\documentclass[reqno]{amsart}
\usepackage{textcomp}
\usepackage{lmodern}

\usepackage[dvipsnames]{xcolor}
\usepackage{amsmath}
\usepackage{amsthm}
\usepackage{hyperref}
\usepackage{amsfonts,graphics,amsthm,amsfonts,amscd,latexsym}
\usepackage{tikz-cd}
\usepackage{epsfig}
\usepackage{flafter}
\usepackage{longtable}
\usepackage{mathtools}
\usepackage{comment}
\usepackage{makecell}
\usepackage[all]{xy}

\hypersetup{
    colorlinks=true,    
    linkcolor=blue,          
    citecolor=blue,      
    filecolor=blue,      
    urlcolor=blue           
}

\usepackage{tikz}
\usetikzlibrary{graphs,positioning,arrows,shapes.misc,decorations.pathmorphing}

\tikzset{
    >=stealth,
    every picture/.style={thick},
    graphs/every graph/.style={empty nodes},
}

\tikzstyle{vertex}=[
    draw,
    circle,
    fill=black,
    inner sep=1pt,
    minimum width=5pt,
]
\usepackage[position=top]{subfig}
\usepackage{amssymb}
\usepackage{color}

\calclayout

\usetikzlibrary{decorations.pathmorphing}
\tikzstyle{printersafe}=[decoration={snake,amplitude=0pt}]

\newcommand{\Cl}{\operatorname{Cl}}

\newcommand{\mult}{\operatorname{mult}}

\newcommand{\rank}{\operatorname{rank}}

\newcommand{\ddivv}{\operatorname{div}}
\newcommand{\ord}{\operatorname{ord}}

\newcommand{\pp}{\mathbb{P}}

\newcommand{\qq}{\mathbb{Q}}
\newcommand{\zz}{\mathbb{Z}}

\newcommand{\cc}{\mathbb{C}}

\newcommand{\oo}{\mathcal{O}}

\def\O#1.{\mathcal {O}_{#1}}			
\def\pr #1.{\mathbb P^{#1}}				
\def\af #1.{\mathbb A^{#1}}			
\def\ses#1.#2.#3.{0\to #1\to #2\to #3 \to 0}	
\def\xrar#1.{\xrightarrow{#1}}			
\def\K#1.{K_{#1}}						
\def\bA#1.{\mathbf{A}_{#1}}			
\def\bM#1.{\mathbf{M}_{#1}}				
\def\bL#1.{\mathbf{L}_{#1}}				
\def\bB#1.{\mathbf{B}_{#1}}				
\def\bK#1.{\mathbf{K}_{#1}}			
\def\subs#1.{_{#1}}					
\def\sups#1.{^{#1}}

\tikzset{
  bu panel/.style={
    x=1mm,y=1mm,line width=0.35pt,
    every node/.style={inner sep=0pt,outer sep=0pt,font=\normalsize}
  },
  bu exceptional/.style={
    line width=0.4pt,line cap=butt,
    dash pattern=on 1.6pt off 1.6pt
  }
}

\newcommand{\BUFrame}{
  \path[use as bounding box] (-19,-15.8) rectangle (19,15.8);
  \draw (-13.3,11.8) -- (13.3,11.8);
  \draw (-13.3,-11.8) -- (13.3,-11.8);
  \draw (-12,-13.2) -- (-12,13.2);
  \draw (12,-13.2) -- (12,13.2);
  \node at (0,13.9) {$s_\infty$};
  \node at (0,-13.8) {$s_0$};
  \node at (-14.3,2.1) {$f_0$};
  \node at (15,9) {$f_\infty$};
}

\newcommand{\BUTPanel}{
  \vcenter{\hbox{
    \begin{tikzpicture}[bu panel]
      \BUFrame
      \fill (-7.2,-11.8) circle[radius=0.62mm];
      \fill (6.5,-11.8) circle[radius=0.62mm];
      \fill (12,0) circle[radius=0.62mm];
      \node at (-7.2,-8.8) {$p_1$};
      \node at (6.5,-8.8) {$p_k$};
      \node at (0,-8.8) {$\cdots$};
      \node at (14.2,0) {$q$};
    \end{tikzpicture}
  }}
}

\newcommand{\BUXPanel}{
  \vcenter{\hbox{
    \begin{tikzpicture}[bu panel]
      \BUFrame
      \foreach \dx in {0,11.8}{
        \begin{scope}[xshift=\dx mm]
          \draw[bu exceptional,dash phase=2.82pt]
            (-8,-4)
            .. controls (-5.793524,-6.603911)
                    and (-4.860191,-9.603911) .. (-5.2,-13);
        \end{scope}
      }
      \draw[bu exceptional,dash phase=2pt]
        (6,7.1)
        .. controls (6.574084,2.821710)
                and (9.007417,0.455044) .. (13.3,0);
      \node at (-8,-8.2) {$E_1$};
      \node at (3.8,-8.2) {$E_k$};
      \node at (-1,-8.8) {$\cdots$};
      \node at (4,2.7) {$E_q$};
    \end{tikzpicture}
  }}
}

\newcommand{\BlowupDiagram}{
  \xymatrix@C=28mm@R=4mm@M=0mm{
    \BUTPanel & \BUXPanel \ar@<1mm>[l]_{\textstyle\pi} \\
    \hbox{\large $T\simeq\mathbb F_2$} & \hbox{\large $X$}
  }
}

\usepackage{tikz}
\usetikzlibrary{matrix,arrows,decorations.pathmorphing}

  \newtheorem{theorem}{Theorem}[section]
  \newtheorem{lemma}[theorem]{Lemma}
  \newtheorem{proposition}[theorem]{Proposition}
  \newtheorem{corollary}[theorem]{Corollary}

  \newtheorem{notation}[theorem]{Notation}
  \newtheorem{definition}[theorem]{Definition}
  \newtheorem{example}[theorem]{Example}

\newtheorem{remark}[theorem]{Remark}

\theoremstyle{remark}

\numberwithin{equation}{section}

\usepackage[all]{xy}

\begin{document}

\thanks{E.A.S. was supported by the State Secretariat for Education, Research and Innovation (SERI) of Switzerland REF-1131-552105. J.M. was partially supported by NSF research grant DMS-2443425.}

\title[Cluster type smooth Fano threefolds]{Cluster type smooth Fano threefolds}

\author[E. Alves da Silva]{Eduardo Alves da Silva}
\address{Departement Mathematik und Informatik,  Universität Basel, Spiegelgasse 1, 4051 Basel, Switzerland.
}
\email{eduardo.alvesdasilva@unibas.ch}

\author[F.~Figueroa]{Fernando Figueroa}
\address{Department of Mathematics, Northwestern University, Evanston, IL 60208, USA
}
\email{fernando.figueroa@northwestern.edu}

\author[J.~Moraga]{Joaqu\'in Moraga}
\address{UCLA Mathematics Department, Box 951555, Los Angeles, CA 90095-1555, USA
}
\email{jmoraga@math.ucla.edu}

\author[J.I.~Y\'a\~nez]{Jos\'e Ignacio Y\'a\~nez}
\address{Departamento de Matem\'atica, Universidad T\'ecnica Federico Santa Mar\'ia, Avenida Espa\~na 1680, Valpara\'iso, Chile}
\email{jose.yaneze@usm.cl}

\subjclass[2020]{Primary 14M25, 14E25;
Secondary  14B05, 14E30, 14E05.}
\keywords{toric geometry, cluster type pairs, log Calabi--Yau pairs, quartic surfaces.}

\begin{abstract}
We prove that most rational smooth Fano threefolds
are compactifications of $\mathbb{G}_m^3$.
More precisely, we study which smooth Fano threefolds are cluster type, i.e., for which smooth Fano threefolds $X$ one can find a boundary $B\in |-K_X|$
and an embedding $\mathbb{G}_m^3\hookrightarrow X\setminus B$. We prove that many important classes of smooth Fano threefolds are cluster type. 
For instance, a smooth Fano threefold which is the blow-up of either $\mathbb{P}^3$ or $\mathbb{Q}^3$ and is general in its deformation space is of cluster type.
As a consequence, we show that among the $105$ families
parametrizing all smooth Fano threefolds, the general element of precisely $87$ families is of cluster type. We prove that a general prime smooth Fano threefold of genus $7$ is not of cluster type. This is the only family whose general element is rational but not of cluster type.
Finally, we prove that cluster type projective klt varieties are cylindrical
and we give a criterion for such a cylinder to be anticanonically polar.
Using this, we reduce the cylindricity problem for smooth Fano threefolds to the families \textnumero\,1.3, \textnumero\,1.5, and \textnumero\,1.6.
\end{abstract} 

\maketitle

\setcounter{tocdepth}{1} 
\enlargethispage{4pt}
\tableofcontents

\section{Introduction}

Fano varieties are projective varieties whose anticanonical divisor is ample, and are one of the three building blocks of smooth projective varieties. Smooth Fano surfaces, or {\em del Pezzo} surfaces, form $10$ deformation families: $\pp^1\times\pp^1$ and the blow-ups of $\pp^2$ at $k\in\{0,\dots,8\}$ points in general position. In dimension three, some Fano varieties are irrational. The Picard rank one classification is due to Iskovskikh~\cite{Isk77,Isk78}, and the higher Picard rank classification to Mori and Mukai~\cite{MM81,MM03}. Iskovskikh and Prokhorov~\cite{IP99} give a comprehensive account. Crucial tools include the Kobayashi--Ochiai index bounds~\cite{KO73} and good anticanonical members~\cite{Sho79}. In~\cite{KMM92}, Koll\'ar, Miyaoka, and Mori proved that in each dimension there are finitely many families of smooth Fano varieties. Thus a full classification is possible in principle, although the higher-dimensional geometry remains poorly understood.

Cluster algebras and their Grassmannian examples originate in~\cite{FZ02,Scott06}. The geometric cluster type condition does not specify a seed. Recently, there has been some substantial interest in the connections between Fano geometry and cluster algebras~\cite{MM24,Mor24a,Mor24b,MY24,JM24,HMY25, Cor25, ELY25,EFMS25,MZ25,Mor26a}.
It is expected that {\em many} Fano varieties carry an affine chart that {\em resembles} the spectrum of a cluster algebra.
More precisely, it is expected that many Fano varieties are of cluster type (see Definition~\ref{def:cluster-type}).
In~\cite{JM24}, Ji and the third author proved that in each dimension $n$ there are finitely many families of smooth Fano varieties which are cluster type. 
Thus, in principle, we should be able to classify cluster type smooth Fano varieties in each dimension.
The geometry of cluster type varieties is more constrained than that of rational varieties, so it is natural to expect a better understanding of these varieties in higher dimensions.
A smooth surface $X$ is of cluster type if and only if the linear system $|-K_X|$ admits a reduced anticanonical cycle of rational curves, including an irreducible nodal rational curve~\cite{GHK15}.
In dimension two, a del Pezzo surface of degree at least two is cluster type
and a general del Pezzo surface of degree one is of cluster type~\cite{ALP23}. 
In dimension three, there are $105$ families of smooth Fano threefolds. 
It is known that the general member of $88$ of these families is a rational smooth Fano threefold~\cite{Con82,AB92}. 
In~\cite{LMV24}, Loginov, Vasilkov, and the third author proved that a general rational smooth Fano threefold is indeed log rational. 
This means that there exists $B\in |-K_X|$ such that the pair $(X,B)$ is crepant birationally equivalent to $(\pp^3,H_0+H_1+H_2+H_3)$ where the $H_i$'s are the coordinate hyperplanes.
One can think of log rationality as a weaker notion than cluster type. In~\cite{AdSFM26}, the authors studied cluster type boundaries on the three-dimensional projective space $\pp^3$. 
For instance, an slc quartic $B$ of coregularity zero with a point of multiplicity three gives a cluster type pair $(\pp^3,B)$ (see~\cite[Proposition 5.2]{AdSFM26}). 
The motivating problem for this project was to understand which smooth Fano blow-ups of $\pp^3$ are cluster type. If $X={\rm Bl}_C(\pp^3)$ and $C\subset B$, where $(\pp^3,B)$ is of cluster type, then $X$ is of cluster type (see Lemma~\ref{lem:blow-up-of-cluster-type}). Thus, for blow-ups $X$ of $\pp^3$, we reduce the problem of proving that $X$ is of cluster type to solving an interpolation problem for singular quartic hypersurfaces in $\pp^3$.
We solve these interpolation problems, for each deformation class of $X$, proving the following theorem.

\begin{theorem}\label{theorem:blow-ups-P^3}
Let $X$ be a smooth Fano threefold which is a blow-up of $\pp^3$.
If $X$ is general in its deformation space, then $X$ is of cluster type.
\end{theorem}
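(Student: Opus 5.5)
The plan is to run through the Mori--Mukai list of smooth Fano threefolds that are blow-ups of $\pp^3$ and, for each family, reduce cluster typeness to a quartic interpolation problem via Lemma~\ref{lem:blow-up-of-cluster-type}. The relevant families are blow-ups of $\pp^3$ along: a point ($2.35$); a line ($2.33$); a twisted cubic ($2.27$); a plane cubic ($2.28$); an elliptic quartic, i.e. the intersection of two quadrics ($2.25$); a curve of degree $6$ and genus $3$ ($2.12$); and a plane quartic ($2.15$); together with the higher Picard rank iterated blow-ups (e.g.\ $3.x$ and $4.x$ families given as blow-ups of disjoint lines, points, and conics, or of curves followed by strict transforms). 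Where the center is a point or several points, I would instead use a toric or cluster-type model directly, since a blow-up of a torus-invariant stratum stays toric.

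For each curve center $C$ (or configuration of centers), the key step is to exhibit a quartic $B\supset C$ such that $(\pp^3,B)$ is of cluster type. The tool is \cite[Proposition 5.2]{AdSFM26}: it suffices that $B$ is an slc quartic of coregularity zero with a point of multiplicity three. I would work in two stages. First, a dimension count: quartics with a prescribed triple point $p$ form a linear system of dimension $34-10=24$. Containing a curve of degree $d$ and genus $g$ imposes at most $4d+1-g$ conditions, reduced by the conditions already forced by passing through $p$ with multiplicity. In each case this should leave a nonempty system. Second, an explicit construction: write $B=x_0F_3+F_4$ with $F_3,F_4$ forms in $x_1,x_2,x_3$, with $p=[1:0:0:0]$. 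Choose the tangent cone $F_3$ and $F_4$ so that $B$ contains a chosen general member of the family of $C$, for instance by taking $C$ through $p$ or lying on a cone over $p$. Then check slc-ness and coregularity zero, e.g.\ by exhibiting a degeneration or a dlt modification with a zero-dimensional stratum, say a toric-type stratum produced by $F_3$ being a triangle of lines.

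Generality enters as follows. The families are irreducible, so it suffices to produce one $X$ in each family, plus openness. Instead of openness, I would argue that the construction works for all $C$ in a dense subset: the locus of pairs $(C,B)$ with $B$ as required dominates the Hilbert scheme component of $C$ up to $\mathrm{PGL}_4$. For iterated blow-ups, the later centers must lie in the strict transform of the boundary. I would choose all centers simultaneously inside the single quartic $B$ and apply Lemma~\ref{lem:blow-up-of-cluster-type} repeatedly, using that the crepant pullback of a cluster type boundary contains the exceptional divisors where needed.

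The main obstacle I expect is the hard cases $2.12$ (a sextic of genus $3$) and $2.15$ (a plane quartic). In $2.15$ the plane containing $C$ must meet $B$ in exactly $C$, so the quartic is forced to be highly special. In $2.12$, $C$ lies on no cubic, so the quartic containing $C$ with a triple point must be checked carefully. For these I would first try placing the triple point $p$ on $C$ and verify the coregularity-zero condition by explicit computation on a general member.
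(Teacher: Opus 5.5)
Your reduction is the right one, and it is the paper's: produce a quartic $B\supset C$ with $(\pp^3,B)$ of cluster type and $\operatorname{mult}_CB\le 2$, then apply Lemma~\ref{lem:blow-up-of-cluster-type}. The gap is that you never solve the interpolation problems, which are the whole content, and your case analysis is wrong.

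\textbf{The case list.} It omits \textnumero\,2.4, 2.9, 2.19, 2.22 and 2.30. \textnumero\,2.4 is the only family where the generality hypothesis is genuinely needed: special members are not of cluster type (Theorem~\ref{thm:2.4-not-cluster-type}).

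\textbf{Why a dimension count cannot replace a construction.} Take any $C=\{F=G=0\}$ in \textnumero\,2.4 and any $p\in C$. The quartic $T_pG\cdot F-T_pF\cdot G$, with $T_pF,T_pG$ the tangent-plane forms, contains $C$ and has a triple point at $p$, so your linear systems are nonempty. Yet for the curves of Section~\ref{sec:special-2.4} no quartic through $C$ gives a cluster type pair. There that triple point is simple elliptic or non-lc (Lemma~\ref{lem:2.4-singular-surfaces}). So the slc and coregularity-zero check you postpone is the theorem itself.

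\textbf{Factual errors.} The blow-up of $\pp^3$ along a plane quartic is not Fano: strict transforms $\ell'$ of lines in the plane have $-K_X\cdot\ell'=4-4=0$. \textnumero\,2.15 is the blow-up along a $(2,3)$ complete intersection, which lies on a quadric and is one of the easy cases. The curve in \textnumero\,2.12 is cut out by four cubics, so it lies on many cubics, and this is exactly what the proof exploits.

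\textbf{The missing idea.} Take $B$ reducible: a cubic $S\in|\mathcal I_C(3)|$ with an $A_n$ point $p$, plus a general plane $H$ through $p$. Then $H|_S$ is a $1$-complement of the $A_n$ point, $\operatorname{mult}_p(S+H)=3$, the pair has coregularity zero, and $\operatorname{mult}_C(S+H)=1$. So Proposition~\ref{prop:criterion-sing-p3} and Lemma~\ref{lem:blow-up-of-cluster-type} apply (this is Lemma~\ref{lem:singular-cubic-surface-An}). The remaining work is finding such $S$, case by case:
\begin{enumerate}
\item[(i)] For \textnumero\,2.4, $|\mathcal I_C(3)|$ is the defining pencil, a line in $\pp^{19}$. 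Singular cubics form a divisor whose non-$A_1$ part has codimension at least two, so a general line meets the divisor only at cubics with a single $A_1$ point (Lemma~\ref{lem:curves-in-cubic-surfaces}). This is precisely where generality of $X$ enters.
\item[(ii)] For \textnumero\,2.9 and 2.12, a net of cubics through $C$ has a unique member singular at a general $p\in C$. Normality of every cubic through $C$ (via Gruson--Peskine), together with a computation of the quadratic term, shows that point is $A_n$ (Lemmas~\ref{lem:normal-cubic-surfaces} and~\ref{lem:singular-cubics-2.9-2.12}).
\item[(iii)] When $C$ lies on a quadric or a plane (\textnumero\,2.15, 2.19, 2.22, 2.25, 2.27, 2.28, 2.30), a quadric plus two general planes, or four planes, is already a complexity-one or toric boundary. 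No triple-point analysis is needed.
\end{enumerate}

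\textbf{Point centers.} For iterated blow-ups with point centers, it is not enough that the point lies in $B$. The discrepancy of the point blow-up is $2-\operatorname{mult}_pB$, so you need $\operatorname{mult}_pB\geq 2$, for example two planes through the point as in Lemma~\ref{lem:blowup-point-curve-p3}.
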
 

The case of blow-ups of $\pp^3$ will be studied in Section~\ref{sec:p3}. 

Smooth quadrics are known to be cluster type varieties. 
For instance, if $\mathbb{Q}^n\subset \mathbb{P}^{n+1}$ is a smooth quadric, then the log 
Calabi--Yau pair $(\mathbb{Q}^n,H_2+\dots+H_{n+1})$ 
is of cluster type when the $H_i$'s are the coordinate hyperplanes. 
Moreover, a general hypersurface in a toric variety which has degree two in Cox coordinates is a cluster type variety~\cite{ELY25}. 
Solving interpolation problems, similar to the case of the projective space, we can also deduce that smooth Fano threefolds which are blow-ups of a three-dimensional quadric are also of cluster type.

\begin{theorem}\label{theorem:blow-up-Q^3}
Let $X$ be a smooth Fano threefold which is a blow-up
of $\mathbb{Q}^3$. 
If $X$ is general in its deformation space, then $X$ is of cluster type. 
\end{theorem}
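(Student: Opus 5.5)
The plan is to mirror the strategy for $\pp^3$. The tool is Lemma~\ref{lem:blow-up-of-cluster-type}: if $(\mathbb{Q}^3,B)$ is a cluster type pair with $B\in|-K_{\mathbb{Q}^3}|=|\mathcal{O}(3)|$, and the blown-up centre $C$ lies in $B$ (in a suitable position, e.g.\ inside the non-snc or toric strata, so that the blow-up is crepant for the log pair), then $X={\rm Bl}_C\mathbb{Q}^3$ is of cluster type. For iterated blow-ups I would apply the lemma successively, tracking the strict transform of the boundary.

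First I will list, from the Mori--Mukai tables, the deformation families of smooth Fano threefolds obtained as blow-ups of $\mathbb{Q}^3$. Their centres are points, lines, conics, twisted quartics, elliptic curves (e.g.\ intersections of two hyperplane sections), and disjoint unions of these. Families that are also blow-ups of $\pp^3$ are already covered by Theorem~\ref{theorem:blow-ups-P^3}. For each remaining family I will take the general centre $C$ and aim for a cluster type boundary $B$ containing a curve projectively equivalent to $C$. Since $X$ is general, we may move $C$ by ${\rm Aut}(\mathbb{Q}^3)$ and within its deformation space.

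Next I will exhibit a supply of cluster type boundaries. The basic one is $(\mathbb{Q}^3,H_2+H_3+H_4)$, with three hyperplane sections whose complement contains $\mathbb{G}_m^3$. More flexible ones come from projecting from a point $p\in\mathbb{Q}^3$: this identifies ${\rm Bl}_p\mathbb{Q}^3$ with a blow-up of $\pp^3$ along a conic. Pushing forward cluster type quartic boundaries of $\pp^3$ with prescribed multiplicity along that conic (as in \cite[Proposition 5.2]{AdSFM26}) then yields cubic sections of $\mathbb{Q}^3$ of coregularity zero. I will also use boundaries of the form $Q_0+H$, with $Q_0$ a singular quadric section (a quadric cone) and $H$ a hyperplane, as well as degenerations of $H_2+H_3+H_4$ to non-normal cubic sections. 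The aim is to have enough components of high degree to contain the centres. Elliptic quartic curves, for instance, sit as $H_i\cap H_j$-type curves or on a quadric section component. Points can be placed at zero-dimensional strata or at smooth points of the boundary, using the blow-up of a point on a component.

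The main obstacle is the interpolation step for the centres of largest degree and genus, and for disjoint unions such as several lines, conics, or a twisted quartic together with other curves. Here the boundary must contain all components simultaneously, with general moduli, while staying slc of coregularity zero and cluster type. For these I would first try reducing to $\pp^3$ via the projection from a point on $\mathbb{Q}^3$, so that the problem becomes one about quartic surfaces with a triple point (or double curve) containing the images of the centres. I would then count conditions against the dimension of such quartics to show that the general configuration is attained. Where that fails, I would exhibit explicit equations and verify the cluster type property via a toric model, i.e.\ a crepant birational map to $(\pp^3,\text{coordinate hyperplanes})$ that restricts to an embedding of $\mathbb{G}_m^3$.
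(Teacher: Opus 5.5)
Your proposal is a strategy rather than a proof. The step you flag as the main obstacle is exactly where the content lies, and it is left open.

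\textbf{The decisive cases are not handled.} These are \textnumero\,2.7, 2.13, 2.17 and 2.21, whose centres are:
\begin{itemize}
\item the complete intersection of two quadric sections (a canonical curve of genus $5$ and degree $8$);
\item a genus $2$ sextic;
\item an elliptic quintic;
\item a rational normal quartic.
\end{itemize}
These curves are linearly normal and span $\pp^4$, so none lies in a hyperplane section. A boundary whose components are hyperplane sections, such as $H_2+H_3+H_4$, therefore cannot contain them. You never exhibit a cluster type cubic section with a higher-degree component through $C$. The ``degenerations to non-normal cubic sections'' are neither specified nor shown to be cluster type.

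Your fallback, projecting from a point and counting conditions on quartics in $\pp^3$ with a triple point, is not carried out. A dimension count alone would not deliver the slc and coregularity-zero hypotheses of Proposition~\ref{prop:criterion-sing-p3}. You also cannot offload \textnumero\,2.17 onto Theorem~\ref{theorem:blow-ups-P^3}. It is indeed the blow-up of $\pp^3$ along an elliptic quintic, but the paper proves that case of the $\pp^3$ statement through the quadric, so the citation would be circular.

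\textbf{Several concrete claims are wrong.}
\begin{itemize}
\item On $\mathbb{Q}^3$, two hyperplane sections meet in a conic, not an elliptic curve. The elliptic quartic of \textnumero\,2.23 is $R\cap S$, with $R$ a hyperplane section and $S\in|\mathcal O_{\mathbb{Q}^3}(2)|$.
\item A quadric section is a quartic del Pezzo surface, not a quadric cone.
\item Your list of centres omits the twisted cubic, the genus $2$ sextic and the genus $5$ octic.
\item A point at a smooth point of $B$ cannot be blown up via Lemma~\ref{lem:blow-up-of-cluster-type}, because the exceptional divisor gets coefficient $\mathrm{mult}_pB-2=-1$.
\end{itemize}
Conversely, the disjoint unions are not an obstacle. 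All components are planar, so putting each in its own hyperplane section of the complexity one boundary $H_1+H_2+H_3$ settles \textnumero\,3.10, 3.15 and 3.20. Similar placements settle \textnumero\,3.19, 4.4 and 5.1: put each point on two components, and put the conic of \textnumero\,5.1 on two components.

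\textbf{The missing idea.} This is how the paper handles the four decisive families, in two steps.
\begin{enumerate}
\item Lemma~\ref{lem:curves-in-quadrics}: each such $C$ lies on a smooth $S\in|\mathcal O_{\mathbb{Q}^3}(2)|$.
\begin{itemize}
\item $\mathcal I_{C/\mathbb{Q}^3}$ is generated by quadric sections: by definition for \textnumero\,2.7, and by Saint-Donat for the others since $d\geq 2g+2$.
\item Bertini gives smoothness away from $C$.
\item At each $p\in C$ the conormal evaluations span a $2$-dimensional space, so members singular somewhere on $C$ form a proper closed subset.
\end{itemize}
\item Lemma~\ref{lem:smooth-quadric-section-in-quadric}: if $\ell\subset S$ is a line and $R$ is a general hyperplane section through $\ell$, then $(\mathbb{Q}^3,S+R)$ is of cluster type.
\begin{itemize}
\item The line $\ell$ has multiplicity two in $S+R$, so its exceptional divisor $E$ is an lc place.
\item Projection from $\ell$ gives ${\rm Bl}_\ell\mathbb{Q}^3\to\pp^2$, on which $S'$ and $E$ are horizontal while $R'$ maps to a line.
\item Proposition~\ref{prop:fibration-to-smooth-surface} then applies, and Lemma~\ref{lem:cluster-type-descends} brings the conclusion down to $\mathbb{Q}^3$.
\end{itemize}
\end{enumerate}
Blowing up $C\subset S$ then extracts only a canonical place.

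\textbf{A repair closer to your sketch.} Your $Q_0+H$ boundary can be made to work. Suppose $Q_0\in|\mathcal O_{\mathbb{Q}^3}(2)|$ has an $A_n$ point $p$ and $H$ is a general hyperplane section through $p$. Projection from $p$ transforms $(\mathbb{Q}^3,Q_0+H)$ crepantly into a quadric plus two planes in $\pp^3$, which is a complexity one pair. This is the mechanism in the proof of Proposition~\ref{prop:1-9}. You would still owe an interpolation lemma placing the general $C$ on such a singular quadric section, and your proposal does not supply it.
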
 

The case of blow-ups of $\qq^3$ will be studied in Section~\ref{sec:q3}.
Finally, we address the other possible isomorphism classes of smooth Fano threefolds. 
Many rational smooth Fano threefolds are either blow-ups of the projective space $\mathbb{P}^3$ or a three-dimensional quadric $\mathbb{Q}^3$.
In order to address the other cases, we use several different approaches. 
In some cases, we will use the complexity approach
to detect the cluster type property initiated in~\cite{ELY25}.
In other cases, we will study del Pezzo fibrations 
or conic fibration structures on the smooth Fano variety or one of its blow-ups.
In~\cite{MY24}, the third and fourth authors developed techniques to detect whether del Pezzo fibrations
or conic fibrations are cluster type (see Proposition~\ref{prop:fibration-to-smooth-curve} and Proposition~\ref{prop:fibration-to-smooth-surface}).
Using all these different tools, we prove the following theorem.

\begin{theorem}\label{theorem:cluster-type-Fano-threefolds}
Among the $105$ families parametrizing all smooth Fano threefolds, the general element of at least $87$ families is of cluster type.
\end{theorem}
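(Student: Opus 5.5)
The plan is a case-by-case verification over the Mori--Mukai table, using the structural results stated earlier and then treating the remaining families individually. The first reduction uses Theorem~\ref{theorem:blow-ups-P^3} and Theorem~\ref{theorem:blow-up-Q^3}. Consulting the classification of Iskovskikh and Mori--Mukai~\cite{IP99,MM81}, I would list every family whose members are blow-ups of $\pp^3$ or $\qq^3$ along points, curves, or disjoint unions of such. These include, for instance, the blow-ups of $\pp^3$ along a line, a conic, a twisted cubic, a plane cubic, or an elliptic quartic, and the blow-ups of $\qq^3$ along conics, lines, or twisted quartics. For each such family, the general member is of cluster type by those theorems. I would also add $\pp^3$ and $\qq^3$ themselves, which are cluster type directly via the coordinate-hyperplane boundaries.

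Next come the families that are not blow-ups of $\pp^3$ or $\qq^3$ but are toric, or of low complexity. Smooth toric Fano threefolds are trivially of cluster type, since one can take $B$ to be the toric boundary. For complexity-one $\mathbb{T}$-varieties, and for hypersurfaces of degree two in Cox coordinates of toric varieties, I would invoke the complexity approach of~\cite{ELY25}. This covers many of the Picard rank $3$ and $4$ cases, such as divisors of bidegree $(1,1)$ or $(1,2)$ in products and double covers branched in low-degree divisors of toric type.

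For the remaining families I would use fibration structures. Each such family carries a del Pezzo fibration over $\pp^1$ or a conic bundle over a rational surface. These families include products $S_d\times\pp^1$ with $d\ge 2$, which are cluster type because $S_d$ is~\cite{ALP23}, the prime-type families of higher Picard rank, and rank-one families such as $V_5$ and $V_{22}$. I would apply Proposition~\ref{prop:fibration-to-smooth-curve} and Proposition~\ref{prop:fibration-to-smooth-surface}. The criterion to check there is the existence of a $\mathbb{G}_m$-invariant-type boundary: a vertical boundary consisting of two fibers, together with a horizontal boundary that restricts on the general fiber to a cluster type boundary, with the monodromy handled.

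For rank-one prime families of genus $g\in\{8,9,10,12\}$ and for $V_4,V_5$, I would instead pass to a Sarkisov link. Blowing up a line or a point produces a blow-up of $\pp^3$, of $\qq^3$, or a conic bundle already treated. I would then use the earlier lemma that cluster type descends along the link, provided the exceptional locus lies in the boundary, so that crepant birational equivalence preserves cluster type.

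Finally, I would count. There are $88$ rational families. From these I would exclude genus $7$, which the paper later shows is not cluster type. That leaves $87$ families, and each must be checked against one of the above mechanisms. The main obstacle is the rank-one and rank-two families that are neither blow-ups of $\pp^3$ or $\qq^3$ nor toric, such as $V_{22}$ and certain conic bundles of degree $\le 4$ discriminant. For these, one must exhibit an explicit anticanonical boundary containing the center of the relevant Sarkisov link, and verify coregularity zero together with the cluster-type condition (a torus in the complement). I would first try to realize each such family as the image of a link from an already-proved family, with center contained in a cluster type boundary; failing that, I would construct the boundary directly from a degree-two Cox-coordinate presentation.
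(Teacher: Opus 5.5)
Your overall architecture is the same as the paper's: run through the Mori--Mukai table, use Theorems~\ref{thm:blow-up-P3} and~\ref{thm:blow-up-Q3}, the toric and complexity-one arguments, and products via Lemma~\ref{lem:product-cluster-type}. The gap is that the two mechanisms you rely on for the families carrying the real content do not work as stated.

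\textbf{The fibration criterion is not sufficient.} You propose two boundary fibers over $0,\infty$ plus a horizontal boundary restricting to a cluster type boundary on the general fiber. Proposition~\ref{prop:double-cover-p1xp2} is a counterexample inside the paper. A blow-up of a member of \textnumero\,2.18 carries a degree-four del Pezzo fibration over $\pp^1$ with exactly this kind of boundary. The two horizontal components are irreducible and restrict to an anticanonical cycle of two rational $0$-curves, which is cluster type on the fiber by Theorem~\ref{thm:cluster-type-smooth-surf}. Its two nodes trace out two disjoint sections, so there is no monodromy to handle. Yet the pair is not of cluster type by \cite[Theorem 1.7]{MY24}.

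The criteria that actually work are narrower. Proposition~\ref{prop:fibration-to-smooth-curve} needs fibers of degree at least five and a single nodal horizontal component. Proposition~\ref{prop:fibration-to-smooth-surface} needs a conic fibration over a smooth surface with two horizontal components. This is why \textnumero\,2.18, \textnumero\,3.2 and \textnumero\,3.4 need non-obvious boundaries. For \textnumero\,2.18 the paper takes preimages of two lines through a point of the discriminant quartic, blows up one line of the reducible conic, and maps to a singular quadric of complexity one. For \textnumero\,3.2 it projects from a line on the quintic model to get a cubic plus a plane in $\pp^3$ with a triple point. Also, the natural boundary on \textnumero\,2.18 has complexity two, so your complexity-based treatment of ``double covers of toric type'' does not cover it.

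\textbf{The Sarkisov-link step is not a reduction to treated cases.} For $g=9,10$ the double projection from a line lands on $\mathrm{Bl}_\Gamma\pp^3$ ($\Gamma$ of genus $3$, degree $7$), resp.\ $\mathrm{Bl}_\Gamma\qq^3$ ($\Gamma$ of genus $2$, degree $7$). For $g=12$ the projection from an ordinary line lands on $\mathrm{Bl}_\Gamma V_5$ with $\Gamma$ a rational normal quintic. These links contain flops, so these blow-ups are not Fano and are covered by none of the blow-up theorems.

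Moreover, as you yourself require, descending along $\mathrm{Bl}_\ell X\to X$ needs the exceptional divisor to be a log canonical place. So the boundary downstairs must contain its image surface $F_\ell$, not merely $\Gamma$. The boundary is then essentially forced to be $F_\ell+H$, and the whole difficulty is making that pair cluster type. For $g=9$, and for $g=10$ after projecting from the double point, the paper does this as follows:
\begin{itemize}
\item find a line with $F_\ell$ singular (Proposition~\ref{prop:sing-line});
\item find a double point of quadratic rank at least two (Proposition~\ref{prop:mult2point});
\item choose a plane through it so that $F_\ell+H$ is a coregularity-zero quartic with a triple point, and apply Proposition~\ref{prop:criterion-sing-p3}.
\end{itemize}
For $g=12$ it uses a Du Val quintic del Pezzo surface with a suitable line and a second projection to $\qq^3$. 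It also needs a separate explicit torus chart for the Mukai--Umemura threefold, which has no ordinary line. None of these ideas is in your plan.

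\textbf{Two corrections to your list.} Genus $8$ (\textnumero\,1.7) must be removed: it is irrational, and cluster type varieties are rational. For \textnumero\,10.1 you need the general del Pezzo surface of degree one, not only $d\geq 2$.
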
 

The cases which are not blow-ups of $\pp^3$ or $\qq^3$ are dealt with in Section~\ref{sec:main-series} and Section~\ref{sec:other}. 
In Section~\ref{sec:main-series} we study smooth Fanos in the main series and in Section~\ref{sec:other} we study several other smooth Fano threefolds.
In some of these cases, we prove that some natural boundary does not lead to a cluster type pair.
For instance, in the case of a double cover $X$ of $\pp^1\times \pp^2$ over a surface of degree $(2,2)$, a natural boundary $B\in |-K_X|$ has coregularity zero but $(X,B)$ is not cluster type (see Proposition~\ref{prop:double-cover-p1xp2}).  
However, in Proposition~\ref{prop:double-cover-p1xp2-cluster-type}, we are able to obtain a different cluster type boundary for this double cover.

\subsection{Applications to cylindrical varieties} In this subsection, we explain some applications of the cluster type condition to cylindrical varieties. 
A {\em cylinder} in a variety $X$ is an open subset $U\subseteq X$ isomorphic to $\mathbb{A}^1\times Z$ where $Z$ is an affine variety. 
In such a case, we say that $X$ is a {\em cylindrical} variety.
Cylindrical varieties have attracted a lot of attention in recent years (see, for instance~\cite{KZ01,SY02,DK15,CPW16,Saw24,KKW24,KKW25,KW25}). For example, there is an interesting connection of cylinders and additive group actions~\cite{KPZ13}.
We refer the reader to~\cite{CPPZ21} for a complete survey on the topic of cylindricity.

Every smooth rational projective surface contains a cylinder (see, e.g.,~\cite{KPZ11}).
Thus, every smooth del Pezzo surface is cylindrical. 
If $X$ is a cylindrical rationally connected variety, then the cylinder $\mathbb{A}^1\times Z$ must satisfy that $Z$ is rationally connected. 
A smooth projective model of the surface $Z$ is therefore rationally connected, hence rational. Thus a cylindrical rationally connected threefold is rational. 
In particular, a cylindrical smooth Fano threefold must be rational~\cite{Zha06}. 
Thus, there are examples of smooth Fano threefolds which are not cylindrical.
It is then natural to understand which smooth rational Fano varieties are cylindrical.
It is not known whether every rational smooth Fano threefold is cylindrical (see~\cite[Question 1.3]{CPPZ21}).
Some rational smooth Fano threefolds are conjectured not to be cylindrical~\cite[Conjecture 3.9 and Conjecture 3.13]{CPPZ21}. It is expected that most rational smooth Fano threefolds are cylindrical. 
However, the cases in which this statement is proven are limited. 
Cylindricity is known for del Pezzo threefolds and some special blow-ups of $\pp^3$ and $\qq^3$ (see~\cite[Examples 3.14-17]{CPPZ21}). In this direction, we show that cluster type varieties with mild singularities are indeed cylindrical.

\begin{theorem}\label{theorem:ct-implies-cylindrical}
Let $X$ be a projective cluster type variety
with klt singularities. Then $X$ is a cylindrical variety.
\end{theorem}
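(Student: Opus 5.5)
The plan is to construct the cylinder directly inside the torus, using the cluster type structure to control the complement. Recall the definition: $X$ is of cluster type if there is a boundary $B$ with $(X,B)$ log Calabi--Yau (so $K_X+B\sim 0$ and $(X,B)$ log canonical) and a crepant birational map $\phi\colon(\pp^n,H_0+\dots+H_n)\dashrightarrow (X,B)$ which restricts to an open embedding $\mathbb{G}_m^n\hookrightarrow X\setminus B$. A naive attempt takes $U=\mathbb{G}_m^n\subset X$ and notes that $\mathbb{G}_m^n=\mathbb{G}_m\times \mathbb{G}_m^{n-1}$. This fails, because $\mathbb{G}_m$ is not $\mathbb{A}^1$. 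So the strategy is to enlarge the torus by one boundary divisor which is ``toric'' for $\phi$.

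The key steps run as follows. First, I use the crepant structure. Every component $B_i$ of $B$ has log discrepancy zero with respect to $(\pp^n,\sum H_j)$, so it corresponds to a toric valuation, i.e.\ a primitive ray $v_i\in N=\zz^n$. Second, I take a toric model: a proper birational toric modification $T\to \pp^n$ on which the divisors $B_1,\dots,B_r$ appear as torus-invariant divisors $D_{v_1},\dots,D_{v_r}$. Up to isomorphism in codimension one, $\phi$ then becomes a birational map $T\dashrightarrow X$ that is an isomorphism at the generic points of these divisors. Third, I pick one such divisor $D=D_{v_1}$ and choose coordinates with $v_1=e_1$. The toric open set $\mathbb{G}_m^n\cup D^\circ\cong \mathbb{A}^1\times\mathbb{G}_m^{n-1}$ is then a cylinder in $T$. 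It remains to transport this cylinder to $X$.

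The main obstacle is exactly this transport. The map $T\dashrightarrow X$ need not be defined on all of $D^\circ$; it is only defined on a dense open subset $D^\circ\setminus Z$. Removing $Z$ naively destroys the product structure. To fix this, let $W\subset D^\circ\cong\mathbb{G}_m^{n-1}$ be the locus where the map is a local isomorphism onto its image in $X$. I then shrink to $W'=W\cap\{g\neq0\}$, a principal open affine subset of the torus $\mathbb{G}_m^{n-1}$, for a suitable character-polynomial $g$ in the variables $x_2,\dots,x_n$. Because $g$ does not involve $x_1$, the set $\mathbb{A}^1_{x_1}\times W'$ remains a cylinder.

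What needs checking is that $\phi$ is an isomorphism onto its image on all of $\mathbb{A}^1\times W'$, not just near $\{x_1=0\}$. On $\{x_1\ne0\}$ this holds since $\phi$ embeds the torus. Near $D$ it holds after shrinking $W'$, using that $\phi$ is birational and an isomorphism at the generic point of $D$. Here is where the klt hypothesis enters: I expect to use it, via a small $\qq$-factorialization or the fact that $X$ has rational singularities, to ensure that $\phi^{-1}$ extracts no divisor over the image of $D$ within the chosen region. Concretely, I would pass to a dlt modification $(Y,B_Y)\to(X,B)$, on which $D$ is a divisor and $\phi$ is an isomorphism in codimension one. I would then apply Zariski's main theorem to show that $\phi$ restricted to $\mathbb{A}^1\times W'$ is quasi-finite and birational onto a normal open set, hence an open immersion.

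The last step is to show that the image in $Y$ maps isomorphically into $X$ after one further shrinking of $W'$ by a function in $x_2,\dots,x_n$ alone. This works because the exceptional locus of $Y\to X$ is a proper closed subset meeting $D$ properly. I regard the compatibility of all these shrinkings with the product structure as the delicate point.
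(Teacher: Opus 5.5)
Your construction is the paper's, written in a chart. The paper chooses a toric model with a $\pp^1$-fibration $\pi\colon T\to T_0$ having $S_T$ as a section. Your open set $\mathbb{G}_m^n\cup D^\circ\cong\mathbb{A}^1\times\mathbb{G}_m^{n-1}$ is exactly $\pi^{-1}$ of the torus of $T_0$ with the opposite section $S'_T$ removed. As written, however, the argument has two problems. First, you have misplaced the klt hypothesis. It plays no role in the transport step. Its only, and indispensable, use is to guarantee $B\neq 0$, so that there is a component $D$ to choose at all; your third step silently assumes $r\geq 1$. The justification is one line: $(X,B)$ is crepant birational to $(\pp^n,\Sigma^n)$, so it has log canonical places, and this is impossible if $B=0$ and $X$ is klt.

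Second, the transport needs neither a dlt modification nor any singularity assumption. Let $E\subset U:=\mathbb{G}_m^n\cup D^\circ$ be the complement of the largest open set on which $T\dashrightarrow X$ is an open immersion. Then $E$ has no divisorial component. Indeed, $D^\circ\not\subseteq E$ because the map is an isomorphism at the generic point of $D$, and no divisor meeting the torus lies in $E$ because the map is small on the torus. Hence $\dim E\leq n-2$. The projection of $E$ to $\mathbb{G}_m^{n-1}$ (coordinates $x_2,\dots,x_n$) therefore lies in $\{g=0\}$ for some nonzero Laurent polynomial $g$, and $\mathbb{A}^1\times D(g)\subseteq U\setminus E$ embeds openly into $X$. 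This is precisely the paper's observation that the image of $E_{\rm vert}$ in $T_0$ is proper, and it shows that the ``compatibility of shrinkings'' is a non-issue.

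The same dimension count is needed on $\{x_1\neq 0\}$. In Definition~\ref{def:cluster-type} the torus embeds only in codimension one, so your sentence ``this holds since $\phi$ embeds the torus'' uses a stronger definition than the paper's. That stronger form is known for $\qq$-factorial klt Fanos by \cite{EFM24}, not for arbitrary projective klt $X$.

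Finally, the dlt detour is not merely superfluous but false as stated: $T\dashrightarrow Y$ need not be an isomorphism in codimension one. Toric divisors of $T$ not extracted on $Y$ are contracted. Moreover, $X$ may contain non-boundary divisors that are exceptional over $T$ with centers inside $B_T$; these have log discrepancy one, like the $S_i$ in the proof of Theorem~\ref{thm:anticanonical-polar-cylinder}.
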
 

The idea of the proof of Theorem~\ref{theorem:ct-implies-cylindrical} is simple 
and we give a short heuristic here.
The cluster type variety $X$ admits a birational map 
$\mathbb{G}_m^n \dashrightarrow X$ which is an open 
embedding on a big open subset of the algebraic torus. 
Let $B:={\rm Poles}(\Omega_n)$ be the cluster type boundary.
Since $X$ has mild singularities, the boundary $B$ must be non-trivial. For each prime component $P$ of $B$, the valuation ${\rm ord}_P$
determines a one-parameter subgroup
$\lambda_P\colon\mathbb G_m\to\mathbb G_m^n$.
The limits $\lim_{t\to0}g\lambda_P(t)$, for general
$g\in\mathbb G_m^n$, sweep out a dense open subset of $P$.
The previous statement can be proved by looking at toric models of the cluster type pair $(X,B)$ (see Proposition~\ref{prop:cluster-type-bir}).
General translates of a suitable one-dimensional subtorus have limits in an open subset of a chosen boundary component $P$. After shrinking the parameter space, they give a cylinder in $X\setminus\operatorname{Supp}(B-P)$. In Theorem~\ref{thm:anticanonical-polar-cylinder}, we will explain some criteria for the construction of this cylinder to yield a polar anticanonical cylinder (see Section~\ref{sec:cylindrical} for the definition).

Combining Theorem~\ref{theorem:ct-implies-cylindrical} with our main result on cluster type smooth Fano threefolds, Theorem~\ref{theorem:cluster-type-Fano-threefolds}, we obtain the following result.

\begin{corollary}\label{cor:cylindrical-smooth-Fano-threefolds}
Among the $105$ families parametrizing all smooth Fano threefolds, 
the general element of at least $87$ families admits a cylinder. 
\end{corollary}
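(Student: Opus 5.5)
The corollary is a formal consequence of two results stated above: Theorem~\ref{theorem:cluster-type-Fano-threefolds} and Theorem~\ref{theorem:ct-implies-cylindrical}. The plan is to fix one of the (at least) $87$ families supplied by Theorem~\ref{theorem:cluster-type-Fano-threefolds}, whose general element $X$ is of cluster type, and then check that $X$ satisfies the hypotheses of Theorem~\ref{theorem:ct-implies-cylindrical}.

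The hypotheses are immediate. The variety $X$ is a smooth Fano threefold, hence projective. Smooth varieties have terminal, in particular klt, singularities. By the choice of family, $X$ is of cluster type, so Theorem~\ref{theorem:ct-implies-cylindrical} applies and produces a cylinder $U\simeq \mathbb{A}^1\times Z\subseteq X$ with $Z$ affine.

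It remains to be precise about the word ``general''. Theorem~\ref{theorem:cluster-type-Fano-threefolds} asserts that, for each such family, the cluster type members form a set containing a dense open (or at least very general) subset of the base of the family. Since Theorem~\ref{theorem:ct-implies-cylindrical} applies pointwise to every cluster type member, the same subset consists of cylindrical varieties. Hence the general element of each of these $87$ families admits a cylinder.

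There is no real obstacle here. All the difficulty sits in the two cited theorems, in particular in constructing cluster type boundaries family by family. The only point requiring care is that ``general'' is read in the same sense in both statements, and it is, because the implication from cluster type to cylindrical holds member by member.
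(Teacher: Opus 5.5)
Your argument is correct and matches the paper's, which obtains the corollary by applying Theorem~\ref{theorem:ct-implies-cylindrical} (a smooth Fano threefold is projective and klt) to the general members of the $87$ families given by Theorem~\ref{theorem:cluster-type-Fano-threefolds}. You also correctly observe that the cluster-type-to-cylindrical implication holds member by member, so ``general'' carries over unchanged.
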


It is worth mentioning that log rationality is not expected to imply cylindricity.
Indeed, in~\cite{LMV24}, Loginov, Vasilkov, and the third named author proved that a general rational smooth Fano threefold is indeed log rational.
Thus, a general smooth Fano threefold of Picard rank one, index one, and genus $7$ is conjectured to give an example of a log rational variety which is not cylindrical. 
It is still unclear how to detect whether a log rational variety is cylindrical.

\subsection{The Mori-Mukai family \textnumero\,1.6} 
Due to the Mori-Mukai classification, there are precisely 88 families of smooth Fano threefolds whose general element is rational.
By Theorem~\ref{theorem:cluster-type-Fano-threefolds} the general element of 87 of these families is of cluster type. 
The remaining family is the Mori-Mukai family \textnumero\,1.6 
of prime smooth Fano threefolds of genus $7$. 
In this case, we will prove that a general prime smooth Fano threefold of genus $7$ is not of cluster type. 
In this direction, we will develop new techniques to prove that a pair is not of cluster type. 
More precisely, we will prove the following lemma that allows us to construct singular movable linear systems on cluster type varieties.

\begin{lemma}\label{lem:movable-intro}
Let $X$ be a klt Fano variety. 
Let $(X,B)$ be a cluster type pair with ${\rm Cl}(X)\simeq \zz[B]$. 
Then, there exists a movable linear system $\mathcal{M}\subseteq |-mK_X|$ and a log canonical place $v$ of $(X,B)$ with 
\[
v(\mathcal{M})>mv(B). 
\]
Furthermore, we can choose the center of $v$ in $X$ to be a closed point.
\end{lemma}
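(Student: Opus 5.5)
The plan is to pull back a singular linear system from a toric model of $(X,B)$. Since $(X,B)$ is of cluster type, there is a crepant birational map $\phi\colon (T,B_T)\dashrightarrow (X,B)$ from a projective toric pair $(T,B_T)$, where $B_T$ is the full toric boundary, and $\phi$ is an open embedding of the torus $\mathbb{G}_m^n\hookrightarrow X\setminus B$ (Proposition~\ref{prop:cluster-type-bir}). Every toric valuation of $T$ is then a log canonical place of $(X,B)$. Rational functions on $X$ are characters-regular functions on the torus. The hypothesis ${\rm Cl}(X)\simeq\zz[B]$ says that every Weil divisor is linearly equivalent to a multiple of $B\sim -K_X$, and in particular $\mathbb{G}_m^n$ sits in the complement of $B$ with no other divisorial complement. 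Hence $X\setminus \operatorname{Supp}B$ has no nonconstant units beyond the characters, and every divisor supported on $B$ is a multiple of $B$ up to linear equivalence.

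\textbf{Step 1.} For $m\gg0$ I write $H^0(X,-mK_X)=H^0(X,\oo_X(mB))$ as a space of regular functions on $\mathbb{G}_m^n$ spanned by Laurent polynomials. For each toric valuation $v_w$ ($w\in N$) I have $v_w(f)$ for $f$ in this space. Set $\mathcal{M}_w:=\{f : v_w(f)\ge -m\,v_w(B)+1\}$, i.e. sections vanishing more than $B$ along $v_w$; since $v_w(B)=1$ for toric lc places, this means $v_w({\rm div} f + mB) > m$. In other words I need sections of $|-mK_X|$ whose order at $v_w$ strictly exceeds that of $mB$. Taking $w$ in the interior of a maximal cone of the fan of $T$ that corresponds, under $\phi$, to a valuation with closed-point center on $X$ should be possible: since $X$ is Fano and klt, $B$ is big and its lc centers include a zero-dimensional stratum. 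I will use a dlt modification of $(X,B)$ and the fact that the coregularity is $0$, which gives a toric valuation centered at a point of $X$.

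\textbf{Step 2.} I need to show that $\mathcal{M}_w$ is a nonempty linear system with no fixed divisor. Nonemptiness follows from dimension counting. $h^0(-mK_X)$ grows like $m^n\cdot\operatorname{vol}(-K_X)/n!$, while the condition $v_w\ge m\,v_w(B)+1$ is a condition on a filtration. It becomes nontrivial in the right way as follows: the Newton polytope of $mB$ in $M_\qq$ is $mP$, and $v_w$ attains its minimum $-m$ on the face $F_w$. Sections whose monomials avoid $F_w$ form a subspace of codimension about $\#(mF_w\cap M)$, which is of order $m^{n-1}$, or even $O(1)$ for $w$ in the interior of a cone. If $X$ were toric this would be clear; in general I would use Okounkov-body style estimates for the valuation filtration, or simply the fact that $v_w$ is not the trivial valuation to get a proper subspace of positive dimension.

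\textbf{Step 3, the main obstacle: movability.} A fixed component of $\mathcal{M}_w$ is a prime divisor $D$. Since ${\rm Cl}(X)=\zz[B]$, if $D\not\subseteq\operatorname{Supp}B$ then $D$ meets the torus and $D\sim dB$ with $d\ge1$. I would then remove it and show that $v_w$-excess persists for $\mathcal{M}_w-D\subseteq|(m-d)(-K_X)|$; iterating reduces the degree, so I must ensure the excess survives. Components of $B$ as fixed part: subtracting $P\subseteq B$ changes $v_w$ by $v_w(P)$. Here the rank-one class group is essential, since it rules out the toric situation where $\mathcal{M}$ could be forced to contain boundary divisors. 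I would argue that the fixed part $F$ of $\mathcal{M}_w$ satisfies $F\sim_\qq cB$ with $v_w(F)\le c\,v_w(B)$ for boundary parts (by lc-ness of $(X,B)$ and since $v_w$ is an lc place, $v_w(F)\le v_w(cB)$ whenever $F$ is supported in $B$ with coefficients at most $c$ times those of $B$ after rescaling). Then $\mathcal{M}=\mathcal{M}_w-F\subseteq|-(m-c)K_X|$ satisfies $v_w(\mathcal{M})>(m-c)v_w(B)$. Making this last inequality precise when $F$ has uneven coefficients along the components of $B$ is where I expect the real difficulty. My first attempt would be to choose $w$ generic enough in its cone that the inequality is forced.
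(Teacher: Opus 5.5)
Your plan has a genuine gap. You flag Step~3 as open yourself, but the deeper problem is that Steps~2 and~3 cannot be done for a valuation $w$ chosen first (from a zero-dimensional stratum), because the key structure of $U:=X\setminus B$ is misidentified. Since ${\rm Cl}(X)\simeq\zz[B]$ with $B$ ample, we have ${\rm Cl}(U)=0$ and $\mathcal{O}(U)^*=\cc^*$: a unit on $U$ has divisor $cB\sim 0$, which forces $c=0$. Plugging this into the exact sequence~\eqref{eq:LES} shows that $U\setminus\mathbb{T}$ consists of exactly $n$ prime divisors $D_1,\dots,D_n$. Moreover, there is a basis of characters $x_1,\dots,x_n$ with ${\rm div}_X(x_i)=D_i-m_iB$ and $m_i>0$. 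So the torus does have a divisorial complement in $X\setminus B$, and the characters are not units there; they are regular functions cutting out the $D_i$.

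Every toric valuation $v_w$ is monomial in the $x_i$. Hence the condition $v_w(f)>0$ defining $\mathcal{M}_w$ is a condition on the exponents of the Laurent polynomial $f$ relative to $w_i:=v_w(x_i)$, and its outcome depends on the signs of the $w_i$, which your choice of $w$ does not control. The Newton-polytope count only makes sense for toric $X$. The claim that non-triviality of $v_w$ gives a positive-dimensional subspace is unfounded: nothing in your argument excludes, say, $\mathcal{O}(U)=\cc[x_1,\dots,x_n]$ with all $w_i\le 0$, where no nonzero section has $v_w(f)>0$.

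Step~3 fails concretely. Take $w=e_1$ in the basis dual to the $x_i$. Since ${\rm ord}_{D_1}(x^a)=a_1$ and $x_1$ is a unit along the other $D_j$, every $f\in\mathcal{O}(U)$ with $v_w(f)>0$ is divisible by $x_1$ in $\mathcal{O}(U)$. Thus $D_1$ is a fixed component of $\mathcal{M}_w$. As $x_1\in\mathcal{M}_w$ for $m\ge m_1$, the residual system contains a multiple of $B$ and has no excess at all. Also $v_w(D_1)=1+m_1v_w(B)>m_1v_w(B)$, so your proposed bound $v_w(F)\le c\,v_w(B)$ on the fixed part is false. Separately, $v_w(B)=1$ does not hold for general toric lc places.

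The missing idea is to reverse the order: build the movable system first, then choose the valuation. Take $m={\rm lcm}(m_i)$ and let $\mathcal{M}\subseteq|mB|$ be spanned by the members $\frac{m}{m_i}D_i=mB+\frac{m}{m_i}{\rm div}_X(x_i)$. This system is movable for free, because the $D_i$ are distinct prime divisors. Then any toric lc place with $w_i=v(x_i)>0$ for all $i$ satisfies
\[
v(\mathcal{M})\geq mv(B)+\min_i\frac{mw_i}{m_i}>mv(B).
\]
These $w$ form an open full-dimensional cone, and a generic one can be chosen with closed-point center on $X$.
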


The previous lemma will be proved in Section~\ref{sec:MM-1-6}.
Roughly speaking, the previous lemma says that on a cluster type pair we can always find a log canonical place $v$ of $(X,B)$ and some movable linear system whose general element is more singular than $B$ with respect to $v$. Thus, if the inequality $v(\mathcal{M})\leq mv(B)$ holds for any movable linear system $\mathcal{M}$ and divisorial valuation $v$, then $(X,B)$ is not of cluster type. A priori, this gives a good way to disprove the cluster type condition for a variety $X$.
However, there are two issues: first, one needs to perform the computation $v(\mathcal{M})\leq mv(B)$ uniformly for all movable systems and valuations, second, one needs to perform the computation for all possible $B\in |-K_X|$ that could lead to a cluster type boundary.
In Lemma~\ref{lem:criteria-under-adjunction}, we will prove that it suffices to show the inequality $v(\mathcal{M})\leq mv(B)$ after possibly performing adjunction to a subvariety.
This will help us to reduce potentially complicated computations
to more suitable surface computations. 

In Section~\ref{sec:MM-1-6}, we will classify the singularities of all the possible slc divisors $B\in |-K_X|$ of coregularity zero.
Here, $X$ is a general smooth prime Fano threefold of genus $7$. 
If $B\in |-K_X|$ is a normal boundary of coregularity zero, then we will show that it has a unique $T_{3,3,4}$ cuspidal singularity.
This will be achieved by studying the Tjurina numbers of the singularities of $B$ in Subsection~\ref{subsec:normal}. 
If $B\in |-K_X|$ is a non-normal boundary of coregularity zero, then we will show that the non-normal locus is a single line $\ell\subset X$.
This will be achieved by studying spinor duality of the spinor tenfold containing $X$ in Subsection~\ref{subsec:non-normal}.
Once we have classified the possible log canonical centers of $(X,B)$, we perform a dlt modification and use Lemma~\ref{lem:criteria-under-adjunction} to show $v(\mathcal{M})\leq mv(B)$ via a sequence of surface computations. 
Putting the previous ideas together, we obtain a proof of the following theorem. 

\begin{theorem}\label{introthm:1.6-not-cluster}
A general prime smooth Fano threefold of genus $7$ is not of cluster type.
\end{theorem}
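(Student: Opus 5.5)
We argue by contradiction, using Lemma~\ref{lem:movable-intro} as the main tool. Let $X$ be a general prime smooth Fano threefold of genus $7$, so ${\rm Cl}(X)\simeq \zz[-K_X]$. Suppose $(X,B)$ is a cluster type pair with $B\in|-K_X|$. Then $B$ is reduced, slc, of coregularity zero, and generates ${\rm Cl}(X)$. Lemma~\ref{lem:movable-intro} then gives a movable linear system $\mathcal{M}\subseteq|-mK_X|$ and a log canonical place $v$ of $(X,B)$ with $v(\mathcal{M})>mv(B)$, where the center of $v$ is a closed point. We will contradict this by proving $v(\mathcal{M})\le mv(B)$ for every movable $\mathcal{M}\subseteq|-mK_X|$ and every log canonical place $v$ of $(X,B)$, for every candidate boundary $B$.

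The first step is to classify the possible $B$. Since $\rho(X)=1$ and $B$ is reduced in $|-K_X|$, $B$ is irreducible, and we split into the normal and non-normal cases.

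\emph{Normal case.} $B$ is a K3 surface with isolated singularities, and coregularity zero forces some singularity to be a degenerate cusp. We bound the total Tjurina number of the singularities of $B$ using the generality of $X$. A dimension count of the incidence of pairs (singular anticanonical section, singularity type) inside the moduli of genus $7$ Fano threefolds bounds the codimension that singularities can impose. Combining this with the Milnor/Tjurina bound on K3 surfaces should leave a single $T_{3,3,4}$ cusp as the only possibility.

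\emph{Non-normal case.} The double locus of $B$ is a curve $\ell$ of low anticanonical degree along which $B$ is singular. We use the embedding $X=\mathrm{OGr}_+(5,10)\cap\pp^8$ and spinor duality: anticanonical sections of $X$ singular along a curve correspond to special points of the dual genus-$7$ curve. For general $X$, this should force $\ell$ to be a single line, with $B$ having normal crossings along $\ell$ generically.

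The second step treats each configuration. We take a dlt modification $(Y,B_Y)\to(X,B)$ that extracts the log canonical places over the cusp, or over $\ell$ and its special points. Every log canonical place $v$ of $(X,B)$ is then a toroidal valuation on a stratum of $B_Y$. By Lemma~\ref{lem:criteria-under-adjunction}, it suffices to verify the inequality after adjunction to a component $S$ of $B_Y$, which is a rational surface (an exceptional divisor or the strict transform of $B$). On $S$ we must check $v(\mathcal{M}|_S)\le m\,v(B_Y|_S)$ for toric valuations on the cycle of curves $(B_Y-S)|_S$. This reduces to intersection computations. The restriction of a general member of $\mathcal{M}$ is a nef-ish class, namely the pullback of $-mK_X$ minus its fixed part. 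We bound its valuation by intersecting with curves in the cycle and with curves whose classes are governed by $-K_X^3=12$ and by the absence of planes and quadric surfaces in $\rho=1$. Movability gives $M\cdot C\ge 0$ on curves sweeping out divisors, and this is the key input: for instance, $M^2\cdot(-K_X)=12m^2$ together with $M\cdot\ell$ bounds multiplicities along $\ell$ and at the cusp.

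The main obstacle will be the classification step, particularly excluding non-normal boundaries other than one along a single line; this needs the spinor duality argument to be made precise for general $X$. After that, the surface computations are explicit but case-heavy. For the $T_{3,3,4}$ cusp, I would first try the resolution cycle of self-intersections $(-3,-3,-4)$ and verify the inequality vertex by vertex, using $\mathcal{M}^2\cdot B=12m^2$ as the bound on the singularity of $\mathcal{M}$ at the cusp point.
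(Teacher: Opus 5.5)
Your architecture matches the paper's: Lemma~\ref{lem:movable-intro}, then a classification of coregularity-zero boundaries (normal with a single $T_{3,3,4}$ cusp, or non-normal along one line), then $v(\mathcal{M})\le mv(B)$ via Lemma~\ref{lem:criteria-under-adjunction}. The gap is in the second step. That step carries the real content, and the method you propose for it does not work.

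\emph{Normal case.} Blowing up the cusp $p$ already gives a smooth strict transform $S$, with $S\cap E=C$ an irreducible nodal cubic and $C^2=-3$ on $S$. So there is no $(-3,-3,-4)$ cycle, and after adjunction the relevant valuations are $\operatorname{ord}_C$ and monomial valuations at the node. Put $k=\operatorname{mult}_p\mathcal{M}$, $a=v(S)$, $e=v(E)$; you need $v(G)\le ma+(3m-k)e$.

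Adjunction to $E\simeq\pp^2$ gives $v(G)\le\frac{3k}{4}a$, which grows with $k$. So the $S$-side must supply a bound on $w(G|_S)/w(C)$ that decreases in $k$. The classes you propose cannot do this. Write $G|_S=\ell C+R$ with $R\sim mL-(k+\ell)C$. Since $L\cdot C=0$ and $C^2<0$, you get $R\cdot C=3(k+\ell)$. Lemma~\ref{lem:node-estimate} then only yields $w(G|_S)\le(\ell+\frac34(k+\ell))\,w(C)$, and nothing bounds $\ell$.

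The paper supplies the missing positivity through Lemma~\ref{lem:morphism}. The lines through $p$ and the hyperplanes containing $T_pX$ give a base-point-free system $|L-2C-\sum\ell_i|$. It maps $S$ birationally onto a normal quadric or cubic $Z\subset\pp^3$, on which $\phi_*C=(4-n)A$ is ample. B\'ezout on $Z$ gives the bound $5m-2k$ (resp.\ $3m-k$), and a convex combination with the $E$-side closes the inequality. Your input $\mathcal{M}^2\cdot B=12m^2$ does not replace this. The bound on $k$ it would give also needs $G_1\cap G_2$ to have no component inside $B$, which movability does not guarantee.

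\emph{Non-normal case.} The same problem appears in sharper form. The paper does not compute on the strict transform of $B$ in ${\rm Bl}_\ell X$. It uses the double projection from $\ell$ and the flop to $Y^+$ with its quintic del Pezzo fibration. There the flopped surface $S^+$ is a fiber, $G^+\sim mF+bE$ with $b=2m-k$, and $G^+|_{S^+}\sim -bK_{S^+}$. Global log canonical thresholds of (Du Val) quintic del Pezzo surfaces bound the $S^+$-side, and $\mathbb{F}_1$-estimates bound the $E$-side. This also needs a further dimension count (Lemma~\ref{lem:singularities-blow-up-line}) on how $C_0$ sits relative to the pencil $\Lambda_\ell$. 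None of this is in your plan, and without it the inequality is out of reach.

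\emph{Classification step.} This is also only gestured at. In the normal case you need the versality statement of Proposition~\ref{prop:sing-normal}: a singularity basket imposes exactly $\tau$ independent conditions on linear sections of the spinor tenfold. Without it, a dimension count gives no lower bound on codimension. In the non-normal case, the duality relates $\operatorname{Sing}(B)$ to singular points of the dual surface $B^-=\Sigma^-\cap\pp(W^\perp)$ through disjoint contact loci. It does not relate it to special points of the genus $7$ curve, along which $B^-$ is smooth. Connectedness of the non-klt locus then leaves a single line.
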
 

Note that Theorem~\ref{theorem:cluster-type-Fano-threefolds} together with Theorem~\ref{introthm:1.6-not-cluster} finishes the problem of determining which general smooth Fano threefolds are cluster type. Indeed, there are precisely $87$ families of smooth Fano threefolds whose general element is of cluster type. 
In the final subsection, we will explain what happens for special members of each family.

\subsection{Special cases of smooth Fano threefolds} In this last subsection, we explain the cases of smooth Fano threefolds which are not yet known to be cluster type or cylindrical.
Due to the previous subsection, we mostly focus on special elements in each family. The main theorem is the most complete form of the statement about the cluster type property for rational smooth Fano threefolds.

\begin{theorem}\label{thm:complete-cluster-type}
Let $X$ be a rational smooth Fano threefold. 
Then $X$ is of cluster type unless:
\begin{enumerate}
\item[(i)] the general element of its deformation family is non-rational; 
\item[(ii)] $X$ is an element of the family \textnumero\,1.6 of prime smooth Fano threefolds of genus $7$;
\item[(iii)] $X$ is a special element of the family \textnumero\,2.4 of blow-ups of $\pp^3$ along the intersection of two cubics; or 
\item[(iv)] $X$ is a special element of the family \textnumero\,10.1 of Fano threefolds of the form $\pp^1\times S$ where $S$ is a del Pezzo surface of degree one.
\end{enumerate} 
\end{theorem}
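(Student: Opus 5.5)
The proof goes family by family through the Mori--Mukai classification. The main point is to upgrade the statement ``general element is of cluster type'' from Theorems~\ref{theorem:blow-ups-P^3}, \ref{theorem:blow-up-Q^3} and \ref{theorem:cluster-type-Fano-threefolds} to ``every element is of cluster type.'' The exceptions are the families in (i)--(iv). Case (i) is excluded by hypothesis. If the general member is non-rational, a rational special member could exist, but the statement explicitly excludes such families. Case (ii) is excluded because of Theorem~\ref{introthm:1.6-not-cluster}.

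For each of the remaining $86$ families, I revisit the construction used for the general member and check whether the genericity assumption was actually needed. Most constructions are of three types:
\begin{enumerate}
\item[(a)] toric varieties or complexity-one varieties, where every member of the family is isomorphic, or the construction is uniform;
\item[(b)] blow-ups ${\rm Bl}_C(\pp^3)$ or ${\rm Bl}_C(\qq^3)$, handled via Lemma~\ref{lem:blow-up-of-cluster-type};
\item[(c)] del Pezzo or conic fibrations, handled via Proposition~\ref{prop:fibration-to-smooth-curve} and Proposition~\ref{prop:fibration-to-smooth-surface}.
\end{enumerate}

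In case (b), the task is an interpolation problem. For an arbitrary center $C$ allowed by the family (any smooth curve of the given degree and genus, possibly with several components), I need an slc quartic (respectively a quadric section of $\mathcal{O}(3)$ on $\qq^3$) of coregularity zero. It should contain $C$ and have a point of multiplicity three, so that \cite[Proposition 5.2]{AdSFM26} applies. I do this by a parameter count: the triple-point condition at a chosen point $p$ costs $10$ conditions, and containing $C$ costs $h^0(\mathcal{O}_C(4))$. I then show the resulting linear system is nonempty and contains a member with mild singularities. When $C$ is special, I choose $p$ on a trisecant line or in another special position to gain conditions.

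In case (c), the fibration criteria only need the fibration to have a cluster type section, or a degenerate fiber of coregularity zero. I would verify that these conditions survive specialization by semicontinuity of the relevant boundary along a one-parameter degeneration, using that coregularity zero is a closed condition on slc anticanonical members.

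The families where this fails should be exactly \textnumero\,2.4 and \textnumero\,10.1. In \textnumero\,2.4 the center is a complete intersection of two cubics of genus $10$. The interpolation $h^0(\mathcal{O}_C(4))=30$ nearly saturates $h^0(\mathcal{O}_{\pp^3}(4))-10=25$, so the count already fails. The general case relies on a special choice that cannot be made for every pencil. For \textnumero\,10.1, $\pp^1\times S$ is of cluster type precisely when $S$ is, and only a general degree-one del Pezzo surface is known to be of cluster type \cite{ALP23}. For all other families I must produce a uniform argument, and this step will be the main obstacle: the case analysis of special members (e.g.\ reducible centers or curves on a smooth quadric in the blow-up families of \textnumero\,2.x and \textnumero\,3.x), where singular quartics through $C$ with a triple point may acquire non-slc singularities. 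I would first try to move the triple point into a general position off $C$ and argue via dimension of the bad locus. If that fails, I would switch to a different birational model, replacing $\pp^3$ by $\qq^3$ or by a conic bundle structure, and apply the fibration criteria instead.
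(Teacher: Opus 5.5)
Your starting idea is the paper's idea: re-examine each family's construction and check whether genericity was used. The paper's proof is exactly that bookkeeping. The constructions of Sections~\ref{sec:p3}--\ref{sec:other}, together with Lemma~\ref{lem:product-cluster-type}, use only features that every smooth member has, except in \textnumero\,2.4 and \textnumero\,10.1, as recorded in Table~\ref{table:Fanos}.

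You never carry out that check. You defer it as ``the main obstacle'' and substitute mechanisms that do not work:
\begin{enumerate}
\item[(1)] \emph{Specialization in (c).} This is unsound. Cluster type is not closed under specialization: Theorem~\ref{thm:2.4-not-cluster-type} gives special members of \textnumero\,2.4 that are not of cluster type, while the general member is. Moreover, an index-one boundary of coregularity zero need not be of cluster type (Proposition~\ref{prop:double-cover-p1xp2}). So neither limits of cluster type boundaries nor closedness of coregularity zero tells you anything about the special fiber.
\item[(2)] \emph{Triple points in (b).} Proposition~\ref{prop:criterion-sing-p3} concerns quartics in $\pp^3$ only; the paper has no analogous criterion for $|\mathcal{O}_{\qq^3}(3)|$. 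In any case, a dimension count showing that quartics through $C$ with a triple point exist gives no control over the slc and coregularity-zero hypotheses, which are the real content.
\item[(3)] \emph{Your numerics for \textnumero\,2.4.} Here $K_C=\mathcal{O}_C(2)$, so $h^0(\mathcal{O}_C(4))=27$, not $30$. More to the point, this family differs because only a pencil of cubics contains $C$, and its singular members can all be cones, as in the Fermat example.
\end{enumerate}

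What is missing is a set of uniform inputs valid for every admissible center or every member. Some examples:
\begin{enumerate}
\item[(a)] For \textnumero\,2.9 and \textnumero\,2.12, the paper uses that $C$ lies on at least three independent cubics. For every such $C$ this gives a cubic singular at a general point of $C$ with quadratic part of rank at least two. That cubic is normal by Lemma~\ref{lem:normal-cubic-surfaces}, hence has only $A_n$ points (Lemma~\ref{lem:singular-cubics-2.9-2.12}). A plane through the point then produces the triple point.
\item[(b)] For the $\qq^3$ families, Lemma~\ref{lem:curves-in-quadrics} puts every such curve on a smooth quadric section, via generation of the ideal by quadrics. The resulting boundary is handled through a fibration over $\pp^2$ and Proposition~\ref{prop:fibration-to-smooth-surface}, not a triple-point criterion.
\item[(c)] Your trichotomy (a)--(c) omits the main-series families \textnumero\,1.8, \textnumero\,1.9 and \textnumero\,1.10. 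This is where ``every member'' is genuinely delicate. The paper must separate special from ordinary lines (Proposition~\ref{prop:mult2point}, resting on Proposition~\ref{prop:sing-line}, which guarantees a singular $F_\ell$ on every $X$). It must also treat the Mukai--Umemura threefold separately, since it has no ordinary line (Proposition~\ref{prop:MU-ct}).
\end{enumerate}
Your description of the fibration criteria is also too loose. They require two horizontal components over a smooth base, or a unique horizontal nodal component over $(\pp^1,\{0\}+\{\infty\})$ with fibers of degree at least five, not merely ``a cluster type section.'' Without these uniform inputs, the proposal does not prove the theorem.
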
 

We recall that a general element of the Mori-Mukai families \textnumero\,1.3 and \textnumero\,1.5 is irrational, however, it is not known whether special elements of these families are rational. These are precisely the varieties that appear in Theorem~\ref{thm:complete-cluster-type}.(i). 
In the case of Theorem~\ref{thm:complete-cluster-type}.(ii), we know that a general prime smooth Fano threefold with volume $12$ is not cluster type by Theorem~\ref{thm:1-6-not-cluster-type}. However, it is unclear whether special elements of such family are cluster type. The proof in Section~\ref{sec:MM-1-6} crucially uses the fact that such a general smooth Fano threefold only contains ordinary lines and that some suitable singularities in $B\in |-K_X|$ do not appear for such a general $X$. A general blow-up of $\pp^3$ along the intersection of two cubics is of cluster type (see Lemma~\ref{lem:curves-in-cubic-surfaces}). However, some special blow-ups of $\pp^3$ along the intersection of two cubics are not of cluster type (see Section~\ref{sec:special-2.4}). It is still unclear what is the locus of such blow-ups that are not cluster type. 
Finally, the case of products $X\simeq \pp^1\times S$ can be completely understood. The variety $X$ is of cluster type if and only if ${\rm coreg}(S)=0$, which is an open condition for smooth del Pezzo surfaces of degree one. Indeed, if ${\rm coreg}(S)=0$, then $S$ is of cluster type and so $X$ is of cluster type due to Lemma~\ref{lem:product-cluster-type}. On the other hand, if $X$ has coregularity zero, then the image of any fibration from $X$ has coregularity zero as well (see, e.g.,~\cite{FFMP22}). We refer the reader to~\cite{Mor22} for the notion of coregularity of surfaces. In summary, for the item (iv) the cluster type condition is totally understood while for the families (i),(ii), and (iii) there are still some special members for which the cluster type property remains open.

Recall that any blow-up of $\pp^3$ along the intersection of two cubics is cylindrical (see~\cite[Example 3.15]{CPPZ21}). Furthermore, every smooth del Pezzo surface is cylindrical as well (see~\cite[Theorem 2.10]{CPPZ21}). 
Combining these two previous facts with Theorem~\ref{theorem:ct-implies-cylindrical}
we obtain the following statement about cylindricity of rational smooth Fano threefolds.

\begin{theorem}\label{thm:complete-cylindric}
Let $X$ be a rational smooth Fano threefold.
Then $X$ is cylindrical unless: 
\begin{enumerate}
\item the general element of its deformation family is non-rational, namely in family \textnumero\,1.3 or \textnumero\,1.5; or
\item $X$ belongs to family \textnumero\,1.6.
\end{enumerate} 
\end{theorem}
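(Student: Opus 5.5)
The plan is to deduce Theorem~\ref{thm:complete-cylindric} from Theorem~\ref{thm:complete-cluster-type} and Theorem~\ref{theorem:ct-implies-cylindrical}, and to handle the exceptional cases of Theorem~\ref{thm:complete-cluster-type} separately. Let $X$ be a rational smooth Fano threefold. By Theorem~\ref{thm:complete-cluster-type}, either $X$ is of cluster type, or $X$ falls into one of the cases (i)--(iv). If $X$ is of cluster type, then, since a smooth variety has klt singularities, Theorem~\ref{theorem:ct-implies-cylindrical} shows that $X$ is cylindrical. So it suffices to go through the exceptional cases and check that each one either is cylindrical or is excluded by items (1)--(2) of the statement.

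Case (i) is exactly item (1). Here one only has to recall from the Mori--Mukai classification, together with the rationality results of~\cite{Con82,AB92}, that the families whose general member is non-rational but which might contain rational special members are precisely \textnumero\,1.3 and \textnumero\,1.5. The remaining families with non-rational general member consist entirely of non-rational varieties: cubic threefolds, quartic double solids, and the like. Case (ii) is exactly item (2).

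In case (iii), $X$ is a blow-up of $\pp^3$ along a complete intersection of two cubics. Every such blow-up is cylindrical by~\cite[Example 3.15]{CPPZ21}, regardless of whether it is of cluster type. In case (iv), $X\simeq\pp^1\times S$ with $S$ a smooth del Pezzo surface. Every smooth del Pezzo surface contains a cylinder $\mathbb{A}^1\times Z$ by~\cite[Theorem 2.10]{CPPZ21}. Then $\mathbb{A}^1\times(\mathbb{A}^1\times Z)$ is an open subset of $\pp^1\times S$, and it has the form $\mathbb{A}^1\times Z'$ with $Z'=\mathbb{A}^1\times Z$ affine. Hence $X$ is cylindrical.

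The only step needing care is the bookkeeping in case (i), namely confirming that no other family with non-rational general member has rational special members. This follows from the classical results that all smooth members of those families are non-rational; for example, all smooth cubic threefolds are non-rational by Clemens--Griffiths. Everything else is a direct citation.
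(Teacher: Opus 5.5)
Your proposal is correct and is essentially the paper's own argument: combine Theorem~\ref{thm:complete-cluster-type} with Theorem~\ref{theorem:ct-implies-cylindrical} (smooth implies klt), note that cases (i) and (ii) are exactly items (1) and (2), and dispose of the exceptional cases (iii) and (iv) via \cite[Example 3.15 and Theorem 2.10]{CPPZ21}. Your explicit cylinder $\mathbb{A}^1\times(\mathbb{A}^1\times Z)\subset\pp^1\times S$ spells out a step the paper leaves implicit, and your remark that the ``namely \textnumero\,1.3 or \textnumero\,1.5'' clause rests on classical non-rationality of all smooth members of the other non-rational families is the same bookkeeping the paper takes from the literature.
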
 

We emphasize that the techniques developed in Section~\ref{sec:MM-1-6} only allow us to prove that a general element of the Mori-Mukai family \textnumero\,1.6 is not of cluster type. We still do not know if the general element of such family is cylindrical or not. However, it is expected that such general element is not cylindrical.

The paper contains two appendices. 
In Appendix~\ref{appendix-A}, we explain how AI was used in this paper. In Appendix~\ref{appendix-B}, for the convenience of the reader, we present a table with the list of all smooth Fano threefolds and make explicit where in the paper its cluster type is proved. 

\section{Preliminaries}

In this section, we recall the definition of cluster type varieties and give criteria for a pair to be of cluster type. We work over the field $\mathbb{C}$ of complex numbers. 

\subsection{Calabi--Yau pairs} 
For the notions of singularities of pairs we refer the reader to~\cite{K13}. We recall some notions related to log Calabi--Yau pairs.  

\begin{definition} 
{\em 
A {\em log Calabi--Yau pair} $(X,B)$ is a couple where $X$ is a normal projective variety and $B$ is an effective $\qq$-divisor such that $K_X+B\sim_\qq 0$ and $(X,B)$ has log canonical singularities.
The {\em coregularity} of $(X,B)$ is the smallest dimension of a log canonical center in a dlt modification of $(X,B)$.
For a klt pair, we set its coregularity equal to the dimension of the underlying variety. 
The {\em index} of $(X,B)$ is the smallest positive integer $i$ for which $i(K_X+B)\sim 0$.
}
\end{definition} 

\subsection{Cluster type pairs} 
In this subsection, we recall the definition of cluster type pairs and prove some structural properties of these pairs. 

\begin{definition}\label{def:codim-one-embedding}
{\em
Let $\phi\colon X \dashrightarrow Y$ be a rational map. 
We say that $\phi$ is an {\em (open) embedding in codimension one} if there exists a closed subset
$Z\subset X$ of codimension at least two for which 
$\phi|_{X\setminus Z}\colon X\setminus Z \dashrightarrow Y$ is indeed a morphism 
which is an (open) embedding.
}
\end{definition}

\begin{notation}\label{not:volume-form}
{\em 
Let $\mathbb{G}_m^n={\rm Spec}(\cc[x_1^\pm,\dots,x_n^\pm])$ be an $n$-dimensional algebraic torus over the complex numbers.
We will denote by
\[
\Omega_n:=\frac{dx_1\wedge \dots \wedge dx_n}{x_1\cdots x_n} 
\]
the volume form of the algebraic torus. 
}
\end{notation} 

\begin{definition}\label{def:cluster-type}
{\em 
Let $X$ be a normal variety of dimension $n$. 
We say that $X$ is of {\em cluster type} 
if there exists an open embedding in codimension one
$\phi\colon \mathbb{G}_m^n \dashrightarrow X$
such that $\Omega_n$ has no zeros on $X$.
If the chosen $\phi$ satisfies the previous condition, then the effective divisor $B={\rm Poles}(\Omega_n)$ is said to be a {\em cluster type boundary}. 
We may also say that $X$ is a {\em cluster type variety}
and that the pair $(X,B)$ is a {\em cluster type pair}.
In the previous setting, we may say that $B\subset X$ or $\phi\colon \mathbb{G}_m^n\dashrightarrow X$ gives a {\em cluster type structure} to $X$. 
}
\end{definition} 

\begin{remark}
{\em 
In ~\cite[Theorem 1.3.(4)]{EFM24}, the authors prove that a $\qq$-factorial klt Fano variety $X$ is of cluster type
if and only if there exists $B\in |-K_X|$ 
and an open embedding $\mathbb{G}_m^n\hookrightarrow X\setminus B$.
}
\end{remark} 

The following proposition gives an equivalent characterization of cluster type pairs.
This follows directly from Definition~\ref{def:cluster-type} (compare with~\cite[Definition 2.6 and Proposition 5.9]{Mor26a}). Here ${\rm Ex}(\phi)$ is the complement of the largest source open set on which $\phi$ is an open immersion, including its indeterminacy locus.

\begin{proposition}\label{prop:cluster-type-bir}
Let $(X,B)$ be a pair. Then, the pair 
$(X,B)$ is of cluster type if and only if there is a crepant birational map 
\[
\phi\colon (\pp^n,H_0+\dots+H_n)\dashrightarrow (X,B)
\]
such that ${\rm Ex}(\phi)\cap \mathbb{G}_m^n$ has codimension at least two in $\mathbb{G}_m^n$.
\end{proposition}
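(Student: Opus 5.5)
Both implications come from unwinding Definition~\ref{def:cluster-type}, using the standard toric compactification $\mathbb{G}_m^n\subset\pp^n$. Here $H_0,\dots,H_n$ are the coordinate hyperplanes and $\mathbb{G}_m^n=\pp^n\setminus\bigcup H_i$. In affine coordinates $x_i=X_i/X_0$, the form $\Omega_n$ extends to a rational volume form on $\pp^n$ with
\[
\operatorname{div}(\Omega_n)=-(H_0+\dots+H_n),
\]
since $K_{\pp^n}+\sum H_i\sim 0$ and $\Omega_n$ has simple poles along each $H_i$, including $H_0$, by a local check. Thus $(\pp^n,\sum H_i)$ is the log Calabi--Yau pair attached to $(\mathbb{G}_m^n,\Omega_n)$. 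In both directions, "crepant" means that $\phi$ pulls back the unique, up to scalar, volume form $\omega_B$ on $X$ with $\operatorname{div}(\omega_B)=-B$ to a scalar multiple of $\Omega_n$.

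\textbf{($\Rightarrow$)} Suppose $(X,B)$ is of cluster type. Then there is an embedding in codimension one $\phi_0\colon\mathbb{G}_m^n\dashrightarrow X$ such that $\Omega_n$, pushed forward to a rational top form $\omega$ on $X$, has no zeros and $B=\operatorname{Poles}(\omega)$. Hence $\operatorname{div}(\omega)=-B$, so $K_X+B\sim 0$ and $\phi_0^*\omega=\Omega_n$. Let $\phi\colon\pp^n\dashrightarrow X$ be the birational extension of $\phi_0$. Then $\phi^*\omega=\Omega_n$ as rational forms on $\pp^n$. By the definition of crepant birational equivalence via log discrepancies (compared on a common resolution through the divisors of the pulled-back forms), $\phi$ is crepant between $(\pp^n,\sum H_i)$ and $(X,B)$. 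Finally, $\phi$ restricted to $\mathbb{G}_m^n$ is $\phi_0$, an open immersion off a codimension-two subset $Z$. Therefore ${\rm Ex}(\phi)\cap\mathbb{G}_m^n\subseteq Z$ has codimension at least two.

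\textbf{($\Leftarrow$)} Suppose $\phi$ is crepant and ${\rm Ex}(\phi)\cap\mathbb{G}_m^n$ has codimension at least two. Then $\phi|_{\mathbb{G}_m^n}$ is an open embedding in codimension one. Crepancy gives $\phi_*\Omega_n=c\,\omega_B$ with $c\in\cc^*$, so the form induced by $\Omega_n$ on $X$ has divisor $-B$. The pair $(X,B)$ is crepant to a log canonical pair, so $B$ is effective and reduced. Hence $\Omega_n$ has no zeros on $X$, and ${\rm Poles}(\Omega_n)=B$. This is exactly Definition~\ref{def:cluster-type}.

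\textbf{Main obstacle.} The delicate point is the bookkeeping identifying "crepant" with the equality $\phi^*\omega_B=c\,\Omega_n$. One must check that for every divisor $E$ over $X$ (equivalently over $\pp^n$), the log discrepancies satisfy $a_E(X,B)=a_E(\pp^n,\sum H_i)$. To do this, I would take a common log resolution $W$ and compare $K_W-\operatorname{div}$ of the pulled-back forms, which are equal on both sides. One also needs $B$ to be a genuine Weil divisor and not merely an $\qq$-divisor. This is automatic because $\operatorname{div}(\omega_B)$ is integral.
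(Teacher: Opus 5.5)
Your proposal is correct and is the direct unwinding of Definition~\ref{def:cluster-type} that the paper intends; the paper's only justification is that the statement ``follows directly from the definition''. Your common-resolution comparison of the divisors of the pulled-back forms is the right way to make ``crepant'' precise. In ($\Leftarrow$), use it to read off $\operatorname{div}_X(\phi_*\Omega_n)=-B$ directly, rather than appealing to a pre-existing $\omega_B$ unique up to scalar. The existence and uniqueness of such an $\omega_B$ would require $K_X+B\sim 0$ and $\mathcal{O}(X)^*=\cc^*$.

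One step is misjustified. In ($\Leftarrow$) you deduce that $B$ is effective from $(X,B)$ being crepant to a log canonical pair. Crepancy only gives integral coefficients $\leq 1$. Effectivity comes from the hypothesis that $(X,B)$ is a pair, i.e.\ $B\geq 0$, and it is exactly what yields ``no zeros'', so it is not cosmetic. For example, take $p\in\mathbb{G}_m^n$ with $n\geq 2$, and let $X={\rm Bl}_p\pp^n$ with crepant sub-boundary $B$ equal to the strict transform of $H_0+\dots+H_n$ minus $(n-1)E$. This satisfies all the other conditions, since ${\rm Ex}(\phi)\cap\mathbb{G}_m^n=\{p\}$, yet $\Omega_n$ vanishes to order $n-1$ along $E$.
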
 

Below, we may write $\Sigma^n$ for the sum of the coordinate hyperplanes of $\pp^n$.
The following three lemmas are already in the literature, see, e.g.,~\cite{JM24,AdSFM26,Mor26a}. As the proofs are short, we give a brief argument for the sake of completeness.

\begin{lemma}\label{lem:blow-up-of-cluster-type}
Let $(X,B)$ be a cluster type pair.
Let $\psi\colon Y\dashrightarrow X$ be a birational map whose inverse contracts no divisors and which extracts only log canonical places or canonical places of $(X,B)$. 
Write $\psi^*(K_X+B)=K_Y+B_Y$ and let $(Y,B_Y)$ be the induced log Calabi--Yau pair on $Y$.
Then, the pair $(Y,B_Y)$ is of cluster type.
\end{lemma}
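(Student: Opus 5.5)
We work directly with Definition~\ref{def:cluster-type}. Let $\phi\colon \mathbb{G}_m^n\dashrightarrow X$ be the open embedding in codimension one giving the cluster type structure, with $B={\rm Poles}(\Omega_n)$ and $\Omega_n$ without zeros on $X$. Set $\phi_Y:=\psi^{-1}\circ\phi\colon \mathbb{G}_m^n\dashrightarrow Y$. We then check three things: $\phi_Y$ is an open embedding in codimension one, $\Omega_n$ has no zeros on $Y$, and ${\rm Poles}(\Omega_n)$ on $Y$ equals $B_Y$.

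\textbf{Step 1: the volume form on $Y$.} Since $\phi$ is birational and $K_X+B\sim 0$ via $\Omega_n$, we have $\ddivv_X(\Omega_n)=-B$. By crepant pullback, $\ddivv_Y(\Omega_n)=-B_Y$ for the pair $(Y,B_Y)$ defined by $\psi^*(K_X+B)=K_Y+B_Y$. The prime divisors of $Y$ are of two kinds: strict transforms of divisors of $X$, and $\psi$-exceptional divisors. The latter are extracted, since $\psi^{-1}$ contracts no divisors, and by hypothesis they are lc places (coefficient $1$ in $B_Y$) or canonical places (coefficient $\ge 0$). On strict transforms the coefficients agree with those of $B\ge 0$. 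Hence $B_Y\ge 0$, and $\Omega_n$ has no zeros on $Y$. This step is routine.

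\textbf{Step 2: embedding in codimension one.} We must show that $\phi_Y$ is an open immersion outside a codimension $\ge 2$ subset of $\mathbb{G}_m^n$. Let $U\subseteq\mathbb{G}_m^n$ be the big open set on which $\phi$ is an open immersion. The key point is that $\phi(U)\subseteq X\setminus{\rm Supp}(B)$, because $\Omega_n$ is regular and nowhere vanishing on the torus, so it has no poles along $\phi(U)$.

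Next, the centers on $X$ of the divisors extracted by $\psi$ are lc or canonical places of $(X,B)$. An lc place has center inside ${\rm Supp}(B)$, since $X\setminus B$ is isomorphic in codimension one to a torus and so is canonical there with no lc centers.

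\textbf{Main obstacle: canonical places.} For canonical places the center may meet $\phi(U)$. Here we use that the center on $X$ of any exceptional divisor has codimension $\ge 2$. Its preimage in $U$ therefore has codimension $\ge 2$, and we remove it from $U$; the indeterminacy of $\psi^{-1}$ also lies in codimension $\ge 2$ on $X$. Away from these loci, $\psi^{-1}$ is an isomorphism onto its image. So $\phi_Y$ is an open immersion on a big open subset $U'\subseteq U$, and there it coincides with $\psi^{-1}\circ\phi$.

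With Step 1, this shows that $(Y,B_Y)$ is of cluster type. Alternatively, one can phrase the argument through Proposition~\ref{prop:cluster-type-bir} as the composite crepant map $(\pp^n,\Sigma^n)\dashrightarrow(X,B)\dashrightarrow(Y,B_Y)$. In that form, the exceptional locus on $\mathbb{G}_m^n$ grows only by the codimension $\ge 2$ preimage of the centers of the extracted divisors.
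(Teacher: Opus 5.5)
Your route is the paper's: compose the torus chart with $\psi^{-1}$, check that $B_Y\geq 0$ is the polar divisor of $\Omega_n$ on $Y$, and check that the composite is still an open embedding in codimension one. Step 1 is fine.

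\textbf{The gap is in Step 2}, in the sentence ``away from these loci, $\psi^{-1}$ is an isomorphism onto its image.'' Removing only the indeterminacy locus of $\psi^{-1}$ and the centers of the $\psi$-exceptional divisors does not leave a set on which $\psi^{-1}$ is an open immersion. For example, if $\psi^{-1}\colon X\to Y$ is a small birational morphism (a flopping contraction), it has no indeterminacy and extracts nothing, yet it collapses curves.

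More to the point, Step 2 never uses the hypothesis that $\psi^{-1}$ contracts no divisors, and this is exactly where it is needed. Any divisor contracted by $\psi^{-1}$ lies in the locus where $\psi^{-1}$ fails to be an open immersion. If such a divisor met $\phi(U)$, the composite would contract a divisor of the torus. You cite the hypothesis in Step 1 instead, where it plays no role: the $\psi$-exceptional divisors are extracted by definition.

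The repair is the paper's one-line argument.
\begin{itemize}
\item $X$ and $Y$ are normal and no prime divisor $D\subset X$ is contracted. So the local rings of $X$ at $D$ and of $Y$ at its strict transform are the same DVR.
\item Since $Y$ is projective, $\psi^{-1}$ is defined and an isomorphism near the generic point of every prime divisor of $X$.
\item Hence the complement of the largest open set on which $\psi^{-1}$ is an open immersion (this complement contains the indeterminacy locus) has codimension at least two in $X$.
\item Its preimage under the open immersion $\phi|_U$ therefore has codimension at least two in $U$.
\end{itemize}

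Two side remarks. First, your ``main obstacle'' is not one. The set $\phi(U)$ is smooth and disjoint from $\operatorname{Supp}(B)$, so any divisor over $X$ with center meeting $\phi(U)$ has log discrepancy at least $2$, hence coefficient at most $-1$ in $B_Y$. So no extracted divisor is centered there at all; removing those preimages is harmless but unnecessary.

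Second, your justification for lc places rests on a false premise: $X\setminus B$ need not be isomorphic in codimension one to a torus. For instance, blowing up a curve contained in a single boundary component, as in Lemma~\ref{lem:singular-cubic-surface-An}, produces an exceptional divisor with coefficient zero. It is a prime divisor of $Y\setminus B_Y$ disjoint from the torus. That claim is not needed, so it does not affect the rest of the argument.
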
 

\begin{proof}
Since $(X,B)$ is of cluster type, by Proposition~\ref{prop:cluster-type-bir}, we have a crepant birational map $\phi\colon (\pp^n,\Sigma^n)\dashrightarrow (X,B)$ which is small in $\mathbb{G}_m^n$.
Since the map $(Y,B_Y)\dashrightarrow (X,B)$
is a crepant birational map, then we have an induced 
crepant birational map $\phi'\colon (\pp^n,\Sigma^n)\dashrightarrow (Y,B_Y)$.
Since $\psi$ only extracts log canonical places and canonical places of $(X,B)$, then the divisor $B_Y$ is effective and so $(Y,B_Y)$ is indeed a log Calabi--Yau pair.
Since $X\dashrightarrow Y$ does not contract divisors,
then the restriction of $\phi'$ to the algebraic torus is an open embedding in codimension one. Thus, we can apply Proposition~\ref{prop:cluster-type-bir} again to conclude that $(Y,B_Y)$ is of cluster type.
\end{proof} 

\begin{lemma}\label{lem:cluster-type-descends}
Let $(X,B)$ be a log Calabi--Yau pair.
Let $\psi\colon Y\dashrightarrow X$ be a projective birational map
that only extracts log canonical places of $(X,B)$.
Let $\psi^*(K_X+B)=K_Y+B_Y$.
If $(Y,B_Y)$ is of cluster type, then so is $(X,B)$.
\end{lemma}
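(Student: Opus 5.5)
We use the characterization of Proposition~\ref{prop:cluster-type-bir}. Since $(Y,B_Y)$ is of cluster type, there is a crepant birational map $\phi_Y\colon (\pp^n,\Sigma^n)\dashrightarrow (Y,B_Y)$ such that ${\rm Ex}(\phi_Y)\cap\mathbb{G}_m^n$ has codimension at least two in $\mathbb{G}_m^n$. The candidate for $(X,B)$ is the composition $\phi:=\psi\circ\phi_Y\colon(\pp^n,\Sigma^n)\dashrightarrow (X,B)$. It is crepant birational, because $\psi^*(K_X+B)=K_Y+B_Y$ and compositions of crepant birational maps are crepant. It remains to show that ${\rm Ex}(\phi)\cap\mathbb{G}_m^n$ has codimension at least two.

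Let $E_1,\dots,E_r$ be the prime divisors on $Y$ contracted by $\psi$, that is, the extracted divisors. By hypothesis each $E_i$ is a log canonical place of $(X,B)$, so its coefficient in $B_Y$ is one, and hence $E_i\subseteq\operatorname{Supp}(B_Y)$. On the other hand, crepancy of $\phi_Y$ shows that $\phi_Y$ maps $\mathbb{G}_m^n$ minus a codimension-two set isomorphically onto an open set $U\subseteq Y$. On $U$ the form $\Omega_n$ has neither zeros nor poles, so $U\cap \operatorname{Supp}(B_Y)=\emptyset$. In particular $U$ is disjoint from every $E_i$.

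Let $V\subseteq Y$ be the largest open set on which $\psi$ is an open immersion. Its complement consists of the exceptional divisors $E_i$ together with a closed subset $W$ of codimension at least two in $Y$. Concretely, $\psi$ is defined in codimension one and is an isomorphism at the generic point of every non-exceptional divisor, since $\psi$ is birational and projective and a divisor not contracted maps birationally onto its image. We want $\psi$ to be an open immersion on $U\setminus W$. Pulling back, $\phi_Y^{-1}(W)\cap\mathbb{G}_m^n$ has codimension at least two, because $\phi_Y$ is an isomorphism from $\mathbb{G}_m^n\setminus Z$ onto $U$. Then $\phi$ restricted to $\mathbb{G}_m^n\setminus(Z\cup\phi_Y^{-1}(W))$ is an open immersion, and Proposition~\ref{prop:cluster-type-bir} gives that $(X,B)$ is of cluster type.

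The main obstacle is the step showing that $\psi$ is an open embedding in codimension one on $Y\setminus\bigcup E_i$. This requires that $\psi^{-1}$ contracts no divisor meeting $U$. Any divisor on $X$ is the image of its strict transform on $Y$; so if $\psi^{-1}$ contracted a divisor $D\subset X$, then $D$ would be a divisor of $X$ not appearing on $Y$. Such a $D$ cannot meet $\psi(U)$ in codimension one, since $\psi$ restricted to $U\setminus W$ is a local isomorphism onto its image. Put differently, the only locus where injectivity or openness might fail is where $\psi$ is not a local isomorphism, and that lies in $\bigcup E_i\cup W$. To make this rigorous, the cleanest route is to resolve $\psi$ by a common projective model and apply Zariski's main theorem on the complement of the exceptional loci: a proper birational morphism that is quasi-finite over a normal target is an isomorphism there. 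If one wants to avoid this, one can replace $Y$ by the image of $U$ and argue directly that $\psi|_{U\setminus W}$ is quasi-finite and birational onto a normal variety, which again gives an open immersion by Zariski's main theorem.
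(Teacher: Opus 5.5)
Your proposal is correct and follows the paper's proof: you compose the toric crepant map with $\psi$, and since every $\psi$-exceptional divisor is an lc place and hence a component of $B_Y$, the map $\psi$ is an open immersion in codimension one on the image of the torus. Your second paragraph already completes the argument, so the Zariski main theorem detour is unnecessary. The reason $\psi$ is a local isomorphism at the generic point of a non-contracted divisor $P$ is normality of $X$: $\mathcal{O}_{X,\psi(P)}$ is a DVR dominated by $\mathcal{O}_{Y,P}$, so the two coincide. It is not enough that $P$ maps birationally onto its image.
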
 

\begin{proof}
Assume that $(Y,B_Y)$ is of cluster type.
Therefore by Proposition~\ref{prop:cluster-type-bir}, we have a crepant birational map 
$\phi\colon (\pp^n,\Sigma^n)\dashrightarrow (Y,B_Y)$
which is an open embedding in codimension one in the algebraic torus.
By assumption, the restriction of $\psi$ to the complement of the boundary $\psi|_{Y\setminus B_Y} \colon Y\setminus B_Y \dashrightarrow X\setminus B$ is    an open immersion in codimension one.
Therefore, we conclude that the composition
$\psi\circ \phi\colon (\pp^n,\Sigma^n)\dashrightarrow (X,B)$ is a crepant birational map which is an open embedding in codimension one in the algebraic torus.
By Proposition~\ref{prop:cluster-type-bir}, we conclude that $(X,B)$ is a cluster type pair.
\end{proof}

\begin{lemma}\label{lem:product-cluster-type}
Let $(X_1,B_1)$ and $(X_2,B_2)$ be two cluster type pairs.
Let $\pi_i \colon X_1\times X_2 \rightarrow X_i$ be the 
projection onto the $i$-th component. 
Then, the pair $(X_1\times X_2, \pi_1^*B_1+\pi_2^*B_2)$ 
is of cluster type. 
\end{lemma}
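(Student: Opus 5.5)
The plan is to work directly with Definition~\ref{def:cluster-type} and take the product of the two torus embeddings. Set $n_i=\dim X_i$ and $n=n_1+n_2$. Let $\phi_i\colon \mathbb{G}_m^{n_i}\dashrightarrow X_i$ be open embeddings in codimension one with ${\rm Poles}(\Omega_{n_i})=B_i$ and no zeros. I would consider
\[
\phi=\phi_1\times\phi_2\colon \mathbb{G}_m^{n}=\mathbb{G}_m^{n_1}\times\mathbb{G}_m^{n_2}\dashrightarrow X_1\times X_2 .
\]
The goal is to show that $\phi$ is an open embedding in codimension one and that $\Omega_n$ has divisor exactly $-(\pi_1^*B_1+\pi_2^*B_2)$.

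\textbf{Step 1: the embedding.} Let $Z_i\subset \mathbb{G}_m^{n_i}$ be closed subsets of codimension at least two off which $\phi_i$ is an open immersion. Then $\phi$ restricted to
\[
(\mathbb{G}_m^{n_1}\setminus Z_1)\times(\mathbb{G}_m^{n_2}\setminus Z_2)
\]
is the product of open immersions, hence an open immersion. The complement $(Z_1\times \mathbb{G}_m^{n_2})\cup(\mathbb{G}_m^{n_1}\times Z_2)$ still has codimension at least two.

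\textbf{Step 2: the form.} In the product coordinates we have $\Omega_n=p_1^*\Omega_{n_1}\wedge p_2^*\Omega_{n_2}$. Pushed forward, $\Omega_n$ becomes the rational top form $\pi_1^*\omega_1\wedge\pi_2^*\omega_2$ on $X_1\times X_2$, where $\omega_i$ is the rational form on $X_i$ induced by $\Omega_{n_i}$. I would compute its divisor on the smooth locus, which is enough since divisors are determined in codimension one. Locally on the smooth locus of $X_1\times X_2$ we have $K_{X_1\times X_2}=\pi_1^*K_{X_1}+\pi_2^*K_{X_2}$, and a wedge of pulled-back local generators generates. This gives
\[
{\rm div}(\pi_1^*\omega_1\wedge\pi_2^*\omega_2)=\pi_1^*{\rm div}(\omega_1)+\pi_2^*{\rm div}(\omega_2)=-\pi_1^*B_1-\pi_2^*B_2 .
\]
So $\Omega_n$ has no zeros and its poles are $\pi_1^*B_1+\pi_2^*B_2$.

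\textbf{Obstacle and alternative.} The main subtlety is only the normality of $X_1\times X_2$, which holds over $\mathbb{C}$, together with the justification of the divisor computation away from the singular locus, which has codimension at least two. Alternatively, one could use Proposition~\ref{prop:cluster-type-bir}: compose the product of the crepant maps $(\pp^{n_i},\Sigma^{n_i})\dashrightarrow(X_i,B_i)$ with the toric crepant birational map $(\pp^n,\Sigma^n)\dashrightarrow(\pp^{n_1}\times\pp^{n_2},\Sigma^{n_1}\boxplus\Sigma^{n_2})$, which is the identity on the torus.
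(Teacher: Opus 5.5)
Your argument is correct. Your main route works directly with Definition~\ref{def:cluster-type}. You take the product of the two torus charts and check that the bad locus $(Z_1\times\mathbb{G}_m^{n_2})\cup(\mathbb{G}_m^{n_1}\times Z_2)$ has codimension at least two. You then compute ${\rm div}(\pi_1^*\omega_1\wedge\pi_2^*\omega_2)=-\pi_1^*B_1-\pi_2^*B_2$ on the big open set $(X_1)_{\rm sm}\times(X_2)_{\rm sm}$ of the normal variety $X_1\times X_2$.

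The paper instead follows exactly your alternative. It passes through Proposition~\ref{prop:cluster-type-bir} and forms $\phi_1\times\phi_2$ on $(\pp^{n_1},\Sigma^{n_1})\times(\pp^{n_2},\Sigma^{n_2})$. It then precomposes with a torus-equivariant crepant birational map from $(\pp^{n_1+n_2},\Sigma^{n_1+n_2})$, citing \cite[Theorem 8.2.15]{CLS11}.

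The two proofs have the same content in different packaging. The paper's claim that the product map is small on the torus is your Step~1. Its claim that $\phi_1\times\phi_2$ is crepant for the product pairs is your Step~2, namely that the canonical class of a product is the sum of the pulled-back canonical classes.

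Your version is more self-contained. It avoids toric models and the comparison between $\pp^{n_1}\times\pp^{n_2}$ and $\pp^{n_1+n_2}$ entirely, since $\Omega_n=p_1^*\Omega_{n_1}\wedge p_2^*\Omega_{n_2}$ already holds on $\mathbb{G}_m^n$. It also makes explicit that the poles are exactly $\pi_1^*B_1+\pi_2^*B_2$, which the paper leaves implicit in the word ``crepant.'' The paper's version, in turn, matches the crepant-birational language used in Lemmas~\ref{lem:blow-up-of-cluster-type} and~\ref{lem:cluster-type-descends}.
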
 

\begin{proof}
Since $(X_i,B_i)$ is a cluster type pair for each $i\in \{1,2\}$, then we can find a crepant birational map
$\phi_i\colon (\pp^{n_i},\Sigma^{n_i})\dashrightarrow (X_i,B_i)$ which is an open embedding in codimension one in the algebraic torus.
Thus, we obtain a crepant birational map
\[
\phi_1\times \phi_2 \colon 
(\pp^{n_1},\Sigma^{n_1})\times 
(\pp^{n_2},\Sigma^{n_2}) 
\dashrightarrow 
(X_1,B_1)\times (X_2,B_2) 
\]
which is an open embedding in codimension one when restricted to the algebraic torus $\mathbb{G}_m^{n_1}\times \mathbb{G}_m^{n_2}$. 
Any two toric log Calabi--Yau pairs of the same dimension are crepant birational to each other (see~\cite[Theorem 8.2.15]{CLS11}).
Furthermore, such crepant birational map can be chosen to be torus-equivariant so it is indeed an isomorphism on the algebraic torus. 
Let 
\[
\psi\colon (\pp^{n_1+n_2},\Sigma^{n_1+n_2}) 
\dashrightarrow 
(\pp^{n_1},\Sigma^{n_1}) \times 
(\pp^{n_2},\Sigma^{n_2})
\]
be the toric birational map. 
By construction, we have that 
$(\phi_1\times\phi_2)\circ \psi$ is a crepant birational map which is an open embedding in codimension one in the algebraic torus.
We conclude by Proposition~\ref{prop:cluster-type-bir} that the pair $(X_1,B_1)\times (X_2,B_2)$ is of cluster type.
\end{proof} 

\subsection{Criteria for cluster type pairs}
In this subsection, we collect some criteria for a pair to be of cluster type. The first criterion is related to singularities in the projective space while the last two criteria are about fibrations from threefolds to surfaces and curves, respectively. First, we recall the following theorem due to Gross, Hacking, and Keel~\cite[Proposition 1.3]{GHK15}.

\begin{theorem}\label{thm:cluster-type-smooth-surf}
Let $X$ be a smooth projective surface. Then a pair $(X,B)$ is of cluster type if and only if $B$ is a reduced anticanonical cycle of rational curves. 
\end{theorem}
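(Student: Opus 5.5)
We prove the two implications separately. Throughout, $X$ is a smooth projective surface and $B$ is a reduced anticanonical divisor; a cluster type pair is log Calabi--Yau with $B\in|-K_X|$ reduced, since $B={\rm Poles}(\Omega_2)$.

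\emph{Cluster type implies cycle.} Suppose $(X,B)$ is of cluster type. By Proposition~\ref{prop:cluster-type-bir} there is a crepant birational map $\phi\colon(\pp^2,\Sigma^2)\dashrightarrow(X,B)$. Hence $(X,B)$ is crepant birational to a toric pair, so it has coregularity zero and index one. On a smooth surface, a reduced anticanonical divisor $B$ with $(X,B)$ log canonical of coregularity zero is a nodal curve whose dual graph is a circle. Here log canonicity of a reduced curve on a smooth surface means nodal singularities, and coregularity zero means some stratum is a point.

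\begin{enumerate}
\item[(a)] Adjunction gives $p_a(B)=1$ with $\omega_B\simeq\mathcal O_B$.
\item[(b)] A connected nodal curve with trivial dualizing sheaf and a zero-dimensional stratum is a cycle of smooth rational curves, or an irreducible nodal rational curve.
\end{enumerate}

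Connectedness of $B$ follows from Koll\'ar--Shokurov connectedness, since $-(K_X+B)\equiv0$. The alternative $B$ a smooth elliptic curve is excluded because its coregularity is one. This direction is routine.

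\emph{Cycle implies cluster type.} Suppose $B$ is a reduced anticanonical cycle of rational curves. The strategy is the Gross--Hacking--Keel toric model argument.

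\begin{enumerate}
\item[(1)] Blow up nodes of $B$. This extracts log canonical places, so by Lemma~\ref{lem:cluster-type-descends} it suffices to treat the blown-up pair. Doing so, we may assume $B$ has many components and that we are in a general position to run the next step.
\item[(2)] Find a toric model. After toric blow-ups at nodes, there is a sequence of blow-downs of $(-1)$-curves meeting $B$ transversally at one interior point of a component; these are non-toric blow-ups. This sequence ends in a toric pair $(\bar Y,\bar D)$. This is \cite[Proposition 1.3]{GHK15}, proved by induction on $\rho$ using negativity of components and the classification of cycles with $D^2$ data. I expect this to be the main obstacle. The key point is showing that some interior $(-1)$-curve exists whenever the pair is not toric; one uses Riemann--Roch on $-K$ and the adjunction $K\cdot E=-1$.
\item[(3)] Pass back to the torus. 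Each non-toric blow-up at an interior point of $\bar D$ is crepant, with the new boundary equal to the strict transform. It changes the complement only by adding an open piece of the exceptional curve and removing a point of the boundary. Hence $\mathbb G_m^2=\bar Y\setminus\bar D$ maps to $Y\setminus D$ as an open embedding away from finitely many points, namely the curves through blown-up points get their strict transforms. More precisely, the map $\mathbb G_m^2\dashrightarrow Y$ is defined except at no codimension-one locus.
\item[(4)] Conclude. $\Omega_2$ has poles exactly along $D$ and no zeros. Combining this with the toric equivalence $(\bar Y,\bar D)\simeq_{\rm bir}(\pp^2,\Sigma^2)$, Proposition~\ref{prop:cluster-type-bir} shows that $(Y,D)$, and therefore $(X,B)$ by Lemma~\ref{lem:cluster-type-descends}, is of cluster type.
\end{enumerate}
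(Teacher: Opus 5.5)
Your proposal is correct and follows the paper's route: the paper gives no argument beyond citing \cite[Proposition 1.3]{GHK15}, which is exactly the toric-model existence you invoke in step (2), and your remaining steps (torus chart from a toric model, and the adjunction/coregularity argument for the converse) just unpack that citation. One minor fix: Koll\'ar--Shokurov connectedness needs $-(K_X+B)$ nef and big, so get connectedness of $B$ instead from rationality of $X$, since $H^1(X,\mathcal{O}_X)=0$ and $0\to\omega_X\to\mathcal{O}_X\to\mathcal{O}_B\to 0$ give $h^0(\mathcal{O}_B)=1$; also, in step (3) the torus embeds as an honest open subset, because every blow-up centre lies on the boundary.
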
 

The following theorem was proved by the authors in~\cite[Proposition 5.2]{AdSFM26}.

\begin{proposition}\label{prop:criterion-sing-p3}
Let $B\subset \pp^3$ be an slc quartic.
If $(\pp^3,B)$ has coregularity zero and
there is a point $p\in \pp^3$ with ${\rm mult}_p(B)=3$, 
then the pair $(\pp^3,B)$ is of cluster type. 
\end{proposition}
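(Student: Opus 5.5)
The plan is to reduce to a toric computation by projecting from the triple point $p$. Choose coordinates with $p=[0:0:0:1]$. Since ${\rm mult}_p(B)=3$, the quartic has the form
\[
B=\{x_3 f_3(x_0,x_1,x_2)+f_4(x_0,x_1,x_2)=0\},
\]
where $f_3$ and $f_4$ are ternary forms of degrees $3$ and $4$, and $f_3\neq 0$. Projection from $p$ is resolved by blowing up $p$. This gives $Y={\rm Bl}_p\pp^3\to\pp^2$, a $\pp^1$-bundle. The exceptional divisor $E$ is a section, and $B$ meets a general fiber in exactly one further point. Hence the strict transform $B_Y$ is a rational section over the complement of $\{f_3=0\}$. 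Explicitly, $B\setminus\{f_3=0\}$ is the graph of $x_3=-f_4/f_3$, so $B$ is birational to $\pp^2$.

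Next I compute the discrepancy to find $B_Y$ crepantly. Since $a(E,\pp^3,B)=2-3=-1$, $E$ is a log canonical place. Lemma~\ref{lem:blow-up-of-cluster-type} and Lemma~\ref{lem:cluster-type-descends} then reduce the claim to showing that $(Y,B_Y+E)$ is of cluster type. Over $U=\pp^2\setminus\{f_3=0\}$, the pair is a $\pp^1$-bundle with two disjoint sections, $E$ and $B_Y$. After a fiberwise coordinate change sending $B_Y$ to $\infty$ and $E$ to $0$, the pair becomes $U\times(\pp^1,\{0\}+\{\infty\})$ crepantly. The boundary restricted to the base is the image of the remaining components, namely the curve $C=\{f_3=0\}$ with some coefficients. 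Crepancy along vertical divisors forces the pair $(\pp^2,C_{\rm red}$-part$)$ to be log Calabi--Yau, and a degree count gives $\deg C=3$. Since the coefficient of $f_3$ appears, $B$ is crepant birational to $(\pp^2,C)\times(\pp^1,0+\infty)$, where $C$ is a plane cubic. This uses the birational product structure, that is, the general fiber computation together with the fact that vertical components of the crepant pullback come from $C$.

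Then I use coregularity zero. A dlt modification of $(\pp^3,B)$ has a zero-dimensional stratum. Crepant birational pairs share coregularity, so $(\pp^2,C)\times(\pp^1,0+\infty)$ has coregularity zero. Therefore $(\pp^2,C)$ has coregularity zero, which means $C$ is a reduced cubic with a node-type zero-dimensional lc center. So $C$ is one of the following: a nodal cubic, a conic plus a transverse line, or a triangle of lines. The slc condition on $B$ excludes cuspidal and concurrent cases, and coregularity zero excludes a smooth cubic. Each of these is an anticanonical cycle of rational curves, so $(\pp^2,C)$ is cluster type by Theorem~\ref{thm:cluster-type-smooth-surf}. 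Lemma~\ref{lem:product-cluster-type} then shows that $(\pp^2\times\pp^1,\ldots)$ is cluster type.

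The main obstacle is the last transfer step. The crepant birational map $(\pp^2,C)\times(\pp^1,0+\infty)\dashrightarrow(\pp^3,B)$ must be an open embedding in codimension one on the torus, as Proposition~\ref{prop:cluster-type-bir} requires. The map is an isomorphism over $U$ away from $\{f_4=0\}\cap\{f_3=0\}$ and the fiber structure. Its only non-isomorphic behavior on the interior occurs over $C$, which lies in the boundary, and along vertical loci over $C$. Thus it does not contract or extract any divisor meeting the torus. I would verify this by checking that every exceptional divisor of the map has center in the boundary on both sides, which means it is a log canonical place. Here I would use that $p\in B$ and that the fibers over $C$ lie in $B$ or $E$, since $f_3=0$ forces $f_4=0$ along lines in $B$, or else lands those lines at $\infty$.
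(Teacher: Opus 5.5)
Your reduction is sound up to the transfer step. Blowing up $p$ is fine, and so is the crepant birational equivalence of $(\pp^3,B)$ with $(\pp^2,C)\times(\pp^1,\{0\}+\{\infty\})$, where $C=\{f_3=0\}$. Reducedness and log canonicity of $C$ also follow correctly, as does the coregularity bookkeeping. The gap is your claim that, over $U=\pp^2\setminus C$, a fibrewise coordinate change makes the pair isomorphic to $U\times(\pp^1,\{0\}+\{\infty\})$, so that all non-isomorphic behaviour lies over $C$.

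This claim is false when $C$ is an irreducible nodal cubic, a case that does occur (take $f_3$ nodal and $f_4$ general). The projection $\pp^3\setminus\{p\}\to\pp^2$ is the total space of $\oo(1)$, and over $U$ the quartic $B$ is the section $-f_4/f_3$. After translating by this section, the complement of $B_Y\cup E$ over $U$ is the $\mathbb{G}_m$-torsor of $\oo(1)|_U$. Since $\Cl(U)\simeq\zz/3$ is generated by $\oo(1)|_U$, this torsor is non-trivial.

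Concretely, every fibre coordinate has the form $w=F/(\sigma f_3)$, where $\sigma$ is a rational section of $\oo(1)$. The divisor $\ddivv(\sigma)$ has degree one, so it must have a component $D\not\subset C$. The resulting map contracts $D\times\pp^1$. For $\sigma=x_0$ it sends $\{x_0=0\}\times\pp^1$ onto the residual curve of $B\cap\{x_0=0\}$, and conversely it collapses the plane $\{x_0=0\}$ onto $\{x_0=0\}\times\{\infty\}$.

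The divisor $D\times\pp^1$ is a canonical place whose centre on $\pp^3$ lies in $B$, so your proposed check ("centres in the boundary") does not detect the problem. It is not a boundary component of the product, however, and it meets $T_2\times\mathbb{G}_m$ unless $D$ lies in the complement of the cluster torus $T_2\subset\pp^2\setminus C$. So ${\rm Ex}$ meets the torus in a divisor, and Proposition~\ref{prop:cluster-type-bir} cannot be applied. Your argument does work as written when $C$ has a line component (conic plus line, or a triangle): take $\sigma$ to be that line, so the torsor is trivial over $U$.

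The missing idea is to trivialize the torsor only over the torus. First fix $T_2\subset\pp^2\setminus C$ from Theorem~\ref{thm:cluster-type-smooth-surf}. Since $\mathrm{Pic}(T_2)=0$, the torsor is trivial over $T_2$. Equivalently, by the sequence \eqref{eq:LES} for $T_2\subset U$, you can choose $\sigma$ with $\ddivv(\sigma)$ supported on $C$ together with the curves of $U\setminus T_2$. Then $T_2\times\mathbb{G}_m$ embeds openly into $\pp^3\setminus B$. In the chart $x_0=1$ one has the identity
\[
\frac{dy_1\wedge dy_2\wedge dz}{F}=\frac{dy_1\wedge dy_2}{f_3}\wedge\frac{dw}{w},
\]
which shows that the torus volume form has poles exactly along $B$ and no zeros.

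Alternatively, you can stay inside the paper's toolkit. The pair $(Y,B_Y+E)\to\pp^2$ has index one and coregularity zero, with two horizontal components: $E$ and the horizontal part of $B_Y$. Proposition~\ref{prop:fibration-to-smooth-surface} therefore applies directly, and Lemma~\ref{lem:cluster-type-descends} finishes. The twisting issue above is exactly what Theorem~\ref{thm:strict-conic} absorbs in that proof. The paper does not reprove the statement; it quotes \cite[Proposition 5.2]{AdSFM26}.
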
 

We use~\cite[Theorems 2 and 6]{ELY25}.
Below we use a special case of the complexity of a log Calabi--Yau pair $(X,B)$. Write $B=\sum_{i=1}^k b_iB_i$ where the $B_i$'s are the prime components.
We define the {\em complexity} of $(X,B)$ to be $\dim X + {\rm rank \ WDiv}_{\rm alg}(X)-\sum_{i=1}^k b_i$.

\begin{theorem}\label{thm:complexity-one-is-cluster-type}
Let $(X,B)$ be an index-one log Calabi--Yau pair of complexity at most one.
Then, the pair $(X,B)$ is of cluster type.
\end{theorem} 

We now give two criteria that allow to detect whether a conic fibration from a threefold or a del Pezzo fibration from a threefold are cluster type.
In order to do so, we will need to recall the concept of strict conic fibrations.

\begin{definition}\label{def:strict-conic-fibration}
{\em 
Let $f\colon X\rightarrow Z$ be a fibration. 
Let $(X,B)$ be a log pair. We say that $f\colon (X,B)\rightarrow Z$ is a {\em strict conic fibration} 
if the following conditions are satisfied:
\begin{enumerate} 
\item the morphism $f$ is a conic fibration of relative Picard rank one; 
\item the pair $(X,B)$ is log Calabi--Yau over $Z$; 
\item the divisor $\lfloor B\rfloor$ has two prime components $S_0$ and $S_\infty$ which are horizontal over $Z$; and 
\item the divisors $S_0$ and $S_\infty$ are disjoint.
\end{enumerate} 
}
\end{definition} 

The following theorem is proved in~\cite[Theorem 1.5]{MY24}.

\begin{theorem}\label{thm:strict-conic}
Let $f\colon (X,B)\rightarrow (Z,B_Z)$ be a crepant strict conic fibration between Calabi--Yau pairs. Then, the pair $(X,B)$ is of cluster type if and only if $(Z,B_Z)$ is of cluster type.
\end{theorem}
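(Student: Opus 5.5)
The statement is Theorem~\ref{thm:strict-conic}, quoted from \cite[Theorem 1.5]{MY24}. I would prove the two directions separately, working throughout with the crepant birational characterization of Proposition~\ref{prop:cluster-type-bir}.

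\emph{Cluster type of $(Z,B_Z)$ implies cluster type of $(X,B)$.} First I would trivialize the fibration over a big open set of $Z$. Since $f$ is a conic fibration of relative Picard rank one, the sections $S_0$ and $S_\infty$ are horizontal and disjoint. Over the generic point, the fibre is then a smooth conic with two marked points, hence $\pp^1$ with $\{0,\infty\}$. Because $S_0$ and $S_\infty$ are sections, the generic fibre has rational points, so $X$ is birational to a $\pp^1$-bundle $\pp(\oo\oplus L)$ over $Z$ with $S_0,S_\infty$ as the two distinguished sections.

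Next I would use crepancy of $f$ to identify the boundary. The condition says $K_X+B\sim_\qq f^*(K_Z+B_Z)$, where $B_Z$ comes from the canonical bundle formula. Over the complement of $B_Z$ this forces $B=S_0+S_\infty$ plus vertical components lying over $B_Z$. Consequently, over $U:=Z\setminus B_Z$ the pair is an honest $\mathbb{G}_m$-torsor $X\setminus B\to U$, coming from the line bundle $L|_U$.

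Now take the torus embedding $\mathbb{G}_m^{n-1}\dashrightarrow U$, which is an open embedding in codimension one. Pull back the torsor to the torus; it is trivial there because $\mathrm{Pic}(\mathbb{G}_m^{n-1})=0$. This gives $\mathbb{G}_m^n\dashrightarrow X\setminus B$, an open embedding in codimension one. I would then check that $\Omega_n$ is the wedge of $\Omega_{n-1}$ with $dt/t$, so it has no zeros on $X$. The hard part of this direction is the codimension-one bookkeeping. Divisors of $X$ over codimension-one points of $Z$ outside $B_Z$ must meet the image. Reducible fibres could a priori give divisors missed by the torus, but relative Picard rank one rules out vertical divisors that are not pullbacks.

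\emph{Cluster type of $(X,B)$ implies cluster type of $(Z,B_Z)$.} This is the converse and the main obstacle, since the given torus chart need not be compatible with $f$. My plan is as follows.
\begin{itemize}
\item Run the argument through a toric model. By Proposition~\ref{prop:cluster-type-bir} there is a crepant birational map $(\pp^n,\Sigma^n)\dashrightarrow(X,B)$.
\item Note that $S_0$ and $S_\infty$ are log canonical places, hence correspond to toric valuations, that is, rays $\pm$-related in $N$. Since the fibration is a $\mathbb{P}^1$-fibration with these two sections, the rays should be $v$ and $-v$.
\item Take the quotient by the one-parameter subgroup $\lambda_v$. This produces a torus $\mathbb{G}_m^{n-1}$ together with a rational map to $Z$, which is crepant for $(Z,B_Z)$.
\item Show that this map is an embedding in codimension one. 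The idea is that the generic $\mathbb{G}_m$-orbits are exactly the general fibres of $f$ minus $S_0\cup S_\infty$, by uniqueness of the $\mathbb{G}_m$-structure on $\pp^1\setminus\{0,\infty\}$ up to inversion.
\end{itemize}
The delicate point is proving that the fibres of $f$ agree with the orbits of $\lambda_v$. I would first attempt this using that both $X\setminus B\to U$ and the orbit map are the maximal quotients by a $\mathbb{G}_m$ acting with the same limit divisors.
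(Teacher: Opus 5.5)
The paper gives no proof of this statement. It is quoted from \cite[Theorem 1.5]{MY24}, and only the implication ``$(Z,B_Z)$ cluster type $\Rightarrow$ $(X,B)$ cluster type'' is used later, in Proposition~\ref{prop:fibration-to-smooth-surface}.

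Your torsor argument for that implication is sound in outline, but three steps need repair.
\begin{enumerate}
\item The $\mathbb{G}_m$-torsor exists only over a big open subset of $Z\setminus B_Z$, which suffices. To see this, use $\mathbb{Q}$-factoriality and $\rho(X/Z)=1$: no divisor of $X$ lies over a codimension-two locus, and fibres over codimension-one points of $Z\setminus B_Z$ are reduced and irreducible.
\item Divisors $f^{-1}(P)$, with $P$ a divisor of $Z\setminus B_Z$ missed by the torus of $Z$, are also missed by your chart, contrary to what you say. This is harmless, because $\ord_{f^{-1}(P)}\Omega_n=0$.
\item The equality $\operatorname{div}_X(\Omega_n)=-B$ along divisors over $B_Z$ does not follow formally from $\Omega_n=f^*\Omega_{n-1}\wedge dt/t$, since fibres there may be multiple and $X$ singular. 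One way to get it: the vertical divisor $\operatorname{div}_X(\Omega_n)+B$ is a pullback $-f^*\Lambda$ because $\rho(X/Z)=1$. Every divisorial valuation $w$ has $\ord_w\Omega_n\ge -1$, with equality for the monomial valuation extending the toric valuation $\ord_P$ with weight $0$ on $t$. Hence $\operatorname{lct}(X,B;f^*P)=\lambda_P$, and $b_P=1$ forces $\lambda_P=0$.
\end{enumerate}

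The converse has a genuine gap. It rests on two claims that you assert but do not prove: the rays $w_0,w_\infty\in N$ of $S_0,S_\infty$ satisfy $w_\infty=-w_0$, and a general fibre $F$ of $f$ meets $j(\mathbb{G}_m^n)$ in a translate of $\lambda_{w_0}$. Restricting $\operatorname{div}_X(\chi^m)$ to $F\cong\mathbb{P}^1$ and taking degrees gives
\[
w_0+w_\infty+\sum_i (D_i\cdot F)\,u_i=0 .
\]
Here $D_i$ runs over the horizontal prime divisors of $(X\setminus B)\setminus j(\mathbb{G}_m^n)$, and $u_i\in N$ is the homomorphism $m\mapsto \ord_{D_i}(\chi^m)$.

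If such $D_i$ exist, then $F\cap j(\mathbb{G}_m^n)$ is $\mathbb{P}^1$ minus at least three points. It is then not an orbit, so your appeal to the uniqueness of the $\mathbb{G}_m$-structure on $\mathbb{P}^1\setminus\{0,\infty\}$ does not apply. In that case there is no induced map $\mathbb{G}_m^n/\lambda_{w_0}\dashrightarrow Z$ at all.

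Nothing in the definition of a cluster type structure rules out such horizontal missing divisors. The chart is only determined up to mutation, and the $\mathbb{G}_m$-action of the torsor $X\setminus B\to Z\setminus B_Z$ has no reason to preserve $j(\mathbb{G}_m^n)$. Your ``maximal quotient by the same $\mathbb{G}_m$'' heuristic presupposes exactly this invariance.

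To close the argument you need an additional idea. Either show that the cluster structure on $(X,B)$ can be replaced by one whose chart misses no horizontal divisor, or produce a toric model of $(Z,B_Z)$ by a route that does not require the chart to be compatible with $f$.
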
 

We will use Theorem~\ref{thm:strict-conic} to prove a criterion for conic fibrations over smooth projective surfaces to be of cluster type.

\begin{proposition}\label{prop:fibration-to-smooth-surface}
Let $f\colon X\rightarrow S$ be a fibration from a projective threefold to a surface. 
Let $(X,B)$ be a log Calabi--Yau pair for which the following conditions are satisfied:
\begin{enumerate}
\item $S$ is a smooth surface; 
\item the pair $(X,B)$ has index one and coregularity zero; and
\item the boundary divisor $B$ has two prime components that are horizontal over $S$.
\end{enumerate} 
Then, the pair $(X,B)$ is of cluster type.
\end{proposition}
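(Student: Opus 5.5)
The plan is to reduce to Theorem~\ref{thm:strict-conic}. We modify $(X,B)$ crepantly, over $S$ and only by extracting or contracting log canonical places, until it becomes a strict conic fibration over a model of $S$. Then we show that the induced base pair is of cluster type.

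\textbf{Step 1: the horizontal components and the generic fibre.} Let $S_0$ and $S_\infty$ be the two horizontal prime components of $B$. Since the generic fibre of $f$ is a conic, we have $(K_X+B)\cdot F=0$ and $-K_X\cdot F=2$. Hence the horizontal part of $B$ has degree two on $F$. So $S_0$ and $S_\infty$ each have coefficient one, each is generically a section, and $B$ has no other horizontal components. As a consequence, the restriction of $(X,B)$ to the generic fibre is $(\pp^1,\{0\}+\{\infty\})$.

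\textbf{Step 2: a dlt model and a relative MMP.} Take a $\qq$-factorial dlt modification $(X',B')\to(X,B)$. It extracts only log canonical places. Next run a $(K_{X'}+B'-\epsilon(S_0'+S_\infty'))$-MMP over $S$, or more concretely an MMP contracting everything vertical that we can. The aim is to reach a Mori fibre space $X''\to S''$ with relative Picard rank one. Here $S''\to S$ should be birational and should extract only log canonical places of the base pair $(S,B_S)$ given by the canonical bundle formula. Smoothness of $S$ is used at this point: it makes the discriminant and moduli parts behave well, so that $K_S+B_S\sim 0$ with $B_S$ reduced. The index-one hypothesis gives this, because the moduli part is trivial for the toric fibration $(\pp^1,0+\infty)$. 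Coregularity zero passes to $(S,B_S)$.

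On $X''$ we must also make $S_0''$ and $S_\infty''$ disjoint. Their intersection lies over a locus in the base where they meet. I would try to separate them by blowing up log canonical centres of $(S,B_S)$ or of $(X,B)$ along $S_0\cap S_\infty$. Over such centres the intersection is a log canonical centre, so these blow-ups stay crepant with reduced boundary. I expect this separation, together with keeping relative Picard rank one, to be the \textbf{main obstacle}. The fallback is to use \cite{MY24}, where such strict models are constructed in the proof of Theorem~\ref{thm:strict-conic}.

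\textbf{Step 3: the base is cluster type.} The pair $(S'',B_{S''})$ is a log Calabi--Yau surface pair of index one and coregularity zero, obtained from a smooth surface. Pass to a minimal resolution that extracts only log canonical places; this keeps the boundary reduced. On a smooth rational surface (rationality follows from coregularity zero), an index-one coregularity-zero boundary is a reduced anticanonical cycle of rational curves. By Theorem~\ref{thm:cluster-type-smooth-surf} it is of cluster type, and Lemma~\ref{lem:cluster-type-descends} takes this down to $S''$. Theorem~\ref{thm:strict-conic} then gives that $(X'',B'')$ is of cluster type. Finally, Lemmas~\ref{lem:blow-up-of-cluster-type} and~\ref{lem:cluster-type-descends} transport cluster type back to $(X,B)$, since every step only extracted or contracted log canonical places.
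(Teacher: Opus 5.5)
Your architecture matches the paper's: a dlt model, an MMP over $S$ down to a Mori fibre space $X''\to S''$, cluster type of the base, then Theorem~\ref{thm:strict-conic} and transport back. The gap is the step you flag yourself, condition (4) of Definition~\ref{def:strict-conic-fibration}. You leave it unproved, and it is the real content of the proposition.

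Your suggested remedy does not work as stated. Blowing up a curve $Z\subseteq S_0''\cap S_\infty''$ separates the sections generically along $Z$. But it makes the fibres over $f''(Z)$ reducible, so you leave the class of relative Picard rank one conic fibrations. You give no argument that you can return to that class, by contracting the strict transform of the old fibre component, without the sections meeting again. Pointing to \cite{MY24} does not fill this, since Theorem~\ref{thm:strict-conic} takes strictness as a hypothesis.

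The issue is genuine. Take $f\colon(\pp^1)^3\to(\pp^1)^2$, $(x,y,t)\mapsto(x,y)$, and $B=\{t=0\}+\{t=x\}+f^*(\{x=\infty\}+\{y=0\}+\{y=\infty\})$. The pair is snc, of index one and coregularity zero, and $f$ has relative Picard rank one. Yet the two horizontal components meet along $\{x=t=0\}$.

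The missing idea is that, on the right model, disjointness is automatic and no separation is needed. The paper first applies \cite[Lemma 2.12]{MM24} over $S'$. This gives a $\qq$-factorial model $f''\colon(X'',B'')\to(S'',B_{S''})$ that extracts only lc places, is still a Mori fibre space, and satisfies ${f''}^{-1}(B_{S''})\subseteq B''$. In the example above this is the elementary transformation along $\{x=t=0\}$, after which the fibre over $\{x=0\}$ lies in the boundary.

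Now suppose $S_0''\cap S_\infty''\neq\emptyset$. Then:
\begin{enumerate}
\item Since both divisors are $\qq$-Cartier, the intersection contains a curve $Z$, which is an lc centre of $(X'',B'')$.
\item The image of $Z$ is an lc centre of $(S'',B_{S''})$, hence lies in $B_{S''}$.
\item So some vertical component of ${f''}^{-1}(B_{S''})\subseteq B''$ also contains $Z$.
\item Three $\qq$-Cartier components of $B''$ through a codimension-two locus contradict \cite[Theorem 2]{MS21}.
\end{enumerate}

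A smaller point in your Step 3: descending from the minimal resolution of $S''$ via Lemma~\ref{lem:cluster-type-descends} is not allowed when that resolution extracts coefficient-zero divisors, since the lemma only permits extracting lc places. It is cleaner, as in the paper, to apply Theorem~\ref{thm:cluster-type-smooth-surf} to $(S,B_S)$ on the smooth surface $S$. Then lift to $S''$ with Lemma~\ref{lem:blow-up-of-cluster-type}, since $S''\to S$ only extracts places of coefficient $0$ or $1$.
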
 

\begin{proof}
First, we may replace $(X,B)$ with a $\qq$-factorial dlt modification. By Lemma~\ref{lem:cluster-type-descends}, it suffices to prove that this $\qq$-factorial dlt modification is of cluster type. 
components. 
Let $(S,B_S)$ be the log Calabi--Yau pair obtained
by the canonical bundle formula of $(X,B)$ to $S$. 
Then, the pair $(S,B_S)$ is a log Calabi--Yau pair of index one and coregularity zero. The previous statement can be proved by performing adjunction to $B_1$ and then pushing-forward to $S$ (see, e.g.,~\cite[Theorem 5.4]{Fil18}). 
Since $(S,B_S)$ is a log Calabi--Yau pair of index one, we have $B_S\in |-K_S|$. 
Since $S$ is smooth and $(S,B_S)$ has coregularity zero, we conclude that $B_S$ is a reduced anticanonical cycle of rational curves.
Therefore, by Theorem~\ref{thm:cluster-type-smooth-surf}, we conclude that the pair $(S,B_S)$ is of cluster type.
We can run a $K_X$-MMP over $S$ that terminates with a Mori fiber space $X\dashrightarrow X'\rightarrow S'$ over $S$.
We know that $X'\rightarrow S'$ is a Mori fiber space
and $\phi\colon S'\rightarrow S$ is a birational morphism. 
Let $B'$ be the push-forward of $B$ to $X'$.
Analogously, we write $B'_i$ for the push-forward of 
$B_i$ in $X'$. 
If we perform the canonical bundle formula for $(X',B')\rightarrow S'$, we obtain a pair $(S',B_{S'})$ which is log Calabi--Yau of index one and coregularity zero. Furthermore, the pair $(S',B_{S'})$ is obtained from $(S,B_S)$ via log pull-back, i.e., we have $K_{S'}+B_{S'}=\phi^*(K_S+B_S)$.
Thus, by Lemma~\ref{lem:blow-up-of-cluster-type} we conclude that the pair $(S',B_{S'})$ is a cluster type pair. 
By~\cite[Lemma 2.12]{MM24}, applied over $S'$, we have a commutative diagram of crepant maps between log Calabi--Yau pairs: 
\[
\xymatrix{ 
(X',B')\ar[d]^-{f'} & (X'',B'')\ar[d]^-{f''}\ar@{-->}[l]_-{\phi'} \\ 
(S',B_{S'}) & (S'',B_{S''})\ar[l]_-{\phi'_S}
}
\]
such that the following conditions are satisfied: 
\begin{enumerate}
\item[(i)] $X''$ is $\qq$-factorial;
\item[(ii)] $\phi'$ is a crepant birational map only extracting lc places of $(X',B')$; 
\item[(iii)] $\phi'_S$ is an isomorphism at the generic point of $S'$; and 
\item[(iv)] $f''^{-1}(B_{S''}) \subseteq B''$. 
\end{enumerate} 
Since $f'$ is a Mori fiber space, then we know that $f''$ is also a Mori fiber space by the last clause of~\cite[Lemma 2.12]{MM24}.
By construction, the boundary divisor $B''$ has two prime components $B''_1$ and $B''_2$ which are horizontal over $S''$.
Since $(S'',B_{S''})$ is a log Calabi--Yau pair, we can apply Lemma~\ref{lem:blow-up-of-cluster-type}  to conclude it is of cluster type. 

We argue that $f''$ is a strict conic fibration.
Note that conditions (1)-(3) of Definition~\ref{def:strict-conic-fibration} are satisfied, so it suffices to show that condition (4) of Definition~\ref{def:strict-conic-fibration} holds.
Assume that $B''_1$ and $B''_2$ intersect along the codimension two subset $Z$. 
The image $S_Z$ of $Z$ in $S''$ is a log canonical center of $(S'',B_{S''})$.
Therefore, it is contained in a component $\Gamma_1$ of $B_{S''}$.
By condition (iv) above, we would have that 
${f''}^{-1}(S_Z)\subset B''$ and so $B''$ has a vertical component $B''_3$ that contains $Z$. 
This means that the curve $Z$ is contained 
in three $\qq$-Cartier prime components of $B''$.
This contradicts~\cite[Theorem 2]{MS21}. This implies that $B''_1$ and $B''_2$ do not intersect on $X''$. 
Therefore, condition (4) of Definition~\ref{def:strict-conic-fibration} is satisfied and then $f''\colon (X'',B'')\rightarrow (S'',B_{S''})$ is a strict conic fibration. 
By Theorem~\ref{thm:strict-conic}, we conclude that $(X'',B'')$ is of cluster type. 
By construction, the crepant birational map 
$(X'',B'')\dashrightarrow (X,B)$ only extracts log canonical places of $(X,B)$.
Thus, the pair $(X,B)$ is of cluster type by Lemma~\ref{lem:cluster-type-descends}.
\end{proof} 

Now, we turn to study del Pezzo fibrations over smooth rational curves and give a criterion for these threefolds to be of cluster type.

\begin{proposition}\label{prop:fibration-to-smooth-curve}
Let $f\colon X\rightarrow C$ be a Fano type fibration from a projective 
threefold to a curve. 
Let $(X,B)$ be a log Calabi--Yau pair 
of index one satisfying the following conditions: 
\begin{enumerate} 
\item the curve $C$ is a smooth rational curve;
\item the pair induced on $C$ via the canonical bundle formula
is isomorphic to $(\pp^1,\{0\}+\{\infty\})$; 
\item $B$ has a unique horizontal prime component which restricts to a singular irreducible nodal curve on general fibers; and
\item the general fiber of $X\rightarrow C$ is a smooth del Pezzo of degree at least $5$.
\end{enumerate} 
Then, the pair $(X,B)$ is a cluster type pair.
\end{proposition}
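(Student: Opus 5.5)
We plan to reduce to a cluster type statement for the generic fiber, mirroring the proof of Proposition~\ref{prop:fibration-to-smooth-surface}. The base pair $(\pp^1,\{0\}+\{\infty\})$ is toric, hence of cluster type. Over the open torus $\mathbb{G}_m\subset C$ we want a fiberwise cluster type structure, uniform in the fiber, and we want to glue it with the torus of the base. Lemma~\ref{lem:cluster-type-descends} lets us modify $(X,B)$ freely by extracting log canonical places. So we first replace $(X,B)$ by a $\qq$-factorial dlt modification. Then, since $f$ is of Fano type, we run a $K_X$-MMP over $C$ and reach a Mori fiber space $X'\to C$ with del Pezzo fibers. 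This step does not change the generic fiber, since that fiber is already a smooth del Pezzo surface of relative Picard rank possibly larger than one; by shrinking to the generic fiber we work over $K=\mathbb{C}(t)$. By condition (2), $B$ contains the fibers over $0$ and $\infty$ with coefficient one, and the horizontal part $B_h$ is a single prime divisor.

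The key step concerns the generic fiber $X_\eta$ over $K$. It is a del Pezzo surface of degree $d\geq 5$ with an anticanonical curve $B_\eta$, which by condition (3) is geometrically an irreducible nodal rational curve. We will show that $(X_\eta,B_\eta)$ is of cluster type \emph{over $K$}, that is, $X_\eta\setminus B_\eta$ contains $\mathbb{G}_{m,K}^2$ up to codimension two.

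\begin{itemize}
\item First, the node of $B_\eta$ is a $K$-rational point, since it is the unique singular point. Its two branch tangent directions are defined over $K$ or over a quadratic extension.
\item Degree at least $5$ gives us rationality and $K$-points in abundance: del Pezzo surfaces of degree $\geq 5$ with a $K$-point are $K$-rational, and here the node is such a point. We will use this to blow up the node (an lc place of $(X_\eta,B_\eta)$), separate the branches, and perform toric blow-downs. The goal is a toric model $(\pp^2,\text{triangle})$ or $(\pp^1\times\pp^1,\text{square})$ over $K$, where all contracted curves are $(-1)$-curves meeting the boundary transversally at one point (non-toric blow-ups are allowed; they are exactly what makes the torus embedding only in codimension one).
\item Degree $\geq 5$ is what we expect to make the exceptional configurations Galois-stable enough to contract over $K$. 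It is also the hypothesis that $\mathrm{Br}$-type obstructions vanish.
\end{itemize}

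Once the generic fiber has a cluster type structure over $K$, we spread it out over an open subset of $\mathbb{G}_m\subset C$. This gives an open embedding in codimension one of $\mathbb{G}_m^2\times U$ into $X\setminus B$ for some open $U\subseteq\mathbb{G}_m$. Compatibility with $\Omega_3=\Omega_2\wedge dt/t$ shows that the poles are exactly $B$. The remaining problem is to remove the codimension-one deficiency from $U\neq\mathbb{G}_m$; the finitely many bad fibers there must not contribute divisors. We would handle this by choosing the toric model uniformly, that is, by choosing the blow-down data as sections of the relative family after base change, and using that $B$ contains no other vertical components. Then Proposition~\ref{prop:cluster-type-bir} concludes the argument.

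The main obstacle is the arithmetic of the generic fiber: the node's two branches may be conjugate over a quadratic extension of $\mathbb{C}(t)$. In that case the naive toric model is not defined over $K$. We would first try to show that monodromy swapping the branches is incompatible with condition (2) and index one, because the dual complex of a dlt model must be a sphere-type complex with trivial orientation character. Failing that, we would use a degree-$\geq5$ argument: blow up further to a conic bundle over $K$ with two horizontal sections. This would reduce to Theorem~\ref{thm:strict-conic} applied to the surface case over the base $C$.
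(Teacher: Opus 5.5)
\textbf{Verdict: genuine gap.} Your route is different from the paper's: you descend to the generic fiber over $K=\mathbb{C}(t)$, find a split torus there, and spread it out. The step everything rests on is not argued.

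\textbf{The missing step.} You need $(X_\eta,B_\eta)$ to have a toric model over $K$ with a \emph{split} torus whose complement is exactly $B_\eta$. None of your bullets supplies a mechanism for this.
\begin{itemize}
\item $K$-rationality of a del Pezzo surface of degree at least $5$ with a $K$-point says nothing about a torus compatible with the boundary.
\item The ``Brauer-type obstructions'' are vacuous, since $\operatorname{Br}(\mathbb{C}(t))=0$ by Tsen's theorem. So this cannot be where the degree hypothesis enters.
\item Galois-stability of the set of $(-1)$-curves to be contracted is exactly what must be proved, not something to expect.
\item The branch-swapping issue you flag is left open. It can be excluded using index one: after blowing up the nodal curve, a residue computation along $B'\cap E$ shows that monodromy exchanging the branches is incompatible with a global volume form with simple poles along the boundary. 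But this has to be carried out.
\end{itemize}
Your fallback (a conic bundle with two horizontal sections) points toward the paper's argument but is not developed.

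\textbf{Two smaller errors.} First, a $K_X$-MMP over $C$ can change the generic fiber and can end in a conic bundle over a surface, so your first reduction is misstated. Second, condition (2) alone does not put $f^{-1}(0)$ and $f^{-1}(\infty)$ into $B$; the paper arranges this first via \cite[Lemma 2.12]{MM24}.

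\textbf{What the paper does instead.} It avoids all arithmetic over $\mathbb{C}(t)$ by blowing up globally.
\begin{itemize}
\item The nodes of the horizontal component sweep out a one-dimensional lc center $Z$, which is a birational section of $f$.
\item Blowing up $Z$ gives $(X',B'+E)$ with two horizontal components and $K_{X'}+B'\sim_{\mathbb{Q}}-E$.
\item A $(K_{X'}+B')$-MMP over $C$ is $E$-positive, so it never contracts $E$.
\item It never contracts $B'$ either, because on a general fiber $(B'_F)^2=d-4\geq 1$. This is exactly where degree $\geq 5$ is used.
\item If the resulting Mori fiber space is over $C$, then $\rho=2$ and there are four boundary components. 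The complexity is at most one, and Theorem~\ref{thm:complexity-one-is-cluster-type} applies.
\item If it is over a surface, that surface is cluster type via a complexity-one model over $C$. The strict conic fibration argument of Proposition~\ref{prop:fibration-to-smooth-surface} then finishes.
\end{itemize}
The natural way to fill your gap is to run this same MMP on $X_\eta$ over $K$ and check by hand that the end models carry split tori.

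\textbf{Your spreading-out step is repairable.} It cannot be done by base change, which replaces $X$ by a different threefold. Once the generic fiber carries a split torus, index one forces $\operatorname{div}_X(\Omega_3)=-B$. Then every component of a fiber over $t_0\in\mathbb{G}_m$ is a toric valuation of weight $(a,b,1)$ in the coordinates $(x,y,t-t_0)$. Rescaling the $K$-coordinates by powers of $t-t_0$ makes the torus divisor $\{t=t_0\}$ map onto such a component.
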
 

\begin{proof}
First, we reduce to the case in which $f^{-1}(\{0\}+\{\infty\})\subset \lfloor B\rfloor$. 
Indeed, since $\{0\}$ and $\{\infty\}$ are log canonical centers of the pair induced by the canonical bundle formula, we know that $(X,B)$ has two log canonical centers that map to $\{0\}$ and $\{\infty\}$ via $f$. 
By~\cite[Lemma 2.12]{MM24},  we can find a commutative diagram 
\[
\xymatrix{
(X,B)\ar[d]^-{f} & (Y,B_Y)\ar[ld]^-{f_Y}\ar@{-->}[l] \\
C 
}
\]
where $f_{Y}^{-1}(\{0\}+\{\infty\}) \subset B_Y$
and 
$(Y,B_Y)\dashrightarrow (X,B)$ is a crepant birational map that only extracts log canonical places of $(X,B)$.
Furthermore, the map $Y\dashrightarrow X$ is an isomorphism over the generic point of $C$.
If $(Y,B_Y)$ is of cluster type, then $(X,B)$ is of cluster type by Lemma~\ref{lem:cluster-type-descends}. 
Hence, by replacing $(X,B)$ with $(Y,B_Y)$ we may simply assume that the preimage of $\{0\}$ and $\{\infty\}$ in $X$ are contained in $\lfloor B\rfloor$.
We can also assume that $X$ is $\qq$-factorial (see~\cite[Lemma 2.12.(1)]{MM24}).

By the assumptions of the proposition, there is a one-dimensional log canonical center $Z$ of $(X,B)$ which dominates $C$ with a birational section. By condition (3), the restriction of $Z$ to a general fiber $F$ of $f$ is a nodal point of $B|_F \subset F$.
We blow up $Z$ and obtain $X'\rightarrow X$.
Let $(X',B'+E)$ be the pair obtained by log pull-back of $(X,B)$ to $X'$. 
Here, $E$ is the exceptional divisor of $X'\rightarrow X$.
By~\cite[Theorem 1]{Mor19}, we may assume that $X'$ is $\qq$-factorial. 
Let $f'\colon (X',B'+E)\rightarrow C$ be the induced fibration and we let $F'$ be the general fiber of $f'$. 
Let $(F',B_F'+E_F)$ be the log Calabi--Yau pair induced on the general fiber of $f'$.
Here, $B_F'$ is the strict transform of $B_F$ in $F'$
and $E_F$ is the exceptional divisor. 
Note that $E_F^2=-1$ and ${B_F'}^2\geq 1$. 
We run a $(K_{X'}+B')$-MMP over $C$.
Note that $K_{X'}+B' \sim_{\qq,C} -E$ and so the divisor $E$ is not contracted by this MMP.
On the generic surface fiber, $B'$ has positive self-intersection, which increases under the surface MMP, so it is not contracted. The divisor $E$ cannot be contracted, since a divisorial contraction of $E$ would require $E$-negative curves, whereas the MMP is $(-E)$-negative.
Furthermore the MMP must terminate with a Mori fiber space over $C$.
All steps are crepant for the log Calabi--Yau pair. Each contracted divisor has coefficient zero or one, so Lemma~\ref{lem:blow-up-of-cluster-type} applies to lift the cluster type property back along the MMP. On the other hand, Lemma~\ref{lem:cluster-type-descends} then descends it to the original pair. 
Let $X'\dashrightarrow X''\rightarrow Z \rightarrow C$ be the birational map $X'\dashrightarrow X''$ induced by the MMP and $X''\rightarrow Z$ be the Mori fiber space. Let $(X'',B''+E'')$ be the log pair structure induced on $X''$.
We will proceed in two different cases, depending whether $Z$ is a surface or a curve. 

First, assume that $Z$ is a curve.
In this case, we have $Z\simeq C\simeq \pp^1$. 
Moreover, the variety $X''$ has Picard rank two.
Furthermore, the boundary $B''+E''$ must have $4$ components; the two horizontal components over $Z$
and the pre-images of $\{0\}$ and $\{\infty\}$. 
Thus, the complexity of the log Calabi--Yau pair $(X'',B''+E'')$ is at most one
and so by Theorem~\ref{thm:complexity-one-is-cluster-type} we conclude that $(X'',B''+E'')$ is a cluster type pair. 
Therefore, the pair $(X,B)$ is of cluster type by Lemma~\ref{lem:cluster-type-descends}. 

Now, assume that $Z$ is a surface.
In this case both $E^{\prime\prime}$ and $B^{\prime\prime}$ are horizontal over $Z$. The former is positive on the fibers of the MFS, and the latter has positive self-intersection on the generic surface fiber. Moreover, a vertical curve has non-positive self-intersection. 
Let $(Z,B_Z)$ be the log Calabi--Yau pair obtained by the canonical bundle formula applied to $(X'',B''+E'')\rightarrow Z$.
As $B''+E''$ has prime components which dominate $Z$, we conclude that $(Z,B_Z)$ has index one and coregularity zero. Let $f_Z\colon Z \rightarrow C$ be the induced fibration. 
We have that $f_Z^{-1}(\{0\}+\{\infty\})\subset B_Z$.
We run a $K_Z$-MMP over $C$ that leads to a birational morphism $Z\rightarrow Z'$ and terminates with a Mori fiber space $Z'\rightarrow C$. 
Let $B_{Z'}$ be the push-forward of $B_Z$ in $Z'$. 
By construction, the variety $Z'$ is $\qq$-factorial and has Picard rank two. 
On the other hand, the boundary divisor $B_{Z'}$ has at least three components; the fiber over $\{0\}$ and $\{\infty\}$ and another horizontal component over $C$.
We conclude that $(Z',B_{Z'})$ has complexity at most one so by Theorem~\ref{thm:complexity-one-is-cluster-type} it is of cluster type.
Then, the pair $(Z,B_Z)$ is of cluster type by Lemma~\ref{lem:blow-up-of-cluster-type}. 
Now, the boundary divisor $B''+E''$ has two horizontal components over $Z$ and the pair $(Z,B_Z)$ is of cluster type. The same proof of Proposition~\ref{prop:fibration-to-smooth-surface} then implies that $(X'',B''+E'')$ is of cluster type.
Finally, applying Lemma~\ref{lem:cluster-type-descends} one more time, we conclude that $(X,B)$ is of cluster type. 
\end{proof} 

\section{Blow-ups of the three-dimensional projective space}\label{sec:p3}

In this section, we study smooth Fano threefolds which are blow-ups of $\pp^3$. The main theorem is the following.

\begin{theorem}\label{thm:blow-up-P3}
Let $X$ be a smooth Fano threefold which is the blow-up of $\mathbb{P}^3$. Then, the following statements hold:
\begin{enumerate} 
\item If $X$ is a smooth Fano threefold in the families \textnumero\,2.9, \textnumero\,2.12, \textnumero\,2.15, \textnumero\,2.19, \textnumero\,2.22, \textnumero\,2.25, \textnumero\,2.27, \textnumero\,2.28, \textnumero\,2.30, \textnumero\,3.6, \textnumero\,3.11, \textnumero\,3.12, \textnumero\,3.14, \textnumero\,3.16, \textnumero\,3.18, \textnumero\,3.23, or \textnumero\,4.6, then $X$ is of cluster type; and 
\item If $X$ is a smooth Fano threefold in the family \textnumero\,2.4, then $X$ is of cluster type provided it is general in its deformation space.
\end{enumerate} 
\end{theorem}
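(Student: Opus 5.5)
The strategy is the one announced in the introduction. Reduce each case to an interpolation problem for quartics in $\pp^3$, then conclude with Proposition~\ref{prop:criterion-sing-p3} and Lemma~\ref{lem:blow-up-of-cluster-type}.

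Write $X$ as an iterated blow-up $X\to\pp^3$ whose centers are smooth curves and/or points; this is the Mori--Mukai description of each family listed. Concretely:
\begin{itemize}
\item the exceptional divisors lie over a point (2.28, 2.35-type steps);
\item lines or disjoint lines (2.33, 3.25, 4.6);
\item conics (2.30);
\item twisted cubics (2.27);
\item plane cubics (2.28);
\item elliptic quartics (2.25);
\item curves of genus $2,3$ and degrees $5,6,7$ (2.19, 2.15, 2.12, 2.9, 2.22);
\item complete intersections of two cubics (2.4);
\item combinations of the above for the Picard rank three and four families.
\end{itemize}

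For each case I will construct an slc quartic $B\subset\pp^3$ with three properties. First, $(\pp^3,B)$ has coregularity zero. Second, $B$ has a point $p$ with $\operatorname{mult}_p(B)=3$. Third, every blow-up center, and in iterated cases the successive centers on the strict transforms, is a log canonical or canonical place of $(\pp^3,B)$. The third condition is guaranteed when each center curve $C$ is contained in $B$: blowing up a smooth curve with $\operatorname{mult}_C B=1$ gives a crepant pull-back with coefficient $0$ on the exceptional divisor, so the divisor is a canonical place. Blowing up a curve in the double locus, or a point with $\operatorname{mult}=3$, gives coefficient $1$, so it is an lc place. Proposition~\ref{prop:criterion-sing-p3} then makes $(\pp^3,B)$ cluster type. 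Since $X\to\pp^3$ contracts no divisor, Lemma~\ref{lem:blow-up-of-cluster-type} transfers the cluster type property to $(X,B_X)$, and $B_X$ is effective.

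The quartics will be built in one of two ways.
\begin{enumerate}
\item Take $B$ to be a union of planes and quadrics, for example $H_0+H_1+Q$ or $H+\text{cubic cone}$. Such a $B$ is slc of coregularity zero when it is a normal-crossing-type arrangement with a zero-dimensional stratum.
\item Take $B$ to be a cubic cone with vertex $p$ plus a plane. Alternatively, take an irreducible quartic $B=\{x_0F_3+F_4=0\}$ having a triple point at $p=[1{:}0{:}0{:}0]$. Its projection from $p$ makes $B$ birational to $\pp^2$, and coregularity zero can be checked from a cusp or degenerate cycle in the tangent cone.
\end{enumerate}
Low degree curves fit into such arrangements by hand:
\begin{itemize}
\item lines and conics lie in planes;
\item twisted cubics and elliptic quartics lie on quadrics;
\item plane cubics lie in a plane.
\end{itemize}
The curves of higher degree and genus require them to lie on a quartic with a triple point. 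I will use the standard model of $C$ on a cubic surface or quadric and count conditions: requiring a triple point at $p$ costs $10$ conditions among the $35$ quartics. Equivalently, I will use the fact that quartics with a triple point at $p$ correspond to the linear system on the blow-up of $\pp^2$ at the $12$ points $F_3=F_4=0$. Curves whose projection from a general point of $C$ stays controlled, e.g.\ by choosing $p\in C$ so that the multiplicity is absorbed, should lie on such a quartic after choosing $p$ suitably.

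For families 3.x and 4.6, I will choose one $B$ adapted simultaneously to all centers. For example, disjoint lines in two planes of $B$ meet the triple point appropriately. Points blown up on later stages will be placed at strata of $B$ so that the exceptional divisors are lc places.

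The main obstacle is (2), the family 2.4, together with the higher genus curves (2.9, 2.12, 2.15, 2.19). For 2.4, $C=S_1\cap S_2$ is a complete intersection of two cubics of degree $9$, and $C$ is contained in a quartic $B$ only when $B=\ell_1S_1+\ell_2S_2$ for linear forms $\ell_i$. So I would look for a pencil member and linear forms such that $\ell_1S_1+\ell_2S_2$ acquires a triple point at some $p$ and is slc of coregularity zero. A natural first try is to take $p\in C$ general with $\ell_1,\ell_2$ vanishing at $p$. Then the quadratic part at $p$, $\ell_1 dS_1+\ell_2 dS_2$, must vanish, which forces a relation. Alternatively, degenerate $S_1$ to a cubic singular at $p$, or impose that $S_1,S_2$ share a tangent plane at $p$. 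Checking that this degeneration stays in an open dense set of the moduli of $C$ and that the resulting $B$ has coregularity zero is where the genericity hypothesis enters. This is also why (2) only covers general $X$.
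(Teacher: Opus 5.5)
There is a genuine gap, and it sits exactly in the cases that carry the content: \textnumero\,2.9, \textnumero\,2.12 and \textnumero\,2.4. Your framework matches the paper's: a coregularity-zero slc quartic boundary containing each center with multiplicity one, lifted by Lemma~\ref{lem:blow-up-of-cluster-type}. The plane/quadric arrangements do handle the easy families. In the hard cases, however, you only describe what you would look for.

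For 2.9 and 2.12, comparing $h^0(\mathcal I_C(4))$ with the ten conditions of a triple point at best produces some quartic through $C$ with a triple point. It gives no control of that quartic's singularities, so neither the slc condition nor coregularity zero follows, and Proposition~\ref{prop:criterion-sing-p3} cannot be invoked. Your cone option does not rescue this. A cubic cone through $C$ need not exist, and over a smooth plane cubic, adding a plane through the vertex gives multiplicity $4$ (not lc), while any other plane gives coregularity one. The missing idea is to work in a net of cubics cutting out $C$:
\begin{enumerate}
\item For general $p\in C$, exactly one member $S$ of the net is singular at $p$.
\item Every cubic containing $C$ is normal. The Gruson--Peskine and Segre genus formulas for curves on cones over singular cubics and on ruled cubics have no solution for $(d,g)=(7,5)$ or $(6,3)$.
\item In local coordinates $u=F_1$, $v=F_2$, $w$ along $C$, write $F_3=uf+vg$. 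The quadratic part of $S$ at $p$ is $-w(f_w u+g_w v)+q(u,v)$. Here $f_w,g_w$ cannot both vanish, since otherwise a single cubic would be singular along all of $C$, contradicting normality.
\item Hence the rank is at least two and $p$ is an $A_n$ point. By Bruce--Wall, $S$ has only $A_n$ points, and $S+H$ with $H$ a general plane through $p$ is the required boundary.
\end{enumerate}
You also misfiled several families. In 2.15 the curve has degree $6$ and genus $4$, in 2.19 degree $5$ and genus $2$, and in 2.22 it is a rational quartic; all three lie on quadrics and are easy. The curve in 2.9 has genus $5$, not $2$ or $3$.

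For 2.4 the proposal stops before an argument, and your first try cannot work for any smooth $C$. Along a smooth complete intersection $dS_1$ and $dS_2$ are independent, so no member of the pencil is singular at a point of $C$. Likewise ``sharing a tangent plane at $p\in C$'' would make $C$ singular at $p$. The boundary that works is $S_0+H$, where $S_0$ is a pencil member with a single $A_1$ point $p\notin C$ and $H$ is a general plane through $p$. In your notation this is $\ell_1S_1+\ell_2S_2$ with $\ell_1,\ell_2$ proportional. The genericity input you leave open is the following. Since $h^0(\mathcal I_C(3))=2$, a general $C$ corresponds to a general line in $|\mathcal O_{\pp^3}(3)|\simeq\pp^{19}$. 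The discriminant is a hypersurface whose general point is a cubic with a single $A_1$ point, so cubics with worse singularities form a subset of codimension at least two in $\pp^{19}$. Hence a general line meets the discriminant only in one-nodal cubics. This input is genuinely needed. For the pencil spanned by $x^3+y^3+z^3+w^3$ and $x^3+2y^3+3z^3+4w^3$, every singular member is a cone over a smooth cubic. Theorem~\ref{thm:2.4-not-cluster-type} shows that such blow-ups are not of cluster type.
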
 

The families in Theorem~\ref{thm:blow-up-P3}.(1) are blow-ups of $\pp^3$. These are not all such families, as other blow-ups of $\pp^3$ will be treated with different methods in the next sections.

\subsection{Intersections of cubic surfaces}

In this subsection, we study curves in $\pp^3$ that are the intersections of two, three, or four cubic surfaces and the blow-ups of these curves. We start by lifting some cluster type boundaries on $\pp^3$, which are composed of a cubic surface and a plane.

\begin{lemma}\label{lem:singular-cubic-surface-An} Let $C$ be a smooth curve in $\pp^3$, such that $C$ is contained in a singular cubic with only $A_n$ singularities. Then the blow-up of $\pp^3$ along $C$ is of cluster type.
\end{lemma}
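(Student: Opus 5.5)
Let $S\subset\pp^3$ be the singular cubic containing $C$, with only $A_n$ singularities, and let $p\in S$ be a singular point. The plan is to find a plane $H$ such that $B:=S+H$ is a quartic boundary to which Proposition~\ref{prop:criterion-sing-p3} applies, and then lift the cluster type structure to the blow-up using Lemma~\ref{lem:blow-up-of-cluster-type}.

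\emph{Step 1: choice of $H$.} Take $H$ to be a general plane through $p$. Since $\mult_p S=2$ and $p\in H$, we get $\mult_p(B)=3$. The quartic $B$ has two components, and $S$ has only du Val singularities, so $B$ is slc away from $S\cap H$. Along $S\cap H$, $B$ has normal crossings in codimension one, because a general plane through $p$ meets $S$ transversally away from finitely many points.

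\emph{Step 2: coregularity zero and slc.} The curve $S\cap H$ is a plane cubic singular at $p$, and for general $H$ through $p$ it is nodal at $p$. This is the main point to check. Near an $A_n$ point $xy=z^{n+1}$, a general plane through the origin cuts out the curve $xy=\dots$ with two distinct tangents, so it is a node. Adjunction to $H$ then gives the pair $(H,\,S|_H)$, a nodal cubic in $\pp^2$, which has coregularity zero. Hence $(\pp^3,B)$ has coregularity zero. I would check that $(\pp^3,S+H)$ is lc near $p$ by inversion of adjunction: $(H,S|_H)$ is lc at $p$ because the cubic is nodal there. Away from $p$, general $H$ is transverse to $S$, so $(\pp^3,B)$ is lc, and therefore $B$ is slc. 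The main obstacle is this step, namely ensuring that the node appears and that lc holds at the $A_n$ point. If $n$ is large and $p$ causes issues, one could instead choose $H$ tangent appropriately.

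\emph{Step 3: lifting to the blow-up.} Proposition~\ref{prop:criterion-sing-p3} now shows that $(\pp^3,B)$ is of cluster type. Since $C\subset S$ is a smooth curve and $S$ appears in $B$ with coefficient one, the exceptional divisor $E$ of $\mathrm{Bl}_C\pp^3$ has log discrepancy
\[
a_E(\pp^3,B)=2-\mult_C B\in\{0,1\}.
\]
Its value is $1$ if $C\not\subset H$, in which case $E$ is a canonical place; generic $H$ does not contain $C$. It is $0$ if $C\subset H$, in which case $E$ is an lc place. Either way $E$ is a canonical or lc place, and the blow-up contracts no divisor under its inverse. Lemma~\ref{lem:blow-up-of-cluster-type} therefore shows that $(\mathrm{Bl}_C\pp^3,B_Y)$ is of cluster type. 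In particular, the blow-up of $\pp^3$ along $C$ is of cluster type.
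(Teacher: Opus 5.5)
Your proposal is correct and follows the paper's route. You take a general plane $H$ through an $A_n$ point $p$, show that $S+H$ is an lc quartic of coregularity zero with $\mult_p(S+H)=3$, and apply Proposition~\ref{prop:criterion-sing-p3}; Lemma~\ref{lem:blow-up-of-cluster-type} then lifts the structure to $\mathrm{Bl}_C\pp^3$, since the exceptional divisor is a canonical place, or a log canonical place if $C\subset H$. Your closing hedge about choosing $H$ tangent is unnecessary: the quadratic term at an $A_n$ point has rank at least two for every $n$, as your own argument shows, so a general plane through $p$ always cuts out a node.
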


\begin{proof}
Let $B$ be the singular cubic surface containing $C$. Let $H$ be a general plane containing a single $A_n$ singularity at $p\in B$. Then, the pair $(\pp^3,B+H)$ is log Calabi--Yau.
Indeed, near $p$, the pair $(B,H|_B;p)$ is a $1$-complement of the $A_n$ singularity $(B;p)$. 
Moreover, the boundary has multiplicity
${\rm mult}_p(B+H)=3$. Therefore, by Proposition~\ref{prop:criterion-sing-p3} the pair $(\pp^3,B+H)$ is of cluster type. Let $\pi:X\rightarrow \pp^3$ be the blow-up of $\pp^3$ along $C$. As ${\rm mult}_C(B+H)=1$ and $C$ is of codimension $2$ in $\pp^3$, $\pi$ extracts only a canonical place of $(\pp^3,B+H)$. Therefore, by Lemma~\ref{lem:blow-up-of-cluster-type}, the variety $X$ is of cluster type.
\end{proof}

The following lemma shows that the general intersection of two cubic surfaces is contained in boundaries of cluster type pairs as in Lemma~\ref{lem:singular-cubic-surface-An}.

\begin{lemma}\label{lem:curves-in-cubic-surfaces}
Let $C\subset \pp^3$ be a smooth irreducible curve of degree $9$ and genus $10$, which is the intersection of two cubic surfaces. Assume that $C$ is general in its deformation space. Then, there exists a singular cubic surface with only one $A_1$ singularity that contains $C$.
\end{lemma}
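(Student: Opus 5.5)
The plan is to work inside the pencil of cubics through $C$. Since $C=S_1\cap S_2$ is a complete intersection of type $(3,3)$, the cubic surfaces containing $C$ are exactly the members $S_t=\{\lambda F_1+\mu F_2=0\}$, $t=[\lambda:\mu]\in\pp^1$. Any cubic containing $C$ lies in this pencil, because $h^0(\mathcal I_C(3))=2$ by the Koszul resolution. So the claim is that, for general $C$, some member of this pencil is singular, and its only singularity is a single $A_1$ point.

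The first step is to locate the singular members. Consider the discriminant hypersurface $\Delta\subset\pp(H^0(\oo_{\pp^3}(3)))\simeq\pp^{19}$ of singular cubic surfaces. It is irreducible of degree $4\cdot 2^3=32$. A pencil is a line in $\pp^{19}$, and every line meets $\Delta$, so at least one $S_t$ is singular.

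The second step is to use generality. The general line meets $\Delta$ transversally, and only at smooth points of $\Delta$. A smooth point of $\Delta$ corresponds to a cubic surface whose unique singular point is an ordinary double point, i.e. of type $A_1$. This is the standard description of the smooth locus of the discriminant: the singular locus of $\Delta$ consists of cubics with at least two singular points or with a singularity worse than $A_1$, and it has codimension one in $\Delta$. A general line avoids this codimension-two subset of $\pp^{19}$.

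The third step is to check that $C$ general in its deformation space corresponds to a general pencil. The family of smooth complete intersections of two cubics is dominated by an open subset of the Grassmannian $G(2,20)$ of pencils. Here we also use Bertini, which says the base locus of a general pencil is a smooth irreducible curve of degree $9$ and genus $10$. So a general $C$ comes from a general line in $\pp^{19}$, and the conclusion follows: $C$ lies on a cubic with a single $A_1$ singularity.

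The main obstacle is the second step. One must justify that cubics with two nodes, or with an $A_2$ or worse singularity, form a locus of codimension at least two in $\pp^{19}$, so that a general line misses it. I would show this by a dimension count. Cubics singular at a fixed point $p$ form a linear space of codimension $4$; requiring a degenerate Hessian at $p$, or a second singular point, imposes one more condition. Varying $p$ (three parameters) then gives codimension $\geq 2$.
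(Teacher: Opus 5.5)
Your proposal is correct and follows the paper's route. You identify $C$ with a general pencil of cubics, using $h^0(\mathcal{I}_C(3))=2$ and Bertini; you note that every line in $\pp^{19}$ meets the discriminant divisor; and you observe that a general line misses the codimension-two locus of cubics whose singularities are worse than a single $A_1$ point. The one difference is that you justify this codimension-two claim by an explicit dimension count, whereas the paper deduces it by citing Huybrechts for the openness of the single-$A_1$ locus in the discriminant.
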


\begin{proof} The space of cubic surfaces in $\pp^3$ forms a $\pp^{19}$, and the space of singular ones constitutes a divisor $\Gamma \subset \pp^{19}$. By  \cite[Chapter 1 Remark 2.3]{huy23}, the space of cubic surfaces with a single $A_1$ singularity is an open subset $\Delta \subset \Gamma$.

Since the intersection of two general cubic surfaces in $\pp^3$ is a smooth irreducible curve of genus $10$ and degree $9$, the base locus of a general pencil of cubic surfaces $\Lambda$ is such a curve.
A smooth irreducible curve of degree $9$ and genus $10$ that is the complete intersection of two cubic surfaces is projectively normal, thus $h^0(\mathcal{I}_C(3))=h^0(\mathcal{O}_{\pp^3}(3))-h^0(\mathcal{O}_C(3))=20-18=2$, therefore the only cubic surfaces containing $C$ are the ones in the pencil.
Hence we have a correspondence between smooth irreducible curves of degree $9$ and genus $10$, which are the intersection of two cubic surfaces, and general pencils of cubic surfaces $\Lambda$.

To show that $C$ is contained in a singular cubic surface with only one $A_1$ singularity is therefore equivalent to showing that a general line in $\pp^{19}$ intersects $\Delta$. Since $\Gamma$ is a divisor, every line in $\pp^{19}$ intersects $\Gamma$. As $\Delta$ is a dense open subset of $\Gamma$, $\Gamma \setminus \Delta$ has codimension at least $2$ in $\pp^{19}$, thus a general line intersects $\Delta$. Therefore, a general smooth irreducible curve of degree $9$ and genus $10$, which is the intersection of two cubic surfaces, is contained in a cubic surface with a unique $A_1$ singularity.

\end{proof}

The following example shows that in Lemma~\ref{lem:curves-in-cubic-surfaces} the hypothesis that the curve $C$ is general is required.

\begin{example}
{\em 
In $\pp^3$, consider the cubic surfaces $F=\{x^3+y^3+z^3+w^3 = 0 \}$ and $G=\{x^3+2y^3+3z^3+4w^3 = 0\}$. The intersection of $F$ and $G$ is a smooth irreducible curve of degree $9$ and genus $10$.
Any cubic surface containing $F\cap G$ must be in the pencil generated by $F$ and $G$. In this pencil, the only singular surfaces are $G-F$, $G-2F$, $G-3F$, and $G-4F$, all of which are cones over smooth plane cubics and have non-canonical singularities.
}
\end{example}

We now turn our attention to the other two cases, namely curves that are the intersection of $3$ or $4$ cubic surfaces.

\begin{lemma}\label{lem:normal-cubic-surfaces}

Let $C\subset \pp^3$ be one of the following irreducible smooth curves:
\begin{enumerate}
\item a curve of degree $7$ and genus $5$, which is the scheme-theoretic intersection of three cubic surfaces; or
\item a curve of degree $6$ and genus $3$, which is the scheme-theoretic intersection of four cubic surfaces.
\end{enumerate}
Then, any cubic surface containing $C$ is normal.
\end{lemma}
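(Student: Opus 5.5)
Suppose some cubic surface $S\supset C$ is not normal; we derive a contradiction. An irreducible cubic surface in $\pp^3$ that is not normal is singular along a curve, and the classification of non-normal cubic surfaces is short. The possibilities are: (a) $S$ is reducible or non-reduced, so it contains a plane $H$; (b) $S$ is an irreducible cubic cone over a plane nodal or cuspidal cubic, singular along a line; (c) $S$ is an irreducible non-conical cubic singular along a line, which is a projection of a cubic scroll in $\pp^4$. In cases (b) and (c), $S$ is the image of a rational ruled surface, $\mathbb{F}_1$ or $\mathbb{F}_3$ (or a cone), under a finite birational map. The plan is to rule out each case using the degree and genus of $C$ together with the fact that $C$ is cut out scheme-theoretically by cubics.

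\emph{Case (a).} Here $S=H\cup Q$ with $H$ a plane and $Q$ a quadric (possibly reducible or non-reduced). Since $C$ is irreducible, it lies in $H$ or in $Q$. If $C\subset H$, then $C$ is a smooth plane curve of degree $d\in\{6,7\}$, whose genus is $(d-1)(d-2)/2\in\{10,15\}$; this contradicts $g\in\{3,5\}$. If $C\subset Q$, a smooth curve of bidegree $(a,b)$ on a smooth quadric has genus $(a-1)(b-1)$ with $a+b=d$. For $d=7$ the achievable genera are $0,5,8,9$, and for $d=6$ they are $0,4,6$. So degree $6$ and genus $3$ is impossible, and for degree $7$ and genus $5$ we must have type $(2,5)$. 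A quadric cone gives similar genus restrictions, and $Q$ cannot be a plane pair or a double plane because $C$ is non-planar. In the remaining case $C$ of type $(2,5)$, the curve is not cut out by cubics: the restriction of a cubic to $Q$ has type $(3,3)$, and the cubics through $C$ cut out $C$ plus the residual in $|(1,-2)|$, which is empty. Hence $C$ meets every line of the ruling of type $(1,0)$ in $5$ points, and such a $5$-secant line lies in every cubic containing $C$. This contradicts scheme-theoretic intersection.

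\emph{Cases (b) and (c).} These are the main obstacle. The key tool is the double line $L\subset S$. Consider a plane $\Pi\supset L$: it meets $S$ in $2L$ plus a residual line $M_\Pi$ (a ruling line). So $C\cdot\Pi=d$ splits into points on $L$ and points on $M_\Pi$. I would pull $C$ back to the normalization $\tilde S$ (a Hirzebruch or cone model) and write its class there as $aC_0+bf$. Then $\deg C$ and $g(C)$ are given by adjunction on $\tilde S$, and an irreducible smooth $C$ must map isomorphically. The tricky point is that smoothness of $C$ at points of $L$ constrains how the preimage meets the preimage of $L$. I expect to find that either the genus cannot equal $5$ (respectively $3$) for degree $7$ (respectively $6$), or the ruling lines $M_\Pi$ are $4$-secant to $C$. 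In the latter case every cubic through $C$ contains all these lines, hence contains $S$, so $C$ would not be cut out by cubics. The same secant argument, using that a line $k$-secant to $C$ with $k\ge4$ lies in every cubic through $C$, should rule out the cone case as well.

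This completes the plan: in every non-normal case we obtain either a numerical contradiction or a $4$-secant line. The latter is forbidden because $C$ is the scheme-theoretic intersection of cubics.
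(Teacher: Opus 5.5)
Your proposal has a genuine gap, and there is also an arithmetic error in Case (a) that puts the cubic hypothesis in the wrong place.

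In Case (a), a curve of type $(a,b)$ on a smooth quadric has genus $(a-1)(b-1)$, not $(a-1)b$. So degree $7$ gives genera $0,4,6$ and degree $6$ gives $0,3,4$. Degree $7$, genus $5$ is therefore excluded by numerics alone; your ``remaining case'' $(2,5)$ actually has genus $4$. Degree $6$, genus $3$ is \emph{not} excluded: curves of type $(2,4)$ realize it. This is exactly where the scheme-theoretic hypothesis is needed, and your residual argument works there. Since $(3,3)-(2,4)=(1,-1)$ is not effective, every cubic through $C$ contains $Q$ (the lines of one ruling are $4$-secant). Hence the cubics through $C$ cut out $Q$, not $C$. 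The quadric cone is harmless: its only smooth curves of degree $6$ have genus $4$, and those of degree $7$ have genus $6$.

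More seriously, Cases (b) and (c) are the real content of the lemma, and you do not prove them; you only describe what you expect to find. The missing idea is short. The strict transform $\tilde C$ on the normalization (or on its resolution) maps finitely and birationally onto the smooth curve $C$, hence isomorphically. So $g(C)=p_a(\tilde C)$ is computed by adjunction on the ruled surface, and how $\tilde C$ meets the preimage of the double line does not matter.

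For the non-conical ruled cubic, the normalization is $\mathbb F_1$ with hyperplane class $C_0+2f$. Writing $\tilde C\sim aC_0+bf$ gives $d=a+b$ and $2g-2=2ab-a^2-a-2b$. For $(d,g)=(7,5)$ and $(6,3)$ this becomes $3a^2-15a+22=0$ and $3a^2-13a+16=0$, neither of which has a real root.

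For the cone, work on $\mathbb F_3$ with hyperplane class $C_0+3f$. Smoothness at the vertex forces $\tilde C\cdot C_0=b-3a\in\{0,1\}$. This gives $g=1+d(d-5)/6$ or $g=1+(d(d-5)-2)/6$, i.e.\ $g=2$ for $d=6$ and $g=3$ for $d=7$.

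These are exactly Segre's formula $2g-2=(k-1)(2d-3k)-2k$ and the Gruson--Peskine cone formulas (their Proposition 2.12) that the paper quotes. So in (b) and (c) the contradiction is purely numerical, and your $4$-secant alternative is never needed.
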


\begin{proof}
First, assume that $S$ is reducible. Then a component of $S$ containing $C$ would have degree at most $2$. B\'ezout's Theorem and the hypothesis would exclude this. Thus, from now on we assume $S$ is irreducible.

Let $S$ be an irreducible cubic surface containing $C$. If it is not normal, then it is either a cone over a singular cubic curve or a ruled surface.

In the first case, by \cite[Proposition 2.12]{GP82}, the genus and degree of the curve satisfy one of the equations $g=1+\frac{d(d-5)}{6}$ or $g=1+\frac{d(d-5)-2}{6}$, checking for our values, neither case is possible.

In the second case, by \cite[Theorem (Segre)]{GP82} there exists some integer $k$ such that the degree and genus of $C$ satisfy the equation $2g-2=(k-1)(2d-3k)-2k.$ Checking for our values, we obtain the equations $3k^2-13k+16=0$ or $3k^2-15k+22=0$, none of which has integer solutions.

\end{proof}

In the following lemma we use local coordinates to construct singular cubic surfaces.

\begin{lemma}\label{lem:singular-cubics-2.9-2.12}
Let $C\subset \pp^3$ be one of the following irreducible smooth curves:
\begin{enumerate}
\item a curve of degree $7$ and genus $5$, which is the scheme-theoretic intersection of three cubic surfaces; or
\item a curve of degree $6$ and genus $3$, which is the scheme-theoretic intersection of four cubic surfaces.
\end{enumerate}
Then, there exists a singular cubic surface with only $A_n$ singularities that contains $C$.
\end{lemma}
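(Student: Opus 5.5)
We will work inside the linear system $|\mathcal{I}_C(3)|$ of cubics through $C$. In case (1) it is a net $\pp^2$; in case (2) it is a web $\pp^3$. This follows from projective normality, or from the Hilbert function computation $h^0(\mathcal{O}_{\pp^3}(3))-h^0(\mathcal{O}_C(3))$ together with the hypothesis that $C$ is cut out by exactly three, respectively four, cubics. Every member is normal by Lemma~\ref{lem:normal-cubic-surfaces}. A normal cubic surface that is not a cone over a smooth plane cubic has only rational double points, and these are of type $A_n$, $D_n$ or $E_6$. So the task reduces to two things: producing a singular member, and controlling its singularity type so that it is $A_n$. The plan is to do this by imposing a singular point in local coordinates at a well-chosen point $p\in C$.

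\textbf{Step 1: a member singular at a point of $C$.} Fix $p\in C$ and affine coordinates $(x,y,z)$ centered at $p$, with $T_pC$ the $z$-axis. Every member $F$ of the system vanishes at $p$. The condition that $F$ be singular at $p$ is that $dF_p$ vanish on a complement of $T_pC$, since $dF_p$ already kills $T_pC$. These are only two linear conditions on the system. Hence in case (1) there is at least one member $S_p$ singular at $p$, and in case (2) there is a pencil of such members.

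\textbf{Step 2: the singularity is $A_n$.} We write the local equation of a member as $F=q_2+q_3$, with $q_2$ the quadratic part. The singularity at $p$ is an $A_n$ point precisely when $\operatorname{rank} q_2\geq 2$. It is a $D$ or $E$ point, or worse, when $q_2$ is a square $\ell^2$ or zero. We will show that the rank-$\leq 1$ locus is too small. Choosing $p$ general on $C$, a dimension count on the incidence variety
\[
\{(p,S): p\in C,\ S\in|\mathcal{I}_C(3)|,\ \operatorname{Sing}_pS\ \text{with } \operatorname{rank} q_2\le 1\}
\]
should have dimension less than $\dim C=1$ in case (1). In case (2), the extra freedom gives a pencil at each $p$, and we expect a general member of that pencil to have $\operatorname{rank} q_2\geq 2$. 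Failure of this would force every cubic of the system singular at $p$ to have a degenerate tangent cone. By differentiating along $C$, it would further force the secant/tangent developable of $C$ into special position. We will try to rule this out using that $C$ is scheme-theoretically cut out by cubics, so the quadratic parts of the members can be moved independently in the normal directions. We also need that the singular member has only $A_n$ points away from $p$. For this, we note that a normal cubic surface with a $D_4$ or worse point has at most that point among its non-$A_n$ points, and we apply the same incidence count at any other singular point, whether on $C$ or off $C$.

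\textbf{Main obstacle.} The hard part is the dimension estimate in Step 2, i.e.\ excluding that the tangent cone degenerates to rank at most one for every singular member. We would first attempt an explicit choice: take $p\in C$ and write the members as $F=a\,x+b\,y+(\text{higher order})$ with $a,b$ depending linearly on the system parameters. Then we would check, using a plane section through $T_pC$ and the known degree and genus of $C$ (degree $7$, genus $5$, or degree $6$, genus $3$), that the Hessian restricted to the normal plane cannot vanish identically. If this fails, a fallback is to use the classification of cubic surfaces with $D_4$, $D_5$ or $E_6$ points. Such surfaces are projectively unique up to few moduli, so their curves of the given degree and genus form families of dimension too small to contain the general member of our Hilbert scheme. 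This would give the conclusion for every $C$ after choosing $p$ appropriately.
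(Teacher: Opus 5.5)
\textbf{There is a genuine gap.} Your reduction is sound. Every member of the system is normal by Lemma~\ref{lem:normal-cubic-surfaces}, being singular at a given $p\in C$ costs only two linear conditions, and the point is $A_n$ exactly when $\operatorname{rank} q_2\geq 2$. But the step that carries the whole content of the lemma is never proved: that the member singular at a general $p\in C$ has $\operatorname{rank} q_2\geq 2$. The incidence count is only asserted (``should have dimension less than one''), case (2) is left as an expectation, and neither sketched route closes the gap.
\begin{itemize}
\item Showing that the Hessian restricted to the normal plane does not vanish identically gives only rank $\geq 1$. That is still compatible with a $D$ or $E$ point.
\item The fallback through cubics with $D_4$, $D_5$, $E_6$ points is a moduli count. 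At best it covers a \emph{general} $C$, whereas the lemma is stated, and used in Theorem~\ref{thm:blow-up-P3}, for every such $C$.
\end{itemize}

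\textbf{The missing idea} is to let the point move and look at the mixed tangent--normal terms of the quadratic part.
\begin{itemize}
\item Near a general $p\in C$, the conormal sections of the net span the conormal fiber; this is where the scheme-theoretic hypothesis enters. So after a change of basis in the net, $u=F_1$, $v=F_2$ and a parameter $w$ along $C$ are local coordinates with $C=\{u=v=0\}$, and $F_3=uf+vg$.
\item The member singular at $(0,0,t)$ is then $S_t=f(0,0,t)F_1+g(0,0,t)F_2-F_3$. Its quadratic part at $t=0$ is $q(u,v)-w\,(f_w(0,0,0)\,u+g_w(0,0,0)\,v)$. There is no $w^2$ term, because $S_t$ lies in the ideal $(u,v)$.
\item The mixed term is exactly the derivative of $t\mapsto S_t$. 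If it vanished identically along $C$, all the $S_t$ would be one and the same cubic, singular along $C$. That cubic would be non-normal, contradicting Lemma~\ref{lem:normal-cubic-surfaces}.
\item Hence at a general point the mixed term is nonzero. With no $w^2$ term, the quadratic form cannot be a square, so its rank is at least two, whatever $q$ is. This is also what makes your incidence locus finite.
\end{itemize}
The same argument works verbatim in case (2) after choosing a net inside the web.

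Finally, no second incidence count is needed for the other singular points, and off $C$ it would not even be the same count. By the Bruce--Wall classification~\cite{BW79}, a $D_4$, $D_5$ or $E_6$ point on a normal cubic surface that is not a cone is its unique singularity. A cone over a smooth plane cubic is singular only at its vertex, which is a triple point. So a single $A_n$ point forces all singularities to be of type $A_n$.
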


\begin{proof}
By Lemma~\ref{lem:normal-cubic-surfaces} any cubic surface containing $C$ is normal. Hence if it has one $A_n$ singularity, by \cite[Section 3]{BW79}
then it must have only $A_n$ singularities. 

As $C$ is the intersection of three or four cubic surfaces, we can pick three linearly independent cubic surfaces $F_1$, $F_2$ and $F_3$ containing $C$. Let $\Lambda$ be the linear system generated by these three elements. Over a general point of $C$, the conormal section of the cubics span the two-dimensional conormal fiber. For a fixed point $p$, the space $T_p\pp^3/T_pC$ is two-dimensional. Thus, a surface in $\Lambda$ being singular at $p$ is a codimension $2$ condition, i.e., there is a unique $S\in \Lambda$ singular at $p$.

We will check the quadratic term of $S$ at a general point $p$. We may use local coordinates $u=F_1$, $v=F_2$, and $w$, so that locally $C=\{u=v=0\}$, with $p=(0,0,0)$ and $w$ a parameter along $C$. Therefore $F_3=uf(u,v,w)+vg(u,v,w).$ By computing the differentials, we find that the surface with a singularity at $(0,0,t)$ must have equation $S=f(0,0,t)F_1+g(0,0,t)F_2-F_3$. If the partial derivatives $f_w(0,0,w)$ and $g_w(0,0,w)$ are both $0$ for a general point $p$, then the singular surface would be the same for a general point. This would imply that the surface is not normal, a contradiction with Lemma~\ref{lem:normal-cubic-surfaces}.
Hence at least one of $f_w(0,0,w)$ or $g_w(0,0,w)$ is nonzero. 
Therefore the quadratic term in $S$ at $p$ is $-w(f_w(0,0,0)u+g_w(0,0,0)v)+q(u,v)$. It has a nonzero $uw$ or $vw$ term and no $w^2$ term, therefore it has rank at least two. We conclude that the singularity is of type $A_n$.
\end{proof}

\subsection{Cluster type pairs of low complexity}

In this subsection, we will lift toric pairs or pairs satisfying Theorem~\ref{thm:complexity-one-is-cluster-type}, containing the blown-up curves in $\pp^3$.

\begin{lemma}\label{lem:blowup-curve-in-plane} Let $C$ be a smooth curve in $\mathbb{P}^3$, such that $C$ is contained in an irreducible and reduced quadric surface or a hyperplane. Then the blow-up of $\mathbb{P}^3$ along $C$ is of cluster type. 
\end{lemma}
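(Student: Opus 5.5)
Our plan is to build a cluster type boundary $B$ on $\pp^3$ that contains $C$ with multiplicity one along it, and then lift it to $X={\rm Bl}_C(\pp^3)$ as in the proof of Lemma~\ref{lem:singular-cubic-surface-An}. Since $C$ is a smooth curve of codimension two and $\mathrm{mult}_C(B)=1$, the exceptional divisor of $\pi\colon X\to\pp^3$ has log discrepancy $1$. Hence it is a canonical place of $(\pp^3,B)$, and Lemma~\ref{lem:blow-up-of-cluster-type} shows that $(X,\pi^{-1}_*B)$ is of cluster type. It therefore suffices to find, in each case, a cluster type pair $(\pp^3,B)$ with $C\subset B$ and $B$ reduced generically along $C$.

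\emph{Plane case.} Suppose $C\subset H$ with $H$ a plane. Take $B=H+H_1+H_2+H_3$, where $H,H_1,H_2,H_3$ are planes in general position. Then $(\pp^3,B)$ is projectively equivalent to $(\pp^3,H_0+H_1+H_2+H_3)$, which is toric, so it is of cluster type. The curve $C$ lies only in the component $H$, because a curve of degree at least two cannot lie in two distinct planes. If $C$ is a line, we choose the other planes not to contain it. In all cases $\mathrm{mult}_C(B)=1$.

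\emph{Quadric case.} Suppose $C\subset Q$ with $Q$ an irreducible reduced quadric, so $Q$ is either smooth or a quadric cone. Take $B=Q+H_1+H_2$ with $H_1,H_2$ planes, and choose them so that $(\pp^3,B)$ is log Calabi--Yau of index one and complexity at most one. The complexity is $3+\rank {\rm WDiv}_{\rm alg}(\pp^3)-3=1$, so Theorem~\ref{thm:complexity-one-is-cluster-type} applies once log canonicity is checked.
\begin{itemize}
\item If $Q$ is smooth, choose $H_1,H_2$ tangent to $Q$ at two general points. Then $H_i\cap Q$ is a pair of lines, and the four lines form a cycle on $Q\simeq\pp^1\times\pp^1$. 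This gives the standard toric-like boundary, and $(\pp^3,B)$ is lc of coregularity zero.
\item If $Q$ is a cone, take $H_1,H_2$ to be two general planes through the vertex. Each meets $Q$ in two rulings, and one checks log canonicity at the vertex, for example by blowing up the vertex and passing to the toric model $\pp(1,1,2)$.
\end{itemize}
Alternatively, in both cases $Q$ with its tangent planes can be made torus-invariant for a suitable toric structure (for a smooth $Q$, the quadric $xy=zw$), so the pair is in fact toric and Theorem~\ref{thm:complexity-one-is-cluster-type} is not even needed. Finally, $C$ is not contained in $H_1$ or $H_2$ provided we choose them generically. The exception is when $C$ is a plane curve, which is already covered by the plane case. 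Hence $\mathrm{mult}_C(B)=1$.

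The main obstacle is the quadric cone case. There one must check carefully that the chosen pair $(\pp^3,Q+H_1+H_2)$ is lc at the vertex with index one. One must also check that the freedom in choosing $H_1,H_2$ through the vertex suffices to avoid containing $C$, or else handle the case where $C$ passes through the vertex and meets every such plane badly. I would first try to write $Q=\{xy=z^2\}$ and $H_1=\{x=0\}$, $H_2=\{y=0\}$, which gives a torus-invariant boundary, and then translate by the torus to move the planes off $C$.
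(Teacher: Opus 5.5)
Your plan is the paper's: a boundary containing $C$ with multiplicity one, the four toric planes in the plane case, Theorem~\ref{thm:complexity-one-is-cluster-type} for $Q+H_1+H_2$ in the quadric case, and then Lemma~\ref{lem:blow-up-of-cluster-type}. Your plane case works. So does your smooth-quadric case, with tangent planes at two general points. The quadric cone case, however, fails as written.

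If $H_1$ and $H_2$ both pass through the vertex $v$, then $\mathrm{mult}_v(Q+H_1+H_2)=2+1+1=4>3$. The exceptional divisor of the blow-up of $v$ then has log discrepancy $3-4=-1$. So the pair is not log canonical, the vertex check you postponed comes out negative, and Theorem~\ref{thm:complexity-one-is-cluster-type} cannot be applied.

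Your explicit choice $Q=\{xy=z^2\}$, $H_1=\{x=0\}$, $H_2=\{y=0\}$ is worse still. The plane $\{x=0\}$ cuts $Q$ in the ruling $\{x=z=0\}$ with multiplicity two. Adjunction to $H_1$ therefore already gives a coefficient $2$, and the pair is not lc along that line.

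The ``toric alternative'' is also false, in both the smooth and cone cases. The pair $(\pp^3,Q+H_1+H_2)$ has complexity $3+1-3=1$, whereas a toric log Calabi--Yau pair has complexity zero; on $\pp^3$ its boundary is four planes. For instance, $xy-zw$ is invariant only under a two-dimensional subtorus. Translating by a torus also cannot move a torus-invariant boundary off $C$.

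The repair is to drop the special position, which is exactly what the paper does: take $H_1,H_2$ to be general planes. Whether $Q$ is smooth or a cone, general planes miss the vertex. The vertex is an $A_1$ point, where $(\pp^3,Q)$ is plt. Elsewhere $Q+H_1+H_2$ is snc, and $H_1\cap H_2\cap Q$ consists of two transverse points. Hence the pair is lc, of index one, complexity one and coregularity zero, and Theorem~\ref{thm:complexity-one-is-cluster-type} applies. A general plane contains no fixed curve, so $\mathrm{mult}_C(Q+H_1+H_2)=1$ with no separate treatment of plane curves. Your lifting argument then finishes the proof.
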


\begin{proof}
Let $S$ be the quadric surface or plane containing $C$.
Consider general planes $H_1$, $H_2$ and $H_3$. Let $D$ be defined as $S+H_1+H_2$ or $S+H_1+H_2+H_3$ if $S$ is a quadric surface or a plane, respectively. Therefore, the pair $(\pp^3,D)$ is log Calabi--Yau. If $S$ is a quadric surface, then by Theorem~\ref{thm:complexity-one-is-cluster-type} the pair $(\pp^3,D)$ is of cluster type.  Otherwise $(\pp^3,D)$ is a toric pair, hence of cluster type.

Let $\pi:X\rightarrow \pp^3$ be the blow-up of $\pp^3$ along $C$. As ${\rm mult}_C(D)=1$ and $C$ is of codimension $2$ in $\pp^3$, the morphism $\pi$ extracts only a canonical place of $(\pp^3,D)$. Therefore, by Lemma~\ref{lem:blow-up-of-cluster-type}, the variety $X$ is of cluster type.
\end{proof}

\begin{lemma}\label{lem:blowup-point-curve-p3} Let $C$ be a smooth curve in $\mathbb{P}^3$, such that $C$ is contained in an irreducible and reduced quadric surface or a hyperplane. Then the blow-up of $\mathbb{P}^3$ at a point $p$ and then along the strict transform of $C$ is of cluster type. 
\end{lemma}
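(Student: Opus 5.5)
The plan is to follow the proof of Lemma~\ref{lem:blowup-curve-in-plane}, adding one more extraction for the point. Let $S$ be the quadric or plane containing $C$. Choose a boundary $D$ on $\pp^3$ such that $(\pp^3,D)$ is of cluster type, $C\subset \operatorname{Supp} D$ with ${\rm mult}_C(D)=1$, and $p$ is a point where ${\rm mult}_p(D)\geq 2$. For the plane case take $D=S+H_1+H_2+H_3$, which is toric. For the quadric case take $D=S+H_1+H_2$, which is cluster type by Theorem~\ref{thm:complexity-one-is-cluster-type}. In both cases choose the planes $H_i$ through $p$ so that $p$ is a stratum of the boundary where possible.

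The blow-up of $p$ extracts the exceptional divisor $E_p$ with discrepancy $a(E_p,\pp^3,D)=2-{\rm mult}_p(D)$. This is a canonical place when ${\rm mult}_p D=2$ and a log canonical place when ${\rm mult}_p D=3$. So we need ${\rm mult}_p(D)\in\{2,3\}$. With three planes through $p$ meeting transversally, $p$ is a torus fixed point and $E_p$ is an lc place. If $p\in S$, we instead take two general planes through $p$, giving multiplicity $3$ in the plane case. In the quadric case with $p\in S$ a smooth point, ${\rm mult}_p(S+H_1+H_2)=3$, and log canonicity must be checked: it should hold when $H_1,H_2$ are chosen generally through $p$, or one takes $H_1=T_pS$ so that $p$ lies in an lc center. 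If $p\notin S$, two planes through $p$ give ${\rm mult}_p=2$, a canonical place.

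Let $Y\to\pp^3$ be the blow-up at $p$, with crepant pullback $(Y,D_Y)$, where $D_Y=\tilde D+(\,{\rm mult}_pD-2)E_p$ is effective. By Lemma~\ref{lem:blow-up-of-cluster-type}, $(Y,D_Y)$ is of cluster type. Next blow up the strict transform $\tilde C$. It lies in $\tilde S\subset \operatorname{Supp}D_Y$, and for general choices ${\rm mult}_{\tilde C}(D_Y)=1$. This needs the $H_i$ not to contain $C$, and if $E_p$ has coefficient $1$, that $\tilde C\not\subset E_p$, which is automatic since $\tilde C$ is a curve meeting $E_p$ in finitely many points. Since $\tilde C$ is smooth of codimension two, its blow-up extracts a divisor of discrepancy $0$, that is, a canonical place. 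Lemma~\ref{lem:blow-up-of-cluster-type} then gives that $X$ is of cluster type.

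The main obstacle is the case where $p\in C$ or $p\in S$. There we must verify that $D$ remains log canonical with the prescribed multiplicity at $p$, and that after the first blow-up $D_Y$ still has multiplicity exactly one along $\tilde C$, rather than picking up $E_p\cap\tilde S$ components that contain $\tilde C$. I would handle this by choosing the auxiliary planes through $p$ transverse to $C$ and $T_pS$ generically, and computing the local multiplicities directly.
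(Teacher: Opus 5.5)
Your route is essentially the paper's. You use the boundary $S+H_1+H_2$ (quadric) or $S+H_1+H_2+H_3$ (plane), with general planes through $p$, blow up $p$ to extract a canonical or lc place according as ${\rm mult}_p=2$ or $3$, then blow up $\tilde C$, and lift twice with Lemma~\ref{lem:blow-up-of-cluster-type}. You are right that $\tilde C$ has multiplicity one because its generic point lies off $E_p$. Your variant in the plane case with $p\notin S$ is also fine: three planes through $p$ make $E_p$ the toric lc place at a torus-fixed point, where the paper uses two planes and gets a canonical place.

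The one case you do not cover is when $S$ is a quadric cone and $p$ is its vertex. There ${\rm mult}_p S=2$, so your recipe for $p\in S$ (two general planes through $p$) gives ${\rm mult}_p(S+H_1+H_2)=4$ and $a(E_p)=-2$. That pair is not lc, and the blow-up of $p$ is not admissible in Lemma~\ref{lem:blow-up-of-cluster-type}. As your own criterion ${\rm mult}_p D\in\{2,3\}$ indicates, you must put only one plane through $p$. The paper takes $H_1$ general through the vertex and $H_2$ avoiding it, so ${\rm mult}_p=3$. Log canonicity then follows by adjunction to $H_1$: there $S|_{H_1}$ is two distinct rulings through $p$ and $H_2|_{H_1}$ is a line missing $p$.

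At a smooth point $p\in S$ your general-planes choice is correct. The tangent planes of $S,H_1,H_2$ at $p$ are independent, so the boundary is snc near $p$, and Bertini handles the rest. Drop the hedge $H_1=T_pS$. For a cone, $T_pS\cap S$ is a double ruling, and for a smooth quadric with $H_2$ also through $p$, the restriction to $T_pS$ is three concurrent lines. Neither is lc.
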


\begin{proof}
Let $H_1$ and $H_2$ be two general planes containing the point $p$. Let $H$ be a general plane and let $S$ be the quadric surface or plane containing $C$. Let $D$ be defined as $S$ or $S+H$ if $S$ is a quadric surface or a plane, respectively. Therefore, the pair $(\pp^3,H_1+H_2+D)$ is log Calabi--Yau. If $S$ is a quadric surface, then by Theorem~\ref{thm:complexity-one-is-cluster-type} the pair $(\pp^3,H_1+H_2+D)$ is of cluster type.  Otherwise $(\pp^3,H_1+H_2+D)$ is a toric pair, hence of cluster type.

Let $\pi_1:X\rightarrow \pp^3$ be the blow-up of $\pp^3$ at $p$, and let $(X,B_X)$ be the log pull-back of $(\pp^3,H_1+H_2+D)$. If $p\in S$, then ${\rm mult}_p(H_1+H_2+D)=3$. In the plane case choose $H$ avoiding $p$. If $p$ is the vertex of a quadric cone $S$, take $H_1$ general through $p$ and $H_2$ avoiding $p$ instead. Adjunction gives an lc boundary and multiplicity three. In all other cases the stated general planes give an lc boundary. Since $p$ is of codimension $3$ in $\pp^3$, the morphism $\pi_1$ extracts a log canonical or canonical place of $(\pp^3,H_1+H_2+D)$, according as $p\in S$ or $p\notin S$. In either case, Lemma~\ref{lem:blow-up-of-cluster-type} implies that $(X,B_X)$ is of cluster type.

Let $\pi_2:Y\rightarrow X$ be the blow-up of $C'$ the strict transform of $C$ in $X$. As ${\rm mult}_{C'}(B_X)=1$ and $C'$ is of codimension $2$ in $X$, $\pi_2$ extracts only a canonical place of $(X,B_X)$. Therefore, by Lemma~\ref{lem:blow-up-of-cluster-type}, the variety $Y$ is of cluster type.
\end{proof}

\begin{lemma}\label{lem:3.6-3.12-3.18} Let $L$ be a line in $\mathbb{P}^3$. Let $C$ be a smooth curve  disjoint from $L$, contained in an irreducible and reduced quadric surface $Q$. Then, the blow-up of $\mathbb{P}^3$ along $L$ and $C$ is of cluster type.
\end{lemma}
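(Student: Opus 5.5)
We follow the template of Lemma~\ref{lem:blowup-curve-in-plane}: find a cluster type boundary $D\in|-K_{\pp^3}|$ that contains both $L$ and $C$ with multiplicity one along each, then lift it with Lemma~\ref{lem:blow-up-of-cluster-type}. Here $Q$ is a reduced irreducible quadric containing $C$, and $D$ has degree four, so we need two more planes. We take $H_1$ and $H_2$ to be two distinct general planes containing $L$, and set
\[
D:=Q+H_1+H_2 .
\]

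First, $(\pp^3,D)$ is log Calabi--Yau. Indeed $D\sim -K_{\pp^3}$, and the log canonicity can be checked in steps.
\begin{itemize}
\item Since $C$ is disjoint from $L$, the line $L$ is not contained in $Q$.
\item If $Q$ is a cone whose vertex lies on $L$, we instead choose the pencil so that $H_1$ and $H_2$ are general planes through $L$. The pair is then still lc, because at the vertex the local picture is a quadric cone plus two general planes through a line meeting it. This can be checked by adjunction to $H_1$: it reduces to the pair $(H_1,\,\text{conic}+\text{line}+L)$ near a point, which is a coordinate-type configuration.
\item Generically, $H_1$ and $H_2$ are smooth and meet transversally along $L$, $Q$ meets $L$ in two points, and $Q\cap H_i$ is a conic meeting $L$ at those points.
\item At a point of $Q\cap L$ the three components are locally like three coordinate hyperplanes when $Q$ is smooth there. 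This is lc, with a zero-dimensional lc center.
\end{itemize}

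Second, $(\pp^3,D)$ is of cluster type. Its complexity is
\[
3+1-3=1 ,
\]
since it has three components with coefficient one and Picard rank one, and its index is one. So Theorem~\ref{thm:complexity-one-is-cluster-type} applies. If $Q$ happens to be a cone and some degeneracy occurs, the same count still gives complexity one.

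Third, blow up. The curves $L$ and $C$ are disjoint, so $X$ is obtained by blowing up $L$ and $C$ in either order, and the two blow-ups commute. We have ${\rm mult}_L(D)=2$, since $L\subset H_1\cap H_2$. Hence the exceptional divisor over $L$ has log discrepancy $2-2=0$, so it is a log canonical place of $(\pp^3,D)$. For the curve $C\subset Q$ we have ${\rm mult}_C(D)=1$, because $C$ avoids $L$ and a general $H_i$ through $L$ does not contain $C$. So the exceptional divisor over $C$ is a canonical place. By Lemma~\ref{lem:blow-up-of-cluster-type}, applied to both blow-ups, $X$ with the induced effective boundary is of cluster type. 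Concretely, the blow-up of $C$ is applied after first blowing up $L$, where the strict transform of $D$ still has multiplicity one along the preimage of $C$.

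The main obstacle is the log canonicity check at the points of $L\cap Q$, together with the special position of $Q$ relative to $L$. The problem cases are when $L$ is tangent to $Q$, or meets its vertex, or when some plane through $L$ is tangent to $Q$ along a ruling. For a tangent line, we would first try using the genericity of $H_1$ and $H_2$ in the pencil through $L$. If that is insufficient, we would replace $H_1+H_2$ by the unique degenerate choice $2H$ only if its multiplicity still gives an lc pair. Otherwise we would verify the lc condition by a single blow-up at the point and adjunction to the plane.
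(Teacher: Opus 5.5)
There is a genuine gap. Because you put \emph{both} planes through $L$, the divisor $D=Q+H_1+H_2$ is too singular at $L\cap Q$ unless $L$ meets $Q$ transversally in two smooth points. Your lc checks for the other configurations are wrong.

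First, your opening bullet is false. $L\subset Q$ is compatible with $C\cap L=\emptyset$: take $C$ and $L$ to be two lines of the same ruling of a smooth quadric. Then ${\rm mult}_L D=3$, so the exceptional divisor over $L$ has log discrepancy $2-3=-1$.

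Second, suppose $Q$ is a cone with vertex $v\in L$. This occurs, e.g.\ for an elliptic quartic on a cone and a line through the vertex not lying on the cone. Then ${\rm mult}_v D=2+1+1=4>3$, so $(\pp^3,D)$ is not lc. Adjunction to $H_1$ gives the two lines of $H_1\cap Q$ through $v$ together with $L$. That is an ordinary triple point, not a coordinate configuration, and your ``instead'' choice is literally the same choice.

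Third, suppose $L$ is tangent to $Q$ at a smooth point $p$ with $L\not\subset Q$. Every plane $H\neq T_pQ$ through $L$ cuts a conic whose tangent line at $p$ is $H\cap T_pQ=L$. So adjunction to $H_1$ yields the tacnode $y(y-z^2)$, whose log canonical threshold is $3/4$. The member $H=T_pQ$ is even worse. Hence genericity in the pencil cannot help, and $2H$ has coefficient $2$, so it is not a boundary.

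You also cannot always escape by changing $Q$. A curve of bidegree $(1,3)$ on a smooth quadric lies on no other quadric. It is disjoint from any line tangent to $Q$ at a point off $C$.

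The fix, which is the paper's argument, is to put only one plane through $L$: take $H_1$ general through $L$ and $H_2$ a general plane. Then the bad cases disappear.
\begin{enumerate}
\item At a tangency point $p$, one has $H_1\neq T_pQ$, so $Q+H_1$ is snc at $p$, and $H_2$ misses $p$.
\item At a vertex on $L$, $H_1\cap Q$ is a pair of lines, and adjunction gives a node plus a general line.
\item ${\rm mult}_L(Q+H_1+H_2)$ is $1$ or $2$ according as $L\not\subset Q$ or $L\subset Q$. In the latter case $H_1\cap Q=L+L'$ is nodal, so the pair is still lc. Either way the blow-up of $L$ extracts a canonical or log canonical place.
\end{enumerate}
The complexity is still $3+1-3=1$. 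The rest of your argument, Theorem~\ref{thm:complexity-one-is-cluster-type} followed by Lemma~\ref{lem:blow-up-of-cluster-type} for the two disjoint blow-ups, then goes through unchanged.
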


\begin{proof}
Let $H_1$ be a general plane containing $L$ and let $H_2$ be a general plane. Then, $(\pp^3,H_1+H_2+Q)$ is a log Calabi--Yau pair, and by Theorem~\ref{thm:complexity-one-is-cluster-type} it is of cluster type. Let $\pi:X\rightarrow \pp^3$ be the blow-up of $\pp^3$ along $L$ and $C$. The multiplicity along $C$ is one, while that along $L$ is one or two according as $L\not\subset Q$ or $L\subset Q$. Hence $\pi$ extracts only canonical or log canonical places of $(\pp^3,H_1+H_2+Q)$. Therefore, by Lemma~\ref{lem:blow-up-of-cluster-type}, the variety $X$ is of cluster type, as wanted.
\end{proof}

\begin{proposition}\label{lem:blowup-union-three-lines} Let $X$ be the blow-up of $\pp^3$ along the disjoint union of three lines. Then, the variety $X$ is of cluster type.
\end{proposition}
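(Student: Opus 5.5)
Three pairwise disjoint lines $L_1,L_2,L_3\subset\pp^3$ lie on a unique smooth quadric $Q\simeq\pp^1\times\pp^1$, and all three belong to the same ruling. The plan is to find a cluster type boundary on $\pp^3$ that contains each $L_i$ with multiplicity at most two along it. Then the blow-up extracts only canonical or log canonical places, and Lemma~\ref{lem:blow-up-of-cluster-type} applies. This is the same mechanism as in Lemma~\ref{lem:3.6-3.12-3.18}.

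The boundary we take is $D=Q+H_1+H_2$, where $H_1$ is a general plane containing $L_1$ and $H_2$ is a general plane containing $L_2$. The lc condition is local and can be checked at the special points.
\begin{enumerate}
\item A plane containing a line of $Q$ cuts $Q$ in that line plus a line of the other ruling. Hence $H_1\cap Q=L_1\cup M_1$ and $H_2\cap Q=L_2\cup M_2$, with $M_1,M_2$ in the opposite ruling.
\item Since $H_1,H_2$ are general in their pencils, $M_1\neq M_2$. The line $H_1\cap H_2$ is then not contained in $Q$.
\item The line $H_1\cap H_2$ meets $Q$ in two points, namely $L_1\cap M_2$ and $L_2\cap M_1$.
\item At each of these two points, three smooth surfaces pass whose three pairwise intersection curves are distinct, so the pair $(\pp^3,D)$ is normal crossing there. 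At all other points $D$ has at most two components, meeting transversally or along $L_i$ or $M_i$ with simple tangency-free behaviour.
\end{enumerate}
Thus $(\pp^3,D)$ is lc and log Calabi--Yau. It has four components, so its complexity is $3+1-4=0$. By Theorem~\ref{thm:complexity-one-is-cluster-type}, or directly because it is toric, the pair is of cluster type.

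For the blow-up, note that $\mathrm{mult}_{L_1}D=\mathrm{mult}_{L_2}D=2$, coming from $Q$ and $H_i$, while $\mathrm{mult}_{L_3}D=1$, coming from $Q$ alone. A general plane through $L_1$ meets $L_3$ in one point only, so $L_3\not\subset H_1,H_2$. Because $D$ is snc at the generic point of each $L_i$, blowing up $L_1$ and $L_2$ extracts log canonical places, and blowing up $L_3$ extracts a canonical place. Since the lines are disjoint, the three blow-ups commute and can be performed simultaneously. Lemma~\ref{lem:blow-up-of-cluster-type} then gives that $X$ is of cluster type.

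The main point to verify is that $D$ is lc at the points where all three components meet, that is, that the configuration is genuinely simple normal crossing there. This amounts to the genericity argument in step (2), which ensures $M_1\neq M_2$ and that the triple points are transverse.
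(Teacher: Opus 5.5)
Your argument works after two corrections, but it follows a different route from the paper. The paper takes general planes $H_1,H_2,H_3$ with $L_i\subset H_i$, plus a general plane $H_4$. These four planes are in general linear position, so $(\pp^3,H_1+H_2+H_3+H_4)$ is a toric pair. Because the lines are pairwise skew, each $L_i$ lies in exactly one component, so every exceptional divisor is a canonical place and Lemma~\ref{lem:blow-up-of-cluster-type} finishes the proof. That route needs no lc verification and no appeal to Theorem~\ref{thm:complexity-one-is-cluster-type}. Your route, modeled on Lemma~\ref{lem:3.6-3.12-3.18}, uses the quadric through the three lines. It costs the complexity-one theorem of \cite{ELY25} and a genuine lc check. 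In exchange it produces a different boundary on $X$, in which $E_1$ and $E_2$ appear with coefficient one.

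The first correction: $D=Q+H_1+H_2$ has three components, not four. Its complexity is therefore $3+1-3=1$, and $(\pp^3,D)$ is not toric, since a toric boundary of $\pp^3$ consists of four planes. Only your appeal to Theorem~\ref{thm:complexity-one-is-cluster-type} is valid, and it suffices because the pair has index one.

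The second correction: step (4) is false as stated. Since $H_i\cap Q=L_i\cup M_i$ is reducible, $H_i$ is the tangent plane $T_{p_i}Q$ at $p_i=L_i\cap M_i$. So $Q$ and $H_i$ are tangent at $p_i$, and $D$ is not snc there. The pair is still lc at $p_i$, for two reasons. First, $H_2\cap L_1=L_1\cap M_2\neq p_1$ because $M_1\neq M_2$, so $H_2$ misses $p_1$, and symmetrically $H_1$ misses $p_2$. Second, adjunction gives $(K_{\pp^3}+H_i+Q)|_{H_i}=K_{H_i}+L_i+M_i$, a nodal curve, so inversion of adjunction \cite{Kaw07} applies. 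Thus the delicate points are $p_1$ and $p_2$, not the triple points, which are indeed snc by your argument. With these fixes, your multiplicity computation along the $L_i$ and the application of Lemma~\ref{lem:blow-up-of-cluster-type} go through.
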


\begin{proof}
Let $H_1$, $H_2$ and $H_3$ be general planes, each containing one of the three disjoint lines, and let $H_4$ be a general plane. The log Calabi--Yau pair $(\pp^3, H_1+H_2+H_3+H_4)$ is toric, therefore of cluster type. Let $\pi:X\rightarrow\pp^3$ be the blow-up of $\pp^3$ along the three disjoint lines. As $H_1+H_2+H_3+H_4$ has multiplicity $1$ at each of the three lines, the map $\pi$ only extracts canonical places of $(\pp^3, H_1+H_2+H_3+H_4)$. Therefore, by Lemma~\ref{lem:blow-up-of-cluster-type}, the variety $X$ is of cluster type.
\end{proof}

\begin{proof}[Proof of Theorem~\ref{thm:blow-up-P3}]
We combine the preceding constructions.
If $X$ is in the families \textnumero\,2.15, \textnumero\,2.19, \textnumero\,2.22, \textnumero\,2.25, \textnumero\,2.27, \textnumero\,2.28 or \textnumero\,2.30, then $X$ is the blow-up of $\pp^3$ along a curve contained in a quadric surface or a plane. Therefore, by Lemma~\ref{lem:blowup-curve-in-plane}, the variety $X$ is of cluster type.

If $X$ is in the families \textnumero\,3.11, \textnumero\,3.14, \textnumero\,3.16 or \textnumero\,3.23, then $X$ is the blow-up of $\pp^3$ at a point $p$ and then along the strict transform of a curve contained in a plane or a quadric surface. Therefore by Lemma~\ref{lem:blowup-point-curve-p3}, the variety $X$ is of cluster type.

If $X$ is in \textnumero\,3.6, \textnumero\,3.12 or \textnumero\,3.18, then the variety $X$ is the blow-up of $\pp^3$ along the disjoint union of a line and a curve contained in a quadric. Therefore, by Lemma~\ref{lem:3.6-3.12-3.18}, the variety $X$ is of cluster type.

The case of elements in the family \textnumero\,4.6 is proved in Proposition~\ref{lem:blowup-union-three-lines}.

If $X$ is in the families \textnumero\,2.9 or \textnumero\,2.12, then it is the blow-up of $\pp^3$ along a curve as in Lemma~\ref{lem:singular-cubics-2.9-2.12}. Therefore the curve is contained in a singular cubic surface with only $A_n$ singularities. By Lemma~\ref{lem:singular-cubic-surface-An} the variety $X$ is of cluster type.

If $X$ is a general element in the family \textnumero\,2.4, then it is the blow-up of $\pp^3$ in a curve as in Lemma~\ref{lem:curves-in-cubic-surfaces}. Therefore, the curve is contained in a singular cubic surface with only $A_n$ singularities. By Lemma~\ref{lem:singular-cubic-surface-An}, the variety $X$ is of cluster type.
\end{proof}

\section{Blow-ups of a three-dimensional quadric}
\label{sec:q3}

In this section, we study smooth Fano threefolds which are blow-ups of a smooth quadric threefold in $\pp^4$.
Throughout this section $\qq^3$ always refers to a smooth quadric threefold in $\pp^4$.
The main theorem is the following:

\begin{theorem}\label{thm:blow-up-Q3}
Let $X$ be a smooth Fano threefold in the families \textnumero\,2.7, \textnumero\,2.13, \textnumero\,2.17, \textnumero\,2.21, \textnumero\,2.23, \textnumero\,2.26, \textnumero\,2.29, \textnumero\,2.31, 
\textnumero\,3.10, \textnumero\,3.15, 
\textnumero\,3.19, \textnumero\,3.20, \textnumero\,4.4, or \textnumero\,5.1, then $X$ is of cluster type.
\end{theorem}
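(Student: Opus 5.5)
We follow the strategy of Section~\ref{sec:p3}. For each family in the list, we exhibit a cluster type pair $(\qq^3,B)$ with $B\in|-K_{\qq^3}|=|\mathcal{O}_{\qq^3}(3)|$ whose boundary contains the blown-up centers with multiplicity one along curves. At points we ask for multiplicity two or three, i.e.\ the center is a canonical or log canonical place. Lemma~\ref{lem:blow-up-of-cluster-type} then lifts the cluster type property to $X$. The basic cluster type pairs on $\qq^3$ are $(\qq^3,H_1+H_2+H_3)$ with $H_i$ hyperplane sections. If $H_2,H_3$ are tangent hyperplanes cutting quadric cones, or three hyperplanes whose intersection meets $\qq^3$ suitably, this is the toric-type boundary $(\qq^3,H_2+H_3+H_4)$ recalled in the introduction. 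If the complexity is at most one, Theorem~\ref{thm:complexity-one-is-cluster-type} applies: on $\qq^3$ the complexity of $H_1+H_2+H_3$ is $3+1-3=1$ for any three hyperplane sections forming an lc index-one boundary. So any $B=H_1+H_2+H_3$ which is lc is cluster type. Similarly $B=S+H$ with $S\in|\mathcal{O}(2)|$ irreducible has complexity $3+1-2=2$, which is not enough. We therefore aim to put every center inside hyperplane sections, or inside components of a degenerate member.

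We then go through the Mori--Mukai descriptions of the families. For \textnumero\,2.31 the center is a line, for \textnumero\,2.29 a conic, and for \textnumero\,2.26 a twisted cubic. For \textnumero\,2.23 it is an intersection of $A\in|\mathcal{O}(1)|$ and $B\in|\mathcal{O}(2)|$, and for \textnumero\,2.21 a twisted quartic. For \textnumero\,2.17 it is an elliptic quintic, for \textnumero\,2.13 a curve of degree $6$ and genus $2$, and for \textnumero\,2.7 the intersection of two divisors in $|\mathcal{O}(2)|$, of degree $8$ and genus $5$. For \textnumero\,3.10, \textnumero\,3.15, \textnumero\,3.19, \textnumero\,3.20, \textnumero\,4.4 and \textnumero\,5.1 the centers are unions of disjoint conics, lines, points, or a line and a conic.

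Planar centers, namely lines, conics, and the curves of \textnumero\,2.23, lie in a hyperplane section $H_1$. We add two general $H_2,H_3$, choosing them through a point center when there is one. Several disjoint lines or conics are handled by taking one hyperplane section through each, as in Proposition~\ref{lem:blowup-union-three-lines}. For \textnumero\,5.1 and \textnumero\,4.4 we use a degenerate choice where some $H_i$ contain two centers, checking lc-ness directly. The multiplicity along each curve is one, so we get canonical places. For point blow-ups we make two or three components pass through the point.

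Non-planar curves (\textnumero\,2.26, 2.21, 2.17, 2.13, 2.7) are the main obstacle. For \textnumero\,2.26 and \textnumero\,2.21 we use a hyperplane section $H_1$, which is a quadric surface, containing a curve linearly equivalent to a twisted cubic or quartic. Then we argue on a singular quadric cone section, or pass to a quadric $S\in|\mathcal{O}(2)|$ splitting as $H\cup H'$. For \textnumero\,2.17, 2.13 and 2.7 we mimic Lemmas~\ref{lem:curves-in-cubic-surfaces} and~\ref{lem:singular-cubics-2.9-2.12}. We find a member $S\in|\mathcal{O}_{\qq^3}(2)|$ containing $C$ with an $A_n$ point $p$ (for \textnumero\,2.7, via a dimension count in the pencil), and set $B=S+H$ with $H$ a general tangent hyperplane section at $p$. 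Then ${\rm mult}_p B=3$ and the pair has coregularity zero. We would need an analogue of Proposition~\ref{prop:criterion-sing-p3} on $\qq^3$. The plan is to project $\qq^3$ from $p$: this is birational onto $\pp^3$, extracting $p$ and contracting the cone of lines through $p$, which lies in the tangent section $H$. We check that this is crepant for $(\qq^3,B)$ onto a quartic-type pair in $\pp^3$ with a triple point, and then apply Proposition~\ref{prop:criterion-sing-p3} together with Lemmas~\ref{lem:blow-up-of-cluster-type} and~\ref{lem:cluster-type-descends}. Verifying the slc and coregularity-zero conditions after this projection is where we expect the real work to lie.
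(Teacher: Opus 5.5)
The hyperplane-section part of your plan is sound, and it is essentially what the paper does for \textnumero\,2.23, 2.26, 2.29, 2.31, 3.10, 3.15, 3.19, 3.20, 4.4 and 5.1. The gap is in \textnumero\,2.7, 2.13, 2.17 and 2.21.

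For \textnumero\,2.21 your argument fails outright. The twisted quartic is a rational normal quartic, nondegenerate in $\pp^4$. So it lies in no hyperplane section of $\qq^3$, and, being irreducible, in no reducible member $H\cup H'$ of $|\oo_{\qq^3}(2)|$ either.

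For \textnumero\,2.7, 2.13, 2.17 everything rests on the existence of a member of $|\mathcal I_C(2)|$ with an $A_n$ point, which you only announce. In \textnumero\,2.7 this system is only a pencil. Since the differentials of the three quadrics cutting out $C$ are independent along $C$, no member is singular at a point of $C$, so the argument of Lemma~\ref{lem:singular-cubics-2.9-2.12} is unavailable. A count in the space of pencils, as in Lemma~\ref{lem:curves-in-cubic-surfaces}, only reaches general $C$, whereas the theorem is claimed for every member.

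Your choice of $H$ is also inconsistent. The tangent hyperplane section at $p$ is the cone of lines through $p$, which has multiplicity two there. With that $H$ you get $\mult_p B=4$, and $(\qq^3,B)$ is not lc. You must take $H$ through $p$ and smooth at $p$. The cone is then not a boundary component: the projection from $p$ contracts it onto a conic in the image plane of the exceptional divisor, a canonical place, exactly as in the proof of Proposition~\ref{prop:1-9}. With this correction the image pair is a quadric plus two planes in $\pp^3$, already of complexity one. So the whole weight of your route lies on the unproved existence statement.

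The missing idea is that no singular member is needed; your dismissal of $S+H$ on complexity grounds was premature. The ideal $\mathcal I_{C/\qq^3}$ is generated by quadric sections: by definition in \textnumero\,2.7, and by Saint-Donat for the linearly normal curves with $d\geq 2g+2$ in the other three families. Hence Bertini away from $C$, together with the two-dimensional conormal count along $C$, gives a \emph{smooth} $S\in|\oo_{\qq^3}(2)|$ containing $C$ (Lemma~\ref{lem:curves-in-quadrics}).

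Now take a line $\ell\subset S$ and a general hyperplane section $R\supset\ell$. Then $R\cap S=\ell+C'$ with $C'$ of bidegree $(1,2)$ on $R\simeq\pp^1\times\pp^1$, so $(\qq^3,S+R)$ has coregularity zero. Blowing up $\ell$ extracts an lc place. Projection from $\ell$ makes $\operatorname{Bl}_\ell\qq^3\to\pp^2$ a fibration with two horizontal boundary components, the strict transform $S'$ and the exceptional divisor $E$. Proposition~\ref{prop:fibration-to-smooth-surface} then shows $(\qq^3,S+R)$ is of cluster type (Lemma~\ref{lem:smooth-quadric-section-in-quadric}). Finally $C\subset S$ with multiplicity one and $C\not\subset R$, since all four curves span $\pp^4$. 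So the blow-up of $C$ extracts a canonical place, which settles all four families uniformly and for every member.
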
 

All the cases considered in the previous theorem
are blow-ups of $\mathbb{Q}^3$.
We start by presenting a Calabi--Yau pair on $\mathbb{Q}^3$ that is of cluster type. 

\begin{lemma}\label{lem:smooth-quadric-section-in-quadric}
Let $\mathbb{Q}^3$ be a smooth quadric threefold in $\pp^4$. Let $S\in |\mathcal{O}_{\mathbb{Q}^3}(2)|$ be smooth and $H$ a general hyperplane in $\pp^4$ containing a line in $S$, then the pair $(\mathbb{Q}^3,S+H|_{\mathbb{Q}^3})$ is of cluster type.
\end{lemma}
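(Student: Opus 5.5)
Let $\ell\subset S$ be the line and $H$ a general hyperplane containing $\ell$. The plan is to project from $\ell$, which turns $\mathbb{Q}^3$ birationally into $\pp^3$ (or a $\pp^1$-bundle over $\pp^2$). We then identify the image of the boundary with a pair already known to be of cluster type, and transport the property back using Lemma~\ref{lem:blow-up-of-cluster-type} and Lemma~\ref{lem:cluster-type-descends}.

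\textbf{Setup.} Since $S$ is smooth and $H$ is general, $H|_{\mathbb{Q}^3}$ is a quadric surface $Q_H$ containing $\ell$. The intersection $S\cap Q_H$ is a curve of class $(2,2)$-type on $Q_H$ containing $\ell$, so $S\cap Q_H=\ell\cup C$ with $C$ a residual twisted cubic meeting $\ell$. One checks that $(\mathbb{Q}^3,S+Q_H)$ is log Calabi--Yau with index one, since $S+Q_H\in|\mathcal{O}(3)|=|-K_{\mathbb{Q}^3}|$. It is lc because along $\ell$ and $C$ the two components meet transversally at general points, and the finitely many points of $\ell\cap C$ are nodes of the double curve. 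Moreover $\ell$ is an lc center of the pair.

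\textbf{Projection from $\ell$.} Let $\sigma\colon Y={\rm Bl}_\ell\mathbb{Q}^3\to\mathbb{Q}^3$. The projection from $\ell$ gives a morphism $Y\to\pp^2$ which is a $\pp^1$-bundle; equivalently, $Y$ is the blow-up of $\pp^3$ along a twisted cubic $\Gamma$, with the exceptional divisor $E_\ell$ mapping to the plane spanned by the residual data. Since $\ell$ is an lc center of multiplicity two in $S+Q_H$, the log pull-back to $Y$ is $S_Y+Q_{H,Y}+E_\ell$ with coefficient one on $E_\ell$. So $(Y,S_Y+Q_{H,Y}+E_\ell)$ is crepant to our pair and extracts only an lc place. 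By Lemma~\ref{lem:cluster-type-descends}, it suffices to show that this pair on $Y$ is of cluster type.

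Now push forward to the other contraction $Y\to\pp^3$. The divisor $Q_H$ contains $\ell$ and is a fiber-type divisor: it is the preimage of a line in $\pp^2$. The image of $Q_{H,Y}$ is therefore a plane $P\subset\pp^3$. The image of $S_Y$ is a surface $S'$ of degree $4-1-2=$ small degree; counting via $-K_{\pp^3}=\mathcal{O}(4)$ and the plane $P$, $S'$ is a cubic containing $\Gamma$. The image of $E_\ell$ is contracted (it becomes the exceptional divisor over $\Gamma$ in reverse). Rather than chase degrees, the cleaner approach is to use the conic bundle structure directly.

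\textbf{Conic bundle route (preferred).} The $\pp^1$-bundle $f\colon Y\to\pp^2$ has $E_\ell$ as a section-like horizontal divisor, and $S_Y$ is another horizontal component. The divisor $Q_{H,Y}=f^{-1}(L)$ for a line $L$ is vertical. The plan is to apply Proposition~\ref{prop:fibration-to-smooth-surface} to $(Y,S_Y+Q_{H,Y}+E_\ell)\to\pp^2$. Here $\pp^2$ is smooth, the index is one, and there are exactly two horizontal components, $E_\ell$ and $S_Y$. It remains to verify coregularity zero. For this we compute the canonical bundle formula base pair $(\pp^2,B_{\pp^2})$. It contains $L$ and the discriminant-type curve $f(S_Y\cap E_\ell)$, which is the image of the curve $S\cap\text{(conormal directions)}$, a conic in $\pp^2$. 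This gives $B_{\pp^2}=L+$ conic, a cycle of rational curves when $L$ is a general line meeting the conic in two points; $H$ general ensures this. Then $(\pp^2,B_{\pp^2})$ has coregularity zero, and hence so does $(Y,\cdot)$ by adjunction. Proposition~\ref{prop:fibration-to-smooth-surface} then gives the cluster type property, and Lemma~\ref{lem:cluster-type-descends} finishes.

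\textbf{Main obstacle.} The hardest step is verifying coregularity zero. This requires identifying $f(S_Y\cap E_\ell)$ precisely as a smooth conic and showing it is transverse to $L$ under the generality of $H$. I would do this by explicit coordinates, taking $\mathbb{Q}^3=\{x_0x_1+x_2x_3+x_4^2=0\}$ with $\ell=\{x_0=x_2=x_4=0\}$.
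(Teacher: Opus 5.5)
Your preferred conic-bundle route is the paper's proof. You blow up $\ell$, so that $E_\ell$ enters the log pull-back with coefficient one, and use the $\pp^1$-fibration $Y\to\pp^2$ with horizontal components $S_Y,E_\ell$ and vertical component $Q_{H,Y}=f^{-1}(L)$. You then apply Proposition~\ref{prop:fibration-to-smooth-surface} and descend with Lemma~\ref{lem:cluster-type-descends}. Two corrections are needed, neither fatal.

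First, delete the aside identifying $Y$ with the blow-up of $\pp^3$ along a twisted cubic, together with the degree-chasing built on it. That variety is \textnumero\,2.27 with $(-K)^3=38$, whereas ${\rm Bl}_\ell\mathbb{Q}^3$ is \textnumero\,2.31 with $(-K)^3=46$. Its only extremal contraction besides $\sigma$ is the $\pp^1$-bundle, so there is no morphism to $\pp^3$.

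Second, coregularity zero is not the obstacle you expect and needs no coordinates. As in the paper, adjunction to $Q_H\simeq\pp^1\times\pp^1$ gives $\ell+C$, where $C$ is a smooth curve of bidegree $(1,2)$ meeting $\ell$ transversally in two points. Smoothness and transversality come from Bertini for the base-point-free residual system $|h-\ell|$ on $S$, with $(h-\ell)\cdot\ell=2$. This is also what makes your lc claim rigorous, via inversion of adjunction.

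Your computation $B_{\pp^2}=L+\text{conic}$ is correct. However, coregularity zero of the base does not lift by itself: take $\pp^2\times A\to\pp^2$ with $A$ an elliptic curve and the pulled-back triangle as boundary. To justify ``by adjunction'' you must adjoin to the birational section $E_\ell\simeq\mathbb{F}_1$. There the induced boundary is $\sigma_S+\sigma_R$, of classes $s+2f$ and $s+f$, meeting in two points and crepant birational to $(\pp^2,\text{conic}+L)$.
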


\begin{proof}
Let $\ell$ be the line in $S$, and let $R$ be the intersection of the general hyperplane through $\ell$ with $\mathbb{Q}^3$. As $H$ is general and $\mathcal{O}_\mathbb{Q}^3(-3)$ is the canonical bundle of $\mathbb{Q}^3$, the pair $(\mathbb{Q}^3,S+R)$ is an index one Calabi--Yau pair. 
Let $(R,B_R)$ be the pair obtained by adjunction of $(\mathbb{Q}^3,S+R)$ to $R$. By construction $B_R$ contains $\ell$, and the surface $R$ is isomorphic to $ \pp^1\times \pp^1$. 
Let $h$ be the hyperplane class on $S$. The residual system $\mathcal{M}\subset |h-\ell|$ cut-out by hyperplanes through $\ell$ is basepoint-free. Since $(h-\ell)\cdot \ell =2$, Bertini's theorem show that for general $H$, $B_R=\ell+C$, where $C$ is smooth and meets $\ell$ transversally at two distinct points. By choosing the ruling so $\ell$ has bidegree $(1,0)$ on $R$, the residual curve $C$ has bidegree $(1,2)$.
Thus the pair $(R,B_R)$ has coregularity zero. Therefore, the pair $(\mathbb{Q}^3,S+R)$ also has coregularity zero.
 
Let $X$ and $Y$ be the blow-ups along $\ell$ of $\mathbb{Q}^3$ and $\pp^4$ respectively. Since $\ell$ has codimension $2$ in $\mathbb{Q}^3$ and it has multiplicity $2$ in the boundary $S+R$, the log pull-back of $(\mathbb{Q}^3,S+R)$ in $X$ is $(X,S'+R'+E)$. Here, $S'$ and $R'$ are the strict transforms of $S$ and $R$, respectively, and $E$ is the exceptional divisor.
Consider the fibration $\pi:Y \rightarrow \pp^2$, where the fibers correspond to the strict transforms of planes in $\pp^4$ containing $\ell$. The residual system $\mathcal{M}$ has self intersection $(h-\ell)^2=1$, therefore $S'$ dominates $\pp^2$. As $R$ is the intersection of a hyperplane $H$ and $\mathbb{Q}^3$, the divisor $R$ intersects a plane containing $\ell$ outside of $\ell$, exactly when the plane is contained in $H$. Therefore the image of $R'$ in $\pp^2$ is a line.
We can restrict the fibration to $\pi|_X\colon X\rightarrow\pp^2$, as $S\sim 2R$, we obtain that $2R'+2E\sim S'+E$, hence $E\sim S'-2R'$. As $S'$ intersects the general fiber of $\pi|_X$, and $R$ doesn't, the divisor $E$ also intersects the general fiber. Thus, for $\pi|_X:X\rightarrow \pp^2$, the pair $R'+S'+E$ has two horizontal components $S'$ and $E$. By Proposition~\ref{prop:fibration-to-smooth-surface}, the pair $(X,R'+S'+E)$ is of cluster type. As $E$ is a log canonical place, Lemma~\ref{lem:cluster-type-descends} implies that $(\mathbb{Q}^3,R+S)$ is of cluster type as desired
\end{proof}

The next lemma shows that some of the curves that need to be blown-up on $\mathbb{Q}^3$ to obtain smooth Fano threefolds are contained in cluster type boundaries as in Lemma~\ref{lem:smooth-quadric-section-in-quadric}.

\begin{lemma}\label{lem:curves-in-quadrics}
Let $\mathbb{Q}^3$ be a smooth quadric threefold in $\pp^4$. Let $C \subset \mathbb{Q}^3$ be one of the following smooth and irreducible curves:
\begin{enumerate}
\item the intersection of two divisors from $\mathcal{O}_{\mathbb{Q}^3}(2)$,
\item a curve of degree $6$ and genus $2$,
\item an elliptic curve of degree $5$,
\item a rational normal quartic.
\end{enumerate}
Then, there is a smooth element $B\in |\mathcal{O}_{\mathbb{Q}^3}(2)|$ that contains $C$.
\end{lemma}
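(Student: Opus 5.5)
The plan is a Bertini-type argument applied to the linear system $|\mathcal{I}_C(2)|$ of quadric sections of $\mathbb{Q}^3$ containing $C$. Since the singular locus of a general member lies in the base locus, it suffices to show three things: the system is nonempty, its base locus is (scheme-theoretically) just $C$, and a general member is smooth along $C$.

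\emph{Dimension count.} Use $h^0(\mathcal{O}_{\mathbb{Q}^3}(2))=14$ together with Riemann--Roch on $C$.
\begin{enumerate}
\item[(1)] For the complete intersection $C=S_1\cap S_2$, the pencil $\langle S_1,S_2\rangle$ is already available. The Koszul resolution gives $H^0(\mathcal{I}_C(2))=\langle S_1,S_2\rangle$, and $C$ is cut out by $S_1$ and $S_2$.
\item[(2)] For $d=6$, $g=2$, we have $h^0(\mathcal{O}_C(2))=12-1=11$. Hence $h^0(\mathcal{I}_C(2))\ge 3$.
\item[(3)] For the elliptic quintic, $h^0(\mathcal{O}_C(2))=10$, so $h^0(\mathcal{I}_C(2))\ge 4$.
\item[(4)] For the rational normal quartic, $h^0(\mathcal{O}_C(2))=9$, so $h^0(\mathcal{I}_C(2))\ge 5$.
\end{enumerate}

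\emph{Cases (3) and (4).} Here $C$ spans $\pp^4$ (respectively a hyperplane), and it is classically cut out scheme-theoretically by quadrics: the elliptic normal quintic by the $5$ Pfaffian quadrics, the rational normal quartic in $\pp^4$ by its $6$ quadrics. Restricting these quadrics to $\mathbb{Q}^3$ shows that $|\mathcal{I}_C(2)|$ on $\mathbb{Q}^3$ has base scheme exactly $C$. In case (4), if $C$ is instead degenerate in a hyperplane section $\mathbb{Q}^2$ of $\mathbb{Q}^3$, it is a $(2,2)$ or $(1,3)$ curve there. We then take quadrics of the form $\{h=0\}\cup\{\text{hyperplane}\}$ together with restrictions of quadric sections, and check directly that they generate $\mathcal{I}_C$ in degree $2$.

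\emph{Case (2).} A curve of degree $6$ and genus $2$ in $\mathbb{Q}^3$ is the residual of a line $\ell$ in a complete intersection of two quadric sections, since $8-2=6$ and the genera are compatible via liaison. Hence $\mathcal{I}_C(2)$ is globally generated away from $\ell$, and its base locus is $C$ together with possibly finitely many points of $\ell$. Alternatively, it is cleaner to argue directly: $\mathcal{I}_C(2)$ is globally generated by Castelnuovo--Mumford regularity, since $C$ is $3$-regular. I would try the regularity route first and fall back on the linkage description if it fails.

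\emph{Bertini step.} By Bertini, a general member $B$ is smooth off the base locus, hence smooth off $C$. For smoothness along $C$, consider the incidence set of pairs $(p,B)$ with $p\in C$ and $B$ singular at $p$. Because $\mathcal{I}_C/\mathcal{I}_C^2$ is a rank-$2$ bundle generated by global sections of $\mathcal{I}_C(2)$ (by the scheme-theoretic generation established above), being singular at a given $p$ imposes $2$ independent conditions. The incidence therefore has dimension $1+(N-2)<N$ with $N=\dim|\mathcal{I}_C(2)|$, so a general $B$ is smooth along $C$. In case (1) this is immediate: $C$ is a smooth complete intersection, and the pencil has at each point of $C$ exactly one member singular there, so a general member is smooth along $C$.

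\emph{Main obstacle.} The hardest step is verifying scheme-theoretic generation of $\mathcal{I}_C$ by quadrics in case (2) and in the degenerate subcases of (4). These are the places where I would cite the classical projective normality and regularity results, or the linkage computation, rather than compute by hand.
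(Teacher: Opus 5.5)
Your skeleton matches the paper's proof. Generation of $\mathcal{I}_C$ by quadric sections gives base scheme $C$, Bertini gives smoothness off $C$, and the incidence count (singularity at a fixed $p\in C$ imposes two conditions) gives smoothness along $C$. Cases (1), (3), (4) go through as you describe for non-degenerate $C$.

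The gap is case (2), the step you flag yourself, where neither of your routes works.
\begin{itemize}
\item The regularity route is misapplied. If $\mathcal{I}_C$ is $3$-regular, Castelnuovo--Mumford gives global generation of $\mathcal{I}_C(3)$, not $\mathcal{I}_C(2)$. Moreover $\mathcal{I}_C$ cannot be $2$-regular, since $H^2(\mathcal{I}_C)\cong H^1(\mathcal{O}_C)\neq 0$ for $g=2$.
\item The liaison route has the wrong numerics. A complete intersection $X$ of two quadric sections of $\mathbb{Q}^3$ has degree $8$ and $\omega_X\cong\mathcal{O}_X(1)$. So linked curves satisfy $g-g'=\tfrac12(d-d')$, and the residual of a line has degree $7$ and genus $3$. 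A genus-$2$ sextic would have to be residual to a degree-$2$ curve such as a conic.
\item Even a correct linkage only bounds the base locus of the pencil by $C\cup C'$. Allowing ``possibly finitely many'' extra base points is not enough: at base points off $C$ Bertini says nothing, and at points of $C\cap C'$ your count along $C$ needs quadric sections generating $\mathcal{I}_C/\mathcal{I}_C^2$.
\end{itemize}

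The missing input is Saint-Donat's theorem: a curve embedded by a complete linear system of degree $d\ge 2g+2$ has homogeneous ideal generated by quadrics. The paper applies it to (2)--(4) uniformly. The bound holds in each case ($6\ge 6$, $5\ge 4$, $4\ge 2$), and Riemann--Roch gives $h^0(\mathcal{O}_C(1))=d-g+1=5$. Also $C$ spans $\pp^4$; for (2) and (3) this is because no smooth curve with these invariants lies on a quadric surface $\mathbb{Q}^3\cap H$. Hence $C\subset\pp^4$ is linearly normal, and $\mathcal{I}_{C/\pp^4}$, hence $\mathcal{I}_{C/\mathbb{Q}^3}$, is generated by quadrics.

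Two smaller corrections:
\begin{itemize}
\item Drop the degenerate subcase of (4), since a rational normal quartic spans $\pp^4$ by definition. Your treatment of it is false anyway: $(2,2)$ curves on a quadric surface have genus $1$. A $(1,3)$ curve on $\mathbb{Q}^3\cap H$ lies only on the reducible quadric sections $(\mathbb{Q}^3\cap H)+(\mathbb{Q}^3\cap H')$, so no member is smooth.
\item In case (1), no member of the pencil is singular at a point of $C$, not exactly one, because the two differentials are independent there.
\end{itemize}
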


\begin{proof}

We first prove that in all cases the ideal $\mathcal{I}_{C/\mathbb{Q}^3}$ is generated by elements of $\mathcal{O}_{\mathbb{Q}^3}(2)$. The first case follows from the definition of the curve. 
In the other three cases $d\geq 2g+2$, where $d$ and $g$ are the degree and genus of the curve.
Riemann-Roch gives $h^0(\mathcal{O}_C(1))=d-g+1=5$. Since the curves are non degenerate in $\mathbb{P^4}$, the embedding $C\subset \mathbb{P}^4$ is linearly normal. Therefore,~\cite{SaintDonat72} implies that $\mathcal{I}_{C/\pp^4}$ is generated by quadrics.

Let $\Lambda$ be the linear system of divisors in $\mathcal{O}_{\mathbb{Q}^3}(2)$ containing $C$. As $\Lambda$ generates $\mathcal{I}_{C/\mathbb{Q}^3}$, the base locus of $\Lambda$ is exactly $C$. Therefore, by Bertini's theorem, a general element in $\Lambda$ is smooth outside of $C$. 

Fix a point $p \in C$, consider a divisor $B\in \Lambda$, and let $R\subset \pp^4$ be a quadric representing $B$. Since $T_pC\subset T_pR$ the differential of the equation of $R$ yields a linear form $\lambda_B$ on $T_p\mathbb{Q}^3/T_pC$. As $B$ is a divisor in $\mathbb{Q}^3$, the divisor $B$ being singular at $p$ is equivalent to $T_p\mathbb{Q}^3=T_pB$ or $T_p\mathbb{Q}^3\subset T_pR$, thus equivalent to $\lambda_B=0$.  As $\Lambda$ generates $\mathcal{I}_{C/\mathbb{Q}^3}$, we have the equality $\bigcap_{B\in \Lambda}T_pB=T_pC$, thus the forms $\lambda_B$ generate the two-dimensional space $(T_p\mathbb{Q}^3/T_pC)^{\vee}$.  Hence the set of divisors in $\Lambda$ singular at $p$ is of codimension $2$. Therefore the set of all $B\in \Lambda$ singular at some point of $C$ is of codimension at least one, which is what we wanted to prove.
\end{proof}

In the following propositions, we lift the cluster type boundaries that we constructed in Theorem~\ref{thm:complexity-one-is-cluster-type}.

\begin{proposition}\label{prop:hyperplanes-in-quadric}
Let $X$ be a smooth Fano threefold which is the blow-up of $\mathbb{Q}^3$, a smooth quadric threefold in $\pp^4$, along one of the following curves:
\begin{enumerate}
\item a curve of degree $4$ that is an intersection of a surface in
$|\mathcal{O}_{\mathbb{Q}^{3}}(1)|$ and a surface in
$|\mathcal{O}_{\mathbb{Q}^{3}}(2)|$,
\item a twisted cubic,
\item a conic,
\item a line.
\end{enumerate}
Then, the variety $X$ is of cluster type.
\end{proposition}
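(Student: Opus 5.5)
In each case the plan is to find a boundary $D\in|-K_{\mathbb{Q}^3}|=|\mathcal{O}_{\mathbb{Q}^3}(3)|$ with three properties. First, $(\mathbb{Q}^3,D)$ has index one and complexity at most one, so Theorem~\ref{thm:complexity-one-is-cluster-type} applies. Second, $D$ has multiplicity one at the general point of the curve $C$ being blown up. Third, $D$ has multiplicity at most two along $C$. Then the blow-up along $C$ extracts only canonical or log canonical places, and Lemma~\ref{lem:blow-up-of-cluster-type} gives the result, exactly as in Lemma~\ref{lem:blowup-curve-in-plane}. Since $\mathbb{Q}^3$ has Picard rank one and $\dim \mathbb{Q}^3=3$, complexity at most one means the boundary should have at least three components of total coefficient three. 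So we look for $D=H_1+H_2+H_3$, a sum of three hyperplane sections, or $D=H_1+S$ with $S\in|\mathcal{O}_{\mathbb{Q}^3}(2)|$. The first form has complexity $3+1-3=1$; the second has complexity two, so we prefer three hyperplane sections whenever possible.

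The cases go as follows.
\begin{enumerate}
\item[(4)] For a line $L$, take $H_1$ a general hyperplane section containing $L$ and $H_2,H_3$ general hyperplane sections. Then $D$ has multiplicity one along $L$.
\item[(3)] For a conic $C$, let $H_1$ be the hyperplane section cut by the plane of $C$ together with a general hyperplane through it, and let $H_2,H_3$ be general.
\item[(2)] A twisted cubic spans a $\pp^3$, so it lies in a hyperplane section $H_1$. Take $H_2,H_3$ general.
\item[(1)] The curve is $C=H_1\cap S$ with $H_1$ a hyperplane section and $S\in|\mathcal{O}_{\mathbb{Q}^3}(2)|$. Here $C$ again lies in the hyperplane section $H_1$, so $H_1+H_2+H_3$ with $H_2,H_3$ general works.
\end{enumerate}
In every case $C$ lies in a single hyperplane section $H_1$ and nowhere else in the boundary.

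The key steps are the following.
\begin{enumerate}
\item Check that $(\mathbb{Q}^3,H_1+H_2+H_3)$ is log canonical. Here $H_1$ is a possibly singular quadric surface, either a smooth quadric or a quadric cone, through $C$, and $H_2,H_3$ are general. By adjunction to $H_1$, we get $(H_1,(H_2+H_3)|_{H_1})$, two general conics on a normal quadric surface, which is lc. Inversion of adjunction then gives log canonicity near $H_1$, and away from $H_1$ generality suffices.
\item Note that the index is one, since $D\sim -K_{\mathbb{Q}^3}$.
\item Compute the complexity: three components with coefficient one, Picard rank one, dimension three, so the complexity is one.
\item Apply Theorem~\ref{thm:complexity-one-is-cluster-type}, then Lemma~\ref{lem:blow-up-of-cluster-type} for the blow-up along $C$. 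The multiplicity along $C$ is one, so the blow-up extracts a canonical place.
\end{enumerate}

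The main obstacle I expect is making sure $H_1$ is normal, i.e.\ a quadric surface of rank at least three, for the particular curve. If $H_1$ were a pair of planes, inversion of adjunction would still give lc, but the general hyperplane sections might fail to keep the pair lc at the singular line. For a twisted cubic, and for the degree four curve in case (1), the spanning $\pp^3$ cuts $\mathbb{Q}^3$ in an irreducible quadric surface, since neither curve lies in a plane. Such a surface is reduced and irreducible, hence of rank at least three, so there is no problem. For the line and the conic we choose $H_1$ general among hyperplane sections containing $C$; this gives a smooth quadric surface by Bertini, since the hyperplanes through $C$ have base locus $C$ on $\mathbb{Q}^3$. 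With that settled, the argument is uniform across all four cases.
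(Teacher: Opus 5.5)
Your proposal is correct and follows essentially the same route as the paper: put $C$ in a hyperplane section (a smooth quadric or a quadric cone), add two general hyperplane sections to get an index-one, complexity-one pair, apply Theorem~\ref{thm:complexity-one-is-cluster-type}, and lift to the blow-up via Lemma~\ref{lem:blow-up-of-cluster-type}, since only a canonical place is extracted. The normality issue you flag as the main obstacle is automatic, because a smooth quadric threefold contains no planes, so every hyperplane section has rank at least three; this also makes your Bertini argument in the line case unnecessary, which is fortunate because as written it would need a separate count to get smoothness along $C$ itself.
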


\begin{proof}
Each of those curves is contained in a hyperplane in $\pp^4$. Let $H$ be the hyperplane containing the curve $C$ and let $R$ be the intersection of $H$ and $\mathbb{Q}^3$. Therefore $R$ is a smooth quadric surface or a quadric cone, hence it has at worst canonical singularities. Thus, by picking general hyperplanes $H_1$ and $H_2$, the pair $(\mathbb{Q}^3,R+H_1\vert_{\mathbb{Q}^3}+H_2\vert_{\mathbb{Q}^3})$ is a log Calabi--Yau pair of dimension $3$. Moreover, we have that $\rho (\mathbb{Q}^3)=1$ and the boundary $R+H_1\vert_{\mathbb{Q}^3}+H_2\vert_{\mathbb{Q}^3}$ has three components. Therefore, by Theorem~\ref{thm:complexity-one-is-cluster-type}, the pair $(\mathbb{Q}^3,R+H_1\vert_{\mathbb{Q}^3}+H_2\vert_{\mathbb{Q}^3})$ is of cluster type. Let $\pi\colon X \rightarrow \mathbb{Q}^3$ be the blow-up of $\mathbb{Q}^3$ along $C$ and let  $\pi^*(K_{\mathbb{Q}^3}+R+H_1\vert_{\mathbb{Q}^3}+H_2\vert_{\mathbb{Q}^3})=K_X+B$. As $C$ has codimension $2$ in $Q$ and is contained in a single component of $R+H_1\vert_ {\mathbb{Q}^3}+H_2\vert_ {\mathbb{Q}^3}$, the morphism $\pi$ only extracts a canonical place of $(\mathbb{Q}^3,R+H_1\vert_{\mathbb{Q}^3}+H_2\vert_{\mathbb{Q}^3})$. Therefore, by Lemma~\ref{lem:blow-up-of-cluster-type}, the pair $(X,B)$ is of cluster type.
\end{proof}

\begin{proposition}\label{prop:union-lines-in-quadric}
Let $X$ be a smooth Fano threefold which is the blow-up of $\mathbb{Q}^3$, a smooth quadric threefold in $\pp^4$, along one of the following:
\begin{enumerate}
\item the disjoint union of two lines,
\item the disjoint union of two conics,
\item the disjoint union of a line and a conic.
\end{enumerate}
Then, the variety $X$ is of cluster type.
\end{proposition}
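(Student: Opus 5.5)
Following the pattern of Proposition~\ref{prop:hyperplanes-in-quadric}, I will find a log Calabi--Yau pair on $\mathbb{Q}^3$ of complexity at most one whose boundary consists of hyperplane sections, each curve contained in a component of multiplicity one along it. Each line or conic $C_i$ spans a linear subspace $\Pi_i\subset\pp^4$ of dimension at most $2$. A conic spans a plane, and I take $R_i=H_i\cap\mathbb{Q}^3$ for a general hyperplane $H_i\supset\Pi_i$. A line spans a line, and the hyperplanes containing it form a $\pp^2$; again I take $R_i=H_i\cap\mathbb{Q}^3$ for a general one. Then I choose a third general hyperplane section $R_3$. The boundary $B_0=R_1+R_2+R_3\sim\mathcal{O}_{\mathbb{Q}^3}(3)\sim -K_{\mathbb{Q}^3}$ has index one and three components, and $\rho(\mathbb{Q}^3)=1$, so the complexity is $3+1-3=1$. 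Once I know $(\mathbb{Q}^3,B_0)$ is log canonical, Theorem~\ref{thm:complexity-one-is-cluster-type} shows it is of cluster type.

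The key step, and the main obstacle, is checking log canonicity together with the multiplicity condition along the curves. Each $R_i$ is a hyperplane section of a smooth quadric threefold, so it is a smooth quadric surface or a quadric cone; for general $H_i$ through $\Pi_i$ it should be smooth. For a conic this holds because the tangent hyperplanes to $\mathbb{Q}^3$ containing a fixed plane form a proper closed subset. For a line the same reasoning applies, since the general hyperplane through a line is not tangent to $\mathbb{Q}^3$. Even if some $R_i$ is a cone, it has only canonical singularities, as in Proposition~\ref{prop:hyperplanes-in-quadric}. I must also check that $R_1+R_2$ is snc, or at least lc, near $R_1\cap R_2$. Since $C_1\cap C_2=\emptyset$, I want $C_1\not\subset R_2$ and $C_2\not\subset R_1$. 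This fails only if $H_2\supset C_1$ for every general choice, which would force $\Pi_1\cup\Pi_2$ to lie in a hyperplane. If it does (for example, two disjoint lines span a $\pp^3$), I will simply choose $H_2$ among the hyperplanes through $\Pi_2$ that do not contain $C_1$. The intersection $R_1\cap R_2$ is then a conic that is transversal for general choices. After adding the general section $R_3$, Bertini gives that $(\mathbb{Q}^3,B_0)$ is lc, with snc boundary away from finitely many checkable points.

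Finally, let $\pi\colon X\to\mathbb{Q}^3$ be the blow-up of $C_1\sqcup C_2$. Each $C_i$ is a codimension-two smooth curve lying in exactly one component $R_i$ of $B_0$, which is smooth at the general point of $C_i$, so ${\rm mult}_{C_i}B_0=1$. The exceptional divisors are therefore canonical places of $(\mathbb{Q}^3,B_0)$ and appear with coefficient zero in the log pull-back. The blow-up of a disjoint union is the composition of two such blow-ups, and $X\dashrightarrow\mathbb{Q}^3$ contracts no divisors in the required direction. Lemma~\ref{lem:blow-up-of-cluster-type} then shows that the log pull-back $(X,B)$ is of cluster type, and hence $X$ is of cluster type, in all three cases.
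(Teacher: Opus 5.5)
Your proposal is correct and is essentially the paper's proof: a general hyperplane section through each curve $C_i$, plus a further general hyperplane section, gives an index-one, complexity-one log Calabi--Yau pair on $\mathbb{Q}^3$, which is of cluster type by Theorem~\ref{thm:complexity-one-is-cluster-type}. The blow-up extracts only canonical places, so Lemma~\ref{lem:blow-up-of-cluster-type} applies. Your side case about $\Pi_1\cup\Pi_2$ lying in a hyperplane is unnecessary: a general hyperplane through $\Pi_2$ contains $C_1$ only if $\Pi_1\subseteq\Pi_2$, and this is impossible for disjoint curves.
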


\begin{proof}
In any case $X$ is the blow-up of $\mathbb{Q}^3$ in the disjoint union of two planar curves $C_1$ and $C_2$. Let $H_1$ and $H_2$ be general hyperplanes containing $C_1$ and $C_2$ respectively, and let $H_3$ be a general hyperplane. The pair $(\mathbb{Q}^3,H_1\vert_{\mathbb{Q}^3}+H_2\vert_{\mathbb{Q}^3}+H_3\vert_{\mathbb{Q}^3})$ is log Calabi-Yau.
By Theorem~\ref{thm:complexity-one-is-cluster-type}
the previous pair is of cluster type. Let $\pi\colon X \rightarrow \mathbb{Q}^3$ be the blow-up of $\mathbb{Q}^3$ along the disjoint curves, and let $\pi^*(K_Q+H_1\vert_{\mathbb{Q}^3}+H_2\vert_{\mathbb{Q}^3}+H_3\vert_{\mathbb{Q}^3})=K_X+B$.  As each of the curves is contained in a single component of the boundary of $H_1\vert_{\mathbb{Q}^3}+H_2\vert_{\mathbb{Q}^3}+H_3\vert_{\mathbb{Q}^3}$, the morphism $\pi$ only extracts canonical places of $(\mathbb{Q}^3,H_1\vert_{\mathbb{Q}^3}+H_2\vert_{\mathbb{Q}^3}+H_3\vert_{\mathbb{Q}^3})$. Hence, by Lemma~\ref{lem:blow-up-of-cluster-type}, the pair $(X,B)$ is of cluster type.
\end{proof}

\begin{proposition}\label{prop:3.19-4.4}
Let $\mathbb{Q}^3$ be a smooth quadric threefold in $\pp^4$. Let $X$ be the blow-up of $\mathbb{Q}^3$ at two points $p_1,p_2$ whose joining line is not contained in $\mathbb Q^3$. Let $Y$ be the blow-up of $X$ along the proper transform of a conic in $\mathbb{Q}^3$ that passes through $p_1$ and $p_2$. Then, the varieties $X$ and $Y$ are of cluster type.
\end{proposition}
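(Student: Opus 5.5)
The plan is to find a complexity-one (or toric) log Calabi--Yau boundary on $\mathbb{Q}^3$ adapted to both points and to the conic. Then we lift it through the blow-ups with Lemma~\ref{lem:blow-up-of-cluster-type}, as in Lemma~\ref{lem:blowup-point-curve-p3}. Let $C\subset\mathbb{Q}^3$ be the conic through $p_1,p_2$ and $\Pi\subset\pp^4$ its plane. Since $C=\Pi\cap\mathbb{Q}^3$, the line $\overline{p_1p_2}\subset\Pi$ meets $\mathbb{Q}^3$ only at $p_1,p_2$, consistent with the hypothesis. We choose three hyperplane sections:
\begin{enumerate}
\item[] $R_1=H_1|_{\mathbb{Q}^3}$ with $H_1\supset\Pi$ general;
\item[] $R_2=H_2|_{\mathbb{Q}^3}$ with $H_2$ general containing $p_1$ and $p_2$, but not $\Pi$;
\item[] $R_3$, a third hyperplane section, chosen below.
\end{enumerate}
The pair $(\mathbb{Q}^3,R_1+R_2+R_3)$ has index one, since $-K_{\mathbb{Q}^3}=3H$. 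It has three components and $\rho=1$, so if it is log canonical its complexity is at most one. It is then of cluster type by Theorem~\ref{thm:complexity-one-is-cluster-type}, just as in Propositions~\ref{prop:hyperplanes-in-quadric} and~\ref{prop:union-lines-in-quadric}.

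The delicate step is the choice of $R_3$ and the log canonical check at $p_1,p_2$. At each $p_i$ we want multiplicity $3$ of the boundary, so that blowing up $p_i$ extracts a log canonical place rather than a place of negative discrepancy coefficient. Since a blow-up of a point in a threefold has discrepancy $2$, multiplicity $\le 3$ gives coefficient $\le 1$. We take $R_3$ to be a general hyperplane section through $p_1,p_2$ as well. Then the three hyperplanes all contain the line $\overline{p_1p_2}$, and their traces on $T_{p_i}\mathbb{Q}^3$ are three general planes containing the direction of $\overline{p_1p_2}$. This direction is not tangent to $\mathbb{Q}^3$ because the line is not contained in $\mathbb{Q}^3$; it is a secant of $\mathbb{Q}^3$, hence transverse at $p_i$. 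So the traces are three general planes in $T_{p_i}\mathbb{Q}^3$ meeting only at the origin, and the local picture at $p_i$ is three smooth transverse surfaces, i.e.\ snc. Therefore the boundary is lc at $p_i$ with multiplicity three, and blowing up $p_i$ extracts an lc place (coefficient $1$).

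Away from $p_1,p_2$ we must check that $R_1+R_2+R_3$ is lc. Globally the $H_i$ meet along the line, so $R_1\cap R_2\cap R_3=\{p_1,p_2\}$; elsewhere pairwise intersections of general members are transverse by Bertini. If this fails, we fall back on replacing $R_3$ by a general hyperplane section not through the points. In that case the multiplicity at $p_i$ is $2$, the blown-up point is a canonical place, and Lemma~\ref{lem:blow-up-of-cluster-type} still applies. So lc-ness is the only real requirement, and the general choice in $H_2,H_3$ handles it. This shows $X$ is of cluster type.

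For $Y$, let $C'$ be the strict transform of $C$ in $X$. With the log pull-back $(X,B_X)$, the strict transform of $R_1$ contains $C'$. Since $H_2,H_3\not\supset\Pi$, the sections $R_2,R_3$ meet $C$ only in finitely many points, namely $p_1,p_2$; after blowing up, the strict transforms of $R_2,R_3$ meet $C'$ in at most finitely many points. The exceptional divisors $E_i$ are transverse to $C'$. Hence ${\rm mult}_{C'}(B_X)=1$ generically along $C'$, so blowing up $C'$ extracts a canonical place, and $Y$ is of cluster type by Lemma~\ref{lem:blow-up-of-cluster-type}. The main obstacle I expect is verifying that $(X,B_X)$ remains lc near $C'\cap E_i$, where $R_1'$, $E_i$ and possibly $R_2',R_3'$ meet. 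If necessary we move $H_2,H_3$ so that their tangent traces at $p_i$ avoid $T_{p_i}C$, which keeps the configuration snc along $C'$.
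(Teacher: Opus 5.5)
Your proof is correct and uses the same strategy as the paper. Three hyperplane sections of $\mathbb{Q}^3$ give an index-one pair of complexity one, which is cluster type by Theorem~\ref{thm:complexity-one-is-cluster-type}, and this is lifted through both blow-ups with Lemma~\ref{lem:blow-up-of-cluster-type}.

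The difference is the choice of hyperplanes. The paper takes $H_1\ni p_1$ and $H_2\ni p_2$ general and $H_3\supset C$. Then each $p_i$ lies on exactly two components, blowing up the points extracts canonical places, and no triple-point analysis is needed. You put all three hyperplanes through $\overline{p_1p_2}$, so $E_1,E_2$ enter the boundary with coefficient one. This costs an snc check at $p_i$, which is where you use $\overline{p_1p_2}\not\subset\mathbb{Q}^3$. In return, $(X,B_X)$ is itself of complexity one, since $\rho(X)=3$ and the boundary has five components.

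A few corrections to the write-up:
\begin{enumerate}
\item The traces of the $H_j$ on $T_{p_i}\mathbb{Q}^3$ do not contain the direction of $\overline{p_1p_2}$, because that direction is not tangent to $\mathbb{Q}^3$. The correct reason the traces meet only at the origin is that $H_1\cap H_2\cap H_3=\overline{p_1p_2}$ for general $H_2,H_3$; this needs $H_2,H_3$ general, since $H_1$ is constrained to contain $\Pi$. The direction of this line is transverse to $T_{p_i}\mathbb{Q}^3$.
\item The obstacle you anticipate near $C'\cap E_i$ does not arise. $(X,B_X)$ is automatically lc as a crepant pullback with effective boundary.
\item In fact $C'$ is disjoint from $R_2'$ and $R_3'$. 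An intersection would have to lie over $p_i$ and would require $T_{p_i}C\subset T_{p_i}H_j$. Together with $T_{p_i}H_j\supset T_{p_i}\overline{p_1p_2}$, this would give $T_{p_i}\Pi\subset T_{p_i}H_j$, hence $\Pi\subset H_j$, contrary to the choice of $H_j$.
\item The fallback choice of $R_3$ is therefore unnecessary.
\end{enumerate}
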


\begin{proof}
Let $C$ be the conic in $\mathbb{Q}^3$. Let $H_1$ and $H_2$ be general hyperplanes containing $p_1$ and $p_2$ respectively, and let $H_3$ be a general hyperplane in $\pp^4$ containing $C$. By Theorem~\ref{thm:complexity-one-is-cluster-type}, the pair $(\mathbb{Q}^3,H_1\vert_{\mathbb{Q}^3}+H_2\vert_{\mathbb{Q}^3}+H_3\vert_{\mathbb{Q}^3})$ is of cluster type. Let $\pi_1\colon X\rightarrow \mathbb{Q}^3$ be the blow-up at $p_1$ and $p_2$. Let $B_1$, $B_2$ and $B_3$ be the strict transforms in $X$ of $H_1\vert_{\mathbb{Q}^3}$, $H_2\vert_{\mathbb{Q}^3}$ and $H_3\vert_{\mathbb{Q}^3}$, respectively. As each of the points is contained in exactly two different components of the boundary, the morphism $\pi_1$ extracts only canonical places of $(\mathbb{Q}^3,H_1\vert_{\mathbb{Q}^3}+H_2\vert_{\mathbb{Q}^3}+H_3\vert_{\mathbb{Q}^3})$, i.e. $\pi^*(K_{\mathbb{Q}^3}+H_1\vert_{\mathbb{Q}^3}+H_2\vert_{\mathbb{Q}^3}+H_3\vert_{\mathbb{Q}^3})=K_X+B_1+B_2+B_3$. Thus, by Lemma~\ref{lem:blow-up-of-cluster-type}, the pair $(X,B_1+B_2+B_3)$ is of cluster type.
Let $\pi_2\colon Y\rightarrow X$ be the blow-up along the strict transform of $C$ in $X$. As $C$ is contained in $B_3$ and no other component of the boundary, the morphism $\pi_2$ extracts only canonical places of $(X,B_1+B_2+B_3)$. Therefore, by Lemma~\ref{lem:blow-up-of-cluster-type}, the variety $Y$ is of cluster type as well.
\end{proof}

\begin{proposition}\label{prop:5.1}
Let $\mathbb{Q}^3$ be a smooth quadric threefold in $\pp^4$. Let $\pi\colon X\rightarrow \mathbb{Q}^3$ be the blow-up of $\mathbb{Q}^3$ along a plane conic. Let $Y$ be the blow-up of $X$ along the disjoint union of three exceptional lines of the blow-up $\pi\colon X\rightarrow\mathbb Q^3$. Then, the variety $Y$ is of cluster type.
\end{proposition}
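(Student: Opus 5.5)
The plan is to find a cluster type boundary on $\mathbb{Q}^3$ whose log pull-back to $X$ contains the three exceptional lines. We then finish with Lemma~\ref{lem:blow-up-of-cluster-type}, as in Proposition~\ref{prop:3.19-4.4}.

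Let $C$ be the plane conic, $H_C$ the hyperplane section of $\mathbb{Q}^3$ spanned by a general hyperplane containing the plane of $C$, and $E$ the exceptional divisor of $\pi$. The exceptional lines are fibers $\pi^{-1}(q_i)$ of $E\to C$ over three points $q_1,q_2,q_3\in C$. Every fiber $\pi^{-1}(q)$ lies in $E$ and also meets the strict transform $\widetilde{H}_C$ in one point, corresponding to the tangent direction of $H_C$ along $C$. To make a whole fiber lie in two boundary components, I would add a second hyperplane section through $C$. Concretely, take $(\mathbb{Q}^3,H_1+H_2+H_3)$ with $H_1,H_2$ two distinct hyperplane sections containing $C$ and $H_3$ a general hyperplane section. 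This pair is log Calabi--Yau: $C=H_1\cap H_2$ scheme-theoretically, it is a normal crossing curve, and $H_3$ is transversal. It has three components on a Picard rank one threefold, so it has complexity at most one, and Theorem~\ref{thm:complexity-one-is-cluster-type} makes it cluster type.

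Since $\operatorname{mult}_C(H_1+H_2+H_3)=2$ and $C$ has codimension two, $E$ is a log canonical place. The log pull-back is $(X,\widetilde H_1+\widetilde H_2+\widetilde H_3+E)$, which is cluster type by Lemma~\ref{lem:blow-up-of-cluster-type}. Now $\widetilde H_1\cap E$ and $\widetilde H_2\cap E$ are two disjoint sections of the $\pp^1$-bundle $E\to C$. A fiber $\pi^{-1}(q_i)$ therefore lies only in $E$, and generically in no other component except where $\widetilde H_3$ meets it: $H_3$ meets $C$ in two points, and over those the fiber meets $\widetilde H_3$ only in codimension. So the multiplicity of the boundary along a general fiber $\pi^{-1}(q_i)$ is one. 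Choosing $H_3$ so that $H_3\cap C$ avoids $q_1,q_2,q_3$, the blow-up of each line $\pi^{-1}(q_i)$ (codimension two, multiplicity one) extracts only a canonical place, and the lines are disjoint. Lemma~\ref{lem:blow-up-of-cluster-type} then gives that $Y$ is of cluster type.

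The main thing to verify is that the log pull-back to $X$ stays log canonical with $E$ having coefficient exactly one. This amounts to checking that $H_1+H_2$ is snc generically along $C$; it holds because $C$ is a smooth complete intersection of two hyperplane sections in the smooth $\mathbb{Q}^3$. I also need the boundary near each exceptional line to be $E$ plus possibly transversal pieces, so that the line is not a log canonical center requiring a coefficient above one. If some $q_i$ happened to be forced into $H_3$, I would instead pick $H_3$ general through none of them, which is possible because the $q_i$ are finitely many.
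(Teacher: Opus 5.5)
Your proposal is correct and matches the paper's proof. Two hyperplane sections through the conic plus a general one give a complexity-one pair on $\mathbb{Q}^3$, $E$ is a log canonical place, and the three fibers lie only in $E$, so their blow-up extracts canonical places and Lemma~\ref{lem:blow-up-of-cluster-type} applies at both steps.

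One small correction: over a point of $H_3\cap C$ the entire fiber of $E\to C$ is contained in $\widetilde H_3$, not merely met in a point. Your choice of $H_3$ with $H_3\cap C$ avoiding $q_1,q_2,q_3$ handles this. Even without that choice, such a line would lie in two transversal boundary components, and blowing it up would extract a log canonical place, which the same lemma also allows.
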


\begin{proof}
Let $C$ be the plane conic. Let $H_1$ and $H_2$ be two general hyperplanes containing $C$, and let $H_3$ be a general hyperplane. Let $R_1$, $R_2$ and $R_3$ be the intersections $H_1\cap \mathbb{Q}^3$, $H_2 \cap \mathbb{Q}^3$ and $H_3 \cap \mathbb{Q}^3$, respectively. The pair $(\mathbb{Q}^3,R_1+R_2+R_3)$ is Calabi--Yau, and by Theorem~\ref{thm:complexity-one-is-cluster-type} is of cluster type. As $C$ has codimension $2$ in $\mathbb{Q}^3$ and it is contained in exactly two components of the boundary, the morphism $\pi$ extracts a log canonical place of $(\mathbb{Q}^3,R_1+R_2+R_3)$. Thus, we have $\pi^*(K_{\mathbb{Q}^3}+R_1+R_2+R_3)=K_X+R_1'+R_2'+R_3'+E$, where $R_1'$, $R_2'$ and $R_3'$ are the strict transforms in $X$ of $R_1$, $R_2$ and $R_3$ respectively, and $E$ is the exceptional divisor. By Lemma~\ref{lem:blow-up-of-cluster-type}, the pair $(X,R_1'+R_2'+R_3'+E)$ is of cluster type.

Let $\pi_1\colon Y\rightarrow X$ be the blow-up along the disjoint union of three exceptional lines. As each of these lines is contained in $E$ and not in any $R_i'$, the morphism $\pi_1$ extracts only canonical places of $(X,R_1'+R_2'+R_3'+E)$. Therefore, by Lemma~\ref{lem:blow-up-of-cluster-type}, the variety $Y$ is of cluster type.
\end{proof}

We can now organize all the cases we have covered to prove the main theorem of the section.

\begin{proof}[Proof of Theorem~\ref{thm:blow-up-Q3}]
Let $X$ be in the families \textnumero\,2.7, \textnumero\,2.13, \textnumero\,2.17, or \textnumero\,2.21. Then by Lemma~\ref{lem:curves-in-quadrics} the variety $X$ is the blow-up of a curve contained in a smooth divisor in $\mathcal{O}_{\mathbb{Q}^3}(2)$, which is of cluster type by Lemma~\ref{lem:smooth-quadric-section-in-quadric}.

In the case that $X$ is contained in the families \textnumero\,2.23, \textnumero\,2.26, \textnumero\,2.29 or \textnumero\,2.31, we conclude that $X$ is of cluster type by Proposition~\ref{prop:hyperplanes-in-quadric}. 
In the case that $X$ is contained in the families \textnumero\,3.10, \textnumero\,3.15, or \textnumero\,3.20, then $X$ is of cluster type by Proposition~\ref{prop:union-lines-in-quadric}. If $X$ is contained in the families \textnumero\,3.19 or \textnumero\,4.4, then $X$ is of cluster type by Proposition~\ref{prop:3.19-4.4}. Finally, if $X$ is in the family \textnumero\,5.1, then by  Proposition~\ref{prop:5.1}, we conclude that the variety $X$ is of cluster type.
\end{proof}

\section{Smooth Fano threefolds in the main series}
\label{sec:main-series}
In this section, we study the smooth Fano threefolds in the main series; this means smooth Fano threefolds of Picard rank one and index one. 
These are smooth prime Fano threefolds of genus at most 12. For their homogeneous models see~\cite{Mukai88,Mukai89}. We prove that every smooth prime Fano threefold of genus $9$, $10$, or $12$ are cluster type. 

\subsection{Mori-Mukai family \textnumero\,1.8 and \textnumero\,1.9}

In this subsection we prove that any prime smooth Fano threefold of 
	genus 9 and 10 is of cluster type. We use the ideas of \cite{KPZ14} to 
	describe the geometry of these threefolds. 
	Let $X:= X_{2g-2}$ be a smooth prime Fano threefold 
	of genus $g \in \{9,10\}$. The anticanonical embedding induces 
	$X \subseteq \pp^{g+1}$. Let $\ell \subseteq X$ be a line,
	and let $H\sim -K_X$ be the hyperplane class. Iskovskikh in \cite{Isk89} shows that if the map $X \dashrightarrow W \subseteq \pp^{g-6}$ is the one induced by the linear system $|H - 2\ell|$, then there exists a Sarkisov 
	link
	\[
		\xymatrix{
			Y:= {\rm Bl}_\ell X \ar[d]_{\varphi} \ar@{-->}[rr]^{\alpha} & & Y' := {\rm Bl}_\Gamma W \ar[d]^{\psi}\\
			X \ar@{-->}[rr] & & W
		},
	\]
	where $\varphi$ is the blow-up of $\ell$, $\alpha$ is a flop, and $\psi$ is the blow-up of a smooth irreducible curve $\Gamma$. Let $E = {\rm Exc}(\varphi),$ $F' = \alpha_* E$, $F_\ell = \psi(F')$, and $D = {\rm Exc}(\psi)$.
	\begin{itemize}
		\item If $g=9$, then $W = \pp^3$, $F_\ell$ is an irreducible cubic surface, and $\Gamma$ is a non-hyperelliptic curve of genus $3$ and degree $7$.
		\item If $g = 10$, then $W = Q^3 \subseteq \pp^4$ is a smooth quadric, $F_\ell$ is an irreducible quadric section of $Q^3$, and $\Gamma$ is a curve of genus $2$ and degree $7$.
	\end{itemize}
	
	The normal bundle of the line $\ell$ is one of the following:
	\[
		N_{\ell/X} \simeq \begin{cases}
			\oo_{\pp^1} \oplus \oo_{\pp^1}(-1)\\
			\oo_{\pp^1}(1) \oplus \oo_{\pp^1}(-2) 
		\end{cases}
	\]
	If it is the first case, we will say that $\ell$ is an \emph{ordinary line}. If it is the latter, we say that $\ell$ is a \emph{special line}. The surface $F_\ell$ is normal if and only if the line is ordinary \cite[Theorem 2.13]{KPZ14}. If so, $F_\ell$ has at worst du Val singularities. If $F_\ell$ is not normal, then its singular locus is a double line, and its normalization is a smooth scroll of minimal degree \cite[Proposition 2.2]{KPZ14}. Virin, in \cite{Vir26} proves that every smooth prime Fano threefold of genus 9 or 10 contains a cylinder. To do so, they prove the following result.
	
	\begin{proposition}\label{prop:sing-line}
		Let $X$ be a smooth prime Fano threefold of genus 9 or 10. Then $X$ contains a line $\ell$ such that the surface $F_\ell$
		is singular.
	\end{proposition}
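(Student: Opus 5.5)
The statement is Proposition~\ref{prop:sing-line}: $X$ contains a line $\ell$ with $F_\ell$ singular. I would split into two cases according to the lines on $X$, using that $F_\ell$ is non-normal (hence singular) exactly when $\ell$ is special, by \cite[Theorem 2.13]{KPZ14}.

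\textbf{Step 1: reduce to ordinary lines.} If $X$ contains a special line $\ell$, then $F_\ell$ is non-normal with a double line as singular locus by \cite[Proposition 2.2]{KPZ14}, and we are done. So assume every line on $X$ is ordinary. The Hilbert scheme of lines $\Sigma(X)$ is then a smooth curve, nonempty since prime Fano threefolds contain lines. For every $\ell\in\Sigma(X)$ the surface $F_\ell$ is normal with at worst du Val singularities.

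\textbf{Step 2: identify singularities of $F_\ell$ with lines meeting $\ell$.} Use the Sarkisov link. The flopped curves of $\alpha$ are the strict transforms of lines $\ell'\neq\ell$ with $\ell'\cap\ell\neq\emptyset$, which are disjoint from $E$ generically. Their images in $W$ are points of $F_\ell$, or small curves, where $F_\ell$ acquires singularities, since $F'\to F_\ell$ contracts curves. Alternatively, singularities of $F_\ell$ arise where $F_\ell$ meets $\Gamma$ non-transversally. I would show that if some line $\ell'$ meets $\ell$, then the flop is nontrivial. Its flopping curves lie on $F'$ or meet it, and the birational morphism $F'\to F_\ell$ contracts them, or $\alpha$ modifies $E$ near them, giving a singular point. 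Concretely, $F'\cdot(\text{flopped curve})<0$ would force the flopped curve into $F'$, and its contraction produces a du Val point. The residual-degree argument is as follows. Lines meeting $\ell$ are exactly the curves contracted by $|H-\ell|$ on $Y$, that is, the $K_Y$-trivial curves, and their strict transforms on $Y'$ lie in $F'$ as $(-2)$-curves. So $F_\ell$ is singular iff some line meets $\ell$.

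\textbf{Step 3: existence of intersecting lines.} It remains to show that on $X$ of genus $9$ or $10$ some line meets another line. Since $\Sigma(X)$ is a curve (of genus $3$, resp. $2$, being isomorphic to $\Gamma$-type data), the lines sweep out a surface $R\in|rH|$. The number of lines meeting a fixed ordinary line $\ell$ is a known positive constant. One computes it via the flop: $-K_Y^3=2g-2-2\cdot$(stuff), and the number of flopping curves equals $(H-\ell)^2\cdot(\text{something})$. Alternatively, $R\cdot\ell$ counts incidences, and $R\cdot\ell=r>0$ forces lines through points of $\ell$ other than $\ell$ itself. Here one uses that through a general point of $R$ passes exactly one line, so the point of $R\cap\ell$ not accounted for by $\ell$ gives another line.

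\textbf{Main obstacle.} The hard step is Step 2, rigorously tying du Val singularities of $F_\ell$ to the flopping curves. I would first try the direct computation $F'|_{F'}$ and the adjunction $K_{F'}=(K_{Y'}+F')|_{F'}$ to show that flopped curves are $(-2)$-curves on $F'$ contracted in $F_\ell$.
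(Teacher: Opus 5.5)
Your proposal breaks at Step~2, and the strategy cannot be repaired: the equivalence ``$F_\ell$ is singular iff some line meets $\ell$'' is false.

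\begin{itemize}
\item The flopped curves $C_i^+$ do lie on $F'$, since $F'\cdot C_i^+=-E\cdot C_i=-1$. But adjunction gives $K_{F'}\cdot C_i^+=K_{Y'}\cdot C_i^+ +F'\cdot C_i^+=-1$. So for an Atiyah flop, $C_i^+$ is a $(-1)$-curve on a smooth $F'$, not a $(-2)$-curve. Locally $F'\to E$ is just the blow-up of $p_i=C_i\cap E$; this is the case $n=1$ of the local computation in the proof of Lemma~\ref{lem:mult2-ordinary-line}.
\item The $C_i^+$ are not contracted by $\psi|_{F'}\colon F'\to F_\ell$ either. The curves contracted there are fibres $R$ of $\psi$, with $-K_{Y'}\cdot R=-D\cdot R=1$, whereas $-K_{Y'}\cdot C_i^+=0$.
\item Every ordinary line already meets other lines. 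Since $K_{F'}=\psi_F^*K_{F_\ell}$, and the minimal resolution of $F'$ is obtained from $E\cong\mathbb{F}_1$ by $n_i$ blow-ups over each $p_i$ ($n_i$ the width of the flop at $C_i$), you get $\sum_i n_i=K_E^2-K_{F_\ell}^2$. This equals $8-3=5$ for $g=9$ and $8-4=4$ for $g=10$.
\end{itemize}

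So the target of your Step~3 holds automatically and says nothing about singularity. Suppose all $n_i=1$, no $p_i$ lies on the negative section $\Sigma$, and no two $p_i$ lie on one ruling. Then $\psi_F$ contracts nothing, and $F_\ell\cong F'$ is a smooth cubic surface (resp.\ smooth quartic del Pezzo surface), even though $\ell$ meets five (resp.\ four) lines. Separately, the count in Step~3 via $R\cdot\ell$ does not work as written, since $\ell\subset R$.

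For an ordinary $\ell$, the proof of Lemma~\ref{lem:mult2-ordinary-line} shows that singular points of $F_\ell$ come only from a \emph{degenerate} incidence configuration. There are two possibilities.
\begin{itemize}
\item A flopping curve of width $n_i\geq 2$, i.e.\ normal bundle $\mathcal{O}\oplus\mathcal{O}(-2)$, meaning the lines meeting $\ell$, counted with multiplicity, are not distinct. This gives an $A_{n_i-1}$ point of $F'$.
\item A fibre of $\psi$ lying on $F'$. Such a fibre is the strict transform either of $\Sigma$, when $\Sigma$ passes through one of the $p_i$, or of a ruling of $E$ through two of the $p_i$ (two lines meeting $\ell$ at the same point).
\end{itemize}
Once special lines are excluded, the content of the proposition is that some line of $X$ realizes such a degeneration. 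That is a genuine existence statement about the family of lines. The paper does not prove it but imports it from \cite[Corollary 4.4, Proposition 5.2, Proposition 5.4]{Vir26}. Your proposal gives no argument producing such a line. So apart from the correct special-line case in Step~1, it does not establish the statement.
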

	
	\begin{proof}
		This follows from Corollary 4.4, Proposition 5.2 and Proposition 5.4 of \cite{Vir26}.
    \end{proof}

    To construct a boundary of cluster type on $X$, we need a more precise description of the geometry of $F_\ell$.
	
	\begin{proposition}\label{prop:mult2point}
		Let $X$ be a smooth prime Fano threefold of genus 9 or 10. Then there exists a line $\ell \subseteq X$ and a point $p\in F_\ell$ such that $\mult_p(F_\ell) = 2$. Furthermore, if 
		\[
			f = f_2 + \mathrm{higher\ order\ terms}
		\] 
		is the local equation of $F_\ell$ at $p$, then the rank of $f_2$ is $\geq 2$.
	\end{proposition}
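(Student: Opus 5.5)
We start from Proposition~\ref{prop:sing-line}, which gives a line $\ell\subseteq X$ with $F_\ell$ singular, and split according to the type of $\ell$.

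\emph{Case 1: $\ell$ is ordinary.} By \cite[Theorem 2.13]{KPZ14}, $F_\ell$ is then normal with at worst du Val singularities. Let $p$ be a singular point. Since $F_\ell$ is a cubic surface in $\pp^3$ (for $g=9$) or a quadric section of the smooth quadric $Q^3$ (for $g=10$), it is a hypersurface in a smooth threefold near $p$. Hence $\mult_p(F_\ell)=2$: a du Val point has multiplicity exactly two.

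For the rank, recall that the local equation of a du Val point is $x^2+y^2+z^{n+1}$, $x^2+y^2z+\dots$, or $x^2+y^3+\dots$. The tangent cone then has rank $3$ for $A_1$, rank $2$ for $A_n$ with $n\ge2$, and rank $1$ for $D_n$ and $E_n$. So we still need a point that is not of type $D$ or $E$. We must either choose $\ell$ so that $F_\ell$ has an $A_n$ point, or rule out $D$/$E$ points.

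To do this, I would go back to Virin's case analysis (\cite[Corollary 4.4, Propositions 5.2, 5.4]{Vir26}). There the singular point of $F_\ell$ comes from the flopping locus or from points where $F_\ell$ meets $\Gamma$. I would check that the construction yields an $A_n$ point. This is done by writing the local equation of $F_\ell$ at a point of $\Gamma\cap F_\ell$, or at the image of a flopping curve. The flopping curves are lines meeting $\ell$, and one expects $A_1$ or $A_n$ behaviour, as in the local computation of Lemma~\ref{lem:singular-cubics-2.9-2.12}: one coordinate is along a curve and the quadratic term contains a mixed term $uw$ or $vw$, giving rank at least two.

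\emph{Case 2: every line given by Proposition~\ref{prop:sing-line} is special.} Then $F_\ell$ is non-normal, with singular locus a double line $L$, and its normalization is a smooth scroll of minimal degree (\cite[Proposition 2.2]{KPZ14}). At a general point $p\in L$, the surface $F_\ell$ has two smooth local branches meeting transversally along $L$. Indeed, the normalization map is generically $2:1$ onto $L$ and the two preimage sheets have distinct tangent planes generically, since otherwise the scroll would have a cuspidal fold. So locally $f=uv+\dots$, with $\mult_p=2$ and $\operatorname{rank} f_2=2$.

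The delicate point is transversality at a general point of $L$. I would verify it on the explicit scroll $\mathbb{F}_1$ or $\mathbb{P}^1\times\mathbb{P}^1$ mapped to $F_\ell$, by checking that the projection contracting to the double line is not ramified along the conductor. Alternatively, Case 2 may simply not be needed if one can always choose an ordinary line.

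The main obstacle is Case 1: excluding rank-one tangent cones, i.e. $D_n$ and $E_n$ points on $F_\ell$. My first attempt would be the genericity in the choice of $\ell$: lines form a curve, and the degeneration to $D$/$E$ type should be a closed condition that can be avoided. Failing that, I would switch to the non-normal case or to the explicit local computation described above.
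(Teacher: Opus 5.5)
Your Case~1 carries the real content of the proof, and there you have no argument. You correctly reduce the rank condition to excluding $D_n$ and $E_n$ points on the normal surface $F_\ell$, but none of the routes you list achieves this.

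\begin{itemize}
\item Genericity is not available. The statement concerns every smooth $X$, and Proposition~\ref{prop:sing-line} only supplies some line with $F_\ell$ singular. Singularity of $F_\ell$ is itself a closed condition on $\ell$, so you cannot move $\ell$ off a further closed condition.
\item Lemma~\ref{lem:singular-cubics-2.9-2.12} concerns a net of cubics through a fixed curve and does not bear on $F_\ell$.
\item You do not say what in \cite{Vir26} would determine the singularity type.
\end{itemize}

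The missing idea is a structural analysis of where the singularities of $F_\ell$ come from in the Sarkisov link. Near each flopping curve $C_i$ (the strict transform of a line $\ell_i$ meeting $\ell$), the flop is a Reid pagoda of some width $n_i$. A local computation shows that the flopped surface $F'=\alpha_*E$ is locally $\mathrm{Bl}_{(x,t^{n_i})}\mathbb{A}^2$ over $p_i=C_i\cap E$. Hence $F'$ has only $A_{n_i-1}$ points, and the minimal resolution $S\to F'\to F_\ell$ is obtained from $E\cong\mathbb{F}_1$ by $n_i$ infinitely near blow-ups over each $p_i$.

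The other exceptional curves of $S\to F_\ell$ are strict transforms of fibres of $\psi$ over points of $\Gamma$. They have $-K_{Y'}$-degree one, so they map to $\Sigma$ or to a ruling of $E$, and you must show they attach to the chains only at their ends. This is exactly where ordinarity of the $\ell_i$ is used. For $n_i\geq 2$, the direction of the infinitely near centres at $p_i$ is the value of the section of the $\mathcal{O}$-summand of $N_{C_i/Y}\cong\mathcal{O}\oplus\mathcal{O}(-2)$. The map $d\varphi$ sends this section to a nowhere-vanishing section of $N_{\ell_i/X}\cong\mathcal{O}\oplus\mathcal{O}(-1)$, while the kernel of $d\varphi$ at $p_i$ is the ruling direction. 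So the centres after the first leave the ruling fibre. This rules out branching, and $F_\ell$ has only $A_n$ points (Lemma~\ref{lem:mult2-ordinary-line}).

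This also shows that your case split is the wrong one. Even granting such a lemma, your Case~1 leaves open the situation where the line from Proposition~\ref{prop:sing-line} is ordinary but meets a special line. There the transversality above can fail and $D$-type configurations are not excluded. The paper instead splits on whether $X$ contains \emph{any} special line. If it does, use that line, since its $F_\ell$ is automatically non-normal with a double line $\Lambda$. If not, every line is ordinary, and Virin's line satisfies the hypothesis of the $A_n$ lemma. So your suggestion that Case~2 might be avoidable points the wrong way: Case~2 is what makes the ordinary case tractable.

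In Case~2 your transversality claim is correct, but ``otherwise the scroll would have a cuspidal fold'' is not a proof. A clean argument goes as follows. A general hyperplane section $C$ of $F_\ell$ has $p_a(C)=1$: it is a plane cubic, resp.\ a complete intersection of two quadrics in $\mathbb{P}^3$. Its normalization is a smooth member of $|M|$ on $\mathbb{F}_n$ with $(K_{\mathbb{F}_n}+M)\cdot M=-2$, hence rational. Thus $\delta(C)=1$, and two branches at $p=C\cap\Lambda$ force an ordinary node. Since $F_\ell$ is singular along $\Lambda$, its quadratic part is a form in the two transverse coordinates whose restriction to the hyperplane is the tangent cone of this node. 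So it has rank two.
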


    We prove Proposition \ref{prop:mult2point} by studying two cases: when $X$ contains a special line, and when all the lines in $X$ are ordinary.
	
	\begin{lemma}\label{lem:mult2-special-line}
		Let $X$ be a smooth prime Fano threefold of genus 9 or 10, and let $\ell\subseteq X$ be a special line. Then a general point of the singular line $\Lambda \subseteq F_\ell$ satisfies Proposition \ref{prop:mult2point}.
	\end{lemma}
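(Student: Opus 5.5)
We use the structure recalled above: for a special line $\ell$, the surface $F_\ell\subset W$ (a cubic surface in $\pp^3$ if $g=9$, a quadric section of $Q^3$ if $g=10$) is non-normal. Its singular locus is a double line $\Lambda$, and its normalization $\nu\colon \tilde F\to F_\ell$ is a smooth scroll of minimal degree (\cite[Proposition 2.2]{KPZ14}). The plan is to analyse $F_\ell$ at a general point $p\in\Lambda$ through $\nu$. We show that the tangent cone of $F_\ell$ at $p$ is a pair of distinct planes through the tangent line of $\Lambda$, i.e. $F_\ell$ has an ordinary double crossing along $\Lambda$ generically. That gives $\mult_p F_\ell=2$ and $\rank f_2=2$.

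\emph{Step 1: multiplicity two.} Since $\Lambda$ is the double locus, the multiplicity of $F_\ell$ at a general point of $\Lambda$ is exactly two. In the case $g=9$ we argue by degree. If $F_\ell$ had multiplicity $3$ along the line $\Lambda$, then every plane through $\Lambda$ would meet the cubic $F_\ell$ only in $\Lambda$ counted three times. Thus $F_\ell$ would be a union of planes or a cone, which contradicts irreducibility and the scroll normalization. In the case $g=10$ we intersect with a general hyperplane section of $Q^3$ through $\Lambda$ and argue the same way. More intrinsically, $\nu^{-1}(\Lambda)$ is a curve $\tilde\Lambda\subset\tilde F$ with $\nu|_{\tilde\Lambda}\colon\tilde\Lambda\to\Lambda$ generically of degree $2$, since $F_\ell$ is generically double along $\Lambda$. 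So over a general $p\in\Lambda$ there are two preimages $\tilde p_1,\tilde p_2$, or one point with $\nu|_{\tilde\Lambda}$ ramified there.

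\emph{Step 2: two distinct branches.} Choose $p\in\Lambda$ general, so that it is not a branch point of $\tilde\Lambda\to\Lambda$. Then $F_\ell$ near $p$ is the union of two smooth analytic branches $\nu(U_1)$ and $\nu(U_2)$, where $U_i$ is a neighbourhood of $\tilde p_i$. Each branch is smooth at $p$ because $\nu$ is an immersion at a general point of $\tilde\Lambda$: the scroll is embedded by a linear projection, and the non-immersion locus is finite. Hence $f=g_1g_2$ locally with $g_i$ smooth, and $f_2=\ell_1\ell_2$ is the product of the linear parts.

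\emph{Step 3: main obstacle.} We must show that $\ell_1$ and $\ell_2$ are not proportional, i.e. that the two branches are not tangent at a general point of $\Lambda$. First, $\tilde\Lambda\to\Lambda$ does have degree $2$ rather than being ramified everywhere, since in characteristic zero a finite map of curves has only finitely many branch points. Second, suppose the tangent planes coincided along all of $\Lambda$. Then $F_\ell$ would have a non-ordinary (tacnodal) double line, and the Gauss map of the scroll would identify the tangent planes at $\tilde p_1$ and $\tilde p_2$ for all pairs. We would rule this out using the explicit models: for $g=9$, non-normal cubic surfaces in $\pp^3$ are classified (projections of the cubic scroll $S(1,2)\subset\pp^4$, or cones), and in the projection model $\{x^2z=y^2w\}$ the double line is ordinary away from two pinch points. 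For $g=10$ we use the analogous description of the quartic scroll projected into $Q^3\subset\pp^4$, checking that a general point of the double line is an ordinary double point of a transversal section. The cone case must be excluded separately, using that $\tilde F$ is a smooth scroll. If this classification route becomes unwieldy, we would instead argue via the adjunction/conductor: the conductor of $\nu$ is $\tilde\Lambda$ with multiplicity one exactly when the double curve is ordinary, and this can be computed from $K_{\tilde F}$ and $\nu^*\mathcal O(1)$ by degree.
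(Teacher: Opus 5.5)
\textbf{The gap is Step 3, which is the whole content of the lemma.} You correctly see that one must show the two sheets of $F_\ell$ along $\Lambda$ are transversal at a general point. However, you only list two possible routes and carry out neither.

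The model route is incomplete as written. For $g=9$, the non-conical non-normal cubic surfaces have two projective types: $x^2z=y^2w$ and Cayley's ruled cubic, which has a single pinch point. You treat only the first. For $g=10$ nothing is checked, although there are two scroll types: $\mathbb F_0$ with $M\sim\Sigma+2l$ and $\mathbb F_2$ with $M\sim\Sigma+3l$.

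Step 3 also depends on an unjustified claim from Step 1. The fact that $F_\ell$ is generically double along $\Lambda$ does not imply that $\tilde\Lambda\to\Lambda$ has degree $2$. A double curve of cuspidal transversal type ($y^2=x^3$) has multiplicity two, yet $\tilde\Lambda\to\Lambda$ is birational. Your remark about finitely many branch points presupposes degree $2$ rather than proving it. This matters for your conductor route: the cuspidal and ordinary types both have conductor of $M$-degree $2$, so ``by degree'' alone cannot separate them.

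\textbf{What closes the gap, and is exactly the paper's argument, is a $\delta$-invariant count.} Take a general hyperplane section $C$ of $F_\ell$; it meets $\Lambda$ in a single general point $p$.
\begin{itemize}
\item We have $p_a(C)=1$: $C$ is a plane cubic, resp.\ a complete intersection of two quadrics in $\pp^3$.
\item The normalization of $C$ is a smooth $C'\in|M|$ with $2g(C')-2=(K_{\mathbb F_n}+M)\cdot M=(n-2k-2)+(2k-n)=-2$. Hence $\delta_p(C)=1$.
\item The two-sheet input comes from the description of $\nu^{-1}(\Lambda)$ as a smooth or reduced degenerate conic \cite[Proposition 2.2]{KPZ14}, which maps with degree two onto $\Lambda$. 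So $C$ has two branches $B_1,B_2$ at $p$.
\item Then $1=\delta_p(C)=\delta(B_1)+\delta(B_2)+(B_1\cdot B_2)_p$ forces two smooth transversal branches, i.e.\ an ordinary node. This also makes your unproved claim that $\nu$ is an immersion at a general point of $\tilde\Lambda$ unnecessary.
\item Since $F_\ell$ is singular along $\Lambda=\{x=y=0\}$, its local equation lies in $(x,y)^2$. So its quadratic part $q$ is a form in $x,y$ alone, and it is the tangent cone of $C=F_\ell\cap\{z=0\}$ at $p$. Hence $q$ is nondegenerate, $\mult_pF_\ell=2$ and $\rank q=2$.
\end{itemize}

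Your conductor remark is the surface version of the same count. Since $K_W+F_\ell\sim\mathcal O(-1)$ in both cases, $\mathfrak c\sim -K_{\tilde F}-M$ and $M\cdot\mathfrak c=2$. With two sheets, this forces $\mathfrak c$ to be reduced, hence transversal type $A_1$. Alternatively, the two-sheet property itself follows from the class: $-K_{\tilde F}-M$ equals $\Sigma+l$, $\Sigma$, $\Sigma+l$ in the three cases, which is never twice a class, so the cuspidal possibility $\mathfrak c=2D$ is excluded. Either way, this computation has to be carried out, not left as a fallback.
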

	
	\begin{proof}
        Let $\nu\colon F^\nu \to F_\ell$ be the normalization of $F_\ell$, and let $M := \nu^* \oo_{F_\ell} (1).$ The normalization $F^\nu$ is one of the smooth scrolls classified~\cite[Proposition 2.2(2)]{KPZ14}. We have that $F^\nu$ is isomorphic to a Hirzebruch surface $\mathbb{F}_n$, and $M \sim \Sigma + kl$, where $\Sigma^2 = -n$, $l$ is a ruling, and $k$ and $n$ satisfy
        \[
	       k > n,\quad 2k - n = g-6.
        \]
        Hence
        \begin{itemize}
	       \item if $g=9$, then $F^\nu \cong \mathbb{F}_1$ and $M \sim \Sigma + 2l$, and
	       \item if $g=10$, then $F^\nu \cong \mathbb{F}_0$ and $M \sim \Sigma + 2l$, or $F^\nu \cong \mathbb{F}_2$ and $M\sim \Sigma + 3l.$
        \end{itemize}
        In both cases, $\nu^{-1}(\Lambda)$ is either a smooth conic or 
        a reduced degenerated conic.
        Let $C$ be a general hyperplane section of $F_\ell$.
        By Bertini's theorem and generic smoothness, the preimage of $C$ is a
        smooth irreducible curve $C' \in |M|$. Notice that
        \[
	        (K_{\mathbb{F}_n} + M)\cdot M = n -2k - 2 + 2k - n = -2,
        \]
        so adjunction gives $g(C') = 0.$

        If $g=9$, $C$ corresponds to a plane cubic, and if $g=10$, $C$ is the complete intersection of two quadrics in $\pp^3.$ In each case, $p_a(C) = 1$, and it is smooth away from its unique intersection point $p$ with $\Lambda$. For a general point on $\Lambda$, its preimage consists of two points, so $C$ has two branches at $p$. For $C$ we have
        \[
	       \delta(C) := p_a(C) - g(C') = 1.	
        \]
        Let $B_1,B_2$ be the two branches of $C$ at $p$. Then $\delta_p(C) = \delta_p(B_1) + \delta_p(B_2) + (B_1\cdot B_2)_p$. As $(B_1\cdot B_2)_p \geq 1$, then it is equal to 1, and $\delta_p(B_1) = \delta_p(B_2) = 0$.
        This implies that $C$ has an ordinary node at $p$.

        Finally, let $(x,y,z)$ be local coordinates at $p$ with $\Lambda = \{x=y=0\}$ and $H=\{z=0\}$. Then, because $F_\ell$ is singular along $\Lambda$, its local equation belongs to $(x,y)^2$, so its quadratic part $q$ is a form in $x,y$ only. 
        As $C=F_\ell\cap H$ is cut out in $H$ by $q+ \text{higher order terms}$, its tangent cone is $\{q=0\}$.
        Since $C$ has two transverse smooth branches at $p$, $q$ is nondegenerate.
        Hence $\mult_p F_\ell=2$ and $\operatorname{rank} q=2$. 
	\end{proof}
	
	\begin{lemma}\label{lem:mult2-ordinary-line}
		Let $X$ be a smooth prime Fano threefold of genus 9 or 10, and let $\ell \subseteq X$ be an ordinary line such that every line intersecting $\ell$ is ordinary. Then $F_\ell$ is smooth or has $A_n$ singularities. 
	\end{lemma}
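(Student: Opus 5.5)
The plan is to use the Sarkisov link $Y={\rm Bl}_\ell X \dashrightarrow Y'={\rm Bl}_\Gamma W$ and read off the singularities of $F_\ell=\psi(F')$ from the flop $\alpha$. Since $\ell$ is ordinary, \cite[Theorem 2.13]{KPZ14} says $F_\ell$ is normal with at worst du Val singularities. So the task is to rule out $D_n$ and $E_n$ points, and to show that every singular point of $F_\ell$ is of type $A_n$.

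\textbf{Step 1: the exceptional divisor $E$.} For an ordinary line we have $N_{\ell/X}\simeq \oo\oplus\oo(-1)$, so $E\simeq \mathbb{F}_1$. The flopping curves of $\alpha$ are the strict transforms of lines meeting $\ell$ (and possibly conics meeting $\ell$ twice). I would check that $F'=\alpha_*E$ is obtained from $E$ by a sequence of elementary modifications at the points where these curves meet $E$. The surface $F_\ell$ is then the image of $F'$ under $\psi$. Off $\Gamma$ the map $\psi$ is an isomorphism, and $F'\cap D$ maps to $\Gamma$.

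\textbf{Step 2: locate the singularities.} The singular points of $F_\ell$ come from two sources:
\begin{itemize}
\item curves contracted by $F'\to F_\ell$, namely fibers of $D\to\Gamma$ contained in $F'$;
\item singular points of $F'$ itself, created by the flop along flopping curves through $E$.
\end{itemize}
For each flopping curve $m$ meeting $\ell$, ordinarity of $m$ gives $N_{m/X}\simeq\oo\oplus\oo(-1)$. Hence the strict transform of $m$ in $Y$ has normal bundle $\oo(-1)\oplus\oo(-1)$, or $\oo\oplus\oo(-2)$ in the degenerate case. By Reid's classification of flops these are Atiyah flops, or $A_n$-type (Laufer-free) flops when the normal bundle is $\oo\oplus\oo(-2)$. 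I would then do the local computation on a transversal slice: locally $E$ is a smooth surface meeting the flopping curve transversally at one point. After the flop its image contains the flopped curve, and the singularity acquired is a cyclic quotient, or a point with local equation $xy=z^{n+1}$. This is an $A_n$ singularity.

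\textbf{Step 3: the main obstacle.} The hardest part is excluding $D$ and $E$ types. Such points would require a non-reduced or non-chain configuration of $(-2)$-curves in the minimal resolution. One could also arrive there via a fiber of $D$ over a point of $\Gamma$ with a flopping curve in special position. I would bound this using the hypothesis: all lines meeting $\ell$ are ordinary, so every flopping curve has normal bundle $\oo(-1)\oplus\oo(-1)$ or $\oo\oplus\oo(-2)$, never $\oo(1)\oplus\oo(-3)$. This forces the width-one, length-one flops, which only produce $A_n$ chains. If that route stalls, the fallback is a direct projective argument. For $g=9$, $F_\ell$ is a normal cubic surface, and it is the image of $\mathbb{F}_1$ blown up at points on $\ell$-fibers; for $g=10$ it is a quadric section of $Q^3$. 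For these surfaces a $D_4$ or worse point would force a line of $X$ meeting $\ell$ with special normal bundle. Concluding either way gives that $F_\ell$ is smooth or has only $A_n$ singularities.
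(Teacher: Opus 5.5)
Your Steps~1--2 agree with the paper's setup. Near each flopping curve $C_i$ (the strict transform of a line $\ell_i$ meeting $\ell$ at $q_i$) the flop is a pagoda, so $F'$ acquires an $A_{n_i-1}$ point over $p_i=C_i\cap E$. The gap is Step~3, which is the actual content of the lemma.

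\textbf{Why Step~3 fails.} Knowing the normal bundles of the $C_i$ only controls the singularities of $F'$. The minimal resolution of $F_\ell$ also contains the strict transforms of the fibres of $\psi$ lying in $F'$. A $D_n$ or $E_n$ graph can only come from how these curves attach to the chains over the $p_i$, for example a contracted curve meeting an interior curve of a chain, or several contracted curves meeting one chain. Your argument never analyses this attachment.

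Moreover, the dichotomy you invoke does not see the hypothesis. If $\ell_i$ were special, $N_{C_i/Y}$ (the elementary modification of $N_{\ell_i/X}$ at $q_i$ along the image of $T_{q_i}\ell$) would still be $\oo\oplus\oo(-2)$, unless that image is the fibre of the $\oo(1)$-summand. So ``never $\oo(1)\oplus\oo(-3)$'' is not what ordinarity of $\ell_i$ buys. Also, $\oo\oplus\oo(-2)$ corresponds to pagodas of every width $n\geq 2$, which is what produces your $A_{n-1}$ points, not to width-one flops. Your fallback, that a $D_4$ point would force a special line meeting $\ell$, is the lemma restated.

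\textbf{What the paper supplies.}
\begin{enumerate}
\item $\psi_F=\psi|_{F'}$ is crepant, $K_{F'}=\psi_F^*K_{F_\ell}$. Hence $S\to F'\to F_\ell$, with $S$ the minimal resolution of $F'$, is the minimal resolution of $F_\ell$. Every $\psi$-contracted curve $R\subset F'$ has $-K_{Y'}\cdot R=1$. Since $-K_{Y'}|_{F'}=\eta^*(\Sigma+2l)$ for $\eta\colon F'\to E\simeq\mathbb F_1$, the curve $\eta(R)$ is $\Sigma$ or a ruling fibre.
\item The infinitely near centres over $p_i$ follow the direction $s(p_i)$, where $s$ spans the $\oo$-summand of $N_{C_i/Y}$. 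The map $d\varphi\colon N_{C_i/Y}\to N_{\ell_i/X}$ is injective, and its kernel at $p_i$ is the ruling direction. Because $N_{\ell_i/X}\simeq\oo\oplus\oo(-1)$, the section $d\varphi(s)$ is nowhere vanishing, so $s(p_i)$ is transverse to the ruling fibre $f_i$. This is the real use of the hypothesis: for a special $\ell_i$ as above, $d\varphi(s)$ would vanish at $q_i$ and the centres would be tangent to $f_i$.
\item Self-intersection bookkeeping does the rest: $-1\to-2$ for $\Sigma$, and $0\to-2$ for a fibre, which by transversality contains no infinitely near centre. So each such curve meets any chain only in the curve produced by the first blow-up. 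Disjointness of distinct $\psi$-fibres prevents two of them from meeting the same chain. Hence every connected component of the exceptional graph is an $A_n$ chain.
\end{enumerate}
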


    \begin{proof}
        Because $\ell$ is ordinary, the exceptional divisor $E \cong \mathbb{F}_1$. Let $\Sigma$ be the section with negative self-intersection, and $l$ the class of the ruling. We have $\varphi^* (-K_X)|_E = l$ and $E|_E = -\Sigma - l$. Hence
        \[
	       -K_Y|_E = \Sigma + 2l =: M.
        \]
        Let $a\Sigma + bl$ be an effective curve on $E$. Then $M(a\Sigma + bl) =a + b$. The effective cone of $E$ is generated by $\Sigma$ and $l$, so $a,b\geq 0$. This implies that the only irreducible curves of degree one with respect to $M$ are $\Sigma$ and the ruling fibers.

        We study how $E$ transforms under the flop $\alpha$. Let 
        \[
            \xymatrix{
	           Y \ar[rd]_{\pi} \ar@{-->}[rr]^{\alpha} & & Y' \ar[ld]^{\pi'}\\
	               & \bar{Y}
            }
        \]
        be the anticanonical contraction common to $Y$ and $Y'$ \cite[Corollary 2.8]{KPZ14}. 
        It contracts exactly the disjoint flopping curves $C_i$, which correspond to the disjoint strict transforms of the lines $\ell_i \neq \ell$ that intersect $\ell$ \cite[Proposition 2.12]{KPZ14}. 
        Each $C_i$ intersects $E$ transversally in one point $p_i$. Let $\bar{E} = \pi(E)$, and recall that $\alpha_* E = F'$.
        
        Near $C_i$ the flop is analytically a pagoda of width $n=n_i \geq 1$ in the sense of Reid \cite[\S 5]{Rei83}. 
        That is, $\bar{Y}=V_n=\{xz=y(y+t^n)\}\subset\mathbb{A}^4$,
        and $Y$, $Y'$ are the small resolutions ${\rm Bl}_{(x,y)}V_n$ and ${\rm Bl}_{(x,y+t^n)}V_n$, respectively.
        In the chart $y=xv$ of the first, with coordinates $(x,v,t)$, the contraction is
        $(x,v,t)\mapsto(x,xv,v(xv+t^n),t)$ and $C_i=\{x=t=0\}$. The transition to the chart $u=v^{-1}$ is
        $z=v^2x+vt^n$, $t=t$, so $N_{C_i/Y} \cong \oo(-1)^{\oplus2}$ for $n=1$ and $\oo \oplus \oo(-2)$ for $n \geq 2$,
        where the summand $\oo$ corresponds to the $t$-direction.
        
        The surface $E$ is smooth and transverse to $C_i$, so at $p_i$ it has local equation
        $E=\{v=h(x,t)\}$ with coordinates $(x,t)$. The map $\pi|_E$ is
        \[
            (x,t) \mapsto (x,xh,h(xh+t^n),t),
        \]so, near $\pi(p_i)$, $\bar{E}$ is smooth and isomorphic to $E$. Away from the $C_i$, the contraction $\pi$ is an isomorphism, and because each $C_i$ intersects $E$ at one point, $E\to \bar{E}$ is an isomorphism. So we identify $E$ with $\bar{E}$.
        Therefore, the map $\pi'$ restricts to a map $\eta\colon F' \to E,$ which is an isomorphism away from the $p_i$. Because $M=-K_Y|_E$, and both $-K_Y|_E$ and $-K_{Y'}|_{F'}$ are pull-backs of the same divisor on $\bar{E},$ $-K_{Y'}|_{F'} = \eta^* M.$
        
        The strict transform of $\bar{E}$ in 
        ${\rm Bl}_{(x,y+t^n)}V_n$,
        is the blow-up of $\bar{E}$ along $(x,y+t^n)\cdot\oo_{\bar E}$, and
        $(x,y+t^n)\cdot \oo_{\bar E}=(x,xh+t^n)=(x,t^n)$. Thus, locally at $p_i$
        \[
            \eta\colon {\rm Bl}_{(x,t^n)} \mathbb{A}^2 \to \mathbb{A}^2.
        \]

        The blow-up ${\rm Bl}_{(x,t^{n})}\mathbb{A}^2$ has two charts given by $\{xu=t^n\}$ and $\{x=vt^n\}$. The second chart is smooth, and the first has an $A_{n-1}$ point at the origin for $n\ge2$. 
        We obtain its minimal resolution by performing $n$ blow-ups.
        We first blow up the origin, so $x=tx_1$ turns $(x,t^n)$ into $t\,(x_1,t^{n-1})$.
        Then we repeat this procedure at the point where the strict transform of the curve $\{x=0\}$ meets the new exceptional curve.
        After $n$ steps we resolve the singularity, and we obtain a chain of exceptional curves 
        \[
            E_1-\cdots-E_{n-1}-E_n,
        \] 
        where $E_1,\ldots,E_{n-1}$ are $(-2)$-curves that get contracted to the $A_{n-1}$ point and $E_n$ is a $(-1)$-curve that maps onto the exceptional curve of $\eta$.

        Therefore, $F'$ is normal, with $A_{n_i-1}$ singularities over each $p_i$, and smooth elsewhere.
        Let $\rho\colon S \to F'$ be its minimal resolution. Then $\tau=\eta\circ\rho\colon S\to E$ is the composition of $n_i$ blow-ups over each $p_i$. The centers of these blow-ups are the points infinitely near to $p_i$ determined by the subscheme $V(x,t^n)\subset E$, which lies on the curve $\{x=0\}$. For $n\geq 2$, this subscheme has tangent
        direction $\langle\partial_t\rangle\subset T_{p_i} E\cong N_{C_i/Y}|_{p_i}$.
        This direction corresponds to the value at $p_i$ of the global section of
        the summand $\oo \subset N_{C_i/Y}$. To understand how each sequence of blow-ups behaves with respect to the ruling $E \to \ell$, we compare this direction with the ruling using the differential of $\varphi$.

        Fix $i$ with $n_i \geq 2$. Let $q_i=\ell\cap\ell_i = \varphi(p_i)$, and let $f_i\subset E$ be the fibre of $E\to\ell$ over $q_i$.
        Since $\ell$ and $\ell_i$ meet transversally at $q_i$, we can choose local coordinates $(x,y,z)$ on $X$ at $q_i$ with
        $\ell=\{x=y=0\}$ and $\ell_i=\{y=z=0\}$. In the chart $y=xw$ of $Y$, with coordinates $(x,w,z)$, we have $E=\{x=0\}$,
        $C_i=\{w=z=0\}$ and $p_i=(0,0,0)$. The projection $E\to\ell$ is given by $(w,z)\mapsto z$, so
        $f_i=\{x=z=0\}$ has tangent direction $\langle\partial_w\rangle$ at $p_i$. Recall that $E$ is transverse to $C_i$, so
        $T_{p_i}E\cong N_{C_i/Y}|_{p_i}=\langle\partial_w,\partial_z\rangle$.

        The restriction $\varphi|_{C_i}\colon C_i\to\ell_i$ is an isomorphism, and the differential of $\varphi$ induces a morphism of vector bundles
        \[
	           d\varphi\colon N_{C_i/Y}\longrightarrow N_{\ell_i/X}.
        \]
        Since $\varphi(x,w,z)=(x,xw,z)$, in the bases $\partial_w,\partial_z$ and $\partial_y,\partial_z$ it is given by
        \[
            \partial_w\longmapsto x\partial_y,\quad\partial_z\longmapsto\partial_z .
        \]
        It is an isomorphism away from $p_i$, so it is injective as a morphism of sheaves, and its kernel at $p_i$ is the direction of the ruling.

        Let $s$ be a nonzero global section of $N_{C_i/Y} \cong \oo \oplus \oo(-2)$.
        By injectivity, its image is a nonzero global section of $N_{\ell_i/X}$. 
        Since $\ell_i$ is ordinary, $N_{\ell_i/X} \cong \oo \oplus \oo(-1)$, and $d\varphi(s)$ is a nonzero constant section of the $\oo$ summand. This implies that $d\varphi(s)(p_i) \neq 0,$ so $s(p_i)\not \in \langle \partial_w \rangle.$ Recall that $s(p_i)$ corresponds to a tangent direction at $p_i$ of the curve defining the sequence of blow-ups.
        Therefore, this curve intersects transversally the ruling $f_i$ at $p_i$. This implies that after the first blow-up of the sequence, the following centers do not lie on $f_i$.

        Let $\psi\colon Y' \to W$ be the blow-up of $\Gamma$, with exceptional divisor $D$. Let $\psi_F = \psi|_{F'}\colon F' \to F.$ Because $\psi^*F = F' + D$ and $K_{Y'} = \psi^* K_W + D,$ adjunction implies that $K_{F'} = \psi_F^* K_F.$ The singularities of $F'$ are du Val, so if $\mu = \psi_F \circ \rho\colon S \to F$, 
        \[
            K_S = \mu^* K_F.
        \]
        Hence $\mu$ is the minimal resolution of $F$. We need to understand the exceptional curves of $\mu$ that are strict transforms of curves contracted by $\psi_F$. Let $R$ be such a curve. Then $R$ is the whole fiber $\psi^{-1}(p) \cong \pp^1$ for a point $p\in \Gamma$, and
        \[
            -K_{Y'}\cdot R = -(\psi^* K_W + D)\cdot R = -D\cdot R = 1.
        \]
        Because $-K_{Y'}|_{F'} = \eta^* M,$ $M\cdot \eta_*(R) = 1$. Then $\eta(R)$ has degree one with respect to $M$, so it is either $\Sigma$ or $l$, and $R$ is its strict transform on $F'$.  Let $R'$ be the strict transform of $R$ on $S$. By adjunction, $R'^2 = -2$. The composition $\eta \circ \rho$ is a composition of blow-ups at points, so the self-intersection of $\eta(R)$ drops by one for each center lying on its successive strict transform. 
        
        If $\eta(R) = \Sigma$, then the self-intersection drops from $-1$ on $E$ to $-2$ on $S$. Then only one center lies on the strict transforms of $\eta(R)$, which means it must be an initial center.
It meets an endpoint of the contracted chain (the $(-1)$-curve is not part of the exceptional graph of $\mu$). Therefore, $R'$ intersects transversally $E_1$ of that chain of exceptional curves.

        If $\eta(R)$ is a ruling fiber, the discussion above implies that its strict transform does not contain infinitely near centers. Then, because the self-intersection drops from $0$ to $-2$, $R$ contains two distinct initial centers, so the resulting curve configuration is a chain of $(-2)$-curves.
Distinct additional curves are disjoint fibers of $\psi_F$ and cannot meet the same resolution chain, since their images on $F^{\prime}$ would then meet. Thus no branching occurs.

        This implies that every connected configuration of exceptional curves of $\mu \colon S\to F$ is a chains of $(-2)$-curves, which implies that $F$ only has $A_n$-type singularities.
        
    \end{proof}
	
	\begin{proof}[Proof of Proposition \ref{prop:mult2point}]
		If $X$ contains a special line $\ell$, then the proposition follows from Lemma \ref{lem:mult2-special-line}. 
		If every line of $X$ is ordinary, we may use Proposition \ref{prop:sing-line} 
		to find a line $\ell$ such that $F_\ell$ is singular. 
		By Lemma \ref{lem:mult2-ordinary-line}, $F_\ell$ has $A_n$ singularities, 
		so the local equation of $F_\ell$ at any of these points is
		\[
		xy+z^{n+1} = 0.
		\]
		If $n=1$, the quadratic part has rank three, and if $n\geq 2$, the rank of the quadratic part is two. 
	\end{proof}
	
	\begin{proposition}\label{prop:1-8}
		Let $X$ be a prime smooth Fano threefold of genus $9$.
		Then, the variety $X$ is of cluster type. 
	\end{proposition}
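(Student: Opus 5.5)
Our plan is to transport a cluster type boundary from $\pp^3$ to $X$ through the Sarkisov link $X \leftarrow Y \dashrightarrow Y' \to \pp^3$ described above, for $g=9$. By Proposition~\ref{prop:mult2point}, we fix a line $\ell\subseteq X$ and a point $p\in F_\ell$ with $\mult_p(F_\ell)=2$ and $\operatorname{rank} f_2\ge 2$. Here $F_\ell\subset\pp^3$ is a cubic surface. We take $H\subset\pp^3$ a general plane through $p$ and consider the quartic $B_W:=F_\ell+H$.

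\textbf{Step 1: $(\pp^3,B_W)$ is of cluster type.} The quadratic part of $F_\ell$ at $p$ has rank at least two. Hence the restriction of $F_\ell$ to a general plane through $p$ is a curve with a node at $p$, so $(H,F_\ell|_H)$ is lc near $p$ with a zero-dimensional lc center. Inversion of adjunction then makes $(\pp^3,F_\ell+H)$ lc near $p$, of coregularity zero, with $\mult_p(B_W)=3$. Away from $p$ we need $B_W$ slc.
\begin{itemize}
\item If $\ell$ is ordinary, $F_\ell$ has only $A_n$ points by Lemma~\ref{lem:mult2-ordinary-line}, and Bertini makes $H$ transverse elsewhere.
\item If $\ell$ is special, $F_\ell$ is non-normal with a double line $\Lambda$ whose general points are ordinary double curve points; this is the local analysis in Lemma~\ref{lem:mult2-special-line}. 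We must check that the finitely many pinch points on $\Lambda$ and any worse points of $\Lambda$ are slc. Since $F_\ell$ is the image of a smooth scroll with conic double locus, we expect only pinch points.
\end{itemize}
Then Proposition~\ref{prop:criterion-sing-p3} gives that $(\pp^3,B_W)$ is of cluster type.

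\textbf{Step 2: lift to $Y'$ and $Y$.} We have $\Gamma\subset F_\ell$, since $D$ and $F'$ are the two divisors of the link and $\psi(F')=F_\ell$ contains $\Gamma$. We choose $H$ general through $p$, so $\Gamma\not\subset H$. Then $\mult_\Gamma(B_W)=1$ at a general point of $\Gamma$, and the blow-up $\psi$ extracts a canonical place, namely $D$ with coefficient $0$. Thus $\psi^*(K_W+B_W)=K_{Y'}+F'+H'$, and Lemma~\ref{lem:blow-up-of-cluster-type} shows $(Y',F'+H')$ is of cluster type. A small detail to verify is that $\Gamma\subset\operatorname{Sing}$ of the boundary does not occur; this holds because along $\Gamma$ the surface $F_\ell$ is generically smooth. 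The flop $\alpha$ is an isomorphism in codimension one and crepant, so $(Y,E+H_Y)$ is of cluster type, with $H_Y$ the strict transform of $H'$, again by Lemma~\ref{lem:blow-up-of-cluster-type}.

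\textbf{Step 3: descend to $X$.} We push forward along $\varphi$ to get $B_X:=\varphi_*H_Y\in|-K_X|$. The exceptional divisor $E$ has coefficient one, so $\varphi$ only extracts an lc place of $(X,B_X)$. By Lemma~\ref{lem:cluster-type-descends}, $(X,B_X)$ is of cluster type. For this we need $(X,B_X)$ log Calabi--Yau with $K_Y+E+H_Y=\varphi^*(K_X+B_X)$. This holds because $-K_Y=\varphi^*(-K_X)-E$, so $E+H_Y\sim -K_Y$ forces $H_Y\sim\varphi^*(-K_X)-2E$, meaning $B_X$ has multiplicity $2$ along $\ell$. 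The identity is then automatic for crepant pullback, and lc-ness descends.

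\textbf{Main obstacle.} We expect the hardest point to be Step 1 in the special-line case: verifying that the non-normal cubic $F_\ell$ together with $H$ is slc everywhere, especially at pinch points and where $\Lambda$ meets $H$. The fallback is to use Lemma~\ref{lem:mult2-ordinary-line} whenever some ordinary line exists with singular $F_\ell$, which reduces to the normal $A_n$ case.
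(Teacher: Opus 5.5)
Your overall route is the paper's: the same boundary $F_\ell+H$ with $H$ a general plane through the rank-$\geq 2$ double point $p$, then Proposition~\ref{prop:criterion-sing-p3}, crepant transport through $\psi$ with $D$ a canonical place, the flop, and descent along $\varphi$ using $H_Y\sim\varphi^*(-K_X)-2E$. Steps 2 and 3 are fine. Your local check at $p$ uses adjunction to $H$: the plane section is nodal at $p$, so $p$ is a zero-dimensional lc center. This is a valid and slightly shorter substitute for the paper's blow-up of $p$ followed by adjunction to the exceptional plane $G$.

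The gap is Step 1 when $\ell$ is special. You never show that $(\pp^3,F_\ell)$ is lc along the double line $\Lambda$; ``we expect only pinch points'' is not a proof. Your fallback does not close it either. If $X$ has special lines, nothing you cite produces an ordinary line $\ell'$ with $F_{\ell'}$ singular at a point whose quadratic part has rank $\geq 2$. Lemma~\ref{lem:mult2-ordinary-line} needs every line meeting $\ell'$ to be ordinary, and $D_n$ or $E_n$ points fail the rank condition.

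The missing input is \cite[Proposition 2.2(2)]{KPZ14}. It says the normalization is $\nu\colon F^\nu\cong\mathbb{F}_1\to F_\ell$, and $\nu^{-1}(\Lambda)$ is a reduced member of $|\Sigma+l|$ with at most one node. Adjunction gives $K_{F^\nu}+\mathrm{cond}\sim\nu^*\bigl((K_{\pp^3}+F_\ell)|_{F_\ell}\bigr)\sim-(\Sigma+2l)$, so the conductor has class $\Sigma+l$. Since it is supported on $\nu^{-1}(\Lambda)$, it equals this reduced nodal curve. Hence $(F^\nu,\mathrm{cond})$ is lc, and Kawakita's inversion of adjunction~\cite{Kaw07} gives $(\pp^3,F_\ell)$ lc. No classification of the points of $\Lambda$ is needed.

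Your second worry, where $\Lambda$ meets $H$, also disappears. $F_\ell$ is smooth off $\Lambda$, because its normalization is smooth. A general plane through the general point $p\in\Lambda$ meets $\Lambda$ only at $p$, which your node computation already covers. In the ordinary case, all you need away from $p$ is that a normal $F_\ell$ has at worst du Val singularities~\cite{KPZ14}.
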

	
	\begin{proof}
		Let $\ell \subseteq X$ and $p\in F_\ell \subseteq \pp^3$ as in Proposition~\ref{prop:mult2point}, where $F_\ell$ is a cubic surface. 
        Choose homogeneous coordinates $x_0,x_1,x_2,x_3$ of $\pp^3$
        such that $p = [1:0:0:0]$ and $F_\ell = \{f = 0\},$ with
		\[
			f = x_0f_2(x_1,x_2,x_3) + f_3(x_1,x_2,x_3).
		\]
		Let $H$ be a general plane through $p$ such that locally $H|_{F_\ell}$ intersects $\{f_2 = 0\}$ transversally at two points. 
		Consider $Z:= {\rm Bl}_p \pp^3 \xrightarrow{\pi} \pp^3$ the blow-up of $p$, with exceptional divisor $G$. Then
		\[
			\pi^*(K_{\pp^3} + H + F_{\ell}) = K_Z + H_Z + F_Z + G.
		\]
        Furthermore, the transversality condition implies that the divisor 
		$H_Z + F_Z + G$ is simple normal crossing in a neighborhood of $ H_Z \cap F_Z \cap G$, and $ H_Z \cap F_Z \cap G$ consists of two points. 
        
        We need to show that the pair $(\pp^3,H+F_\ell)$ is log canonical. By adjunction, 
        \[
            (K_Z + H_Z + F_Z + G)|_G = K_G + L_G + C_G,
        \]
        where $L_G$ is a line and $C_G = \{f_2 = 0\}$ is a reduced conic of rank $\geq 2$. 
        The line $L_G$ intersects $C_G$ transversally at two points, and if the rank of $C_G$ is two,
        the vertex is not on $L_G$. Then $(G,L_G + C_G)$ is log canonical, so by inversion of adjunction \cite{Kaw07},
        $(Z,H_Z + F_Z + G)$ is log canonical near $G$. Hence, $(\pp^3,H+F_\ell)$ is log canonical near $p.$
 		Away from $p$, we need $(\pp^3,F_\ell)$ to be log canonical, and $F_\ell + H$ to be simple normal crossing along $H \setminus \{p\}$. If $F_\ell$ is normal, then it has du Val singularities, so it is klt, and by inversion of adjunction, $(\pp^3,F_\ell)$ is log canonical. If $F_\ell$ is nonnormal, then consider its normalization $\nu\colon F^\nu \cong \mathbb{F}_1 \to F_\ell$, with singular line $\Lambda$. The preimage of $\Lambda$ is reduced, with at most one node, and linearly equivalent to $\Sigma + l$ \cite[Proposition 2.2(2)]{KPZ14}. Hence $F_\ell$ is semi-log canonical, so $(\pp^3,F_\ell)$ is log canonical. A general $H$ avoids the other isolated singularities and the exceptional points of the double locus, and meets the latter transversely. Thus $H\cap F_\ell$ is nodal at $p$ by the rank condition and nodal or smooth elsewhere by Bertini. Adjunction to $H$ proves the lc condition near $H$. Therefore, $(\pp^3,H+F_\ell)$ is a log canonical pair. We have $H+F_\ell \sim -K_{\pp^3}$, so the pair is Calabi--Yau.
        
		Furthermore, the pair $(Z,H_Z + F_Z + G)$ has a 0-dimensional simple normal crossing stratum. Then, any dlt modification over it will have a 0-dimensional stratum. Thus, $(\pp^3,F_\ell + H)$ has coregularity zero.
        Because $\mult_p(F_\ell + H) = 3$, 
		  Proposition \ref{prop:criterion-sing-p3} implies that $(\pp^3,F_\ell + H)$ is of cluster type.

        The curve $\Gamma$ is smooth, irreducible, of genus 3, and degree $7$. Therefore, it is not a planar curve, so $\Gamma\not\subseteq H$. The multiplicity of $F_\ell$ along $\Gamma$ is one. Indeed, if $F_\ell$ is normal, its singular locus is finitely many points, and if $F_\ell$ is nonnormal, the singular locus is the line $\Lambda.$ Let $H'$ be the strict transform of $H$ in $Y'$. Then
        \[
            \psi^*(K_{\pp^3} + H + F_\ell) = K_{Y'} + H' + F'.
        \]
        The divisor $D$ is a canonical place of $(\pp^3,F_\ell + H)$, so Lemma \ref{lem:blow-up-of-cluster-type} implies that $(Y', H' + F')$ is of cluster type. Let $H_Y = \alpha^{-1}_* H'$. By definition of cluster type, $(Y,E + H_Y)$ is of cluster type. Finally, let $H_X = \varphi_* H_Y$. As $K_Y = \varphi^*(K_X) + E$ and $K_Y + E + H_Y \sim 0$, $H_Y \sim \varphi^*(-K_X) -2E$. On the other hand, we have $\varphi^*(H_X) = H_Y + mE$, which implies $m=2$ and $H_X\in |-K_X|$. Hence
        \[
            \varphi^*(K_X + H_X) = K_Y + H_Y + E.
        \]
        Then $E$ is a log canonical place of $(X,H_X)$, so Lemma \ref{lem:cluster-type-descends} implies that $(X,H_X)$ is of cluster type.
	\end{proof}
	
	\begin{proposition}\label{prop:1-9}
		Let $X$ be a prime smooth Fano threefold of genus $10$.
		Then, the variety $X$ is of cluster type. 
	\end{proposition}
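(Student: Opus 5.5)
We follow the proof of Proposition~\ref{prop:1-8}, now with $W=\mathbb{Q}^3\subset\pp^4$ and $F_\ell$ a quadric section of $\mathbb{Q}^3$. By Proposition~\ref{prop:mult2point}, we choose a line $\ell\subseteq X$ and a point $p\in F_\ell$ with $\mult_p(F_\ell)=2$ and quadratic part $f_2$ of rank at least two. Since $-K_{\mathbb{Q}^3}=\mathcal{O}_{\mathbb{Q}^3}(3)$ and $F_\ell\in|\mathcal{O}_{\mathbb{Q}^3}(2)|$, the natural boundary is $F_\ell+R$, with $R=H\cap\mathbb{Q}^3$ for a general hyperplane $H$ through $p$. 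Then $R$ is a smooth quadric surface; it is smooth at $p$ because $H$ is general and $\mathbb{Q}^3$ is smooth. Hence $\mult_p(F_\ell+R)=3$.

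\textbf{Step 1: log canonicity.} We check that $(\mathbb{Q}^3,F_\ell+R)$ is log canonical and has coregularity zero, exactly as before. Let $Z={\rm Bl}_p\mathbb{Q}^3$ with exceptional divisor $G\simeq\pp^2$. Then $G$ is a log canonical place, and the restriction to $G$ is $(\pp^2, L_G+C_G)$. Here $C_G=\{f_2=0\}$ is a conic of rank $\geq2$ and $L_G$ is a general line meeting it transversally, so the restricted pair is lc and has a zero-dimensional stratum. Inversion of adjunction then gives log canonicity near $p$. Away from $p$, $F_\ell$ is either du Val (ordinary line) or slc with double line $\Lambda$ and smooth scroll normalization (special line, by \cite[Proposition 2.2]{KPZ14}). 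Adjunction to the general $R$ handles the remaining points.

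\textbf{Step 2: cluster type on $\mathbb{Q}^3$ (main obstacle).} Proposition~\ref{prop:criterion-sing-p3} is specific to $\pp^3$, so we replace it with a projection from $p$. Projection from $p$ identifies ${\rm Bl}_p\mathbb{Q}^3$ with the blow-up of $\pp^3$ along a conic $c$ lying in a plane $P$; the strict transform of $T_p\mathbb{Q}^3\cap\mathbb{Q}^3$ is contracted onto $c$. Under this identification:
\begin{itemize}
\item $R$ maps to a plane $H'$;
\item $F_\ell$, having multiplicity $2$ at $p$, maps to a quadric $F'$ containing $c$;
\item $G$ maps to a plane.
\end{itemize}
We then check the degrees via the pull-back of $K+B$ and should obtain a log Calabi--Yau pair $(\pp^3, F'+H'+P_G)$ of quadric plus two planes, where $G\mapsto P_G$. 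The delicate point is whether the image of $G$ is $P$ or another plane; we would verify this with the explicit coordinates $x_0x_4=q(x_1,x_2,x_3)$. If the boundary on $\pp^3$ is a quadric plus two planes, it has complexity at most one, and Theorem~\ref{thm:complexity-one-is-cluster-type} applies. Alternatively, if the image is a cubic-type boundary with a triple point, Proposition~\ref{prop:criterion-sing-p3} applies. In either case, Lemmas~\ref{lem:blow-up-of-cluster-type} and~\ref{lem:cluster-type-descends} transport cluster type back to $(\mathbb{Q}^3,F_\ell+R)$, since the maps extract only lc or canonical places. As a fallback, we would use Proposition~\ref{prop:fibration-to-smooth-surface} for the projection from a line through $p$, in the spirit of Lemma~\ref{lem:smooth-quadric-section-in-quadric}.

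\textbf{Step 3: back to $X$.} Since $\Gamma$ has degree $7$ and genus $2$, it is not contained in the hyperplane $H$. Also $F_\ell$ has multiplicity one along $\Gamma$, because its singular locus is finite or the line $\Lambda$. So $D$ is a canonical place, and $\psi^*(K_{\mathbb{Q}^3}+F_\ell+R)=K_{Y'}+F'+R'$. By Lemma~\ref{lem:blow-up-of-cluster-type}, the pair $(Y',F'+R')$ is of cluster type. The flop $\alpha$ is an isomorphism in codimension one, so $(Y,E+R_Y)$ is of cluster type. The same intersection computation as in Proposition~\ref{prop:1-8} gives $\varphi^*(K_X+R_X)=K_Y+R_Y+E$ with $R_X\in|-K_X|$. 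Finally, Lemma~\ref{lem:cluster-type-descends} shows that $(X,R_X)$ is of cluster type.
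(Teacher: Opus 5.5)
Your proposal follows the paper's route: project from $p$, identify ${\rm Bl}_p\mathbb{Q}^3$ with the blow-up of $\pp^3$ along the conic $c$, get a quadric plus two planes on $\pp^3$, pull cluster type back to $(\mathbb{Q}^3,F_\ell+R)$, and then run the argument of Proposition~\ref{prop:1-8} through the Sarkisov link. Step~2 needs two corrections, and once they are made your hedges and fallback become unnecessary.

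\textbf{The image quadric does not contain $c$.} Let $Z={\rm Bl}_p\mathbb{Q}^3$, with $\beta\colon Z\to\mathbb{Q}^3$ and $\gamma\colon Z\to\pp^3$ the blow-up of $c$, and let $E_\gamma$ be the exceptional divisor of $\gamma$. Then $\gamma^*h'=\beta^*h-G$. Hence the strict transform of $F_\ell$ satisfies $F_Z\sim 2\beta^*h-2G=2\gamma^*h'$, so $\gamma^*F'=F_Z$. The paper also proves $c\not\subseteq F'$ directly: otherwise the equation of $F'$ would be $x_0l+\lambda f$, which factors on $\mathbb{Q}^3$ as $x_0(l-\lambda u)$. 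That contradicts the irreducibility of $F_\ell$. If $F'$ did contain $c$, then $E_\gamma$ would appear in the crepant pull-back with coefficient one, and the comparison with $(\mathbb{Q}^3,F_\ell+R)$ would fail.

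\textbf{The image of $G$ is the plane of $c$.} Your ``delicate point'' resolves favourably: $\gamma|_G$ is an isomorphism onto the plane $P$ spanned by $c$, and $\gamma^*P=G+E_\gamma$. Therefore
\[
\beta^*(K_{\mathbb{Q}^3}+F_\ell+R)=K_Z+F_Z+R_Z+G=\gamma^*(K_{\pp^3}+F'+H'+P).
\]
This is exactly the paper's identity, with $F'=S$, $H'=H_1$ and $P=H_0$.

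\textbf{Different criterion on $\pp^3$.} This is where you genuinely differ from the paper. The paper applies Proposition~\ref{prop:criterion-sing-p3}, which needs coregularity zero and the triple point $q\in H_0\cap H_1\cap S$. You apply Theorem~\ref{thm:complexity-one-is-cluster-type}, and this is legitimate. The pair has index one and complexity $3+1-3=1$. It is lc because log canonicity transfers along the displayed crepant identity from your Step~1. This route is slightly more economical.

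\textbf{Where log canonicity is checked.} The paper checks it on $\pp^3$ rather than on $\mathbb{Q}^3$. There $(\pp^3,S)$ is automatically lc, since $S$ is a reduced quadric. Near $H_0$ and $H_1$, adjunction gives the nodal curves $L+C_0$ and $L+C_1$, where $C_0=S\cap H_0$ is precisely your tangent-cone conic $C_G$. This avoids your only-sketched analysis of $F_\ell$ away from $p$ (du Val, or slc along $\Lambda$). Either way works, and your Step~3 matches the paper.
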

	
	\begin{proof}
	Let $\ell \subseteq X$ and $p\in F_\ell \subseteq Q \subseteq \pp^4$ as in Proposition 
	\ref{prop:mult2point}. Here, $F_\ell \in |\oo_Q(2)|$.
	Choose homogeneous coordinates $u,x_0,x_1,x_2,x_3$ of $\pp^4$ such that $p = [1:0:0:0:0]$ and
	\[
		Q = \{ux_0 + f(x_1,x_2,x_3) = 0\},
	\]
	where $f(x_1,x_2,x_3)$ is a nondegenerate quadric.
	The defining equation of $F_\ell$ in $Q$ does not have $u^2$ as a term, and we may assume, 
	using that $F_\ell$ is singular at $p$ and after subtracting a multiple of the equation of $Q$, that
	\[
		F_\ell = Q \cap \{g(x_0,x_1,x_2,x_3) = 0\}.
	\]
	Consider the projection from $p$ to $\pp^3$
	\[
		[u:x_0:x_1:x_2:x_3] \mapsto [x_0:x_1:x_2:x_3],
	\]
	and let $H_0 = \{x_0 = 0\}$, $C = \{x_0 = f(x_1,x_2,x_3) = 0\}$ and $S = \{g(x_0,x_1,x_2,x_3) = 0\}.$ 
	The conic $C$ is not contained in $S$. Indeed, if it were,
	\[
		g(x_0,x_1,x_2,x_3) = x_0l(x_0,x_1,x_2,x_3) + \lambda f(x_1,x_2,x_3),
	\]
	with $l$ a linear form. On $Q$ this would imply $g(x_0,x_1,x_2,x_3) = x_0(l(x_0,x_1,x_2,x_3) -\lambda u)$,
	which contradicts that $F_\ell$ is reduced and irreducible.
	At $u=1$, the equation of $Q$ implies that $x_0$ is a quadratic function of $x_1$, $x_2$ and $x_3$, 
	so the tangent cone of $F_\ell$ at $p$ is $\{g(0,x_1,x_2,x_3) = 0\}$. 
	By Proposition \ref{prop:mult2point}, the intersection $S\cap H_0$ is a reduced conic of rank at least two. 
	Choose a plane $H_1 \neq H_0$ such that the line $H_0\cap H_1$ intersects 
	$S$ transversely at two smooth points. 
	Let $H\subseteq Q$ be the hyperplane section defined by the same linear equation
	in the variables $x_i$ as $H_1$. This implies that $p \in H$.
	Furthermore, we may assume that $\Gamma \not\subseteq H$. 
    Notice that $H$ is smooth at $p$ because $H_1 \neq H_0$.
    
    We prove that the pair $(\pp^3, H_0 + H_1 + S)$ is log canonical. Let $C_0 := H_0 \cap S$, $C_1 := H_1\cap S$ and $L=H_0\cap H_1$. As mentioned above, $C_0$ is a reduced conic of rank at least two. Similarly, $C_1$ is a reduced conic of rank at least two. The quadric $S$ is reduced. Away from $H_0$, the pair $(\pp^3,S)$ is lc. Near $H_0,$ by adjunction we get 
    \[
        (K_{\pp^3} + H_0 + H_1 + S)|_{H_0} = K_{H_0} + L + C_0.
    \]
    The divisor $L+C_0$ is reduced and only has nodes, so it is simple normal crossing. Then, by inversion of adjunction \cite{Kaw07}, $(\pp^3, H_0 + H_1 + S)$ is log canonical near $H_0$. We follow the same argument to show that $(\pp^3, H_0 + H_1 + S)$ is log canonical near $H_1$. Therefore, $(\pp^3, H_0 + H_1 + S)$ is log canonical.
	Let $q \in H_0\cap H_1 \cap S$ be one of the two points in the intersection. 
	Then $\mult_q(H_0 + H_1 + S) = 3$, and because the intersection is transversal, 
	$(\pp^3, H_0 + H_1 + S)$ is a log Calabi--Yau pair of coregularity 0.
	Proposition \ref{prop:criterion-sing-p3} implies that $(\pp^3, H_0 + H_1 + S)$ is of cluster type.
	
	The projection from the point $p$ is resolved by 
	\[
	\xymatrix{
		& Z := {\rm Bl}_p Q \ar[dr]^{\gamma} \ar[ld]_{\beta} &\\
		Q \ar@{-->}[rr] & & \pp^3,
	}
	\]
	where $\gamma$ is the blow-up of $C \subseteq \pp^3$. See, for example, the proof of \cite[Proposition 3.3]{Vir26}.
	Let $E_\beta$ and $E_\gamma$ be the exceptional divisors of $\beta$ and $\gamma$, respectively.
	Notice that $\gamma|_{E_\beta}\colon E_\beta \to H_0$ is an isomorphism.
	Let $S_Z$ and $H_Z$ be the strict transforms of $S$ and $H_1$. These strict transforms agree with the
	strict transforms of $F_\ell$ and $H$, respectively.
	This follows from the fact that $\beta$ and $\gamma$ restrict to
	isomorphisms from $Z \setminus (E_\beta \cup E_\gamma)$ onto
	$Q \setminus T_pQ$ and $\pp^3 \setminus H_0$, and the projection maps
	$F_\ell \setminus T_pQ$ onto $S \setminus H_0$ and $H \setminus T_pQ$ onto $H_1 \setminus H_0$.
	Hence, as $S$ and $H_1$ do not contain $C$, we have
	\[
		\gamma^* S = S_Z,\ \gamma^* H_1 = H_Z, \ \gamma^* H_0 = E_\beta + E_\gamma,
	\]
	and with respect to $\beta$, we obtain
	\[
		\beta^* F_\ell = S_Z + 2E_\beta,\ \beta^* H = H_Z + E_\beta.
	\]
	Fix a canonical divisor $K_Z$, and let $K_Q = \beta_* K_Z$ and $K_{\pp^3} = \gamma_* K_Z$.
	Then
	\[
		\beta^* (K_Q + F_\ell + H) = K_Z + S_Z + H_Z + E_\beta = \gamma^*(K_{\pp^3} + S + H_1 + H_0).
	\]
	The divisor $E_\gamma$ is a canonical place of $(\pp^3, H_0 + H_1 + S)$ and $E_\beta$ is a 
	log canonical place of $(Q, F_\ell + H)$. 
	Lemma \ref{lem:blow-up-of-cluster-type} and Lemma \ref{lem:cluster-type-descends} 
	imply that $(Q, F_\ell + H)$ is of cluster type. 
	Finally, using the fact that $\Gamma \not\subseteq H$, we can follow the argument in the proof of Proposition \ref{prop:1-8}, to show that the log pullback $(X,B_X)$ of $(Q, F_\ell + H)$ 
	via $\psi \circ \alpha \circ \varphi^{-1}$ is of cluster type.
	\end{proof}

\subsection{Mori-Mukai family \textnumero\,1.10}

In this subsection, we prove that any prime smooth Fano threefold of genus $12$ is of cluster type. We will proceed in two cases. First, we will assume that the threefold $X$ contains an ordinary line, i.e., a line $\ell$ for which 
$N_{\ell/X} \simeq \mathcal{O}_{\ell}\oplus \mathcal{O}_{\ell}(-1)$. 
Every prime smooth Fano threefold of genus $12$ contains an ordinary line
except for the Mukai-Umemura Fano threefold (see~\cite[Corollary 2.1.6 and Remark 2.2.4]{KPS18}).
Indeed, for the Mukai-Umemura Fano threefold the scheme of lines is everywhere non-reduced. In that case, we will seek an explicit torus chart of the Fano threefold which shows that it is cluster type directly. 
The main proposition of this subsection is the following.

\begin{proposition}\label{prop:1-10}
Let $X$ be a prime smooth Fano threefold of genus $12$.
Then, the variety $X$ is of cluster type. 
\end{proposition}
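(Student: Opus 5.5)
We split into two cases, as the paragraph before the statement suggests: $X$ contains an ordinary line, or $X$ is the Mukai--Umemura threefold.

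\emph{Case 1: $X$ contains an ordinary line $\ell$.} We mimic the proofs of Proposition~\ref{prop:1-8} and Proposition~\ref{prop:1-9}. For $g=12$ the double projection from $\ell$, given by $|H-2\ell|$, yields a Sarkisov link
\[
Y={\rm Bl}_\ell X \dashrightarrow Y'={\rm Bl}_\Gamma W\rightarrow W .
\]
Here $W\subset\pp^{6}$ is the del Pezzo quintic threefold $V_5$, $\Gamma$ is a smooth rational normal quintic curve, and $F_\ell=\psi(\alpha_*E)$ is a hyperplane section of $V_5$ containing $\Gamma$. Since $\ell$ is ordinary, $F_\ell$ is normal with at worst du Val singularities (the analogue of \cite[Theorem 2.13]{KPZ14}). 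The local analysis of Lemma~\ref{lem:mult2-ordinary-line} applies verbatim and gives more: $F_\ell$ is a degree-five del Pezzo surface with $A_n$ singularities whose minimal resolution is obtained from $E\simeq\mathbb F_1$ by blowing up over the points $p_i$.

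We then build a cluster type boundary on $V_5$ of the form $B_W=F_\ell+H'$, with $H'$ a second hyperplane section. We choose $H'$ so that the pair has coregularity zero and $\Gamma\not\subseteq H'$. Concretely, we take $H'$ tangent to $F_\ell$ along a suitable curve so that $H'|_{F_\ell}$ is an anticanonical cycle of rational curves on $F_\ell$ (a cycle of lines or conics), and we check log canonicity by inversion of adjunction \cite{Kaw07} as in Proposition~\ref{prop:1-8}. The pair $(V_5,F_\ell+H')$ has Picard rank one and two components, so it has complexity two. To show it is cluster type we would therefore either:
\begin{enumerate}
\item blow up a one-dimensional lc center, such as a component of $F_\ell\cap H'$, and use Proposition~\ref{prop:fibration-to-smooth-surface} or Proposition~\ref{prop:fibration-to-smooth-curve} on a resulting conic or del Pezzo fibration, as in Lemma~\ref{lem:smooth-quadric-section-in-quadric}; or
\item use the known torus charts of $V_5$ (it is a linear section of ${\rm Gr}(2,5)$, which carries Plücker cluster structures) to exhibit $F_\ell+H'$ as the restriction of a cluster boundary.
\end{enumerate}

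Once $(W,B_W)$ is cluster type, we lift and descend exactly as in Proposition~\ref{prop:1-8}. The divisor $D$ is a canonical place because ${\rm mult}_\Gamma F_\ell=1$ and $\Gamma\not\subseteq H'$, so Lemma~\ref{lem:blow-up-of-cluster-type} applies. The flop does not change the cluster type property. Finally, $E$ is a log canonical place of $(X,H_X)$ with $H_X\in|-K_X|$, so Lemma~\ref{lem:cluster-type-descends} applies.

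\emph{Case 2: $X$ is the Mukai--Umemura threefold.} We use its explicit description as the closure of the ${\rm PGL}_2$-orbit of binary forms $xy(x^{10}+11x^5y^5-y^{10})$ in $\pp^{12}$. Following the remark in the paper, we look for an explicit open torus chart. We take $B$ to be the unique hyperplane section that is the ${\rm SL}_2$-invariant boundary $X\setminus{\rm SL}_2/\mathrm{I}$, which is the tangent developable of the rational normal curve, or a suitable $\mathbb{G}_m$-invariant section. Using the $\mathbb{G}_m\subset{\rm SL}_2$ action, we would try to exhibit $X$ as a complexity-one $\mathbb{G}_m$-variety degeneration, or find a $\mathbb{G}_a\rtimes\mathbb{G}_m$-equivariant birational map $X\dashrightarrow\pp^3$ that is crepant for suitable boundaries. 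The orbit of the Borel subgroup is open and isomorphic to $\mathbb{A}^1\times\mathbb{G}_m\times\mathbb{A}^1$-like coordinates, and we would hope to write down $\mathbb{G}_m^3\hookrightarrow X\setminus B$ with $\Omega_3$ having only poles. Alternatively, since Mukai--Umemura contains special lines, we could redo Case~1 with a special line, where $F_\ell$ is non-normal with normalization a scroll, as in Lemma~\ref{lem:mult2-special-line}.

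The main obstacle is producing the coregularity-zero boundary on $V_5$ and proving it is cluster type, together with handling Mukai--Umemura, where explicit coordinates must be checked by hand.
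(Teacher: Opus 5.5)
\textbf{Case 1 (ordinary line).} Your outer frame matches the paper: the double projection link ${\rm Bl}_\ell X\dashrightarrow{\rm Bl}_\Gamma V_5\to V_5$, the canonical place over $\Gamma$, the flop, and descent through the lc place $E$. The gap is the one step with real content: producing a boundary $F_\ell+H'$ on $V_5$ and proving it is of cluster type. You leave this as two unexecuted options, and you never address log canonicity at the Du Val points of $F_\ell$. Also, an $H'$ ``tangent to $F_\ell$ along a curve'' would make $H'|_{F_\ell}$ non-reduced, so that pair is not lc.

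The missing idea is to take $H'$ general through a \emph{line} $\ell_S\subset F_\ell$ whose strict transform on the minimal resolution is a $(-1)$-curve that is either disjoint from the exceptional locus or meets one exceptional curve transversally at the end of an $A_n$-chain. Such a line exists because $\Gamma$ excludes the $A_4$ case: there ${\rm Cl}(F_\ell)=\zz$ forces $\Gamma\sim -K_{F_\ell}$, which cannot be rational (Lemma~\ref{lem:du-val-degree-5}). Locally $F_\ell$ is $xy=z^{n+1}$ with $\mathcal I_{\ell_S}=(x,z)$, so $H'|_{F_\ell}$ is slc and inversion of adjunction gives log canonicity (Lemma~\ref{lem:smooth-quintic-del-Pezzo-threefold-line}). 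Now $\ell_S$ is an lc center. The projection from it, ${\rm Bl}_{\ell_S}V_5\to\mathbb{Q}^3$, is the blow-up of a twisted cubic, and $H'_W+F_W+E_W$ is the crepant pullback of three hyperplane sections of $\mathbb{Q}^3$. The complexity therefore drops to one and Theorem~\ref{thm:complexity-one-is-cluster-type} applies (Proposition~\ref{prop:special-ct-structure-V_5}).

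Neither of your options reaches this. For option (1), ${\rm Bl}_{\ell_S}V_5$ (family \textnumero\,2.26) has two birational extremal contractions and no fibration, so Propositions~\ref{prop:fibration-to-smooth-surface} and~\ref{prop:fibration-to-smooth-curve} do not apply, and you do not say which other curve or fibration you mean. Option (2) would need the fixed surface $F_\ell$, which is determined by $\ell$ and contains $\Gamma$, to be a component of a Pl\"ucker-type cluster boundary; you give no argument for this.

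\textbf{Case 2 (Mukai--Umemura).} Here there is no proof, only candidates, and the concrete ones fail.
\begin{itemize}
\item The ${\rm PGL}_2$-invariant boundary $B$ is provably not a cluster type boundary. Its complement is $X_{\rm MU}\setminus B={\rm PGL}_2/A_5\simeq{\rm SL}_2/\widetilde{A}_5$, whose fundamental group is the binary icosahedral group, which is nontrivial and perfect. A torus embedded in codimension one would have dense open image, forcing $\pi_1(\mathbb{G}_m^3)=\zz^3$ to surject onto that group, which is impossible.
\item The Borel subgroup of ${\rm PGL}_2$ is two-dimensional, so it has no open orbit on a threefold. Also, a $\mathbb{G}_m$-action on a threefold has complexity two, not one.
\item The special-line route is not a rerun of Case~1. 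Every line on $X_{\rm MU}$ is special, $F_\ell$ is then non-normal (as you note), and the Du Val input of Lemmas~\ref{lem:du-val-degree-5} and~\ref{lem:smooth-quintic-del-Pezzo-threefold-line} is lost.
\end{itemize}
The paper instead uses the Kuznetsov--Prokhorov link $Q\leftarrow{\rm Bl}_\Gamma Q\dashrightarrow\widetilde X_{\rm MU}\to X_{\rm MU}$, where $Q=\{xw+4yt-5z^2=0\}$ and $\Gamma$ is a rational sextic. It writes down the explicit chart $(a,b,c)\mapsto[1:a:b:(b^2-c)/a:b^2+4c]$ onto $\{xyz(z^2-yt)\neq0\}$. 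The volume form of this chart has poles exactly $S+H$, where $S=\{xw=z^2=yt\}$ is the surface contracted on the Mukai--Umemura side and $H=Q\cap\{z=0\}$. Since $\Gamma\subset S$ and both flopping lines lie in $H$, the link does not touch the torus, which therefore gives a cluster type structure on $X_{\rm MU}$ (Proposition~\ref{prop:MU-ct}).
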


In order to prove the previous proposition, we will first prove some statements about del Pezzo surfaces with Du Val singularities containing smooth rational curves of anti-canonical degree $5$. 
The statements about surfaces will allow us to construct cluster type structures on some Sarkisov link model of the smooth Fano threefolds of interest in this subsection. 

\begin{lemma}\label{lem:du-val-degree-5}
Let $S$ be a Du Val del Pezzo surface of degree $5$.
Assume that $S$ is embedded in $\pp^N$ by the anti-canonical linear system.
Assume that $S$ contains a smooth rational curve $\Gamma$ of anti-canonical degree $5$.
Then $S$ contains a line $\ell$ satisfying the following.
For the minimal resolution $S'\rightarrow S$ the strict transform $\ell'$ of $\ell$ is a $(-1)$-curve which is either disjoint from the exceptional locus or intersects a single exceptional curve transversally at the endpoint of an $A_n$ chain\footnote{An $A_n$ chain is the chain of $n$ rational curves of the minimal resolution of an $A_n$ singularity.}.
\end{lemma}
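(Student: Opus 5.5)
We work on the minimal resolution $\mu\colon S'\to S$. Here $S'$ is a weak del Pezzo surface of degree $5$, i.e.\ $\mathbb{P}^2$ blown up in four (possibly infinitely near) points in almost general position. The exceptional locus of $\mu$ is a union of $(-2)$-curves forming an ADE configuration contained in the root system $A_4$. So the possible singularity types of $S$ are
\[
\emptyset,\ A_1,\ 2A_1,\ A_2,\ A_1A_2,\ A_3,\ A_4,
\]
and every singularity is of type $A_n$. Lines on $S$ are exactly the images of $(-1)$-curves $\ell'$ on $S'$, since $-K_{S'}\cdot\ell'=1$. For a $(-1)$-curve $\ell'$ and a $(-2)$-curve $R$ we have
\[
\ell'\cdot R\le 1,
\]
because $(\ell'+R)^2=-3+2\ell'\cdot R$ while $-K_{S'}\cdot(\ell'+R)=1$, and Hodge index/Riemann--Roch rule out $\ell'\cdot R\ge 2$. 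Intersections are therefore automatically transversal. The task reduces to finding a $(-1)$-curve that meets the $(-2)$-locus in at most one curve, and, if it meets a chain, meets it at an end curve.

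We plan a case-by-case check over the seven types, using the explicit configurations of negative curves on weak del Pezzo surfaces of degree $5$. These are classical, e.g.\ via the blow-down description or the tables of Coray--Tsfasman or Derenthal.
\begin{itemize}
\item In the smooth case any line works.
\item For $A_4$ there is a unique $(-1)$-curve, and it meets the end of the chain. One sees this from $S'$ being obtained by four successive infinitely near blow-ups on a line: the last exceptional curve meets only $E_3-E_4$, which is an end of the chain $E_1-E_2,\ E_2-E_3,\ E_3-E_4,\ L-E_1-E_2-E_3$ (arranged suitably).
\item For the types $A_1$, $2A_1$, $A_2$ and $A_3$, one writes the $(-2)$-classes as roots in $E_8$-style notation, $E_i-E_j$ and $L-E_i-E_j-E_k$. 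Among the ten classes
\[
E_i,\qquad L-E_i-E_j
\]
one then exhibits a $(-1)$-class that is effective and irreducible, and whose intersection numbers with the simple roots are $0$ except possibly a single $1$ on an end root.
\item The mixed case $A_1A_2$ is handled the same way. Note that meeting only the $A_1$ curve is allowed, since a single curve is an end of its chain.
\end{itemize}

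The hypothesis on $\Gamma$ does not seem necessary for this argument. We note it may be used to exclude degenerate situations or to pick $\ell$ compatibly with $\Gamma$. If it is needed, the only use we would make of it is to ensure that $S$ is not a cone-like degeneration; for degree $5$ Du Val surfaces this is automatic.

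The main obstacle is a uniform verification that in each type some $(-1)$-curve avoids meeting two different $(-2)$-curves or an interior curve of a chain. We intend to do this concretely with the root-lattice description: a $(-1)$-curve $\ell'$ gives the weight vector $(\ell'\cdot R_i)_i$, where the $R_i$ are the $(-2)$-curves. The $(-1)$-curves correspond to the minimal elements of a $W(A_4)$-orbit that are dominant-negative with respect to the sublattice. For each subsystem we will list these weights and check directly that at least one is either zero or a fundamental weight at an end node.
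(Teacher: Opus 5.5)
There is a genuine gap, and it sits exactly where you set aside the hypothesis on $\Gamma$. Your claim that in the $A_4$ case the unique $(-1)$-curve meets an end of the chain is false. With $r_1=E_1-E_2$, $r_2=E_2-E_3$, $r_3=E_3-E_4$, $r_4=L-E_1-E_2-E_3$ one computes $r_3\cdot r_4=1$ and $r_1\cdot r_4=r_2\cdot r_4=0$. So the chain is forced to be $r_1-r_2-r_3-r_4$, and there is no freedom to ``arrange suitably''. The only $(-1)$-class among the $E_i$ and $L-E_i-E_j$ that is nonnegative on all four roots is $E_4$, and it meets $r_3$, which is an interior vertex.

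An independent check confirms this. Suppose the line's transform $\ell'$ met an end $R_1$ of a chain $R_1-R_2-R_3-R_4$. Then $\mu^*\ell=\ell'+\tfrac15(4R_1+3R_2+2R_3+R_4)$, so $\ell^2=-1+\tfrac45<0$. This is impossible, since $\operatorname{Cl}(S)_{\mathbb{Q}}$ has rank one and $K_S^2=5>0$. So the quintic del Pezzo surface with an $A_4$ point is a counterexample to the statement without $\Gamma$, and no case-by-case check can close that case.

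The missing idea is to use $\Gamma$ to rule out type $A_4$, which is what the paper does. In that case $\operatorname{Cl}(S)=\mathbb{Z}[e]$ with $-K_S\sim 5e$, so degree $5$ forces $\Gamma\sim -K_S$, and adjunction contradicts $\Gamma\simeq\mathbb{P}^1$.

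Concretely, on $S'$ write $\mu^*\Gamma=\Gamma'+Z$ with $Z\geq 0$ exceptional. Then $K_{S'}+\Gamma'\equiv -Z$, so $-2=(K_{S'}+\Gamma')\cdot\Gamma'=-Z\cdot\Gamma'$, i.e.\ $Z\cdot\Gamma'=2$. This fails in both possible cases:
\begin{itemize}
\item if $\Gamma$ misses the singular point, then $Z=0$;
\item if $\Gamma$ passes smoothly through it, then $\Gamma'$ meets a single $R_i$ once, and $Z\cdot\Gamma'$ is a diagonal entry of the inverse Cartan matrix of $A_4$, hence at most $6/5$.
\end{itemize}

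For the remaining types your plan works and agrees with the paper. Take the $(-2)$-curves to be a proper subdiagram of $r_1-r_2-r_3-r_4$; then $E_4$ meets at most $r_3$, which is an end whenever it occurs. For $A_3$ choose the subdiagram $\{r_1,r_2,r_3\}$, or use $E_1$ if you keep $\{r_2,r_3,r_4\}$. Irreducibility follows because the chosen class is nonnegative on every $(-2)$-curve.
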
 

\begin{proof}
We will use the material from~\cite[Proposition 8.4]{CF88} and~\cite[Table 5]{Der14} for the classification of Du Val del Pezzo surfaces of degree $5$. 
From the classification of Du Val del Pezzo surfaces of degree $5$, we know that its possible singularities are
\[
\emptyset, \quad A_1, \quad 2A_1, \quad A_2, \quad A_2+A_1, \quad A_3, \text{ or } A_4.
\]
Let $\psi\colon S'\rightarrow S$ be the minimal resolution. 
Its minimal resolution $S'$ is a weak del Pezzo surface with Picard lattice 
\[
{\rm Pic}(S')=  \zz h \oplus \bigoplus_{i=1}^4 \zz e_i.
\]
The intersection products are given by
\[
h^2=1, \quad e_i^2=-1, \quad h\cdot e_j = e_i \cdot e_j =0 (i\neq j), \quad -K_{S'} \sim 3h -\sum_{i=1}^4 e_i.
\]
Choose the following roots orthogonal to $K_{S'}$:
\[
r_1 = e_1-e_2, \quad 
r_2 = e_2-e_3, \quad 
r_3 = e_3-e_4, \quad 
\text{ and } \quad
r_4 = h-e_1-e_2-e_3.
\]
These are roots for the graph of an $A_4$ singularity. 
We argue that the existence of the curve $\Gamma$ excludes the possibility of the $A_4$ singularity in $S$.
In this case, we have ${\rm Cl}(S)=\zz[e]$ where $e$ is the image of any of the $e_i$'s. We have $\psi_*h\sim 3e$ and $-K_S\sim 5e$. Thus, we conclude $\Gamma\sim -K_S$. By adjunction, we conclude that $\Gamma$ cannot be isomorphic to $\pp^1$, leading to a contradiction.
We conclude that under the existence of the curve $\Gamma$, only the singularity baskets $\{\emptyset,A_1,2A_1,A_2,A_1+A_2,A_3\}$ are possible. 
In these cases the curve class $e_4$ is either disjoint from the exceptional locus or intersects $r_3$ transversally once. 
Riemann--Roch gives an effective divisor $D\sim e_4$.
Since $-K_{S'}$ is nef and $(-K_{S'})D=1$, write
$D=\ell'+Z$, where $\ell'$ is irreducible of anticanonical
degree one and $Z$ is effective and exceptional.
Adjunction and the Hodge index theorem give $(\ell')^2=-1$.
Since $e_4$ intersects every exceptional curve non-negatively,
\[
0\leq D\cdot Z
=\frac{D^2-(\ell')^2+Z^2}{2}
=\frac{Z^2}{2}.
\]
Negative definiteness forces $Z=0$. Thus $D=\ell'$ is an
irreducible $(-1)$-curve. Note that $-K_{S'}\cdot e_4=1$ from where we conclude that the image of a general element of $|e_4|$ in $S$ is a line. 
\end{proof} 

Now, we turn to prove a statement about hyperplane systems on smooth quintic del Pezzo threefolds. 

\begin{lemma}\label{lem:smooth-quintic-del-Pezzo-threefold-line}
Let $V\subset \pp^6$ be a smooth quintic del Pezzo threefold. 
Let $H_0$ be its hyperplane class and $S\in |H_0|$ be a Du Val del Pezzo surface of degree $5$. Assume that $S$ contains:
\begin{enumerate}
\item[(i)] a smooth rational curve $\Gamma$ of anti-canonical degree $5$; and 
\item[(ii)] a line $\ell$ whose strict transform in the minimal resolution is either disjoint from the exceptional locus or intersects a single exceptional curve transversally at the end point of an $A_n$ chain. 
\end{enumerate}
Then, for a general element $H\in |H_0|$ containing $\ell$, the pair $(V,H+S)$ is log canonical.
Furthermore, we can choose $H$ in such a way that it does not contain $\Gamma$.
\end{lemma}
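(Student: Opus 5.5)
Inversion of adjunction reduces the statement to a surface computation on $S$: if $(S,H|_S)$ is log canonical, then $(V,H+S)$ is log canonical near $S$. Here $S\in|H_0|$ is Cartier, normal and Du Val, and $K_V+S+H\sim 0$ since $-K_V\sim 2H_0$. Away from $S$ the pair is just $(V,H)$, and a general member of the hyperplane system through $\ell$ is smooth away from its base locus, which lies inside $S$ (the base locus is $\ell$, or at worst a subset of $S$). So $(V,H)$ is lc away from $S$ by Bertini. It remains to show that $(S,H|_S)$ is lc for general $H\supset\ell$.

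Write $H|_S=\ell+R$, where $R$ is the residual curve in $|H_0|_S-\ell|$ on $S$. Since $V$ is projectively normal, hyperplanes through $\ell$ restrict surjectively onto the sections of $\mathcal O_S(1)$ vanishing on $\ell$. So $R$ moves in the full residual linear system $|-K_S-\ell|$. On the minimal resolution $\psi\colon S'\to S$ this system pulls back to $|-K_{S'}-\ell'|$ plus possibly exceptional components. With the model $\ell'=e_4$ from the proof of Lemma~\ref{lem:du-val-degree-5}, we get $-K_{S'}-e_4=3h-e_1-e_2-e_3-2e_4$. I would check that this class is base point free. It is the conic bundle class $h-e_4$ plus $2h-e_1-e_2-e_3-e_4$, and both pieces are nef and base point free on a weak del Pezzo surface of degree $5$. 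Then the general $R'$ is smooth, meets $\ell'$ transversally in $(-K_{S'}-\ell')\cdot\ell'=2$ points, and meets each exceptional curve $E$ in $-\ell'\cdot E\in\{0,1\}$ points, transversally.

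Next comes the log canonicity check on $S$. Pull back: $\psi^*(K_S+\ell+R)=K_{S'}+\ell'+R'+\sum a_iE_i$. Because $S$ is Du Val, the $a_i$ are determined by orthogonality, $(\ell'+R'+\sum a_iE_i)\cdot E_j=0$. The total intersection with each exceptional curve is $(\ell'+R')\cdot E=(-K_{S'})\cdot E=0$, so $\ell'+R'$ restricted to the exceptional locus has degree zero. Hypothesis (ii), that $\ell'$ meets only an end curve of an $A_n$ chain and does so transversally, gives $\ell'\cdot E_1=1$ and $R'\cdot E_1=-1$. That looks contradictory, since $R'$ is general and cannot have negative intersection with $E_1$. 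This means the residual linear system on $S'$ actually contains $E_1$, or a part of the chain, as a fixed component.

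This is the step I expect to be the main obstacle: computing the fixed part of $|-K_{S'}-\ell'|$ precisely. I expect the fixed part to be a reduced subchain $E_1+\dots+E_k$ with coefficient one, and the moving part to be base point free and to meet the chain at the other end. In that case the total transform $\ell'+R'+\sum E_i$ is a reduced snc cycle, with coefficients at most $1$, so $(S,\ell+R)$ is lc. Moreover $\ell+R$ has a node at the singular point, which is the expected $1$-complement of an $A_n$ point. If $\ell'$ is disjoint from the exceptional locus, then $R'$ is also disjoint from it and smooth, and $\ell'+R'$ is two smooth curves meeting transversally, which is again lc.

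Finally, $\Gamma$ is irreducible of degree $5$. The condition $\Gamma\subset H$ defines a proper linear subspace of the hyperplanes through $\ell$: otherwise every such hyperplane would contain $\ell\cup\Gamma$, forcing $\Gamma$ to lie in $\ell$, which is absurd. Hence a general $H$ also avoids containing $\Gamma$.
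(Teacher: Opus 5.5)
Your reduction is fine. Inversion of adjunction to the normal Cartier divisor $S$ reduces the claim to $(S,H|_S)$ being lc. Bertini handles the complement of $S$, since the base locus of the hyperplanes through $\ell$ is $\ell\subset S$. Your argument for avoiding $\Gamma$ is the paper's.

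The gap is at a point $p\in\ell\cap S^{\rm sing}$, which is the only nontrivial part of the lemma, and there you stop at ``I expect''. Your earlier claim that $-K_{S'}-\ell'$ is base point free is false exactly in this case: $(-K_{S'}-\ell')\cdot E_1=-1$. For instance $(h-e_4)\cdot(e_3-e_4)=-1$ when $e_3-e_4$ is effective. Your count ``$-\ell'\cdot E\in\{0,1\}$'' also has the wrong sign.

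The ``subchain'' option never arises. $H|_S$ is Cartier and passes through $p$, so every curve $E_i$ over $p$ appears in $\psi^*(H|_S)$ with an integer coefficient $\geq 1$. Since $\psi^*(K_S+H|_S)=K_{S'}+\psi^*(H|_S)$, log canonicity at $p$ is equivalent to two conditions: all these coefficients are exactly $1$, and $\ell'+R'+\sum_iE_i$ is nodal. This is a statement about a general $H$ near $p$, and nothing in your proposal proves it.

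The missing ingredient is the paper's local computation. By (ii) there are analytic coordinates on $V$ at $p$ with $S=\{xy=z^{n+1}\}$ and $\mathcal I_{\ell/V,p}=\langle x,z\rangle$. The linear forms vanishing on $\ell$ generate $\mathcal I_{\ell/V}$, so a general $H\supset\ell$ has local equation $ax+bz$ with $b(p)\neq0$. Then $H$ is smooth at $p$, one has $z=xg$ on $H$, and $S|_H=\{x(y-x^ng^{n+1})=0\}$ is a node. The paper applies inversion of adjunction to the smooth surface $H$ rather than to $S$, so no fixed-part computation on $S'$ is needed. In your route, by contrast, smoothness of $H$ is only needed away from $S$.

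The same input also closes your own argument on $S'$. Write $x=u^{n+1}$, $y=v^{n+1}$, $z=uv$. The equation $z-xg$ becomes $u\,(v-u^n\tilde g)$ with $\tilde g(0)=g(p)$, and this vanishes to order exactly $1$ along every $E_i$. Hence $\psi^*(H|_S)=\ell'+R'+\sum_iE_i$. For general $H$ (so $g(p)\neq0$), $R'$ meets the end of the chain opposite to $\ell'$ transversally. This is exactly the reduced snc chain you anticipated.
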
 

\begin{proof}
First, we argue that a general element $H\in |H_0|$ containing $\ell$ is indeed smooth. The restrictions of the ambient linear forms vanishing along $\ell$ generate $\mathcal{I}_{\ell/V}(H)$. For a fixed point $p\in \ell$, the set of elements $H\in |H_0|$ containing $\ell$ and singular at $p\in \ell$ has codimension at least two. Since $\ell$ is a line, the set of elements $H\in |H_0|$ containing $\ell$ and being singular at some closed point of $\ell$ form a subset of codimension one. We conclude that a general $H\in |H_0|$ containing $\ell$ is smooth along $\ell$. Then Bertini's theorem implies that such general element is smooth. 

The ideal of $\ell$ in $S\setminus S^{\rm sing}$ is invertible.
Bertini's theorem implies that a general element of $|H_0|$ containing $\ell$ restricts to $S$ to the sum of $\ell$ and a smooth curve with transversal intersection to $\ell$ along $S\setminus S^{\rm sing}$. It suffices to understand the behavior of this linear system along $S^{\rm sing}$. 

Let $p\in S^{\rm sing}$. By (ii), we may assume that locally analytically we have 
\[
(S;p) \simeq \{ (x,y,z)\mid xy=z^{n+1}\} \subset (\mathbb{A}^3;0) 
\text{ and }
\mathcal{I}_{\ell/V,p}=\langle x,z\rangle.
\]
The equation of $H$ at $p$ is given by $ax+bz=0$. 
The ambient linear forms generate the ideal $\langle x,z\rangle$.
So, we may assume that $b(p)\neq 0$ for the equation defining some general $H$.
Solving for $z$ gives $z=xg(x,y)$ so $S|_H$ has equation
\[
x(y-x^ng(x,y)^{n+1})=0
\]
which is an slc curve. 
Inversion of adjunction implies that $(V,H+S)$ is log canonical near $H$.
We conclude that $(V,H+S)$ is log canonical.
Finally, for the last statement, note that $\Gamma$ contains a closed point outside $\ell$, so it suffices to choose $H$ not containing such point.
\end{proof} 

We use the previous lemma regarding hyperplane systems on smooth quintic del Pezzos to give a special cluster type structure on such varieties. 

\begin{proposition}\label{prop:special-ct-structure-V_5}
Let $V\subset \pp^6$ be a smooth quintic del Pezzo threefold.
Let $H_0$ be the class of a hyperplane section in $V$.
Let $S\in |H_0|$ be a Du Val del Pezzo surface. 
Assume that $S$ contains a smooth rational curve $\Gamma$ which is of anticanonical degree $5$. Then, there is a smooth element $H\in |H_0|$,
not containing $\Gamma$, for which $(V,H+S)$ is of cluster type. 
\end{proposition}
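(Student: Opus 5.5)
The plan is to choose the line and the hyperplane section using the two previous lemmas, and then to prove cluster type by passing to the Sarkisov link given by projection from the line. That link lands on $\mathbb{Q}^3$, where Theorem~\ref{thm:complexity-one-is-cluster-type} applies.

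\textbf{Step 1: choice of $\ell$ and $H$.} Since $S$ contains $\Gamma$, Lemma~\ref{lem:du-val-degree-5} gives a line $\ell\subset S$ whose strict transform on the minimal resolution meets the exceptional locus in the controlled way required there. Lemma~\ref{lem:smooth-quintic-del-Pezzo-threefold-line} then gives a smooth general $H\in|H_0|$ with $\ell\subset H$ and $\Gamma\not\subset H$ such that $(V,H+S)$ is log canonical. Since $-K_V\sim 2H_0$, the pair $(V,H+S)$ is an index-one log Calabi--Yau pair. Because $\rho(V)=1$ and the boundary has only two components, the complexity is $2$. So Theorem~\ref{thm:complexity-one-is-cluster-type} does not apply directly, and we must extract one more log canonical place.

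\textbf{Step 2: blow up $\ell$.} Let $\varphi\colon Y={\rm Bl}_\ell V\to V$ with exceptional divisor $E$. Both $H$ and $S$ contain $\ell$ and are generically smooth along it, so ${\rm mult}_\ell(H+S)=2$. Hence
\[
\varphi^*(K_V+H+S)=K_Y+H_Y+S_Y+E,
\]
and $E$ is a log canonical place. The variety $Y$ is the Fano threefold \textnumero\,2.26. Its second extremal contraction $\sigma\colon Y\to\mathbb{Q}^3$ is the blow-up of a twisted cubic $T$. Its exceptional divisor $D$ is the strict transform of the unique hyperplane section $D_V$ of $V$ singular along $\ell$, which is swept out by the lines meeting $\ell$.

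\textbf{Step 3: push down to $\mathbb{Q}^3$.} We check that none of $H_Y,S_Y,E$ equals $D$. For $E$ this is clear since $E$ is $\varphi$-exceptional. For $H_Y$ it holds because $H$ is smooth while $D_V$ is singular along $\ell$. For $S_Y$ it holds because $S$ has only Du Val, hence isolated, singularities, whereas $D_V$ is singular along the whole line $\ell$. Therefore $D$ has coefficient $0$ in the crepant pullback, so it is a canonical place of the pushforward pair
\[
(\mathbb{Q}^3,B_Q),\qquad B_Q:=\sigma_*(H_Y+S_Y+E).
\]
This pair is log Calabi--Yau of index one, since it is crepant to $(V,H+S)$ and has effective reduced boundary. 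The three components of $B_Q$ are distinct and $\rho(\mathbb{Q}^3)=1$, so its complexity is $3+1-3=1$. By Theorem~\ref{thm:complexity-one-is-cluster-type}, $(\mathbb{Q}^3,B_Q)$ is of cluster type.

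\textbf{Step 4: transfer back to $V$.} Lemma~\ref{lem:blow-up-of-cluster-type}, applied to $\sigma$, which extracts only the canonical place $D$, shows that $(Y,H_Y+S_Y+E)$ is of cluster type. Lemma~\ref{lem:cluster-type-descends}, applied to $\varphi$, which extracts only the log canonical place $E$, then shows that $(V,H+S)$ is of cluster type.

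\textbf{Main obstacle.} The delicate point is Step 3. It relies on the explicit geometry of the link $V\dashleftarrow Y\to\mathbb{Q}^3$ from the projection from $\ell$, in particular on the identification of $D$. We must ensure that no boundary component is contracted, since otherwise only two components would survive on $\mathbb{Q}^3$ and the complexity count would fail. If the description of $D$ needed more care, for instance when $\ell$ is a special line of $V$, I would instead verify directly that $\sigma(E)$, $\sigma(H_Y)$ and $\sigma(S_Y)$ are surfaces by computing their classes in ${\rm Pic}(Y)=\zz\varphi^*H_0\oplus\zz E$ and intersecting with the fibres of $\sigma$.
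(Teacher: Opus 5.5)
Your proposal is correct and follows the paper's proof essentially step by step. You choose $\ell$ via Lemma~\ref{lem:du-val-degree-5} and $H$ via Lemma~\ref{lem:smooth-quintic-del-Pezzo-threefold-line}, blow up $\ell$, pass through the link to $\mathbb{Q}^3$ as the blow-up of a twisted cubic, check that all three boundary components survive so Theorem~\ref{thm:complexity-one-is-cluster-type} applies, and transfer back via Lemmas~\ref{lem:blow-up-of-cluster-type} and~\ref{lem:cluster-type-descends}. The only cosmetic difference is how non-contraction is checked: you identify the $\sigma$-exceptional divisor as the strict transform of the hyperplane section singular along $\ell$ (class $\varphi^*H_0-2E$, versus $\varphi^*H_0-E$ for $H_Y$ and $S_Y$), whereas the paper identifies $E$ with the hyperplane section of $\mathbb{Q}^3$ through the twisted cubic and notes that $H_Y$ and $S_Y$ map to hyperplane sections.
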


\begin{proof}
By Lemma~\ref{lem:du-val-degree-5}, we can find a line $\ell\subset S$ satisfying the following condition. The strict transform of $\ell$ to the minimal resolution of $S$ is either disjoint from the exceptional locus or intersects a single exceptional curve transversally at the endpoint of an $A_n$ chain. 
By Lemma~\ref{lem:smooth-quintic-del-Pezzo-threefold-line} a general element $H\in |H_0|$ containing $\ell$ satisfies that $(V,H+S)$ is log canonical. We argue that the pair $(V,H+S)$ is of cluster type. 
Let $\alpha \colon W \rightarrow V$ be the blow-up of $\ell$. 
Note that $\ell$ is contained in both $H$ and $S$ and so 
$\alpha^*(K_V+H+S)=K_W+H_W+S_W+E_W$ where $E_W$ is the exceptional divisor
and $H_W$ and $S_W$ are the strict transforms of $H$ and $S$ to $W$, respectively. 
For the projection link, see~\cite[\S1.1.1]{DK19} (also compare~\cite[Proof of Theorem 3.6]{CPPZ21}). There is a projective birational morphism
$\beta \colon W \rightarrow \mathbb{Q}^3$ where $\mathbb{Q}^3$ is a smooth quadric in $\pp^4$. The morphism $\beta \colon W\rightarrow \mathbb{Q}^3$ is the blow-up of a smooth twisted cubic. 
Let $A_1,A_2$ and $A_3$ be the images of $H_W$, $S_W$ and $E_W$ to $\mathbb{Q}^3$. The images $A_1$ and $A_2$ are hyperplane sections in $\mathbb{Q}^3$ and so they do not contain the twisted cubic.
The divisor $E_W$ is the strict transform of the unique hyperplane section containing the twisted cubic. Thus, we conclude that 
\[
\alpha^*(K_V+H+S)=
K_W+H_W+S_W+E_W=
\beta^*(K_{\mathbb{Q}^3}+A_1+A_2+A_3). 
\]
Note that the pair $(\mathbb{Q}^3,A_1+A_2+A_3)$ is of cluster type due to
Theorem~\ref{thm:complexity-one-is-cluster-type}. Therefore, we conclude that
$(V,H+S)$ is of cluster type due to Lemma~\ref{lem:cluster-type-descends}
and Proposition~\ref{prop:cluster-type-bir}.
\end{proof}

Now, we turn to prove that any prime smooth Fano threefold of genus $12$ that contains an ordinary line is of cluster type.

\begin{proposition}\label{prop:1-10-containing-ord-line-ct}
Let $X$ be a prime smooth Fano threefold of genus $12$.
Assume that $X$ contains an ordinary line $\ell$.
Then $X$ is a cluster type variety. 
\end{proposition}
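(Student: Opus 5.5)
We will use the classical Sarkisov link for a prime Fano threefold $X=X_{22}$ of genus $12$ containing an ordinary line $\ell$. Blowing up $\ell$ gives $\varphi\colon Y={\rm Bl}_\ell X\to X$ with exceptional divisor $E$. Since $\ell$ is ordinary, $E\simeq\mathbb F_1$. The linear system $|-K_Y|=|\varphi^*H-E|$ gives a flop $\alpha\colon Y\dashrightarrow Y'$. This is followed by a contraction $\psi\colon Y'\to V$ onto the smooth quintic del Pezzo threefold $V\subset\pp^6$, which is the blow-up of a smooth rational curve $\Gamma$ of degree $5$ (Iskovskikh--Prokhorov). Let $S:=\psi(\alpha_*E)$. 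Then $S$ is a hyperplane section $S\in|H_0|$ containing $\Gamma$.

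\textbf{Step 1: the hypotheses of Proposition~\ref{prop:special-ct-structure-V_5}.} We must show that $S$ is a Du Val del Pezzo surface of degree $5$ in which $\Gamma$ has anticanonical degree $5$. For this we copy the analysis of Lemma~\ref{lem:mult2-ordinary-line}. The image $\bar E$ in the anticanonical model is isomorphic to $E\simeq\mathbb F_1$. The strict transform $F'=\alpha_*E$ is ${\rm Bl}_{(x,t^{n_i})}$ of $E$ at the points $p_i$ where the flopping curves meet $E$, so $F'$ has only $A_{n_i-1}$ points. Adjunction along $\psi$, using $\psi^*S=F'+D$ and $K_{Y'}=\psi^*K_V+D$, gives $K_{F'}=\psi_F^*K_S$. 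Hence the minimal resolution of $F'$ is crepant over $S$, and $S$ has Du Val singularities. Since $S$ is a hyperplane section of $V$, it has $-K_S=H_0|_S$ and degree $5$. Then $\Gamma\subset S$ has $-K_S\cdot\Gamma=H_0\cdot\Gamma=5$. We will double-check that $S$ is normal and irreducible, which holds because $F'$ is normal and $\psi_F$ only contracts fibers of $D$.

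\textbf{Step 2: the cluster type structure on $V$.} Proposition~\ref{prop:special-ct-structure-V_5} provides a smooth $H\in|H_0|$, not containing $\Gamma$, such that $(V,H+S)$ is of cluster type. Since $\Gamma\subset S$ and $\Gamma\not\subset H$, and $S$ is smooth at a general point of $\Gamma$ (it is Du Val), we get ${\rm mult}_\Gamma(H+S)=1$. Therefore $D$ is a canonical place and
\[
\psi^*(K_V+H+S)=K_{Y'}+H'+F'.
\]
By Lemma~\ref{lem:blow-up-of-cluster-type}, the pair $(Y',H'+F')$ is of cluster type. The flop is an isomorphism in codimension one, so $(Y,H_Y+E)$ is of cluster type, where $H_Y=\alpha^{-1}_*H'$.

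\textbf{Step 3: descending to $X$.} As in the end of the proof of Proposition~\ref{prop:1-8}, we have $K_Y+H_Y+E\sim0$ and $K_Y=\varphi^*K_X+E$. Writing $H_X=\varphi_*H_Y$, this gives $\varphi^*H_X=H_Y+2E$, so $H_X\in|-K_X|$ and $\varphi^*(K_X+H_X)=K_Y+H_Y+E$. Thus $E$ is a log canonical place of $(X,H_X)$, and Lemma~\ref{lem:cluster-type-descends} shows that $(X,H_X)$ is of cluster type.

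The main obstacle is Step 1. We must carefully identify $S$ with the image of $E$ and verify that it is a Du Val hyperplane section containing $\Gamma$. In particular, we need that $\Gamma$ lies in $S$ with multiplicity one, and that the curves contracted by $\psi_F$ do not produce worse singularities. We expect to handle this exactly as in Lemma~\ref{lem:mult2-ordinary-line}, using only that $\ell$ itself is ordinary. We do not need the lines meeting $\ell$ to be ordinary, since Du Val (not $A_n$) suffices here.
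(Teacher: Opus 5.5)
Your architecture is the paper's: the same link through ${\rm Bl}_\ell X$, the flop, and the blow-up $\psi\colon Y'\to V$ of the quintic $\Gamma$. You then apply Proposition~\ref{prop:special-ct-structure-V_5} to $S=\psi(\alpha_*E)\in|H_0|$, take the crepant pull-back in which $D$ is a canonical place, and descend along $\varphi$ by Lemma~\ref{lem:cluster-type-descends}. Steps 2 and 3 are correct as written. The one difference is Step 1. The paper does not re-derive that $S$ is Du Val; it quotes this from \cite[Theorem 18 and its proof]{DFK26}. You instead propose to re-prove it with the pagoda analysis of Lemma~\ref{lem:mult2-ordinary-line}, and that re-derivation has a gap exactly where you drop a hypothesis.

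\textbf{The gap.} The description $F'\simeq{\rm Bl}_{(x,t^{n_i})}E$ near $p_i$ needs the flop at $C_i$ to be a Reid pagoda, and a pagoda curve has normal bundle $\mathcal{O}(-1)^{\oplus 2}$ or $\mathcal{O}\oplus\mathcal{O}(-2)$. By the computation of $d\varphi$ in that lemma, $N_{C_i/Y}$ is the subsheaf of $N_{\ell_i/X}$ of sections whose value at $q_i=\ell\cap\ell_i$ lies in the image of $T_{q_i}\ell$.
\begin{itemize}
\item If $\ell_i$ is ordinary, this gives one of the two good bundles.
\item If $\ell_i$ is special, so $N_{\ell_i/X}\simeq\mathcal{O}(1)\oplus\mathcal{O}(-2)$, it gives $\mathcal{O}(1)\oplus\mathcal{O}(-3)$ whenever $T_{q_i}\ell$ is the fibre of the $\mathcal{O}(1)$-summand. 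Then $C_i$ is not a pagoda curve and the local computation is unavailable.
\end{itemize}
So ordinariness of the lines meeting $\ell$ also feeds the local model, not only the final $A_n$ versus $D_n$ step. Your claim that it is not needed therefore requires an argument.

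\textbf{A repair.} The $\mathcal{O}(1)$-summand of $N_{\ell_i/X}\subset N_{\ell_i/\pp^{13}}\simeq\mathcal{O}(1)^{\oplus 12}$ is a constant direction. That is, it corresponds to a plane $\Pi\supset\ell_i$ tangent to $X$ along $\ell_i$. Suppose $T_{q_i}\ell$ were that direction. Then $\ell\subset\Pi$. Every quadric containing $X$ would then cut $\Pi$ in a conic that is singular along $\ell_i$ and contains $\ell$, so it would contain $\Pi$. Since $X$ is an intersection of quadrics, this forces $\Pi\subset X$. That is impossible, because every surface in $X$ is linearly equivalent to some $dH$ and has degree $22d$.

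With this, Step 1 also needs the genus-$12$ versions of the facts you borrow from \cite{KPZ14}: the flopping curves are the strict transforms of the lines meeting $\ell$, they are pairwise disjoint, and each meets $E$ transversally once. Given these, your Step 1 goes through.

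\textbf{A smaller point.} Normality of $S$ should be deduced as follows: $S$ is Cartier in $V$, hence $S_2$, and it is $R_1$ because it agrees with $F'$ off $\Gamma$ and ${\rm mult}_\Gamma S=1$. The fact that $\psi_F$ contracts only fibres of $D$ does not by itself give normality.
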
 

\begin{proof}
We have a commutative diagram as follows:
\[
\xymatrix{
X' \ar@{-->}[r]^{\chi}\ar[d]_-{\sigma} & X^+ \ar[d]^-{\tau} \\ 
X & V 
}
\]
satisfying the following conditions:
\begin{itemize}
\item the morphism $\sigma$ is the blow-up of the ordinary line $\ell$; 
\item the small birational map $\chi$ is a flop; 
\item the variety $V$ is a smooth quintic del Pezzo threefold; and
\item the morphism $\tau$ is the blow-up of a smooth rational normal quintic curve $\Gamma$.
\end{itemize} 
All the previous facts come from~\cite[Theorem 18]{DFK26}. 
Let $E'$ be the exceptional divisor of $\sigma$.
Let $E^+$ be the strict transform of $E'$ in $X^+$
and let $S$ be the strict transform of $E'$ in $V$. 
By~\cite[Proof of Theorem 18]{DFK26}, we know that $S$ has canonical singularities\footnote{This is explicitly stated in line $5$ of the last paragraph of the proof.}. 
Let $H_0$ be the class of a hyperplane section in $V$. 
By construction, we know that $S\in |H_0|$. 
Therefore, we conclude that $S$ is a Du Val del Pezzo surface.
By Proposition~\ref{prop:special-ct-structure-V_5} there is $H\in |H_0|$
containing a line $\ell_S\subset S$ and not containing $\Gamma$ such that $(V,H+S)$ is cluster type. Its crepant pullback to $X^+$ has boundary $E^++H^+$ and the flop gives $(X^{\prime},E^{\prime}+H^{\prime})$. Since $E^{\prime}$ has coefficient one, Lemma~\ref{lem:cluster-type-descends} applies to $\sigma$ and proves that $X$ is cluster type. 
\end{proof} 

Now, we turn to prove that the Mukai-Umemura threefold is of cluster type.

\begin{proposition}\label{prop:MU-ct}
The Mukai-Umemura smooth Fano threefold $X_{\rm MU}$ is of cluster type. 
\end{proposition}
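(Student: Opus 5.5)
The Mukai--Umemura threefold carries an action of ${\rm PGL}_2$ and a quasi-homogeneous description. I would realize it as the closure of the ${\rm SL}_2$-orbit of the binary form $f_0=xy(x^{10}+11x^5y^5-y^{10})$ (the icosahedral invariant) in $\pp(S^{12}\cc^2)=\pp^{12}$. The boundary $X_{\rm MU}\setminus {\rm SL}_2\cdot[f_0]$ is a single surface, the hyperplane section $B_0\in|-K_X|$ (using ${\rm Pic}(X)=\zz H$ and $-K_X=H$). It consists of forms with a root of multiplicity at least $11$, i.e.\ the tangent surface to the rational normal curve of degree $12$.

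The plan is not to use $B_0$ directly, since $(X,B_0)$ has coregularity one or two rather than zero. Instead I would run the same strategy as in Proposition~\ref{prop:1-10-containing-ord-line-ct}, replacing the ordinary line by a special line. Every line $\ell$ on $X_{\rm MU}$ is special, with $N_{\ell/X}\simeq\oo(1)\oplus\oo(-2)$; these lines are the lines $\{x^{11}\alpha\}$ tangent to the orbit of $x^{12}$.

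For a special line, the double projection from $\ell$ still gives a Sarkisov link. The steps are: blow up $\ell$, then flop, then contract to $V_5$, giving $X'\dashrightarrow X^+\to V$ with $\tau$ the blow-up of a rational normal quintic $\Gamma$. Here $S=\tau(\chi_*E')\in|H_0|$ is now a non-normal hyperplane section of $V_5$, and the link is treated in \cite{DFK26} and \cite{KPS18}. Concretely I would:
\begin{enumerate}
\item[(a)] identify $S$ explicitly, using the ${\rm SL}_2$-equivariance on $V_5$, which is the closure of the ${\rm SL}_2$-orbit of the octahedral sextic;
\item[(b)] find a hyperplane section $H\in|H_0|$, not containing $\Gamma$, such that $(V,H+S)$ is log canonical of coregularity zero;
\item[(c)] exhibit a line $\ell_S\subset S\cap H$ whose blow-up links $V$ to $\qq^3$, as in Proposition~\ref{prop:special-ct-structure-V_5}. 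Then $(V,H+S)$ becomes crepant to $(\qq^3,A_1+A_2+A_3)$, which is cluster type by Theorem~\ref{thm:complexity-one-is-cluster-type}.
\end{enumerate}
Lifting back through the flop (an isomorphism in codimension one, hence harmless by definition of cluster type) and descending along $\sigma$ by Lemma~\ref{lem:cluster-type-descends} would finish the argument. This works because $E'$ appears with coefficient one, exactly as before.

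As a fallback, I would seek an explicit torus chart instead. Take the torus $T\subset{\rm SL}_2$ of diagonal matrices together with a suitable one-parameter unipotent direction, and write down an open embedding $\mathbb{G}_m^3\hookrightarrow X\setminus B$. For example, consider $(t,s,u)\mapsto g_{t,s,u}\cdot f_0$ for a parametrization of an open cell of ${\rm PGL}_2$ by a torus, via the Bruhat cell $U^-TU$. Then check that the pole divisor of $\Omega_3$ is reduced and anticanonical, namely $B_0$ plus the closures of the loci where the Bruhat coordinates vanish. Generic injectivity holds because the stabilizer of $[f_0]$ is the finite icosahedral group; one must rearrange so that the map is an open embedding in codimension one rather than a $60{:}1$ cover. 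This injectivity issue is the main obstacle of the fallback. In the primary approach, the main obstacle is step (b): verifying log canonicity of $H+S$ along the non-normal locus of $S$, which is a double line. I would handle it as in the proof of Proposition~\ref{prop:1-8}, by normalizing $S$ to a scroll, checking the slc condition, and applying inversion of adjunction.
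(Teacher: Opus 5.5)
Your primary route is genuinely different from the paper's, and it can be completed, but as written its only substantive step, (b), is deferred.

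\textbf{How the paper argues.} The paper never uses lines on $X_{\rm MU}$. It takes the Kuznetsov--Prokhorov link $Q\dashrightarrow X_{\rm MU}$: blow up a rational sextic $\Gamma\subset Q$, flop the strict transforms of the lines $L_1,L_2$, and contract the strict transform of $S=\{xw=z^2=yt\}$. It then exhibits an explicit open torus $(a,b,c)\mapsto[1:a:b:(b^2-c)/a:b^2+4c]$ in $Q$ whose volume form has pole divisor exactly $S+H$, with $H=Q\cap\{z=0\}$. Because the poles are computed directly, no log canonicity check is needed, and the proof is a few lines.

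\textbf{What your route buys and costs.} Your route runs Proposition~\ref{prop:1-10-containing-ord-line-ct} with a special line and then Proposition~\ref{prop:special-ct-structure-V_5}. This would give one uniform argument for every prime Fano threefold of genus $12$, at the price of working with a non-normal $S$.

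\textbf{Carrying out (a)--(c).}
\begin{enumerate}
\item[(a)] Only the Borel subgroup $B\subset{\rm PGL}_2$ stabilizing $\ell$ acts on the link, not all of ${\rm SL}_2$. This suffices: $H^0(V_5,\mathcal O(1))\simeq S^6\mathbb C^2$ is irreducible, so $S$ is the unique $B$-invariant hyperplane section $\{f\in V_5: f(p)=0\}$, where $p$ is the $B$-fixed point, and $\Gamma=\{l^5l_p\}$.
\item[(b)] Take $l_p=x$ and the chart $(u,v,s)\mapsto(x+vy)^5(y+ux)+s(x+vy)(y+ux)^5$ around $x^5y$. In it the equation of $S$ is $v(v^4+s)$. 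So $S$ is singular exactly along the special line $\lambda=\{l_p^5m\}$, with two transverse branches there. Its normalization is $\mathbb F_3$, ruled by $\lambda$ and the ordinary lines $\ell_q=\{l_pl_q(al_p^4+bl_q^4)\}$ with $q\neq p$. The conductor is $\sigma_0+f_\lambda$, so $S$ is slc.
\item[(c)] Choose the line first: $\ell_S=\ell_q$ with $q\neq p$. It must not be $\lambda$, since ${\rm mult}_\lambda S=2$ would put coefficient $2$ on $E_W$. Then take $H$ general through $\ell_S$. We get $H\cap\lambda=\ell_S\cap\lambda$, and the conductor plus the pullback of $H$ on $\mathbb F_3$ is the cycle $\sigma_0+f_\lambda+f_q+R$ with $R\in|\sigma_0+3f|$. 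This cycle is compatible with the gluing, so inversion of adjunction gives that $(V,H+S)$ is lc.
\end{enumerate}
Moreover $\Gamma\not\subset H$ and ${\rm mult}_\Gamma S=1$. The crepancy of $\beta\colon W\to\mathbb Q^3$ is automatic from linear equivalence. So the lifting through $\tau$, the flop and $\sigma$ works as you describe.

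\textbf{The fallback.} Your fallback does not work as stated. The orbit map ${\rm PGL}_2\to X\setminus B_0$ is a quotient by the icosahedral group, so composing it with any torus chart of ${\rm PGL}_2$ remains $60{:}1$. Also, $(X,B_0)$ is not even lc, since $B_0$ has a cuspidal edge along the degree-$12$ rational normal curve; it does not have coregularity one or two as you state.
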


\begin{proof}
We will use the description of the Mukai-Umemura smooth Fano threefold
$X_{\rm MU}$ as a Sarkisov link of a smooth quadric due to Kuznetsov and Prokhorov~\cite[Theorem 1.3 and Theorem 2.2]{KP18}. The coordinates and exceptional loci of the following description are explained in~\cite[Remark 2.6 and Remark 2.11]{CS18}.

Consider the smooth quadric threefold 
\[
Q:=\{ [x:y:z:t:w]\mid xw+4yt-5z^2=0\} \subset \pp^4. 
\]
Let $S=\{xw=z^2=yt\}\subset Q$.
The surface $S$ is an irreducible surface, indeed, in the locus
where all coordinates are non-zero $S$ is just a $2$-dimensional algebraic torus. The complement of this open locus in $S$ is a union of curves.
The surface $H=Q\cap \{z=0\}$ is a smooth quadric. 
We consider the curve $\Gamma\subset \pp^4$ parametrized by 
\[
[t_0^6:t_0^5t_1:t_0^3t_1^3:t_0t_1^5:t_1^6].
\]
The curve $\Gamma$ is a smooth rational sextic. 
Then, we have a commutative diagram as follows
\[
\xymatrix{
\widetilde{Q} \ar[d]_-{\pi} \ar@{-->}[r]^-{\chi} & \widetilde{X}_{\rm MU}\ar[d]^-{\sigma} \\ 
Q \ar@{-->}[r]^-{\phi} & X_{\rm MU}
}
\]
satisfying the following conditions: 
\begin{itemize}
\item $\pi$ is the blow-up along $\Gamma$;  
\item the strict transform of $S$ in $\widetilde{X}_{\rm MU}$ is ${\rm Ex}(\sigma)$; and 
\item the flopping curves in $\widetilde{Q}$ are the strict transforms of the lines
\[
L_1=\{z=t=w=0\} \text{ and } L_2=\{x=y=z=0\}.
\]
\end{itemize} 
Consider the open embedding $\iota \colon \mathbb{G}_m^3 \rightarrow Q$ with
\begin{align*}
 (a,b,c) & \mapsto \left[ 
1:a:b:\frac{b^2-c}{a}:b^2+4c 
\right] 
\end{align*}
The image of this open embedding is
\[
T:=\{ [x:y:z:t:w] \mid xyz(z^2-yt)\neq 0\}. 
\]
One can check that $T\cap (S\cup H)=\emptyset$.
The inverse chart is $(y/x,z/x,(z^2-yt)/x^2)$. 
Note that the volume form on $T$ equals 
\[
\omega_T= - \frac{dy\wedge dz\wedge dt}{z(z^2-yt)}
\]
from where it follows that 
${\rm Poles}_{Q}(\omega_T)=S+H$.
We conclude that the pair $(Q,S+H)$ is of cluster type.
Therefore, due to the previously described Sarkisov link
and Lemma~\ref{lem:blow-up-of-cluster-type}, we conclude that
the pair $(X_{\rm MU},\phi_*H)$ is of cluster type. 
\end{proof} 

\begin{proof}[Proof of Proposition~\ref{prop:1-10}]
If $X$ is the Mukai-Umemura smooth Fano threefold, then $X$ is of cluster type due to Proposition~\ref{prop:MU-ct}.
If $X$ is not the Mukai-Umemura smooth Fano threefold, then $X$ admits an ordinary line (see~\cite[Corollary 2.1.6 and Remark 2.2.4]{KPS18}). 
Then, by Proposition~\ref{prop:1-10-containing-ord-line-ct} we conclude that $X$ is of cluster type. 
\end{proof} 

\section{Other smooth Fano threefolds}
\label{sec:other}
In this section, we study smooth Fano threefolds which are not blow-ups of $\pp^3$ or $\qq^3$ and 
not in the main series.
This section is organized as follows.
In Subsection~\ref{subsec:torus-act}, we study smooth Fano threefolds with a torus action.
In Subsection~\ref{subsec:2cap2} and Subsection~\ref{subsec:grass}, we study complete intersections of two quadrics and sections of Pl\"ucker embeddings of ${\rm Gr}(2,5)$, respectively, and their corresponding blow-ups.
Subsection~\ref{subsec:double-cover} treats the double cover of $\pp^1\times\pp^2$.
In Subsection~\ref{subsec:toric-hyperf}, we move to consider the cluster type condition for toric hypersurfaces and their blow-ups. 

\subsection{Varieties with a torus action}
\label{subsec:torus-act}
In this subsection, we argue that most smooth Fano threefolds admitting two-dimensional torus actions are cluster type. First, we recall which families of smooth Fano threefolds are toric.

\begin{proposition}\label{prop:toric}
Let $X$ be a smooth Fano threefold in the families \textnumero\,1.17, \textnumero\,2.33, \textnumero\,2.34, \textnumero\,2.35, \textnumero\,2.36, \textnumero\,3.25, \textnumero\,3.26, \textnumero\,3.27, \textnumero\,3.28, \textnumero\,3.29, \textnumero\,3.30, \textnumero\,3.31, \textnumero\,4.9, \textnumero\,4.10, \textnumero\,4.11, \textnumero\,4.12, 
\textnumero\,5.2 or \textnumero\,5.3 then $X$ is a toric variety and hence of cluster type.
\end{proposition}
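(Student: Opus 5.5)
The statement has two parts, and I would prove them in order. The first part is that every member of the listed families is toric. The second part is that a projective toric variety is of cluster type. For the second part the plan is direct. Let $X$ be a projective toric variety of dimension $n$ with open dense torus $\mathbb{G}_m^n\subset X$. The inclusion $\phi\colon\mathbb{G}_m^n\hookrightarrow X$ is an honest open embedding, so in particular it is an open embedding in codimension one in the sense of Definition~\ref{def:codim-one-embedding}.

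It remains to see that $\Omega_n$ has no zeros on $X$. This is a local computation on each affine toric chart $U_\sigma$. The form $\Omega_n$ is torus-invariant, and along each torus-invariant prime divisor $D_\rho$ it has a simple pole. To see this, choose the primitive ray generator $u_\rho$ and complete it to a basis of the lattice, at least at the generic point of $D_\rho$, where $U_\rho\simeq \mathbb{A}^1\times\mathbb{G}_m^{n-1}$. In the corresponding coordinates $\Omega_n=\frac{dy_1}{y_1}\wedge\dots\wedge\frac{dy_n}{y_n}$, which has a simple pole along $\{y_1=0\}$ and no zeros. Since $X$ is normal, the divisor of $\Omega_n$ is determined in codimension one. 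Hence ${\rm div}(\Omega_n)=-\sum_\rho D_\rho$, so $\Omega_n$ has no zeros. This recovers the standard fact $K_X+\sum_\rho D_\rho\sim 0$ from \cite[Theorem 8.2.3]{CLS11}. Thus $(X,\sum_\rho D_\rho)$ is a cluster type pair. Equivalently, one can invoke Proposition~\ref{prop:cluster-type-bir} with the toric birational map from $(\pp^n,\Sigma^n)$, as in the proof of Lemma~\ref{lem:product-cluster-type}.

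For the toricity claim I would not reprove anything from scratch. I would appeal to the classification of smooth toric Fano threefolds due to Batyrev and Watanabe--Watanabe, which lists exactly $18$ deformation families. I would then match them with the Mori--Mukai numbering: \textnumero\,1.17 is $\pp^3$, \textnumero\,2.33--2.36, \textnumero\,3.25--3.31, \textnumero\,4.9--4.12, and \textnumero\,5.2, 5.3. In each case I would check that the family consists of a single rigid variety. This holds because smooth toric Fano varieties are rigid, since $H^1(X,T_X)=0$ by Bott--Steenbrink--Danilov vanishing. So every member of the family, not merely a general one, is isomorphic to the toric model.

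The main obstacle is this bookkeeping step. One must confirm that each listed Mori--Mukai family is indeed a single toric variety, for example via the tables in \cite{IP99}, and that no listed family has non-toric members. Rigidity handles the second point. The cluster type conclusion itself is routine.
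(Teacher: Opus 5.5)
Your proposal is correct and matches the paper, which also cites the Batyrev and Watanabe--Watanabe classification for toricity and treats toric $\Rightarrow$ cluster type as immediate; your check that $\Omega_n$ has simple poles exactly along the $D_\rho$ makes that step explicit. One small correction: $H^1(X,T_X)=0$ only makes the locus of fibers isomorphic to $X$ open in a family, so it does not by itself rule out other members. What does rule them out is that each listed Mori--Mukai description (e.g.\ the blow-up of $\pp^3$ along a line) determines a unique variety.
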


Now, we turn to study smooth Fano threefolds with the action of a two-dimensional algebraic torus.
The toric classification in Proposition~\ref{prop:toric} is due to~\cite{Bat82,WW82}.

\begin{proposition}\label{prop:T-comp-one}
Let $X$ be a smooth Fano threefold which admits an effective action
of a two-dimensional algebraic torus.
Assume that $X$ is neither isomorphic to
a hypersurface of bi-degree $(1,2)$ in $\pp^2\times \pp^2$ nor the blow-up of 
$\mathbb{Q}^3$ along two conics.
Then, the variety $X$ is of cluster type. 
\end{proposition}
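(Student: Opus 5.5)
The plan is to combine the classification of smooth Fano threefolds of complexity one with the complexity criterion, Theorem~\ref{thm:complexity-one-is-cluster-type}.

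\textbf{Reduction to a list.} By the classification of smooth Fano threefolds with an effective $\mathbb{G}_m^2$-action (Süß), such an $X$ is either toric, hence covered by Proposition~\ref{prop:toric}, or a complexity-one $T$-variety. The complexity-one cases form a finite list: families \textnumero\,2.24 (the bidegree $(1,2)$ divisor, excluded here), \textnumero\,2.29, \textnumero\,2.30, \textnumero\,2.31, \textnumero\,2.32, \textnumero\,3.10 (two conics, excluded here), \textnumero\,3.18, \textnumero\,3.19, \textnumero\,3.20, \textnumero\,3.21, \textnumero\,3.22, \textnumero\,3.23, \textnumero\,3.24, \textnumero\,4.4, \textnumero\,4.5, \textnumero\,4.7, \textnumero\,4.8, and the special members $Q$, $V_{22}$ and $V_5$ admitting a $T$-action. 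Several of these are already settled, as blow-ups of $\pp^3$ or $\qq^3$ or as elements of the main series. So the real content is a uniform argument, with the list serving mainly as a check.

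\textbf{Uniform construction.} Let $T=\mathbb{G}_m^2$ act with complexity one, and let $\pi\colon X\dashrightarrow \pp^1$ be the rational quotient. In the Altmann--Hausen description, the $T$-invariant prime divisors are:
\begin{itemize}
\item horizontal divisors, coming from the rays of the tail fan;
\item vertical divisors, lying over finitely many special points $P_1,\dots,P_r\in\pp^1$ and over general points.
\end{itemize}
A $T$-invariant anticanonical divisor is obtained as follows: take all horizontal invariant divisors, together with the full vertical invariant fibers over two points of $\pp^1$. This gives the standard formula $-K_X\sim \sum_{\rm hor} D_\rho + \sum_{\rm vert}(\mu_v-1)\dots$ with the appropriate adjustments.

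I would choose the two points among the special points (or general points) so that:
\begin{enumerate}
\item $B$ is reduced with coefficients one;
\item $B\in|-K_X|$, so $(X,B)$ has index one;
\item $(X,B)$ is lc. It is invariant and locally toroidal, so this can be checked on the polyhedral divisor.
\end{enumerate}
Then the number of components of $B$ is at least $\rho(X)+2$, since the invariant divisors generate $\Cl(X)$ with relations coming from $M$ and from the functions on $\pp^1$. Hence the complexity is $3+\rho(X)-\#B\le 1$, and Theorem~\ref{thm:complexity-one-is-cluster-type} gives cluster type.

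More concretely, $\#\{\text{invariant prime divisors}\}=\rho+2+(\#\text{special fibers}-2)$. Choosing two fibers with all their components loses nothing from the count, provided the remaining special fibers can be omitted without breaking linear equivalence.

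\textbf{Main obstacle.} The hard part is the requirement that $B$ be reduced. When there are three or more special fibers with non-reduced components (multiplicities $\mu>1$), an invariant anticanonical divisor is forced to carry coefficients $1-\mu$ or to include extra fibers. This is exactly where \textnumero\,2.24 and \textnumero\,3.10 fail, which is presumably why they are excluded.

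For each remaining family I would first try the uniform choice: the two special fibers of largest multiplicity, plus a correction by a general fiber. Where this fails to be reduced, I would fall back on a case-by-case check from Süß's list. That check writes down the divisorial fan, exhibits at most two special fibers with non-reduced components, and verifies the component count.

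As an alternative for problematic cases, realize $X$ as an equivariant blow-up of $\qq^3$, $\pp^3$ or $\pp^1\times\pp^2$ along invariant curves lying in a complexity-one boundary. Then Lemma~\ref{lem:blow-up-of-cluster-type} applies exactly as in Sections~\ref{sec:p3}--\ref{sec:q3}.
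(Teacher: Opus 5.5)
Your setup agrees with the paper's. You take an invariant boundary made of the horizontal invariant divisors together with the reduced fibers over two of the three special points, chosen to contain all multiple fibers; S\"u\ss's classification allows this exactly outside \textnumero\,2.24 and \textnumero\,3.10.

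\textbf{The gap is the component count.} The claim $\#B\geq\rho(X)+2$ does not follow from the presentation of $\Cl(X)$, and omitting a special fiber \emph{does} lose components. Write $N_P$ for the number of invariant prime divisors over $P\in\pp^1$, $h$ for the number of horizontal ones, and $s$ for the number of special points. The principal divisors $\operatorname{div}(f\chi^u)$, with $u\in M$ and $f$ having zeros and poles only at special points, form a group of rank $2+(s-1)$. These relations are independent since $\mathcal{O}(X)^*=\mathbb{C}^*$. Hence $h+\sum_{P\text{ special}}N_P=\rho(X)+1+s$; for example $\qq^3$ has $5$ coordinate divisors, not the $4$ your formula gives. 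Consequently
\[
3+\rho(X)-\#B=\sum_{P\neq P_a,P_b}(N_P-1).
\]
With three special points, the complexity of your pair is therefore the number of components of the omitted fiber minus one. Nothing in your uniform argument bounds this by two. Your case-by-case fallback only addresses reducedness, so the application of Theorem~\ref{thm:complexity-one-is-cluster-type} is not yet justified.

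\textbf{How the paper handles this.} It needs no bound on the omitted fiber. It works on the model $\widetilde X$, on which the quotient $\pi\colon\widetilde X\to\pp^1$ is a toroidal morphism. There it contracts all but one component of the omitted fiber $\pi^{-1}(1)$, using toric birational maps near $1$ globalized to a projective contraction over $\pp^1$. The resulting pair has complexity zero, hence is toric by \cite[Theorem 2]{BMSZ18}. The contracted divisors have coefficient zero, so they are canonical places and the cluster type property lifts back to $\widetilde X$. It then descends to $X$, because $r\colon\widetilde X\to X$ contracts only horizontal boundary components.

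\textbf{Repairing your route instead.} You would have to read off from S\"u\ss's divisorial fans that the omitted reduced fiber has at most two components in every admissible family. This holds in cases one can check by hand, such as $\qq^3$ or \textnumero\,4.7, but it is an extra input, not a consequence of the count.

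\textbf{Smaller remarks.} $V_5$ and $V_{22}$ do not belong to your list, since their automorphism groups have rank at most one. The lc check should be done on $\widetilde X$, where $\pi$ is a morphism, and then pushed forward crepantly along $r$.
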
 

\begin{proof}
If $X$ is toric, then the statement follows from Proposition~\ref{prop:toric}, therefore, we may assume that $X$ admits a torus action of complexity one, i.e., the action of a $2$-dimensional algebraic torus.

Let $\mathbb{T}$ be the two-dimensional algebraic torus. 
By~\cite[Theorem 5.6]{AHS08}, we know that $X$ is $\mathbb{T}$-equivariantly isomorphic to $X(\mathcal{S})$
where $\mathcal{S}$ is a divisorial fan on the normalized Chow quotient of $X$ by $\mathbb{T}$.
Note that such normalized Chow quotient must be a rational curve as $X$ is rationally connected (see, e.g,~\cite{Zha06}).
Therefore, there is a $\mathbb{T}$-equivariant projective birational morphism
$r\colon \widetilde{X}(\mathcal{S})\rightarrow X$
and a $\mathbb{T}$-equivariant good quotient
for the torus action $\pi\colon \widetilde{X}(\mathcal{S})\rightarrow \pp^1$ (see, e.g,~\cite[Section 12.2]{AIPSV12}). 
Note that every p-divisor $\mathcal{D}$ of $\mathcal{S}$ has SNC support as every divisor on $\pp^1$ must be snc. 
Henceforth, we know that $\pi$ must be toroidal (see~\cite[Proposition 2.6]{LS13}).
Therefore, every degenerate divisor of $\pi\colon \widetilde{X}(\mathcal{S})\rightarrow \pp^1$ can be contracted over $\pp^1$ by using a toric birational map. The morphism $\pi$ is trivial with fiber a projective surface $F$ outside three closed points $\{0\},\{1\}$, and $\{\infty\}$ and at most two of these fibers are multiple.
This statement is a consequence of the classification and description 
of $\mathbb{T}$-complexity one smooth Fano threefolds due to S\"u\ss{}  (see~\cite[Section 5]{Sus13}). 
Here, we used the fact that $X$ is not isomorphic to a hypersurface of multi-degree $(1,2)$ in $\pp^2\times \pp^2$ nor isomorphic to the blow-up of $\mathbb{Q}^3$ along two conics.
Indeed, those are precisely the cases in which $\pi$ has three multiple fibers (see~\cite[Page 14 and Page 16]{Sus13}). Let $B_F$ be the closure in $\widetilde{X}$ of the product of the torus invariant boundary of $F$ with $\pp^1\setminus \{0,1,\infty\}$. 
Let $H_0$ and $H_\infty$ be two reduced fibers of $\pi$ for which
$H_0+H_\infty$ contain all the multiple fibers of $\pi$.
Therefore, by construction, the pair $(\widetilde{X}(\mathcal{S}),H_0+H_\infty+B_F)$ is a $\mathbb{T}$-equivariant log Calabi--Yau pair of index one and coregularity zero.
Furthermore, as the base is $\pp^1$ and the general fiber a toric surface $F$, the 
complexity of the pair $(\widetilde{X}(\mathcal{S}),H_0+H_\infty+B_F)$ equals the number
of components of $\pi^{-1}(1)$ minus one.

We can contract all the prime components of
$\pi^{-1}(1)$ except one component using a toric birational map. These contractions globalize as follows. Choose their defining semiample
divisors with the same restriction to the general fiber as a fixed
$\pi$-ample divisor. By the $p$-divisor description, these semiample divisors extend over an
affine neighborhood of $1$ and glue with the fixed divisor away from $1$, 
up to a divisor pulled back from the base. The resulting divisors are
$\pi$-semiample and $\pi$-ample away from $1$. Twisting by
$\pi^*\mathcal O_{\mathbb P^1}(N)$ for $N\gg0$ gives the desired projective contractions.

By doing so, the complexity of the Calabi--Yau pair drops to zero
and so we can apply~\cite[Theorem 2]{BMSZ18}
to deduce that this is a toric model of the pair.
Let $q:\widetilde X\to Z$ be the resulting contraction and put
$\Delta:=H_0+H_\infty+B_F$ and $\Delta_Z:=q_*\Delta$.
Then
\[
K_{\widetilde X}+\Delta=q^*(K_Z+\Delta_Z).
\]
Every $q$-exceptional divisor has coefficient zero in $\Delta$, hence
is a canonical place of the toric pair $(Z,\Delta_Z)$.
Thus $(\widetilde X,\Delta)$ is of cluster type by Lemma~\ref{lem:cluster-type-descends}
The birational contraction $r\colon \widetilde{X}\rightarrow X$ only contracts components 
of $B_F$. Indeed, by~\cite[Theorem 10.1]{AH06}, every exceptional divisor of $r$
must be horizontal over $\pp^1$. Thus, by Lemma~\ref{lem:cluster-type-descends}, we conclude that $X$ is of cluster type. 
\end{proof}  

\subsection{Complete intersection of two quadrics}
\label{subsec:2cap2}
In this subsection, we study complete intersections of two quadrics and their blow-ups which are smooth Fano threefolds.

\begin{lemma}\label{lem:q1-cap-q2}
Let $X$ be a smooth complete intersection of two quadrics in $\pp^5$.
Let $H$ be a generator of its Picard group. 
Let $C\subset X$ be one of the following types of smooth curves.
\begin{enumerate}
\item[(i)] an elliptic curve, which is a scheme theoretic intersection of two hyperplanes with $X$;
\item[(ii)] a conic which is not contained in a plane in $X$; or
\item[(iii)] a line.
\end{enumerate} 
Then, there is a smooth element $B\in |H|$ that contains $C$. 
\end{lemma}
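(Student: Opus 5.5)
I would follow the template of Lemma~\ref{lem:curves-in-quadrics}: show that the hyperplane sections of $X$ containing $C$ cut out $C$ scheme-theoretically, then apply a Bertini argument away from $C$ and a dimension count along $C$. Here $H$ is the restriction of $\mathcal{O}_{\pp^5}(1)$, so the relevant linear system is $\Lambda=|\mathcal{I}_{C/X}(H)|$, the hyperplane sections of $X\subset\pp^5$ containing $C$.

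\emph{Step 1: $\Lambda$ cuts out $C$.} In case (i), $C$ is by hypothesis $X\cap\Pi$ for a linear subspace $\Pi\simeq\pp^3$, so hyperplanes through $\Pi$ give a pencil $\Lambda$ whose base scheme is $C$. In case (ii), $C$ spans a plane $P$, and hyperplanes through $P$ give a net. Its base scheme is $X\cap P$. Since $P\not\subset X$, the intersection $X\cap P$ is the intersection of two plane conics containing $C$, and the residual part is empty. So as schemes $X\cap P=C$ (using smoothness of $C$, or simply that $C$ is a Cartier divisor in $P$ and the second conic restricts to a multiple of the first). In case (iii), hyperplanes through the line give a three-dimensional system with base scheme $X\cap\ell=\ell$. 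In every case $\Lambda$ generates $\mathcal{I}_{C/X}(H)$ and has base locus exactly $C$. By Bertini, a general $B\in\Lambda$ is therefore smooth off $C$.

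\emph{Step 2: smoothness along $C$.} Fix $p\in C$. For each $B\in\Lambda$, the differential of its defining linear form gives a functional $\lambda_B$ on the two-dimensional space $N_p:=T_pX/T_pC$. The divisor $B$ is singular at $p$ exactly when $\lambda_B=0$. Since $\Lambda$ generates the ideal of $C$ at $p$ (Step 1), the $\lambda_B$ span $N_p^\vee$. Hence the elements of $\Lambda$ singular at $p$ form a linear subspace of codimension $2$ in $\Lambda$. Letting $p$ vary over the one-dimensional curve $C$, the set of $B\in\Lambda$ singular somewhere on $C$ has codimension at least $1$. Thus a general $B\in\Lambda$ is smooth along $C$ as well.

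\emph{Main obstacle.} The delicate point is case (i), where $\Lambda$ is only a pencil. There the incidence argument gives a codimension-one bad locus, so it rules out singular members only generically and requires $\dim\Lambda\ge 1$; this holds. Concretely, one must check that the pencil is not entirely singular at some point of $C$. This follows from the span computation, because $C$ is a scheme-theoretic complete intersection of two hyperplane sections. In case (ii), the scheme-theoretic identity $X\cap P=C$ is where the hypothesis that $P\not\subset X$ is used, and I expect verifying it to be the only other nontrivial check.
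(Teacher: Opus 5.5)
Your proof is correct and follows essentially the same route as the paper. In each case you show that the hyperplane sections through $C$ (those through the spanned $\pp^3$, plane, or line) cut out $C$ scheme-theoretically, you apply Bertini off $C$, and you note that being singular at a fixed $p\in C$ imposes two independent linear conditions, so the members that are bad along $C$ form a proper subset. One small correction to your closing remark: in case (i), a codimension-two condition in a pencil is empty, so every member of the pencil, not just a general one, is smooth along $C$, which is exactly how the paper phrases that case.
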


\begin{proof}
In the first case, let $H_1$ and $H_2$ be the two hyperplane sections in $X$ cutting $C$ and let $\Lambda$ be the pencil generated by them. Since $C$ is smooth of codimension $2$, the differentials of $H_1$ and $H_2$ are linearly independent at every point of $C$. Therefore every element of $\Lambda$ is smooth at every point of $C$. By Bertini's theorem a general element of $\Lambda$ is smooth outside of $C$. Hence for $H \in \Lambda$ general, the intersection $H\cap X$ is smooth and contains $C$.

Assume that $C$ is a smooth conic.
Let $Q_1$ and $Q_2$ be two quadrics that cut-out  $X$. Let $\Pi$ be the plane containing $C$.
Note that $\Pi$ is not contained in $X$. We may assume that $\Pi\cap X=C$ and that $C=\Pi\cap Q_1$ scheme-theoretically.
The hyperplanes containing $\Pi$ are precisely the hyperplanes containing $C$. Let $\Lambda$ be the linear system of such hyperplanes. By Bertini's theorem, a general element of $\Lambda$ has smooth intersection with $X$ away from $C$.
Let $p\in C$ be a fixed point. Given $H\in \Lambda$, the intersection $H\cap X$ is singular at $p$ precisely when $H$ contains $T_pX=T_pQ_1\cap T_pQ_2$. The linear equations of $\Pi$ generate $\mathcal I_{C/X}(H)$ because $X\cap\Pi=C$ scheme-theoretically. Their conormal evaluations span a two-dimensional space, so singularity at $p$ imposes two independent linear conditions. 
By moving the point $p\in C$, we conclude that the set of hyperplanes in $\Lambda$ whose intersection with $X$ is singular at some point of $C$ gives a one-dimensional subvariety.
Hence, a general element of $\Lambda$ has smooth intersection with $X$ and contains $C$. 

Finally, assume that $C$ is a line. The linear system of hyperplanes in $\pp^5$ that contain $C$ is a three-dimensional space $\Lambda$.
For $p\in C$ fixed and $H\in \Lambda$, we have that $H\cap X$ is singular at $p$ precisely when $H$ contains $T_pX$.
Therefore, hyperplanes in $\Lambda$ that are singular at $p\in C$ form a one-dimensional subvariety. 
By moving the point $p\in C$, we conclude that the set of hyperplanes in $\Lambda$ whose intersection with $X$ is singular at some point of $C$ forms a $2$-dimensional subvariety of $\Lambda$.
By Bertini's theorem, a general element $H\in \Lambda$ has smooth intersection with $X$ outside $C$. 
Hence, we conclude that for $H\in \Lambda$ general, the intersection $H\cap X$ is smooth and contains $C$.
\end{proof}

\begin{proposition}\label{prop:cluster-type-q1capq2}
Let $X$ be a smooth Fano threefold which is either a complete intersection of two quadrics or a blow-up of a complete intersection of two quadrics. Then, the variety $X$ is of cluster type.
\end{proposition}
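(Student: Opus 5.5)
Let $V=Q_1\cap Q_2\subset\pp^5$ and let $H$ be the hyperplane class, so $-K_V\sim 2H$. The smooth Fano threefolds in question are $V$ itself (\textnumero\,1.14) and its Fano blow-ups: along an elliptic curve cut out by two hyperplanes (\textnumero\,2.10), along a conic (\textnumero\,2.16), and along a line (\textnumero\,2.19). The family \textnumero\,2.19 is also a blow-up of $\pp^3$ and is already covered by Theorem~\ref{thm:blow-up-P3}. The strategy mirrors Section~\ref{sec:q3}. First build a cluster type pair $(V,S+R)$ with $S,R\in|H|$, $S$ smooth and containing the curve $C$ to be blown up. Then lift it to $X={\rm Bl}_CV$ via Lemma~\ref{lem:blow-up-of-cluster-type}.

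\textbf{Step 1: the boundary on $V$.} I will imitate Lemma~\ref{lem:smooth-quadric-section-in-quadric}. Take $S\in|H|$ smooth; it is a quartic del Pezzo surface containing lines. Fix a line $\ell\subset S$ and take $R\in|H|$ general through $\ell$; by Lemma~\ref{lem:q1-cap-q2}(iii), $R$ is smooth. Then $S\cap R=\ell+C'$, where $C'\sim H|_S-\ell$ is a twisted cubic meeting $\ell$ transversally in two points. Adjunction to $R$ gives the boundary $\ell+C'$, a cycle of two rational curves. Hence $(V,S+R)$ is log Calabi--Yau of index one and coregularity zero, and $\ell$ has multiplicity two in $S+R$.

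\textbf{Step 2: cluster type via the projection from the line.} Blowing up $\ell$ gives $W={\rm Bl}_\ell V$. Projection from $\ell$ gives the classical link $W\to\pp^3$, the blow-up of a quintic elliptic curve $\Gamma$ (family \textnumero\,2.19 again). The log pull-back is
\[
K_W+S_W+R_W+E,
\]
where $E$ is a log canonical place. I then push forward to $\pp^3$. The images $\bar S$, $\bar R$, $\bar E$ should be a cubic surface containing $\Gamma$, a plane, and a quadric containing $\Gamma$, respectively; the degrees come from $H-\ell$ mapping to the hyperplane class. Their sum is a quartic-type boundary of coregularity zero.

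To finish on $\pp^3$, I will either find a point of multiplicity three and apply Proposition~\ref{prop:criterion-sing-p3}, or argue via complexity. If $\bar S$ were reducible or the plane and quadric split nicely, the complexity would be small. Alternatively, I will use a conic bundle structure over $\pp^2$ as in Lemma~\ref{lem:smooth-quadric-section-in-quadric}: choose a plane $\Pi\supset\ell$ and use the projection from $\Pi$, for which two components are horizontal. Then Proposition~\ref{prop:fibration-to-smooth-surface} applies. Finally, Lemma~\ref{lem:cluster-type-descends} brings the cluster type property back to $(V,S+R)$. This step is the main obstacle, namely producing a genuinely cluster type model and not merely coregularity zero. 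The first thing I would try is to take $R$ to be the tangent-type hyperplane section through two intersecting lines $\ell_1,\ell_2\subset S$, making $R\cap S=\ell_1+\ell_2+\text{conic}$. This increases the number of boundary strata, so that after the link the pair has complexity at most one and Theorem~\ref{thm:complexity-one-is-cluster-type} applies.

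\textbf{Step 3: blow-ups.} For $C$ an elliptic curve, a conic not in a plane, or a line, Lemma~\ref{lem:q1-cap-q2} supplies a smooth $S\in|H|$ containing $C$. I run Steps 1--2 with this $S$, choosing $\ell\subset S$ and $R$ so that $C\not\subset R$; this is possible since $R$ varies in a positive-dimensional system. Then ${\rm mult}_C(S+R)=1$, so the blow-up extracts only a canonical place. Lemma~\ref{lem:blow-up-of-cluster-type} then gives that $X$ is of cluster type. The case of $V$ itself is Step 2 directly.
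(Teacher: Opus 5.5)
Your Step 1 is the paper's boundary: the paper also takes $B_1=S$ smooth through $C$ and $B_2=R$ with $S\cap R=D_1+D_2$, where $D_1$ is a line (a $(-1)$-curve) and $D_2\sim -K_S-D_1$.

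\textbf{The gap is Step 2.} Step 2 carries the whole content of the proposition, and as written it is not proved. The identification of the images is wrong, and the step ends with a list of strategies, none of which is carried out. Since $\ell\subset S$, we have $\sigma^*S=S_W+E$, so $S_W\sim H-E$. This is the pullback $A$ of $\mathcal O_{\pp^3}(1)$, so $\bar S$ is a plane, exactly like $\bar R$. The cubic through $\Gamma$ is the image of a hyperplane section \emph{not} passing through $\ell$. Your three images have total degree $6$, so they could not form an anticanonical boundary on $\pp^3$. Also, $\Gamma$ is not elliptic: $(-K_W)^3=26=64-40+2g-2$ forces $g(\Gamma)=2$.

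\textbf{The repair.} It is immediate within your framework. Let $F$ be the exceptional divisor of $\pi\colon W\to\pp^3$. The identity $-K_W=2H-E=4A-F$ gives $H\sim 3A-F$ and $E\sim 2A-F$. Hence $\pi_*E=Q$ is the quadric containing $\Gamma$, and the crepant pushforward of $(W,S_W+R_W+E)$ is $(\pp^3,P_1+P_2+Q)$, where $P_1\neq P_2$ are planes not containing $\Gamma$.
\begin{itemize}
\item This pair is crepant birational to the lc pair $(V,S+R)$, so it is lc of index one.
\item It has complexity $3+1-3=1$, so Theorem~\ref{thm:complexity-one-is-cluster-type} applies. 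Alternatively, a point of $P_1\cap P_2\cap Q$ has multiplicity three and Proposition~\ref{prop:criterion-sing-p3} applies.
\item $F$ is a canonical place, so Lemma~\ref{lem:blow-up-of-cluster-type} lifts cluster type to $W$.
\item $E$ is an lc place, so Lemma~\ref{lem:cluster-type-descends} gives cluster type for $(V,S+R)$.
\end{itemize}
Step 3 then works as you wrote it.

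\textbf{Comparison with the paper.} With this fix, your route genuinely differs from the paper's. The paper keeps the same boundary but separates the pencil $\langle S,R\rangle$ by a dlt modification. It then contracts, over $\pp^1$, the horizontal component lying over the $(-1)$-curve $\ell$. This produces a quintic del Pezzo fibration whose horizontal boundary is irreducible and nodal, and it concludes with Proposition~\ref{prop:fibration-to-smooth-curve}. Your corrected argument replaces that relative MMP by the classical link ${\rm Bl}_\ell V\cong{\rm Bl}_\Gamma\pp^3$ of family \textnumero\,2.19 and a complexity-one count. It is shorter, but it depends on that link. The paper's argument uses only the del Pezzo geometry of $S$ and parallels its treatment of $V_5$.
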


\begin{proof}
By the classification of smooth Fano threefolds, we know that $X$ is either a complete intersection of two quadrics
or the blow-up of such variety along a curve $C$ which is either 
elliptic, conic, or a line. 
For the remainder of the proof, $X$ denotes the Picard rank one complete intersection. We pass to its blow-up only at the end. If no center is prescribed, choose a line $C$. By Lemma~\ref{lem:q1-cap-q2}, $C$ is contained in a smooth element $B_1\in |H|$, where $H$ is the generator of the Picard group. By construction, the surface $B_1$ is a smooth del Pezzo surface of degree $4$. Let $D\in |-K_{B_1}|$ be a nodal anti-canonical curve with two components $D_1$ and $D_2$. We may assume that $D_1^2=-1$. By Kawamata-Viehweg vanishing, we know that there exists $B_2\in |-(K_X+B_1)|$ such that $B_1\cap B_2=D$. 
We consider the log Calabi--Yau pair $(X,B_1+B_2)$.
The exact sequence $0\to\mathcal O_X\to\mathcal O_X(H)\to\mathcal O_{B_1}(-K_{B_1})\to0$ and $H^1(\mathcal O_X)=0$ lift $D$. General lifts are smooth off its nodes. Varying the lift by $B_1$ makes the normal derivative nonzero at every node. Hence $B_2$ is smooth and adjunction proves the lc property.
We argue that $(X,B_1+B_2)$ is of cluster type, and then by Lemma~\ref{lem:blow-up-of-cluster-type}, we conclude that the blow-up of $X$ along an elliptic, conic, or line is again of cluster type.

The divisors $B_1,B_2$ generate a pencil with smooth general member, a del Pezzo surface of degree four, and base curve $D$.
We can find a dlt modification $(X',B')\rightarrow (X,B_1+B_2)$ that separates this pencil.
Then, we have a fibration $X'\rightarrow \pp^1$ such that the fiber over $\{0\}$ is the strict transform of $B_1$ and the fiber over $\{\infty\}$ is the strict transform of $B_2$.
Therefore, if we apply the canonical bundle formula for $(X',B')$ over $\pp^1$, we obtain the pair $(\pp^1,\{0\}+\{\infty\})$. 
The pair $(X',B')$ has two horizontal components over $\pp^1$, let's say $E_1+E_2$, such that the restriction to a general fiber is $D_1+D_2$ where $(D_1)^2=-1$.

We argue that we can contract $E_1$ over $\pp^1$.
Since $X^{\prime}\to X$ extracts only lc places, pulling back $(X,(1-\epsilon)(B_1+B_2))$ for small $\epsilon>0$ gives an effective klt boundary with nef and big anticanonical divisor. Hence $X^{\prime}$ is of Fano type, and is a relative Mori dream space~\cite{BCHM10}. On the generic fiber the chosen ray is $\mathbb R_{\geq0}[D_1]$, supported by $-K_F+D_1$. We extend this supporting class and run its relative MMP, then we take its ample model. The preliminary steps are vertical, and its only generic contraction is $D_1$. All steps are crepant for the log Calabi--Yau pair. The inverse extracts only coefficient zero or coefficient one places, so Lemma~\ref{lem:blow-up-of-cluster-type} lifts
the cluster type property. This realizes the desired contraction of $E_1$ over $\pp^1$ (compare with~\cite[Theorems 3.3 and 3.7]{KM98}). 

Let $X'\dashrightarrow X''$ be the induced birational contraction
and $(X'',B'')$ be the induced log Calabi--Yau pair on $X''$.
The boundary divisor $B''$ has a unique prime component which is horizontal over $\pp^1$ and the general fiber of $X''\rightarrow \pp^1$ is a smooth del Pezzo surface of degree $5$.
Thus, by Proposition~\ref{prop:fibration-to-smooth-curve}, we conclude that $(X'',B'')$ is of cluster type.
Then, by Lemma~\ref{lem:blow-up-of-cluster-type}, we conclude that our pair $(X,B_1+B_2)$ is of cluster type.
\end{proof}

\subsection{Section of Pl\"ucker embedding of ${\rm Gr}(2,5)$ by a codimension three linear subspace}
\label{subsec:grass}
In this subsection, we study smooth Fano threefolds which are blow-ups of sections of Pl\"ucker embeddings of the Grassmannian ${\rm Gr}(2,5)$ by codimension 3 subspaces.

\begin{lemma}\label{lem:plucker-section}
Let $X$ be a smooth Fano threefold which is a section of 
the Pl\"ucker embedding of the Grassmannian ${\rm Gr}(2,5)$ by codimension 3 subspaces. Let $H$ be the generator of its Picard group.
Let $C\subset X$ be one of the following classes of curves:
\begin{enumerate}
\item[(i)] an elliptic curve, which is a scheme theoretic intersection of two hyperplanes with $X$;
\item[(ii)] a twisted cubic whose blow-up is Fano;
\item[(iii)] a conic; or 
\item[(iv)] a line. 
\end{enumerate} 
Then, there is a smooth element $B\in |H|$ that contains $C$. 
\end{lemma}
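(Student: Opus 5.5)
I will follow the template of Lemma~\ref{lem:q1-cap-q2}, working with $X=V_5\subset\pp^6$, the quintic del Pezzo threefold, which is cut out scheme-theoretically by quadrics (the Pl\"ucker quadrics restricted to the linear section). In each case let $\Lambda\subset|H|$ be the linear system of hyperplane sections containing $C$. The argument has two parts. Bertini's theorem shows that a general member of $\Lambda$ is smooth away from the base locus of $\Lambda$. A dimension count then shows that the members singular at some point of $C$ form a proper closed subset of $\Lambda$.

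\textbf{Step 1: the base locus is $C$ and $\Lambda$ generates $\mathcal{I}_{C/X}(H)$.}
\begin{enumerate}
\item[(i)] $C$ is by hypothesis the scheme-theoretic intersection of two hyperplane sections $H_1,H_2$, so the pencil they span suffices.
\item[(ii)] A twisted cubic spans a $\pp^3$. Since $X$ is an intersection of quadrics and $\deg X=5$ contains no cubic surface, $\langle C\rangle\cap X$ is cut out by quadrics containing $C$. The Fano condition on the blow-up (equivalently, the absence of bisecant lines of $C$ on $X$) should give $\langle C\rangle\cap X=C$ scheme-theoretically.
\item[(iii)] For a conic, $X$ contains no planes (a linear section of ${\rm Gr}(2,5)$ of Picard rank one with $H^3=5$ contains no planes). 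So $\Pi\cap X$ is cut out by quadrics and contains the conic; hence it equals $C$.
\item[(iv)] A line is its own span.
\end{enumerate}
In every case the linear forms vanishing on $\langle C\rangle$ generate $\mathcal{I}_{C/X}(H)$.

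\textbf{Step 2: the dimension count along $C$.} Fix $p\in C$. A member $B\in\Lambda$ is singular at $p$ exactly when its defining linear form lies in the kernel of the evaluation map $\Lambda\to (T_pX/T_pC)^\vee$, which takes a form to its conormal differential. By Step 1 this map is surjective, since the generators of $\mathcal{I}_{C/X}$ span the two-dimensional conormal fibre. Singularity at $p$ is therefore a codimension-two condition in $\Lambda$. Letting $p$ vary along the one-dimensional curve $C$, the members singular somewhere on $C$ form a subset of codimension at least one. Together with Bertini away from $C$, a general $B\in\Lambda$ is smooth and contains $C$.

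For the line in case (iv), surjectivity is immediate: $\Lambda$ consists of all hyperplanes containing $\ell$, and $T_pX$ is a $\pp^3$ properly containing $\ell$. Hyperplanes containing $T_pX$ then have codimension $2$ in $\Lambda$, exactly as in the proof of Lemma~\ref{lem:q1-cap-q2}.

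\textbf{Main obstacle.} The hardest step is the twisted cubic case (ii) of Step 1. There one must show $\langle C\rangle\cap X=C$ scheme-theoretically, using the hypothesis that the blow-up is Fano, so that the conormal differentials of $\Lambda$ span the conormal fibre at every point of $C$. If the span $\langle C\rangle\cap X$ were larger, the dimension count in Step 2 would fail. My first attempt is to show that any extra component would be a line or conic on $X$ meeting $C$ in at least two points. Such a curve $\gamma$ satisfies $-K_{\mathrm{Bl}_C X}\cdot\tilde\gamma\le 0$, contradicting ampleness. Failing that, I would fall back on the explicit description of the family (\textnumero\,2.14, \textnumero\,2.20, \textnumero\,2.22, \textnumero\,2.26 in Mori--Mukai) and exhibit a smooth hyperplane section directly.
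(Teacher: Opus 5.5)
Your plan is the paper's: take the hyperplane sections through $C$, prove $\langle C\rangle\cap X=C$ scheme-theoretically so that the conormal evaluation $\Lambda\to(T_pX/T_pC)^\vee$ is surjective at every $p\in C$, then combine the codimension-two count along $C$ with Bertini off $C$. Cases (i), (iii), (iv) and Step 2 are fine as written.

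The gap is in (ii), at the step you flagged, and the sketch as written would not close it. You assert that an excess line or conic meeting $C$ twice has $-K_{\mathrm{Bl}_CX}\cdot\tilde\gamma\le 0$. This is false for conics: since $-K_X=2H$, such a conic has degree $4-2=2>0$, so the Fano hypothesis alone would not exclude it.

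What is missing is control over what the excess can be. Since $X$ is cut out by quadrics, $\Pi\cap X$ (with $\Pi=\langle C\rangle\cong\pp^3$) is the base locus of the span $W$ of the restricted quadrics, and $W\subseteq H^0(\mathcal{I}_{C/\Pi}(2))\cong\cc^3$. There are three cases.
\begin{enumerate}
\item[(a)] If $\dim W=3$, then $\Pi\cap X=C$ scheme-theoretically, because the net cuts out the twisted cubic.
\item[(b)] If $\dim W=2$, both quadrics are irreducible because $C$ spans $\pp^3$. They therefore meet in $C\cup L$, with $L$ a line and $\operatorname{length}(L\cap C)=2$ (secant or tangent, by liaison). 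Then $-K_{\mathrm{Bl}_CX}\cdot\tilde L=2-2=0$, contradicting ampleness. Note that you need ``length two'' here, not ``two points'', to cover tangent lines.
\item[(c)] If $\dim W\le 1$, then $X$ would contain a quadric surface or $\Pi$ itself. This is impossible, since every surface in $X$ has degree in $5\zz$. This, rather than the absence of cubic surfaces, is the relevant degree restriction.
\end{enumerate}
So conics never arise, and this rank-three statement is exactly how the paper proves (ii). With it in place, no fallback to the Mori--Mukai description is needed.
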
 

\begin{proof}
In the first case, let $H_1$ and $H_2$ be the two hyperplane sections in $X$ cutting $C$ and let $\Lambda$ be the pencil generated by them. Since $C$ is smooth of codimension $2$, the differentials of $H_1$ and $H_2$ are linearly independent at every point of $C$. Therefore every element of $\Lambda$ is smooth at every point of $C$. By Bertini's theorem a general element of $\Lambda$ is smooth outside of $C$. Hence for $H \in \Lambda$ general, the intersection $H\cap X$ is smooth and contains $C$.

Assume that $C$ is a twisted cubic. 
Let $\Lambda$ be the linear system of hyperplanes in $\pp^6$ that contain $C$. Every element of $\Lambda$ must contain $\Pi$ the three-dimensional projective space spanned by $C$. 
The linear system $|\mathcal{I}_C(H)|$ consists of the elements of $\Lambda$ restricted to $X$.
The restrictions of the quadratic equations of $X$ span $H^0(\mathcal I_{C/\Pi}(2))$. Indeed, rank two would leave a residual bisecant line of nonpositive anticanonical degree on the Fano blow-up. Lower rank would put a surface of degree at most two in $X$, contradicting $\operatorname{Pic}(X)=\zz H$ and $H^3=5$. Thus $X\cap\Pi=C$ scheme-theoretically and the linear equations of $\Pi$ generate $\mathcal I_{C/X}(H)$. Therefore, $|\mathcal{I}_C(H)|$ is two-dimensional and its base locus is indeed $C$. 
Let $p\in C$. 
A hyperplane $\Gamma \in \Lambda$ satisfies that $\Gamma \cap X$ is singular at $p$ if and only if $\Gamma \supseteq T_pX$. 
Note that $T_pX\cap \Pi$ has dimension one, so there is a unique hyperplane $\Gamma \in \Lambda$ that contains both $T_pX$ and $\Pi$. 
Thus, for each $p\in C$ there is a unique $\Gamma \in \Lambda$ for which $\Gamma \cap X \in |\mathcal{I}_C(H)|$ is singular at $p\in C$.
Therefore, the set of all $\Gamma\in \Lambda$ for which $\Gamma \cap X$ is singular at some point of $C$ forms a subvariety of $\Lambda$ of dimension at most one. Thus, for $\Gamma \in \Lambda$ general, we have that $\Gamma \cap X$ contains $C$ and is smooth.

Assume that $C$ is a conic. Let $\Pi$ be the span of the conic.
Hence, the system $|\mathcal{I}_C(H)|$ consists of the restriction
to $X$ of hyperplanes of $\mathbb{P}^6$ that contains $\Pi$.
Let $\Lambda$ be the system of hyperplanes in $\pp^6$ that contains $\Pi$. The system $\Lambda$ is three-dimensional.
The plane $\Pi$ is not contained in $X$ and so $X\cap \Pi=C$.
The first assertion follows from $\operatorname{Pic}(X)=\zz H$ and $H^3=5$. The quadratic equations then give the asserted scheme-theoretic intersection. Thus the conormal evaluations have rank two, as required by the tangent space count. 
Therefore, the base locus of $|\mathcal{I}_C(H)|$ is precisely $C$.
Fix a point $p\in C$. A hyperplane $\Gamma \in \Lambda$ satisfies that $\Gamma\cap X \in |\mathcal{I}_C(H)|$ is singular at a $p\in X$ if and only if $\Gamma$ contains both $\Pi$ and $T_pX$. 
Such set of hyperplanes form a $\pp^1$ in the linear system $\Lambda$ and so the set of hyperplanes in $\Lambda$ whose intersection with $X$ are singular at some point of $C$ form a 2-dimensional subvariety of $\Lambda$. We conclude that a general element of $|\mathcal{I}_C(H)|$ is smooth. 

Assume that $C$ is a line. The system $|\mathcal{I}_C(H)|$ is $4$-dimensional in this case. Indeed, the system $|\mathcal{I}_C(H)|$ consists of the restrictions to $X$ of the elements of the linear system $\Lambda$ of hyperplanes in $\pp^6$ containing $C$. It is clear that the linear system $|\mathcal{I}_C(H)|$ has base locus $C$. 
Given a point $p\in C$, a hyperplane
$\Gamma\in \Lambda$ satisfies that $\Gamma\cap X \in |\mathcal{I}_C(H)|$ is singular at $p\in C$ if and only if $\Gamma$ contains $T_p X$.
Thus, such set of hyperplanes forms a $2$-dimensional space for a fixed $p\in C$. Moving $p\in C$, we observe that the set of hyperplanes $\Gamma \in \Lambda$ for which $\Gamma \cap X \in |\mathcal{I}_C(H)|$ is singular at some point $p\in C$ form a $3$-dimensional subvariety of the $4$-dimensional system $\Lambda$.
Thus, for $\Gamma \in |\mathcal{I}_C(H)|$ general, we have that $\Gamma \cap X$ is smooth and contains $C$. 
\end{proof}

\begin{proposition}\label{prop:cluster-type-section-grass}
Let $X$ be a smooth Fano threefold which is either 
a section of the Pl\"ucker embedding of ${\rm Gr}(2,5)$ by a codimension $3$ subspace or a blow-up of such variety. 
Then, the variety $X$ is of cluster type. 
\end{proposition}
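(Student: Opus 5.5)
The argument follows the template of Proposition~\ref{prop:cluster-type-q1capq2}, with the quintic del Pezzo threefold $V_5$ (a codimension $3$ linear section of ${\rm Gr}(2,5)$) in place of the intersection of two quadrics. First we determine which blow-ups of $V_5$ appear among smooth Fano threefolds. By the Mori--Mukai classification, these are blow-ups along an elliptic curve cut out by two hyperplanes, a twisted cubic, a conic, or a line, which are exactly the cases of Lemma~\ref{lem:plucker-section}. If no center is prescribed, i.e.\ $X=V_5$ itself, we choose a line $C$. Lemma~\ref{lem:plucker-section} then gives a smooth $B_1\in|H|$ containing $C$, and $B_1$ is a smooth del Pezzo surface of degree $5$.

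Next we construct the boundary. Since $-K_X=2H$, we need $B_2\in|H|$ with $B_1\cap B_2=D$, where $D\in|-K_{B_1}|$ is a nodal anticanonical cycle of rational curves. We take $D=D_1+D_2$ with two smooth rational components meeting transversally at two points, for instance a line plus a conic-class twisted quartic, or any reduced two-component cycle.
\begin{itemize}
\item The lift $B_2$ exists by the exact sequence $0\to\mathcal O_X\to\mathcal O_X(H)\to\mathcal O_{B_1}(-K_{B_1})\to0$ and $H^1(\mathcal O_X)=0$.
\item As in Proposition~\ref{prop:cluster-type-q1capq2}, varying $B_2$ by multiples of $B_1$ makes $B_2$ smooth along the nodes of $D$, hence smooth.
\item Adjunction and inversion of adjunction then show that $(X,B_1+B_2)$ is lc.
\item This pair has index one and coregularity zero, since $D$ is a cycle.
\end{itemize}
Because $C\subset B_1$ and $C\not\subset B_2$ for a general lift, the blow-up along $C$ extracts only a canonical place. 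Lemma~\ref{lem:blow-up-of-cluster-type} therefore reduces everything to showing that $(X,B_1+B_2)$ is of cluster type.

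For that step we take a dlt modification $(X',B')$ which resolves the pencil $\langle B_1,B_2\rangle$. This gives a del Pezzo fibration $X'\to\pp^1$ of degree $5$ whose fibers over $0$ and $\infty$ lie in $\lfloor B'\rfloor$, so the canonical bundle formula yields $(\pp^1,\{0\}+\{\infty\})$. The horizontal part of $B'$ is $E_1+E_2$, restricting to $D_1+D_2$ on the general fiber. As in Proposition~\ref{prop:cluster-type-q1capq2}, $X'$ is of Fano type, hence a relative Mori dream space. We contract $E_1$ over $\pp^1$ when $D_1$ is a $(-1)$-curve on the fiber, via the relative MMP for a supporting class of $\mathbb R_{\geq0}[D_1]$. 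The general fiber then becomes a degree $6$ del Pezzo surface, and the unique horizontal component of the boundary restricts to an irreducible nodal anticanonical curve. All steps are crepant and extract only places of coefficient $0$ or $1$. Proposition~\ref{prop:fibration-to-smooth-curve} applies, and Lemmas~\ref{lem:blow-up-of-cluster-type} and~\ref{lem:cluster-type-descends} carry the cluster type property back to $(X,B_1+B_2)$.

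The main obstacle is the choice of $D$ on the quintic del Pezzo $B_1$. We need $D_1$ to be a $(-1)$-curve such that $D_2=-K_{B_1}-D_1$ is an irreducible smooth rational curve meeting $D_1$ at two distinct points. We would try $D_1=e_4$ and $D_2\in|3h-e_1-e_2-e_3-2e_4|$, a nodal class of arithmetic genus $0$ with $D_1\cdot D_2=2$. A general member is smooth, because the linear system is basepoint free of dimension $\geq1$ on a smooth degree $5$ surface, and it meets $D_1$ transversally. The remaining point is to check that the contraction of $E_1$ over $\pp^1$ does not contract the other horizontal component and keeps the general fiber smooth, exactly as in the quartic case.
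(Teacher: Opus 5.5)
Your argument is correct and has the same skeleton as the paper's proof:
\begin{enumerate}
\item a smooth $B_1\in|H|$ through $C$ from Lemma~\ref{lem:plucker-section};
\item a lift $B_2$ via $H^1(\mathcal O_X)=0$, made smooth by varying it by $B_1$;
\item a dlt modification resolving the pencil, giving a del Pezzo fibration over $(\pp^1,\{0\}+\{\infty\})$, to which Proposition~\ref{prop:fibration-to-smooth-curve} is applied;
\item Lemma~\ref{lem:blow-up-of-cluster-type} for the blow-up.
\end{enumerate}

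The difference is that you imported from Proposition~\ref{prop:cluster-type-q1capq2} a step that is only needed for $V_4$. There the fibers have degree $4$, so one must first contract a $(-1)$-curve to reach the range of Proposition~\ref{prop:fibration-to-smooth-curve}. For $V_5$ the fibers already have degree $5$, which that proposition allows. The paper therefore simply takes $D=B_1\cap B_2$ to be an irreducible nodal anticanonical curve on the quintic del Pezzo surface $B_1$. The unique horizontal component of the dlt modification then restricts to a nodal curve on the general fiber, and the proposition applies directly.

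Your detour does work. Your class computations are right: $3h-e_1-e_2-e_3-2e_4=(2h-\sum e_i)+(h-e_4)$ is base point free with $p_a=0$ and $D_1\cdot D_2=2$. After contracting $D_1$, the image of $D_2$ is an irreducible nodal anticanonical curve on a sextic del Pezzo surface. But this route costs three things the paper avoids: the Fano type argument, the relative MMP contracting $E_1$, and the check that the other horizontal component survives.

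One small correction: $B_1\cap B_2=D$ for every lift, so what you need is $C\not\subset D$, not genericity of the lift. This holds automatically unless $C$ is a line equal to $D_1$. Even in that case the blow-up of $C$ extracts a log canonical place, which Lemma~\ref{lem:blow-up-of-cluster-type} also allows.
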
 

\begin{proof}
By the classification of smooth Fano threefolds, we know that $X$ is either a section of the Pl\"ucker embedding of ${\rm Gr}(2,5)$ by a  codimension $3$ subspace or a blow-up of such variety along a curve $C$ which is an elliptic curve, a twisted cubic, a conic, or a line. 
For the remainder of the proof, $X$ denotes the Picard rank one variety. If no blow-up center is prescribed, choose a line $C$. By Lemma~\ref{lem:plucker-section} and Kawamata--Viehweg, we can find a Calabi--Yau pair $(X,B_1+B_2)$ such that $B_1$ contains $C$.
Without loss of generality, we may assume that $B_1\cap B_2$ is
an irreducible nodal curve.
Furthermore, we have $B_1\sim B_2$ and the induced pencil has general fiber a smooth del Pezzo surface of degree $5$.
As above, $H^1(\mathcal O_X)=0$ lifts the chosen nodal divisor on $B_1$, and varying the lift by $B_1$ gives a smooth $B_2$ and an lc pair. The pencil is smooth over a nonempty open set by Bertini. Its crepant resolution blows up Cartier curves on the general surface fiber, so leaves that fiber unchanged.
Let $(X',B') \rightarrow (X,B_1+B_2)$ be a dlt modification that separates the pencil. Therefore, we have a fibration $X'\rightarrow \pp^1$ such that the fiber over $\{0\}$ is the strict transform of $B_1$ and the fiber over $\{\infty\}$ is the strict transform of $B_2$. If we apply the canonical bundle formula for $(X',B')$ over $\pp^1$, we obtain the log pair $(\pp^1,\{0\}+\{\infty\})$. 
The boundary $B'$ has a unique prime component which is horizontal over $\pp^1$ and its restriction to a general fiber is an irreducible nodal curve. 
The general fiber of $X'\rightarrow \pp^1$ is a smooth del Pezzo surface of degree $5$. Therefore, by Proposition~\ref{prop:fibration-to-smooth-curve}, we conclude that $(X',B')$ is of cluster type.
Then, by Lemma~\ref{lem:cluster-type-descends}, we conclude that our pair $(X,B_1+B_2)$ is of cluster type.
Finally, by Lemma~\ref{lem:blow-up-of-cluster-type}, we deduce that the blow-up of $X$ at $C$ is of cluster type as well.
\end{proof} 

\subsection{Double cover of $\pp^1\times \pp^2$}
\label{subsec:double-cover}
In this subsection, we study the double cover
of $\pp^1 \times \pp^2$ along a general divisor of multi-degree $(2,2)$.
This smooth Fano threefold comes naturally equipped with a boundary divisor which is the pull-back of the sum of some torus invariant divisors on $\pp^1\times \pp^2$. In this case, we show that such boundary does not give a cluster type. Regardless of this, we produce a special boundary on this variety which gives a cluster type pair. We present both constructions as the latter is motivated by the former.

\begin{proposition}\label{prop:double-cover-p1xp2}
Let $X$ be a smooth Fano threefold which is a double cover of $\pp^1\times \pp^2$ along a smooth divisor of multi-degree $(2,2)$.
Let $H_1$ be a general divisor of multi-degree $(1,0)$
and $F_i$ be two general divisors of multi-degree $(0,1)$ with $i\in \{1,2\}$. 
Let $H_{X,1}$ and $F_{X,i}$, with $i\in \{1,2\}$, be the reduced pull-backs of $H_1$ and $F_i$, respectively.
Then, the pair $(X,H_{X,1}+F_{X,1}+F_{X,2})$ is not a  cluster type pair.
\end{proposition}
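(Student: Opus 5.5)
The plan is to argue by contradiction. Assume $(X,B)$ with $B:=H_{X,1}+F_{X,1}+F_{X,2}$ is cluster type, and obtain an invariant of the complement $U:=X\setminus B$ that every cluster type pair must satisfy and that $U$ does not.

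First, record the geometry of $f\colon X\to\pp^1\times\pp^2$, the double cover branched along $R\in|\mathcal O(2,2)|$.
\begin{itemize}
\item We have $K_X=f^*\mathcal O(-1,-2)$, and $B=f^*(H_1+F_1+F_2)\in|-K_X|$ is reduced. Each component is irreducible: $H_{X,1}\simeq\pp^1\times\pp^1$, and $F_{X,i}$ is a del Pezzo surface of degree $4$.
\item The strata are the three curves $H_{X,1}\cap F_{X,i}$ and $F_{X,1}\cap F_{X,2}$. Each is a double cover of a line branched at two points, hence rational. There are also two triple points over $H_1\cap F_1\cap F_2$.
\item Consequently $(X,B)$ is log Calabi--Yau of index one and coregularity zero, and its dual complex is $S^2$. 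The naive obstructions therefore vanish, and in particular the complexity is $3+2-3=2$.
\item Since $\operatorname{Cl}(X)=\zz h\oplus\zz g$ with $h=f^*\mathcal O(1,0)$ and $g=f^*\mathcal O(0,1)$, and the components have classes $h,g,g$, we get $\operatorname{Cl}(U)=0$.
\item The units satisfy $\mathcal O(U)^*/\cc^*=\zz\cdot f^*(y)$, where $y$ is the ratio of the equations of $F_1$ and $F_2$.
\end{itemize}

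Second, exploit a cluster type structure $\phi\colon\mathbb G_m^3\dashrightarrow X$ via Proposition~\ref{prop:cluster-type-bir}. The image of the torus is open in $U$ up to codimension two, except for finitely many divisors $D_j\subset U$ whose valuations are lc places of $(\pp^3,\Sigma^3)$ centered in $\Sigma^3$. The unit $f^*(y)$ restricts to a character $\chi$ of the torus. Comparing the exact sequences
\[
0\to\mathcal O(U)^*/\cc^*\to\zz^3\to\bigoplus_j\zz D_j\to\operatorname{Cl}(U)\to0
\]
forces exactly two such divisors $D_1,D_2$. Moreover, the fibration $\chi\colon U\to\mathbb G_m$ has general fiber $S_c^\circ:=f^{-1}(\{y=c\})\setminus B$, which must contain a codimension-one embedded $\mathbb G_m^2$ with no zeros of the induced volume form. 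So $(S_c,B|_{S_c})$ is a cluster type surface pair on the degree four del Pezzo $S_c$, where the boundary is the two-cycle formed by $H_{X,1}|_{S_c}$ and the base curve $F_{X,1}\cap F_{X,2}$. By Theorem~\ref{thm:cluster-type-smooth-surf} it has a toric model. This model is obtained from a toric surface by $\rho(S_c)-2=4$ non-toric blow-ups on the boundary, and the resulting $4$ interior $(-1)$-curves must sweep out, as $c$ varies, components of $D_1\cup D_2$.

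The key step, which I expect to be the main obstacle, is to show that this monodromy is incompatible with only two divisors $D_j$. As $c$ varies, the $(-1)$-curves of $S_c$ meeting $B|_{S_c}$ in one point form a finite cover of the $c$-line. I would compute this family via the conic bundle $S_c\to\pp^1$ induced by projection to $\pp^1$, whose singular fibers lie over the points of $R$ with multiplicity. The aim is to show that the monodromy, together with the covering involution $\iota$ of $f$ which preserves $B$ and swaps pairs of such curves, forces the interior $(-1)$-curves to sweep out at least three, or a non-$\iota$-stable set of, divisors in $U$. For general $R$ I expect the discriminant of this family to make the monodromy act transitively on a set larger than what two divisors can accommodate, giving the contradiction. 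If this counting is inconclusive, the fallback is a mixed Hodge structure argument. A cluster type $U$ has Tate-type cohomology with $\operatorname{Gr}^W$ controlled by $T$ and the $D_j$, and I would compute $H^3(U)$ through the branched cover over $(\pp^1\times\pp^2)\setminus(H_1+F_1+F_2)$ minus $R$. The goal there is to show that the generic $R$ contributes a weight piece of dimension too large for a torus plus two divisors.
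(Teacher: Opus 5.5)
Your preliminary steps are sound. You correctly get $\operatorname{Cl}(U)=0$ and $\mathcal O(U)^*/\cc^*\simeq\zz$, hence exactly two divisors $D_1,D_2$ in $U\setminus\mathbb{T}$. Note that these are canonical places of $(\pp^3,\Sigma^3)$, not lc places. You also correctly get a cluster type structure on a general fibre $(S_c,A+C_q)$, with $A=H_{X,1}|_{S_c}$ and $C_q=F_{X,1}\cap F_{X,2}$; this is exactly the paper's general fibre $G$ with $G_{Y,1},G_Y$.

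However, by Theorem~\ref{thm:cluster-type-smooth-surf} every such surface pair is of cluster type, so the fibrewise statement carries no obstruction. Everything rests on the step you call the main obstacle, and that step is not carried out. The picture you sketch for it is also inaccurate, in three ways.
\begin{enumerate}
\item A toric model of $(S_c,A+C_q)$ has $12-3\cdot 2-(A^2+C_q^2)=6$ non-toric blow-ups, not $\rho(S_c)-2=4$. This matches the count $2+\operatorname{rank}\operatorname{Cl}(S_c^\circ)=6$ of curves in $S_c^\circ\setminus\mathbb{T}_c$.
\item The images of these six curves in $S_c$ need not be lines. Let $\tilde S$ be the blow-up of $S_c$ at the two nodes of $A+C_q$. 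Take one component of each of the four singular fibres of the conic bundle with fibre $A$, chosen so that contracting them in $S_c$ gives $\pp^1\times\pp^1$. Add the strict transforms of the two conics through the nodes pulled back from the other ruling. These are six disjoint interior $(-1)$-curves on $\tilde S$, and contracting them gives $\mathbb{F}_2$ with its toric boundary. So you must control rational curves through the nodes, with a priori arbitrary contact, and monodromy on the $16$ lines says nothing about those.
\item Two divisors impose no bound on the number of curves, since each $D_j\cap S_c$ may be reducible. The actual constraint is that the six complement curves form a monodromy-stable set with exactly two orbits. Also, a non-$\iota$-stable $\{D_1,D_2\}$ is no contradiction, because the torus chart need not be $\iota$-invariant.
\end{enumerate}
Ruling out every toric model of the fibre against the monodromy of the family is the whole problem, and it is not attempted.

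The fallback rests on a false premise. Consider the boundary $D_1+D_2+D_3$ of Proposition~\ref{prop:double-cover-p1xp2-cluster-type}, which is of cluster type. Its components have the same classes $h,g,g$ as yours, so its complement $U'$ also has trivial class group, one unit, and exactly two divisors off the torus. In both cases the boundary strata are rational, so the only non-Tate cohomology is $W_3H^3\simeq H^3(X)$, of dimension $4$ since $h^{1,2}(X)=2$. Hence cluster type complements need not be mixed Tate: the $D_j\cap U$ can be $\mathbb{A}^1$-bundles over punctured curves of positive genus. No weight count can therefore separate your $B$ from a cluster type boundary.

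The paper avoids all of this. It blows up the lc centre $F_{X,1}\cap F_{X,2}$ to obtain $Y\to S\to\pp^1$, a degree four del Pezzo fibration over $(\pp^1,\{0\}+\{\infty\})$. Its two horizontal boundary components $H_{Y,1}$ and $E_Y$ restrict to curves of self-intersection zero on the general fibre (your $A$ and $C_q$). It then applies \cite[Theorem 1.7]{MY24}, which excludes cluster type in exactly this configuration, and transfers the conclusion back to $X$ via Lemma~\ref{lem:blow-up-of-cluster-type}. The missing key step of your proposal is precisely the content of that theorem.
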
 

\begin{proof}
Let $f\colon X\rightarrow \pp^1\times \pp^2$ be the induced cover and observe that we have 
\[
K_X+H_{X,1}+F_{X,1}+F_{X,2} = f^*\left(
K_{\pp^1\times \pp^2}+H_1+F_1+F_2+\frac{1}{2}(2,2)
\right) \sim 0.
\]
Here, $H_{X,1}$ is the reduced pre-image of $H_1$ in $X$
and $F_{X,i}$ is the reduced pre-image of $F_i$ in $X$ for $i\in \{1,2\}$.
By construction, the pair $(X,H_{X,1}+F_{X,1}+F_{X,2})$ is a log Calabi--Yau pair of index one, coregularity zero, and complexity $2$.
We have a commutative diagram of crepant maps
\[
\xymatrix{
(X,H_{X,1}+F_{X,1}+F_{X,2}) \ar[r]^-{f}\ar[rd]_-{g} & \left(\pp^1\times \pp^2, H_1+F_1+F_2+\frac{1}{2}(2,2) \right)\ar[d]^-{\pi} \\
 & (\pp^2, B)
}
\]
where $f$ is a finite crepant morphism between Calabi--Yau pairs
and $\pi$ is a crepant fibration, i.e., the pair $(\pp^2,B)$ is induced by the canonical bundle formula. The morphism $g$ has connected fibers. The boundary $B$ is the sum of two lines $L_1$ and $L_2$ (the images of $F_1$ and $F_2$) and a conic with coefficient $\frac{1}{2}$. 
Let $\phi_S\colon S\rightarrow \pp^2$ be the blow-up at the intersection of $L_1$ and $L_2$, so we have a fibration 
$\pi_S\colon S\rightarrow \pp^1$. 
Let $\phi_Y\colon Y\rightarrow X$ be the blow-up of $F_{X,1}\cap F_{X,2}$.
Let $\phi_S^*(K_{\pp^2}+B)=K_S+E_S+B_S$ where $B_S$ is the strict transform of $B$ in $S$. 
Write
\[
\phi_Y^*(K_X+H_{X,1}+F_{X,1}+F_{X,2})=
K_Y+H_{Y,1}+F_{Y,1}+F_{Y,2}+E_Y, 
\]
where the subscript $Y$ denotes strict transform on $Y$ 
and $E_Y$ is the exceptional divisor of $\phi_Y$. 
Note that we have a fibration $\pi_Y\colon Y\rightarrow S$.
Then, we have a commutative diagram of crepant morphisms:
\[
\xymatrix{
(Y,H_{Y,1}+F_{Y,1}+F_{Y,2}+E_Y) \ar[r]^-{\phi_Y}\ar[d]_-{\pi_Y} &
(X,H_{X,1}+F_{X,1}+F_{X,2}) \ar[r]^-{f}\ar[rd]_-{g} & \left(\pp^1\times \pp^2, H_1+F_1+F_2+\frac{1}{2}(2,2) \right)\ar[d]^-{\pi} \\
(S,B_S+E_S) \ar[rr]_-{\phi_S}\ar[d]_-{\pi_S} & & (\pp^2, B) \\ 
(\pp^1,\{0\}+\{\infty\}) & & 
}
\]
where $\{0\}$ and $\{\infty\}$ are the images in $\pp^1$ of the strict transforms of $L_1$ and $L_2$.
The fibration $\pi_S\circ \pi_Y$ has relative Picard rank two. 
The general fiber $G$ of $Y\rightarrow \pp^1$
is a del Pezzo surface of degree $4$. 
Note that both $F_{Y,1}$ and $F_{Y,2}$ are vertical over $\pp^1$
while the divisors $H_{Y,1}$ and $E_Y$ are horizontal over $\pp^1$.
Let $G_{Y,1}$ and $G_Y$ be the restrictions of $H_{Y,1}$ and $E_Y$ to $G$, respectively. Note that $G_{Y,1}^2=G_Y^2=0$ and so~\cite[Theorem 1.7]{MY24} implies that the pair $(Y,H_{Y,1}+F_{Y,1}+F_{Y,2}+E_Y)$ is not of cluster type.
Therefore, Lemma~\ref{lem:blow-up-of-cluster-type} implies that
$(X,H_{X,1}+F_{X,1}+F_{X,2})$ is not cluster type.
\end{proof} 

By producing some tangency on the divisors $F_{X,1}$ and $F_{X,2}$ we obtain a cluster type boundary.

\begin{proposition}
\label{prop:double-cover-p1xp2-cluster-type}
Let $X$ be a smooth Fano threefold which is a double cover of 
$\pp^1\times \pp^2$ along a smooth divisor of multi-degree $(2,2)$. 
Then $X$ is of cluster type.
\end{proposition}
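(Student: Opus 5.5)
We keep the structure of Proposition~\ref{prop:double-cover-p1xp2}, but change the two divisors $F_1,F_2$ of multi-degree $(0,1)$ so that the failure in that proof disappears. Let $f\colon X\to \pp^1\times\pp^2$ be the double cover branched along $R\in|(2,2)|$, and let $\pi\colon \pp^1\times \pp^2\to\pp^2$ be the projection. The discriminant of $R\to\pp^2$ is a curve $\Delta\subset\pp^2$ (a conic in the sense of the coefficient-$\frac12$ part of $B$ above). In the previous proposition the obstruction was that the horizontal components $H_{Y,1}$ and $E_Y$ both restricted to square-zero curves on the del Pezzo fibers of degree $4$. We therefore choose the data so that, after the analogous blow-up, the boundary has a unique horizontal component whose restriction to the general fiber is an irreducible nodal curve.

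Concretely, I would take $L_1$ and $L_2$ to be two lines in $\pp^2$ tangent to $\Delta$. Their preimages $F_i=\pi^{-1}(L_i)\simeq \pp^1\times\pp^1$ then meet $R$ in a curve of bidegree $(2,2)$ that is singular along a fiber. As a result, the reduced preimage $F_{X,i}$ splits into two components, or becomes a non-normal but slc divisor. Next I would replace $H_1$ by a divisor $H\in|(1,0)|$ chosen so that $H_X$ restricts to a nodal curve on the fibers of the pencil spanned by the preimages of lines through $p=L_1\cap L_2$. The key steps, in order, are as follows.

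\begin{enumerate}
\item Verify that $(X,H_X+F_{X,1}+F_{X,2})$ is log canonical, of index one and of coregularity zero. This is a local computation along the tangency points, using inversion of adjunction to $F_{X,i}$.
\item Blow up the curve $f^{-1}(\pi^{-1}(p))$, as in the proof of Proposition~\ref{prop:double-cover-p1xp2}. This gives a del Pezzo fibration $Y\to\pp^1$ of degree $4$, whose fibers over $0$ and $\infty$ lie in the boundary. Here the canonical bundle formula gives $(\pp^1,\{0\}+\{\infty\})$.
\item Use tangency to exhibit a $(-1)$-curve on the general fiber that is contained in the boundary. Then contract its family over $\pp^1$ by a relative MMP, exactly as in Proposition~\ref{prop:cluster-type-q1capq2}, reaching a degree-$5$ del Pezzo fibration whose boundary has one horizontal nodal component.
\item Apply Proposition~\ref{prop:fibration-to-smooth-curve}, and then return to $X$ by Lemmas~\ref{lem:blow-up-of-cluster-type} and~\ref{lem:cluster-type-descends}.
\end{enumerate}

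An alternative route, if the fibration bookkeeping fails, is Proposition~\ref{prop:fibration-to-smooth-surface}. Over the surface $S=\mathrm{Bl}_p\pp^2$, the map $Y\to S$ is a conic bundle. If $H_X$ is replaced by a divisor that splits into two sections over $S$, the cluster type property of $(S,B_S+E_S)$ would suffice. That cluster type property holds because $B_S+E_S$ becomes a reduced cycle of rational curves once $\Delta$ is replaced by tangent lines.

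The main obstacle is step 3, together with the choice of boundary that makes it work. One must show that on the general fiber $G$, the horizontal boundary is an irreducible nodal anticanonical curve rather than a union of two square-zero curves. Equivalently, the tangency must force $E_Y|_G$ and $H_{Y}|_G$ to combine into a configuration containing a $(-1)$-curve that can be contracted. I would test this first by an explicit local equation $y^2=a(x)s^2+b(x)st+c(x)t^2$ for $R$, with $L_i$ given by tangency of the discriminant $b^2-4ac$. Since $R$ is general, one must also check that lines tangent to $\Delta$ with the required contact exist for every smooth $R$, which holds because $\Delta$ is a smooth conic.
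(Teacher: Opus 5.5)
Your step~3 cannot be carried out, because tangency to $\Delta$ never reaches the general fibre. Tangency of $L_1,L_2$ to $\Delta$ only changes the two fibres of $Y\to\pp^1$ over $0$ and $\infty$. It gives $F_{X,i}$ an $A_1$ point. It does not make $F_{X,i}$ split or become non-normal: $F_{X,i}$ is the double cover of $\pp^1\times\pp^1$ branched along a $(2,2)$-curve with a node, so it stays irreducible and normal. Also, $\Delta$ is the discriminant of the conic bundle $\pi_2\colon X\to\pp^2$, which is a quartic, not a conic.

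For a general pair of tangent lines, $p=L_1\cap L_2\notin\Delta$, so the blown-up curve $C_p=\pi_2^{-1}(p)$ is a smooth conic. The general fibre is $G=\pi_2^{-1}(\ell)$ for a general line $\ell\ni p$. On it, $E_Y|_G=C_p$ is a fibre of the conic bundle $G\to\ell$, and $H_Y|_G$, the preimage of $\{y\}\times\ell$, is a smooth rational bisection. Both have square zero. This is exactly the configuration $G_{Y,1}^2=G_Y^2=0$ from which Proposition~\ref{prop:double-cover-p1xp2} deduces that the pair is not of cluster type, and there is no $(-1)$-curve in the boundary to contract. Re-choosing $H$ does not help: every member of $|(1,0)|$ is some $\{y\}\times\pp^2$, and it meets a general $G$ in such a bisection.

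Your fallback via Proposition~\ref{prop:fibration-to-smooth-surface} is also blocked. Since $\pi_2$ has relative Picard rank one and nonempty discriminant, it has no rational section. Hence every divisor has even degree on the conic fibres. As $-K_X$ has degree $2$ there, every $B\in|-K_X|$ has exactly one horizontal component over $\pp^2$, and likewise over ${\rm Bl}_p\pp^2$. So no boundary can have the two horizontal components that proposition requires.

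The missing idea is to put the base point \emph{on} $\Delta$; tangency is not needed at all. The paper takes two distinct lines through a general $q\in\Delta$, so that $D_1\cap D_2=\pi_2^{-1}(q)$ is a pair of lines meeting at a node, and adds a general $D_3\in|H|$. It then blows up one of these two lines. This extracts an lc place, because that line lies in two boundary components. The linear system $|\phi^*(H+F)-E|$ then maps the blow-up birationally onto a singular quadric threefold $Q\subset\pp^4$ with $\rank\Cl(Q)=2$, without contracting any boundary component. The four boundary components give complexity $3+2-4=1$, so Theorem~\ref{thm:complexity-one-is-cluster-type} applies. Lemmas~\ref{lem:blow-up-of-cluster-type} and~\ref{lem:cluster-type-descends} then carry the conclusion back to $X$.

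In your fibration language, taking the base point on $\Delta$ is what makes the exceptional divisors over $\pi_2^{-1}(q)$ restrict, on every fibre of the pencil of lines through $q$, to the $(-1)$-curves forming the degenerate conic. Tangency at two special fibres cannot produce this.
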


\begin{proof}
Let $X \xrightarrow{g} \pp^1\times \pp^2$ be 
the double cover branched over a divisor of multi-degree $(2,2)$. 
We follow the construction of \cite[Lemma 5.3]{LMV24}, 
based on \cite[Lemma 11.5]{ALP23}. Consider the diagram
\[
	\xymatrix{
		& X \ar[d]_g \ar@/_1pc/[ldd]_{\pi_1} \ar@/^1pc/[rdd]^{\pi_2} &  \\
		& \pp^1 \times \pp^2 \ar[ld] \ar[rd] & \\
		\pp^1	& & \pp^2		
	}
\]
where $\pi_2$ is a conic bundle 
with quartic discriminant curve $\Delta \subseteq \pp^2.$
Let $H = \pi_1^* \oo_{\pp^1}(1)$ and $F = \pi^*_2 \oo_{\pp^2}(1).$ 
Then $- K_X \sim H + 2F.$
	
Let $q\in \Delta$ be a general point. 
Then the fiber over $q$ corresponds to two lines 
$\pi_2^{-1}(q) = L_1 \cup L_2,$ 
meeting at one node. 
Each line $L_i$ is mapped isomorphically to $\pp^1$ via $\pi_1$. 
Let $\ell_1,\ell_2 \subseteq \pp^2$ be two distinct lines passing through $q$. 
Define the divisors $D_1,D_2\in |F|$ as $D_1 = \pi_2^{-1}(\ell_1)$ 
and $D_2 = \pi_2^{-1}(\ell_2)$. 
Additionally, define a general divisor $D_3\in |H|$ 
that avoids the node $L_1\cap L_2$. 
Then $B:= D_1 + D_2 + D_3 \in |-K_X|$, 
and so $(X,B)$ is a Calabi--Yau pair.
	
Let $Y :={\rm Bl}_{L_1} X \xrightarrow{\phi} X$,
and set $E$ to be the exceptional divisor of $\phi$, 
and $D_i'$ to be the strict transform of $D_i$, $i=1,2,3.$ 
Then the divisor $B_Y = D_1' + D_2' + D_3' + E$ 
is simple normal crossing, and
\[
	K_Y + B_Y = \phi^*(K_X + B).
\]
	
Let $A = \phi^*(H+F) - E$. 
The linear system defines a birational morphism 
$Y \xrightarrow{\psi} Q$ to a singular quadric 
$Q\subseteq \pp^4$. We have that $\dim Q = 3$ and $\rank \Cl(Q) = 2$. 
Set $B_Q = \psi_* B_Y$. Then $K_Q + B_Q = \psi_*(K_Y + B_Y) \sim 0$,
and since $K_Y + B_Y$ is trivial with respect to $\psi$,
$\psi^*(K_Q + B_Q) = K_Y + B_Y.$
Using the blow-up description, we can see that $A^2\cdot E=3$ and $A^2\cdot D_i'=1$ for each $i\in  \{1,2,3\}$, we conclude that no component of $B_Y$ gets contracted by $\psi$.
Hence every exceptional divisor of $\psi$ has coefficient zero in $B_Y,$
thus, every exceptional divisor of $\psi$ is a canonical place of $(Q,B_Q).$
The pair on $Q$ is log canonical, and has complexity $3+2-4 = 1.$
Apply Theorem \ref{thm:complexity-one-is-cluster-type} to $(Q,B_Q)$,
then Lemma \ref{lem:blow-up-of-cluster-type} to $\psi,$
and Lemma \ref{lem:cluster-type-descends} to $\phi$ to conclude that $X$ is of cluster type.
\end{proof}

\begin{corollary}
\label{coro:3.4}
Let $X$ be a smooth Fano threefold which is a double cover of
$\pp^1\times \pp^2$ along a smooth divisor of multi-degree $(2,2)$.
Let $Y$ be the blow-up along a smooth fiber of the conic fibration 
$X \to \pp^2$. 
Then $Y$ is of cluster type.
\end{corollary}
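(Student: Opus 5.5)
The plan is to reuse the cluster type pair $(X,B)$ from the proof of Proposition~\ref{prop:double-cover-p1xp2-cluster-type}, with $B=D_1+D_2+D_3$. The only new requirement is that the blown-up smooth fiber $C=\pi_2^{-1}(r)$ lie in $B$ with multiplicity one, so that Lemma~\ref{lem:blow-up-of-cluster-type} applies.

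The construction has some freedom we can exploit. In Proposition~\ref{prop:double-cover-p1xp2-cluster-type}, $q\in\Delta$ is a general point and $\ell_1,\ell_2$ are any two distinct lines through $q$. Given a smooth fiber $C=\pi_2^{-1}(r)$ with $r\notin\Delta$, I choose $q\in\Delta$ general and take $\ell_1$ to be the line through $q$ and $r$. For $q$ general, this line is not a component of $\Delta$ and is not tangent to $\Delta$ at $q$. I then take $\ell_2$ to be a general line through $q$, so $r\notin\ell_2$, and $D_3\in|H|$ general. Then $C\subset D_1=\pi_2^{-1}(\ell_1)$, while $C\not\subset D_2$ since $r\notin \ell_2$. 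Also $C\not\subset D_3$, because $D_3$ meets each fiber of $\pi_2$ in finitely many points: $H$ restricted to a conic fiber has degree $2$.

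Next I re-verify that the proof of Proposition~\ref{prop:double-cover-p1xp2-cluster-type} still goes through for this choice of $\ell_1$. That argument needs $(X,B)$ to be log canonical and the blow-up $Y={\rm Bl}_{L_1}X$ to give an snc boundary. For this it suffices that $D_1,D_2$ be smooth near $L_1$ and that they meet there only along the fiber over $q$. Smoothness of $D_1$ near $L_1$ holds because $\ell_1$ meets $\Delta$ transversally at $q$ and $X$ is smooth. Along the rest of $\ell_1$, $D_1$ is a conic bundle over $\ell_1$ that is singular only over non-transverse points of $\ell_1\cap\Delta$. I also need $(X,B)$ to remain lc along $D_1$ after the specialization $r\in\ell_1$. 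I would check this by adjunction to $D_1$: the pair $(D_1,(D_2+D_3)|_{D_1})$ should be lc for general $q$. The restricted boundary consists of the fiber $L_1\cup L_2$ over $q$ plus a curve in $|H|$, which is slc for general choices. Then inversion of adjunction gives the lc property of $(X,B)$. I expect this to be the main obstacle. If the line through $q$ and $r$ misbehaves, the fallback is to vary $q\in\Delta$, which gives a one-parameter family of lines through $r$, and use that a general line through $r$ meets $\Delta$ transversally at smooth points.

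Given this, the proof of Proposition~\ref{prop:double-cover-p1xp2-cluster-type} shows that $(X,B)$ is of cluster type. The curve $C$ is smooth of codimension two and contained in exactly one component of $B$, namely $D_1$, which is smooth along $C$ for general $q$; hence ${\rm mult}_C B=1$. The blow-up $Y\to X$ along $C$ therefore extracts a canonical place of $(X,B)$, with log pull-back $K_Y+B_Y$ where $B_Y$ is the strict transform of $B$. Lemma~\ref{lem:blow-up-of-cluster-type} then shows that $(Y,B_Y)$ is of cluster type, and hence so is $Y$.
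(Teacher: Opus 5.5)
Your proposal is correct and follows the paper's proof. Like the paper, you reuse the boundary $D_1+D_2+D_3$ from Proposition~\ref{prop:double-cover-p1xp2-cluster-type} with $\ell_1$ the line through $r$ and the general point $q\in\Delta$, observe that $C$ lies in $D_1$ with multiplicity one and in neither $D_2$ nor a general $D_3$, and conclude by Lemma~\ref{lem:blow-up-of-cluster-type}. Your additional check that this special $\ell_1$ (effectively a general line through $r$, hence transverse to $\Delta$) keeps that proposition's construction valid is left implicit in the paper, and your argument for it is sound.
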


\begin{proof}
Let $X$ be a double cover of $\pp^1 \times \pp^2$ 
branched over a divisor of multi-degree $(2,2)$,
and let $\pi_2\colon X\to \pp^2$ be the induced conic fibration. 
Let $C = \pi_2^{-1}(r)$ be a smooth fiber of $\pi_2$.
Let $D := D_1 + D_2 + D_3$ be the divisor on $X$ constructed
in Proposition \ref{prop:double-cover-p1xp2-cluster-type}. 
Recall that $D_i=\pi_2^{-1}(\ell_i)$ for $i=1,2$, 
where $\ell_1,\ell_2$ are two distinct lines in $\pp^2$
intersecting at a general point $q\in \Delta$. 
Furthermore, we may assume that $\ell_1$ is 
the line connecting the points $r$ and $q$ in $\pp^2$.
	
Let $Y = {\rm Bl}_C X \xrightarrow{\phi} X$.
We argue that the exceptional divisor $E$ of $\phi$
is a canonical place of $(X,D)$. 
Indeed, $C\subseteq D_1$ with multiplicity one, 
and $C\not\subseteq D_2$. 
Even more, $C$ is a double cover of $\pp^1$, 
so a general choice of $D_3$ does not contain $C$. 
If $D_1',D_2',D_3'$ are the strict transforms 
of $D_1,D_2,D_3$ in $Y$, respectively, then
\[
	K_Y + D_1' + D_2' + D_3' = \phi^*(K_X + D_1 + D_2 + D_3).
\]
Therefore, Lemma \ref{lem:blow-up-of-cluster-type} implies that 
$(Y,D_1' + D_2' + D_3')$ is of cluster type.
\end{proof}

\subsection{Toric hypersurfaces and their blow-ups}
\label{subsec:toric-hyperf}
In this subsection, we study the cluster type property for smooth Fano threefolds which are hypersurfaces in toric varieties, and their blow-ups. 

\begin{proposition}\label{prop:(1,2)-in-p2xp2}
Let $X$ be a smooth Fano threefold which is a hypersurface of multi-degree $(1,2)$ in $\pp^2\times \pp^2$. Then, the variety $X$ is of cluster type. 
\end{proposition}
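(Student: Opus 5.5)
We want a cluster type boundary on $X\subset\pp^2\times\pp^2$ of bidegree $(1,2)$ (family \textnumero\,2.24). Note $-K_X\sim (2,1)|_X$. The plan mirrors the treatment of the double cover of $\pp^1\times\pp^2$: reduce to a low complexity pair on a simpler model by a Sarkisov-type link, and conclude with Theorem~\ref{thm:complexity-one-is-cluster-type} together with Lemma~\ref{lem:blow-up-of-cluster-type} and Lemma~\ref{lem:cluster-type-descends}.

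\textbf{The two projections.} The first projection $p_1\colon X\to\pp^2$ is a $\pp^1$-bundle, since $X$ is the projectivization of a rank-two bundle given by the linear equation in the first coordinates. The second projection $p_2\colon X\to\pp^2$ is a conic bundle with discriminant a cubic curve $\Delta\subset\pp^2$. Write $H_i=p_i^*\oo(1)$, so that $-K_X\sim 2H_1+H_2$.

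\textbf{Choice of boundary.} Let $q\in\Delta$ be a general point, so that $p_2^{-1}(q)=L_1\cup L_2$ is a pair of lines meeting in a node. Take two distinct lines $\ell_1,\ell_2\subset\pp^2$ through $q$ and set $D_i=p_2^{-1}(\ell_i)\in|H_1|$; both contain $L_1$. This is the same tangency trick as in Proposition~\ref{prop:double-cover-p1xp2-cluster-type}. Let $D_3\in|H_2|$ be general, avoiding the node. Then $B=D_1+D_2+D_3\in|-K_X|$, and the pair $(X,B)$ is log Calabi--Yau. Log canonicity is checked by adjunction to the general $D_3$, plus a local check along $L_1$, where $D_1$ and $D_2$ meet along $p_2^{-1}(q)$.

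\textbf{Passing to a model of complexity one.} Blow up $L_1$ to get $\phi\colon Y\to X$ with exceptional divisor $E$. Since $L_1\subset D_1\cap D_2$, we have $\phi^*(K_X+B)=K_Y+D_1'+D_2'+D_3'+E$, so $E$ is an lc place. Consider the linear system $|\phi^*(H_1+H_2)-E|$, or a variant such as $\phi^*H_2+\phi^*H_1-E$ suggested by the analogous double cover case. The aim is to show that it defines a birational morphism $\psi\colon Y\to Q$ onto a threefold of class group rank two, for instance a singular quadric, or $\pp^3$ with a contraction. We must also check, via intersection numbers $A^2\cdot E$ and $A^2\cdot D_i'$, that no boundary component is contracted by $\psi$.

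Granting this, $(Q,\psi_*B_Y)$ is an index one log Calabi--Yau pair with four boundary components. Its complexity is therefore $3+2-4=1$, and it is of cluster type by Theorem~\ref{thm:complexity-one-is-cluster-type}. Since $\psi$ only extracts canonical places, Lemma~\ref{lem:blow-up-of-cluster-type} shows that $(Y,B_Y)$ is of cluster type. Lemma~\ref{lem:cluster-type-descends} applied to $\phi$ then gives that $(X,B)$ is of cluster type.

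\textbf{Fallback.} If the map $\psi$ is awkward, a different route is available. Note that $\rho(X)=2$ and $B$ has three components, so $(X,B)$ has complexity two. We would instead look for a boundary with more components, or extract $E$ and run an MMP over the second $\pp^2$. In that setting Proposition~\ref{prop:fibration-to-smooth-surface} applies to the conic bundle structure, using $D_3$ and $E$ as the two horizontal components over $\pp^2$ or over a blow-up of it at $q$.

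\textbf{Main obstacle.} The hardest step is identifying the contraction $\psi$ (or the MMP output) explicitly, and verifying that it contracts no component of $B_Y$ and lands on a variety of Picard or class rank two.
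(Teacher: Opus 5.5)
There is a genuine gap. The step you call the main obstacle is the whole proof, and you only grant it. You never show that $|\phi^*(H_1+H_2)-E|$ is birational, which divisors it contracts, or that its target has class group of rank two. Without that, the complexity count $3+2-4=1$ is unsupported. The analogy with Proposition~\ref{prop:double-cover-p1xp2-cluster-type} also does not transfer literally. Here $|H_1+H_2|$ is the Segre embedding $X\subset\pp^8$ (since $h^0(\oo_X(1,1))=9$), $L_1$ is a line, and $A=\phi^*(H_1+H_2)-E$ is the projection from $L_1$. Hence $h^0(A)=7$ and $A^3=(H_1+H_2)^3-3(H_1+H_2)\cdot L_1+\deg N_{L_1/X}=9-3-1=5$. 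So the target is a threefold in $\pp^6$, not a quadric.

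Your labels also need fixing. For $X=\{\sum_i x_iQ_i(y)=0\}$, the first projection $p_1$ is the conic bundle with cubic discriminant, and $p_2$ is the $\pp^1$-bundle. The boundary must therefore be $D_i=p_1^{-1}(\ell_i)\in|H_1|$ for $i=1,2$, and $D_3\in|H_2|$. With $D_i=p_2^{-1}(\ell_i)$ as you wrote it, $B$ is not anticanonical.

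Your route can be completed as follows.
\begin{itemize}
\item The linear forms vanishing on $L_1$ generate $\mathcal I_{L_1/X}$, so $|A|$ is base point free on $Y$. Its image is a nondegenerate threefold in $\pp^6$, hence has degree at least $4$. Since $A^3=5$ is prime, $\psi$ is birational onto a quintic threefold $V$.
\item One computes $A^2\cdot E=3$, $A^2\cdot D_i'=2$ and $A^2\cdot D_3'=3$, so no boundary component is contracted.
\item $X$ is cut out by quadrics and contains no plane, and $A|_E\sim s+2f$ is ample on $E\simeq\mathbb F_1$. So the contracted curves are the lines of $X$ meeting $L_1$: the curve $L_2$, and the $p_2$-fibers over $p_2(L_1)$.
\item These fibers sweep out $P=p_2^{-1}(p_2(L_1))$, whose strict transform $P'\sim\phi^*H_2-E$ satisfies $A^2\cdot P'=0$. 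It is the only $\psi$-exceptional divisor, and it has coefficient zero in $B_Y$.
\item Hence $\operatorname{rk}\Cl(V)=\rho(Y)-1=2$. In fact $-K_Y=2A-P'$, so $V$ is a singular quintic del Pezzo threefold.
\end{itemize}
With this in place, your argument via Theorem~\ref{thm:complexity-one-is-cluster-type} and the two lemmas goes through.

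Your fallback fails as stated. $E$ lies over $q$, so it is vertical for the conic bundle, and $D_3$ is a single irreducible bisection. Over the $\pp^1$-bundle $p_2$, however, $D_1$ and $D_2$ are birational sections and $D_3$ is vertical. Proposition~\ref{prop:fibration-to-smooth-surface} therefore applies directly to $(X,D_1+D_2+D_3)$, with no blow-up, even for general $\ell_1,\ell_2$.

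The paper takes a third route. It uses general $F_{X,1},F_{X,2}\in|H_1|$ and blows up the smooth conic $F_{X,1}\cap F_{X,2}$. This gives a degree five del Pezzo fibration over $\pp^1$, to which it applies \cite[Theorem 1.7]{MY24}.
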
 

\begin{proof}
Let $H_1$ be a general hypersurface in $\pp^2\times \pp^2$ of multi-degree $(0,1)$
and let $F_1$ and $F_2$ be two general hypersurfaces in $\pp^2\times \pp^2$ of multi-degree $(1,0)$. 
We write $H_{X,1}$ and $F_{X,1},F_{X,2}$ 
for the restriction of $H_1$ and $F_1,F_2$ to $X$. 
Then, by construction, the pair $(X,H_{X,1}+F_{X,1}+F_{X,2})$ is a log Calabi--Yau pair. 
Let $\pi_1 \colon X \rightarrow \pp^2$ be the restriction to $X$ of the projection onto the first component of $\pp^2\times \pp^2$.
Let $L_1$ and $L_2$ be the images of $F_{X,1}$ and $F_{X,2}$ via $\pi_1$. 
Therefore, the pair induced on $\pp^2$ by applying the canonical bundle formula for 
$(X,H_{X,1}+F_{X,1}+F_{X,2})$ equals $(\pp^2,L_1+L_2+\frac{1}{2}Q)$ where $Q$ is a quadric in $\pp^2$.
Let $\pi_S\colon S \rightarrow \pp^2$ be the blow-up of $L_1\cap L_2$
and let $\pi_Y\colon Y  \rightarrow X$ be the blow-up
of $F_{X,1}\cap F_{X,2}$. 
Therefore, we have
\[
\pi_S^*(K_{\pp^2}+L_1+L_2+\frac{1}{2}Q)=
K_S+L_{S,1}+L_{S,2}+\frac{1}{2}Q_S+E_S
\]
where the subscript $S$ indicates strict transform on $S$ and $E_S$ is the exceptional divisor.
Analogously, we can write
\[
\pi_Y^*(K_X+H_{X,1}+F_{X,1}+F_{X,2})=
K_Y+H_{Y,1}+F_{Y,1}+F_{Y,2}+E_Y,
\]
where the subscript $Y$ indicates strict
transform on $Y$ and $E_Y$ is the exceptional divisor. 
Then, we have a commutative diagram of crepant morphisms between Calabi--Yau pairs 
\[
\xymatrix{
(Y,H_{Y,1}+F_{Y,1}+F_{Y,2}+E_Y) \ar[r]^-{\pi_Y}
\ar[d]^-{\phi_Y}
& 
(X,H_{X,1}+F_{X,1}+F_{X,2})\ar[d]^-{\pi_1} \\ 
\left(S,L_{S,1}+L_{S,2}+\frac{1}{2}Q_S+E_S\right) \ar[r]^-{\pi_S} \ar[d]^-{\phi}
& \left(\pp^2,L_1+L_2+\frac{1}{2}Q\right)
 \\ 
(\pp^1,\{0\}+\{\infty\}) & \\
}
\]
where $\{0\}$ and $\{\infty\}$ are the images
on $\pp^1$ of $L_{S,1}$ and $L_{S,2}$.
The general fiber of the fibration 
$\phi\circ \phi_Y$ is a del Pezzo surface of degree $5$. 
Therefore, by~\cite[Theorem 1.7]{MY24}, we conclude that the pair 
$(Y,H_{Y,1}+F_{Y,1}+F_{Y,2}+E_Y)$ is of cluster type.
Therefore, the pair
$(X,H_{X,1}+F_{X,1}+F_{X,2})$ is of cluster type by Lemma~\ref{lem:cluster-type-descends}.
\end{proof} 

\begin{proposition}\label{prop:(1,1)-in-p2xp2}
Let $X$ be a smooth Fano threefold which is a hypersurface of multi-degree $(1,1)$ in $\pp^2\times \pp^2$.
Then, the variety $X$ is of cluster type. 
\end{proposition}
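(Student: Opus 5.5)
Since $X$ is a smooth $(1,1)$ divisor in $\pp^2\times\pp^2$, it is the flag variety $\mathbb{P}(T_{\pp^2})$, which is homogeneous under $\mathrm{PGL}_3$. In particular it carries an effective action of the two-dimensional maximal torus of $\mathrm{PGL}_3$. So the quickest route is to invoke Proposition~\ref{prop:T-comp-one}. That proposition excludes only the $(1,2)$ hypersurface in $\pp^2\times\pp^2$ and the blow-up of $\mathbb{Q}^3$ along two conics, and $X$ is neither. We conclude that $X$ is of cluster type.

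For robustness I would also give a direct construction in the style of Proposition~\ref{prop:(1,2)-in-p2xp2}. Choose coordinates so that $X=\{x_0y_0+x_1y_1+x_2y_2=0\}$. Take $F_{X,1},F_{X,2}$ to be the restrictions of two general $(1,0)$ divisors and $H_{X,1}$ the restriction of a general $(0,1)$ divisor. Then $F_{X,1}+F_{X,2}+H_{X,1}\sim -K_X=(2,2)|_X$ fails, since this sum has class $(2,1)$. The correct boundary therefore needs two divisors of each type, for instance the torus-invariant divisors $\{x_1=0\},\{x_2=0\},\{y_0=0\},\{y_1=0\}$ restricted to $X$. With four components on a variety with $\rho(X)=2$, the complexity is $3+2-4=1$.

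One then checks that this pair is log canonical of index one. On the open set $x_1x_2y_0y_1\neq0$, the equation $x_0y_0=-(x_1y_1+x_2y_2)$ solves uniquely for $x_0$, so that open set is $\mathbb{G}_m^3$ (coordinates $x_2/x_1$, $y_1/y_0$, $y_2/y_0$ suitably normalized). This already exhibits the torus chart directly, and Theorem~\ref{thm:complexity-one-is-cluster-type} (or even just Definition~\ref{def:cluster-type}, by computing that the volume form has poles exactly along the four divisors) then finishes the proof.

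The main point to verify is that the volume form $\Omega_3$ in these coordinates has no zeros on $X$, i.e. the boundary is exactly the polar divisor with coefficient one. I would check this on the charts near the intersections of the boundary components, or alternatively appeal to the equivariant argument of Proposition~\ref{prop:T-comp-one}. That argument already handles this issue, so citing it is the cleanest choice.
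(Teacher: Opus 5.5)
Your main argument, via Proposition~\ref{prop:T-comp-one}, is correct and takes a genuinely different route from the paper's. Smoothness forces the defining bilinear form to be nondegenerate, so $X$ is projectively equivalent to the flag variety $\pp(T_{\pp^2})$. The maximal torus of ${\rm PGL}_3$ acts effectively on it. $X$ is neither of the two excluded threefolds, since these differ from it in Picard rank or anticanonical degree. So the citation is legitimate and non-circular.

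The paper instead argues in one line. It restricts two general members of $|(1,0)|$ and two general members of $|(0,1)|$ to $X$. Both systems are base point free on $X$, so by Bertini the sum is an snc divisor in $|-K_X|$. This gives an index-one log Calabi--Yau pair of complexity $3+2-4=1$, and Theorem~\ref{thm:complexity-one-is-cluster-type} concludes. The paper's route needs no homogeneity and no classification. Yours relies on the much heavier machinery inside Proposition~\ref{prop:T-comp-one}: S\"u\ss's classification, divisorial fans, and \cite{BMSZ18}.

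The ``direct construction'' you add for robustness is wrong as written, and you should drop it. With $X=\{x_0y_0+x_1y_1+x_2y_2=0\}$, the line $\ell=\{[1:0:0]\}\times\{y_0=0\}$ lies on $X$. It is contained in three components of your boundary $B$, namely $\{x_1=0\}$, $\{x_2=0\}$ and $\{y_0=0\}$. Hence ${\rm mult}_\ell B\geq 3$, and the exceptional divisor of the blow-up of $\ell$ has coefficient at least $2$ in the log pull-back. So $(X,B)$ is not log canonical, and Theorem~\ref{thm:complexity-one-is-cluster-type} does not apply.

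Your torus chart also fails. On the chart $x_1=y_0=1$ one has $x_0=-y_1-x_2y_2$ with $x_2,y_1\in\mathbb{G}_m$ but $y_2$ unconstrained. Thus $X\setminus B\simeq\mathbb{G}_m^2\times\mathbb{A}^1$, not $\mathbb{G}_m^3$. Correspondingly, the form $c\,\frac{dx_2}{x_2}\wedge\frac{dy_1}{y_1}\wedge dy_2$ with divisor $-B$ has a double pole at $y_2=\infty$. Replacing your torus-invariant divisors by general members of $|(1,0)|$ and $|(0,1)|$ repairs this, and that is exactly the paper's proof.
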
 

\begin{proof}
Let $H_1$ and $H_2$ be two general divisors in $\pp^2\times\pp^2$ of multi-degree $(1,0)$.
Let $F_1$ and $F_2$ be two general divisors
in $\pp^2\times \pp^2$ of multi-degree $(0,1)$. 
Let $H_{X,1}$ and $H_{X,2}$ be the restrictions of $H_1$ and $H_2$ to $X$.
Let $F_{X,1}$ and $F_{X,2}$ be the restrictions of $F_1$ and $F_2$ to $X$.
By construction, the pair $(X,H_{X,1}+H_{X,2}+F_{X,1}+F_{X,2})$ is a log Calabi--Yau pair of index one and complexity one.
Then, by Theorem~\ref{thm:complexity-one-is-cluster-type}, we conclude that $X$ is of cluster type.
\end{proof} 

\begin{proposition}\label{prop:(1,1,2)-multi}
Let $X$ be a smooth hypersurface
of multi-degree $(1,1,2)$ in $\pp^1\times \pp^1\times \pp^2$.
Then, the variety $X$ is of cluster type. 
\end{proposition}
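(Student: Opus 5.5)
Following the pattern of Proposition~\ref{prop:(1,2)-in-p2xp2} and Proposition~\ref{prop:(1,1)-in-p2xp2}, I would restrict torus-invariant divisors of the ambient toric variety $\pp^1\times\pp^1\times\pp^2$ to $X$. Write $p_1,p_2\colon X\to\pp^1$ and $p_3\colon X\to \pp^2$ for the projections. By adjunction, $-K_X\sim (1,1,1)|_X$. So I take a general divisor $A_1$ of multi-degree $(1,0,0)$, a general $A_2$ of multi-degree $(0,1,0)$ and a general $A_3$ of multi-degree $(0,0,1)$, and set $B=A_{1}|_X+A_{2}|_X+A_{3}|_X\in|-K_X|$. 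By Bertini each restriction is smooth and they meet transversally, so $(X,B)$ is an index one log Calabi--Yau pair. Since $A_1\cap A_2\cap A_3\cap X$ consists of finitely many points, it has coregularity zero.

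Next I compute the complexity. By Lefschetz, $\rho(X)=3$, so the complexity is $3+3-3=3$. This is too large for Theorem~\ref{thm:complexity-one-is-cluster-type}, so I need to lower it. I would instead use the fibration $p_{12}\colon X\to\pp^1\times\pp^1$, whose fibers are conics in $\pp^2$ (degree $2$ in the last factor). This is a conic bundle, and it has relative Picard rank one over the smooth surface $S=\pp^1\times\pp^1$. The two divisors $A_1|_X$ and $A_2|_X$ are vertical, being pullbacks of rulings. The divisor $A_3|_X$ is horizontal: it is a double section meeting each conic in two points.

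To apply Proposition~\ref{prop:fibration-to-smooth-surface} I need two horizontal components. So I would replace $A_3$ by a special member of $|(0,0,1)|$ whose restriction splits, or alternatively modify the boundary. The concrete plan is the following. Choose $B=A_1|_X+A_2|_X+D$, where $D\in|(0,0,1)|_X|$, and arrange that $D\to S$ is a double cover splitting into two components. This happens if the line $\ell\subset\pp^2$ defining $D$ is tangent to the discriminant-type locus. That does not work globally, though, so a better choice is a different boundary class. Write $-K_X\sim(1,1,1)$ as $(1,0,0)+(0,1,0)+(0,0,1)$ versus $(1,1,0)+(0,0,1)$. Alternatively I would use the conic bundle $p_3\colon X\to\pp^2$ restricted to ... hmm. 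The cleanest route, mirroring Proposition~\ref{prop:(1,2)-in-p2xp2}, is different: use the projection $X\to\pp^1\times\pp^2$ (forgetting one $\pp^1$ factor). Since the degree is $1$ in that factor, $X$ is the blow-up of $\pp^1\times\pp^2$ along the curve $\{f_0=f_1=0\}$, a complete intersection of two $(1,2)$ divisors. Then I can argue exactly as for blow-ups: find a cluster type boundary on $\pp^1\times\pp^2$ that contains this curve with multiplicity one. For instance, take $\{f_0=0\}$ (degree $(1,2)$), a general $(1,0)$ divisor and a general $(0,1)$ divisor. The pair $\left(\pp^1\times\pp^2,\{f_0=0\}+H+F\right)$ is log Calabi--Yau of complexity $3+2-3=2$. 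I would then reduce it to complexity one, or apply \cite[Theorem 1.7]{MY24} after blowing up $F\cap\{f_0=0\}$-type strata as in Proposition~\ref{prop:(1,2)-in-p2xp2}.

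The main obstacle is therefore producing a cluster type boundary on $\pp^1\times\pp^2$ (or on $X$) with enough components. I would try first the degeneration trick: choose the pencil member $\{\lambda f_0+\mu f_1=0\}$ of degree $(1,2)$ to be reducible, as a $(1,1)$ plus a $(0,1)$ divisor. This is possible when the pencil of conic-valued linear forms contains a reducible member, which is a codimension-one condition in a one-parameter family and hence occurs. With that choice, the boundary $(1,1)+(0,1)+(1,0)+(0,1)$ minus adjustments gives four components and complexity one. Theorem~\ref{thm:complexity-one-is-cluster-type} then applies, after which Lemma~\ref{lem:blow-up-of-cluster-type} lifts the structure to $X$, since the blown-up curve lies in the boundary with multiplicity one. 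Verifying the log canonicity of this reducible configuration along the curve is the delicate point.
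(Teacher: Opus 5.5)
Your final plan does not work. You need the pencil $\{\lambda f_0+\mu f_1=0\}$ of $(1,2)$-divisors containing $C$ to have a reducible member, and no smooth $X$ allows this.

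First, reducibility is not a codimension-one condition: reducible $(1,2)$-divisors form a locus of codimension at least four in $|\mathcal{O}_{\pp^1\times\pp^2}(1,2)|\simeq\pp^{11}$. More decisively, suppose $\lambda f_0+\mu f_1=gh$ with $g,h$ non-constant, and let $f'$ be another member of the pencil. Then $C=\{gh=f'=0\}$. The locus $\{g=h=0\}$ is non-empty of dimension at least one, and $\{f'=0\}$ is ample, so $\{g=h=f'=0\}\neq\emptyset$. At such a point $d(gh)=g\,dh+h\,dg=0$, so $C$ is singular, and hence so is $X$.

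Your other suggestions are not carried out: lowering the complexity of $(\pp^1\times\pp^2,\{f_0=0\}+H+F)$, or blowing up strata and invoking \cite[Theorem 1.7]{MY24} as in Proposition~\ref{prop:(1,2)-in-p2xp2}. For the latter, that pair has only one vertical component over $\pp^2$, so there is no pair of vertical divisors whose intersection you could blow up. As written, the proposal therefore contains no complete argument.

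The missing idea is the one you dropped mid-sentence: use the \emph{other} conic bundle $p_3\colon X\to\pp^2$. Its fibers are $(1,1)$-curves in $\pp^1\times\pp^1$. Each such curve meets $A_1|_X$ and $A_2|_X$ in one point, so both are horizontal over $\pp^2$; they are birational sections. You already checked that $(X,A_1|_X+A_2|_X+A_3|_X)$ has index one and coregularity zero, and $\pp^2$ is smooth. So Proposition~\ref{prop:fibration-to-smooth-surface} applies directly to your very first boundary and finishes the proof. That proposition requires neither relative Picard rank one nor any complexity bound, so the complexity three you computed is no obstruction. This is exactly the paper's argument.

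Your blow-up picture also closes this way. For $\pp^1\times\pp^2\to\pp^2$ the divisors $\{f_0=0\}$ and $H$ are horizontal, so the same proposition makes $(\pp^1\times\pp^2,\{f_0=0\}+H+F)$ cluster type. Lemma~\ref{lem:blow-up-of-cluster-type} then lifts this to $X$, since $C$ lies in the boundary with multiplicity one. The log pullback is again a boundary of type $(1,0,0)+(0,1,0)+(0,0,1)$ restricted to $X$.
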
 

\begin{proof}
Consider $H_1,H_2$, and $H_3$ to be general hypersurfaces of multi-degree
$(1,0,0)$, $(0,1,0)$, and $(0,0,1)$, respectively. 
Let $H_{X,i}$ be the restriction of $H_i$ to $X$.
Therefore, the pair $(X,H_{X,1}+H_{X,2}+H_{X,3})$ is a log Calabi--Yau pair
of index one, coregularity zero, and complexity three.
Let $\pi\colon X\rightarrow \pp^2$ be the projection
induced into the third component of $\pp^1\times\pp^1\times\pp^2$.
Note that $H_{X,1}$ and $H_{X,2}$ are horizontal over $\pp^2$.
Thus, by Proposition~\ref{prop:fibration-to-smooth-surface}, we conclude that the pair $(X,H_{X,1}+H_{X,2}+H_{X,3})$ is of cluster type. 
\end{proof} 

\begin{proposition}\label{prop:blow-up-p1xp2}
Let $X$ be a smooth Fano threefold which 
is a blow-up of $\pp^1\times \pp^2$ along a curve $C$ 
of bi-degree $(5,2)$ such that the composition
$C\hookrightarrow \pp^1\times \pp^2\rightarrow \pp^2$ is an embedding. Then, the variety $X$ is of cluster type. 
\end{proposition}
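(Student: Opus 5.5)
The plan is to find a cluster type boundary on $\pp^1\times\pp^2$ containing $C$ with multiplicity one along $C$, and then lift it by Lemma~\ref{lem:blow-up-of-cluster-type}, exactly as in Lemma~\ref{lem:blowup-curve-in-plane}. Let $p_2\colon\pp^1\times\pp^2\to\pp^2$ be the projection. By hypothesis $p_2|_C$ is an embedding, and its image $C_0\subset\pp^2$ is a smooth conic, since $C$ has bidegree $(5,2)$. Hence $C$ lies in the surface $S:=p_2^{-1}(C_0)\simeq\pp^1\times C_0\simeq\pp^1\times\pp^1$, which is a divisor of bidegree $(0,2)$.

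The boundary will be
\[
B:=S+G_1+G_2+F,
\]
where $G_1,G_2$ are general fibers of the projection to $\pp^1$ (bidegree $(1,0)$) and $F$ is a general divisor of bidegree $(0,1)$. Then $B\sim(2,3)\sim-K_{\pp^1\times\pp^2}$. The pair $(\pp^1\times\pp^2,B)$ is the product of the pair $(\pp^1,\{0\}+\{\infty\})$ with $(\pp^2,C_0+L)$, where $L$ is a general line. The latter is a log Calabi--Yau pair of index one and coregularity zero, since a smooth conic and a transverse line form a nodal cycle. Its complexity is $2+1-2=1$, so it is of cluster type by Theorem~\ref{thm:complexity-one-is-cluster-type}. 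Lemma~\ref{lem:product-cluster-type} then shows that $(\pp^1\times\pp^2,B)$ is of cluster type. Alternatively, one can check directly that $B$ has four components, Picard rank two and dimension three, so its complexity is $3+2-4=1$, and apply Theorem~\ref{thm:complexity-one-is-cluster-type} directly.

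It remains to check that the blow-up $\pi\colon X\to\pp^1\times\pp^2$ along $C$ extracts only a canonical place. Since $S$ is smooth and contains $C$, we have ${\rm mult}_C S=1$. The curve $C$ is not contained in any other component of $B$: it dominates $\pp^1$, so it is not contained in a fiber $G_i$, and its image $C_0$ is not a line, so it is not contained in $F$. Therefore ${\rm mult}_C B=1$, and $\pi^*(K+B)=K_X+B_X$ with $B_X$ the strict transform, so the exceptional divisor has coefficient zero. Lemma~\ref{lem:blow-up-of-cluster-type} then gives that $(X,B_X)$ is of cluster type.

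There is no real obstacle here. The one point needing care is that the chosen boundary has the right singularities, that is, it is lc with index one, so that Theorem~\ref{thm:complexity-one-is-cluster-type} applies. Because the whole configuration is a product of a toric pair on $\pp^1$ with a conic-plus-line on $\pp^2$, this reduces to the planar statement, which is immediate.
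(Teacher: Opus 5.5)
Your proposal is correct and is essentially the paper's proof: take the product of $(\pp^1,\{0\}+\{\infty\})$ with the cluster type pair $(\pp^2,C_0+L)$, apply Lemma~\ref{lem:product-cluster-type}, and then lift along the blow-up of $C\subset p_2^{-1}(C_0)$ using Lemma~\ref{lem:blow-up-of-cluster-type}. You also spell out two details the paper leaves implicit, namely that the planar pair has complexity one and that $\mult_C B=1$, and both checks are fine.
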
 

\begin{proof}
Let $j\colon C\hookrightarrow \pp^1\times \pp^2 \rightarrow \pp^2$ be the composition, which by assumption, is an embedding. 
Therefore, $j(C) \subset \pp^2$ is a smooth conic. 
Hence, if $L$ is a general line in $\pp^2$, then the pair $(\pp^2,j(C)+L)$ is of cluster type. 
Let $\pi_1\colon \pp^1\times \pp^2 \rightarrow \pp^1$
be the projection onto the first component 
and $\pi_2 \colon \pp^1\times \pp^2\rightarrow \pp^2$ 
be the projection onto the second component.
By Lemma~\ref{lem:product-cluster-type}, we know that the pair
\[
(\pp^1\times\pp^2, \pi_1^*(\{0\}+\{\infty\}) +
\pi_2^*(j(C)+L)) 
\]
is of cluster type. 
Hence, by Lemma~\ref{lem:blow-up-of-cluster-type}, we conclude that $X$ is of cluster type.
\end{proof}

\begin{proposition}\label{prop:blow-up-of-(1,1)-in-P2xP2}
Let $X$ be a hypersurface of multi-degree $(1,1)$ in $\pp^2\times \pp^2$. Let $H_1$ and $H_2$ be two elements with smooth intersection of 
$\left|-\frac{1}{2}K_X\right|$ and let $C$ be their intersection. Let $Y$ be the smooth Fano threefold obtained by blowing up $X$ along $C$. 
Then, the variety $Y$ is of cluster type. 
\end{proposition}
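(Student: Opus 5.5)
The plan is to mimic Proposition~\ref{prop:(1,1)-in-p2xp2}, but with the boundary chosen so that $C$ is contained in exactly one of its components. On $X$ we have $-K_X\sim 2(h_1+h_2)$, where $h_i$ is the restriction of the pullback of the hyperplane class of the $i$-th factor. Hence $-\tfrac12 K_X\sim h_1+h_2$, and the curve $C=H_1\cap H_2$ is a complete intersection of two divisors in $|h_1+h_2|$. Since $H_1$ and $H_2$ meet smoothly, after shrinking the pencil we may take $H_1$ to be a smooth member containing $C$. We then consider the boundary
\[
B:=H_1+G_1+F_1,
\]
where $G_1$ is a general divisor of class $h_1$ and $F_1$ is a general divisor of class $h_2$. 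This gives $B\sim 2(h_1+h_2)\sim -K_X$. The pair $(X,B)$ should be log Calabi--Yau of index one: $G_1$ and $F_1$ are general members of basepoint-free systems, so by Bertini the divisor $B$ is snc away from finitely many checks, and the smoothness of $H_1$ together with adjunction gives the lc property.

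Next I will show that $(X,B)$ has complexity at most one, so that Theorem~\ref{thm:complexity-one-is-cluster-type} applies. We have $\dim X=3$, and $\operatorname{rank}\operatorname{WDiv}_{\rm alg}(X)=\rho(X)=2$ by Lefschetz. The boundary $B$ has three components, so the complexity is $3+2-3=2$, which is too big. I therefore refine the construction. The first option is to split $H_1$: if the pencil spanned by $H_1$ and $H_2$ contains a reducible member $G_2+F_2$ with $G_2\in|h_1|$ and $F_2\in|h_2|$, then we can use the boundary $G_1+F_1+G_2+F_2$. This has complexity one, and $C$ lies in a single component only if $C\subset G_2$, which fails in general.

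A better route is to run the argument of Proposition~\ref{prop:(1,1,2)-multi} and Proposition~\ref{prop:fibration-to-smooth-surface}. The projection $\pi_1\colon X\to\pp^2$ is a $\pp^1$-bundle. With respect to it, $H_1$ and $F_1$ are horizontal, while $G_1$ is the pullback of a line $L$. Proposition~\ref{prop:fibration-to-smooth-surface} requires index one, coregularity zero, and two horizontal prime components over the smooth surface $\pp^2$. The key check is coregularity zero. Adjunction to $F_1\cong\mathbb F_1$ (a $\pp^1$-bundle over $\pp^1$) gives the curve $(H_1+G_1)|_{F_1}$, and we need this to contain a cycle of rational curves with a $0$-dimensional stratum. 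Here $G_1|_{F_1}$ is a fiber, while $H_1|_{F_1}$ is a curve of class $\sigma+2f$ or similar. This curve is rational, since it is a section-type curve, and it meets the fiber, so the restricted pair has a node and hence coregularity zero. Once these hypotheses are verified, $(X,B)$ is of cluster type.

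Finally, let $\pi\colon Y\to X$ be the blow-up of $C$. Since $C\subset H_1$ and, for general choices, $C$ lies in neither $G_1$ nor $F_1$, we get $\operatorname{mult}_C B=1$. Because $C$ has codimension two, $\pi$ extracts only a canonical place of $(X,B)$. Lemma~\ref{lem:blow-up-of-cluster-type} then shows that $Y$ is of cluster type.

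I expect the main obstacle to be verifying that $(X,B)$ is lc and has coregularity zero near the points of $H_1\cap F_1\cap G_1$, and in particular confirming the rationality of $H_1|_{F_1}$. If that fails, the fallback is to choose $F_1$ tangent to the pencil so as to force a nodal intersection, in the spirit of Proposition~\ref{prop:double-cover-p1xp2-cluster-type}.
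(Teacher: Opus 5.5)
Your final route is correct and is essentially the paper's proof. You take a smooth member $H_1$ of the pencil, add a general divisor of class $h_1$ and one of class $h_2$, and apply Proposition~\ref{prop:fibration-to-smooth-surface} to a projection $X\to\pp^2$ in which $H_1$ and the $h_2$-divisor are horizontal; since $C$ lies only in $H_1$, Lemma~\ref{lem:blow-up-of-cluster-type} then lifts the cluster type property to $Y$.

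One small correction is needed in your coregularity check. On $F_1\cong\mathbb F_1$, the curve $G_1|_{F_1}$ is not a fiber: it is a section of class $\sigma+f$, the pullback of a line under $F_1\to\pp^2$. It therefore meets $H_1|_{F_1}\sim\sigma+2f$ transversally in two points, and $(H_1+G_1)|_{F_1}\sim 2\sigma+3f\sim -K_{\mathbb F_1}$ is a cycle of two rational curves. This is what actually gives coregularity zero.
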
 

\begin{proof}
Let $F_1$, $F_2$, and $F_3$ be three hypersurfaces
in $\pp^2\times \pp^2$ of multi-degree $(1,1)$, $(1,0)$, and $(0,1)$, respectively.
Note that the log Calabi--Yau pair $(\pp^2\times \pp^2, X +F_1+F_2+F_3)$
has coregularity zero.
Indeed, $F_2\cap F_3\simeq \pp^1\times \pp^1$ 
and the restriction of both $X$ and $F_1$ to 
this surface must intersect. 
Hence, the pair has a simple normal crossing
zero dimensional log canonical center.
If we denote by $F_{X,i}$ the restriction 
of $F_i$ to $X$, we obtain 
a log Calabi--Yau pair
$(X,F_{X,1}+F_{X,2}+F_{X,3})$ of index one,
coregularity zero, and complexity two.
We choose a smooth $H_1$ in the pencil defining $C$. Such a choice exists because $\mathcal I_C(H)$ is generated by the two pencil sections. The differentials are independent along $C$, and Bertini applies outside $C$. Lift this $H_1$ to $F_1$, then we choose $F_2$ and $F_3$ generally. Bertini on the smooth $H_1$ gives an lc boundary and a transverse zero-dimensional stratum, so all the preceding choices are available for this $C$.
Let $\pi\colon X\rightarrow \pp^2$ be the induced projection onto either factor of $\pp^2\times \pp^2$.
Note that $F_{X,1}$ is horizontal over $\pp^2$.
On the other hand, either $F_{X,2}$ or $F_{X,3}$ 
must be horizontal over $\pp^2$ as well. 
Therefore, the boundary of log Calabi--Yau pair
$(X,F_{X,1}+F_{X,2}+F_{X,3})$ of coregularity zero 
has two prime components which are horizontal over $\pp^2$.
By Proposition~\ref{prop:fibration-to-smooth-surface}, we conclude that the pair $(X,F_{X,1}+F_{X,2}+F_{X,3})$
is of cluster type.
Since $F_{X,1}$ contains $C$, by Lemma~\ref{lem:blow-up-of-cluster-type}, we conclude that $Y$ is of cluster type. 
\end{proof} 

\begin{proposition}\label{prop:3-8}
Let $\alpha\colon {\rm Bl}_p \pp^2\rightarrow \pp^2$ be the blow-up at $p$. 
Consider the product ${\rm Bl}_p \pp^2\times \pp^2$
with its projections $\pi_1$ and $\pi_2$.
Let $X$ be a smooth Fano threefold which is a divisor in the linear system 
\[
|(\alpha \circ \pi_1)^*\mathcal{O}_{\pp^2}(1) \otimes \pi_2^*\mathcal{O}_{\pp^2}(2)|.
\]
Then, the variety $X$ is of cluster type. 
\end{proposition}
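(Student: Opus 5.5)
We mimic the proof of Proposition~\ref{prop:(1,2)-in-p2xp2}, using the conic bundle structure $\pi_1\colon X\to {\rm Bl}_p\pp^2$ together with the $\pp^1$-bundle ${\rm Bl}_p\pp^2\to\pp^1$. Write $F=\mathbb F_1={\rm Bl}_p\pp^2$, with $h=\alpha^*\mathcal O(1)$, exceptional curve $e$ and fiber class $f=h-e$. On $F\times\pp^2$ we have $-K=(3h-e)\boxtimes\mathcal O(3)$. Adjunction gives $-K_X=(3h-e,3)-(h,2)=(2h-e,1)|_X=(h+f,1)|_X$.

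We choose the boundary $B=D_1+D_2+D_3\in|-K_X|$ as follows.
\begin{itemize}
\item $D_1=\pi_1^*\ell$, where $\ell\in|h|$ is a general line of $\pp^2$.
\item $D_2=\pi_1^*f_0$, where $f_0$ is a fiber of $F\to\pp^1$.
\item $D_3$ is the restriction of a general divisor of $|\pi_2^*\mathcal O(1)|$.
\end{itemize}
Then $(X,B)$ is log Calabi--Yau of index one. Its coregularity is zero: the curve $\ell\cap f_0$ is a point $r\in F$, the fiber of $\pi_1$ over $r$ is a conic in $\pp^2$, and a general line $D_3$ meets this conic transversally, giving a zero-dimensional snc stratum. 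Log canonicity follows from Bertini away from the singular conics, and adjunction handles $D_3$.

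Next, the canonical bundle formula for $\pi_1$. The horizontal part of $B$ is $D_3$. On the general conic fiber it restricts to two points, a degree-two divisor. The discriminant of the conic bundle is $\Delta\in|(3h-e)\cdot 2-(3h-e)|$; more precisely $-K_F-(\text{moduli part})$ gives $B_F=\ell+f_0+\tfrac12\Delta_0$, where $\Delta_0\sim 2h$ is the ramification curve of $D_3\to F$. This is the analogue of $(\pp^2,L_1+L_2+\tfrac12Q)$ in Proposition~\ref{prop:(1,2)-in-p2xp2}.

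We then reduce to a del Pezzo fibration over $\pp^1$. The composite $X\to F\to\pp^1$, given by $\pi_1$ followed by the ruling, has general fiber $G$: the preimage of a fiber $f$ is a divisor of bidegree $(1,2)$ in $\pp^1\times\pp^2$. This is a del Pezzo surface, namely the blow-up of $\pp^2$ in $4$ points, of degree $(-K_G)^2=5$, since $-K_G=(1,1)|_G$ and $(1,1)^2(1,2)=5$ on $\pp^1\times\pp^2$. The fibration has a distinguished fiber $D_2$ over $\{0\}$, but we need a second log canonical fiber over $\infty$. To get it, we replace $D_1$ by a second fiber $\pi_1^*f_\infty$ together with $\pi_1^*e$: indeed $h+f\sim e+2f$, so we take $B=\pi_1^*(e+f_0+f_\infty)+D_3$. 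The induced pair on $\pp^1$ is then $(\pp^1,\{0\}+\{\infty\})$. The horizontal components are $\pi_1^*e\simeq e\times$(conics)$\cong\pp^1\times\pp^1$-type and $D_3$. Their restrictions to $G$ are curves of classes $(1,0)$ and $(0,1)$ forming a cycle, so this choice has two horizontal components.

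To conclude, we first blow down or contract so that the hypotheses of Proposition~\ref{prop:fibration-to-smooth-curve} hold: a unique horizontal component restricting to a nodal irreducible curve on $G$. Alternatively, we apply \cite[Theorem~1.7]{MY24} directly, as in Proposition~\ref{prop:(1,2)-in-p2xp2}, since the general fiber is a quintic del Pezzo surface. Then Lemma~\ref{lem:cluster-type-descends} and Lemma~\ref{lem:blow-up-of-cluster-type} transport the cluster type property back to $X$. The main obstacle is verifying that \cite[Theorem~1.7]{MY24} applies: its hypothesis concerns the self-intersections of the restrictions of the horizontal boundary to $G$. If that fails, the fallback is to choose $D_3$ tangent to a singular conic, as in Proposition~\ref{prop:double-cover-p1xp2-cluster-type}, blow up a line, and reach a complexity-one model to which Theorem~\ref{thm:complexity-one-is-cluster-type} applies.
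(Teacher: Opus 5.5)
Your argument works in the end, but it takes a detour the paper avoids. The boundary you write down first, $\pi_1^*\ell+\pi_1^*f_0+D_3$, is exactly the paper's boundary $H_{X,1}+H_{X,2}+F_{X,1}$. The paper simply uses the other conic bundle, the projection $\pi_2\colon X\to\pp^2$ to the second factor. Via $\pi_1$, each fiber of $\pi_2$ is identified with a curve of $|h|$ on $\mathbb F_1$. Since $h\cdot h=h\cdot f=1$, both $\pi_1^*\ell$ and $\pi_1^*f_0$ are sections of $\pi_2$, while $D_3$ is vertical, so Proposition~\ref{prop:fibration-to-smooth-surface} applies at once. Looking only at $\pi_1$, you see a single horizontal component. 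That is what forces your change of boundary and makes the canonical bundle formula paragraph superfluous.

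Your second route is sound, but the step you call the main obstacle is settled by an identification you should state explicitly. $X$ is the blow-up of the $(1,2)$-divisor $W=(\alpha\times{\rm id})(X)\subset\pp^2\times\pp^2$ along the conic over $p$. This conic is smooth because $X$ is smooth, and the exceptional divisor is $\pi_1^*e$. Moreover, $\pi_1^*(e+f_0+f_\infty)+D_3$ is the crepant pull-back of the boundary of Proposition~\ref{prop:(1,2)-in-p2xp2} with $L_1\cap L_2=p$. So your pair is $(Y,H_{Y,1}+F_{Y,1}+F_{Y,2}+E_Y)$ from that proof. It has the same quintic del Pezzo fibration over $\pp^1$ and the same fiberwise configuration: a conic of self-intersection $0$ and a line of self-intersection $1$ meeting twice. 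Hence \cite[Theorem~1.7]{MY24} is invoked exactly as in that proof.

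Discard your other two options. Neither horizontal restriction is contractible on $G$, so no contraction over $\pp^1$ leaves a single nodal horizontal component for Proposition~\ref{prop:fibration-to-smooth-curve}. The fallback is not carried out.

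The paper's route is a one-line application of the strict conic fibration criterion proved in the paper. Yours relies on the external del Pezzo fibration criterion of \cite{MY24}, but it has the merit of showing that family \textnumero\,3.8 is the blow-up of family \textnumero\,2.24, handled by the very same pair.
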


\begin{proof}
Set $S:={\rm Bl}_p \pp^2$ and 
let $T:= S \times \pp^2$.
Let $\ell$ be the strict transform on ${\rm Bl}_p \pp^2$
of a line in $\pp^2$ passing through $p$
and let $e$ be the exceptional curve of $\alpha$.
Therefore, we have 
\[
X\in |\pi_1^*\mathcal{O}_S(\ell+e) \otimes \pi_2^*\mathcal{O}_{\pp^2}(2)|. 
\]
Note that $\omega_S^\vee\simeq\mathcal O_S(2e+3\ell)$, hence, we have 
\[
\omega_X^\vee \simeq \left(\pi_1^*\mathcal{O}_S(2\ell+e)\otimes \pi_2^*\mathcal{O}_{\pp^2}(1)\right)|_X.
\]
Let $H_1$ be product with $\pp^2$ of a general element in $|\mathcal{O}_S(\ell+e)|$
and let $H_2$ be the product with $\pp^2$ of a general element in $|\mathcal{O}_S(\ell)|$. 
Let $F_1$ be the product with $S$ of a general line in $\pp^2$.
Therefore, by construction, we have a log Calabi--Yau pair
\[
(T,X+H_1+H_2+F_1) 
\]
of index one and coregularity zero.
This pair has coregularity zero as 
the three divisors $H_1,H_2$ and $F_1$ intersect transversally inside $X$. 
Thus, the pair obtained by adjunction
\[
(X,H_{X,1}+H_{X,2}+F_{X,1}) 
\]
is a log Calabi--Yau pair of index one and coregularity zero.
Let $\pi_2\colon X \rightarrow \pp^2$ be the induced fibration onto $\pp^2$.
Note that both $H_{X,1}$ and $H_{X,2}$ are horizontal over $\pp^2$.
Thus, we can use Proposition~\ref{prop:fibration-to-smooth-surface} to
conclude that $X$ is of cluster type. 
\end{proof} 

\begin{proposition}\label{prop:3-9}
Let $X$ be a smooth Fano threefold which is the blow-up of the cone
over the Veronese embedding of $\pp^2$ in $\pp^5$ with center
the disjoint vertex and a quartic curve in $\pp^2$. 
Then, the variety $X$ is of cluster type. 
\end{proposition}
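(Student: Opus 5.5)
Let $V\subset \pp^6$ be the cone over the Veronese surface with vertex $v$. Its blow-up at $v$ is $W:=\mathbb{P}_{\pp^2}(\mathcal{O}\oplus\mathcal{O}(2))$, a $\pp^1$-bundle $p\colon W\to\pp^2$ with two disjoint sections. The first is the exceptional divisor $S_0\simeq\pp^2$, with normal bundle $\mathcal{O}(-2)$. The second is a section $S_\infty\simeq\pp^2$ disjoint from $S_0$, namely the hyperplane section of $V$ not through $v$. The quartic curve $C$ sits inside $S_\infty$, and $X={\rm Bl}_C W$. We have $-K_W\sim S_0+S_\infty+p^*\mathcal{O}(3)$, so we aim for a boundary of the form
\[
B_W=S_0+S_\infty+p^*L_1+p^*L_2+p^*L_3,
\]
where $L_1,L_2,L_3\subset\pp^2$ are general lines. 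The plan is to show that $(W,B_W)$ is of cluster type and then lift this structure to $X$ using Lemma~\ref{lem:blow-up-of-cluster-type}.

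\textbf{Step 1: the pair on $W$.} The variety $W$ is toric, and $B_W$ is exactly its toric boundary once the lines $L_i$ are chosen as coordinate lines for a torus structure on $\pp^2$. Hence $(W,B_W)$ is a toric pair, and in particular of cluster type. Alternatively, this follows from Theorem~\ref{thm:strict-conic}: $p$ is a strict conic fibration with disjoint horizontal components $S_0,S_\infty$, and the base pair is $(\pp^2,L_1+L_2+L_3)$.

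\textbf{Step 2: lifting to $X$.} We blow up $C\subset S_\infty$. For the blow-up to extract only a canonical place, we need ${\rm mult}_C(B_W)=1$, that is, $C$ must lie in no component of $B_W$ other than $S_\infty$. This holds because $C$ is an irreducible quartic in $S_\infty\simeq\pp^2$, so it is not contained in any $p^*L_i$, and $C$ is disjoint from $S_0$. The blow-up of $S_0$ (which undoes the contraction to the vertex) has exceptional divisor $S_0$ with coefficient one, so it extracts a log canonical place of the pair on $V$. Therefore, Lemma~\ref{lem:blow-up-of-cluster-type} shows that the log pull-back $(X,B_X)$ is of cluster type, where $B_X$ is the strict transform of $B_W$.

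\textbf{Main obstacle.} The only delicate point is the claim that the lines $L_i$ can be chosen freely while keeping the pair toric. Toricity depends only on the $L_i$ and not on $C$, and a general choice of the $L_i$ keeps $C$ out of the $p^*L_i$. If one prefers not to use the toric structure on $W$, the fallback is the strict conic fibration route via Theorem~\ref{thm:strict-conic}. There, the base pair $(\pp^2,L_1+L_2+L_3)$ is toric, hence of cluster type, and $S_0\cap S_\infty=\emptyset$, which gives condition (4) of Definition~\ref{def:strict-conic-fibration}.
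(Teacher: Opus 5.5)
Your proposal is correct and matches the paper's proof. The paper also blows up the vertex to obtain a toric variety in which the quartic lies in a single torus-invariant boundary component, so blowing up the quartic extracts only a canonical place and Lemma~\ref{lem:blow-up-of-cluster-type} gives the cluster type structure on $X$. Your remark about $S_0$ being a log canonical place of the pair on the cone is harmless but unnecessary, since $X$ is directly the blow-up of $W$ along $C$.
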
 

\begin{proof}
The blow-up of the vertex of the cone over the Veronese embedding
is a toric variety $T$. The quartic curve in $\pp^2$ is contained
in the torus invariant boundary of $T$ and so Lemma~\ref{lem:blow-up-of-cluster-type}
implies that $X$ is of cluster type. 
\end{proof}

\begin{proposition}\label{prop:3-13}
Let $X$ be a divisor in $\pp^2\times \pp^2$ of bi-degree $(1,1)$.
Let $C$ be a curve in $X$ of bi-degree $(2,2)$ such that the projection onto
either $\pp^2$ factor gives an embedding.
Let $Y$ be the smooth Fano threefold ${\rm Bl}_C X$.
Then, the variety $Y$ is of cluster type. 
\end{proposition}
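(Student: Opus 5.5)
The plan is to imitate the proof of Proposition~\ref{prop:blow-up-of-(1,1)-in-P2xP2}. We construct a cluster type pair $(X,B)$ with $C$ contained in exactly one component of $B$, with multiplicity one. Since $C$ has codimension two, the blow-up $Y={\rm Bl}_C X\to X$ then extracts only a canonical place of $(X,B)$, and Lemma~\ref{lem:blow-up-of-cluster-type} gives that $Y$ is of cluster type.

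The first step is to find a divisor containing $C$. A curve of bi-degree $(2,2)$ whose projections to both factors are embeddings maps isomorphically onto smooth conics $C_1=p_1(C)$ and $C_2=p_2(C)$ in the two factors of $\pp^2\times\pp^2$. We take $D:=p_1^{-1}(C_1)\cap X$, a divisor of multi-degree $(2,0)$ on $X$ containing $C$. Then $-K_X\sim(2,2)|_X$, so $D+F$ is anticanonical for any $F\in|(0,2)|$. For a quadric-type boundary with more components, we instead take $F=F_2+F_3$ with two general members of $|(0,1)|$. This gives the candidate $B=D+F_{X,2}+F_{X,3}$, which consists of the three components $(2,0),(0,1),(0,1)$.

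The second step is to check that $(X,B)$ is log canonical of index one and coregularity zero. We have $D\simeq p_1^{-1}(C_1)\cap X$, which is a $\pp^1$-bundle over the conic $C_1$, since $X\to\pp^2$ is a $\pp^1$-bundle. Hence $D$ is smooth, and adjunction to $D$ gives the restriction of two general $(0,1)$-divisors. Bertini makes these meet transversally, so there is a zero-dimensional snc stratum. This works provided we check that $D\cap F_{X,2}\cap F_{X,3}$ is nonempty and transverse, using $(2,0)(0,1)^2\cdot(1,1)=2>0$.

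The third step is to prove cluster type. We use the projection $\pi=p_1|_X\colon X\to\pp^2$, which is a $\pp^1$-bundle, hence a conic fibration over a smooth surface. The divisors $F_{X,2}$ and $F_{X,3}$ are horizontal over this $\pp^2$, since each restricts to a point on the fibers, and $D$ is vertical. Proposition~\ref{prop:fibration-to-smooth-surface} then applies directly: $S=\pp^2$ is smooth, the index is one and the coregularity is zero, and there are two horizontal prime components. So $(X,B)$ is of cluster type.

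Finally, $C\subset D$. We must check that $C\not\subset F_{X,2}\cup F_{X,3}$, which holds for general choices, and that $D$ is smooth along $C$, which was established above. Then ${\rm mult}_C B=1$ and the blow-up extracts a canonical place. The main obstacle I expect is ensuring that the boundary is genuinely lc with a zero-dimensional stratum while keeping $C$ inside a single smooth component. If the choice of $D$ as a $(2,0)$-divisor caused trouble, the fallback would be the complexity approach with Theorem~\ref{thm:complexity-one-is-cluster-type}: use $D'=p_2^{-1}(C_2)\cap X$ as well, together with general $(1,0)$ and $(0,1)$ pieces.
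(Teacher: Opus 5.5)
Your proposal is correct and follows the paper's proof. The paper uses the same boundary (the pullback of the conic $p_1(C)$ plus two general $(0,1)$-divisors restricted to $X$) and the same conic bundle $p_1|_X\colon X\to\pp^2$, in which the two $(0,1)$-divisors are the horizontal components. It then applies Proposition~\ref{prop:fibration-to-smooth-surface} followed by Lemma~\ref{lem:blow-up-of-cluster-type}, since $C$ lies with multiplicity one in the $(2,0)$-component only; your checks that $D$ is smooth and that the zero-dimensional stratum is transverse fill in details the paper leaves implicit.
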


\begin{proof}
Let $j\colon C\hookrightarrow \pp^2$ be the induced embedding into the first component of $\pp^2\times \pp^2$. 
Let $H_1$ and $H_2$ be two general lines in $\pp^2$. Therefore, we have a log
Calabi--Yau pair 
\[
(\pp^2\times \pp^2, X + \pi^*_1j(C) + \pi_2^*H_1+\pi_2^*H_2)
\]
of index one and coregularity zero. 
Let $H_{X,1}$ and $H_{X,2}$ be the restriction of $\pi_2^*H_1$ and $\pi_2^*H_2$ to $X$. Let $F$ be the restriction of $\pi^*_1j(C)$ to $X$.
By construction, the log Calabi--Yau pair
$(X,H_{X,1}+H_{X,2}+F)$ has index one and coregularity zero.
On the other hand, the surface $F$ contains the curve $C$.
Note that we have an induced projection $\pi\colon X\rightarrow \pp^2$
such that $H_{X,1}$ and $H_{X,2}$ are horizontal over $\pp^2$. 
Thus, by Proposition~\ref{prop:fibration-to-smooth-surface}, we conclude
that the pair $(X,H_{X,1}+H_{X,2}+F)$ is of cluster type.
As $F$ contains the curve $C$, Lemma~\ref{lem:blow-up-of-cluster-type} then 
implies that $Y$ is of cluster type. 
\end{proof} 

\begin{proposition}\label{prop:3-17}
Let $X$ be a smooth Fano threefold which is a divisor
of multi-degree $(1,1,1)$ in $\pp^1\times \pp^1 \times \pp^2$.
Then, the variety $X$ is of cluster type. 
\end{proposition}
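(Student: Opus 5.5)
We follow the strategy of Proposition~\ref{prop:(1,1,2)-multi}, using Proposition~\ref{prop:fibration-to-smooth-surface}. Since $-K_{\pp^1\times\pp^1\times\pp^2}$ has multi-degree $(2,2,3)$ and $X$ has multi-degree $(1,1,1)$, adjunction shows that $-K_X$ is the restriction of a divisor of multi-degree $(1,1,2)$. Let $H_1$, $H_2$, $H_3$, $H_4$ be general hypersurfaces of multi-degrees $(1,0,0)$, $(0,1,0)$, $(0,0,1)$ and $(0,0,1)$, respectively, and let $H_{X,i}$ be their restrictions to $X$. Then $B:=H_{X,1}+H_{X,2}+H_{X,3}+H_{X,4}\in|-K_X|$, and $(X,B)$ is an index-one log Calabi--Yau pair.

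\textbf{Singularities and coregularity.} For general choices, Bertini shows that $B$ is simple normal crossing on the smooth threefold $X$, so $(X,B)$ is log canonical. To get coregularity zero, we exhibit a zero-dimensional stratum. The intersection $H_1\cap H_2\cap H_3$ is a line $\{pt\}\times\{pt\}\times L\cong\pp^1$. A hypersurface of multi-degree $(1,1,1)$ restricts to a degree one form on this line, so it meets $X$ in exactly one point, generically transversally. Hence $H_{X,1}\cap H_{X,2}\cap H_{X,3}$ is a single snc point, and the coregularity is zero.

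\textbf{Fibration criterion.} Let $\pi\colon X\to\pp^2$ be the projection onto the third factor; it is a fibration onto the smooth surface $\pp^2$. Its fiber over a point $q\in\pp^2$ is a $(1,1)$-curve in $\pp^1\times\pp^1$, which is a conic. The divisors $H_{X,1}$ and $H_{X,2}$ dominate $\pp^2$ and are distinct prime components of $B$. The divisors $H_{X,3}$ and $H_{X,4}$ are vertical over $\pp^2$. All hypotheses of Proposition~\ref{prop:fibration-to-smooth-surface} therefore hold, and $(X,B)$ is of cluster type.

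\textbf{Expected obstacle.} The only delicate point is irreducibility of the horizontal components. We need $H_{X,1}$ and $H_{X,2}$ to be prime, and $B$ to be reduced snc. The divisor $H_{X,1}$ is $X$ restricted to $\{pt\}\times\pp^1\times\pp^2$, which is a $(1,1)$-divisor in $\pp^1\times\pp^2$. By Bertini it is smooth, and it is irreducible because it is ample in $\pp^1\times\pp^2$. The same argument applies to $H_{X,2}$. If it turns out to be more convenient, we could instead drop $H_{X,4}$ and use the complexity count, but the fibration argument above already suffices.
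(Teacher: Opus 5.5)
Your proof is correct and is essentially the paper's argument. You use the same boundary (one $(1,0,0)$-, one $(0,1,0)$- and two $(0,0,1)$-sections) and the same criterion, Proposition~\ref{prop:fibration-to-smooth-surface}; the only difference is that the paper projects to $\pp^1\times\pp^1$, where $X$ is a $\pp^1$-bundle and the two $(0,0,1)$-components are horizontal, while you project to $\pp^2$, where $X$ is a conic bundle and the $(1,0,0)$- and $(0,1,0)$-components are horizontal, exactly as in Proposition~\ref{prop:(1,1,2)-multi}. Both choices satisfy the hypotheses, but you should delete the closing aside: dropping $H_{X,4}$ leaves a boundary that is no longer anticanonical, and the four-component pair has complexity $3+3-4=2$, so Theorem~\ref{thm:complexity-one-is-cluster-type} would not apply.
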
 

\begin{proof}
Let $F,G$ and $H_i$ be general divisors in $\pp^1\times \pp^1\times \pp^2$
of multi-degree $(1,0,0)$, $(0,1,0)$, and $(0,0,1)$, respectively, for $i\in \{1,2\}$. 
Therefore, the pair $(\pp^1\times\pp^1\times\pp^2,X+F+G+H_1+H_2)$ is log Calabi--Yau of index one and coregularity zero.
Let $(X,F_X+G_X+H_{X,1}+H_{X,2})$ be the log Calabi--Yau pair induced by adjunction
and let $X\rightarrow \pp^1\times \pp^1$ be the fibration induced by projecting into the first two components of $\pp^1\times\pp^1 \times \pp^2$.
Note that $H_{X,1}$ and $H_{X,2}$ are horizontal over $\pp^1\times \pp^1$.
Therefore, Proposition~\ref{prop:fibration-to-smooth-surface} implies that $X$ is of cluster type. 
\end{proof} 

\begin{proposition}\label{prop:3-21}
Let $X$ be a smooth Fano threefold which is the blow-up of $\pp^1\times \pp^2$
along a curve $C$ of multi-degree $(2,1)$. Then, the variety $X$ is of cluster type.
\end{proposition}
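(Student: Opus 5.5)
Write $\pi_1\colon \pp^1\times\pp^2\to\pp^1$ and $\pi_2\colon \pp^1\times\pp^2\to\pp^2$ for the two projections, and let $C$ be a smooth curve of bi-degree $(2,1)$. The plan is the same as in Proposition~\ref{prop:blow-up-p1xp2}: exhibit a cluster type boundary on $\pp^1\times\pp^2$ that contains $C$ with multiplicity one, and then lift it along the blow-up by Lemma~\ref{lem:blow-up-of-cluster-type}.

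First I identify the curve. The degree of $C$ against $\pi_2^*\mathcal{O}_{\pp^2}(1)$ is $1$, so $\pi_2|_C$ is birational onto a line; since $C$ is smooth it maps isomorphically to a line $L_0\subset\pp^2$. Hence $C$ lies in the surface $S:=\pi_2^{-1}(L_0)\simeq\pp^1\times\pp^1$. Inside $S$ the curve $C$ is the graph of a degree-two map $L_0\to\pp^1$, so it is a smooth curve of class $(1,2)$ (or $(2,1)$, depending on the convention for the factors).

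Next I build the boundary. Take the pair
\[
\bigl(\pp^1\times\pp^2,\ \pi_1^*(\{0\}+\{\infty\})+\pi_2^*(L_0+L_1+L_2)\bigr),
\]
where $L_1,L_2$ are general lines. This pair is toric, hence of cluster type, but $C$ is contained in only one of its components, namely $S=\pi_2^*L_0$. Blowing up a codimension two curve lying in exactly one component, with multiplicity one, along which the pair is snc at the generic point, extracts only a canonical place. I still need to check that no embedded point of $C$ meets the other components in a way that changes the discrepancy computation. Since the discrepancy of the exceptional divisor is computed at the generic point of $C$, it is $0$. Lemma~\ref{lem:blow-up-of-cluster-type} therefore applies and gives that $X$ is of cluster type.

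The only genuine check is the first step. I must confirm that the Mori--Mukai description of family \textnumero\,3.21 really has $C$ of degree $1$ over $\pp^2$ in the bidegree convention used here. If instead the degree over $\pp^2$ were $2$, $C$ would project to a conic, and I would replace $\pi_2^*L_0$ by $\pi_2^*Q$ for that conic $Q$, exactly as in Proposition~\ref{prop:blow-up-p1xp2}. I would use the product pair $(\pp^1\times\pp^2,\pi_1^*(\{0\}+\{\infty\})+\pi_2^*(Q+L))$, which is of cluster type by Lemma~\ref{lem:product-cluster-type}. In the remaining case the projection of $C$ to $\pp^2$ is not an embedding, and $C$ would have to be handled inside the surface $\pi_1^{-1}$-trivial direction, which I expect does not occur. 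Either way the argument reduces to finding one boundary component containing $C$ generically transversally to the others.
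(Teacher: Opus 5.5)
Your proof is correct and is essentially the paper's argument: the paper simply notes that $C$ lies in a torus-invariant divisor of $\pp^1\times\pp^2$ (your $\pi_2^{-1}(L_0)$, for a toric structure in which $\pi_2(C)$ is a coordinate line) and applies Lemma~\ref{lem:blow-up-of-cluster-type}. Your worry about the bidegree convention is settled by the volume: with $C\cdot\pi_1^*\mathcal{O}(1)=2$ and $C\cdot\pi_2^*\mathcal{O}(1)=1$ one gets $(-K_X)^3=54-2\cdot 7-2=38$, which matches family \textnumero\,3.21, so the fallback cases in your last paragraph are unnecessary.
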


\begin{proof}
Note that the curve $C$ is contained in a torus invariant divisor of $\pp^2\times\pp^1$ so the statement follows from Lemma~\ref{lem:blow-up-of-cluster-type}. 
\end{proof} 

\begin{proposition}\label{prop:3-22}
Let $X$ be a smooth Fano threefold which is the blow-up of $\pp^2\times \pp^1$
along a conic $C$ in $\pp^2\times \{x\}$ for $x\in \pp^1$. Then, the variety $X$ is of cluster type. 
\end{proposition}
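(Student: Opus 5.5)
The plan is to realize the conic $C$ inside a component of a toric boundary of $\pp^2\times\pp^1$ with multiplicity one, and then lift the cluster type structure to the blow-up by Lemma~\ref{lem:blow-up-of-cluster-type}. This is the same strategy as in Proposition~\ref{prop:blow-up-p1xp2}.

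First I choose the boundary on the $\pp^2$ factor. Since $C\subset \pp^2\times\{x\}$ is a smooth conic, let $j(C)\subset\pp^2$ be its image under the first projection $\pi_1$, and let $L$ be a general line in $\pp^2$. The pair $(\pp^2,j(C)+L)$ is of cluster type: it is an index one log Calabi--Yau pair of coregularity zero, since $L$ meets $j(C)$ transversally at two points, so Theorem~\ref{thm:cluster-type-smooth-surf} applies. Alternatively, it has complexity $2+1-2=1$, so Theorem~\ref{thm:complexity-one-is-cluster-type} applies. On the $\pp^1$ factor I take the toric pair $(\pp^1,\{x\}+\{y\})$ with $y\neq x$, after a change of coordinates making $x=0$ and $y=\infty$.

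By Lemma~\ref{lem:product-cluster-type}, the pair
\[
\bigl(\pp^2\times\pp^1,\ \pi_1^*(j(C)+L)+\pi_2^*(\{x\}+\{y\})\bigr)
\]
is of cluster type. The conic $C$ equals the intersection $\pi_1^*j(C)\cap \pi_2^*\{x\}$, so it lies in exactly two boundary components. These components are smooth and meet transversally along $C$, because $\pi_2^*\{x\}\cong\pp^2$ restricts $\pi_1^*j(C)$ to the smooth conic $C$. Since $L$ is general, $C\not\subset\pi_1^*L$. Hence the boundary has multiplicity two along the codimension two curve $C$ and is simple normal crossing there.

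The blow-up $X\to\pp^2\times\pp^1$ along $C$ therefore extracts a log canonical place: the log discrepancy is $2-2=0$, and the crepant pullback has the exceptional divisor with coefficient one. Lemma~\ref{lem:blow-up-of-cluster-type} then gives that $X$ is of cluster type.

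The only point that needs care is checking that the two components containing $C$ meet transversally, so that the extracted divisor is an lc place and not something worse. This holds because $j(C)$ is smooth. An alternative avoiding this check is to replace $\{x\}$ by a fiber $\{z\}$ with $z\neq x$. Then $C$ lies in a single component with multiplicity one, and the blow-up extracts a canonical place, so Lemma~\ref{lem:blow-up-of-cluster-type} applies directly. I would present this simpler variant.
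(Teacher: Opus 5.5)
Your proof is correct in both variants. The pair $(\pp^2,j(C)+L)$ is of cluster type, and Lemma~\ref{lem:product-cluster-type} applies. The blow-up of $C$ then extracts an lc place in your first variant, where $C$ is the transversal intersection $\pi_1^*j(C)\cap\pi_2^*\{x\}$. In the second variant it extracts a canonical place, since $C\subset j(C)\times\pp^1$ with multiplicity one. Either way Lemma~\ref{lem:blow-up-of-cluster-type} gives the claim.

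The paper takes a shorter route. After choosing coordinates on $\pp^1$ with $x=0$, the fiber $\pp^2\times\{x\}$ is itself a torus-invariant divisor of $\pp^2\times\pp^1$. A smooth conic lies on no line, so $C$ is contained in exactly one component of the toric boundary, with multiplicity one. The blow-up therefore extracts a canonical place of the toric pair, and Lemma~\ref{lem:blow-up-of-cluster-type} finishes.

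In other words, your first construction already has $\pp^2\times\{x\}$ in the boundary. So the non-toric factor $j(C)+L$ is unnecessary, and so are the appeals to Theorem~\ref{thm:cluster-type-smooth-surf} (or Theorem~\ref{thm:complexity-one-is-cluster-type}) and to Lemma~\ref{lem:product-cluster-type}.

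What your route buys is uniformity. It is exactly the argument of Proposition~\ref{prop:blow-up-p1xp2}. Your second variant works for any curve in $\pp^2\times\pp^1$ mapping isomorphically onto a smooth conic, whether or not it lies in a fiber. The toric argument, by contrast, uses essentially that $C$ sits in a fiber.
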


\begin{proof}
Note that the curve $C$ is contained in a torus invariant divisor of
$\pp^2\times \pp^1$ so the statement follows from Lemma~\ref{lem:blow-up-of-cluster-type}.
\end{proof}

\begin{proposition}\label{prop:4.1}
Let $X$ be a smooth Fano threefold in $(\pp^1)^4$ of multi-degree $(1,1,1,1)$.
Then, the variety $X$ is of cluster type. 
\end{proposition}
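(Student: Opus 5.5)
We follow the pattern of Proposition~\ref{prop:3-17} and Proposition~\ref{prop:(1,1,2)-multi}: build an index-one coregularity-zero boundary on $X$ by adjunction from $(\pp^1)^4$, then apply Proposition~\ref{prop:fibration-to-smooth-surface} to a projection onto $\pp^1\times\pp^1$.

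\textbf{Boundary.} Since $-K_{(\pp^1)^4}$ has multi-degree $(2,2,2,2)$, adjunction gives that $-K_X$ is the restriction of a divisor of multi-degree $(1,1,1,1)$. Let $F_i$ be a general divisor of multi-degree $e_i$ (a fiber of the $i$-th projection) for $i\in\{1,\dots,4\}$. Then
\[
(\,(\pp^1)^4,\ X+F_1+F_2+F_3+F_4\,)
\]
is log Calabi--Yau of index one. Let $F_{X,i}:=F_i|_X$ and consider the pair $(X,F_{X,1}+F_{X,2}+F_{X,3}+F_{X,4})$ obtained by adjunction.

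\textbf{Log canonicity and coregularity zero.} For general choices, $F_1\cap F_2\cap F_3\cap F_4$ is a single point, which we may take outside $X$. Each triple intersection $F_i\cap F_j\cap F_k$ is a line $\pp^1$, and $X$ meets it transversally in one point since $X$ has degree one in the remaining factor. By Bertini, the $F_{X,i}$ are smooth and the sum $F_{X,1}+\dots+F_{X,4}$ is simple normal crossing on $X$. Hence the pair on $X$ is lc and has a zero-dimensional stratum, so it has coregularity zero.

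\textbf{Conic fibration.} Let $p\colon X\to\pp^1\times\pp^1$ be the projection to the first two factors. Its fibers are the intersections of $X$ with $\pp^1\times\pp^1$; these are $(1,1)$-curves, so $p$ is a conic fibration with smooth base. The divisors $F_{X,3}$ and $F_{X,4}$ are pulled back from the last two factors, so they dominate $\pp^1\times\pp^1$ and are horizontal. All hypotheses of Proposition~\ref{prop:fibration-to-smooth-surface} then hold: the base is smooth, the pair has index one and coregularity zero, and there are two horizontal prime components. Therefore $(X,\sum F_{X,i})$ is of cluster type.

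\textbf{Main obstacle.} The delicate point is checking that $F_{X,3}$ and $F_{X,4}$ are irreducible. Each is a surface of multi-degree $(1,1,1)$ in $(\pp^1)^3$. For general $F_3$, $F_4$ such a surface is smooth, and a smooth divisor in $(\pp^1)^3$ is connected, hence irreducible. Smoothness follows from Bertini applied to the base-point-free system $|e_3|$ restricted to the smooth $X$.
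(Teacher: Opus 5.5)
Your proposal is correct and is essentially the paper's proof. The paper uses the same four fiber divisors and projects onto the last two factors, so that $H_{X,1},H_{X,2}$ are horizontal, and concludes by Proposition~\ref{prop:fibration-to-smooth-surface}; your version is this argument with the factors relabeled, and your transversality and irreducibility checks fill in details the paper leaves implicit.

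One small slip: not every smooth divisor in $(\pp^1)^3$ is connected (two disjoint fibers form a counterexample). The connectedness, and hence irreducibility, of $F_{X,3}$ and $F_{X,4}$ comes from the ampleness of the class $(1,1,1)$.
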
 

\begin{proof}
Let $H_i$ be a hypersurface in $(\pp^1)^4$ of multi-degree $e_i$ with $i\in\{1,\dots,4\}$. 
Let $H_{X,i}$ be the restriction of $H_i$ to $X$.
Therefore, the pair
$(X,H_{X,1}+H_{X,2}+H_{X,3}+H_{X,4})$ 
is a log Calabi--Yau pair
of index one and coregularity zero.
Let $\pi\colon X\rightarrow \pp^1\times \pp^1$ be the restriction
of the projection on the last two components of the product $(\pp^1)^4$.
Note that both divisors $H_{X,1}$ and $H_{X,2}$ are horizontal over $\pp^1\times \pp^1$. Therefore, by Proposition~\ref{prop:fibration-to-smooth-surface},
we conclude that $X$ is a cluster type variety. 
\end{proof} 

\begin{proposition}\label{prop:4.2}
Let $X$ be a smooth Fano threefold which
is the blow-up of the cone over a smooth quadric in $\pp^3$
along the disjoint union of the vertex and an elliptic curve on the quadric. 
Then $X$ is of cluster type.
\end{proposition}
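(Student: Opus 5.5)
The plan is to follow the pattern of Proposition~\ref{prop:3-9}: first resolve the cone at its vertex to reach a toric model, then place the elliptic curve inside a suitable boundary component and apply Lemma~\ref{lem:blow-up-of-cluster-type}.

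Let $V\subset\pp^4$ be the cone over a smooth quadric $Q\simeq\pp^1\times\pp^1\subset\pp^3$, with vertex $v$. Blowing up $v$ gives $T=\pp_{Q}(\mathcal O\oplus\mathcal O(1,1))$. This is a $\pp^1$-bundle over $\pp^1\times\pp^1$, and it is toric. It has two disjoint sections: the exceptional divisor $Q_0\simeq Q$ over $v$, and a section $Q_\infty$ that is identified with the quadric $Q$ at infinity, which contains the elliptic curve $C$.

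We cannot take the toric boundary directly, because $C$ is not torus invariant. Instead, choose the boundary
\[
B_T=Q_0+Q_\infty+\rho^{-1}(D),
\]
where $\rho\colon T\to Q$ is the bundle projection and $D\in|-K_Q|=|\mathcal O_Q(2,2)|$. Adjunction gives $K_T+Q_0+Q_\infty\sim\rho^*K_Q$, so $K_T+B_T\sim 0$. Since $C\subset Q_\infty$ has class $(2,2)$, it is itself an anticanonical curve on $Q$. We therefore choose $D$ to be an anticanonical cycle of rational curves on $\pp^1\times\pp^1$, for instance four rulings, that is general with respect to $C$. Then $(Q,D)$ is toric, hence of cluster type. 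The pair $(T,B_T)$ is crepant to the strict conic fibration $\rho\colon (T,B_T)\to(Q,D)$, with the disjoint horizontal components $Q_0$ and $Q_\infty$. By Theorem~\ref{thm:strict-conic}, or directly because this pair is toric, $(T,B_T)$ is of cluster type.

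Next blow up $C\subset Q_\infty$ to obtain $X\to T$. The curve $C$ lies in the boundary component $Q_\infty$. For general $D$, it meets $\rho^{-1}(D)$ only in finitely many points and is not contained in it, so generically along $C$ the boundary has multiplicity one. The exceptional divisor is then a canonical place of $(T,B_T)$, of discrepancy $0$ with coefficient zero. Lemma~\ref{lem:blow-up-of-cluster-type} then shows that $X$ is of cluster type. Descending to $V$ is not needed, since $X$ is by definition the blow-up of $T$ along $C$. This relies on the blow-up centers being disjoint, so the order of the two blow-ups does not matter.

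The main obstacle I expect is a discrepancy subtlety. Blowing up a curve in a single boundary component extracts a divisor with coefficient $0$, which is fine. However, Lemma~\ref{lem:blow-up-of-cluster-type} requires that $(X,B_X)$ remain log Calabi--Yau, and that the torus chart avoid $C$ in codimension one. The latter holds because $C\subset B_T$. I would also verify that the points of $C\cap\rho^{-1}(D)$ cause no issue. These are codimension-three phenomena, so they do not affect the log pull-back formula; one only needs the crepant pull-back to be effective, which holds since $C\not\subset\rho^{-1}(D)$.
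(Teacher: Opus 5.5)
Your proposal is correct and is essentially the paper's argument: blow up the vertex to get the toric $\pp^1$-bundle $T$, note that $C$ lies in the single boundary component $Q_\infty$ with multiplicity one, and apply Lemma~\ref{lem:blow-up-of-cluster-type} to the blow-up of $C$. The only difference is cosmetic. Your $B_T$ with $D$ equal to four rulings is just the toric boundary of $T$ for a suitable torus structure, and the paper uses the toric boundary directly, since the lemma only needs $C$ to lie in one boundary component, not to be torus invariant.
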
 

\begin{proof}
Let $T$ be the blow-up of the cone over a smooth quadric in $\pp^3$ at the vertex. 
The variety $T$ is toric. Furthermore, the toric boundary contains the elliptic curve on the quadric. Therefore, by Lemma~\ref{lem:blow-up-of-cluster-type}, we conclude that $X$ is of cluster type. 
\end{proof}

\begin{proposition}\label{prop:4.3-13}
Let $X$ be a smooth Fano threefold which is the blow-up
of $(\pp^1)^3$ along a curve $C$ of degree $(1,1,n)$ where $n\in \{2,3\}$. 
Then, the variety $X$ is of cluster type.
\end{proposition}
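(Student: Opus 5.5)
Following the pattern of Propositions~\ref{prop:3-21} and~\ref{prop:3-22}, I would find a cluster type boundary on $(\pp^1)^3$ containing $C$ with multiplicity one and then apply Lemma~\ref{lem:blow-up-of-cluster-type}.

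A curve of degree $(1,1,n)$ has degree $1$ on each of the first two $\pp^1$ factors. So the projection $C\to\pp^1\times\pp^1$ onto the first two factors is birational onto its image $D$. The image $D$ has class $(1,1)$, since its intersections with the two rulings are the degrees of the projections $C\to\pp^1$. Hence $D$ is a smooth conic and $C\cong\pp^1$.

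The surface $S:=D\times\pp^1\subset(\pp^1)^3$ is then a smooth divisor of multidegree $(1,1,0)$ containing $C$. Next choose the remaining components:
\begin{itemize}
\item $F$, a general divisor of multidegree $(1,0,0)$ or $(0,1,0)$;
\item $G_0$ and $G_\infty$, two general fibers of the projection to the third factor.
\end{itemize}
Set $B:=S+F+G_0+G_\infty$, which has multidegree $(2,2,2)$, so $B\in|-K_{(\pp^1)^3}|$. The divisor $B$ is snc for a general choice. Its strata $F\cap G_0\cap S$ are points, so the pair has coregularity zero.

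To prove $((\pp^1)^3,B)$ is of cluster type, write it as a product. Since $S+F=\pi_{12}^*(D+\ell)$ with $\ell$ a ruling, the pair is
\[
\bigl(\pp^1\times\pp^1,\,D+\ell\bigr)\times\bigl(\pp^1,\{0\}+\{\infty\}\bigr).
\]
The first factor is a log Calabi--Yau surface pair: its boundary is a smooth conic of class $(1,1)$ plus a ruling, which meet transversally in one point. This pair has index one and coregularity zero, but the boundary is not a cycle of two nodes in the required sense. Indeed, $D\cdot\ell=1$ gives only one intersection point, so $D+\ell$ has $p_a=0$ and is not in $|-K|$. The multidegree check needs to be redone.

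$-K_{\pp^1\times\pp^1}$ has class $(2,2)$, while $D+\ell$ has class $(2,1)$, so one more ruling is needed. With the multidegrees corrected, the boundary should instead be:
\begin{itemize}
\item $S$, of multidegree $(1,1,0)$;
\item two rulings, one of multidegree $(1,0,0)$ and one of $(0,1,0)$;
\item two fibers of multidegree $(0,0,1)$.
\end{itemize}
These sum to $(2,2,2)$, as required. The surface factor becomes $(\pp^1\times\pp^1,D+\ell_1+\ell_2)$, which has complexity $2+2-3=1$ and index one. It is therefore of cluster type by Theorem~\ref{thm:complexity-one-is-cluster-type}; alternatively, it is a cycle of rational curves, and Theorem~\ref{thm:cluster-type-smooth-surf} applies. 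The general rulings meet $D$ transversally, so the boundary is reduced and nodal with a zero-dimensional stratum.

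By Lemma~\ref{lem:product-cluster-type}, the product with $(\pp^1,\{0\}+\{\infty\})$ is of cluster type. Note that $C\subset S$, and for a general choice $C$ lies in no other boundary component. So $\operatorname{mult}_C B=1$, and the blow-up of $C$ extracts only a canonical place. Lemma~\ref{lem:blow-up-of-cluster-type} then gives that $X$ is of cluster type.

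The main point to check carefully is the identification of $D$ as a smooth irreducible $(1,1)$-curve with $C\subset D\times\pp^1$. Since $C$ is smooth and irreducible with degree one on each of the first two factors, its image cannot be a union of two rulings. The value of $n$ plays no role in the argument.
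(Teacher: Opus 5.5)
Your corrected argument is the paper's proof. The paper also takes the $(1,1)$-curve image of $C$ in $\pp^1\times\pp^1$ together with one general fiber of each ruling, multiplies by $(\pp^1,\{0\}+\{\infty\})$ via Lemma~\ref{lem:product-cluster-type}, and concludes with Lemma~\ref{lem:blow-up-of-cluster-type} because $C$ lies only in the component pulled back from that curve. Only your first draft of the boundary was wrong, since with a single ruling it has class $(2,1,2)$ rather than $(2,2,2)$, and you correctly discarded it.
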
 

\begin{proof}
Let $\pi_1 \colon (\pp^1)^3 \rightarrow \pp^1\times \pp^1$ be the projection onto the first two components and $\pi_2 \colon (\pp^1)^3 \rightarrow \pp^1$ be the projection onto the last component. Then, the image of $C$ on $\pp^1\times \pp^1$ is a curve of degree $(1,1)$. Denote this curve by $\ell$.
Note that the pair $(\pp^1\times \pp^1, \ell+F_1+F_2)$ is cluster type, where
$F_1$ and $F_2$ are general fibers of the projections onto the first and second
components of $\pp^1\times\pp^1$, respectively.
On the other hand, the pair $(\pp^1,\{0\}+\{\infty\})$ is toric. 
Thus, we conclude that the pair 
\[
((\pp^1)^3, \pi_1^*(\ell+F_1+F_2)+\pi_2^*(\{0\}+\{\infty\})) 
\]
is of cluster type by Lemma~\ref{lem:product-cluster-type}.
Since the boundary divisor $\pi_1^*\ell$ contains the curve $C$, we can then
use Lemma~\ref{lem:blow-up-of-cluster-type} to conclude that $X$ is of cluster type as well.
\end{proof} 

In the following example, we show that Proposition~\ref{prop:fibration-to-smooth-curve} does not apply to deduce that a smooth Fano threefold of Picard rank $3$ and volume $14$ is of cluster type. 

\begin{example} 
\label{example:family3.2}
{\em 
Consider the toric bundle $T=\mathbb{P}(\mathcal{O}\oplus \mathcal{O}(-1,-1)^{\oplus 2})$ over $\pp^1\times \pp^1$.
Let $\mathcal{L}$ be the tautological line bundle of $T\rightarrow \pp^1\times \pp^1$.
Let $X$ be a smooth Fano threefold which is a general element of $|\mathcal{L}^2\otimes \mathcal{O}(2,3)|$.

Let $\pi\colon T \rightarrow \pp^1\times \pp^1$ be the projection.
Note that $\mathcal{O}_T(-K_T-X) \simeq \mathcal{L}\otimes \pi^* \mathcal{O}(2,1)$. 
We consider $H$ to be a general element of $\mathcal{L}$.
Let $F_1$ and $F_2$ be the pull-back to $T$ of two general fibers
of the projection into the first factor $\pi_1\colon \pp^1 \times \pp^1 \rightarrow \pp^1$.
Let $G$ be the pull-back to $T$ of a general fiber of 
the projection into the second factor 
$\pi_2\colon \pp^1 \times \pp^1 \rightarrow \pp^1$. 
Therefore, the pair
$(T,X+H+F_1+F_2+G)$ is a log Calabi--Yau pair of coregularity zero. We let $H_X$ (resp. $F_{X,1}$, $F_{X,2}$, and $G_X$) 
be the restriction of $H$ (resp. $F_1$, $F_2$, and $G$) 
to $X$. Therefore, the pair
$(X,H_X+F_{X,1}+F_{X,2}+G_X)$ is a log Calabi--Yau pair
of coregularity zero and index one. 
Note that this pair has complexity two. 
We have a fibration $\psi \colon X\rightarrow \pp^1$ induced by 
the projection of $\pp^1\times\pp^1$ onto the first component.
The log Calabi--Yau pair induced on $\pp^1$ via $\psi$ 
is $(\pp^1,\{0\}+\{\infty\})$. On the other hand, a general fiber
of $\psi$ is a del Pezzo surface of degree $3$. Therefore, we cannot apply Proposition~\ref{prop:fibration-to-smooth-curve} to conclude that $(X,H_X+F_{X,1}+F_{X,2}+G_X)$ is cluster type.
}
\end{example}

Despite Example \ref{example:family3.2}, we can find a cluster type boundary on a smooth Fano threefold of Picard rank 3 and volume 14.

\begin{proposition}\label{prop:3-2_cluster_type}
Let $X$ be a smooth Fano threefold given by a smooth element of $|\mathcal{L}^2\otimes \mathcal{O}(3,2)|$ in the $\pp^2$-bundle $T=\mathbb{P}(\mathcal{O}\oplus \mathcal{O}(-1,-1)^{\oplus 2})$ over $\pp^1\times \pp^1,$ where $\mathcal{L}$ is the tautological quotient line bundle. Then, the variety $X$ is of cluster type.
\end{proposition}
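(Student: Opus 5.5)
Family 3.2 has a second description: it is the blow-up of $\pp^1\times\pp^1\times\pp^1$ minus... no. Let me instead follow the pattern of the paper. The plan is to swap the roles of the two rulings of $\pp^1\times\pp^1$ relative to Example~\ref{example:family3.2}. There the boundary $H_X+F_{X,1}+F_{X,2}+G_X$ was fibred over the first factor with cubic surface fibers, which is useless. Instead, I would choose a boundary with two horizontal components over the base surface $\pp^1\times\pp^1$ of the conic bundle $\pi|_X\colon X\to\pp^1\times\pp^1$ and apply Proposition~\ref{prop:fibration-to-smooth-surface}. Note that $X\in|\mathcal L^2\otimes\mathcal O(3,2)|$ restricts to a conic in each $\pp^2$-fiber of $T$, so $\pi|_X$ is a conic bundle over the smooth surface $\pp^1\times\pp^1$. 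By adjunction, $-K_X\sim(\mathcal L\otimes\pi^*\mathcal O(a,b))|_X$ for the appropriate twist, which is $(\mathcal{L}\otimes\pi^*\mathcal{O}(1,2))|_X$ or $(2,1)$ depending on convention.

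The key step is to find two effective divisors $D_1,D_2$ on $X$, both horizontal over $\pp^1\times\pp^1$, plus vertical divisors from the base, summing to $-K_X$, with index one and coregularity zero. Since $|\mathcal L|_X|$ cuts each conic in two points, a single general $H_X\in|\mathcal L|_X|$ is horizontal but of degree two over the base. I would try to make it split. The natural source is the distinguished section $\mathbb P(\mathcal O)\subset T$ and the tautological divisors $\mathbb{P}(\mathcal O(-1,-1)^{\oplus2})$. Concretely, I would write the equation of $X$ in fiber coordinates $(s;u,v)$ with $s$ of weight $\mathcal O$ and $u,v$ twisted. Then I would look for an element of $|\mathcal L\otimes\pi^*\mathcal O(c,d)|$ whose restriction to $X$ decomposes into two sections of the conic bundle, or else use two different divisors $D_1\in|\mathcal{L}|$ and $D_2\in|\mathcal L\otimes\pi^*\mathcal O(\cdot,\cdot)|$ arranged so that $D_1+D_2$ absorbs the $\mathcal{L}$-degree. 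Since $-K_X$ has $\mathcal L$-degree one, the two horizontal components cannot both come from $|\mathcal L|$. I therefore expect to take one horizontal component $D$ in $|\mathcal L|_X$ passing through special points so that it is reducible as $D=D_1+D_2$ over the base. Equivalently, I would choose it so that $D\cap$(conic) is the two points of a bisection that splits. This happens when $D$ is a hyperplane through the vertex direction that meets the discriminant favourably, or more simply when $D$ is the restriction of a divisor of the form $\{u=0\}$, which is a union of two sections on the conics where $s^2$-coefficient structure allows. I would then add two fibers $F_{X,1},F_{X,2}$ and possibly $G_X$ to reach $-K_X$.

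With the boundary $B=D_1+D_2+(\text{vertical pullbacks})$ in hand, I would verify log canonicity and coregularity zero by adjunction to a stratum: a curve $D_1\cap F_{X,1}$ meeting $G_X$ transversally, as in Proposition~\ref{prop:3-8}. This gives a zero-dimensional lc center, and index one holds since $B\in|-K_X|$. Proposition~\ref{prop:fibration-to-smooth-surface} then gives cluster type. If the splitting fails, the fallback is a Sarkisov-type reduction. I would blow up a curve contained in two boundary components so as to extract lc places, run an MMP to a simpler model with complexity at most one, and conclude with Theorem~\ref{thm:complexity-one-is-cluster-type} together with Lemmas~\ref{lem:blow-up-of-cluster-type} and~\ref{lem:cluster-type-descends}.

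The main obstacle is the splitting step: exhibiting an anticanonical boundary with two distinct horizontal components for the conic bundle while keeping the pair lc. This requires an explicit analysis of the equation of $X$ restricted to the divisor $\{u=0\}$ or a suitable member of $|\mathcal L|$.
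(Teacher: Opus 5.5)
Your plan has a genuine gap at exactly the step you flagged. It is not merely hard to find $B\in|-K_X|$ with two horizontal components over $\pp^1\times\pp^1$: such a boundary does not exist. So Proposition~\ref{prop:fibration-to-smooth-surface} can never be applied to the conic bundle $\pi|_X\colon X\to\pp^1\times\pp^1$.

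Here is why. Since $\rho(X)=3$ and $\rho(\pp^1\times\pp^1)=2$, the conic bundle $\pi|_X$ has relative Picard rank one. Hence every divisor satisfies $D\equiv\lambda(-K_X)+\pi^*A$ for some $\lambda\in\qq$.

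The discriminant is nonempty. Write $X\in|\oo_{\mathbb P(\mathcal E)}(2)\otimes\pi^*N|$ with $\mathcal E=\oo\oplus\oo(-1,-1)^{\oplus 2}$ and $N=\oo(3,2)$. The determinant of the defining quadratic form is then a section of $(\det\mathcal E)^{2}\otimes N^{3}\simeq\oo(5,2)$, a curve of bidegree $(5,2)$.

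Let $l$ be a component of a reducible fiber. Then $-K_X\cdot l=1$, so $D\cdot l=\lambda=\tfrac12 D\cdot f$ for a general conic $f$. Thus every divisor on $X$ has even degree on the fibers. The generic fiber is a non-split conic, and $\pi|_X$ has no rational section.

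Since $-K_X\cdot f=2$, every $B\in|-K_X|$ has exactly one horizontal prime component, an irreducible bisection. In particular, no member of $|\mathcal L|$, no divisor like $\{u=0\}$, and no other choice can split into two sections. Passing to a birational model fibred compatibly over the generic point of $\pp^1\times\pp^1$ does not help, because the generic fiber is unchanged.

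Your fallback (blow up some curve, run some MMP, reach complexity one) does not specify the curve, the MMP, or why the complexity drops. It therefore does not supply a proof.

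The missing idea is to give up on the conic bundle and detect the cluster type structure on $\pp^3$. The paper, following \cite[Lemma 5.4]{LMV24}, argues as follows.

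\begin{enumerate}
\item The base-point-free system $|\mathcal L\otimes\pi^*\oo(1,1)|$ gives a morphism $g\colon X\to Y\subset\pp^5$. It contracts the bisection $L\simeq\pp^1\times\pp^1$ onto a line $L_0$.
\item The image $C$ of a general degree-three del Pezzo fiber is a cubic surface containing $L_0$. Choose a line $L_1\subset C$ disjoint from $L_0$.
\item Projection from $L_1$ gives a morphism ${\rm Bl}_{L_1}Y\to\pp^3$. It contracts the strict transform of $C$ to a line $\ell$ and maps the exceptional divisor onto a cubic surface $S$.
\item For a general plane $\Pi$, a local snc check shows that $(\pp^3,S+\Pi)$ is an index-one lc Calabi--Yau pair of coregularity zero. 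It has a triple point at $\ell\cap\Pi$.
\item Proposition~\ref{prop:criterion-sing-p3} gives cluster type on $\pp^3$. Lemmas~\ref{lem:blow-up-of-cluster-type} and~\ref{lem:cluster-type-descends} transport it back to $(X,C_X+H_X)$.
\end{enumerate}

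This boundary has a single horizontal component $H_X$ over $\pp^1\times\pp^1$, consistent with the obstruction above.
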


\begin{proof}
    We follow the construction in the proof of \cite[Lemma 5.4]{LMV24}, which we recall for the reader's convenience.

    We use the quotient convention throughout.
    Thus we may see $X$ as a smooth divisor $X\sim 2M + F_1$ in the $\pp^2$-bundle $T := \pp_{\pp^1\times \pp^1}(\oo^{\oplus 2} \oplus \oo(1,1)),$ where $M$ is the tautological divisor and $F_1$ is the pullback of a divisor of bidegree $(1,0)$ in $\pp^1 \times \pp^1$. Denote by $F_2$ the pullback of a divisor of bidegree $(0,1)$ in $\pp^1 \times \pp^1$. By adjunction we get $-K_X\sim M+F_2$. A general fiber $F_1$ is a smooth del Pezzo surface of degree $6$, while $F_2$ is a smooth del Pezzo surface of degree $3$.
	
	There exists a divisor $L \sim M - F_1 - F_2$ on $X$ such that $L \cong \pp^1 \times \pp^1$. The linear system $|M|$ induces a morphism $\phi\colon T \to \pp^5$ such that the restriction to $X$ contracts $L$ to $L_0\cong \pp^1$ along one of its rulings. We then have $ g := \phi|_X \colon X\to Y \subset \pp^5$, where $Y$ is a smooth subvariety of degree $5$ that admits a quadric bundle $f\colon Y \to \pp^1$. Let $F$ be the class of a fiber of $f$, and $H$ the restriction of a hyperplane section of $\pp^5$.
	
	Let $C$ be the image of a general member of $|F_2|$, that is, $C \in |H-F|$. Such cubic surface contains $L_0$ as a line. Choose another line $L_1$ in $C$ disjoint from $L_0$. Then the normal bundle $N_{L_1/Y} \cong \oo \oplus\oo(-1)$. Let $\psi\colon W:= {\rm Bl}_{L_1} Y \to Y$ be the blow-up of $L_1$, with exceptional divisor $E$. Then the linear system $|\psi^* H - E|$ induces a birational morphism $\pi\colon W \to \pp^3$. This construction may be summarized with the following diagram
	\[
	\xymatrix{
		& W \ar[d]_{\psi} \ar[rd]^{\pi} \\
		X \ar[r]^{g}	& Y \ar@{-->}[r] & \pp^3.
	}
	\]

    We now construct a boundary $B$ on $\pp^3$ such that its log pullback $(X,B_X)$ is a cluster type pair.

    Let $A:= \psi^* H - E$ and $C_W$ be the strict transform of $C$ in $W$. Then $C_W \sim \psi^*(H-F) - E$. On $Y$ we have the intersection numbers
	\[
		H^3 = 5,\ H^2\cdot F = 2,\ H\cdot F^2 = F^3 = 0, \quad \text{ and } \quad\ H\cdot L_1=F\cdot L_1=1. 
	\]
	Additionally, on $W$ we obtain
	\[
		(\psi^*H)^2\cdot E = \psi^*H \cdot \psi^* F \cdot E = (\psi^*F)^2\cdot E = 0,\ \psi^*H\cdot E^2 = -1,\ \psi^* F\cdot E^2 = -1, \quad \text{and} \quad E^3 = 1.
	\]
	Hence,
	\begin{align*}
		A^2\cdot E &= -2\psi^*H\cdot E^2 + E^3 = 3\\
		\intertext{and}
		A^2\cdot C_W &= (\psi^* H)^3 - (\psi^* H^2)\cdot \psi^* F + 3\psi^* H\cdot E^2
						- \psi^* F\cdot E^2 - E^3 = 0.
	\end{align*}
	Therefore, the morphism $\pi$ contracts the cubic surface $C_W$ and maps $E$ to an irreducible cubic surface $S$. In fact, $C_W$ is contracted to a line $\ell \subseteq \pp^3$, which is consistent with the fact that the map $\pi\circ \psi^{-1}$ corresponds to the projection from the line $L_1$ in $\pp^5$ \cite[Lemma 5.4]{LMV24}.

    Let $\Pi \subseteq \pp^3$ be a general plane that does not contain $\ell$ and the centers of exceptional divisors of $\pi$. Let $P:= \pi^* \Pi$. Hence, $P$ is a general member of $|A|$, and Bertini implies that it is smooth. Recall that $A = \psi^* H - E$, so the elements of $|A|$ are the strict transforms of restriction of hyperplanes in $\pp^5$ containing $L_1$. Therefore, there exists $H_0 \in |H|$ such that its strict transform is $P$. We may assume that $H_0 \cap C = L_1 \cup Q$, where $Q$ is a smooth conic that intersects $L_1$ transversally at two points. \\ 
    
    \textit{Claim:} The divisor $E + C_W + P$ is a simple normal crossing divisor.\\ 
	
	\begin{proof}[Proof of the Claim]
		The divisor is simple normal crossing over the points away from $L_1 \cap Q$ by Bertini. For a general $H_0$, it intersects $C$ transversally at every point of $(L_1\cup Q)\setminus(L_1\cap Q)$.
        Outside $L_1$ the blow-up is an isomorphism and $C_W + P = C + H_0$, which is simple normal crossing along $Q\setminus L_1$ by the transversality above.
        Let $y_0 \in L_1 \setminus Q$. Since $C$ and $H_0$ are transverse at $y_0$ and $C\cap H_0 = L_1$ near $y_0$, we may pick local coordinates $(x,y,z)$ such that $C=\{x=0\}$, $H_0=\{y=0\}$ and $L_1=\{x=y=0\}$. On the chart $x=uy$ of the blow-up of $(x,y)$ we get $E=\{y=0\}$, $C_W=\{u=0\}$, and $P$ does not appear, since the pullback of $H_0$ is $E$. On the chart $y=vx$ we get $E=\{x=0\}$, $P=\{v=0\}$, and $C_W$ does not appear. In both charts the components are coordinate hyperplanes, so $E+C_W+P$ is simple normal crossing over a neighborhood of $y_0$.
        Over a point of $L_1 \cap Q$, because $H_0|_C = Q + L_1$, we may pick local coordinates $(x,y,z)$ such that $C = \{x = 0\}$, $L_1 = \{x = y = 0\}$ and $H_0 = \{x + yz = 0\}$. Then, on the chart $x = uy$ of the blow-up of $(x,y)$ we get $E = \{y=0\}$, $C_W = \{u = 0\}$ and $P = \{u + z=0\}$, which intersect transversally at the point $(0,0,0)$. On the chart $y=vx$, we get $E =\{x=0\}$ and $P = \{1 + vz = 0\}$, with $C_W$ not appearing. As $(v,z) \neq (0,0)$ on $P,$ the divisor $E+P$ is simple normal crossing. Therefore, $E + C_W + P$ is a simple normal crossing divisor.
	\end{proof}
	
	The claim implies that $(\pp^3,S+\Pi)$ is an index-one log Calabi--Yau pair of coregularity zero. At the point $p:=\ell\cap\Pi$, 
    the boundary $S+\Pi$
    has multiplicity $\geq 3$, and log canonicity gives multiplicity $\leq 3$, so Proposition \ref{prop:criterion-sing-p3} implies that the pair  $(\pp^3,S + \Pi)$ is of cluster type. Because $\pi^*(K_{\pp^3} + S+\Pi) = K_W + E + P + C_W$, the morphism $\pi$ only extracts log canonical and canonical places, so $(W,  E + P + C_W)$ is of cluster type by Lemma \ref{lem:blow-up-of-cluster-type}. We have that $\psi^* C = C_W + E$ and $\psi^* H_0 = P + E$, so $\psi^*(K_Y + C + H_0) = K_W + C_W + P + E$. Lemma \ref{lem:cluster-type-descends} implies that $(Y , C + H_0)$ is of cluster type. Finally, by generality of $\Pi$, we may assume that $L_0\not \subseteq H_0$. Then, $g^* C = C_X + L$ and $g^* H_0 = H_X$, where $C_X$ and $H_X$ are the strict transforms of $C$ and $H_0$, respectively. We compute 
	\[
		g^*(K_Y + C + H_0) = (K_X -L) + (C_X + L) + H_X = K_X + H_X + C_X.
	\]
	Therefore, $g$ only extracts canonical places of $(Y,C+H_0)$ and Lemma \ref{lem:blow-up-of-cluster-type} implies that the pair $(X,C_X + H_X)$ is of cluster type.
\end{proof}

Finally, we study the cluster type condition for a smooth Fano threefold which is not a toric hypersurface, but instead a complete intersection in a toric variety.

\begin{proposition}\label{prop:3-24}
Let $X$ be a smooth Fano threefold which is an intersection
of divisors of multi-degree $(1,1,0)$ and $(0,1,1)$ in $\pp^1\times\pp^2\times\pp^2$.
Then, the variety $X$ is of cluster type. 
\end{proposition}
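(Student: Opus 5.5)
We follow the strategy of Proposition~\ref{prop:3-17} and Proposition~\ref{prop:4.1}: restrict coordinate-type hypersurfaces from the ambient toric variety, then apply the fibration criterion Proposition~\ref{prop:fibration-to-smooth-surface}.

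Set $T:=\pp^1\times\pp^2\times\pp^2$ and $X=D_1\cap D_2$ with $D_1\sim(1,1,0)$ and $D_2\sim(0,1,1)$. Adjunction gives
\[
-K_X\sim\big((2,3,3)-(1,1,0)-(0,1,1)\big)|_X=(1,1,2)|_X .
\]
Choose general divisors $F$ of degree $(1,0,0)$, $G$ of degree $(0,1,0)$, and $H_1,H_2$ of degree $(0,0,1)$ on $T$. Then $(T,D_1+D_2+F+G+H_1+H_2)$ is toric-like, and its restriction gives $B:=F_X+G_X+H_{X,1}+H_{X,2}\in|-K_X|$.

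First check that $(X,B)$ is log Calabi--Yau of index one and coregularity zero. The components are general members of base point free systems, so Bertini makes $B$ simple normal crossing on the smooth $X$. For coregularity zero, we need a point of $F_X\cap G_X\cap H_{X,1}$ (or a similar triple intersection). Compute $(1,0,0)(0,1,0)(0,0,1)(1,1,0)(0,1,1)$ on $T$. Expanding with $h_1^2=0$, the only surviving monomial is $h_1h_2^2h_3^2$, with coefficient $1$. So the triple intersection is one transverse point, and the pair has a zero-dimensional snc stratum.

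Next, use the projection $\pi\colon X\to\pp^2$ onto the last factor. Given $z\in\pp^2$, the equation $(0,1,1)$ cuts a line in the middle $\pp^2$. Then $(1,1,0)$ cuts a $(1,1)$ curve in $\pp^1\times$(line). So the general fiber is a conic, and $\pi$ is a fibration to the smooth surface $\pp^2$. The divisors $F_X$ and $G_X$ restrict nontrivially to fibers (degrees $1$ and $1$ on the conic), so they are horizontal over $\pp^2$. The divisors $H_{X,i}$ are vertical. All hypotheses of Proposition~\ref{prop:fibration-to-smooth-surface} therefore hold, and $(X,B)$ is of cluster type.

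The main obstacle is verifying that $\pi$ is a genuine fibration, i.e.\ has connected fibers. If some fibers are reducible or jump in dimension, $\pi$ must still be a contraction with smooth base; it is enough that the general fiber is an irreducible conic. If that fails, we would instead project to the first two factors, or to $\pp^1\times\pp^2$, and rerun the same horizontality check. A second point is that $F_X$ and $G_X$ are irreducible. This follows from Bertini, since $X$ has dimension $3$ and the systems are base point free and not composite with a pencil.
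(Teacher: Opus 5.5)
Your proof is correct and follows the paper's strategy. You use the same boundary $F_X+G_X+H_{X,1}+H_{X,2}\in|-K_X|$, the same intersection count $F_X\cdot G_X\cdot H_{X,1}=1$ to produce a zero-dimensional snc stratum, and the same criterion, Proposition~\ref{prop:fibration-to-smooth-surface}. The only difference is the choice of base surface:
\begin{itemize}
\item The paper projects to the \emph{middle} $\pp^2$, with $H_{X,1}$ and $H_{X,2}$ as the horizontal components. That map factors through the $\pp^1$-bundle $X\to\mathbb{F}_1$, where $\mathbb{F}_1\subset\pp^1\times\pp^2$ is cut out by the $(1,1,0)$ equation, so it has a two-dimensional fiber over the blown-up point.
\item You project to the \emph{last} $\pp^2$. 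This is an equidimensional conic bundle, with reducible fibers over a single line, and $F_X,G_X$ are horizontal of degree one on the general fiber.
\end{itemize}
Both maps are fibrations onto a smooth surface with two horizontal boundary components, so either choice works.

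Two small corrections. First, the restriction of $|\mathcal{O}(1,0,0)|$ to $X$ \emph{is} composite with a pencil: its members are the fibers of $X\to\pp^1$. So Bertini does not give irreducibility of $F_X$ as you stated. Instead, note that $F_X$ is the restriction, over a line, of the $(1,1)$-divisor in $\pp^2\times\pp^2$ defining $D_2$. That divisor is a $\pp^1$-bundle over the middle $\pp^2$, so $F_X$ is an irreducible $\pp^1$-bundle over $\pp^1$. Second, your fallback discussion about connectedness is unnecessary. The general fiber over the normal surface $\pp^2$ is a connected conic, and Stein factorization then already gives $\pi_*\mathcal{O}_X=\mathcal{O}_{\pp^2}$.
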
 

\begin{proof}
Let $F,G$ and $H_i$ be general divisors of multi-degree
$(1,0,0)$, $(0,1,0)$, and $(0,0,1)$ in $\pp^1\times \pp^2\times\pp^2$, for $i\in \{1,2\}$.
Let $F_X$, $G_X$, and $H_{X,i}$ be the restrictions of $F,G$ and $H_i$, respectively.
Then, we have a log Calabi--Yau pair
\begin{equation}\label{eq:cy-3-24}
(X,F_X+G_X+H_{X,1}+H_{X,2}). 
\end{equation}
We argue that this Calabi--Yau pair has coregularity zero. 
Indeed, $F\cap G\cap H_1\simeq \pp^1\times \pp^1 =:S$ 
and the intersection of $S$ with $X$ is the intersection
of a curve of multi-degree $(1,0)$ and a curve of multi-degree $(1,1)$.
Thus, the surfaces $F_X,G_X$, and $H_{X,1}$ intersect at a point in $X$. 
Hence, the pair~\eqref{eq:cy-3-24} has coregularity zero. 
Let $\pi_2\colon X \rightarrow \pp^2$ be the fibration induced by projecting
$X$ into the second component of $\pp^1\times \pp^2\times \pp^2$.
Note that $H_{X,1}$ and $H_{X,2}$ are both horizontal over $\pp^2$. 
Hence, by Proposition~\ref{prop:fibration-to-smooth-surface}, we conclude that the variety $X$ is of cluster type.
\end{proof}

\section{Special varieties in the Mori-Mukai family \texorpdfstring{\textnumero}{No.}\texorpdfstring{\,}{ }2.4}
\label{sec:special-2.4}

In this section we will show that some special blow-ups of $\pp^3$ along the intersection of two cubic surfaces are not of cluster type. 

\begin{theorem}\label{thm:2.4-not-cluster-type}
Let $C$ be the intersection of $\{x^3+y^3+z^3+w^3=0\}$ and $\{a_0x^3+a_1y^3+a_2z^3+a_3w^3=0\}$ in $\pp^3$, 
where $a_i$ are pairwise distinct nonzero coefficients. The blow-up of $\pp^3$ along $C$ is a Fano variety in  \textnumero\,2.4 which is not of cluster type.
\end{theorem}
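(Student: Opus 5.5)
The plan is to exploit the del Pezzo fibration $g\colon X={\rm Bl}_C\pp^3\rightarrow \pp^1$ given by the pencil $\Lambda=\langle F,G\rangle$ of cubic surfaces through $C$, and the four special members of $\Lambda$. As in the Example following Lemma~\ref{lem:curves-in-cubic-surfaces}, these are the only singular members: the cones $G-a_iF$ over smooth plane cubics. Denote by $F_1,\dots,F_4\subset X$ the corresponding fibers of $g$, and let $H$ be the pull-back of a plane and $E$ the exceptional divisor. Then $F_i\sim 3H-E$ and $-K_X\sim 4H-E=H+(3H-E)$.

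Each $F_i$ is isomorphic to its image cone, since $C$ is a Cartier divisor on it. Each $F_i$ is therefore irreducible, reduced and birational to $\mathcal{E}_i\times\pp^1$ with $\mathcal{E}_i$ an elliptic curve, so it is \emph{not rational}. The first, purely numerical, step shows that any $B\in|-K_X|$ contains at most one fiber of $g$. Indeed, $-K_X-2(3H-E)=-2H+E$ is not effective: intersect with the strict transform of a general line, which has $H\cdot\ell=1$ and $E\cdot\ell=0$. Hence at least three of the $F_i$, say $F_1,F_2,F_3$, are not components of $B$.

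The second step is the geometric heart of the argument. Suppose $\phi\colon\mathbb{G}_m^3\dashrightarrow X$ gives a cluster type structure with boundary $B$, defined and an open embedding on $U\subset\mathbb{G}_m^3$ with ${\rm codim}(\mathbb{G}_m^3\setminus U)\geq 2$. Since $\Omega_3$ has no zeros, $X\setminus \phi(U)$ is $\operatorname{Supp}B$ union a closed set of codimension at least two. Consequently every prime divisor $D\not\subset B$ meets $\phi(U)$ in a dense open subset. Choose the coordinate on $\pp^1$ so that the (at most one) fiber in $B$ lies over $\infty$. Then $f:=g\circ\phi$ is regular on $U$, hence, by normality and the codimension-two condition, a Laurent polynomial on $\mathbb{G}_m^3$. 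Its level sets $\{f=t_i\}$, for $t_i$ the value with $g^{-1}(t_i)=F_i$, are closures of $\phi^{-1}(F_i)\cap U$: irreducible hypersurfaces in the torus that are birational to the irrational surfaces $F_i$.

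The final step is to derive a contradiction from this, and it is where I expect the main obstacle. A single hypersurface in $\mathbb{G}_m^3$ can certainly be irrational, so irrationality alone is not enough. I would first try to show that $f$ is forced to be a monomial character, up to a M\"obius transformation. Then every level set over $\mathbb{G}_m$ is a translate of a finite union of subtori, hence rational, contradicting the irrationality of $F_1$.

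To force $f$ to be monomial, I would use coregularity zero and index one of $(X,B)$. Via Proposition~\ref{prop:cluster-type-bir}, pass to a toric model $(\pp^n,\Sigma^n)\dashrightarrow (X,B)$. The fibration $g$ then corresponds to a crepant fibration of a toric-model pair whose base pair must be $(\pp^1,\{0\}+\{\infty\})$ by the canonical bundle formula (compare the proofs of Propositions~\ref{prop:fibration-to-smooth-surface} and~\ref{prop:fibration-to-smooth-curve}). This forces the horizontal part to be toric over $\mathbb{G}_m\subset\pp^1$, so that $f$ has no zeros or poles in the torus away from two values. Once $f$ is monomial, all fibers of $g$ over $\mathbb{G}_m$ except those in $B$ are rational. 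This contradicts the presence of at least three irrational fibers $F_1,F_2,F_3\not\subset B$, so $X$ is not of cluster type.

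A fallback in case this base-pair argument fails is a valuative one, in the spirit of Lemma~\ref{lem:movable-intro}. Blow up the vertex of some $F_i\not\subset B$ and compare log discrepancies of $(X,B)$ there, using that the vertex is a simple elliptic, non-toroidal singularity of $F_i$.
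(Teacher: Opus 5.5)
Your Step~1 is correct, but Step~2 rests on a false claim. Being an open embedding in codimension one is a condition on the source only. The complement $X\setminus\phi(U)$ may contain prime divisors along which $\Omega_3$ has neither zeros nor poles. These are exactly the components $D_1,\dots,D_r$ of $(X\setminus B)\setminus\mathbb{T}$ appearing in \eqref{eq:LES}, and for $B=S_X+P_X$ in this very family the paper finds $r=3$. Such divisors can be irrational. Blowing up a smooth plane cubic lying in one coordinate plane of $(\pp^3,\Sigma^3)$ gives a cluster type pair (Lemma~\ref{lem:blowup-curve-in-plane}, family \textnumero\,2.28). Its exceptional divisor is an elliptic ruled surface that is not contained in the boundary and is disjoint from the torus. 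So nothing prevents your cones $F_i$ from being among the $D_j$. Irrationality obstructs cluster type only for components of $B$, and that is how the paper uses the cones (the cone-plus-plane case). Conversely, a cone meeting the torus gives no contradiction by itself: Laurent polynomials can have level sets birational to elliptic ruled surfaces, e.g.\ $x+y+(xy)^{-1}$ on $\mathbb{G}_m^3$.

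This makes monomiality of $f$ the entire content, and your plan for it does not work. It is in fact incompatible with your Step~2. A M\"obius transform of a character omits two values on the torus, so two fibers would miss $\phi(U)$, while at most one fiber lies in $B$. In the correct picture, when $B$ contains a fiber, $f$ is a Laurent polynomial and omits at most one finite value. So at most one further fiber is a $D_j$, and nothing forces even that one.

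The canonical bundle formula does not supply this. In Propositions~\ref{prop:fibration-to-smooth-surface} and~\ref{prop:fibration-to-smooth-curve} the base pair $(\pp^1,\{0\}+\{\infty\})$ is a hypothesis, not a consequence of cluster type. For $B'=S+P$ with $S$ a smooth member of the pencil, $(X,S_X+P_X)$ is plt away from $S_X$, so only $\infty$ gets coefficient one. Showing that this is incompatible with cluster type is precisely the valuative argument of Proposition~\ref{prop:2.4-smooth-cubic}. Even when the base pair is $(\pp^1,\{0\}+\{\infty\})$, $f$ need not be monomial. Consider the pencil of planes through a general line of $H_0$, on the crepant blow-up of $(\pp^3,\Sigma^3)$ along that line. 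There the base pair is $(\pp^1,\{0\}+\{\infty\})$, yet $f=\lambda_1y_1+\lambda_2y_2+\lambda_3y_3$ in torus coordinates. Your fallback fails as well: singularities of a divisor not contained in $B$ do not control the log discrepancies of $(X,B)$.

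The paper instead descends to $\pp^3$. Any candidate $B$ is the log pull-back of a quartic $B'\supset C$, and there are two cases. If $B'$ is irreducible, it has coregularity at least one by Lemmas~\ref{lem:2.4-normal-quartic} and~\ref{lem:2.4-singular-surfaces}. Otherwise $B'$ is a plane plus a pencil member: a cone member gives an irrational boundary component, and a smooth member is excluded by Proposition~\ref{prop:2.4-smooth-cubic}.
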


Before we prove the main theorem of this section, we will start with some statements that hold for every smooth complete intersection of two cubic surfaces.
The strategy of the proof of the next proposition will be further developed and used in Section~\ref{sec:MM-1-6} (see, e.g., Lemma~\ref{lem:movable-system}).

\begin{proposition}\label{prop:2.4-smooth-cubic}
Let $C$ in $\pp^3$ be a smooth complete intersection of two cubic surfaces. Let $X$ be the blow-up of $\pp^3$ along $C$. Let $B$ be the strict transform in $X$ of the sum of a plane and a smooth cubic, containing $C$, in $\pp^3$. Then, the pair $(X,B)$ is not of cluster type.
\end{proposition}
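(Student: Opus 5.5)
The plan is to use the movable linear system obstruction that the introduction advertises (Lemma~\ref{lem:movable-intro}, and its refinement via adjunction, Lemma~\ref{lem:criteria-under-adjunction}). Write $\pi\colon X\to\pp^3$ for the blow-up with exceptional divisor $E$. Let $S$ be the smooth cubic and $H$ the plane. Since $C$ has degree $9$ it is not planar, so $C\not\subset H$ and ${\rm mult}_C(S+H)=1$. Hence $\pi^*(K_{\pp^3}+S+H)=K_X+S'+H'$ with $B=S'+H'$, and $E$ is only a canonical place.

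First I would dispose of the degenerate cases. A cluster type pair has index one and coregularity zero. Adjunction to $S$ gives the pair $(S,S\cap H)$, and $S\cap H$ is a plane cubic. Since $S$ is smooth, if this curve is smooth or cuspidal (or otherwise not a nodal cycle of rational curves), then the coregularity is positive or the pair is not lc. So we may assume $S\cap H$ is a nodal anticanonical cycle (an irreducible nodal cubic, a conic plus a line, or a triangle). The lc centers of $(X,B)$ are then the curve $S'\cap H'$ and its nodes. In particular every lc place of $(X,B)$ has center contained in $S'$. Moreover, $C$ is Cartier on the smooth surface $S$, so $S'\cong S$. Finally $-K_X|_{S'}=(4h-E)|_S=4h-3h=h=-K_S$, so $B|_{S'}$ corresponds to $B_S:=S\cap H\in|-K_S|$.

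Next, assuming $(X,B)$ were of cluster type, I would extract, exactly as in Lemma~\ref{lem:movable-intro}, a movable linear system $\mathcal M\subseteq|-mK_X|$ and an lc place $v$ of $(X,B)$ with $v(\mathcal M)>m\,v(B)$. The hypothesis ${\rm Cl}(X)\simeq\zz[B]$ fails here because $\rho(X)=2$. I would redo the construction using torus characters, whose divisors are supported on $B$ and on the divisors of $X\setminus B$ missing from the torus; the only ones of the latter kind are at worst $E$-type classes. The aim is still to produce such an $\mathcal M$ centered at a toric lc place. By Lemma~\ref{lem:criteria-under-adjunction} it suffices to contradict the inequality after adjunction to $S'$. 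Since $\mathcal M$ is movable, its general member does not contain $S'$, so $\mathcal M|_{S'}\subseteq|-mK_S|$. The relevant valuations are the monomial valuations $v_{a,b}$ at a node $q$ of $B_S$, taken with respect to the two branches $\{x=0\}$ and $\{y=0\}$. These satisfy $v_{a,b}(B_S)=a+b$.

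The key surface computation is then the following. Let $D\in|-mK_S|$ and write $D=D_0+\sum c_iN_i$, where the $N_i$ are components of $B_S$ and $D_0$ contains none of them. Put $t=v_{a,b}(D_0)$. The local intersection of $D_0$ with the branch $\{y=0\}$ at $q$ is at least $t/a$, and with $\{x=0\}$ it is at least $t/b$. Summing over the branches through $q$ and using $D_0\cdot B_S\le D\cdot(-K_S)=3m$ (adjusted componentwise for reducible $B_S$), we get $t(\tfrac1a+\tfrac1b)\le 3m$, that is $t\le 3m\,ab/(a+b)$. Since $3ab\le(a+b)^2$, this gives $t\le m(a+b)$. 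The components $c_iN_i$ contribute at most $c_i\,v(N_i)$, which is proportional to their share of $v(B_S)$. Hence $v(D)\le m\,v(B_S)$ for every $D$, contradicting $v(\mathcal M)>m\,v(B)$. Therefore $(X,B)$ is not of cluster type.

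The main obstacle is the second step: adapting Lemma~\ref{lem:movable-intro} to Picard rank two, where the characters may also carry the divisor $E$. I would handle this by pushing forward to $\pp^3$, where ${\rm Cl}=\zz$. I would then check that the multiplicity of $\mathcal M$ along $C$ is at most $m$, so that $-mK_X$ is the correct class upstairs.
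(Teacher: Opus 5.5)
Your surface estimate is correct. For effective $D\sim -mK_S$ and a monomial valuation at a node of the cycle $B_S=S\cap H$, the branch-intersection bound (done componentwise for reducible cycles) gives $v(D)\le m\,v(B_S)$. This is exactly the input the paper takes from \cite{CPW16}, so that part is a valid elementary substitute.

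The gap is the global step. You flag it but do not close it, and the proposed repair does not work. Lemma~\ref{lem:movable-system} needs every character $x_i$ of the basis to have a pole along the boundary. This is where $\Cl(X)=\zz[B]$ with $B$ irreducible is used, and it fails here in two ways.
\begin{itemize}
\item The components $D_1,D_2,D_3$ of $U\setminus\mathbb T$, with $U=X\setminus B$, are arbitrary prime divisors of class $dH-\mu E$, not ``$E$-type'' classes.
\item If $E$ misses the torus, then $E\cap U$ is one of them, and its character has $\ddivv_X=E+S'-3H'$. This is a zero, not a pole, along $S'$, so the members $mB+\frac{m}{m_i}\ddivv_X(x_i)$ cannot be formed.
\end{itemize}
Pushing forward to $\pp^3$ does not help, for three reasons.
\begin{itemize}
\item $S+H$ is reducible, so the lemma still does not apply.
\item If $E$ meets the torus, $\pi$ contracts the divisor $E\cap\mathbb T$, and the torus does not give $(\pp^3,S+H)$ a cluster type structure.
\item If $E$ misses the torus, that character becomes the nonconstant unit $s/\ell^3$ on $\pp^3\setminus(S\cup H)$, where $s,\ell$ are equations of $S,H$.
\end{itemize}
Moreover, a strict transform of a member of $|4mH|$ lies in $|-mK_X|$ only if its multiplicity along $C$ is exactly $m$; ``at most $m$'' is not enough. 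Finally, $\ord_{H'}$ is an lc place not centered in $S'$. You also need $(X,H')$ plt near $C\cap H$, where $H'$ acquires $A_{r-1}$ points at contact of order $r$.

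The missing idea is that you need neither movability nor the class $-mK_X$; one character suffices. Since $S'\sim 3H-E$ and $H'\sim H$ form a basis of $\Cl(X)$, you get $\Cl(U)=0$ and $\oo(U)^\times=\cc^\times$. The exact sequence for $\mathbb T\subset U$ then forces exactly three divisors $D_i$ and a character basis $h_i$ with $\ddivv_U(h_i)=D_i$. Pick $h_1$ with $\overline{D_1}\neq E$. Then $\ddivv_X(h_1)=D_1-\alpha S'-\beta H'$, where:
\begin{itemize}
\item $\alpha=\mult_C\pi(D_1)\ge0$;
\item $\beta\ge0$, because $D_1|_{S'}\sim\beta(-K_S)$ is effective.
\end{itemize}
Apply your surface inequality with $m$ replaced by $\beta$ and feed it into Lemma~\ref{lem:criteria-under-adjunction}. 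This gives $v(D_1)\le\beta\,v(H')$ for every lc place $v$ centered in $S'$. Hence $v(h_1)\le-\alpha\,v(S')\le0$ there, while $\ord_{H'}(h_1)=-\beta\le0$. But the toric valuation with $w(h_1)=1$ and $w(h_2)=w(h_3)=0$ is an lc place of $(X,B)$, because the form with poles exactly $B$ is a constant multiple of $\Omega_3$ on $\mathbb T$. This contradiction is the paper's proof.
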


\begin{proof}
Let $S$ be a smooth member in the pencil generated by the cubic surfaces defining $C$, and $P$ be any plane in $\pp^3$. Set $B = S_X + P_X$ as in the statement. For the sake of contradiction, assume that $(X,B)$ is of cluster type. So, in particular, the log Calabi--Yau pair $(\pp^3,S+P)$ is log rational. Let $\Delta \in |-K_S|$ be a reduced nodal plane section which is not smooth. Consider $R$ an effective $\qq$-divisor such that $R \sim_{\qq} b\Delta$, for some $b\geq 0$. One can view $\Delta$ as the tangent plane section at its nodal points, that is, $\Delta = T_p \coloneqq T_pS \cap S$ for all $p \in \text{Sing}(\Delta)$. For $p$ a nodal point of $\Delta$, one has that $(S,T_p)$ is log canonical at $p$. If $b> 0$, by the contrapositive of \cite[Corollary 1.13]{CPW16}, the pair $(S, \frac{1}{b}R)$ is log canonical at $p$. For $p$ a smooth point of $\Delta$, we have that the pair $(S,\Delta)$ is plt near $p$, and so any place  over $p$ is not strictly log canonical. Thus, we obtain the inequality 
\begin{equation*}
v(R) \leq b v(\Delta)
\end{equation*} for every log canonical place $v$ of $(S,\Delta)$. Consequently, for $0\leq \epsilon \ll 1 $, the pair 
\begin{equation}\label{lc pair from ineq lc places}
(S,(1-\epsilon b)\Delta+\epsilon R)
\end{equation}
is log canonical. 

Let $D \subset X$ be a prime divisor different from $S_X,P_X$ and $E$, where the latter is the exceptional divisor of the blow-up $\varphi \colon X  \rightarrow \pp^3$. Its image in $\pp^3$ is a surface of degree $d$ and multiplicity $m \geq 0$ along $C$. Set $H$ to be the pullback of a plane section in $\pp
^3$. Thus, we have the following divisor class relations $$D \sim d H - mE = dH - 3mH + 3mH -mE = m(3H-E) +(d-3m)H.$$

Restricting to $S_X$, we have that $R \coloneqq D|_{S_X} \sim -(d-3m)K_S$. Since the class $R$ is effective and $-K_S$ is ample, we have $d-3m \geq 0$.

Note that $P \cap S$ is a reduced nodal plane cubic. If it is smooth, then the coregularity of the log Calabi-Yau pair $(\pp^3,S+P)$ is one, which contradicts log rationality. Thus, plugging $\Delta = P|_S$ and $R=D|_{S_X}$ under $S_X\simeq S$ in \eqref{lc pair from ineq lc places}, we get that the pair $(S,(1-\epsilon b)P|_S+\epsilon R)$ is log canonical for $0 \leq \epsilon \ll 1$, where $b \coloneqq d- 3m$. Since $\varphi$ is crepant, its restriction to log canonical places is a crepant map between the induced log Calabi-Yau pair structures on them \cite[Theorem 4.16]{K13}. So, the pair $(S_X, (1-\epsilon b )P_X|_{S_X} + \epsilon D|_{S_X})$ is log canonical. Inversion of adjunction \cite[Theorem]{Kaw07} implies that
\begin{equation}\label{pair inv of adj}
(X,S_X + (1 - \epsilon b)P_X + \epsilon D )
\end{equation}
is log canonical near $S_X$ for $0 \leq \epsilon \ll 1$.

If $v$ is a log canonical place of $(X,B)$ whose center is contained in $S_X$, its discrepancy with respect to the pair~\eqref{pair inv of adj} is precisely $ \epsilon (bv(P_X)-v(D))$. Therefore, one obtains 
\begin{equation}\label{ineq lc places}
v(D) \leq b v(P_X).
\end{equation}

The remaining lc place is $P_X$ itself. Indeed, at a point of $C\cap P$ the plane transform has local equation $xy+z^r=0$, where $r$ is the intersection multiplicity, and thus has only Du Val singularities. Hence $(X,P_X)$ is plt, and $(X,S_X+P_X)$ is plt away from $S_X$.

Set $U = X \setminus B$. Notice that we can write $ \text{Pic}(X)=\text{Cl}(X) = \zz  H \oplus \zz  E = \zz  H \oplus \zz  (3H - E)$. We have the following exact sequence: $$\zz  S_X \oplus \zz  P_X \longrightarrow\Cl(X) \longrightarrow\Cl(U) \longrightarrow 0.$$

As the first arrow is an isomorphism, we get that $\Cl(U) = 0$. The cluster type condition on $X$ implies the existence of an embedding $\mathbb{T} \coloneqq \mathbb{G}_m^3 \hookrightarrow U$ by \cite[Theorem 16.2.3]{Cor25} and \cite[Theorem 1.3(4)]{EFM24}. Let $D_1,\dots,D_r$ be the prime divisorial
components of $U\setminus \mathbb{T}$. We have the following exact sequence:
\begin{equation}\label{eq:LES}
0\longrightarrow\oo(U)^\times/\cc^\times\longrightarrow\oo(\mathbb{T})^\times/\cc^\times
\xrightarrow{\ \ddivv_U\ }\bigoplus_{i=1}^{r}\zz[D_i]\longrightarrow\Cl(U)
\longrightarrow\Cl(\mathbb{T})\longrightarrow0.
\end{equation}

We claim that the middle arrow is an isomorphism. The surjectivity comes from the fact that $\Cl(U)=\Cl(\mathbb{T})=0$, and the injectivity comes from the fact $\oo(U)^{\times} = \cc^\times$. Indeed, for the latter, an invertible regular function $f$ on $U$ defines a rational function on $X$ whose divisor is supported on $S_X +P_X $. Thus, abusing notation, $\ddivv(f) = c S_X + dP_X \sim 0$ for some $c,d \in \zz$. Since the classes $S_X$ and $P_X$ are linearly independent in $\Cl(X)$, then $c=d=0$. Hence, $f \in  \oo(X) \simeq \cc$ and we get the asserted fact.

Therefore $r=3$, and so there exist characters $h_1,h_2,h_3$ forming a basis of the character lattice of $\mathbb{T}$ such that $\ddivv_U(h_i)=D_i$. As $U$ is smooth, hence normal, each $h_i$ can be extended to a regular function on $U$. Without loss of generality, we may assume that the closure of $D_1$ in $X$ is not $E$. By abuse of notation, still denote it by $D_1$. As $D_1$ is also different from $S_X$ and $P_X$, and writing its class in $\Cl(X)$ in the basis formed by the latter as before, one has $$ \ddivv_X(h_1) = D_1 - mS_X - bP_X,$$ where $b,m \geq 0$. 

For every log canonical place centered in $S_X$, the inequality~\eqref{ineq lc places} implies $v(h_1) \leq -mv(S_X) \leq 0$. For the remaining log canonical place $P_X$, we have $v_{P_X}(h_1)=-b\leq 0$. Thus, we obtain the following claim.\\

\textit{Claim:} For every log canonical place $v$ of $(X,B)$, we have $v(h_1)\leq 0$.\\

Let us show this leads to a contradiction. Choose a top-degree rational form $\tau$ with $\ddivv_X(\tau) = -B$.  On $\mathbb{T}$, it is therefore a nowhere zero regular form, so we may write
$$
\tau=ch_1^{k_1}h_2^{k_2}h_3^{k_3}\,
\frac{dh_1}{h_1}\wedge\frac{dh_2}{h_2}\wedge\frac{dh_3}{h_3},$$
where $c\in\cc^\times $ and $\ k_i\in\zz$.

For any divisorial valuation $w$ of the common function field of $\mathbb{T}$ and $X$, the log discrepancy with respect to $(X,B)$ is given by $1+\ord_w(\tau)$. Indeed, taking $K_X = -B = \ddivv(\tau)$ and the corresponding canonical divisor on a model where $w$ is a divisor, then $K_X+B = 0$ as a divisor.

Now, looking at the toric compactification $(\pp^1)^3$ in the coordinates $h_i$, the valuations at $h_i=0$ and $h_i=\infty$ have discrepancies $k_i$ and $-k_i$,
respectively. Log canonicity then forces $k_i=0$ for all $i$. By the Claim, we obtain that the valuation satisfying $w(h_1)=1$ and $w(h_2)=w(h_3)=0$ is a log canonical place, contradicting the inequality~\eqref{ineq lc places}. So, the cluster type property is not satisfied by the log Calabi-Yau pair $(X,B)$.

\end{proof}

\begin{lemma}\label{lem:2.4-normal-quartic}
Let $C$ in $\pp^3$ be a smooth complete intersection of two cubic surfaces. Every irreducible slc quartic surface containing $C$ is normal.
\end{lemma}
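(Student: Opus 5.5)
We argue by contradiction, using the classification of non-normal irreducible quartic surfaces together with degree and genus constraints on curves contained in them. Let $Q\subset \pp^3$ be an irreducible slc quartic containing $C$, and suppose $Q$ is not normal. Since $Q$ is slc, it is Gorenstein, $S_2$, and has at worst double crossings in codimension one. Hence its non-normal locus is a curve $\Gamma$ along which $Q$ has multiplicity exactly two generically; we may take $\Gamma$ reduced as the double curve. We will bound the geometry of $\Gamma$ and of the normalization $\nu\colon Q^\nu\to Q$, and then show that $Q^\nu$ cannot carry a curve of degree $9$ and genus $10$ mapping birationally onto $C$.

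First, we control $\Gamma$ by intersecting with the cubics. Let $F$ be a cubic in the pencil defining $C$ that does not contain $Q$. Then $F\cap Q$ is a curve of degree $12$ containing $C$, so $F|_Q = C + R$ with $R$ a curve of degree $3$. Since $C$ is smooth and $Q$ is singular along $\Gamma$, the curve $C$ meets $\Gamma$ only in points where $C$ is smooth but $Q$ is double. The key relation is the conductor adjunction: $K_{Q^\nu} = \nu^*(K_Q) - \mathrm{Cond} = -\nu^*\mathcal{O}(0)\ldots$, more precisely $K_{Q^\nu}+\Gamma^\nu = \nu^*K_Q = 0$, with $\Gamma^\nu$ the reduced conductor. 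So $(Q^\nu,\Gamma^\nu)$ is an lc log Calabi--Yau pair with $-K_{Q^\nu}=\Gamma^\nu$ effective and nonzero. Therefore $Q^\nu$ is a rational surface or a ruled surface over an elliptic curve (the cone case). By the classification of non-normal quartics (Urabe, or the slc quartic classification used in \cite{AdSFM26}), $\deg\Gamma\le 3$ in the irreducible case, unless $Q$ is a cone over a plane quartic, which has isolated non-normal points only and is not slc anyway.

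Next, let $\widetilde C\subset Q^\nu$ be the strict transform of $C$, so that $\widetilde C\cong C$ has genus $10$. Adjunction on the normal surface $Q^\nu$ gives
\[
2g(\widetilde C)-2 \le \widetilde C^2 + K_{Q^\nu}\cdot\widetilde C = \widetilde C^2 - \Gamma^\nu\cdot \widetilde C,
\]
and hence $\widetilde C^2\ge 18+\Gamma^\nu\cdot\widetilde C\ge 18$. Write $h=\nu^*\mathcal{O}(1)$, so that $h^2=4$ and $h\cdot \widetilde C=9$. The Hodge index theorem then gives $\widetilde C^2\le (h\cdot\widetilde C)^2/h^2 = 81/4$, i.e.\ $\widetilde C^2\le 20$. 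Now we use $3h\sim \widetilde C+\widetilde R$ on $Q^\nu$, where $\widetilde R$ is the transform of $R$, so $h\cdot\widetilde R=3$. This gives $\widetilde C^2 = 27 - \widetilde C\cdot \widetilde R$, and therefore $\widetilde C\cdot\widetilde R\le 9$. We then run through the finitely many possible pairs $(Q^\nu,\Gamma^\nu)$ in the classification. These are rational scrolls and projected Veronese-type surfaces, namely $\mathbb F_n$ with $h=\Sigma+kl$, or $\pp^2$ with $h$ of degree $2$ in the Steiner case, and so on, each with $h^2=4$. In each case we compute the integer classes $\widetilde C$ satisfying $h\cdot\widetilde C=9$ and the arithmetic genus $10$, and show that none exists; compare the degree–genus check in Lemma~\ref{lem:normal-cubic-surfaces}. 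For instance, on $\pp^2$ with $h=2L$ the degree $h\cdot\widetilde C$ is even, a contradiction. On $\mathbb F_n$ we write $\widetilde C=a\Sigma+bl$, solve the degree condition, and observe that the genus formula forces a non-integer or an out-of-range value.

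The main obstacle is obtaining a complete and correct list of normalizations $Q^\nu$ of irreducible non-normal slc quartics, including the elliptic ruled case where $\Gamma$ is a double line or conic. If that classification is unwieldy, I would try first the cruder bound above: $\widetilde C^2\in\{18,19,20\}$ combined with $\widetilde C\cdot\Gamma^\nu\le 2$ and the parity constraints from $h^2=4$, $h\cdot\widetilde C=9$. I expect these alone to rule out most cases via the Hodge index equality case.
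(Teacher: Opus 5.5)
\emph{The proof stops exactly where the work has to be done.} The step ``run through the finitely many possible pairs $(Q^\nu,\Gamma^\nu)$'' is not carried out, and it is not backed by a precise classification. The list you sketch is also incomplete. Besides rational quartic scrolls and Steiner-type surfaces, irreducible non-normal slc quartics include:
quartics with a double conic, whose normalizations are, e.g., quartic del Pezzo surfaces with $h=-K_{Q^\nu}$;
non-ruled quartics with a double line, which have sectional genus two and whose normalization is a conic bundle over $\mathbb{P}^1$ given by the planes through the line;
and cases with $Q^\nu$ singular, where $\widetilde C^2$ is only rational and adjunction carries a nonzero different.

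Your fallback does not close the gap either. The data $h^2=4$, $h\cdot\widetilde C=9$, $\widetilde C^2=20$, $\Gamma^\nu\cdot\widetilde C=2$ satisfy both adjunction for genus $10$ and the Hodge index inequality ($4\cdot 20<81$). A contradiction appears only once you know more about $Q^\nu$. For instance, if $h\cdot K_{Q^\nu}=-2$ and $Q^\nu$ is smooth, Hodge index applied to $\widetilde C+\tfrac12K_{Q^\nu}$ shows that a genus-$10$ class requires $K_{Q^\nu}^2\le -8$. Excluding that is precisely the classification you have not done.

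\emph{The missing idea is to use that $C$ is a complete intersection, not just its degree and genus.}
Every quartic through $C=\{F=G=0\}$ has the form $Q=\{LF+MG=0\}$, with $L,M$ independent since $Q$ is irreducible. So $Q$ contains the line $\ell=\{L=M=0\}$.
Along $\ell\setminus C$ we have $dQ=F\,dL+G\,dM\neq 0$, so $Q$ is smooth there. Your bound $\deg\Gamma\le 3$ shows $C\not\subset\Gamma$. Hence $\widetilde C\cong C$ and $\bar\ell\cong\ell$.
On $Q$ the maps $[F:G]$ and $[-M:L]$ coincide. Comparing the fixed parts of the two pencils on $Q^\nu$ gives $\widetilde C\sim 2h+\bar\ell$.

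Now the genus alone decides. If $Q^\nu$ were smooth, then using $h\cdot\bar\ell=1$ and $\bar\ell^2+K_{Q^\nu}\cdot\bar\ell=-2$ you get $18=(2h+\bar\ell)\cdot(2h+\bar\ell+K_{Q^\nu})=18+2h\cdot K_{Q^\nu}$.
In general, apply Riemann--Roch to $\mathcal{O}(-\bar\ell)$ and $\mathcal{O}(-\bar\ell-2h)$; their local correction terms coincide because $h$ is Cartier. Comparing with the ideal sequences of $\bar\ell$ and $\widetilde C$ gives $-10=\chi(\mathcal{O}_{\widetilde C})-\chi(\mathcal{O}_{\bar\ell})=-2h^2-2h\cdot\bar\ell-h\cdot K_{Q^\nu}=-10+h\cdot\Gamma^\nu$.
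Hence $h\cdot\Gamma^\nu=0$, so $\Gamma^\nu=0$ by ampleness of $h$, and $Q$, being $S_2$, is normal. This is the paper's argument; it is uniform and needs no classification of non-normal quartics.
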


\begin{proof}
Write $C=\{F=G=0\}$ and $S=\{LF+MG=0\}$, where $L,M$ are independent linear forms, and put $\ell=\{L=M=0\}$. Let $\nu\colon\bar S\to S$ be the normalization, $H=\nu^*\mathcal O_S(1)$ and $\Delta$ its conductor. Then $K_{\bar S}+\Delta\sim0$. The non-normal locus does not contain $C$ by~\cite[Proposition 3.3]{Duc24}. Moreover $S$ is smooth generically along $\ell$, since there $dS=F\,dL+G\,dM$. The strict transforms $\bar C,\bar\ell$ map finitely and birationally to the smooth curves $C,\ell$, hence isomorphically. Comparing fixed divisors in $[F:G]=[-M:L]$ gives $\bar C\sim2H+\bar\ell$.

For $\mathcal F=\mathcal O_{\bar S}(-\bar\ell)$, the two curve ideal sequences give $\chi(\mathcal F)-\chi(\mathcal F(-2H))=\chi(\mathcal O_{\bar C})-\chi(\mathcal O_{\bar\ell})=-10$. 
Since $H$ is Cartier, $\mathcal{F}$ and $\mathcal{F}(-2H)$ have the same local correction terms in the Riemann--Roch formula. Therefore  $\chi(\mathcal F)-\chi(\mathcal F(-2H))=-2H^2-2H \cdot \bar\ell-H \cdot K_{\bar S}=-10+H\cdot \Delta$. Thus $H \cdot \Delta=0$, and ampleness gives $\Delta=0$. Since $S$ is a hypersurface, it is $S_2$, hence normal.
\end{proof}

The following lemma will be required to perform some local computations.

\begin{lemma}\label{lem:2.4-local-computations}
Let $(S;0)$ be the germ of a surface singularity in $\mathbb{A}^3_{u,v,w}$ with local equation $$q(u,v)+w^3l(u,v),$$
where $q$  is a convergent power series with no constant or linear terms and $l$ a nonzero linear form. Assume the quadratic part $q_2$ of $q$ has rank one. After a linear change of the variables $u,v$, write $q_2=u^2$ and $l=\rho u+\sigma v$,
where $\rho,\sigma\in \mathbb{C}$ are not both zero. Then, there are local analytic
coordinates $(U,v,w)$, with $\partial U/\partial u(0)=1$, in which the equation of $S$
becomes
\[
U^2+h(v)+\sigma vw^3-\frac{\rho^2}{4}w^6+R(v,w)=0.
\]
Here, $h$ vanishes to order at least three and $R \in (v^2w^3,vw^6,w^9) \subset \mathbb {C}\{v,w\}.$

Furthermore, if $(S,0)$ is an isolated log canonical singularity, then it is Du Val or simple elliptic. 
\end{lemma}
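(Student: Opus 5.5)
We argue by completing the square in $U$, followed by a classification of the resulting double point. First we split $q$ by its dependence on $u$. By the Weierstrass preparation theorem, applied to $q+w^3l$ regarded as a series in $u$ (whose $u^2$ coefficient is a unit because $q_2=u^2$), we may write $q(u,v)=u^2a(u,v)+u\,b(v)+c(v)$, where $a$ is a unit with $a(0)=1$, $b\in(v)^2$ and $c\in(v)^3$. The constraints on $b$ and $c$ hold because $q_2=u^2$ contains no $uv$ or $v^2$ term. Dividing by $a$ only multiplies the equation by a unit. We replace $u$ by $u\sqrt{a}$, which is possible since $a(0)=1$ is a unit, and then complete the square in $u$. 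The equation becomes $U^2-\tfrac14(\text{coefficient of }u)^2+(\text{terms free of }u)$, where the coefficient of $u$ is $\rho w^3+b(v)+(\text{higher order corrections})$ and the free part is $c(v)+\sigma v w^3$ plus corrections.

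Next we expand this carefully. The square of the linear coefficient gives $\tfrac{\rho^2}{4}w^6+\tfrac{\rho}{2}b(v)w^3+\tfrac14 b(v)^2$. The cross term lies in $(v^2w^3)$ because $b\in(v^2)$, and $b^2/4$ depends only on $v$, so it is absorbed into $h(v)=c(v)-b(v)^2/4+\cdots$, which still vanishes to order at least three. The corrections coming from the unit $a$ and from the substitution have the form of a factor in $(u,v,w)$ times $w^3(\rho u+\sigma v)$. After substituting $u=U-(\text{stuff})$, where the stuff lies in $(w^3,v^2)$, these corrections lie in $(v^2w^3,vw^6,w^9)$. We record them as $R$. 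The coordinate change satisfies $\partial U/\partial u(0)=1$. The bookkeeping that every leftover mixed term falls into the ideal $(v^2w^3,vw^6,w^9)$ is the main obstacle, and we plan to control it by assigning weights $\mathrm{wt}(w)=1$, $\mathrm{wt}(v)=3$, $\mathrm{wt}(U)=9/2$ and checking that every discarded term has weight strictly greater than $9$ or lies in the stated ideal.

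For the second statement, the equation $U^2+g(v,w)$ with $g=h(v)+\sigma v w^3-\tfrac{\rho^2}{4}w^6+R$ defines a double point, and its singularity type is governed by the plane curve $g=0$. The general fact we use is that $U^2+g=0$ is log canonical exactly when $(\mathbb{A}^2,\tfrac12\{g=0\})$ is log canonical. Moreover, it is Du Val when $g$ has an ADE type, and it is simple elliptic or cusp type when $g$ has a $\tfrac12$-lc but not klt configuration, namely $\tilde E_7$ (four tangent branches) or $T$-type.

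We then split into cases. If $\sigma\neq0$, the Newton polygon of $g$ contains $vw^3$ together with either $v^3$ (if $h$ has a cubic term) or $w^6$ (if $\rho\ne0$). The resulting singularity is $E_7$, $E_6$, $E_8$ or an $A$/$D$-type, hence Du Val, except in degenerate cases which are excluded unless $h$ vanishes to high order, and these we treat as in the next case. If $\sigma=0$ and $\rho\neq0$, then $g=h(v)-\tfrac{\rho^2}{4}w^6+R$ with $R\in(v^2w^3,w^9)$. If $\operatorname{ord}h=3$ we get $v^3+w^6$ type, which is simple elliptic $\tilde E_8$. If $\operatorname{ord}h\ge4$, the Newton polygon through $v^4$ and $w^6$ is too steep, and $(\mathbb{A}^2,\tfrac12 g)$ fails to be lc, since the log canonical threshold of $v^4+w^6$ is $5/12<1/2$. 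If $\rho=\sigma=0$, then $l=0$, which is a contradiction. In the excluded steep cases the singularity is either not log canonical or not isolated (for instance when $h\equiv0$). We thus conclude that an isolated log canonical $(S,0)$ is Du Val or simple elliptic. In particular, the cusp ($T_{p,q,r}$) types cannot arise, because the $w^3$-linear structure forces the quadratic-cubic Newton data above.
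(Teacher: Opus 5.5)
There are two genuine gaps: one in the normal form, one in the case $\sigma\neq0$ of the classification.

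\textbf{The normal form.} The decomposition $q=u^2a(u,v)+ub(v)+c(v)$ is only a regrouping of $q$. Weierstrass preparation of $q+w^3l$ would instead give a unit times a monic quadratic in $u$. Because $a$ depends on $u$, neither dividing by $a$ nor rescaling $u$ makes the equation quadratic in $u$. For instance, the leftover $(a^{-1}-1)\,w^3(\rho u+\sigma v)\in w^3(u,v)^2$ becomes, after $u=U-(\text{stuff})$, a combination of $U^2w^3$, $Uvw^3$, $Uw^6$ together with terms of $(v^2w^3,vw^6,w^9)$. The $U$-dependent terms cannot go into $R\in\mathbb{C}\{v,w\}$. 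Since the lemma asserts an exact analytic normal form, you cannot discard them by a weight count. Your weight criterion would not certify the shape of $R$ either: with $\mathrm{wt}(w)=1$, $\mathrm{wt}(v)=3$, the monomial $v^3w$ has weight $10>9$ but is not in $(v^2w^3,vw^6,w^9)$.

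What is missing is that everything should be done over $\mathbb{C}\{v,t\}$ with $t=w^3$, via the parametric splitting lemma. Set $F=q+t(\rho u+\sigma v)$ and solve $\partial_uF=0$ by the implicit function theorem as $u=a(v,t)$, with $a(v,0)=O(v^2)$ and $a_t(0,0)=-\rho/2$. Then $F=U^2+F(a(v,t),v,t)$ exactly, with $\partial U/\partial u(0)=1$. Since $\frac{d}{dt}F(a(v,t),v,t)=\rho a+\sigma v$, the coefficients of $t^0,t^1,t^2$ are $O(v^3)$, $\sigma v+O(v^2)$ and $-\rho^2/4+O(v)$. After substituting $t=w^3$ this is precisely the claimed form.

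\textbf{The case $\sigma\neq0$.} Your classification here is wrong as written.
\begin{enumerate}
\item If $h_3=0\neq h_4$, the weights $(2,1,1)$ on $(U,v,w)$ give leading part $U^2+h_4v^4+\sigma vw^3$. The form $v(h_4v^3+\sigma w^3)$ has four distinct linear factors, so the branch curve has an ordinary quadruple point (not four tangent branches). The singularity is therefore simple elliptic of type $\tilde E_7$, not Du Val as you assert.
\item If $h_3=h_4=0$, you defer to the $\sigma=0$ argument, but it does not apply. For the weights $(3,2)$ on $(v,w)$ the term $\sigma vw^3$ has weight $9<12$, so $g$ has weighted order $9$, and $\tfrac12\cdot 9<5$ gives no contradiction. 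You need another valuation, e.g.\ weights $(6,8)$ on $(v,w)$, equivalently $(15,6,8)$ on $(U,v,w)$. For these, $g$ has order at least $30>2\cdot 14$, so $(\mathbb{A}^2,\tfrac12\{g=0\})$, and hence $(S,0)$, is not log canonical. This also covers $h\equiv0$.
\end{enumerate}

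A smaller point in the case $\sigma=0$, $\operatorname{ord}h\geq4$: the $v^2w^3$ part of $R$ lies on the same $(3,2)$-edge as $v^4$ and $w^6$. You should therefore argue via weighted order $\geq 12$ rather than via the log canonical threshold of $v^4+w^6$. Your conclusion in that case is correct.
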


\begin{proof}
Set $t=w^3$. By the implicit function theorem, the equation $q_u(u,v)+\rho t=0$ has a convergent solution $u=a(v,t)$, with $a(v,0)=O(v^2)$ and $a_t(0,0)=-\rho/2$. Completing the square in $u$ gives an analytic coordinate $U$, with $\partial U/\partial u(0)=1$, such that in the new coordinates $U, v, w$, with $t=w^3$, the equation of $S$ is 
$$
U^2+q(a(v,t),v)+t(\rho a(v,t)+\sigma v)=0.
$$
The constant term in $t$ is a series $h(v)$ of order at least three. The coefficient of $t$ is $\sigma v+O(v^2)$, and that of $t^2$ is $-\rho^2/4+O(v)$. Substituting $t=w^3$ gives the stated form and the required ideal containing $R$.

We now prove the last assertion. Suppose $(S,0)$ is an isolated log canonical singularity.
Write $h(v)=h_3v^3+h_4v^4+O(v^5)$. If $\sigma \neq 0$ and $h_3 \neq 0$, assign weights $(9,6,4)$ to $(U,v,w)$, the terms of least weight are $U^2+h_3v^3+ \sigma vw^3$, so the singularity is Du Val of type $E_7$. If $\sigma\neq0$, $h_3=0$ and $h_4\neq0$, assign weights $(2,1,1)$ to $(U,v,w)$, the terms of least weight are  $U^2+h_4v^4+\sigma vw^3$. The binary quartic $v(h_4v^3+\sigma w^3)$ has four distinct roots, so the exceptional curve of the weighted blow-up of weights $(2,1,1)$ is smooth elliptic and the singularity is simple elliptic. If $\sigma \neq 0$ and $h_3=h_4=0$, the weighted blow-up with weights $(15,6,8)$ gives weighted order $30$, and log discrepancy $29-30=-1$, thus a non log canonical place, which is a contradiction.

Finally, if $\sigma=0$, then $\rho\neq0$. If $h_3 \neq 0$, the weights $(3,2,1)$ give the leading equation $U^2+h_3v^3-\rho^2w^6/4$, the exceptional curve of the weighted blow-up of weights $(3,2,1)$ is a smooth elliptic curve, thus the singularity is simple elliptic. If $h_3=0$, the weights $(6,3,2)$ give weighted order $12$ and yield a non log canonical place, a contradiction. 
\end{proof}

The following lemma is the main reason for our examples to not be of cluster type, although the general element in \textnumero\,2.4 is of cluster type.

\begin{lemma}\label{lem:2.4-singular-surfaces}
Let $C$ be the intersection of $\{x^3+y^3+z^3+w^3=0\}$ and $\{a_0x^3 +a_1y^3+a_2z^3+a_3w^3=0\}$ in $\pp^3$, where $a_i$ are pairwise distinct nonzero coefficients. Let $S$ be a singular irreducible surface containing $C$, then the following hold: 

\begin{enumerate}
\item If $S$ is a cubic surface, then it is a cone over a smooth plane cubic.
\item If $S$ is a quartic surface and $(\mathbb{P}^3,S)$ is log canonical, then $S$ has only canonical and simple elliptic singularities.
\end{enumerate}

\end{lemma}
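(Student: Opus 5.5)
Write $F=x^3+y^3+z^3+w^3$ and $G=a_0x^3+a_1y^3+a_2z^3+a_3w^3$. The plan is to exploit that all cubic and quartic surfaces through $C$ are explicit in $F$ and $G$.

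\textbf{Part (1).} Since $C$ is a complete intersection of two cubics, $h^0(\mathcal I_C(3))=2$ (as in Lemma~\ref{lem:curves-in-cubic-surfaces}). Hence every cubic containing $C$ is $\lambda F+\mu G=\sum(\lambda+\mu a_i)x_i^3$. Such a Fermat-type diagonal cubic is singular exactly when some coefficient $\lambda+\mu a_i$ vanishes. Because the $a_i$ are pairwise distinct, at most one coefficient vanishes, and the $a_i\neq 0$ guarantee that $F$ itself is smooth. The surface is then the cone with vertex the corresponding coordinate point over the smooth plane Fermat cubic in the remaining three variables, as already noted in the example after Lemma~\ref{lem:curves-in-cubic-surfaces}.

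\textbf{Part (2).} The quartics containing $C$ are $S=\{LF+MG=0\}$ with $L,M$ linear. If $L,M$ are proportional, $S$ is reducible, which is excluded, so $L$ and $M$ are independent. By Lemma~\ref{lem:2.4-normal-quartic}, $S$ is normal. Since $(\pp^3,S)$ is log canonical, the singularities of $S$ are isolated and log canonical. I would then show that at each singular point $p$ either $\mult_p S=2$ with quadratic part of rank $\ge 2$ (giving an $A_n$ point), or we land in the normal form of Lemma~\ref{lem:2.4-local-computations}, which gives Du Val or simple elliptic. The remaining possibilities are a rank-one quadratic part not of that form, or multiplicity $\ge 3$; these must be excluded or shown to be canonical or simple elliptic directly.

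\emph{Singular points off $\ell=\{L=M=0\}$.} At a point $p\in S\setminus\ell$ with, say, $L(p)\neq0$, set $t=M/L$. Locally $S$ is $\{F+tG=0\}$, a member of the pencil with a varying parameter. Singularity at $p$ forces $p$ to lie on $C$, or on the singular locus of the cubic $F+t(p)G$; by part (1) the latter is a cone vertex $e_i$. At a point of $C$, the quadratic part is $dF\cdot dt+\dots$. I would compute it in coordinates $u=F$, $v=G$ and $s$ along $C$, getting $uL'+vM'+{}$(higher) with $L'$, $M'$ the differentials of $L$, $M$. This has rank $\ge2$ unless the conormal pairing degenerates. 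At a vertex $e_i$, the term $(\lambda+\mu a_i)x_i^3$ vanishes to first order in $t$, so the local equation has the shape $q(u,v)+w^3 l(u,v)$ with $q$ a smooth plane Fermat cubic germ perturbed by the $t$-variation. This is precisely the setting of Lemma~\ref{lem:2.4-local-computations}, after checking that the quadratic part has rank $\le 1$ (otherwise it is $A_n$).

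\emph{Singular points on $\ell$.} Here $dS=F\,dL+G\,dM$, so singular points on $\ell$ occur only where $F=G=0$, i.e.\ on $\ell\cap C$. At such points the tangent cone is the quadric $dF\,dL+dG\,dM$. It has rank $\ge 2$, hence gives $A_n$, unless the four differentials are dependent. In the degenerate case I would again reduce to the normal form of Lemma~\ref{lem:2.4-local-computations}, or show the singularity is not log canonical.

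The main obstacle is the case analysis where the tangent cone has rank $\le1$ or vanishes: certifying that every such point is an $A_n$ point, of the form $q+w^3l$, or non-lc. I would attack it by exploiting the $\mu_3^4$-symmetry of the diagonal cubics to move $p$ into a standard position, and then expanding $LF+MG$ to third order.
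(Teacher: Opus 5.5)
Your Part (1) is fine; note that it is $G$, not $F$, whose smoothness uses $a_i\neq 0$. Part (2), however, rests on a false structural claim. A singular point $p\notin\ell$ need \emph{not} lie on $C$ or be a singular point of the member $F+t(p)G$. In $d(F+tG)=dF+t\,dG+G\,dt$, the term $G(p)\,dt(p)$ can cancel a nonzero $d(F+t(p)G)(p)$.

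The truth is the reverse. On $C\setminus\ell$ the surface is smooth, because there $dS=L\,dF+M\,dG$ with $dF,dG$ independent. For $p\notin C$, rewrite $S=L'F'+MG'$ with $G'(p)=0\neq F'(p)$. Then $L'(p)=0$ and $F'(p)\,dL'=-M(p)\,dG'(p)$.
\begin{itemize}
\item At a cone vertex, $dG'(p)=0$, so $L'\equiv 0$ and $S=MG'$ is reducible. Cone vertices are therefore never singular points of $S$.
\item Conversely, at every smooth point $p\notin C$ of every member $G'$, take $L'$ proportional to the tangent plane of $G'$ at $p$ and any $M$ with $M(p)\neq 0$. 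This gives an irreducible quartic through $C$ singular at $p$.
\end{itemize}
So your off-$\ell$ analysis examines exactly the points where nothing happens, and misses the main family of singular points.

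At those points $q_2=g_2+(m_1-f_1/A)g_1$, whose restriction to $\{g_1=0\}$ is the second fundamental form of $G'$. Hence $\rank q_2\le 1$ is possible only at parabolic points of $G'$. For $G'=\sum b_ix_i^3$ with all $b_i\neq 0$, these are the points with a vanishing coordinate, since the Hessian matrix is $\operatorname{diag}(6b_ix_i)$. A cone member is parabolic everywhere. This is the paper's Case~2. It splits by the number of vanishing coordinates of $p$ and by whether some $b_i=0$, with a flex (Wronskian) computation on the cone members. It is here, and not at cone vertices, that the form $q(u,v)+w^3l(u,v)$ of Lemma~\ref{lem:2.4-local-computations} arises.

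For points on $\ell$, that is $p\in\ell\cap C$ (the paper's Case~1), your degeneracy condition is empty. Four covectors in a three-dimensional space are always dependent. The correct statement is that $\rank q_2\le 1$ forces $dL$ and $dM$ to vanish on $T_pC$.

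More importantly, the rank $\le 1$ analysis you defer as the ``main obstacle'' is the actual content of the lemma. You are missing the reduction that makes it tractable. A normal Gorenstein log canonical surface singularity that is not canonical is simple elliptic or a cusp, so it suffices to exclude cusps. Concretely:
\begin{itemize}
\item a rank-one $q_2$ can give a cusp only if $q_3|_{\{q_2=0\}}$ is zero or a cube;
\item a rank-zero point with smooth tangent cubic is simple elliptic.
\end{itemize}
At points of $C$ with all coordinates nonzero, the paper checks these conditions by direct computation; the equation there is not of the form $q(u,v)+w^3l(u,v)$. It invokes Lemma~\ref{lem:2.4-local-computations}, or a weighted blow-up showing failure of log canonicity, only when a coordinate of $p$ vanishes.
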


\begin{proof}

If $S$ is a cubic surface, then it must be in the pencil formed by $F=\{x^3+y^3+z^3+w^3=0\}$ and $G=\{a_0x^3+a_1y^3+a_2z^3+a_3w^3=0\}$. The only singular members in that pencil are cones over smooth plane cubics.

If $S$ is an irreducible quartic surface, it must be normal by Lemma~\ref{lem:2.4-normal-quartic}. We only need to exclude isolated (degenerate) cusps, see \cite[Table 1]{Duc24}.  
Let us write $S$ as $LF+MG$, where $L$ and $M$ are linear. For a singular point $p$, let $q_2$ and $q_3$ be the quadratic and cubic terms of the local equation in affine coordinates centered at $p$. We only need to check when $q_2$ has rank at most one, since otherwise the point is an $A_n$ singularity. Furthermore, if the rank is one, to be a cusp, the restriction of $q_3$ to $q_2=0$ needs to be zero or the cube of a linear form. If the rank of $q_2$ is zero, then the projective tangent cubic $\{q_3=0\} \subset \pp^2$ cannot be smooth, or the singularity would be simple elliptic.

We use local coordinates $(u,v,w)$ and we will assume that $p$ is a cusp.\\

\textit{Case 1:} In this case, we assume that $p\in C$.\\ 

As $F(p)=G(p)=0$, the singularity condition implies $dS(p)=0$, whence $L(p)=M(p)=0$. Quadratic rank at most one forces both $L$ and $M$ to vanish on the tangent line to $C$. 
We first deal with the case where all coordinates of $p$ are nonzero. We rescale the coordinates to have $p=(1,1,1,1)$, and work in the affine chart where the fourth coordinate is $1$. After changing the cubic generators, we can write
$f=x^3-1-\alpha (t^3-1)$ and $g=y^3-1-\beta (t^3-1)$, with $\alpha,\beta$ different from each other, zero and one. Set $z=t-1$, $u=x-1-\alpha z$ and $v=y-1-\beta z$. Thus the homogeneous terms of degrees $1$ and $2$ are
$f_1=3u$,  $g_1=3v$,
$f_2=3u^2+6\alpha uz +3\alpha (\alpha-1)z^2$ and $g_2=3v^2+6\beta vz +3\beta(\beta-1)z^2$.

After possibly interchanging $u$ and $v$ we can rewrite $S=(u+kv)(f+kg)+c(vf-ug)$, where $k,c$ are constants with $c\neq 0$.

Thus $q_2=3(u+kv)^2$ and its cubic part on $u+kv=0$ is$$3cv(  k(k+1)v^2  +2k(\beta-\alpha)vz  +   (  \alpha(\alpha-1) +k\beta(\beta-1)   )z^2    ).$$ Put $a=\alpha(\alpha-1)$, $b=\beta(\beta-1)$ and $\delta=\alpha-\beta\ne0$. Vanishing or a cube would force $k\delta=a+kb=0$, hence $a=0$, a contradiction. Thus a rank-one quadratic part cannot give a cusp. If $q_2$ is of rank zero, then $S=c(vf-ug)$. After multiplying the defining equation by $\frac{1}{3c}$, its cubic part is $q_3=u^2v-uv^2+2(\alpha-\beta )uvz+\alpha(\alpha-1)vz^2-\beta(\beta-1)uz^2$. The three points on $z=0$ are smooth. In the chart $z=1$, a singular point would satisfy $v-u=\delta$, $u^2=a$ and $v^2=b$, hence $u=\frac12-\alpha$, contradicting $u^2-a=\frac14$. Thus the tangent cubic is smooth, and the singularity is simple elliptic.

If one coordinate is zero, then the others are nonzero. In this case we can perform a change of coordinates to have $p=(1,1,0)$ and write:
$f=x^3-1-\alpha w^3$ and $g=y^3-1-\beta w^3$. 
With $u=x-1$, $v=y-1$ and $S=Lf+Mg$.
Thus $S=q(u,v)+w^3l(u,v)$, where $l=-\alpha L- \beta M$. By Lemma~\ref{lem:2.4-local-computations}, if the quadratic term $q_2$ is of rank one, then the singularity is Du Val or simple elliptic. If it has rank zero, then $L=cv$ and $M=-cu$, so the weighted blow-up with weights $(3,3,2)$ on $(u,v,w)$ gives an exceptional divisor with log discrepancy $-1$, a contradiction.\\

\textit{Case 2:} In this case, we assume that $p\notin C$.\\ 

We can change the generators of the pencil and rescale to have $F=x^3+y^3+z^3+w^3$ and $G=b_0x^3+b_1y^3+b_2z^3+b_3w^3$, with $F(p)=A \neq 0$ and $G$ vanishing at $p$. Therefore $L(p)=0$, but $M(p) \neq 0$. We can separate the terms by degrees as:
$F=A+f_1+f_2+f_3$, $G=g_1+g_2+g_3$ and $M=1+m_1$. The singularity condition gives $L=-\frac{g_1}{A}$. Thus
$q_2=g_2+(m_1-\frac{f_1}{A})g_1$ and $q_3=g_3+m_1g_2-\frac{f_2}{A}g_1$.

We first consider the case where $p$ has only nonzero coordinates. If every $b_i$ is nonzero, the Hessian of $G$ is diagonal and invertible.
Since the restriction of $g_2$ and $q_2$ to $\{g_1=0\}$ coincide, and $g_2$ restricted to $\{g_1=0\}$ has rank two. The restriction of  $q_2$ has rank at least two, a contradiction.
Now, if some $b_i$ is zero, we can assume $b_3=0$. Thus, the vanishing set of $G$ is a cone over a plane cubic. In an affine chart fixing a different coordinate, write $z=t_0+w$, then $G$ and $L$ do not involve $w$. Writing the base plane cubic locally as $y^3=r+st^3$ where $r$ and $s$ are nonzero constants, its Wronskian with respect to $t$ is $Wr(1,t,y)=\frac{2rst}{y^5}.$ Which is nonzero at $p$. Hence the image of $p$ on the plane cubic is not a flex point. Thus $q_2$ cannot have rank zero. If $q_2$ has rank one, the $w$-coefficient of $m_1$ is $3t_0^2/A$, and the kernel of $q_2$ contains the $w$-direction. Here $g_1$ is a multiple of $v$. Restricting $q_3$ to $q_2$ we obtain $v(c_0v^2+c_1vw+c_2w^2)$, whose $vw^2$ coefficient is $-3t_0/A\ne0$, so it is neither zero nor a cube. Therefore $p$ is not a cusp, by the criterion at the beginning of the proof.

We now consider the case where exactly one coordinate of $p$ is zero. Without loss of generality, $z=0$. Set the local coordinate $w=z$.
Since $f_1, g_1, g_2$ do not involve $w$, rank at most one forces $L$ and $M$ to be independent of $w$. Thus $S=q(u,v)+w^3l(u,v)$, where $l=L+b_3M$.
If $b_3=0$, then the previous Wronskian computation also shows that the image of $p$ on the plane cubic is not a flex point. Therefore $q_2$ has rank one, and by Lemma~\ref{lem:2.4-local-computations} it is not a cusp.
If $b_3\neq 0$, then $l(0) \neq 0$, so $l$ has an analytic cube root. The analytic change of coordinates $W=w l(u,v)^{\frac{1}{3}}$ transforms the equation into $q(u,v)+W^3$. In the case where $q_2$ has rank one, completing the squares gives $U^2+h(v)+W^3$. Log canonicity forces $\ord h \leq 6$, since the weighted blow-up with weights $(21,6,14)$ would yield a non log canonical center otherwise. Therefore the singularity would be canonical or simple elliptic. 
In the case where $q_2$ has rank zero, the tangent cubic is $q_3(u,v)+W^3$. Since the plane cubic is log canonical, the cubic $q_3(u,v)$ must have three distinct roots. 
Thus the tangent cubic of $S$ is smooth and $p$ is simple elliptic.

Then, if exactly two coordinates of $p$ are zero, then we can consider $u$ along the coordinate line determined by two nonzero coordinates, and set $v$ and $w$ to be the zero coordinates.
Since $g_1$ is a multiple of $u$, rank at most one forces $L$ and $M$ to depend only on $u$. Thus $S=q(u)+v^3l_2(u)+w^3l_3(u)$.
If $b_2$ and $b_3$ are nonzero, we can choose analytic cubic roots of $l_2, l_3$ and replace $v, w$ by $v l_2(u)^{\frac{1}{3}}$ and $w l_3(u)^{\frac{1}{3}}$ respectively. The equation becomes $q(u)+v^3+w^3=0$.
Rank one gives $U^2+v^3+w^3$, a canonical singularity. In rank zero, log canonicity forces $\operatorname{ord}q=3$, otherwise the weighted blow-up with weights $(3,4,4)$ would yield a non log canonical center. Thus, the singularity is simple elliptic.
If $b_3=0$, then $b_2\neq 0$ and $l_3$ is a nonzero multiple of $u$. After normalizing the unit $l_2$, the rank one case is non cuspidal by Lemma~\ref{lem:2.4-local-computations}. In rank zero, the weighted blow-up with weights $(3,3,2)$ yields a non log canonical center.

If three coordinates vanish, $G(p)=0$ forces the coefficient of the remaining coordinate to vanish. Then $g_1=0$, so the singularity condition gives $L=0$ and $S=MG$, contradicting irreducibility.

\end{proof}

\begin{proof}[Proof of Theorem~\ref{thm:2.4-not-cluster-type}]
By the Jacobian criterion the curve $C$ is smooth, its blow-up is a smooth Fano threefold in the family \textnumero\,2.4 by \cite{MM81}.

Every index one pair $(X,B)$ with reduced boundary, as required for cluster type, is the log pull-back of a log Calabi--Yau pair  $(\pp^3,B')$. Thus, for $B$ to be effective, $B'$ must contain $C$. $C$ is not contained in planes or quadric surfaces in $\pp^3$. Therefore $B'$ must be an irreducible quartic or the sum of a plane and an irreducible cubic in the pencil of cubics containing $C$.
If $B'$ is a smooth irreducible quartic, then $(\pp^3,B')$ has coregularity two.
If $B'$ is a singular irreducible quartic, then Lemma~\ref{lem:2.4-singular-surfaces} implies that the pair $(\pp^3,B')$ has coregularity greater than or equal to one. 
If $B'$ is the sum of a smooth cubic and a plane, then the pair $(X,B)$ is not of cluster type by Proposition~\ref{prop:2.4-smooth-cubic}.
If $B'$ is the sum of a  singular cubic and a plane, by Lemma~\ref{lem:2.4-singular-surfaces} it has a non rational component, therefore the pair $(\pp^3,B')$ is not log rational. In any case, we conclude that the pair $(X,B)$ is not of cluster type.
\end{proof}

\begin{corollary}
Cluster type is not invariant under smooth Fano deformation. Moreover, a cylindrical smooth Fano threefold need not be cluster type.
\end{corollary}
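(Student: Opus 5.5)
The corollary has two assertions, and I would derive both from results already in the paper.

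For the first, I would exhibit a single deformation family whose general member is of cluster type but which contains a smooth member that is not. Family \textnumero\,2.4 does this. By Theorem~\ref{thm:blow-up-P3}.(2), via Lemma~\ref{lem:curves-in-cubic-surfaces} and Lemma~\ref{lem:singular-cubic-surface-An}, the blow-up of $\pp^3$ along a general complete intersection of two cubics is of cluster type. Now take $C=\{x^3+y^3+z^3+w^3=0\}\cap\{a_0x^3+a_1y^3+a_2z^3+a_3w^3=0\}$ with the $a_i$ pairwise distinct and nonzero. Theorem~\ref{thm:2.4-not-cluster-type} says $\mathrm{Bl}_C\pp^3$ is a smooth Fano threefold in the same family \textnumero\,2.4 that is not of cluster type. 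The family is connected, since it is parametrized by an open subset of the Grassmannian of pencils of cubics, and every member is smooth Fano. So cluster type is not preserved in this smooth Fano deformation family; in particular the non-cluster-type members cannot form a dense open set, and the property is not deformation invariant.

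For the second assertion, I would use the same special threefold $X=\mathrm{Bl}_C\pp^3$. The paper recalls that every blow-up of $\pp^3$ along a smooth intersection of two cubics is cylindrical, citing~\cite[Example 3.15]{CPPZ21}. Hence $X$ is cylindrical, and by Theorem~\ref{thm:2.4-not-cluster-type} it is not of cluster type. This shows the converse of Theorem~\ref{theorem:ct-implies-cylindrical} fails, even for smooth Fano threefolds.

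The only thing to check is that~\cite[Example 3.15]{CPPZ21} covers every smooth complete intersection of two cubics and not just a general one. If it were stated only for general $C$, I would construct a cylinder by hand: project from a line meeting the specific Fermat-type curve $C$ in a suitable way, or use the pencil of cubic cones containing $C$. That verification is where I expect the only real effort.
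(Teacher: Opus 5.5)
Your proposal is correct and matches the paper's proof: within family 2.4 you combine Theorem~\ref{thm:blow-up-P3}(2) for the general member with Theorem~\ref{thm:2.4-not-cluster-type} for the Fermat-type pencil, and you get cylindricity of every member from \cite[Example 3.15]{CPPZ21}. The paper invokes that example for every blow-up along a smooth intersection of two cubics, not just a general one, so your fallback construction is not needed; your connectedness of the space of pencils plays the role of the paper's one-parameter perturbation of the cubic pencil.
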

\begin{proof}
Combine Theorems~\ref{thm:blow-up-P3} and~\ref{thm:2.4-not-cluster-type} in family \textnumero\,2.4. Cylindricity holds for every member by~\cite[Example 3.15]{CPPZ21}. A general one-parameter perturbation of the defining cubic pencil, restricted to the smooth complete-intersection locus, gives the claimed deformation.
\end{proof}

\section{The Mori-Mukai family \texorpdfstring{\textnumero}{No.}\texorpdfstring{\,}{ }1.6} 
\label{sec:MM-1-6}

In this section, we prove that a general element $X$ of the Mori-Mukai family \textnumero\,1.6 is not of cluster type. The main theorem of this section is the following.

\begin{theorem}\label{thm:1-6-not-cluster-type}
Let $X$ be a general element of the Mori-Mukai family \textnumero\,1.6.
Then, the variety $X$ is not of cluster type.
\end{theorem}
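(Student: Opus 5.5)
We argue by contradiction, following the outline given in the introduction. Let $X$ be general in family \textnumero\,1.6, so $\operatorname{Pic}(X)=\operatorname{Cl}(X)=\zz[-K_X]$, and suppose $(X,B)$ is a cluster type pair. Then $B\in|-K_X|$ is reduced, $(X,B)$ has index one and coregularity zero, and $\operatorname{Cl}(X)\simeq\zz[B]$. Lemma~\ref{lem:movable-intro} (to be proved first, along the lines of the argument in Proposition~\ref{prop:2.4-smooth-cubic}) then gives a movable linear system $\mathcal M\subseteq|-mK_X|$ and an lc place $v$ of $(X,B)$ with closed-point center and $v(\mathcal M)>m\,v(B)$. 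Concretely, on $U=X\setminus B$ we have $\operatorname{Cl}(U)=0$ and $\mathcal O(U)^\times=\cc^\times$, so a torus $\mathbb T\subset U$ has complement with exactly three divisors, cut out by characters $h_i$. Comparing the volume form with $\tau$, $\ddivv(\tau)=-B$, forces the $k_i$ to vanish, and a suitable toric valuation then violates the inequality. The proof thus reduces to showing that for every slc $B\in|-K_X|$ of coregularity zero and every movable $\mathcal M\subseteq|-mK_X|$ one has $v(\mathcal M)\le m\,v(B)$ for all lc places $v$ of $(X,B)$.

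The second step is to classify such $B$ on a general $X$, in two cases. If $B$ is normal, then $B$ is a K3-type surface with isolated lc singularities, and coregularity zero forces a cusp or degenerate cusp. Using the generality of $X$ (its embedding as a linear section of the spinor tenfold and a dimension count of Tjurina numbers, as in Subsection~\ref{subsec:normal}), we show that the only possibility is a single $T_{3,3,4}$ cusp. The count should compare $\dim|-K_X|=8$ with the codimension of the locus of surfaces carrying cusps of given Tjurina number. If $B$ is non-normal, we use spinor duality (Subsection~\ref{subsec:non-normal}) to show that the non-normal locus is a single line $\ell\subset X$, which is ordinary because $X$ is general. In both cases we then take an explicit dlt modification $(Y,B_Y)\to(X,B)$: weighted blow-ups resolving the cusp cycle in the normal case, and the blow-up of $\ell$ followed by the blow-ups of the remaining lc centers in the non-normal case. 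This lists all lc centers.

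The third step is to verify the inequality $v(\mathcal M)\le m\,v(B)$ via Lemma~\ref{lem:criteria-under-adjunction}. Its role is to reduce the check to a component $S$ of $B_Y$ and the restricted pair $(S,B_S)$, exactly as we reduced to $(S,\Delta)$ in Proposition~\ref{prop:2.4-smooth-cubic}. On each such surface, $\mathcal M|_S$ lies in an explicit class. We then show that $(S,(1-\epsilon)B_S+\tfrac{\epsilon}{m}M_S)$ is lc for a general $M\in\mathcal M$ and $0<\epsilon\ll1$. The tool is the intersection numbers of $M_S$ with the cycle components, together with the Cheltsov--Park--Won type local inequality \cite[Corollary 1.13]{CPW16} at the nodes of the cycle. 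We then lift the result by inversion of adjunction \cite{Kaw07}. Combining the two cases yields $v(\mathcal M)\le m\,v(B)$ for every lc place, which contradicts Lemma~\ref{lem:movable-intro}.

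The main obstacle is the classification in the second step, especially excluding the other cusp types and excluding non-normal boundaries with larger double locus on a general $X$. The surface computations at the $T_{3,3,4}$ cusp are a secondary difficulty: there we would try the exceptional cycle of the minimal resolution (three curves with self-intersections $-3,-3,-4$ in a loop) and check lc-ness node by node.
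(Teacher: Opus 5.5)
Your outline follows the paper's architecture: Lemma~\ref{lem:movable-system}, the versality/Tjurina count and spinor duality for the classification, then adjunction to surfaces. However, the third step does not close as you formulate it.

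On the crepant model, the inequality involves \emph{both} components meeting along the double curve. In the normal case, let $\pi\colon Y\to X$ be the blow-up of the $T_{3,3,4}$ point, $G_Y$ the strict transform of a general $G\in\mathcal M$, and $k=\operatorname{mult}_p\mathcal M$. Then $v(\mathcal M)=k\,v(E)+v(G_Y)$ and $v(B)=v(S)+3v(E)$, so you need $v(G_Y)\le m\,v(S)+(3m-k)\,v(E)$. In the non-normal case, \eqref{eq:flop} asks for $v(G^+)\le m\,v(S^+)+b\,v(E)$.

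Lemma~\ref{lem:criteria-under-adjunction} applied to a single component only bounds $v(G)$ by a multiple of the valuation of the \emph{other} component. Adjunction to $E$ with threshold $t_E$ gives $v(G)\le t_E\,v(S)$, and adjunction to $S$ (or $S^+$) with threshold $t_S$ gives $v(G)\le t_S\,v(E)$. A one-sided lc check such as your $(S,(1-\epsilon)B_S+\frac{\epsilon}{m}M_S)$ therefore needs $t_E\le m$, or $t_S\le 3m-k$ (resp.\ $b$). These are not what can be proved:
on $E\simeq\pp^2$, Lemma~\ref{lem:node-estimate} gives $t_E=\frac{3k}{4}$, and on $S$ one only gets $t_S=5m-2k$ when $S$ maps onto a quadric. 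So for $\frac{4m}{3}<k<2m$ neither bound suffices. In the non-normal cases one gets $(t_E,t_S)=(\frac65m,6b)$ or $(2m,2b)$, because the global log canonical threshold of a quintic del Pezzo surface is only bounded below by $\frac16$ or $\frac12$.

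The missing idea is to apply the lemma in \emph{both} directions. This bounds $v(G)$ by $\min\{t_E\,v(S),t_S\,v(E)\}$ for lc places centered on the double curve. You then dominate the minimum by a convex combination, which works precisely when $m/t_E+\beta/t_S\ge1$ with $\beta=3m-k$ or $b$. For instance, $\frac{m}{3k/4}+\frac{3m-k}{5m-2k}-1=\frac{3k^2-14mk+20m^2}{3k(5m-2k)}>0$, and the non-normal cases use the weights $(\frac56,\frac16)$ and $(\frac12,\frac12)$.

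The thresholds themselves require global input that your plan does not identify, and the tools you name do not supply it.

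In the normal case, one ordinary blow-up already resolves a $T_{3,3,4}$ point. Then $S$ is smooth and $C=S\cap E$ is a single irreducible nodal curve with $C^2=-3$ and $L\cdot C=0$ (Notation~\ref{notation-normal-1.6}); there is no loop of curves with self-intersections $-3,-3,-4$. Writing $G_Y|_S=\ell C+R$, intersection with $C$ gives $R\cdot C=3(k+\ell)$, but nothing on the cycle bounds $\ell$. The paper controls $\ell$ through the birational morphism $S\to Z\subset\pp^3$ onto a normal quadric or cubic given by $|L-2C-\sum\ell_i|$, which is built from the lines of $X$ through $p$ (Lemma~\ref{lem:morphism}).

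In the non-normal case, blowing up further lc centers does not by itself give surfaces on which you can estimate. You need the double projection from $\ell$ and the flop $Y\dashrightarrow Y^+$. After it, $S^+$ is a fibre of the quintic del Pezzo fibration, so \cite{PW10} applies, and $E$ is the graph of the pencil $\Lambda_\ell$ on $\mathbb F_1$, so Lemma~\ref{lem:scroll-estimates} applies. You also need the trichotomy of Lemma~\ref{lem:singularities-blow-up-line}, which rests on a second dimension count for general $X$.

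Two smaller points. First, \cite[Corollary 1.13]{CPW16} concerns tangent-plane sections of smooth cubic surfaces and applies to none of $S$, $E$, $S^+$. Second, your count ``$\dim|-K_X|=8$ versus $\tau$'' is only legitimate through versality of the whole family of $\pp^7$-sections of the spinor tenfold, i.e.\ the surjectivity $H^0(N_{B/\Sigma})\to H^0(T^1_B)$ in Proposition~\ref{prop:sing-normal}. The system $|-K_X|$ on a fixed $X$ need not be versal.
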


Recall that such $X$ is a section of the half-spinor embedding of a connected component of 
${\rm OGr}_{+}(5,10)$ by a codimension $7$ subspace.    
A smooth member of
the Mori-Mukai family 1-6 is a prime Fano threefold of genus seven and satisfies
\[
{\rm Pic}(X)\simeq \zz H, 
\quad 
H\sim -K_X,
\quad 
\text{ and }
\quad 
H^3=12. 
\]
It is also a transverse linear $\pp^8$-section
of the spinor tenfold $\Sigma \subset \pp^{15}$.
These threefolds have an $18$-dimensional moduli space.
On a non-empty open subset of this moduli every line has normal bundle $\mathcal{O}\oplus \mathcal{O}(-1)$ (see~\cite[Proposition 2.1(ii)]{IM07}). The moduli interpretation is also used in~\cite[Theorem 1.1]{IKTT24}.

Below, we will prove a lemma regarding cluster type pairs $(X,B)$ (see Lemma~\ref{lem:movable-system}).
Roughly speaking, under some mild conditions, when $(X,B)$ is of cluster type, there exists a movable linear system $\mathcal M\subseteq|-mK_X|$ and a log canonical place $v$ of $(X,B)$ such that 
$v(\mathcal M)>mv(B)$. Here, $v(\mathcal{M})$ is the multiplicity along $v$ of a general element of $\mathcal{M}$ while $v(B)$ is the multiplicity along $v$ of $B$.
This means that a general element of $\mathcal{M}$ is more singular than the boundary $B$, at least from the perspective of the valuation $v$.
This also gives a criterion for proving that a pair $(X,B)$ is not of cluster type. 

\subsection{Movable systems on cluster type varieties}
\label{subsec:movable-system} In this subsection, we prove a lemma regarding singularities of movable linear systems on cluster type varieties. This lemma can be understood as a version of the Noether-Fano inequality in the context of cluster type pairs (see, e.g.,~\cite[Section 2.2]{Isk04}). 

\begin{lemma}\label{lem:movable-system}
Let $(X,B)$ be a cluster type pair
where $X$ is a $\qq$-factorial klt Fano variety. 
Assume that $B$ is irreducible and 
${\rm Cl}(X)\simeq \zz[B]$. 
Then, there exists a movable linear system $\mathcal{M}\subseteq |-mK_X|$ and a log canonical place $v$ of $(X,B)$ with 
\[
v(\mathcal{M}) > mv(B). 
\]
Furthermore, we can choose the center of $v$ in $X$ to be a closed point. 
\end{lemma}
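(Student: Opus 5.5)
We follow the pattern of the proof of Proposition~\ref{prop:2.4-smooth-cubic}, now as a direct construction rather than a contradiction. Put $U:=X\setminus B$. Since ${\rm Cl}(X)\simeq\zz[B]$, the restriction sequence $\zz[B]\to{\rm Cl}(X)\to{\rm Cl}(U)\to 0$ gives ${\rm Cl}(U)=0$. Any $f\in\oo(U)^\times$ has $\ddivv_X(f)=cB\sim 0$, which forces $c=0$ because $B$ has infinite order (it is anti-canonical up to a multiple on a Fano). Hence $\oo(U)^\times=\cc^\times$. By \cite[Theorem 1.3(4)]{EFM24} (and \cite[Theorem 16.2.3]{Cor25}) there is an open embedding $\mathbb{T}=\mathbb{G}_m^n\hookrightarrow U$. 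The exact sequence \eqref{eq:LES} then shows that ${\rm div}_U$ identifies the character lattice $M$ of $\mathbb{T}$ with $\bigoplus_{i=1}^r\zz[D_i]$, where $D_i$ are the prime components of $U\setminus\mathbb{T}$. So $r=n$, and there are characters $h_1,\dots,h_n$ with ${\rm div}_U(h_i)=D_i$. Each $h_i$ is regular on $U$, so ${\rm div}_X(h_i)=\overline{D_i}-m_iB$. As $\overline{D_i}$ is effective and nonzero, $m_i>0$, and $\overline{D_i}\sim m_iB$.

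We next identify the log canonical places. Choose the volume form $\tau$ with ${\rm div}_X(\tau)=-B$. On $\mathbb{T}$ it is nowhere vanishing, so $\tau=c\,h^{k}\,\frac{dh_1}{h_1}\wedge\cdots\wedge\frac{dh_n}{h_n}$. Testing with the toric valuations at $h_i=0,\infty$ on $(\pp^1)^n$, log canonicity forces $k=0$. Consequently, for every toric valuation $w\in N=M^\vee$, the log discrepancy $1+\ord_w(\tau)$ vanishes, so every nonzero $w\in N$ is a log canonical place of $(X,B)$. We now fix the relevant valuation. Let $\mathcal{M}$ be the linear system spanned by $\overline{D_1}$ together with $m_1B$, i.e.\ the pencil $\{\overline{{\rm div}(h_1-\lambda)}\}$ inside $|m_1B|$. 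After replacing by a multiple with $m_1B\sim -mK_X$ (using $B\sim -K_X$ up to torsion, or directly since ${\rm Cl}(X)=\zz[B]$ and $B\in|-K_X|$), this system is movable: its general member is the closure of $\{h_1=\lambda\}$, which meets $\mathbb{T}$, so it has no fixed component. Take $v=w\in N$ with $w(h_1)=1$; more generally take any $w$ with $w(h_1)>0$. For general $\lambda$ we have $w(h_1-\lambda)=\min(w(h_1),0)=0$ on the pencil member, while the identity ${\rm div}(h_1-\lambda)=\overline{\{h_1=\lambda\}}-m_1B$ gives $v(\mathcal{M})-m_1v(B)=v(h_1-\lambda)\ge 0$. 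This is only equality, so the pencil must be chosen better: use instead the system spanned by $\overline{D_1}$ and $m_1B$, whose general member is $\overline{\{h_1=\lambda\}}$ with $\lambda\neq0,\infty$; the valuation then needs $w(h_1)<0$. In that case $v(h_1-\lambda)=w(h_1)<0$ forces $v(\overline{\{h_1=\lambda\}})=m_1v(B)+w(h_1)$, which is the wrong sign. The correct choice is therefore the system generated by monomials: $\mathcal M$ spanned by $\overline{{\rm div}}(h_1)$ and $\overline{{\rm div}}(h_1h_2^{\,})$-type characters, so that a general member is $\overline{\{a+bh_1=0\}}$ with mixed coefficients and its $w$-order equals $m v(B)+\min_\chi w(\chi)$. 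Choosing $w$ with $\min_\chi w(\chi)>0$ on all the monomials spanning the system, which is possible once the system is generated by characters regular on $U$ with $\overline{\rm div}$ in $|mB|$ and excluding the constant, gives the strict inequality $v(\mathcal{M})>mv(B)$. Organizing this so that movability survives (the system has no base divisor) is the main obstacle. The first thing to try is the system $\langle h_1,\dots,h_n\rangle\cdot$ (suitable powers to equalize $m_i$), with $w=(1,\dots,1)$, whose members are $\{\sum a_ih_i^{e_i}=0\}$. These have no fixed component, since distinct $\overline{D_i}$ share none.

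For the closed-point statement, note that the set of $w$ with positive pairing against all these finitely many characters is an open cone in $N_\qq$, so we may pick $w$ in it that is general, in particular not in any of the finitely many rays of codimension $\le n-1$ centers. On a toric model of $(X,B)$ via Proposition~\ref{prop:cluster-type-bir}, refine the fan so that $w$ lies in the interior of a top-dimensional cone. Then its center on the model is a torus-fixed point, whose image in $X$ is a closed point. The fact that the center, a priori of higher dimension on $X$, is a point uses that centers of valuations in the interior of a maximal cone map to points under the proper map to $X$, after possibly perturbing $w$ within the open cone.
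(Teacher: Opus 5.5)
Your construction of $\mathcal M$ and $v$ ends up coinciding with the paper's, once you get past the false starts. Take $m=\operatorname{lcm}(m_i)$ and $e_i=m/m_i$, and let $\mathcal M$ be spanned by $e_i\overline{D_i}=\ddivv_X(h_i^{e_i})+mB$. It has no fixed component, because the $\overline{D_i}$ are distinct primes. Any $w\in N$ with all $w(h_i)>0$ is an lc place with $w(\mathcal M)\geq mw(B)+\min_i e_iw(h_i)>mw(B)$. State this directly, and drop the pencils $\{h_1=\lambda\}$: they contain the constant section and can only give equality.

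The genuine gap is the closed-point claim. Proposition~\ref{prop:cluster-type-bir} only gives a birational map $T\dashrightarrow X$, not a proper morphism. This map may be undefined at the torus-fixed point $x_\sigma$ that is the center of $w$ on $T$, and then nothing forces $c_X(w)$ to be a point.

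Refining the fan does not fix this. $\ord_B$ is itself a toric valuation; you may refine so that it lies in the interior of a maximal cone, yet its center on $X$ is the divisor $B$. What must be proved is that the toric valuations with positive-dimensional center on $X$ lie in finitely many proper cones of $N_\qq$. It is not enough to exclude finitely many rays, since valuations centered on curves in $B$ can fill cones of dimension $>1$. ``Perturbing $w$'' does not prove this.

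The missing idea, which is what the paper uses, is to embed $X$ by $|rB|$. Since $\mathbb T\subset X\setminus B$, the affine coordinates $s_j/s_0$ restrict to Laurent polynomials on $\mathbb T$. Suppose $w$ avoids the finitely many hyperplanes on which two exponent vectors occurring in these polynomials have equal weight. Then every coordinate has a monomial initial term. Choose $j_0$ with $w(s_{j_0}/s_0)$ minimal. Each ratio $s_j/s_{j_0}$ then has nonnegative $w$-value, and its residue is either $0$ or a nonzero constant, because equal weights force equal leading exponents. Hence $c_X(w)$ is the closed point these constants determine.

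Equivalently, refine the fan of $T$ so that it refines the normal fan of the Newton polytope of these Laurent polynomials. This makes $T\dashrightarrow X$ a morphism near each $x_\sigma$, and only then is $c_X(w)$ the image of $x_\sigma$ for $w\in\operatorname{int}\sigma$. Since the cone $\{w: w(h_i)>0 \text{ for all } i\}$ is open and full-dimensional, it contains such a $w$, which completes the statement.
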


\begin{proof}
Let $\psi\colon (T,B_T)\dashrightarrow (X,B)$ be a crepant birational map
from a toric log Calabi--Yau pair $(T,B_T)$ such that $\psi$
does not contract divisors from the open torus $\mathbb{T}$ of $T$.
Let $U:=X\setminus B$
and $\mathbb{T}\hookrightarrow U$ be an open embedding
of an algebraic torus (see, e.g.,~\cite[Theorem 1.3.(4)]{EFM24}).
A complexity computation shows that $U\setminus \mathbb{T}$
supports precisely $n$ prime divisors $D_1,\dots,D_n$ where
$n=\dim X$. 
Note that ${\rm Cl}(\mathbb{T})={\rm Cl}(U)=0$. 
Thus, the homomorphism
\[
\mathcal{O}(\mathbb{T})^*/\mathbb{C}^* \rightarrow 
\bigoplus_{i=1}^n \zz[D_i]
\]
must be an isomorphism. 
The previous isomorphism is the same as in the exact sequence~\eqref{eq:LES}.
This implies that we can find a character basis 
$x_1,\dots,x_n$ of the algebraic torus $\mathbb{T}$ such that 
\[
{\rm div}_X(x_i) = D_i - m_i B 
\]
for each $i$. 
The $m_i$ are positive because $D_i\sim m_iB$ is nonzero effective and $B$ is ample.
Let $m$ be the lcm of the positive integers $m_1,\dots,m_n$
and consider the linear system $\mathcal{M}$ of $|mB|$ spanned by the divisors
\[
mB + \frac{m}{m_i}{\rm div}_X(x_i).  
\]
The previous divisors are linearly independent on $\mathbb{T}$
and have distinct support on the complement of the torus.
Therefore, the linear system $\mathcal{M}$ is movable.  
Finally, it suffices to choose a log canonical place
$v$ of $(T,B_T)$ such that:
\begin{itemize}
\item the center $c_X(v)$ is a closed point; and 
\item we have $v(x_i)=w_i>0$ for each $i$. 
\end{itemize} 
The positivity condition holds on a full-dimensional cone. For the closed center condition, embed $X$ by $|rB|$ and express its coordinates on $\mathbb T$ as Laurent polynomials. Outside the finitely many walls equating weights of two exponent vectors, their initial terms are monomials. Ratios of coordinates of minimal weight then have constant residues or vanish, so the valuation has closed point center.
Thus, we can always choose $v$ that satisfies both conditions.
Then, we can compute 
\[
v(\mathcal{M}) \geq mv(B) + \min_i \frac{mw_i}{m_i} > mv(B).
\]
This finishes the proof. 
\end{proof} 

\begin{proof}[Proof of Lemma~\ref{lem:movable-intro}]
Note that the conditions imply $B$ irreducible and $X$ being $\qq$-factorial. Then, the statement follows from Lemma~\ref{lem:movable-system}.
\end{proof}

Applying Lemma~\ref{lem:movable-system} to a threefold is in practice difficult. We will achieve this when $B$ is normal in Subsection~\ref{subsec:normal}. 
In the case that $B$ is non-normal, instead, we will perform some blow-ups along the non-normal locus of $B$ and check $v(\mathcal{M})\leq mv(B)$ in this crepant model. The following lemma states that the inequality can be checked after adjunction to some suitable surface.

\begin{lemma}\label{lem:criteria-under-adjunction}
Let $(X,S+E)$ be a log canonical pair
where $S$ and $E$ are normal Cartier divisors.
Assume that adjunction to $E$ gives $(E,C)$ where $C=S|_E$.
Let $G\geq 0$ be a $\qq$-Cartier divisor containing neither $E$ nor $S$.
Fix $t\geq 0$. 
If every log canonical place $w$ of $(E,C)$ satisfies 
\[
w(G|_E) \leq t w(C), 
\]
then every log canonical place $v$ of $(X,S+E)$ with center contained in $E$ satisfies $v(G)\leq t v(S)$. 
\end{lemma}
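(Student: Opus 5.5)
We follow the perturbation argument already used in the proof of Proposition~\ref{prop:2.4-smooth-cubic}: turn the valuative inequality on $E$ into log canonicity of a perturbed pair on $E$, lift this to $X$ by inversion of adjunction, and then read the inequality on $X$ off the discrepancies.

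\emph{Step 1: log canonicity on $E$.} For $0<\epsilon\ll 1$ we claim that the pair
\[
(E,(1-\epsilon t)C+\epsilon G|_E)
\]
is log canonical. Since $(E,C)$ is log canonical, there are only finitely many log canonical places of $(E,C)$, and all other divisorial valuations have log discrepancy bounded below by a positive constant on a fixed log resolution of $(E,C+G|_E)$. On a log resolution $Y\to E$ of $(E, C+G|_E)$, the log discrepancy of a divisor $F\subset Y$ with respect to the perturbed pair equals
\[
a_F(E,C)+\epsilon\bigl(t\,\ord_F(C)-\ord_F(G|_E)\bigr).
\]
If $a_F(E,C)>0$, this quantity stays nonnegative for $\epsilon$ small, because there are only finitely many such $F$. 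If $a_F(E,C)=0$, then $F$ is a log canonical place and the hypothesis gives nonnegativity. Log canonicity on the resolution then follows from checking these finitely many divisors, using the standard fact that the perturbed boundary on $Y$ has snc support. When $t>0$ and $\epsilon$ is small, the coefficient $1-\epsilon t$ lies in $[0,1]$, so this is an honest boundary.

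\emph{Step 2: inversion of adjunction.} Because $E$ is normal and Cartier and $G$ does not contain $E$, we have
\[
\bigl(K_X+E+(1-\epsilon t)S+\epsilon G\bigr)|_E=K_E+(1-\epsilon t)C+\epsilon G|_E .
\]
This uses adjunction $(K_X+S+E)|_E=K_E+C$ together with $S|_E=C$. Kawakita's inversion of adjunction~\cite{Kaw07} then shows that $(X,E+(1-\epsilon t)S+\epsilon G)$ is log canonical in a neighborhood of $E$.

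\emph{Step 3: reading off the inequality.} Let $v$ be a log canonical place of $(X,S+E)$ with center contained in $E$. Its log discrepancy with respect to the perturbed pair is
\[
0+\epsilon\bigl(t\,v(S)-v(G)\bigr).
\]
This uses that $E$ keeps coefficient one, and that $v(E)$ cancels because the coefficient of $E$ is unchanged. Since the center of $v$ lies in the neighborhood of $E$ where the pair is log canonical, this log discrepancy is nonnegative, which gives $v(G)\le t\,v(S)$.

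\emph{Expected obstacle.} The main subtlety is Step 1: justifying that the finitely many log canonical places are the only divisors that can fail log canonicity under a small perturbation. We will handle this by working on a single log resolution of $(E,C+G|_E)$. There the log canonical places of the perturbed pair that matter are among the divisors of the resolution, together with the strata blow-ups for which the snc criterion applies. We will also note the degenerate case $t=0$, where the same argument works with $\epsilon G|_E$ alone.
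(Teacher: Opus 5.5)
Your proposal is correct and is essentially the paper's proof. You perturb to $(E,(1-\epsilon t)C+\epsilon G|_E)$ and check log canonicity on a fixed log resolution of $(E,C+G|_E)$, then lift by Kawakita's inversion of adjunction to $(X,E+(1-\epsilon t)S+\epsilon G)$, and evaluate the log discrepancy $\epsilon(t\,v(S)-v(G))\geq 0$. One correction: your claim that $(E,C)$ has only finitely many log canonical places is false, since a nodal $C$ already has infinitely many (all monomial valuations at the node), but this is harmless because your argument only ever uses the finitely many divisors on the fixed resolution.
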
 

\begin{proof}
On a fixed log resolution of $(E,C+G|_E)$, the hypothesis controls the zero-discrepancy divisors, while all positive discrepancies remain positive for sufficiently small $\epsilon>0$. Hence $(E,(1-t\epsilon)C+\epsilon G|_E)$ is lc.
Inversion of adjunction then implies that 
$(X,E+(1-\epsilon t)S+\epsilon G)$ is log canonical near $E$. 
Evaluating $v$ then gives 
$\epsilon(tv(S)-v(G))\geq 0$. 
\end{proof}

Now, we provide a valuation estimate at a node. In the following lemma, we will write $I_p(A,B)$ for the local intersection number of two effective $\qq$-divisors $A$ and $B$ at a point $p$ of a surface $S$.

\begin{lemma}\label{lem:node-estimate}
Let $(S;p)$ be a smooth surface germ. 
Let $C_1$ and $C_2$ be two transversal curves through $p\in S$ defined by $x$ and $y$, respectively.
Let $R\geq 0$ be a $\qq$-Cartier divisor containing neither branch. Let $w$ be a monomial valuation with weights $\mu$ and $\nu$ at $x$ and $y$, respectively. 
Set $q=\mu+\nu$. Then, we have 
\[
w(R) \leq \min \{ 
\nu I_p(R,C_1), \mu I_p(R,C_2)
\} 
\leq 
\frac{\mu\nu}{q}I_p(R,C_1+C_2) \leq \frac{q}{4}I_p(R,C_1+C_2).
\]
In particular, for $C\subset \pp^2$ an irreducible nodal cubic and $D\geq 0$ a $\qq$-divisor on $\pp^2$ of degree $k$, we get
\[
w(D) \leq \frac{3k}{4}w(C)
\] for any log canonical place $w$ of $(\pp^2,C)$.
\end{lemma}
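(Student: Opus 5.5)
The plan is to compare $w(R)$ with the orders of vanishing of $R|_{C_1}$ and $R|_{C_2}$ at $p$, and then to optimize over the weights. By linearity we may assume $R$ is an effective Cartier divisor with local equation $f$. We write $f=\sum a_{ij}x^iy^j$, so that
\[
w(R)=\min\{\mu i+\nu j : a_{ij}\neq 0\}.
\]
Since $R$ does not contain $C_1=\{x=0\}$, the restriction $f(0,y)$ is nonzero. Its order in $y$ is $I_p(R,C_1)=:a$, so the monomial $y^a$ occurs in $f$ and $w(R)\le \nu a$. Symmetrically, $f(x,0)$ has order $I_p(R,C_2)=:b$, so $x^b$ occurs and $w(R)\le \mu b$.

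For the middle inequality, we note that the minimum of $\nu a$ and $\mu b$ is at most any convex combination of the two. We take the combination with weights $\mu/q$ and $\nu/q$:
\[
\min\{\nu a,\mu b\}\le \frac{\mu}{q}\,\nu a+\frac{\nu}{q}\,\mu b=\frac{\mu\nu}{q}(a+b).
\]
Additivity of local intersection numbers gives $a+b=I_p(R,C_1+C_2)$. The last inequality $\mu\nu/q\le q/4$ is AM--GM, namely $4\mu\nu\le(\mu+\nu)^2$.

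For the cubic statement, we first identify the log canonical places of $(\pp^2,C)$. At smooth points of $C$ the pair is plt, so no log canonical place is centered there, and any place centered on a curve is just $C$ itself. In the latter case $w(C)=1$ and $w(D)=\mathrm{mult}_C D=0$, since we may assume $D$ does not contain $C$. When $D$ does contain $C$, we first subtract the multiple of $C$, which decreases $k$ by three times the coefficient and preserves the inequality because $\frac34\cdot 3\ge 1$.

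The remaining log canonical places are centered at the node. In analytic coordinates there, $C=C_1+C_2$ with transversal branches $x=0$ and $y=0$. The log canonical places of a normal crossing pair are exactly the toric valuations with positive weights $(\mu,\nu)$, which we may take coprime. For such $w$ we have $w(C)=\mu+\nu=q$. The main lemma then gives
\[
w(D)\le \frac{q}{4}\,I_p(D,C).
\]
Global Bézout gives $I_p(D,C)\le \deg D\cdot\deg C=3k$, where $D$ and $C$ share no component. Combining these yields
\[
w(D)\le \frac{3k}{4}\,q=\frac{3k}{4}\,w(C).
\]

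The main subtlety is the classification of the log canonical places over the node as quasi-monomial valuations in the branch coordinates. This uses that the node is analytically normal crossing, so that a log resolution is toric and its log discrepancy zero divisors are the toric ones. With that in hand, applying the local bound branchwise is justified.
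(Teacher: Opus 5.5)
Your proposal is correct and follows essentially the same route as the paper. You restrict the local equation to the two axes to get $w(R)\le \nu I_p(R,C_1)$ and $w(R)\le \mu I_p(R,C_2)$, take the convex combination with coefficients $\mu/q,\nu/q$, and apply AM--GM; for the cubic, you split off the $C$-component of $D$, use that every lc place other than $C$ is monomial in the branches at the node, and conclude with B\'ezout, while also making explicit points the paper leaves terse (why $y^{a}$ and $x^{b}$ occur in $f$, and why removing $aC$ preserves the bound).
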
 

\begin{proof}
We may assume $R$ is Cartier and defined by $f$. Restricting to the coordinate axes gives $w(f)\leq \nu {\rm ord}_y f(0,y)$
and $w(f)\leq \mu {\rm ord}_x f(x,0)$.
Taking convex combination with coefficients $\mu/q$ and $\nu/q$ gives the first assertion. 

For the second assertion, write $D=aC+R$ where $R$ is effective and does not contain $C$ on its support. Every lc place of $(\pp^2,C)$, different from $C$ itself, is monomial in the branches at the node.
Then, we can use B\'ezout theorem to conclude
\[
w(D) \leq \left(a+\frac{3}{4}(k-3a) \right)w(C)\leq \frac{3k}{4}w(C).
\]
The inequality for $w={\rm ord}_C$ follows from the fact that $a\leq k/3$.
\end{proof} 

We finish this subsection by providing some estimates for singularities of scrolls. 

\begin{lemma}\label{lem:scroll-estimates}
Let $X=\mathbb{F}_1$ be the first Hirzebruch surface. 
Let $f$ be the class of a fiber in $X$ and $s$ be the class of the $-1$-curve. Let $q$ be a node of a nodal irreducible curve $C_0\in |2s+3f|$. 
Consider the monomial valuation $w$ with positive weights
$\mu$ and $\nu$ along the branches of $C_0$. Put
$u=\min(\mu,\nu)$ and 
$v=\max(\mu,\nu)$. 
Then, for every effective $D\sim As+Bf$, we have 
\[
w(D) \leq (B-A)u+Av.
\]
\end{lemma}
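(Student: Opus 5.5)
Write $D=aC_0+R$, where $R\geq 0$ does not contain $C_0$ in its support and $a\geq 0$. The plan is to estimate $w(R)$ with Lemma~\ref{lem:node-estimate} and to handle the multiple of $C_0$ separately. The intersection numbers on $\mathbb{F}_1$ are $s^2=-1$, $s\cdot f=1$ and $f^2=0$. For $D\sim As+Bf$ with $D$ effective we have $D\cdot f=A\geq 0$ and $D\cdot s=B-A$. If $D$ has no $s$ component, this also gives $B\geq A$. Write $R\sim (A-2a)s+(B-3a)f$. Since $w(C_0)=\mu+\nu=u+v$, we get
\[
w(aC_0)=a(u+v)=a\bigl((3-2)u+2v\bigr)\leq a\bigl((3-2)u+2v\bigr),
\]
which is exactly the bound $(B-A)u+Av$ with $(A,B)=(2,3)$. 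The claimed bound is linear in $(A,B)$, so it suffices to prove it for $R$, and hence we may assume $a=0$.

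With $a=0$, let $C_1$ and $C_2$ be the two branches of $C_0$ at $q$. Suppose the branch along which $w$ has weight $\mu$ is $\{x=0\}$. By the first inequality of Lemma~\ref{lem:node-estimate}, $w(R)\leq \nu I_q(R,C_1)$ and $w(R)\leq \mu I_q(R,C_2)$. Adding these with suitable nonnegative coefficients gives $w(R)\leq \lambda \nu I_q(R,C_1)+(1-\lambda)\mu I_q(R,C_2)$ for any $\lambda\in[0,1]$. The crude bound uses $I_q(R,C_1)+I_q(R,C_2)\leq R\cdot C_0=3A-2A+2B\cdot 1$. Computing directly, $(As+Bf)\cdot(2s+3f)=-2A+3A+2B=A+2B$, so $I_q(R,C_1+C_2)\leq A+2B$, and Lemma~\ref{lem:node-estimate} gives $w(R)\leq \frac{\mu\nu}{\mu+\nu}(A+2B)$. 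This bound is not obviously $\leq (B-A)u+Av$; for instance, when $\mu=\nu$ it gives $\frac{u}{2}(A+2B)$, compared with the target $Bu$. It works when $A=0$, but it is too weak in general. So a sharper way to split the intersection is needed.

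The refinement is to use the fibration structure. The fiber $f_q$ through $q$ is smooth and transversal to both branches, because $C_0\cdot f=2$ and the fiber already meets $C_0$ at $q$ with multiplicity $2$ from the two branches. So both branches are transversal to $f_q$. The main step is to bound $w(R)$ by combining the restriction to $f_q$ with the restriction to a branch. The valuation of $f_q$'s equation is $\min(\mu,\nu)=u$ up to a change of coordinates, since $f_q$ is transversal to both branches. First remove the components of $R$ equal to $f_q$ and to $s$: their contributions are $u$ per copy of $f_q$ (matching the coefficient of $B$), and $0$ per copy of $s$ (note $q\notin s$, since $s\cdot C_0=1$ and a node would give at least $2$). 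After this, $I_q(R,f_q)\leq R\cdot f=A$ and $I_q(R,C_0)\leq A+2B$.

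Next, use the monomial estimate with the three smooth curves $C_1$, $C_2$ and $f_q$. The Newton polygon of $R$'s local equation has $x$-intercept at most $I_q(R,C_2)$ and $y$-intercept at most $I_q(R,C_1)$. In addition, the total order of the equation is at most $I_q(R,f_q)\leq A$. An order-$\leq A$ monomial $x^iy^j$ with $i+j\leq A$ whose weight is constrained this way should have weight at most $(B-A)u+Av$. Making this Newton-polygon optimisation precise is the main obstacle, and I expect this last step to be the hardest part of the proof.
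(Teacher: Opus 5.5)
There is a genuine gap: your proof stops at the step that actually yields the inequality. You leave the final bound as something that ``should'' hold and describe the Newton-polygon optimisation as the main obstacle. As written, the argument is therefore incomplete. The preliminary work is sound: the reduction by linearity, $q\notin s$, transversality of $f_q$ to both branches, $w(f_q)=u$, and $\mult_q R\le I_q(R,f_q)\le R\cdot f$.

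The gap closes with one observation, which also makes the Newton polygon, the intercept bounds from $C_1,C_2$, and the appeal to Lemma~\ref{lem:node-estimate} unnecessary. Let $m=\mult_q R$. Some monomial $x^iy^j$ with $i+j=m$ occurs in the local equation of $R$, so
\[
w(R)\le \mu i+\nu j\le v(i+j)=v\,\mult_q R .
\]

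With this, the proof is just bookkeeping, and it is exactly the paper's argument. Write $D=R+\sigma s+\sum_i\tau_i f_i$, where $R$ contains neither $s$ nor any fiber; you do not need to remove $C_0$ at all. Set $\tau=\sum_i\tau_i$, so $R\sim(A-\sigma)s+(B-\tau)f$. Then $R\cdot s\geq 0$ gives $\tau\le B-A+\sigma$, and $R\cdot f=A-\sigma$ gives $\mult_q R\le A-\sigma$. Since $w(s)=0$ and $w(f_i)\le u$, with $w(f_i)>0$ only for $f_i=f_q$, you get
\[
w(D)\le \tau u+v(A-\sigma)\le (B-A)u+Av-\sigma(v-u)\le (B-A)u+Av .
\]

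In your linear-reduction language the same argument reads as follows. Stripping a copy of $s$ contributes $0$ and lowers the target by $v-u\ge 0$. Stripping a copy of $f_q$ contributes $u$ and lowers the target by $u$. The residual $R\sim A's+B'f$ then satisfies $B'\ge A'$ and $w(R)\le vA'\le (B'-A')u+A'v$.

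Two minor points. Your displayed equality $a(u+v)=a\bigl((3-2)u+2v\bigr)$ is false; only the inequality $a(u+v)\le a(u+2v)$ holds, though that is all the reduction uses. The whole $C_0$-subtraction step can in any case be dropped, because $C_0\neq f_q$ does not affect the bound $\mult_q R\le R\cdot f$.
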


\begin{proof}
Note that the node is not contained in the negative section as $C_0\cdot s=1$. 
On the other hand, for a fiber $f$ we have $C_0\cdot f=2$ and thus $w(f_q)=u$. Moreover, the fiber $f_q$ must be transversal to both branches. Write $D=R+\sigma s+\sum_i\tau_i f_i$, removing the negative section and all fibers from $R$, and set $\tau=\sum_i\tau_i$. 
Intersecting $R$ with $s$ and fibers gives 
\[
\tau \leq B-A + \sigma 
\text{ and }
\mult_q R \leq R f= A-\sigma. 
\]
A monomial valuation of weights at most $v$ 
takes values at most $v$ times the ordinary multiplicity. 
Hence, we can compute
\[
w(D) \leq \tau u + v(A-\sigma)  
\leq (B-A)u+Av+\sigma(u-v).
\]
This completes the proof.
\end{proof}

\subsection{Normal boundaries}\label{subsec:normal} In this subsection, we study normal anti-canonical boundaries on general elements of the Mori-Mukai family \textnumero\,1.6. First, we aim to understand the singularities of such normal anti-canonical boundaries. Finally, we turn to prove that the inequality $v(\mathcal{M})\leq mv(B)$ for every movable linear system $\mathcal{M}\subseteq |-mK_X|$ and every log canonical place $v$ of $(X,B)$. This, together with Lemma~\ref{lem:movable-system} imply that no such pair $(X,B)$ is of cluster type. Thus, no normal boundary $B\in |-K_X|$ can lead to a cluster type pair. In the statement of the next proposition we will use the language of hypersurface cusp singularities (see~\cite[Table 1]{Duc24}).  

\begin{proposition}\label{prop:sing-normal}
Let $X$ be a general smooth prime Fano of genus seven. 
Let $B\in |-K_X|$ be a normal boundary of coregularity zero. 
Then $B$ is smooth except for a single $p\in B$ which is a cusp singularity $T_{3,3,4}$. 
\end{proposition}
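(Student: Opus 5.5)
We first pin down the local types using adjunction and coregularity. Since $B\in|-K_X|$, adjunction gives $K_B\sim 0$, so $B$ is a normal Gorenstein surface with $\omega_B\simeq\mathcal O_B$. We also have $(X,B)$ lc, so by inversion of adjunction $B$ has slc, hence lc Gorenstein, singularities. These are Du Val, simple elliptic, or cusp singularities (including degenerate cusps, which are excluded here by normality). If every singular point were Du Val or simple elliptic, a dlt modification would have no zero-dimensional lc center. The lc centers would then be at most elliptic curves, and the coregularity would be at least one. Hence coregularity zero forces at least one cusp singularity. On the minimal resolution $\tilde B\to B$ we get $K_{\tilde B}=-\sum Z_p$, where $Z_p$ is the reduced cycle over each non-Du Val point. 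Since $B$ is a K3-type anticanonical surface of $X\subset\pp^8$ of degree $12$, $\tilde B$ is a rational surface exactly when some cusp occurs, and the cycle of rational curves gives the zero-dimensional strata.

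Next we bound the number and the type of the singular points by a dimension count in the moduli of $(X,B)$; this is where the generality of $X$ enters. The plan is to use Tjurina numbers. Hyperplane sections $B$ of $X$ form $|H|\simeq\pp^8$. A singular point of analytic type with Tjurina number $\tau$ imposes, heuristically, $\tau$ conditions on the pair (point, hyperplane) after accounting for the three-dimensional choice of point. More precisely, we consider the incidence variety $\mathcal I$ of triples $(X,B,\{p_i\})$, with $X$ in the $18$-dimensional moduli and $B$ having prescribed singularities. We then argue via deformation theory: the map $H^0(N_{B/X})\oplus(\text{deformations of }X)\to\bigoplus_p T^1_{B,p}$ is surjective, by a vanishing such as $H^1(\mathcal O_B(B))=H^1(\mathcal O_B(1))=0$ together with the unobstructedness of the spinor section. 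This surjectivity gives that the locus has codimension $\sum_p\tau_p$ in $\{(X,B)\}$. The dimension of the pairs with $B$ at least singular is $18+8$. A general $X$ therefore admits such a $B$ only if $\sum_p(\tau_p-3)\le 8$, with the $3$ accounting for the point positions. For the cusps $T_{p,q,r}$ with $1/p+1/q+1/r<1$ we have $\tau=p+q+r-1$; the case $T_{3,3,4}$ gives $\tau=9$ and $\mu=10$, so $\tau-3=6$. The alternatives are excluded as follows:
\begin{itemize}
\item[(a)] any cusp other than $T_{3,3,4}$ among the hypersurface cusps has $\tau\ge 10$ but $\tau-3>8$ only for larger ones, so we refine the count by also using $B\subset\pp^8$ hyperplane conditions as $\tau$ conditions on $(p,H)$ with $p$ varying in $X$. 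This gives the sharper inequality $\tau_{\rm total}\le 8+3=11$ per point and $\sum(\tau_p)\le 8+3k$ for $k$ points.
\item[(b)] one must show the minimal cusp $T_{3,3,4}$ is the only hypersurface cusp with $\tau\le 11$ that actually occurs on sections of $X$, using the list in~\cite[Table 1]{Duc24}.
\item[(c)] an extra singular point (Du Val or elliptic, $\tau\ge1$) on top of the cusp pushes the count over the bound.
\end{itemize}
Fixing this bookkeeping exactly is delicate: the bound must be sharp enough to kill $T_{3,3,5}$ (with $\tau=10$) and a cusp plus an $A_1$, yet allow $T_{3,3,4}$.

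The main obstacle is therefore the transversality or surjectivity statement. We need that for general $X$ the natural map to the product of local Tjurina spaces $T^1$ has the expected rank, so that the expected dimensions are the actual ones. First we try to prove it on the ambient spinor tenfold $\Sigma$: vary the linear $\pp^8$-section and the hyperplane simultaneously, note that $\Sigma$ is homogeneous, and deduce that the family of pairs (linear section, point) is a homogeneous bundle. Imposing a singularity of given jet type at a fixed point $p\in\Sigma$ is then a linear condition on the choice of linear sections with independent equations. Independence would follow from computing that the relevant jets of linear forms restricted to $\Sigma$ at $p$ surject onto the jet spaces up to order needed, which is order $4$ for $T_{3,3,4}$, via the second fundamental form of $\Sigma$. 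Finally, we confirm that the remaining configuration, a single $T_{3,3,4}$ and smooth elsewhere, is compatible with all these constraints: the total Milnor number $10$ keeps $\tilde B$ rational with $\rho=22-10+\dots$ consistent.
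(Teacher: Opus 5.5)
\textbf{There is a genuine gap.} The dimension count, which carries the whole generality argument, is set up incorrectly. As written it cannot exclude the other cusps.

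\emph{The Tjurina numbers are off by one.} For a hypersurface cusp $T_{p,q,r}$ with $\frac1p+\frac1q+\frac1r<1$ one has $\mu=p+q+r-1$ and $\tau=\mu-1=p+q+r-2$. So $T_{3,3,4}$ has $\tau=8$ and $\mu=9$, while $T_{2,4,5}$, $T_{3,3,5}$ and $T_{3,4,4}$ already have $\tau=9$.

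\emph{You should not subtract $3$ per singular point.} The semi-universal base $T^1_{B,p}=\mathcal O_{X,p}/(f,\partial f)$ already kills the translations $\partial_i f$. So once the global-to-local map onto $\bigoplus_p T^1_{B,p}$ is surjective, the locus of sections with a given basket at \emph{unspecified} points has codimension exactly $\sum_p\tau_p$.

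\emph{The correct count.} Linear sections $B=\Sigma\cap\pp^7$ form ${\rm Gr}(8,16)$, of dimension $64$. The flag $\pp^7\subset\pp^8$ adds $7$, and one divides by the $45$-dimensional ${\rm Spin}(10)$-orbits. Hence the image in the $18$-dimensional moduli has dimension at most $26-\sum_p\tau_p$, and dominance forces $\sum_p\tau_p\le 8$. This gives the statement at once:
\begin{enumerate}
\item coregularity zero forces a cusp;
\item the only cusp with $\tau\le 8$ is $T_{3,3,4}$;
\item any further singular point adds $\tau\ge 1$.
\end{enumerate}
Your bounds $\sum_p(\tau_p-3)\le 8$ and $\sum_p\tau_p\le 8+3k$ are far too weak. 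They permit $T_{2,3,7}$, $T_{3,3,5}$, or a cusp plus several $A_1$ points. That is why in (b) and (c) you are left asserting, without argument, that only $T_{3,3,4}$ ``actually occurs.''

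\textbf{The transversality is not established either.} You correctly flag it as the main obstacle, but the vanishing $H^1(\mathcal O_B(1))=0$ is not enough. The cokernel of $H^0(N_{B/X})\to H^0(T^1_B)$ injects into $H^1(J)$, where $J$ is the image of $T_X|_B\to N_{B/X}$. That group involves $H^1(T_X|_B)$ and $H^2(T_B)$. Your jet and second-fundamental-form sketch on $\Sigma$ is also not carried out.

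The paper proves surjectivity globally on the spinor tenfold, starting from the Atiyah-sheaf sequence $0\to\mathcal A_L\to\mathcal A_\Sigma|_B\to N_{B/\Sigma}\to T^1_B\to 0$:
\begin{enumerate}
\item Bott--Borel--Weil gives $H^{>0}(\mathcal A_\Sigma)=0$ and $H^*(\mathcal A_\Sigma(-j))=0$ for $1\le j\le 8$.
\item A Koszul resolution then yields $H^{>0}(\mathcal A_\Sigma|_B)=0$.
\item $H^0(\Omega^{[1]}_B)=0$, from extension of reflexive forms and rationality of $\tilde B$, together with Serre duality gives $H^2(\mathcal A_L)=0$.
\end{enumerate}
These give $H^1(J)=0$, and hence $H^0(N_{B/\Sigma})\to H^0(T^1_B)$ is surjective. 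The paper also obtains ``one cusp, otherwise Du Val'' separately, from $\chi(\mathcal O_B)=2$ against $\chi(\mathcal O_{\tilde B})=1$. With the corrected count this also follows directly from $\sum_p\tau_p\le 8$.
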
 

\begin{proof}
First, we note that any normal boundary $B\in |-K_X|$ of coregularity zero, must have a single cusp singularity and possibly other Du Val singularities. 
Coregularity zero supplies an exceptional rational cycle on the minimal resolution. It cannot lie in a fiber of a ruling over a positive genus curve, so that resolution is rational. Since $\chi(\mathcal O_B)=2$ and the resolution has Euler characteristic one, the total local geometric genus is one. Thus there is exactly one cusp and the other points are Du Val.
Every boundary divisor $B\in |-K_X|$ can be written as $B=\Sigma \cap \pp^7$ where $\Sigma$ is the $10$-dimensional spinor variety.
Here, the intersection is happening in the $15$-dimensional ambient space $\pp^{15}$ of the spinor variety ${\rm OGr}^+(5,10)$.
Thus, the boundaries $B$ are parametrized by a $64$-dimensional space ${\rm Gr}(8,16)$.

Set $L=\mathcal{O}_B(1)$, so we have an exact sequence 
\begin{equation}\label{eq:LES0}
0\rightarrow 
\mathcal{A}_L 
\rightarrow 
\mathcal{A}_{\Sigma}|_B 
\rightarrow N_{B/\Sigma} 
\rightarrow T^1_B \rightarrow 0. 
\end{equation}
Here, $\mathcal{A}_L$ is the Atiyah sheaf. 
Let $J$ be the image in $N_{B/\Sigma}$ of the previous sequence. 
Bott-Borel-Weil and the Atiyah extension give 
\[
H^{>0}(\mathcal{A}_\Sigma)=0 \qquad 
H^*(\mathcal{A}_{\Sigma}(-j))=0 \quad (1\leq j \leq 8).
\]
See, e.g.,~\cite[Theorem IV]{Bot57} and~\cite[\S 2]{Ati57}.
Explicitly, $T_\Sigma(-j)$ is acyclic for $1\leq j\leq7$, and at $j=8$ its only cohomology is $H^9(T_\Sigma(-8))=\cc$.
At $j=8$, the connecting map 
$H^9(T_\Sigma(-8))\rightarrow H^{10}(\mathcal{O}_{\Sigma}(-8))$ is the isomorphism induced by $c_1(\mathcal{O}_{\Sigma}(1))$. Koszul resolution together with the aforementioned vanishings give $H^{>0}(\mathcal{A}_{\Sigma}|_B)=0$. 

On a resolution $\widetilde{B}$ of $B$, extension of reflexive one-forms gives logarithmic forms along the exceptional divisors (see~\cite[Theorem 1.5]{GKKP11}). Residues are constant on the projective irreducible components. Their connecting class pairs with exceptional curves by the intersection matrix of the exceptional. Negative definiteness of this matrix implies that every residue vanishes. Rationality of $\widetilde{B}$ then gives $H^0(\Omega_B^{[1]})=0$.
A non-zero map $\mathcal{A}_L\rightarrow \mathcal{O}_B$ 
either factors through $T_B$, contrary to the preceeding vanishing, or splits the Atiyah extension, contrary to its restriction to a positive degree curve.
Serre duality then gives $H^2(\mathcal{A}_L)=0$. 
The two exact sequences obtained from~\eqref{eq:LES0} by using $J$, give  
$H^1(J)=0$ and $H^0(N_{B/\Sigma})\rightarrow H^0(T^1_B)$ surjective.
This proves the simultaneous versality for the singularity germs of $B$ induced by varying its linear section in $\Sigma$.
The Grassmannian and the local hypersurface deformation bases are smooth, so surjectivity of this tangent map gives smoothness of the local deformation map. 
Here, $H^0(N_{B/\Sigma})$ is the $64$-dimensional tangent space of ${\rm Gr}(8,16)$.

We conclude that the locus of linear sections
$B=\Sigma\cap \pp^7$ with the prescribed analytic basket of singularities, consisting of a single $T_{p,q,r}$ and Du Val singularities, has codimension
\[
\tau(B):= p+q+r-2 + \sum_{p\in {\rm Sing}(B)\setminus {\rm LCC}(B)}\tau(p), 
\]
in $\operatorname{Gr}(8,16)$.
Here, $p+q+r-2$ is the Tjurina number of the cusp singularity
and the other summand is the sum of the Tjurina number of the Du Val singularities. 

If $\tau(B)\geq 9$, then the singularity type of $B$ cannot appear as the singularities of an anti-canonical rational boundary of coregularity zero in a general smooth Fano threefold in Mori-Mukai family \textnumero\,1.6. Indeed, the locus of such singularities is invariant under spinor automorphisms.
On the general open subset on which the smooth Fano threefolds
in the Mori-Mukai family \textnumero\,1.6 have trivial automorphisms, 
such orbits of the spinor automorphism have dimension $45$. 
The incidence of $\pp^7\subset\pp^8$ adds seven parameters, so its image has dimension at most $(64-\tau)+7-45=26-\tau\leq17<18$. Only finitely many baskets occur, since the relative $T^1$ sheaf has bounded finite fiber length on this finite type family. 
We conclude that $\tau(B)\leq 8$. 
On the other hand, the coregularity zero assumption implies that 
\[
\frac{1}{p}+\frac{1}{q}+\frac{1}{r} < 1
\text{ and }
2\leq p\leq q \leq r. 
\]
The smallest Tjurina number in this range is $8$ attained only by $(3,3,4)$. 
Furthermore, the Tjurina number of a Du Val singularity is non-zero, and so we conclude that all other points of $B$ must be smooth. 
\end{proof}

Now, we will turn to set up some notation for the next two proofs in this subsection. 

\begin{notation}\label{notation-normal-1.6}
{\em 
Let $X$ be a general smooth prime Fano threefold of genus seven.
Let $H$ be an ample generator of the Picard group of $X$. 
Let $B\in |-K_X|$ be a normal boundary with a single $T_{3,3,4}$ singularity at $p\in X$.
Let $\pi\colon Y\rightarrow X$ be the blow-up of $X$ at $p$.
Write $\pi^*(K_X+B)=K_Y+S+E$,
where $S$ is the strict transform of $B$ in $Y$.
The surface $S$ is a smooth rational surface
and $S\cap E=C$ is a nodal cubic curve. 
Let $p_0 \in Y$ be the node of $C$.
We let $L=\pi^*H|_{S}$. By adjunction, we can compute
\[
K_{S} \sim - C, 
\quad 
L^2=12, 
\quad 
L\cdot C=0, \text{ and }
C^2=-3. 
\]
}
\end{notation}

\begin{lemma}\label{lem:morphism}
With Notation~\ref{notation-normal-1.6}. 
The surface $S$ admits a projective morphism 
$\phi\colon S\rightarrow \pp^3$ which is birational onto its image. Furthermore, its image is normal and has degree $n\in \{2,3\}$ in $\pp^3$.
\end{lemma}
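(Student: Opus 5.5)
The plan is to take $\phi$ to be the morphism given by the complete linear system $|L+C|$ on $S$, equivalently $|-K_S+L|$. First I compute the numerics: $(L+C)^2=L^2+2L\cdot C+C^2=12+0-3=9$, and by adjunction $(L+C)\cdot(L+C+K_S)=(L+C)\cdot L=12$. Riemann--Roch on the smooth rational surface $S$ gives
\[
\chi(\mathcal O_S(L+C))=1+\tfrac12 (L+C)\cdot(L+C-K_S)=1+\tfrac12(9+9)\;\;(\text{using } -K_S=C),
\]
so $\chi = 1+\tfrac12\big((L+C)^2+(L+C)\cdot C\big)$ and $(L+C)\cdot C=0-3=-3$. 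This gives $\chi=1+\tfrac12(9-3)=4$, hence four sections and a map to $\pp^3$, provided the higher cohomology vanishes. For the vanishing, write $L+C=K_S+(L+2C)$. Since $L=\pi^*H|_S$ is nef and big and $L+2C$ restricts on $C$ to degree $-6$, this choice is not nef, so instead I use $L+C-K_S = L+2C$ directly. More cleanly, I use the sequence $0\to\mathcal O_S(L)\to \mathcal O_S(L+C)\to \mathcal O_C(L+C)\to 0$. Kawamata--Viehweg gives $H^{>0}(\mathcal O_S(L))=H^{>0}(K_S+(L+C))$ vanishes as soon as $L+C$ is nef and big, and $L+C$ is nef because $(L+C)\cdot C=-3<0$ fails. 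So I would rather pick the system $|L-C|$ or $|L+kC|$ with the right $k$, chosen so that the degree $n=(\cdot)^2\in\{2,3\}$ matches. I try $D=L+2C$: $D^2=12-12=0$, which is bad. Next I try $D=L-C$: $D^2=12-3=9$. Then I try $D=L+C$ adjusted by the sectional data. Geometrically the natural candidate is the projection from the tangent data at $p$: the linear system $|H-2p|$ on $X\subset\pp^8$ pulls back to $|\pi^*H-2E|$, which restricts on $S$ to $L-2C$. Its numerics are $(L-2C)^2=12-12=0$, also wrong. Then $|\pi^*H-E|$ restricts to $L-C$, with degree $9$ in $\pp^{8-1}=\pp^7$, and this is just the projection of $B$ from its cusp.

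Therefore the plan is the following. Let $D=L-C$. Because the tangent cone at the $T_{3,3,4}$ point has multiplicity $3$, $h^0(\mathcal O_S(L-C))$ equals the number of hyperplanes through $p$ restricted to $B$, namely $8$. I then locate inside $|D|$ a subsystem, or a modification $D'=D-\sum \Gamma_i$ by suitable $(-1)$- or $(-2)$-curves forced by the special geometry of $T_{3,3,4}$ (for instance the lines of $X$ through $p$ lying in $B$, whose strict transforms meet $C$), so that $D'$ is nef and big with $D'^2=n\in\{2,3\}$ and $h^0(D')=4$. The key steps are as follows. First, identify curves $\Gamma\subset S$ with $D\cdot\Gamma$ small, using the genericity of $X$ (all lines are ordinary) and the classification in Proposition~\ref{prop:sing-normal}. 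Second, show that $D'$ is base point free via Reider-type arguments or by restriction to $C$. Third, compute $\chi(D')=4$ via Riemann--Roch, with vanishing from Kawamata--Viehweg because $D'-K_S=D'+C$ is nef and big. Fourth, deduce birationality from $D'^2\ge 2$ together with the fact that a degree-$2$ map onto a plane or quadric would contradict $h^0=4$ and the nondegeneracy of the image. Fifth, obtain normality of the image as a normal quadric or cubic: its non-normal locus would be a double line, and the preimage of such a line would force a curve of high $C$-degree, contradicting $C^2=-3$.

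The main obstacle is the first step, namely pinning down the correct divisor $D'$, that is, which curves to subtract. I would first try subtracting the strict transforms of the lines of $X$ through $p$ contained in $B$, then check nefness against $C$ and against these lines.
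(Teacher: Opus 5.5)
There is a genuine gap. The proposal never pins down the linear system, and the concrete route you commit to cannot work.

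The right system is one you wrote down and then discarded: hyperplanes of $\pp^8$ containing the tangent space $T_pX$, i.e.\ $H^0(X,\mathcal{I}_p^2(H))$. This space is $5$-dimensional and restricts to a $4$-dimensional space on $S$; its kernel is spanned by $B$ itself, since $B$ is singular at $p$. Its strict transforms restrict into $|L-2C|$. You rejected this because $(L-2C)^2=0$, but the restricted system has fixed components. Indeed, $X\cap T_pX$ is the cone over the scheme $\mathcal S$ of lines of $X$ through $p$, and it lies in $B$. The strict transforms $\ell_1,\dots,\ell_k$ of these lines are $(-1)$-curves on $S$ with $L\cdot\ell_i=C\cdot\ell_i=1$. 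The moving part is $D=L-2C-\sum_i\ell_i$, with $D^2=k$, $D\cdot C=6-k$ and $D\cdot\ell_i=0$, and then $n=k$.

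Your fallback, subtracting the lines from $L-C$, gives $(L-C-\sum_i\ell_i)^2=9-k\geq 6$, because $(L-C)\cdot\ell_i=0$. The curve that must be removed a second time is $C$ itself ($p_a(C)=1$, $C^2=-3$), which is not among the $(-1)$- and $(-2)$-curves your plan contemplates. Separately, hyperplanes through $p$ restrict to a $7$-dimensional space on $B$, not $8$; consistently, $\chi(\mathcal O_S(L-C))=7$.

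Even with the correct divisor, the substantive steps of the proof are missing from your plan.
\begin{itemize}
\item \emph{The bound $1\le k\le 3$.} For $k=0$ the system is $|L-2C|$, which has self-intersection $0$ and cannot be birational onto a surface. The birationality argument ($k=D^2=\deg\phi\cdot\deg\phi(S)$ with a nondegenerate image of degree $\geq 2$) needs $k\le 3$.
\item \emph{Non-emptiness of $\mathcal S$.} The paper uses Pfaffian coordinates. The condition $\operatorname{mult}_pB=3$ forces a nontrivial linear relation $\sum_i c_iq_i=0$ among the five quadratic Pfaffians on $T_pX$. If there are no lines, this contradicts the vanishing $H^0(\mathcal E(-1))=0$ for the rank-four quotient $\mathcal E$ of $\mathcal O^5$ by the kernel of $A$.
\item \emph{At most three lines.} Four or more points are excluded by a syzygy argument: the Pfaffians would span at most a pencil, forcing $\operatorname{rank}A\le 2$.
\item \emph{Base-point-freeness of $D$.} This is a direct base-scheme computation, not a Reider-type argument. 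It uses reducedness of $\mathcal S$, which is where generality of $X$ (all lines ordinary) enters. Each $\ell_i$ meets $C$ at a smooth point, none passes through the node $p_0$, and a projected quadratic Pfaffian supplies the residual local equation.
\item \emph{Normality of the image.} A general member of $|D|$ is smooth of genus $n-2$ and normalizes a plane section of the image, which has arithmetic genus $n-2$; hence that plane section is smooth. Your remark about a double line forcing a curve of high $C$-degree is not yet an argument.
\end{itemize}
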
 

\begin{proof}
First, we study the scheme of lines $\mathcal{S}$ of $X$ through $p\in X$. 
We write $X=\Sigma \cap \pp^8 \subset \pp^{15}$ and consider Pfaffian coordinates in $\pp^{15}$.
Here, the boundary $B$ is given by $\Sigma\cap \pp^7 \subset \pp^{15}$. 
More precisely, we consider a skew-symmetric $5\times 5$ matrix $A$ and consider the projective coordinates
$[1:A:{\rm Pf}_1(A) : \cdots : {\rm Pf}_5(A)]$. 
Let $V=T_pX$.
The scheme of lines through $p\in X$ is
\[
\mathcal{S}=\{q_1=\dots=q_5=0\} \subset \pp(V) 
\quad 
\text{ with }
\quad q_i={\rm Pf}_i(A|_V).
\]
The scheme $\mathcal{S}$ is finite and reduced, indeed its tangent space at a line is $H^0(N_{\ell/X}(-p))=0$.
No three points of $\mathcal S$ are collinear. In this case, each quadratic Pfaffian would then vanish on their line, contradicting finiteness. Four such points impose four independent conditions on plane quadrics, so their space of quadratic equations is a pencil. A fifth would force a common component. The syzygy argument below therefore excludes four or more points. 

If $\mathcal{S}=\emptyset$, the kernel of $A$ is 
$\mathcal{O}(-2)\subset \mathcal{O}^5$.
Its quotient $\mathcal{E}$ has non-degenerate alternating form with values in $\mathcal{O}(1)$ and hence $\mathcal{E}(-1)\simeq \mathcal{E}^*$.
Since $\operatorname{mult}_p B=3$, the quadratic term of its local
hyperplane equation vanishes on $T_pX$. In spinor coordinates this
gives $\sum_{i=1}^5 c_iq_i=0$ for a nonzero constant vector
$(c_1,\ldots,c_5)$.
The constant Pfaffian relation puts the kernel in a fixed $\mathcal{O}^4$ summand, so $H^0(\mathcal{E}^*)$ is non-zero. However, the short exact sequence 
\[
0\rightarrow \mathcal{O}(-3) \rightarrow 
\mathcal{O}(-1)^5 \rightarrow \mathcal{E}(-1)\rightarrow 0 
\]
gives $H^0(\mathcal E(-1))=0$, leading to a contradiction. We conclude that $\mathcal{S}$ is non-empty.

We argue that $\mathcal{S}$ cannot contain exactly $4$ points. If there were four points in $\mathcal{S}$, the five quadratic Pfaffians $q_i$ would span at most a pencil. Let's call the pencil $q=aQ+bQ'$ for independent constant vectors $a$ and $b$ and coprime quadrics $Q$ and $Q'$. The identity $Aq=0$ and the absence of linear syzygies between $Q$ and $Q'$ would then imply $Aa=Ab=0$.
Thus $A$ has rank at most two and all Pfaffians vanish identically, leading to a contradiction. We conclude that $\mathcal{S}$ contains $1$, $2$, or $3$ points. 

The scheme-theoretic intersection $X\cap T_p X$ is the cone over the scheme $\mathcal{S}$. 
Note that $X\cap T_p X$ is contained in $B$. 
Indeed, the direction of each line lies in the tangent hyperplane defining $B$. Thus, we conclude that $X\cap T_p X \subset B$ and it contains $1,2$, or $3$ lines.
Let $\ell_1,\dots,\ell_k$ be the strict transform of such lines in $S$. Note that $\ell_i^2=-1$ for each $i$.

We consider the linear system 
\[
D:= L-2C -\sum_{i=1}^k \ell_i.
\]
The vector space $H^0(X,\mathcal{I}_p^2(H))$ has dimension $5$. Furthermore, its restriction to $S$ has dimension $4$. Here, with restriction, we mean restriction to $S$ of the strict transform to $Y$ of the sections of this space. Indeed, the kernel is generated by the hypersurface $B$ itself. Note that an element of $H^0(X,\mathcal{I}_p^2(H))$ is the restriction to $X$ of a hyperplane containing $T_pX$. 
Therefore, the restrictions of the sections of 
$H^0(X,\mathcal{I}_p^2(H))$ to $S$ give elements of
$H^0(S,\mathcal{O}_S(D))$.
The strict lines meet $C$ at smooth points. Indeed, a line transverse to the exceptional plane cannot lie in a surface tangent to that plane at the node. Reducedness of $\mathcal S$ gives a projected quadratic Pfaffian\footnote{This is the quadratic Pfaffian restricted to the tangent plane $\pp(V)$.} with nonzero derivative along $C$ at each such point. After subtracting $2C$, its residual equation on $S$ is a local equation of $\ell_i$ times a unit. At the node $p_0$ of $C$, no $\ell_i$ passes through $p_0$,
so some quadratic Pfaffian is nonzero there. After subtracting
$2C$, its residual restriction is therefore a unit at $p_0$. Outside $C$ the base scheme is the reduced union of these lines. Thus the residual base ideal is exactly $\mathcal O_S(-\sum\ell_i)$, with no infinitely near base point.
We conclude that $D$ is base-point-free
and that $H^0(S,\mathcal{O}_S(D))$ has $4$ independent generating sections.
By performing adjunction to $S$, one can compute
\[
D^2=k, \quad 
D\cdot C = 6-k, \quad \text{ and } \quad 
D\cdot \ell_i =0. 
\]
Let $\phi\colon S\rightarrow \pp^3$ be the morphism defined by these four sections.
The four independent sections give a nondegenerate surface image, so $k=D^2=\deg(\phi)\deg(\phi(S))$ with $1\leq k\leq3$ forces $k\in\{2,3\}$ and $\deg(\phi)=1$. Put $n=k$.

A general member of $|D|$ is smooth of genus $n-2$. 
It normalizes a general plane section of the degree $n$ image, whose arithmetic genus is also $n-2$. That plane section is smooth. Hence, the hypersurface image is regular in codimension one and $S_2$ and so it is normal.
Writing $Z=\phi(S)$ and $A=\mathcal O_Z(1)$, adjunction gives $\phi_*C=(4-n)A$ and $\phi_*L=(9-2n)A$. A contracted irreducible curve $R$ has $R^2<0$ and $2p_a(R)-2=R^2-C R$, hence is a $(-1)$-curve meeting $C$ once or a $(-2)$-curve disjoint from $C$. Neither passes through its node $p_0$, so $\phi$ is an isomorphism there.
\end{proof} 

\begin{proposition}\label{prop:normal-cluster-type-1.6}
Let $X$ be a general smooth prime Fano threefold of genus seven.
Let $B\in |-K_X|$ be a normal boundary.
Then, the pair $(X,B)$ is not of cluster type. 
\end{proposition}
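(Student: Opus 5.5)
We argue by contradiction using Lemma~\ref{lem:movable-system}. Suppose $(X,B)$ is of cluster type with $B\in|-K_X|$ normal. Cluster type pairs have coregularity zero, and ${\rm Pic}(X)={\rm Cl}(X)=\zz H$ with $B\sim H$ irreducible, since $H$ is the ample generator. Hence Proposition~\ref{prop:sing-normal} applies: $B$ is smooth away from a single $T_{3,3,4}$ cusp at $p$. Lemma~\ref{lem:movable-system} then produces a movable $\mathcal M\subseteq|mH|$ and an lc place $v$ of $(X,B)$ with $v(\mathcal M)>mv(B)$. Its center is a closed point, necessarily the unique lc center $p$, because $(X,B)$ is plt elsewhere by inversion of adjunction. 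So it suffices to show that every lc place $v$ of $(X,B)$ over $p$ and every general $G\in\mathcal M$ satisfy $v(G)\le m\,v(B)$.

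We pass to the model of Notation~\ref{notation-normal-1.6}. Since $\mult_pB=3$, we have $\pi^*(K_X+B)=K_Y+S+E$, so every lc place over $p$ is an lc place of $(Y,S+E)$ with center in $E\simeq\pp^2$. Write $\pi^*G=G_Y+aE$ with $a=\mult_pG$, and $\pi^*B=S+3E$. The required inequality becomes $v(G_Y)+a\,v(E)\le m\,v(S)+3m\,v(E)$. Using Lemma~\ref{lem:criteria-under-adjunction} with $(E,C)$, $C=S|_E$ the nodal cubic, we reduce to estimates on lc places $w$ of $(\pp^2,C)$, which are monomial in the branches at the node $p_0$. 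Concretely, we will bound $w(G_Y|_E)$ against $w(C)$. Since $G_Y|_E$ is a plane curve of degree $a$, Lemma~\ref{lem:node-estimate} gives $w(G_Y|_E)\le \tfrac{3a}{4}w(C)$. The remaining input is a bound on $a=\mult_pG$, and more precisely on how $G_Y$ meets $S$ near $p_0$. We obtain this by restricting to $S$: the curve $G_Y|_S\sim mL-aC$ is effective and contains no component of $C$ generically. Pushing it forward by the morphism $\phi$ of Lemma~\ref{lem:morphism}, which is an isomorphism near $p_0$, and using $\phi_*L=(9-2n)A$ and $\phi_*C=(4-n)A$, gives an effective class $\bigl(m(9-2n)-a(4-n)\bigr)A$ on a normal quadric or cubic surface $Z$.

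From here we combine two estimates. The first is the degree constraint $a\le m(9-2n)/(4-n)$. The second is a local estimate at $p_0$ of monomial valuations of curves on $Z$ against the two branches of $\phi(C)$, handled as in Lemma~\ref{lem:node-estimate} or, for the quadric, via a scroll-type estimate like Lemma~\ref{lem:scroll-estimates}. Together these should yield $v(G)\le m\,v(B)$ for all monomial weights $(\mu,\nu)$, and also for $v={\rm ord}_E$ itself, where the condition is simply $a\le 3m$. To organize the comparison, we parametrize $v$ by the weights, with $v(E)$, $v(S)$, $v(C)$ linear in $(\mu,\nu)$, and check the inequality as a linear inequality on a cone.

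The main obstacle is this numerical step. The naive bound $a\le 3m$ together with the factor $3/4$ must be balanced correctly, and the cases $n=2$ and $n=3$ behave differently. For $n=3$ one gets $a\le 3m$ directly. For $n=2$ one gets $a\le 5m/2$, which leaves room, but the local intersection at $p_0$ on the quadric $Z$ must be controlled through the rulings. If Lemma~\ref{lem:node-estimate} alone is too weak, I would first try to bound $I_{p_0}(G_Y|_S,\,\text{branches})$ using the curves $\phi^{-1}(T_{p_0}Z\cap Z)$ and intersection with $D$.
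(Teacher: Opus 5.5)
Your reduction matches the paper's: Lemma~\ref{lem:movable-system}, the blow-up $(Y,S+E)$, adjunction to $E$ giving $v(G_Y)\le\frac{3a}{4}v(S)$ via Lemma~\ref{lem:criteria-under-adjunction}, and the morphism $\phi$ of Lemma~\ref{lem:morphism}. However, the proposal stops exactly at the step that carries the proof, and your framing of that step would not work.

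The lc places of $(Y,S+E)$ over $p_0$ are not indexed by the two branch weights $(\mu,\nu)$. The surfaces $S$ and $E$ are smooth and tangent at $p_0$, locally $z(z-xy)=0$. After blowing up one branch of $C$ the pair becomes snc with a triple point over $p_0$. So these places form a three-dimensional cone on which $v(S)$ and $v(E)$ vary independently. A check parametrized by $(\mu,\nu)$ therefore misses places, and the $E$-side bound alone fails when $a>4m/3$ and $v(S)\gg v(E)$.

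The missing idea is to apply Lemma~\ref{lem:criteria-under-adjunction} a second time with the roles of $S$ and $E$ exchanged. A bound $w(G_Y|_S)\le c\,w(C)$ for the lc places $w$ of $(S,C)$ gives $v(G_Y)\le c\,v(E)$ for every lc place $v$, hence
\[
v(G_Y)\le\min\bigl\{\tfrac{3a}{4}\,v(S),\;c\,v(E)\bigr\}.
\]
What remains is an inequality in the two independent quantities $v(S)$ and $v(E)$.

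To produce $c$, you may not assume $C\not\subset G_Y$, since a movable system can have all its tangent cones at $p$ divisible by that of $B$. Write $G_Y|_S=\ell C+R$, so $R\sim mL-(a+\ell)C$ and $d=(9-2n)m-(4-n)(a+\ell)\ge0$. Since $\phi_*R\cdot\phi_*C$ equals $4d$ on the quadric and $3d$ on the cubic, Lemma~\ref{lem:node-estimate} suffices, with no ruling or scroll argument. It gives $c=5m-2a$ for $n=2$ and $c=3m-a$ for $n=3$, the latter using $a+\ell\le3m$. The valuation along $C$ itself also satisfies these bounds.

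For $n=3$ the $S$-side bound immediately gives $v(G_Y)\le m\,v(S)+(3m-a)\,v(E)$. For $n=2$ you need a convex combination of the two bounds, which exists because
\[
\frac{4m}{3a}+\frac{3m-a}{5m-2a}-1=\frac{3a^2-14ma+20m^2}{3a(5m-2a)}>0\qquad(0<a<5m/2).
\]
The endpoints $a=0$ and $a=5m/2$ are trivial. Either way $v(\mathcal M)=v(G_Y)+a\,v(E)\le m\,v(S)+3m\,v(E)=m\,v(B)$. Until you supply the second adjunction, these constants, and this combination, the argument does not establish the proposition.
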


\begin{proof}
Assume $(X,B)$ is cluster type and use Notation~\ref{notation-normal-1.6}. Therefore,  Proposition~\ref{prop:sing-normal} applies. Let $\phi\colon S\to Z\subset\pp^3$ be Lemma~\ref{lem:morphism}, $n=\deg Z$ and $A=\mathcal O_Z(1)$. Then $\phi_*C=(4-n)A$ and $\phi_*L=(9-2n)A$, and $\phi$ is an isomorphism near $p_0$.

For a movable $\mathcal M\subseteq|mH|$, let $k=\operatorname{mult}_p\mathcal M$ and let $G$ be the strict transform of a general member. Write $G|_S=\ell C+R$, where $C\not\subset\operatorname{Supp}(R)$, and set $t=k+\ell$. The effective pushforward of $R\sim mL-tC$ has class $dA$, with $d=(9-2n)m-(4-n)t\geq0$. Lemma~\ref{lem:node-estimate} gives, for every lc place $w$ of $(S,C)$,
\[
\frac{w(G|_S)}{w(C)}\leq
\begin{cases}
5m-2k-\ell\leq5m-2k,&n=2,\\
(9m-3k+\ell)/4\leq3m-k,&n=3.
\end{cases}
\]
Indeed, the total intersection of $\phi_*R$ and $\phi_*C$ is $4d$ on the quadric and $3d$ on the cubic. In the latter case $t\leq3m$. The boundary component valuation satisfies the same bounds. On $E\simeq\pp^2$, $G|_E$ has degree $k$, so $w(G|_E)\leq(3k/4)w(C)$.

Apply Lemma~\ref{lem:criteria-under-adjunction} in both directions, and put $a=v(S)$ and $e=v(E)$. For every lc place other than $E$ centered over $p$,
\[
v(G)\leq
\begin{cases}
\min\{3ka/4,(5m-2k)e\},&n=2,\\
\min\{3ka/4,(3m-k)e\},&n=3.
\end{cases}
\]
For $n=3$ this is at most $ma+(3m-k)e$. For $n=2$, $0\leq k\leq5m/2$, and for interior values
\[
\frac{m}{3k/4}+\frac{3m-k}{5m-2k}-1
=\frac{3k^2-14mk+20m^2}{3k(5m-2k)}>0.
\]
Taking the corresponding convex combination bounds the minimum by $ma+(3m-k)e$. For $k=0$ or $k=5m/2$, the minimum is zero, so the same
bound holds directly. Since $v(\mathcal M)=ke+v(G)$ and $v(B)=a+3e$, we obtain $v(\mathcal M)\leq mv(B)$. For $E$ itself use $k\leq3m$. Indeed, away from $p$ the pair is plt. Lemma~\ref{lem:movable-system} now implies that $(X,B)$ is not of cluster type.
\end{proof} 

\subsection{Non-normal boundaries}\label{subsec:non-normal}
In this subsection, we prove that for a general prime smooth Fano threefold $X$ of genus $7$ and any non-normal boundary $B\in |-K_X|$ the pair $(X,B)$ is not of cluster type.
The strategy is similar to that of Subsection~\ref{subsec:normal}. First, we aim to classify the singularities of non-normal anti-canonical boundaries in $X$ of coregularity zero. 
We will prove that the singular locus of such boundary divisors is a line and finitely many of Du Val singularities (see Lemma~\ref{lem:sing-line}). 
Then, we will classify the possible singularities of the strict transform of $B$ when blowing up such line (see Lemma~\ref{lem:singularities-blow-up-line}). 
Finally, for each singularity basket of $B$, we will show that $v(\mathcal{M})\leq m v(B)$ holds for every movable linear system $\mathcal{M}\subseteq |-mK_X|$ and every log canonical place $v$ of $(X,B)$. This will be the content of Proposition~\ref{prop:non-normal-case-1},
Proposition~\ref{prop:non-normal-case-2}, and 
Proposition~\ref{prop:non-normal-case-3}, respectively, depending on the different singularity baskets of $B$.

\begin{lemma}\label{lem:sing-line}
Let $X$ be a prime smooth Fano threefold of genus $7$. 
Let $B\in |-K_X|$ be a non-normal boundary for which $(X,B)$ is log canonical and the coregularity of $(X,B)$ is zero.
Then, the singular locus of $B$ is a single non-normal line and Du Val points.
\end{lemma}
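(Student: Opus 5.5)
We will work with $B=X\cap\Pi$, where $\Pi\subset\pp^8$ is a hyperplane; equivalently $B=\Sigma\cap\pp^7$, with $\Sigma$ the spinor tenfold. The plan has three steps: show the non-normal locus is a curve all of whose components are lines, show there is exactly one such line, and show the remaining singularities are Du Val.

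\textbf{Step 1: the non-normal locus is a union of lines.} Since $B$ is a Cartier divisor in a smooth threefold, it is $S_2$. Non-normality is therefore equivalent to failure of $R_1$, so the non-normal locus $\Gamma$ is a curve. Because $(X,B)$ is log canonical, $B$ is slc. Hence $B$ has multiplicity exactly $2$ at the generic point of each component of $\Gamma$, with two branches there, and $\Gamma$ is a log canonical center of $(X,B)$. Let $\nu\colon \bar B\to B$ be the normalization with conductor $\bar\Gamma$. Then $K_{\bar B}+\bar\Gamma\sim \nu^*(K_X+B)|_B\sim 0$, so $(\bar B,\bar\Gamma)$ is log Calabi--Yau and $\bar B$ is ruled or rational. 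To bound the degree of $\Gamma$, we use that $X\subset\pp^8$ is cut out by quadrics. A curve along which the hyperplane section $\Pi\cap X$ is singular must lie in the contact locus $\{x\in X : T_xX\subset\Pi\}$. By projective duality of the spinor variety (self-dual in $\pp^{15}$), this contact locus for a hyperplane section of $\Sigma$ is a linear space $\pp^k$ contained in $\Sigma$. Cutting with the $\pp^8$ then shows that every component of $\Gamma$ is a line, or possibly a conic. We rule out conics using the fact that $B$ has degree $12$ and contains no planes, since $\mathrm{Pic}(X)=\zz H$. If the contact $\pp^k$ met $\pp^8$ in a plane, we would get a plane in $X$, which is a contradiction. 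So every component of $\Gamma$ is a line, and the intersection $\pp^k\cap\pp^8$ is a line.

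\textbf{Step 2: there is exactly one line.} The contact locus of a single hyperplane is a single linear space. Two distinct non-normal lines would span a plane inside $\pp^k\cap \pp^8\subset X$, which is again impossible. Hence $\Gamma$ is one line $\ell$. Along this line, $\mathrm{mult}_\ell B=2$ generically.

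\textbf{Step 3: the remaining singularities are Du Val.} Away from $\ell$, $B$ is normal with isolated singularities. By the coregularity and genus argument in the proof of Proposition~\ref{prop:sing-normal}, the local geometric genera of $\bar B$ add up in a controlled way: $\chi(\mathcal O_B)=2$, $\chi(\mathcal O_{\bar B})=1$, and the conductor $\ell$ accounts for the difference. So there is no room for a further non-rational singularity. An isolated slc hypersurface singularity with geometric genus $0$ is Du Val. We then conclude that the singular locus of $B$ is $\ell$ together with finitely many Du Val points.

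\textbf{Main obstacle.} The hard step is Step 1: showing that the curve along which a hyperplane section is singular must be linear. We would carry this out through the spinor-duality description of tangent hyperplanes to $\Sigma$, following Subsection~\ref{subsec:non-normal}'s announced approach. First, we would verify that the singular locus of $\Sigma\cap\pp^{14}$ is a linear $\pp^k\subset\Sigma$ with $k\in\{0,1,4\}$. Second, we would verify the transversality needed to intersect this linear space with the $\pp^8$ cutting out $X$.
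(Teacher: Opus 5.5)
\textbf{There is a genuine gap in Steps 1--2.} You claim that $\operatorname{Sing}(B)=\{x\in X : T_xX\subset\Pi\}$ is the contact locus of a single hyperplane of $\pp^{15}$, and this is false. Write $X=\Sigma\cap\pp(A)$ and $B=\Sigma\cap\pp(W)$. A hyperplane of $\pp(A)$ does not determine a hyperplane of $\pp^{15}$: every $H_q$ with $q\in\pp(W^\perp)\setminus\pp(A^\perp)$ restricts to $\pp(W)$. Each singular point $x\in B$ selects its own such $H_q$. Smoothness of $X$ gives $A^\perp\cap N_x=0$, so $W^\perp\cap N_x$ is one-dimensional. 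It therefore defines a point $q(x)\in\Sigma^-\cap\pp(W^\perp)$ with $x\in\Pi_{q(x)}\simeq\pp^4$.

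What duality actually yields is $\operatorname{Sing}(B)=\bigcup_q\bigl(\pp(W)\cap\Pi_q\bigr)$. This is a union over possibly several $q$ of linear pieces, pairwise disjoint by the same uniqueness of $q(x)$. Nothing forces a single $q$. Indeed, the Du Val points allowed by the statement lie off the line, and a line and a point off it inside one linear subspace of $X$ would span a plane in $X$. So under your Step 1 those points could never occur. Consequently Step 2 proves nothing. It does not exclude two disjoint non-normal lines (which span a $\pp^3$, not a plane), nor a line together with an isolated simple elliptic or cusp point, coming from different $q$'s.

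\textbf{Two inputs are missing.}

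First, you need finiteness of the relevant $q$'s. Otherwise a curve component of $\operatorname{Sing}(B)$ (your feared conic) could be swept out by points $\pp(W)\cap\Pi_q$ with $q$ moving. The paper shows that the dual surface $B^-=\Sigma^-\cap\pp(W^\perp)$ is normal. It also shows that $\pp(W)\cap\Pi_q\neq\emptyset$ exactly for $q\in\operatorname{Sing}(B^-)$. Hence $\operatorname{Sing}(B)$ is a finite disjoint union of points and lines.

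Second, and decisively, you need a connectedness statement. Coregularity zero gives a rational crepant model of the normalized pair whose reduced anticanonical boundary is connected, using $H^1(\omega)=0$. So the non-klt locus of $B$ is connected; this locus is the non-normal locus together with any isolated non-klt points. Disjointness of the pieces then leaves exactly one non-klt piece. It is a line because $B$ is non-normal, and every other singular point is an isolated Gorenstein klt point, hence Du Val. Koll\'ar--Kov\'acs connectedness for $(\bar B,\bar\Gamma)$, with coregularity zero excluding the disconnected plt case, would serve equally well.

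\textbf{Step 3 cannot replace this as written.} The value $\chi(\mathcal O_{\bar B})=1$ is asserted without proof. In fact $\chi(\mathcal O_{\bar B})=2+\chi(\mathcal O_{\bar\Gamma})-\chi(\mathcal O_\Gamma)$, so it depends on the conductor. It equals $1$ when $\Gamma$ is one line and $\bar\Gamma$ is connected of arithmetic genus one, which is precisely the information you have not established.

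\textbf{A smaller error.} For $q\in\Sigma^-$, the singular locus of $\Sigma\cap H_q$ is always the $4$-plane $\Pi_q$. So the cases $k\in\{0,1\}$ you planned to verify do not arise.
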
 

\begin{proof}
Let $\Sigma \subset \pp^{15}$ be the spinor tenfold and 
$\Sigma^{-}$ be its dual spinor variety. 
We consider the flag $W\subset A$ where $\dim A=9$ and $\dim W=8$ such that 
\begin{align*}
&X=\Sigma \cap \pp(A), \quad B=\Sigma \cap \pp(W),\\
&\Gamma =\Sigma^{-}\cap \pp(A^{\perp}), \text{ and }
B^{-}=\Sigma^{-}\cap \pp(W^{\perp}).
\end{align*}
We argue that if $X$ is smooth and $B$ is arbitrary, then 
$B^{-}$ is normal.
Note that Kuznetsov proved in~\cite[Lemma 3.20]{Kuz18b} that $\Gamma$ is smooth whenever $X$ is smooth. 
Every component of $B^{-}$ has dimension at least two, as $B^{-}$ contains $\Gamma$, then every component must have dimension two. 
As an expected dimensional linear section of the smooth spinor variety, the divisor $B^-$ is a local complete intersection, hence $S_2$. Smoothness of its Cartier section $\Gamma$ implies smoothness along $\Gamma$. Ampleness excludes a positive-dimensional singular locus. Thus $B^-$ is regular in codimension one, hence normal. 

For $q\in\Sigma^-$ we write $H_q \subset \pp^{15}$ for the corresponding hyperplane. We define the contact locus to be 
\[
\Pi_q:=\{x\in \Sigma \mid \text{ $H_q$ is tangent to $\Sigma$ at $x$}\}. 
\]
In~\cite[Proposition 1.1.(iv)]{IM07}, the authors show that the contact locus at $q\in\Sigma^-$
is a $4$-dimensional linear subspace of the opposite spinor variety.
A point $x\in B$ lies in the singular locus if and only if 
$W^{\perp}\cap N_x\neq 0$ where $N_x$ is the orthogonal space to the tangent $T_x\Sigma$. A non-zero vector in $W^{\perp}\cap N_x$ determines a point $q\in \pp(W^{\perp}) \cap \pp(N_x)$. The corresponding hyperplane is tangent to $\Sigma$ so $q\in B^{-}$ and $x \in \Pi_q$.
By spinor duality, we also have $q\in {\rm Sing}(B^{-})$ if and only if 
$\pp(W)\cap \Pi_q \neq \emptyset$.
This implies the set-theoretic equality
\[
{\rm Sing}(B) = \bigcup_{q\in {\rm Sing}(B^{-})}(\pp(W)\cap \Pi_q).
\]
If two points $q_1,q_2\in {\rm Sing}(B^{-})$ satisfy $\Pi_{q_1}\cap \Pi_{q_2}\neq \emptyset$, then both points are elements of $\pp(W^{\perp}\cap N_x)$ which is one-dimensional. This leads to a contradiction. 
Then, all the linear spaces $\pp(W)\cap \Pi_q$ are disjoint. 
Thus ${\rm Sing}(B)$ is a disjoint union of points and lines. 
On a smooth crepant model of the normalized pair, coregularity zero gives a rational boundary cycle. Such a cycle cannot lie in a fiber of a ruling over a positive genus curve, so the model is rational. Its reduced anticanonical boundary is connected, by $H^1(\omega)=0$ and $0\to\omega\to\mathcal O\to\mathcal O_D\to0$. Its image is the non-klt locus of $B$, hence connected. The disjoint contact loci therefore give exactly one non-normal line and no additional isolated non-klt point. All other singularities are Du Val. 
\end{proof} 

We will use the following notation throughout the rest of this subsection.

\begin{notation}\label{not:blow-up-line}
{\em
We recall the construction of the double-projection theorem. 
All the statements used in this notation follow from the work of Prokhorov, see, e.g.,~\cite[Theorem 8.3, Lemma 8.13, Corollary 8.16]{Pro25}.
For any line on a prime smooth Fano threefold of genus seven, 
its blow-up has a small anti-canonical morphism, and it also admits a small modification into a degree five del Pezzo fibration. 
In particular, the linear system $|H-2\ell|$ is a pencil. 

Let $X$ be a general prime smooth Fano threefold of genus $7$.
In particular, all the lines contained in $X$ are ordinary.  
Let $B\in |-K_X|$ be a non-normal boundary for which
$(X,B)$ has coregularity zero. 
Let $\sigma\colon Y={\rm Bl}_\ell X \rightarrow X$ be the blow-up of $\ell$.
Let $p\colon Y\rightarrow V\subset \pp^6$ be its anti-canonical model
which is a small morphism.
There is a small modification $\chi\colon Y\dashrightarrow Y^{+}$
such that there is a fibration $f\colon Y^+\rightarrow \pp^1$ whose general fiber is a smooth del Pezzo surface of degree $5$.
We have an induced small morphism $p^+\colon Y^+\rightarrow V$. 
This leads to the following commutative diagram 
\[
\xymatrix{
Y\ar[d]^-{\sigma}\ar[rd]^-{p}\ar@{-->}[r]^-{\chi} & Y^+ \ar[d]^-{p^{+}} 
\ar[r]^-{f}& \pp^1 \\
X & V. & \\ 
}
\]
We turn to explain the geometry of the exceptional divisor of $\sigma$ and its strict transforms in different models.
Let $E_Y$ be the exceptional divisor of $\sigma$.
We define the pencil $\Lambda_\ell$ on $E_Y$ to be the induced pencil by the double projection from $\ell$.
More precisely, the pencil $\Lambda_\ell$ is defined 
by the projectivized image of $H^0(Y,\mathcal{O}_Y(H-2E_Y))$ on $H^0(E_Y,\mathcal{O}_{E_Y}(-K_{E_Y}))$.

Note that $E_Y \simeq \mathbb{F}_1$.
The morphism $p$ sends $E_Y$ isomorphically to a cubic scroll $F_0\subset V \subset \pp^6$.
We denote by $E$ the strict transform of $E_Y$ in $Y^+$.
The induced fibration $E\rightarrow \pp^1$ is precisely the graph of the pencil $\Lambda_\ell$.
Let $F\subset Y^+$ be the class of a fiber of $f$. 
As above, we let $H$ be the class of the pull-back of $-K_X$ to $Y$.
We write $S$ for the strict transform of $B$ in $Y$ and 
$S^+$ for the strict transform of $B$ in $Y^+$.
Below, we may call $S^+$ the {\em flopped surface}. 

Now, let $\mathcal{M} \subset |-mK_X|$ be a movable linear system in $X$. 
Let $k={\rm mult}_{\ell}(\mathcal{M})$ and $b=2m-k$. 
Let $G$ be a general element in $\mathcal{M}$.
We denote by $G_Y$ the strict transform of $G$ in $Y$ 
and by $G^+$ the strict transform of $G$ in $Y^+$.
We write $C=S^+\cap E$ and $G_S=G^+|_{S^+}$ for the restrictions used below. 
Note that $G^+ \sim mF + bE$ where $0\leq k \leq 2m$ and $0\leq b \leq 2m$.
Finally, we have 
\begin{equation}\label{eq:flop}
mv(B)-v(\mathcal{M})=
bv(E)+mv(S^+)-v(G^+). 
\end{equation} 
}
\end{notation} 

For the rest of this subsection, we may use Notation~\ref{not:blow-up-line} in the proof of the statements.
On the common surface model $\eta\colon\hat E\to E$ and $\rho\colon\hat E\to F_0\simeq\mathbb F_1$, put $\hat C=\eta^*C$, $h=\rho^*(s+2f)$, $D=G_Y|_{E_Y}$ and $c=k-m=m-b$. Then $D\sim ks+(m+k)f$ and $G_{\hat E}=\rho^*D-cK_{\hat E/F_0}\sim c\hat C+bh$. 

Here $G_{\hat E}:=\eta^*(G^+|_E)$. The restrictions agree outside
the $\rho$-exceptional locus, and both sides are linearly
equivalent to $c\hat C+bh$. Their difference is exceptional
and numerically trivial, hence zero by the negativity lemma.
In the cases below, $S^+$ is a degree-five del Pezzo surface, and both $E$ and $S^+$ are normal Cartier divisors in the terminal
Gorenstein threefold $Y^+$. Thus, adjunction has no additional
different. We use this same crepant model for both restrictions below. 

\begin{lemma}\label{lem:singularities-blow-up-line}
Let $X$ be a general prime smooth Fano threefold of genus $7$. 
Let $B\in |-K_X|$ be a non-normal slc boundary of coregularity zero. 
Assume $B$ is non-normal along an ordinary line $\ell$.
Let $\sigma \colon (Y,S+E_Y)\rightarrow (X,B)$ be the crepant blow-up of $\ell$. Set $C_0:=S\cap E_Y$. 
Let $\Lambda_\ell\subset |C_0|$ be the pencil induced by the double projection from $\ell$.
Let $S^+$ be the flopped surface in $Y^+$.
Then, one of the following three cases occurs:
\begin{enumerate}
\item $S^+$ has Du Val singularities, $C_0$ is irreducible nodal, and the node is not a base point of $\Lambda_\ell$; 
\item $S^+$ is smooth, $C_0$ is irreducible nodal, the node is a base point of $\Lambda_\ell$ and the other members of the pencil are smooth at the node and transversal there to both branches of the node; or 
\item $S^+$ is smooth, $C_0$ has two rational components, and neither node is a base point of $\Lambda_\ell$.
\end{enumerate} 
\end{lemma}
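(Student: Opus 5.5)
The plan is to read everything off the surface $E_Y\simeq\mathbb F_1$ and the restriction $C_0=S|_{E_Y}$, and then propagate the information to $S^+$ through the flop $\chi$.

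\emph{Step 1: the class and singularities of $C_0$.} Since $\sigma^*(K_X+B)=K_Y+S+E_Y$, adjunction to $E_Y$ gives $K_{E_Y}+C_0\sim 0$. Hence $C_0\in|-K_{E_Y}|=|2s+3f|$. Because $(Y,S+E_Y)$ is lc of coregularity zero, and $E_Y$ is a log canonical place dominating $\ell$, the pair $(E_Y,C_0)$ is an lc Calabi--Yau pair of coregularity zero. So $C_0$ is a reduced anticanonical cycle of rational curves on $\mathbb F_1$. I will enumerate the possible splittings of $2s+3f$ into smooth rational components forming a cycle. Either $C_0$ is irreducible nodal, or it is a two-component cycle, for instance $s+(s+3f)$ or $(s+f)+(s+2f)$. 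I then exclude decompositions with more components. The first tool is that $S|_{E_Y}\to\ell$ is a double cover branched according to the non-normality of $B$ along $\ell$, so $C_0$ has degree two over $\ell$. The second tool is genericity of $X$: all lines are ordinary, and $S$ is smooth near $E_Y$ except over the Du Val points of Lemma~\ref{lem:sing-line}. A third fibre component would give a fibre contained in $S\cap E_Y$ with multiplicity forcing a non-Du Val point.

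\emph{Step 2: comparing with the pencil $\Lambda_\ell$.} The pencil $\Lambda_\ell\subset|C_0|$ has $(-K_{E_Y})^2=8$ base points, counted with multiplicity. These are the points where the flopping curves of $\chi$ meet $E_Y$; they are the strict transforms of lines meeting $\ell$ and of conics. The flop $\chi$ is a composition of Atiyah flops, since the lines are ordinary and hence the flopping curves have normal bundle $\mathcal O(-1)^2$. It is an isomorphism away from these curves, and $S^+$ is obtained from $S$ by replacing neighbourhoods of flopping curves. A local computation in the chart used for the pagoda in Lemma~\ref{lem:mult2-ordinary-line}, now with width $1$, will show two things. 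If a node of $C_0$ is not a base point of $\Lambda_\ell$, then $S^+$ near the corresponding flopping curves has at worst the Du Val points inherited from $S$. If the node is a base point, then $S$ contains the flopping curve through it. The flop then blows up that point on $S$ and contracts the curve, producing a smooth $S^+$ exactly when the remaining members of $\Lambda_\ell$ are smooth at the node and transversal to both branches. Otherwise the image acquires a non-Du Val singularity, or $S^+$ fails to be a degree-five del Pezzo surface, contradicting coregularity zero via Proposition~\ref{prop:sing-normal}-type counting.

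\emph{Step 3: excluding the remaining configurations by a dimension count.} I will rule out the reducible $C_0$ with a node at a base point, and the irreducible nodal $C_0$ with Du Val singular $S^+$ and node at a base point. For this I will use the parameter count from Proposition~\ref{prop:sing-normal}. Each extra coincidence imposes conditions on the choice of $\pp(W)\supset\ell$ beyond the Tjurina budget available, using the $18$-dimensional moduli and the versality statement for the spinor linear sections. I expect this to be the main obstacle. Step~3 needs the versality to hold relative to the incidence "$B$ non-normal along $\ell$". I would try first to redo the $T^1$ surjectivity argument for the family of hyperplanes tangent along $\ell$, which is identified with $\pp(W^\perp)$ meeting the contact locus in Lemma~\ref{lem:sing-line}, and to check that the base-point condition is an independent closed condition.
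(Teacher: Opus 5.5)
Your reduction in Step~1 to reduced anticanonical cycles of rational curves on $E_Y\simeq\mathbb F_1$ is fine. However, the real content of the lemma is a dimension count that excludes degenerate configurations of (pencil, marked member), and your proposal does not supply a working one.

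\emph{Three or more components.} Local reasoning cannot exclude these. The system $|2s+3f|$ contains cycles such as $s+f+D$ with $D\in|s+2f|$, or $s+f_1+D'+f_2$ with $D'\in|s+f|$. A fibre component is harmless in itself: the splitting $f+(2s+2f)$, which you omit, contains one and is an allowed case (the fibre-plus-bisection case of Proposition~\ref{prop:non-normal-case-3}). Nothing you cite controls the singularities of $S$ along $E_Y$; Lemma~\ref{lem:sing-line} only describes $\operatorname{Sing}(B)$. These configurations, like ``node at a base point'' or ``another member tangent to a branch'', are ruled out only because they have codimension at least three in the space of pencils with a marked member.

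\emph{The Step~3 tool.} It cannot measure such conditions. Since $B$ is singular along $\ell$, $T^1_B$ has positive-dimensional support, so there is no Tjurina budget as in Proposition~\ref{prop:sing-normal}. Moreover, the conditions concern the position of $C_0$ relative to $\Lambda_\ell$ on $E_Y$, not analytic singularity types.

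\emph{The missing idea.} Parametrize $(X,\ell,B)$ through the anticanonical model $V\subset\pp^6$ of $\operatorname{Bl}_\ell X$. Fix the cubic scroll $F_0\subset V$ (the image of $E_Y$) in $\{x_5=x_6=0\}$ and write $V=\{Q_i+x_5L_{i1}+x_6L_{i2}=0\}$, which gives $39$ parameters. Let $B$ correspond to a hyperplane through the span of $F_0$, one more parameter; since $|H-2\ell|$ is a pencil, $C_0$ is the marked member of $\Lambda_\ell$. The Hilbert--Burch resolution of $F_0$ has a nowhere-zero relation vector, and $H^0(s+f)\otimes H^0(s+2f)\to H^0(2s+3f)$ is surjective. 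Together these show that the $40$ parameters dominate the ordered pencil generators, so codimensions transfer. Dividing by the $20$-dimensional stabilizer of the scroll leaves at most $40-3-20=17<18$ moduli.

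\emph{Smoothness of $S^+$ in cases (2) and (3).} You do not prove this. Step~2 only examines neighbourhoods of flopping curves, so it cannot exclude Du Val points of $S^+$ elsewhere: Du Val points of $B$ off $\ell$, which Lemma~\ref{lem:sing-line} allows, or an $A_n$ point at a node of $C$ in case (3). The paper shows that, with the pencil fixed, any further singular point of the marked fibre imposes one more independent condition. At a node, the scalar obstruction $\kappa$ varies linearly in the $x_5^2$-coefficients through the nowhere-zero scroll relation. Off the scroll, these coefficients move the three equations freely, and the same count then applies. Your assertion that non-transversality at a nodal base point would contradict coregularity zero is likewise unsupported: such configurations remain lc and are removed only by codimension.

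\emph{The local model in Step~2.} It is not justified. Ordinary lines do not force Atiyah flops; compare the pagodas of width $\geq 2$ in Lemma~\ref{lem:mult2-ordinary-line}. In case (2), $I_q(C_0,C_t)\geq 2$ at the node $q$ for every other member $C_t$, so $\Lambda_\ell$ is not resolved there by a single transversal blow-up. Also, $S$ contains every flopping curve $\Gamma$, since $S\cdot\Gamma=(-K_Y-E_Y)\cdot\Gamma<0$. Under the flop it is $E_Y$ that is blown up at the base points, while the flopping curves are contracted on $S$.
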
 

\begin{proof}
First, we note that the displayed cases for the singularities of $C_0$  and base points of $\Lambda_\ell$ have codimension one or two 
in the space of pencils with a marked member. The excluded nodal configurations, including three or more components, a reducible member with a nodal base point, or non-transversality at a nodal base point, have codimension at least three in this space. We will argue that this codimension counting implies that the other cases do not happen for $X$ a general prime smooth Fano threefold of genus $7$.

We consider $V\subset \pp^6$ which is the intersection of three
quadrics and contains the cubic scroll $F_0$.
For a general choice, the threefold $V$ has $8$ ordinary double points $p_1,\dots,p_8$. The scroll $F_0$ is a surface of degree three in $\pp^4$ (see, e.g.,~\cite[Proof of Lemma 6.2]{ALP23}).
We may assume that this fixed $\pp^4$ is determined by the last two variables $x_5$ and $x_6$ of $\pp^6$, being zero. 
After choosing projective coordinates, we can assume that $F_0$ is defined by
\[
Q_1=x_0x_3-x_1x_2, \qquad 
Q_2=x_0x_4-x_2^2, \quad
\text{ and } \quad 
Q_3=x_1x_4-x_2x_3. 
\]
Therefore, the variety $V$ is determined by the equations
\[
Q_i + x_5L_{i1}+x_6L_{i2}=0 \text{ for $i\in \{1,2,3\}$.}
\]
Here $x_5L_{i1}+x_6L_{i2}$ represents a quadratic form vanishing on 
$\{x_5=x_6=0\}$. There are $13$ quadratic monomials involving $x_5$ or $x_6$. We conclude that there is a $39$-dimensional space parametrizing the possible varieties $V$. Choosing a hyperplane $\Pi\subset\pp^6$ containing the fixed
$\pp^4$ adds one parameter, giving a $40$-dimensional space
of pairs $(V,\Pi)$. The Hilbert--Burch resolution $0\to\mathcal O(-3)^2\to\mathcal O(-2)^3\to\mathcal I_{F_0/\pp^4}\to0$ identifies the induced sections with the contraction of each triple $(L_{1j},L_{2j},L_{3j})$ with the scroll relation. 
Explicitly, under
$(x_0,x_1,x_2,x_3,x_4)=(va^2,ua,vab,ub,vb^2)$,
the two syzygy vectors are $ar$ and $br$, where
$r=(vb,-u,va)$ is nowhere zero.
Its entries span $H^0(s+f)$, and multiplication $H^0(s+f)\otimes H^0(s+2f)\to H^0(2s+3f)$ is surjective on scroll monomials. Thus these coefficients dominate the ordered pencil generators. The two-component strata have dimension six in $|-K_{F_0}|=\pp^8$, with splittings $s+(s+3f)$, $(s+f)+(s+2f)$ or $f+(2s+2f)$. Three component strata have dimension at most five. The irreducible nodal stratum has codimension one. Requiring its
node to be a pencil base point adds one condition. Requiring a base
point at a reducible node, or tangency to either branch at a nodal
base point, adds one further condition.

Now, let $(X,B)$ be a pair for which $C_0$ and $\Lambda_\ell$ do not satisfy (1),(2), or (3). Then the corresponding triples $(X,\ell,B)$ belong to a family of dimension at most $37$. The group $G$ of projective transformations of $\pp^6$ preserving the scroll $F_0$ has dimension $20$. Indeed, the automorphism group of the embedded scroll has dimension
six; extending its action from $\pp^4$ to $\pp^6$ adds
$5\cdot2+2^2=14$ parameters. On the locus of $X$ with finite automorphism group, the stabilizer of the marked construction is finite, so its $G$-orbits have dimension twenty. Thus the image in the moduli has dimension at most $40-3-20=17<18$. 

In case (1), $S^+$ is normal. Its codimension-one singularities are removed by the blow-up of the non-normal line, and it is Cartier, hence $S_2$. Outside $E$ its singularities are Du Val. At a node of $C$, its local equation has quadratic rank at least two, hence any singularity there is $A_n$. It is smooth at the smooth points of $C$ by adjunction.

We argue that $S^+$ is smooth in the cases (2) and (3).
In cases (2) and (3), the pairs $(\Lambda_\ell,C_0)$ already have codimension two. Additional singularities impose an independent condition. Fix the pencil and write the marked hyperplane as $x_6=0$. Near a point of the scroll, we eliminate two equations whose normal differentials are independent. In the graph chart $x_6=\lambda x_5$, the remaining equation, after removing the scroll factor, is $c_0(u,v)+\lambda c_1(u,v)+x_5\kappa(u,v,x_5,\lambda)=0$, where $c_0,c_1$ define the pencil. The marked fiber $S^+$ is singular at a node of $c_0$ precisely when $\kappa(p,0,0)=0$. The variation of the scalar $\kappa(p,0,0)$ in the three $x_5^2$ coefficients equals the dot product of these coefficients with the nonzero scroll relation at $p$. 
More precisely, we have a relation
\[
\delta \kappa(p,0,0)=\sum_{i=1}^3 r_i(p)\delta a_i,
\]
where $a_i$ is the coefficient of $x_5^2$ in the $i$-th defining
quadric and $\vec r=(r_1,r_2,r_3)=(vb,-u,va)$ is the
nowhere-zero relation vector above.
Hence, the map from variation of the three coefficients to the variation of the scalar $\kappa(p,0,0)$ is surjective.
Therefore, even with the pencil fixed, the scalar obstruction $\kappa(p,0,0)$ can vary freely. 
Outside the scroll on $x_6=0$, we have $x_5\neq 0$, so these
coefficients vary the three defining equation values independently.
After passing to a flat projective family of
marked flopped surfaces $S^+$, the singular fiber locus is closed and is
proper in each codimension two stratum.
Hence generic smoothness applies in this situation to 
the projection from the universal surface outside the scroll to the three coefficient parameters. Thus a singular $S^+$ in any codimension two case imposes at least one more condition. Smoothness elsewhere along $C$ follows from adjunction, and the same dimension count as above excludes these fibers for general $X$.
\end{proof} 

\begin{proposition}\label{prop:non-normal-case-1}
Let $X$ be a general prime smooth Fano threefold of genus $7$. 
Let $B\in |-K_X|$ be a non-normal slc boundary of coregularity zero. 
Assume $B$ is non-normal along an ordinary line $\ell$.
Let $\sigma\colon (Y,S+E_Y)\rightarrow (X,B)$ be the crepant blow-up of $\ell$. Set $C_0:=S\cap E_Y$. 
Let $\Lambda_\ell\subset |C_0|$ be the pencil induced by the double projection from $\ell$.
Assume that $S^+$ has Du Val singularities, 
$C_0$ is irreducible nodal, and the node is not a base point of $\Lambda_\ell$. 
Then, for every movable linear system $\mathcal{M}\subseteq |-mK_X|$ 
and every log canonical place $v$ of $(X,B)$ centered at a point of $X$, we have $v(\mathcal{M})\leq mv(B)$. 
\end{proposition}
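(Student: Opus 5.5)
By the identity \eqref{eq:flop}, we must show $v(G^+)\leq b\,v(E)+m\,v(S^+)$ for every log canonical place $v$ of the crepant pair $(Y^+,S^++E)$ whose center maps to a point of $X$. The plan is to reduce this threefold inequality to surface inequalities, applying Lemma~\ref{lem:criteria-under-adjunction} in both directions, as in the proof of Proposition~\ref{prop:normal-cluster-type-1.6}. The first step is to locate the lc centers. Away from $E$, the pair $(Y^+,S^+)$ is plt, because $S^+$ has only Du Val singularities. The only one-dimensional lc center is therefore the curve $C=S^+\cap E$, and the zero-dimensional center is the node $p_0$ of $C$. Since the node is not a base point of $\Lambda_\ell$, the fibration $E\to\pp^1$ is a morphism near $p_0$, and $\eta$ is an isomorphism there. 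Lc places centered at points are then either monomial valuations at $p_0$ in the two branches, or places lying over points of $C$. Once they are expressed on $(S^+,C)$ and on $(E,C)$, the latter can be handled by adjunction. The divisorial places $E$ and $S^+$ are checked directly from $0\leq b\leq 2m$.

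The first surface estimate is on $E$. We use the model $\hat E$, on which $G_{\hat E}\sim c\hat C+bh$, with $c=m-b$ and $h=\rho^*(s+2f)$. Since $\eta$ is an isomorphism near $p_0$, we may work on $F_0\simeq\mathbb F_1$, where $C_0\in|2s+3f|$ is irreducible nodal, and apply Lemma~\ref{lem:scroll-estimates} with $D=G_Y|_{E_Y}\sim ks+(m+k)f$. This gives $w(G|_E)\leq m\,u+k\,v'$ for the monomial weights $u\leq v'$ with $w(C)=u+v'$. Alternatively, we subtract the multiple $c\hat C$ and estimate the residual $bh$ with Lemma~\ref{lem:node-estimate}: the fiber through $p_0$ is transversal to both branches, and the intersection numbers are $h\cdot\hat C=5$. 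The aim is a bound of the form $w(G_{\hat E})\leq \alpha\,w(C)$ with $\alpha$ linear in $m$ and $b$.

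The second surface estimate is on $S^+$, a Du Val degree-five del Pezzo surface with $C\in|-K_{S^+}|$ nodal. We restrict $G^+\sim mF+bE$ to $S^+$. The term $F|_{S^+}$ is a pencil of conics, since $-K_{S^+}\cdot F|_{S^+}=2$, and $E|_{S^+}=C$. So $G_S\sim m\,F_S+b\,C$. After removing the $C$-component, the node estimate of Lemma~\ref{lem:node-estimate}, applied with the intersection numbers $F_S\cdot C=2$ and $C^2=5$, gives $w(G_S)\leq\beta\,w(C)$. The delicate point is that the minimum of the two branch intersections gives a factor of $\mu\nu/q$ rather than $q/4$. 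This should yield roughly $\beta\leq m+\tfrac{5}{4}b$ or better.

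Finally, Lemma~\ref{lem:criteria-under-adjunction} yields $v(G^+)\leq\alpha\,v(S^+)$ and $v(G^+)\leq\beta\,v(E)$ for every lc place whose center lies in $E$, respectively in $S^+$. The last step is the convex-combination argument of Proposition~\ref{prop:normal-cluster-type-1.6}. We need $\tfrac{m}{\alpha}+\tfrac{b}{\beta}\geq1$ across the range $0\leq b\leq 2m$, so that the minimum is bounded by $m\,v(S^+)+b\,v(E)$. I expect this arithmetic to be the main obstacle, because the crude constants may fail for intermediate $b$. If they do, I would first refine the $E$-side bound using the exact monomial weights together with the fiber transversality, that is, with the two-weight form of Lemma~\ref{lem:scroll-estimates}. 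Then I would optimize jointly over $(\mu,\nu)$ rather than using the two separate worst cases.
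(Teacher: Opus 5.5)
There is a genuine gap, and it begins with the restriction to $S^+$. Since $B$ is non-normal along $\ell$, crepancy forces $\sigma^*B=S+2E_Y$. Hence $S\sim H-2E_Y$, which is exactly the class $F$ of the pencil $|H-2\ell|$ (compare $G^+\sim mF+bE$ with $b=2m-k$). So the flopped surface $S^+$ is itself a fiber of $f$, not a surface meeting the fibers in conics. Therefore $F|_{S^+}\sim 0$ and $G_S\sim bC\sim -bK_{S^+}$.

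With the class $mF_S+bC$ that you use, your closing inequality $\frac{m}{\alpha}+\frac{b}{\beta}\geq 1$ fails. Indeed, your $\beta$ (you estimate $\beta\approx m+\frac54 b$) does not tend to $0$ as $b\to 0$, so you would need $\alpha\leq m$ on $E$. But Lemma~\ref{lem:scroll-estimates} applied to $D\sim ks+(m+k)f$ with $k=2m-b$ only gives $\alpha=\max\{k,\frac{m+k}{2}\}$, which tends to $2m$. Your alternative $E$-side estimate does not repair this. The class $c\hat C+bh$ carries no information on the actual multiplicity of $\hat C$ in $G_{\hat E}$, and $c<0$ when $b>m$. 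So you cannot subtract $c\hat C$ and treat a divisor of class $bh$ as an effective residual.

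Correcting the class helps a lot, but it does not finish the proof. Suppose first that $S^+$ is smooth at the node $p_0$. Write $G_S=\gamma C+R'$ with $0\leq\gamma\leq b$ and $R'\cdot C=5(b-\gamma)$. Then Lemma~\ref{lem:node-estimate} gives $w(G_S)\leq\frac54 b\,w(C)$. Even your crude bound $\alpha\leq\max\{m,k\}\leq 2m$ then suffices, because $\frac12+\frac45\geq 1$. In that subcase this would be a shorter route than the paper's.

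However, hypothesis (1) only says that $S^+$ is Du Val, and Lemma~\ref{lem:singularities-blow-up-line} explicitly allows an $A_n$ point of $S^+$ at the node of $C$. At such a point Lemma~\ref{lem:node-estimate} does not apply, and the lc places over $p_0$ are the toric valuations of the $A_n$ germ. The same argument on the local cyclic cover only gives $\beta=\frac{5(n+1)}{4}b$, already $\frac52 b$ for $A_1$. Combined with the crude scroll bound, this fails for small $b$.

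The paper avoids all local analysis on $S^+$ by the global bound $\operatorname{lct}\geq\frac16$ for Du Val quintic del Pezzo surfaces. This gives $w(G_S)\leq 6b\,w(C)$ for every lc place of $(S^+,C)$, and the paper compensates with a sharp bound on $E$. Write $G_{\hat E}=\alpha\hat C+R$, with $q_0=\mu+\nu$ and $t=\min(\mu,\nu)/q_0$.
\begin{itemize}
\item The node estimate, using $\hat C^2=0$ and $h\cdot\hat C=5$, gives $w(G_{\hat E})/q_0\leq\alpha+5bt(1-t)$, which is increasing in $\alpha$.
\item Lemma~\ref{lem:scroll-estimates} applied to $\rho_*R\sim(k-2\alpha)s+(m+k-3\alpha)f$ gives $w(G_{\hat E})/q_0\leq k+(m-k)t-\alpha(1-t)$, which is decreasing in $\alpha$.
\end{itemize}
Adding these with weights $1-t$ and $1$ eliminates the unknown $\alpha$ and yields $w(G_{\hat E})\leq\frac65 m\,w(\hat C)$. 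The convex combination then gives $\frac56\cdot\frac65 m+\frac16\cdot 6b=m+b$. This elimination of the unknown multiplicity of $\hat C$, together with a bound on $S^+$ that is uniform over its Du Val points, is what your plan is missing.
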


\begin{proof}
Let $G_{E_Y}$ be the restriction of $G_Y$ to $E_Y$. Let $\eta\colon\hat E\to E$ be a crepant resolution and $\rho\colon\hat E\to F_0\simeq\mathbb F_1$ resolve $\Lambda_\ell$. Set $\hat C=\eta^*C$, $G_{\hat E}=\eta^*(G^+|_E)$, and $h=\rho^*(s+2f)$. Restriction on a common resolution of $\chi$ gives
\[
G_{\hat E}=\rho^*G_{E_Y}-(k-m)K_{\hat E/F_0}
\sim(m-b)\hat C+bh.
\]
Write
\[
G_{\hat E}=\alpha\hat C+R 
\]
where $R$ is effective and does not contain $\hat C$. 
Let $w$ be a monomial valuation centered at the node $q$ of $\hat C$ with weights $\mu$ and $\nu$. 
Set $q_0=\mu+\nu$ and $t=\min(\mu,\nu)/q_0 \in (0,1/2]$.
By Lemma~\ref{lem:node-estimate}, we have
\[
w(R) \leq \frac{\mu\nu}{\mu+\nu}I_q(R,\hat C),
\]
where $I_q$ denotes the local intersection product. 
This implies 
\begin{equation}\label{eq:first-approx}
\frac{w(G_{\hat E})}{q_0} \leq \alpha + 5bt(1-t). 
\end{equation}
On the other hand, Lemma~\ref{lem:scroll-estimates} implies 
\begin{equation}\label{eq:second-approx}
\frac{w(G_{\hat E})}{q_0} \leq k+(m-k)t-\alpha(1-t). 
\end{equation} 
We can multiply inequality~\eqref{eq:first-approx} by $1-t$ and
add inequality~\eqref{eq:second-approx} and then divide by $2-t$ to get
\[
\frac{w(G_{\hat E})}{q_0} \leq m+b\frac{(1-t)(5t(1-t)-1)}{2-t}. 
\]
The rational function on $t$ displayed in the right-hand side of the previous inequality is bounded above by $\frac{1}{10}$ in the interval $(0,1/2]$.
For $t\leq1/3$ use $t(1-t)\leq2/9$ and $(1-t)/(2-t)\leq1/2$. For $t\geq1/3$ use $1/4$ and $2/5$, respectively. On the other hand, we have $b\leq 2m$. 
We conclude that 
\begin{equation}\label{one-approx}
w(G_{\hat E}) \leq \frac{6}{5}mw(\hat C).
\end{equation} 
At the same time, the global log canonical threshold of a degree five Du Val del Pezzo surface is at least $1/6$ (see~\cite[Theorem 1.4 and Theorem 1.5]{PW10}). 
Thus, we deduce that 
\begin{equation}\label{second-approx}
w(G_S) \leq 6b w(C)
\end{equation} 
for every log canonical place $w$ of $(S^+,C)$.
By Lemma~\ref{lem:criteria-under-adjunction}, we conclude that 
\[
v(G^+) \leq \min \left\{ 
\frac{6}{5}mv(S^+),6bv(E) 
\right\} \leq 
\frac{5}{6} \frac{6}{5}mv(S^+) + \frac{1}{6}6bv(E) = 
mv(S^+)+bv(E). 
\]
The first inequality follows from Lemma~\ref{lem:criteria-under-adjunction} by plugging inequality~\eqref{one-approx}
and inequality~\eqref{second-approx}. 
The second inequality follows as the value is a convex combination 
of the two values in the minimum. 
By equality~\eqref{eq:flop}, we conclude that $mv(B)\geq v(\mathcal{M})$. 
\end{proof} 

\begin{proposition}\label{prop:non-normal-case-2}
Let $X$ be a general prime smooth Fano threefold of genus $7$. 
Let $B\in |-K_X|$ be a non-normal slc boundary of coregularity zero. 
Assume $B$ is non-normal along an ordinary line $\ell$.
Let $\sigma\colon (Y,S+E_Y)\rightarrow (X,B)$ be the crepant blow-up of $\ell$. Set $C_0:=S\cap E_Y$. 
Let $\Lambda_\ell\subset |C_0|$ be the pencil induced by the double projection from $\ell$.
Assume that the flopped surface $S^+$ is smooth, the curve
$C_0$ is irreducible nodal, and the node is a base point of $\Lambda_\ell$. 
Assume that other members of $\Lambda_\ell$ are smooth at the node and transversal there to both branches of the node.
Then, for every movable linear system $\mathcal{M}\subseteq |-mK_X|$ 
and every log canonical place $v$ of $(X,B)$ centered at a point of $X$, we have $v(\mathcal{M})\leq mv(B)$. 
\end{proposition}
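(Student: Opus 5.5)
We follow the template of Proposition~\ref{prop:non-normal-case-1}. By equality~\eqref{eq:flop} it suffices to show $v(G^+)\leq mv(S^+)+bv(E)$ for every lc place $v$ of $(Y^+,S^+ +E)$ centered over a point. We do this by producing two surface estimates and combining them through Lemma~\ref{lem:criteria-under-adjunction} as a convex combination. Places with center off $E\cap S^+$ are handled by plt-ness, as before. The new feature is that the node $q$ of $C_0$ is a base point of $\Lambda_\ell$. Hence $\rho\colon\hat E\to F_0$ blows up $q$, and the node of $\hat C$ is no longer at $q$ but sits on the exceptional curve $e$ of this blow-up. Because the other pencil members are transversal to both branches, $e$ meets the two branches of the strict transform of $C_0$ at two distinct points. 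Also $\hat C=\rho^*C_0-2e$ plus possibly further exceptional pieces from the other base points. So $C$ on $E$ becomes a cycle of two rational curves, the strict transform $C_0'$ and $e$, meeting at two nodes $q_1,q_2$. On this model $E$ is smooth near the cycle, and $S^+$ is smooth by hypothesis.

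\textbf{Estimate on $E$.} Write $G_{\hat E}=\alpha_1 C_0'+\alpha_2 e+R\sim (m-b)\hat C+bh$ with $R$ containing neither component. At each node $q_i$, a monomial valuation $w$ with weights $\mu,\nu$ along $C_0'$ and $e$ satisfies, by Lemma~\ref{lem:node-estimate},
\[
w(R)\leq \min\{\nu I(R,C_0'),\mu I(R,e)\}.
\]
The intersections $R\cdot e$ and $R\cdot C_0'$ are computed from the class: $h\cdot e=1$ since $h$ is the pencil and $e$ is a section of the pencil fibration, and $h\cdot C_0'$ is small as well. Together with a scroll-type bound in the spirit of Lemma~\ref{lem:scroll-estimates} on $\mathbb F_1$ that controls $\alpha_1,\alpha_2$ (through intersections with fibers, $s$, and the pencil class), this gives a bound $w(G_{\hat E})\leq c_1 m\,w(\hat C)$ with $c_1$ an explicit constant. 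We optimize over $t=\min(\mu,\nu)/(\mu+\nu)\in(0,1/2]$ as in the previous proof. The target is $c_1\leq 6/5$, or whatever value makes the final convex combination work.

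\textbf{Estimate on $S^+$.} $S^+$ is a smooth degree five del Pezzo surface with $-K_{S^+}=C$ and $G_S\sim bC$ plus possibly $F$-contributions; more precisely $G^+\sim mF+bE$, and $F|_{S^+}$ is a fiber class. The lct bound of~\cite{PW10} for smooth quintic del Pezzos, which is at least $1/6$ and in fact larger in the smooth case, then gives $w(G_S)\leq c_2 b\,w(C)$. If the $F$-part is not absorbed, we instead bound it directly: $F|_{S^+}$ is the pencil restricted to $S^+$, which passes through a node with multiplicity one, and we estimate it with Lemma~\ref{lem:node-estimate} on $S^+$.

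\textbf{Combination.} Lemma~\ref{lem:criteria-under-adjunction}, applied in both directions, yields
\[
v(G^+)\leq \min\{c_1 m\, v(S^+),\,c_2 b\, v(E)\}\leq mv(S^+)+bv(E)
\]
whenever $1/c_1+1/c_2\geq1$. The main obstacle is the $E$-estimate. The extra component $e$ changes the intersection numbers, and the worst valuations concentrate at the nodes on $e$. We must check that the transversality hypothesis keeps $I_{q_i}(R,e)$ bounded by $R\cdot e\leq b$ so that the constant stays at most $6/5$. If it fails, we would refine by using the exact lct of smooth quintic del Pezzos (which is $1/2$) to allow a larger $c_1$.
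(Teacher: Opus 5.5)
Your frame agrees with the paper's: reduce via~\eqref{eq:flop} to $v(G^+)\leq mv(S^+)+bv(E)$, prove one estimate on $S^+$ and one on $E$, and combine them through Lemma~\ref{lem:criteria-under-adjunction} as a convex combination. On $S^+$, your fallback is the right input. Since $S^+\sim F$ is itself a fiber of $f$, we have $F|_{S^+}\sim 0$ and $G_S\sim bC=-bK_{S^+}$, so there are no ``$F$-contributions''. Then $\operatorname{lct}(S^+)=1/2$ gives $w(G_S)\leq 2b\,w(C)$.

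What remains is exactly $w(G_{\hat E})\leq 2m\,w(\hat C)$ for every lc place $w$ of $(\hat E,\hat C)$. This is the real content of the proposition, and your proposal does not prove it. Moreover, the local picture on which you would compute $c_1$ is wrong:
\begin{itemize}
\item All members of $\Lambda_\ell$ other than $C_0$ share one tangent direction at $q$. So there is an infinitely near base point on the first exceptional curve, and $\rho$ blows up twice over $q$.
\item The first exceptional curve becomes a $(-2)$-curve $\tilde e_1$, which resolves the $A_1$ point of $E$. Near $q$ one has $\hat C=C_0'+\tilde e_1$ with $(C_0')^2=\tilde e_1^2=-2$ and $C_0'\cdot\tilde e_1=2$. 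The section is the second exceptional curve.
\item Hence your $e$ cannot be both a component of $\hat C$ and a section, and $\hat C\neq\rho^*C_0-2e$.
\item Also $h=\rho^*(s+2f)$ is not the pencil class: $h\cdot\tilde e_1=0$ and $h\cdot C_0'=5$.
\end{itemize}

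The value $6/5$ you aim for is also out of reach of any class-and-intersection argument. Take $k=0$ and $G_Y|_{E_Y}=mf_q$, with $f_q$ the ruling through $q$. Then $G_{\hat E}=\rho^*(mf_q)+mK_{\hat E/F_0}$, whose coefficient along the lc place $\tilde e_1$ is $2m$. So the needed constant is $2$, and it is sharp.

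Even with the correct numbers, your decomposition route does not close as sketched. These are $R\cdot\tilde e_1=2(\alpha_2-\alpha_1)$, $R\cdot C_0'=5b-2(\alpha_2-\alpha_1)$, and the nef constraints $2\alpha_1\leq k$, $\alpha_1+\alpha_2\leq 2m$. For $k=0$ and $\alpha_2=2m$, Lemma~\ref{lem:node-estimate} with global intersection numbers gives only $3m\,w(\hat C)$ at balanced weights.

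The missing idea is to compute on $F_0$ rather than on $\hat E$. Every lc place $w$ of $(\hat E,\hat C)$ over $q$ is a monomial valuation on $F_0$ at $q$, with weights $\mu,\nu$ along the two branches of $C_0$. Put $u=\min(\mu,\nu)$ and $v=\max(\mu,\nu)$. Then $w(K_{\hat E/F_0})=u$, and, because $\tilde e_1\subset\hat C$, $w(\hat C)=\mu+\nu-u=v$. We have $G_{\hat E}=\rho^*D-(k-m)K_{\hat E/F_0}$ with $D=G_Y|_{E_Y}\sim ks+(m+k)f$. Lemma~\ref{lem:scroll-estimates} gives $w(D)\leq mu+kv$, hence
\[
w(G_{\hat E})\leq mu+kv-(k-m)u=bu+kv\leq (b+k)v=2m\,w(\hat C).
\]
Together with $w(G_S)\leq 2b\,w(C)$, Lemma~\ref{lem:criteria-under-adjunction} yields $v(G^+)\leq\min\{2mv(S^+),2bv(E)\}\leq mv(S^+)+bv(E)$, and~\eqref{eq:flop} finishes. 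No decomposition of $G_{\hat E}$ along $C_0'$ and $\tilde e_1$ is needed.
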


\begin{proof}
Let $G$ be a general element of the movable linear system $\mathcal{M}$.
Let $G^+$ be its strict transform in $Y^+$
and let $G_E$ be its restriction to $E$. 

Let $\eta\colon \hat{E}\rightarrow E$ be a crepant resolution of singularities and $\rho\colon \hat{E}\rightarrow F_0\simeq \mathbb{F}_1$ be a resolution of the pencil $\Lambda_\ell$.
We let $\hat{C}=\eta^*(E\cap S^+)$
and $G_{\hat{E}}=\eta^*(G_E)$.
We write $h=\rho^*(s+2f)$. 
Therefore, we have $G_{\hat{E}}\sim (m-b)\hat{C}+bh$
and $G_S\sim -bK_{S^+}$.
Let $w$ be a monomial valuation on $F_0$ centered at the node of $C_0$
with weights $\mu$ and $\nu$. 
We set $u=\min(\mu,\nu)$ and $v=\max(\mu,\nu)$. 
Note that 
\[
w(K_{\hat{E}/F_0})=u \text{ and }
w(\hat{C})=\mu+\nu-u=v.
\]
By the previous equalities, we can use Lemma~\ref{lem:scroll-estimates} to conclude that 
$w(G_{\hat E})\leq mu+kv-(k-m)u=bu+kv\leq2m\,w(\hat C)$.
On the other hand, using the fact that $S^+$ is smooth, we deduce that its global log canonical threshold is at least $\frac{1}{2}$ (see~\cite[Theorem 1.3]{PW10}).
Therefore, we conclude that $w(G_S)\leq 2bw(C)$ for every lc place
$w$ of $(S^+,C)$. By Lemma~\ref{lem:criteria-under-adjunction}, we conclude that 
\[
v(G^+) \leq \min \{ 2mv(S^+),2bv(E)\} \leq mv(S^+)+bv(E). 
\]
Therefore, by equality~\eqref{eq:flop}, we conclude that $v(\mathcal{M})\leq mv(B)$.
\end{proof} 

\begin{proposition}\label{prop:non-normal-case-3}
Let $X$ be a general prime smooth Fano threefold of genus $7$. 
Let $B\in |-K_X|$ be a non-normal slc boundary of coregularity zero. 
Assume $B$ is non-normal along an ordinary line $\ell$.
Let $\sigma\colon (Y,S+E_Y)\rightarrow (X,B)$ be the crepant blow-up of $\ell$. Set $C_0:=S\cap E_Y$. 
Assume that the flopped surface $S^+$ is smooth, the curve 
$C_0$ has two rational components, and neither node of $C_0$ is a base point of $\Lambda_\ell$.
Then, for every movable linear system $\mathcal{M}\subseteq |-mK_X|$ 
and every log canonical place $v$ of $(X,B)$ centered at a point of $X$, we have $v(\mathcal{M})\leq mv(B)$. 
\end{proposition}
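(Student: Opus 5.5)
We follow the template of Propositions~\ref{prop:non-normal-case-1} and~\ref{prop:non-normal-case-2}. By equality~\eqref{eq:flop} it suffices to show
\[
v(G^+)\le mv(S^+)+bv(E)
\]
for every log canonical place $v$ of $(Y^+,S^++E)$ centered over a point of $X$. We apply Lemma~\ref{lem:criteria-under-adjunction} in both directions, to the pairs $(E,C)$ and $(S^+,C)$. Since both $S^+$ and $E$ are normal Cartier divisors in the terminal Gorenstein threefold $Y^+$, adjunction has no different. We then take a convex combination of the two resulting bounds.

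\textbf{The $S^+$ side.} Here $G_S\sim -bK_{S^+}$ and $S^+$ is a smooth quintic del Pezzo surface. By~\cite[Theorem 1.3]{PW10} its global log canonical threshold is at least $\frac12$. Hence $w(G_S)\le 2b\,w(C)$ for every lc place $w$ of $(S^+,C)$, exactly as in Proposition~\ref{prop:non-normal-case-2}.

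\textbf{The $E$ side.} We pass to the common model $\eta\colon\hat E\to E$, $\rho\colon\hat E\to F_0\simeq\mathbb F_1$, where $G_{\hat E}\sim c\hat C+bh$ with $c=m-b$. Since neither node of $C_0$ is a base point of $\Lambda_\ell$, $\rho$ is an isomorphism near both nodes. Therefore $w(K_{\hat E/F_0})=0$ and $w(\hat C)=\mu+\nu$ for a monomial valuation $w$ with weights $\mu,\nu$ at either node. Every lc place of $(E,C)$ other than the two components is such a monomial valuation at one of the nodes.

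We write $G_{\hat E}=\alpha_1C_1+\alpha_2C_2+R$, with $C_1+C_2=C_0$ and $R$ effective containing neither branch. The possible splittings are $s+(s+3f)$, $(s+f)+(s+2f)$ and $f+(2s+2f)$. For each splitting we bound
\[
w(R)\le \min\{\nu I_q(R,C_1),\,\mu I_q(R,C_2)\}
\]
using Lemma~\ref{lem:node-estimate}, and we compute the intersection numbers of $R\sim ks+(m+k)f-\alpha_1C_1-\alpha_2C_2$ with each $C_i$ on $\mathbb F_1$. Since $C_1\cdot C_2=2$, the two nodes share the available intersection. The aim is a bound of the form $w(G_{\hat E})\le \lambda m\, w(\hat C)$ with $\lambda<2$, in the spirit of the bound $\frac65$ in Proposition~\ref{prop:non-normal-case-1}.

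Even $\lambda=2$ would suffice. Lemma~\ref{lem:criteria-under-adjunction} would then give
\[
v(G^+)\le\min\{2m\,v(S^+),\,2b\,v(E)\}\le mv(S^+)+bv(E),
\]
with the second inequality by the $\frac12,\frac12$ convex combination. So we will try to reach $2m$ directly, possibly combining with Lemma~\ref{lem:scroll-estimates}-type fiber and section estimates by removing copies of $s$ and of the fiber through the node. The components themselves are handled by coefficient bounds: $\alpha_i$ is controlled by intersecting with a nef class, for instance $f$ or $s+f$, which gives $\alpha_i\le$ the corresponding degree.

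\textbf{Main obstacle.} The main difficulty is the case analysis over the three splitting types. We must check that the bound $w(G_{\hat E})\le 2m\,w(\hat C)$ holds uniformly in $t=\min(\mu,\nu)/(\mu+\nu)$ at each node. The $f+(2s+2f)$ splitting is the most delicate, because the fiber component is small and its coefficient $\alpha_1$ can be as large as $k$. For that case we would first try mimicking the convex-combination trick of Proposition~\ref{prop:non-normal-case-1}: average the intersection-number estimate against the scroll estimate, weighted by $1-t$, to cancel the $\alpha_i$ dependence.
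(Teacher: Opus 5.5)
Your outer framework matches the paper's. You reduce via \eqref{eq:flop} to $v(G^+)\le mv(S^+)+bv(E)$, use $w(G_S)\le 2b\,w(C)$ from \cite[Theorem 1.3]{PW10}, aim for $w(G_{\hat E})\le 2m\,w(\hat C)$, and finish with Lemma~\ref{lem:criteria-under-adjunction} and the $\frac12,\frac12$ combination. But the proposal stops at the one estimate that carries the proof, and the tools you propose for it cannot establish it. Because $\rho$ is an isomorphism near the nodes, your inputs only know that $D=G_{E_Y}$ is an effective divisor of class $ks+(m+k)f$ on $\mathbb F_1$. Those inputs are the node estimate with intersection numbers on $\mathbb F_1$, coefficient bounds from nef classes, and scroll-type estimates. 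At that level the inequality is false for $k$ near $2m$.

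For instance, take $C_0=s+(s+3f)$, $k=2m$ (so $b=0$), a node $q$, and $D=2m\,s+3m\,f_q$. Since $f_q$ is transverse to both branches, $w(D)=2m\mu+3m\min\{\mu,\nu\}$, which equals $\frac52 m\,w(C_0)$ when $\mu=\nu$. Yet every constraint you list holds: $\alpha_1=2m=D\cdot f$, and $R=3mf_q$ makes the node estimate sharp. Likewise, for $C_0=f+(2s+2f)$ the divisor $D=2ms+3mC_1$ has ratio $3m\mu/(\mu+\nu)$, which exceeds $2m$ once $\mu>2\nu$. Such $D$ are excluded only by the effectivity of $G_{\hat E}=\rho^*D-cK_{\hat E/F_0}$ with $c=k-m$. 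This forces $D$ to have multiplicity at least $c$ at every base point of $\Lambda_\ell$, and the first example misses all base points lying on $s+3f$. You write down $G_{\hat E}\sim c\hat C+bh$ but never use it.

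The paper closes the argument with a dichotomy at $k=m$. For $k\le m$, the $\mathbb F_1$-level information does suffice. Degenerating to a torus-invariant divisor gives ${\rm lct}(\mathbb F_1,D)\ge 1/(m+k)$, hence $w(G_{\hat E})=w(D)\le (m+k)\,w(\hat C)\le 2m\,w(\hat C)$. For $k\ge m$ one must intersect on $\hat E$. There $\hat C$ is a fiber, so $C_1^2=C_2^2=-2$ and $C_1\cdot C_2=2$.

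In the two horizontal splittings, with $h$-degrees $(d,5-d)$, this gives $R\cdot C_1=bd+2\alpha-2\beta$, $R\cdot C_2=b(5-d)+2\beta-2\alpha$, and $\alpha+\beta\le k$. Combining the two node estimates with weights $\frac{1+2x}4$ and $\frac{3-2x}4$ yields $w(G_{\hat E})\le (m+b)\,w(\hat C)$. In the fiber-plus-bisection case you need the inequalities from both intersection forms simultaneously, $Q$ on $\hat E$ and $Q_0$ on $F_0$. Combining them with weights $\frac3{10},\frac1{30},\frac12,\frac16$ reaches $m+b$ again.

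The two regimes are complementary: $m+b>2m$ when $k<m$, but $\min\{m+k,m+b\}\le 2m$ always. This dichotomy and the use of the $\hat E$ intersection form are the missing ideas. Without them the case analysis you describe cannot close.
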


\begin{proof}
Let $G$ be a general element of the movable linear system $\mathcal{M}$.
Let $G^+$ be its strict transform in $Y^+$ and let $G_E$ be its restriction to $E$. 

Let $\eta\colon \hat{E}\rightarrow E$ be a crepant resolution of singularities and $\rho\colon \hat{E}\rightarrow F_0\simeq \mathbb{F}_1$ be a resolution of the pencil $\Lambda_\ell$. 
We let $\hat{C}=\eta^*(E\cap S^+)$
and $G_{\hat{E}}=\eta^*(G_E)$.
We let $h=\rho^*(s+2f)$. 
Since $S^+$ is smooth, we have $w(G_S)\leq 2bw(C)$ where $C=S^+\cap E$ for every log canonical place $w$ of $(S^+,C)$. 
For $0\leq D \sim As+Bf$ with $B\geq A\geq 0$ and $B> 0$, we have 
${\rm lct}(\mathbb{F}_1,D)\geq 1/B$. 
The lower bound for the log canonical threshold can be obtained by degenerating $D$ to a torus invariant divisor and using lower semicontinuity of log canonical thresholds (see, e.g.,~\cite[Theorem 4.9]{Mus02}).
Since $\mathbb{F}_1$ is projective, lower semicontinuity also
applies to the global threshold.
Indeed, we can degenerate successively under the two torus one parameter subgroups. In the invariant limit, section coefficients are at most $A$ and fiber coefficients at most $B$. The support is snc, proving the bound.
Since the nodes avoid the pencil base locus, for $k\leq m$ the preceding inequality gives 
\[
w(G_{\hat E})=w(G_{E_Y})\leq(m+k)w(\hat C)\leq2mw(\hat C).
\]
For $k\geq m$, we prove the same previous inequality. 
We set $c=k-m=m-b\geq 0$. 

First, we assume that $C_0$ has two horizontal components. 
Let their $h$-degrees be $(d,5-d)$ for $d\in \{1,2\}$.
Write $G_{\hat{E}}=\alpha C_1 +\beta C_2+R$.
The nef ruling gives $\alpha+\beta \leq k$. 
We can compute the intersections
\[
R\cdot C_1 = bd+2\alpha-2\beta 
\text{ and }
R\cdot C_2 = b(5-d)+2\beta-2\alpha. 
\]
At a node with weights $\mu$ and $\nu$, we set $x=\mu/(\mu+\nu)$.
By Lemma~\ref{lem:node-estimate}, $w(R)\leq\min\{\nu R\cdot C_1,\mu R\cdot C_2\}$. Taking the convex combination of these two bounds
with coefficients $(1+2x)/4$ and $(3-2x)/4$ yields 
\[
\frac{w(G_{\hat{E}})}{w(\hat C)} \leq \frac{\alpha+\beta}{2} + 
b\left(\frac{d}{4}+\frac{15-2d}{4}x-\frac{5}{2}x^2\right). 
\]
For $d=1,2$ the respective maxima of the function on $x$ are $209/160$ and $201/160$, both below $3/2$. Consequently, we have 
\[
w(G_{\hat{E}}) \leq \left(\frac{k}{2}+\frac{3}{2}b\right)w(\hat C) =
(m+b)w(\hat C)\leq 2mw(\hat C). 
\]

Now, assume that $C_0$ has one fiber and one bisection.
The statement is immediate if $b=0$. So, assume $b>0$.
The intersection matrices of these components  on the surfaces $\hat{E}$ and $F_0$ are 
\[
Q=\begin{bmatrix}
-2 & 2 \\
2 & -2 
\end{bmatrix} 
\text{ and } 
Q_0=\begin{bmatrix}
0 & 2 \\
2 & 4 
\end{bmatrix}.
\]
Set the vector $d=\begin{bmatrix}1\\4\end{bmatrix}$.
Again, write $G_{\hat{E}}=\alpha C_1+\beta C_2+R$. 
Set $z_1=(\alpha-c)/b$, $z_2=(\beta-c)/b$, and $T=(w(G_{\hat{E}})/w(\hat C)-c)/b$.
Both $R$ and $\rho_*R$ are effective and contain neither boundary component.
Writing $C_{i,0}=\rho_*C_i$, we get $\rho_*R=b(s+2f-z_1C_{1,0}-z_2C_{2,0})$, which justifies the inequalities on the second model. 
Intersection therefore gives $Qz\leq d$ and $Q_0z\leq d$. 
Applying Lemma~\ref{lem:node-estimate} to the
divisors on $\widehat E$ and $F_0$, with normalized
node weights $x,1-x$, gives
\begin{align}
T & \leq xz_1+(1-x)z_2+(1-x)(d_1-(Qz)_1)\\
T & \leq xz_1+(1-x)z_2+x(d_2-(Qz)_2), 
\end{align} 
and the same inequalities for $Q$ replaced with $Q_0$. 
We take the previous four inequalities, multiply them by 
$3/10, 1/30, 1/2$ and $1/6$, respectively, and add them.
By doing so, we obtain
\[
T \leq \frac{4}{5}+\frac{3}{5}z_1 - \frac{3}{5}z_2. 
\]
The inequality $Qz\leq d$ implies $\frac{3}{5}z_1 - \frac{3}{5}z_2 \leq 6/5$.
Thus, we conclude that $T\leq 2$. This implies that 
\[
w(G_{\hat{E}})\leq (c+2b)w(\hat C) = (m+b)w(\hat C) \leq 2mw(\hat C).
\]

Consequently, in any possible case, we conclude that 
$w(G_{\hat{E}})\leq 2mw(\hat C)$ for any log canonical place $w$. 
By Lemma~\ref{lem:criteria-under-adjunction}, we obtain
\[
v(G^+)\leq \min \{ 2mv(S^+),2bv(E)\} 
\leq mv(S^+)+bv(E). 
\]
Therefore, we conclude that $v(\mathcal{M})\leq mv(B)$ by equality~\eqref{eq:flop}. This finishes the proof.
\end{proof} 

\subsection{Cluster type condition} In this subsection, we finish the proof of Theorem~\ref{thm:1-6-not-cluster-type}. 

\begin{proof}[Proof of Theorem~\ref{thm:1-6-not-cluster-type}]
Let $X$ be a general smooth prime Fano threefold of genus seven.
Let $B\in |-K_X|$ be an slc boundary of coregularity zero. 
The surface $B$ is either normal or non-normal.
If $B$ is normal, then by Proposition~\ref{prop:normal-cluster-type-1.6} we conclude that $(X,B)$ is not of cluster type.
If $B$ is non-normal, then by Lemma~\ref{lem:sing-line} the singularities of $B$ are a single ordinary line $\ell$ and possibly some Du Val singularities. 
Let $\sigma \colon Y \rightarrow X$ be the blow-up of the line, 
with exceptional divisor $E_Y$. Let 
$\sigma\colon (Y,S+E_Y)\rightarrow (X,B)$ be the corresponding crepant morphism. Set $C_0:=S\cap E_Y$. Then, by Lemma~\ref{lem:singularities-blow-up-line}, there are three possible cases:
\begin{enumerate}
\item $S^+$ has Du Val singularities, $C_0$ is irreducible nodal, and its node is not a base point of $\Lambda_\ell$; 
\item $S^+$ is smooth, $C_0$ is irreducible nodal, and its node is a base point of $\Lambda_\ell$ and the other members of the pencil are smooth and transversal to both branches of the node; or
\item $S^+$ is smooth, $C_0$ has two rational components, and neither node is a base point of $\Lambda_\ell$. 
\end{enumerate} 
In Case (1), we know that $(X,B)$ is not of cluster type due to Proposition~\ref{prop:non-normal-case-1}.
In Case (2), we know that $(X,B)$ is not of cluster type due to Proposition~\ref{prop:non-normal-case-2}. Finally, in Case (3), we know that $(X,B)$ is not of cluster type due to Proposition~\ref{prop:non-normal-case-3}. Thus, we conclude that $(X,B)$ is not of cluster type for any $B\in |-K_X|$. 
Hence, the variety $X$ is not of cluster type.
\end{proof} 

\begin{proof}[Proof of Theorems~\ref{theorem:cluster-type-Fano-threefolds}, \ref{thm:complete-cluster-type}, and~\ref{thm:complete-cylindric}]
The constructions in Sections~\ref{sec:p3}--\ref{sec:other}, together with Lemma~\ref{lem:product-cluster-type}, cover the $87$ families listed positively in Table~\ref{table:Fanos}. They apply to every smooth member except for the stated generality qualifications in families \textnumero\,2.4 and \textnumero\,10.1. This proves the first two assertions. Theorem~\ref{theorem:ct-implies-cylindrical}, the known cylinders in family \textnumero\,2.4 and on del Pezzo surfaces~\cite[Example 3.15 and Theorem 2.10]{CPPZ21}, give the cylindricity assertion.
\end{proof}

\section{Applications to cylindrical varieties}
\label{sec:cylindrical}

In this section, we prove that cluster type varieties with mild singularities are cylindrical. A cylinder $U$ in a variety $X$ is said to be $H$-polar 
if there exists $0\leq B \sim_\qq H$ such that $U=X\setminus {\rm Supp}(B)$.
A cylinder $U\subset X$ is said to be an {\em anticanonical} cylinder if 
there exists an effective divisor $0\leq B \sim_\qq -K_X$ such that 
$U=X\setminus {\rm Supp}(B)$. If furthermore $-K_X$ is ample, then we may say that $U\subset X$ is a {\em polar anticanonical cylinder}. 

Although it is expected that many rational varieties are cylindrical, admitting an anticanonical polar cylinder is a much stronger condition.
For instance, every smooth del Pezzo surface is cylindrical 
while only del Pezzo surfaces of degree at least $4$ admit anticanonical polar cylinders (see~\cite[Theorem 1.2.3]{CPW16} and~\cite{CPWsing16}). Below, we give a criterion for a cluster type variety to admit an anticanonical polar cylinder which is compatible with the cluster type structure. We show that this is the case for del Pezzo surfaces of degree at least $4$. First, we formalize the concept of cylinders compatible with cluster type structures. 

\begin{definition}
{\em 
Let $(X,B)$ be a cluster type pair
with an embedding in codimension one $j\colon \mathbb{G}_m^n \dashrightarrow X\setminus B$. A {\em cluster cylinder} of $X$ is an open subset $U\subset X$ for which the following conditions are satisfied:
\begin{enumerate}
\item we have $U\simeq \mathbb{A}^1\times Z$ where $Z$ is an affine variety; 
\item the variety $Z$ is an open subset of $\mathbb{G}_m^{n-1}$; and 
\item $j(\mathbb{G}_m^n)\cap U$ is isomorphic in codimension one to $\mathbb{G}_m\times Z$.
\end{enumerate} 
If $U\subset X$ is moreover a polar cylinder (resp. anticanonical cylinder, polar anticanonical cylinder), then we say that $U\subset X$ is a {\em polar cluster cylinder} 
(resp. {\em anticanonical cluster cylinder}, {\em polar anticanonical cluster cylinder}). 
}
\end{definition} 

In the previous definition, $j(\mathbb{G}_m^n)$ stands for the rational image.
Roughly speaking, a cluster cylinder in $X$ 
comes from the choice of a toric morphism $\pi\colon \mathbb{G}_m^n \rightarrow \mathbb{G}_m^{n-1}$ for which there exists an open subset $Z\subset\mathbb G_m^{n-1}$ and the trivial $\mathbb{G}_m$-bundle $\pi^{-1}(Z)$
can be extended to an $\mathbb{A}^1$-bundle on $X$. 

\begin{proof}[Proof of Theorem~\ref{theorem:ct-implies-cylindrical}]
Let $(X,B)$ be a cluster type pair. 
As $X$ is projective and has klt singularities, the boundary divisor $B$ is non-trivial. 
Let $S\subseteq B$ be a prime component. 
Let 
\[
\phi\colon (\pp^n,H_0+\dots+H_n) \dashrightarrow (X,B) 
\]
be a crepant birational map such that 
${\rm Ex}(\phi)\cap \mathbb{G}_m^n$ has codimension at least two in $\mathbb{G}_m^n$ (see Proposition~\ref{prop:cluster-type-bir}).
We may replace $(\pp^n,H_0+\dots+H_n)$ with a higher birational toric model $(T,B_T)$ and assume that 
there is a $\pp^1$-fibration $\pi\colon T\rightarrow T_0$
such that the strict transform $S_T$ of $H_0$ in $T$
is a birational section for $\pi$. 
We may assume that the strict transform $S_{\pp^n}$ of $S$ in $\pp^n$ equals $H_0$. 
Let $S'_T$ be the second torus invariant section of $\pi$. 
Therefore, we have a crepant birational map 
\[
\psi\colon (T,B_T)\dashrightarrow (X,B) 
\]
which is small on the torus $\mathbb{G}_m^n$.
Let $E$ be the exceptional locus of $\psi$, i.e., the union of the locus
where $\psi$ is not defined and the locus where $\psi$ is not an isomorphism. 
By construction, the divisor $S_T$ is not contained in $E$.
Write $E_{\rm vert}$ for all the irreducible components of $E$ that are vertical over $T_0$. 
Recall that $\psi$ does not contract divisors in the algebraic torus.
Therefore, the only divisors that $\psi$ can contract, except from $S_T$ and $S'_T$, must be vertical over $T_0$.
We conclude that every prime component of $E_{\rm vert}$ either has codimension at least two or is a prime torus invariant divisor of $T$ that maps to a prime divisor on $T_0$. 
We conclude that the image of $E_{\rm vert}$ in $T_0$ must be proper.
Thus, we can write $E=E_{\rm vert}$ or 
$E=E_{\rm vert}+S'_T$, depending whether $S'_T$ is contained or not in the exceptional locus of $\psi$.
Let $V$ be the image of $E_{\rm vert}$ in $T_0$. By construction, $V$ is a proper subset of $T_0$.
Let $U$ be an open affine subset of $T_0$ which is disjoint of $V$.
Shrinking $U$ we may assume that the $\pp^1$-bundle is trivial over this open subset. 
Let $U_T:=\pi^{-1}(U)$.
Note that $U_T\setminus E$ is either isomorphic to 
$\pp^1\times U$ or $\mathbb{A}^1\times U$, depending whether $E$ contains or not $S'_T$ on its support. 
In any case, we get open embeddings
$\mathbb{A}^1\times U \hookrightarrow U_T\setminus E \hookrightarrow X$.
Thus, the variety $X$ admits a cluster cylinder. 
\end{proof} 

\begin{theorem}\label{thm:anticanonical-polar-cylinder}
Let $X$ be a projective normal klt variety.
Let $(X,B)$ be a log Calabi--Yau pair. 
Assume that there exists a crepant map
\[
\phi\colon (T,B_T) \dashrightarrow (X,B)
\]
from a toric
log Calabi--Yau pair $(T,B_T)$ satisfying the following conditions:
\begin{enumerate}
\item $\phi$ restricts to an isomorphism on $\mathbb{G}_m^n$;
\item there is an ample irreducible component $S_T$ of $B_T$; and 
\item for every prime divisor $P$ in $X$, exceptional over $T$, we have $c_P(T)\not \subseteq S_T$.
\end{enumerate} 
Then, the variety $X$ admits an anticanonical cluster cylinder. 
Furthermore, if $-K_X$ is ample, $X$ admits a polar anticanonical cluster cylinder.
\end{theorem}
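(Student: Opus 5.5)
The plan is to rerun the proof of Theorem~\ref{theorem:ct-implies-cylindrical}, this time fibering with respect to the ample component $S_T$. I will then use hypotheses (1)--(3) to show the cylinder is exactly the complement of an anticanonical divisor.

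First I replace $(T,B_T)$ by a higher toric model $T'\to T$ carrying a toric $\pp^1$-fibration $\pi\colon T'\to T_0$, where the strict transform of $S_T$ is a section at infinity and $S'_T$ is the opposite invariant section. Concretely, I choose a one-parameter subgroup whose ray points into the interior of the cone dual to $S_T$, and take $\pi$ to be the quotient by this subgroup. By (1), $\phi$ is an isomorphism on the torus. Hence every divisor on $X$ that is exceptional over $T$, and every divisor of $T$ contracted by $\phi$, lies in the toric boundary of the respective model.

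The key step is to show that no such exceptional divisor meets the general fiber of $\pi$ away from $S'_T$. Its center on $T$ is a torus-invariant stratum $Z$ not contained in $S_T$, by (3). I will choose the one-parameter subgroup so that the limit as $t\to 0$ of a general orbit lands in the open part of $S_T$. Then the closure of a general fiber meets the boundary only in $S_T$ (at $t=0$) and in the opposite invariant divisors. Since $S_T$ is ample, every invariant stratum not contained in $S_T$ must intersect the opposite side. I therefore expect the exceptional loci to project onto a proper closed subset $V\subset T_0$, except possibly along $S'_T$. Taking $U=T_0\setminus V$ affine gives, as before, $\mathbb A^1\times U\cong \pi^{-1}(U)\setminus S'_T\hookrightarrow X$, and the cluster cylinder conditions follow from (1).

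To make the cylinder anticanonical, I write its complement in $X$ as the union of the supports of $B-S$ and of the closures of the invariant divisors of $T$ lying over $T_0\setminus U$, where $S=\phi_*S_T$. The divisor $S_T$ is ample and its complement in $T$ is an affine toric variety, and I will arrange $U$ to be exactly the preimage of that affine chart. Then the cylinder becomes $X\setminus\operatorname{Supp}(B-S)$, up to the boundary components. Since $B\sim -K_X$ and every component of $B$ has coefficient at most one, I would take $B_\epsilon=(1-\epsilon)B+\epsilon D$, where $D$ is effective, $D\sim_\qq -K_X$, supported on $B-S$ together with the extra vertical components, and chosen so that the coefficient of $S$ in $B_\epsilon$ cancels. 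I build $D$ by pushing forward an ample combination of toric boundary divisors on $T$ not involving $S_T$. This is possible because $S_T$ is ample, so $B_T-S_T$ is linearly equivalent to a divisor supported on the other components.

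When $-K_X$ is ample, an effective $\qq$-divisor $\sim_\qq -K_X$ with the prescribed support is by definition polar, which gives the second statement. I expect the main obstacle to be the bookkeeping of the support. The delicate part is ensuring that condition (3), and not merely the weaker condition used in Theorem~\ref{theorem:ct-implies-cylindrical}, forces every exceptional center to be removed together with a whole boundary divisor different from $S_T$, so that the complement is cut out exactly by an anticanonical divisor avoiding $S$.
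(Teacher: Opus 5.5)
There is a genuine gap, and it is the step you call the main obstacle. You never construct an effective $D\sim_\qq -K_X$ on $X$ whose support is exactly the complement of the cylinder.

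\textbf{Pushing forward does not work.} ``Pushing forward'' a toric relation from $T$ fails because $\phi$ may extract divisors. For a rational function $g$ one has
\[
\phi_*\operatorname{div}_T(g)=\operatorname{div}_X(g)-\sum_P \operatorname{ord}_P(g)\,P,
\]
where $P$ runs over the prime divisors of $X$ exceptional over $T$. So your $D$ is not anticanonical in general. Its support also misses these exceptional divisors, which must be removed from the cylinder.

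\textbf{The missing idea.} Use a rational function instead of a pushforward.
\begin{itemize}
\item Since $S_T$ is ample, there is a character $u$ with $\frac1N\operatorname{div}_T(u)=A_T-S_T$, where $A_T\geq 0$ has support exactly $B_T-S_T$.
\item By (1), $u$ is a rational function on $X$, and $D_X:=B+\frac1N\operatorname{div}_X(u)\sim_\qq -K_X$ holds automatically.
\item Condition (3) is exactly what makes $D_X$ effective with the right support. The center of any exceptional $P$ lies in the toric boundary by (1), and not in $S_T$ by (3). So $u$ is regular and vanishes at its generic point, and $\operatorname{ord}_P(u)>0$.
\end{itemize}
This covers the toric exceptional divisors (coefficient one in $B$) and also the coefficient-zero ones lying in $X\setminus B$. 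For the latter, your claim that exceptional centers are torus-invariant strata is false; consider blowing up a non-invariant curve in a boundary plane.

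The same $u$ shows that $S_T$ is not contracted by $\phi$. Otherwise $u$ would be a nonconstant function without poles on the normal projective variety $X$. You assume this tacitly when you write $S=\phi_*S_T$ and run the $\mathbb{A}^1$-direction through $S$.

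\textbf{Smaller corrections.}
\begin{itemize}
\item Ampleness is not what controls where exceptional centers project. For codimension-two centers, landing in a proper closed subset of $T_0$ is automatic by dimension. The only role of ampleness is to produce $u$.
\item The relevant affine chart is $T\setminus\operatorname{Supp}(B_T-S_T)\simeq\mathbb{A}^1\times\mathbb{G}_m^{n-1}$, the chart of the ray of $S_T$. It is not $T\setminus S_T$.
\item $X\setminus\operatorname{Supp}D_X$ agrees with $\mathbb{A}^1\times\mathbb{G}_m^{n-1}$ only on big open subsets. The remaining bad locus in the base is generally not torus invariant, so ``invariant divisors over $T_0\setminus U$'' cannot remove it. The paper picks a Laurent polynomial $f$ on the base vanishing on the projection of this locus. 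It then replaces $D_X$ by $D_X+\epsilon\operatorname{div}_X(f)$ for small rational $\epsilon>0$. This stays effective and anticanonical because $f$ is regular on $X\setminus\operatorname{Supp}D_X$, and its complement is $\mathbb{A}^1\times D(f)$.
\item Your interpolation $(1-\epsilon)B+\epsilon D$ is superfluous. Killing the coefficient of $S$ forces $\epsilon=1$.
\end{itemize}
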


\begin{proof}
By (2), choose a character $u$ with $\frac1N\operatorname{div}_T(u)=A_T-S_T$, where $N\in\zz_{>0}$ and $A_T$ is effective with support $B_T-S_T$. By (1) and (3), every prime divisor on $X$ exceptional over $T$ has positive $u$-order. Thus $S_T$ cannot be contracted. Otherwise $u$ would have no poles and be constant on projective normal $X$.
Let $S_X$ be the strict transform of $S_T$ in $X$.

Note that (1) implies that our pair $(X,B)$ is of cluster type.
Let $\rho_S$ be the ray in the fan of $T$ corresponding to the prime
torus invariant divisor $S_T$. 
Let $\psi\colon T'\dashrightarrow T$ be a toric birational map that extracts
the torus invariant divisor corresponding to $-\rho_S$ and 
the torus invariant divisors corresponding to prime components of $B$. We may take $T^{\prime}$ smooth. 
Let $(T',B_{T'})$ be the log pull-back of $(T,B_T)$ to $T'$. 
By construction, $T^{\prime}\dashrightarrow T$ is an isomorphism at the generic point of $S_T$. We let $S_{T'}$ be the strict transform of $S_T$ in $T'$. 
Since the rays $\rho_S$ and $-\rho_S$ are part of the fan of $T'$, then we have a rational fibration $\pi\colon T'\dashrightarrow \mathbb{G}_m^{n-1}$ 
such that $\pi$ is indeed a morphism near the general fiber which is isomorphic to $\pp^1$. We have a commutative diagram as follows 
\[
\xymatrix{
\mathbb{G}_m^{n-1} & (T',B_{T'})\ar@{-->}[l]^-{\pi} \ar@{-->}[d]_-{\psi} \ar@{-->}[r]^-{\phi'} & (X,B) \\ 
& (T,B_T) \ar@{-->}[ru]_-{\phi} & 
}
\]
Let $B'_1,\dots,B'_k$ be the prime components of the torus invariant boundary $B_{T'}$.
We may assume $B'_1=S_{T'}$.
By construction, we have $T'\setminus {\rm Supp}(\sum_{i=2}^k B_i')\simeq \mathbb{G}_m^{n-1}\times \mathbb{A}^1$.
Let $S_1,\dots,S_m$ be the prime components of $(X\setminus B)\setminus \phi(\mathbb{G}_m^n)$. 
By replacing $T'$ by a higher birational toric model, we may assume that for each $i\in \{1,\dots,m\}$ the center $c_{T'}(S_i)$ is a codimension two subvariety of $T'$ contained in a unique prime torus invariant divisor
(see, e.g.,~\cite[Proof of Lemma 3.1]{EFM24}). 
These refinements preserve the cylinder and make the centers contain no toric stratum. Adding a sufficiently small general ample divisor through them keeps the pair lc and makes the log discrepancies of the $S_i$ lie in $(0,1)$.
By~\cite[Theorem 1]{Mor19}, applied to this perturbed pair, we can find a projective birational morphism 
$\rho\colon Y\rightarrow T'$ of relative Picard rank $m$ extracting precisely $S_1,\dots,S_m$. We write $S_{Y,1},\dots,S_{Y,m}$ for the center of each $S_i$ on $Y$. 
Therefore, we have a birational contraction 
$\phi_Y \colon Y\dashrightarrow X$.
Taking character orders on $T^{\prime}$, we can write $B'_1\sim_\qq \sum_{i=2}^k \alpha_i B'_i$
where $\alpha_i=N^{-1}\operatorname{ord}_{B_i^{\prime}}(u)$, positive whenever $B_i^{\prime}$ survives on $X$ (see, e.g.,~\cite{CLS11}). 
Therefore, we have 
\[
-K_{T'} \sim_\qq \sum_{i=2}^k (1+\alpha_i) B'_i. 
\]
The center of each $S_i$ on $T'$ lies in one of the $B'_2,\dots,B'_k$ and not inside $B'_1$.
From this, we conclude that
\[
-K_Y \sim_\qq \sum_{i=2}^k (1+\alpha_i)B_{Y,i} + \sum_{i=1}^m \beta_i S_{Y,i}
\]
where $B_{Y,i}$ is the strict transform of $B'_i$ to $Y$
and each $\beta_i=N^{-1}\operatorname{ord}_{S_i}(u)$ is positive. 
Note that 
\[
Y\setminus {\rm Supp}\left( 
\sum_{i=2}^k B_{Y,i} + \sum_{i=1}^m S_{Y,i} 
\right) \simeq \mathbb{G}_m^{n-1} \times \mathbb{A}^1. 
\]
Push the displayed divisor to $D_X\sim_\qq-K_X$. Its surviving coefficients are positive outside $S_X$ and the torus, so $D_X\geq0$. Normality extends the toric coordinates to $g\colon U:=X\setminus\operatorname{Supp}D_X\to V:=\mathbb A^1\times\mathbb G_m^{n-1}$, identifying big opens $U^\circ\simeq V^\circ$. Choose a nonzero Laurent polynomial $f$ vanishing on the closure of the base projection of $(V\setminus V^\circ)\cup g(U\setminus U^\circ)$, whose dimension is at most $n-2$. Then $U\cap\{f\ne0\}\simeq\mathbb A^1\times D(f)$. Since $f$ is regular on $U$, for sufficiently small rational $\epsilon>0$, the divisor $D_X+\epsilon\operatorname{div}_X(f)$ is effective with precisely this complement.
We conclude that $X$ admits an anticanonical cylinder. 
Furthermore, by construction, the cylinder is a cluster cylinder. 
\end{proof} 

We conclude this subsection by applying the ideas of the previous theorem to the context of smooth del Pezzo surfaces. 

\begin{example}
{\em 
In this example, we give a short argument that a smooth del Pezzo
surface of degree at least $4$ admits a polar anticanonical cluster cylinder. The statement is trivial for $\pp^1\times\pp^1$ by considering the toric boundary, so we assume that the surface is a blow-up of $\pp^2$.

Let $T$ be the second Hirzebruch surface.
Let $f_0$ and $f_\infty$ be two fibers over $\pp^1$,
$s_0$ be the section with self-intersection $2$,
and $s_\infty$ be the section with self-intersection $-2$. 
Let $p_1,\dots,p_k$ be $k\leq 5$ be general points in $s_0$.
Let $q$ be a general point in $f_\infty$.
Blow up the points $p_1,\dots,p_k$, and $q$ and obtain 
$\pi\colon X\rightarrow T$.
By abuse of notation, we denote by
$f_0,f_\infty,s_0$, and $s_\infty$  
the strict transforms of these curves in $X$. 
Therefore, the pair $(X,f_0+f_\infty+s_0+s_\infty)$ is log Calabi--Yau. 
We obtain a situation as in the following figure:

\begin{figure}[htbp]
\centering
\[
\BlowupDiagram
\]
\caption{Blow-up of toric model $T\simeq \mathbb{F}_2$.}
\label{fig:my_label}
\end{figure}

Note that in $X$ the curves $f_\infty$ and $s_\infty$ satisfy
$f_\infty^2=-1$ and $s_\infty^2=-2$.
Thus, we can contract $f_\infty$ and then $s_\infty$. 
Let $\varphi \colon X \rightarrow Y$ be the induced birational contraction.
By abuse of notation, denote by $s_0$ and $f_0$ the images of such curves on $Y$. We may also identify the curves $E_1,\dots,E_k$ and $E_q$ with their images in $Y$. By construction, the pair $(Y,s_0+f_0)$ is a log Calabi--Yau pair
and $Y$ is a smooth del Pezzo surface. We can compute
\[
(-K_Y)^2 = (s_0+f_0)^2 = 9-k
\]
so $Y$ is a smooth del Pezzo surface of degree $9-k$. 
The blow-up $Y\to\pp^2$ at the $k$ points maps $s_0+f_0$ crepantly to $C+L$, sending 
$s_0$ to a conic through $k\leq 5$ general points and $f_0$ to a line.
Note that we cannot apply Theorem~\ref{thm:anticanonical-polar-cylinder} in a straightforward way as $f_0$ is not ample in the toric model $T\simeq \mathbb{F}_2$. 
However, in this case we can observe that 
\[
-K_T \sim \frac{2}{3}s_\infty + \frac{4}{3} (s_0+f_\infty), 
\]
where this relation comes from a rational function on the torus $\mathbb{G}_m^2$. 
Using the previous relations, we observe that 
\[
-K_X \sim_\qq \frac{2}{3}s_\infty + \frac{4}{3}\left(s_0+f_\infty\right)+\frac{1}{3}\left(\sum_{i=1}^k E_i+E_q\right). 
\]
Pushing forward the previous relation to $Y$, we see that the cylinder 
$\mathbb{G}_m\times \mathbb{A}^1 \simeq Y\setminus {\rm Supp}(s_0+E_1+\dots+E_k+E_q)$ is a polar anticanonical cluster cylinder of $Y$, since $-K_Y\sim_\qq\frac43s_0+\frac13(\sum_iE_i+E_q)$.
}
\end{example} 

\appendix 

\section{Use of AI disclosure}
\label{appendix-A}

In this appendix, we explain how the tools of Artificial Intelligence (AI), specifically ChatGPT 6.0 were used to understand the cluster type property for some families of smooth Fano threefolds. 

This project is one of the milestones of a long-term project by the authors to understand the cluster type condition for three-dimensional varieties. We started writing this paper in February 2026, however, the ideas within go back to previous papers by the authors, for instance~\cite{Mor24a,Mor24b,MY24,HMY25,AdSFM26,Mor26a}. Around May 2026, the authors had understood the cluster type condition for 81 families of smooth Fano threefolds. Some talks by the third author reported on the status of the project by then. By that point AI was only used for minor tasks unrelated to the intellectual content of the paper. In early June 2026, the authors tried to use AI tools to understand the missing families  \textnumero\, 1.8, \textnumero\,1.9, and \textnumero\,1.10. The ideas of these cases go back to Loginov and are contained in the log rationality proof in~\cite{LMV24}. AI suggested technical steps for the proofs presented here. More precisely, regarding the existence of some lines in Du Val del Pezzo surfaces.
Arguments for the cluster type property for \textnumero\, 2.18, \textnumero\, 3.2 and \textnumero\,3.4 were suggested by AI and subsequently revised by the authors. In all these three cases, AI suggested carefully chosen coregularity zero boundaries which were not the most natural ones considered by the authors. The special element in \textnumero\,2.4 was found by the first two authors as a potential example which is not cluster type. Some observations in this direction were made by the first two authors, but details of the proof including Wronskian computations were provided by AI. Finally, the strategy to prove that a general element in \textnumero\,1.6 is not of cluster type using Lemma~\ref{lem:movable-system} was due to the authors. Such lemma is motivated by the proof of the Noether-Fano inequality. However, to apply Lemma~\ref{lem:movable-system} one needs to understand all the coregularity zero anti-canonical boundaries of the variety. AI proposed a classification of these coregularity zero boundaries for smooth prime Fano threefolds of genus $7$. The argument uses ideas from Manivel, Kuznetsov, Prokhorov, Iliev, and Markushevich. Most of these arguments are related to the ten-dimensional spinor variety. The precise citations are given in the corresponding section.

All the AI-generated arguments were rather long and tedious. In many cases, the authors improved and cleaned the exposition. 
After the manuscript was completed, AI was used for proofreading, without altering the mathematical arguments.
The authors take full responsibility for the material in this paper. 

\section{Table of cluster type smooth Fano threefolds}\label{appendix-B}

In this appendix, we present a table that collects
all families of smooth Fano threefolds as classified by Iskovskikh, Mori, Mukai, and Prokhorov~\cite{MM81,MM03,IP99}. 
The first four columns follow~\cite{LMV24,ALP23}, based on the table of Cheltsov and Shramov~\cite{CS08}. We use this table to better help the reader navigate the paper and know precisely which statement of the paper proves that an element or a general element of such family is of cluster type or not. 
The first column lists all families of smooth Fano threefolds, the second column explains their volumes, the third column gives a brief explanation, and the fourth column states whether the general element of such family is rational.
The last column of the table is new and explains which elements of such family are proved to be of cluster type.

\label{sec-the-table}
\small{
\begin{longtable}{|c|c|p{9cm}|c|c|}
\caption{Cluster type smooth Fano threefolds}\label{table:Fanos}\\
\hline Family & $-K_X^3$ &  Brief description & Rational & Cluster type \\
\hline \textnumero\,$1.1$ & $2$ & A hypersurface in $\mathbb{P}(1,1,1,1,3)$ of
degree $6$. & No & No \\
\hline \textnumero\,$1.2$ & $4$ & A hypersurface in $\mathbb{P}^4$ of degree
$4$. & No & No  \\
\hline \textnumero\,$1.3$ & $6$ & A complete intersection of a quadric and a
cubic in
$\mathbb{P}^{5}$. & Gen. Not & Gen. Not \\
\hline \textnumero\,$1.4$ & $8$ & A complete intersection of three quadrics in
$\mathbb{P}^{6}$. & No &  No \\
\hline \textnumero\,$1.5$ & $10$ & A section of
$\mathrm{Gr}(2,5)\subset\mathbb{P}^9$ by
quadric and linear subspace of dimension~$7$
& Gen. Not &  Gen. Not \\
\hline \textnumero\,$1.6$ & $12$ & A section of the Hermitian symmetric space
$M=G/P\subset\mathbb{P}^{15}$\hfill\break of type DIII  by a
linear
subspace of dimension~$8$. & Yes & \makecell{Gen. Not\\ by Theorem~\ref{thm:1-6-not-cluster-type}} \\
\hline \textnumero\,$1.7$ & $14$ & A section of
$\mathrm{Gr}(2,6)\subset\mathbb{P}^{14}$
by a linear subspace of codimension~$5$. & No & No \\
\hline \textnumero\,$1.8$ & $16$ & A section of the Hermitian symmetric space
$M=G/P\subset \mathbb{P}^{19}$\hfill\break of type CI  by a linear
subspace
of dimension~$10$ & Yes & \makecell{Any element \\ by Proposition~\ref{prop:1-8} } \\
\hline \textnumero\,$1.9$ & $18$ & A section of the $5$-dimensional rational
homogeneous contact\hfill\break manifold
$G_2/P\subset\mathbb{P}^{13}$  by
a linear subspace of dimension~$11$ & Yes & \makecell{Any element \\ by Proposition~\ref{prop:1-9} } \\
\hline \textnumero\,$1.10$ & $22$ & A zero locus of three sections of the rank
$3$ vector bundle $\bigwedge^2\mathcal{Q}$,\hfill\break where
$\mathcal{Q}$ is
the universal quotient bundle on $\mathrm{Gr}(3,7)$ & Yes &
\makecell{Any element \\ by Proposition~\ref{prop:1-10}}  \\
\hline \textnumero\,$1.11$ & $8$ & $V_{1}$ that is a hypersurface in
$\mathbb{P}(1,1,1,2,3)$ of degree $6$ & No & No
 \\
\hline \textnumero\,$1.12$ & $16$ & $V_{2}$ that is a hypersurface in
$\mathbb{P}(1,1,1,1,2)$ of degree $4$ & No & No \\
\hline \textnumero\,$1.13$ & $24$ & $V_{3}$ that is a hypersurface in
$\mathbb{P}^{4}$ of degree $3$ & No & No \\
\hline \textnumero\,$1.14$ & $32$ & $V_{4}$ that is a complete intersection of two
quadrics in $\mathbb{P}^{5}$ & Yes & 
\makecell{Any element \\ by Proposition~\ref{prop:cluster-type-q1capq2}}
\\
\hline \textnumero\,$1.15$ & $40$ & $V_{5}$ that is a section of
$\mathrm{Gr}(2,5)\subset\mathbb{P}^9$ by linear subspace of
codimension $3$ & Yes & \makecell{Any element\\ by Proposition~\ref{prop:cluster-type-section-grass}}\\
\hline \textnumero\,$1.16$ & $54$ & $\mathbb{Q}^3$ that is a hypersurface in $\mathbb{P}^{4}$
of degree $2$ & Yes & \makecell{Any element\\ by
Theorem~\ref{thm:complexity-one-is-cluster-type}} \\
\hline \textnumero\,$1.17$ & $64$ & $\mathbb{P}^{3}$ & Yes & Proposition~\ref{prop:toric} \\
\hline \textnumero\,$2.1$ & $4$ & A blow-up of the Fano threefold $V_1$ along an
elliptic curve\hfill\break that is an intersection of two divisors
from $|-\frac{1}{2}K_{V_1}|$ & No &  No \\
\hline \textnumero\,$2.2$ & $6$ & A double cover of
$\mathbb{P}^1\times\mathbb{P}^2$
whose branch locus is a divisor of bidegree $(2, 4)$ & No & No \\
\hline \textnumero\,$2.3$ & $8$ & The blow-up of the Fano threefold $V_2$ along an
elliptic curve\hfill\break that is an intersection of two divisors
from $|-\frac{1}{2}K_{V_2}|$ & No & No \\
\hline \textnumero\,$2.4$ & $10$ & The blow-up of $\mathbb{P}^3$ along an
intersection of two cubics & Yes & \makecell{General element\\ by Theorem~\ref{thm:blow-up-P3}}\\ 
\hline \textnumero\,$2.5$ & $12$ & The blow-up of the threefold
$V_3\subset\mathbb{P}^4$ along a plane cubic & No & No \\
\hline \textnumero\,$2.6$ & $12$ & A divisor on
$\mathbb{P}^2\times\mathbb{P}^2$ of bidegree $(2, 2)$ & No & No \\
\hline \textnumero\,$2.7$ & $14$ & The blow-up of $Q$ along the intersection of
two divisors from $|\mathcal{O}_Q (2)|$ & Yes & \makecell{Any element\\ by Theorem~\ref{thm:blow-up-Q3}} \\
\hline \textnumero\,$2.8$ & $14$ & A double cover of $V_7$ whose branch locus
is a surface
in $|-K_{V_7}|$ & No & No \\
\hline \textnumero\,$2.9$ & $16$ & The blow-up of $\mathbb{P}^3$ along a curve
of degree $7$ and genus~$5$\hfill\break which is an intersection
of three cubics & Yes & \makecell{Any element\\ by Theorem~\ref{thm:blow-up-P3}} \\
\hline \textnumero\,$2.10$ & $16$ & The blow-up of $V_4\subset\mathbb{P}^5$
along an elliptic curve\hfill\break which is an intersection of
two hyperplane
sections & Yes & \makecell{Any element\\ by Proposition~\ref{prop:cluster-type-q1capq2}}\\
\hline \textnumero\,$2.11$ & $18$ & The blow-up of $V_3$ along a line & No & No
 \\
\hline \textnumero\,$2.12$ & $20$ & The blow-up of $\mathbb{P}^3$ along a curve
of degree $6$ and genus~$3$\hfill\break which is an intersection
of four cubics & Yes & \makecell{Any element\\ by Theorem~\ref{thm:blow-up-P3}} \\
\hline \textnumero\,$2.13$ & $20$ & The blow-up of $\mathbb{Q}^3 \subset\mathbb{P}^4$ along
a curve of degree $6$ and genus $2$ & Yes & \makecell{Any element\\ by Theorem~\ref{thm:blow-up-Q3}}\\
\hline \textnumero\,$2.14$ & $20$ & The blow-up of $V_5\subset\mathbb{P}^6$ along
an elliptic curve\hfill\break which is an intersection of two
hyperplane sections & Yes & \makecell{Any element\\ by Proposition~\ref{prop:cluster-type-section-grass}}\\
\hline \textnumero\,$2.15$ & $22$ & The blow-up of $\mathbb{P}^3$ along the
intersection of a quadric and a cubic surface & Yes & \makecell{Any element\\ by Theorem~\ref{thm:blow-up-P3}} \\
\hline \textnumero\,$2.16$ & $22$ & The blow-up of $V_4\subset\mathbb{P}^5$
along a conic & Yes & \makecell{Any element\\ by Proposition~\ref{prop:cluster-type-q1capq2}} \\
\hline \textnumero\,$2.17$ & $24$ & The blow-up of $\mathbb{Q}^3\subset\mathbb{P}^4$ along
an elliptic curve of degree~$5$ & Yes & \makecell{Any element\\ by Theorem~\ref{thm:blow-up-Q3}} \\
\hline \textnumero\,$2.18$ & $24$ & A double cover of
$\mathbb{P}^1\times\mathbb{P}^2$ whose branch locus is a divisor of
bidegree $(2, 2)$ & Yes & \makecell{Any element\\ by Proposition~\ref{prop:double-cover-p1xp2-cluster-type}} \\
\hline \textnumero\,$2.19$ & $26$ & The blow-up of $V_4\subset\mathbb{P}^5$ along a
line & Yes & \makecell{Any element\\ by Proposition~\ref{prop:cluster-type-q1capq2}} \\
\hline \textnumero\,$2.20$ & $26$ & The blow-up of $V_5\subset\mathbb{P}^6$
along a twisted cubic & Yes & \makecell{Any element\\ by Proposition~\ref{prop:cluster-type-section-grass}} \\
\hline \textnumero\,$2.21$ & $28$ & The blow-up of $\mathbb{Q}^3 \subset\mathbb{P}^4$ along
a twisted quartic & Yes & \makecell{Any element\\ by Theorem~\ref{thm:blow-up-Q3}} \\
\hline \textnumero\,$2.22$ & $30$ & The blow-up of $V_5\subset\mathbb{P}^6$
along a conic & Yes & \makecell{Any element\\ by Proposition~\ref{prop:cluster-type-section-grass}} \\
\hline \textnumero\,$2.23$ & $30$ & The blow-up of $\mathbb{Q}^3 \subset\mathbb{P}^4$ along a
curve of degree $4$ that is an intersection of a surface in
$|\mathcal{O}_{\mathbb{Q}^3}(1)|$ and a surface in
$|\mathcal{O}_{\mathbb{Q}^3}(2)|$ & Yes & \makecell{Any element\\ by Theorem~\ref{thm:blow-up-Q3}} \\
\hline \textnumero\,$2.24$ & $30$ & A divisor on $\mathbb{P}^2\times\mathbb{P}^2$
of bidegree $(1, 2)$ & Yes & \makecell{Any element\\ by Proposition~\ref{prop:(1,2)-in-p2xp2}} \\
\hline \textnumero\,$2.25$ & $32$ & The blow-up of $\mathbb{P}^3$ along an elliptic
curve which is an intersection of two quadrics & Yes & \makecell{Any element\\ by Theorem~\ref{thm:blow-up-P3}} \\
\hline \textnumero\,$2.26$ & $34$ & The blow-up of the threefold
$V_5\subset\mathbb{P}^6$ along a line & Yes & \makecell{Any element\\ by Proposition~\ref{prop:cluster-type-section-grass}} \\
\hline \textnumero\,$2.27$ & $38$ & The blow-up of $\mathbb{P}^3$ along a twisted
cubic  & Yes & \makecell{Any element\\ by Theorem~\ref{thm:blow-up-P3}} \\
\hline \textnumero\,$2.28$ & $40$ & The blow-up of $\mathbb{P}^3$ along a plane
cubic &Yes & \makecell{Any element\\ by Theorem~\ref{thm:blow-up-P3}} \\
\hline \textnumero\,$2.29$ & $40$ & The blow-up of $\mathbb{Q}^3 \subset\mathbb{P}^4$ along a
conic & Yes & \makecell{Any element\\ by Theorem~\ref{thm:blow-up-Q3}} \\
\hline \textnumero\,$2.30$ & $46$ & The blow-up of $\mathbb{P}^3$ along a conic &
Yes &\makecell{Any element\\ by Theorem~\ref{thm:blow-up-P3}} \\
\hline \textnumero\,$2.31$ & $46$ & The blow-up of $\mathbb{Q}^3 \subset\mathbb{P}^4$ along a
line & Yes &\makecell{Any element\\ by Theorem~\ref{thm:blow-up-Q3}} \\
\hline \textnumero\,$2.32$ & $48$ & $W$ that is a divisor on
$\mathbb{P}^2\times\mathbb{P}^2$ of bidegree $(1, 1)$ & Yes &
\makecell{Any element\\ by Proposition~\ref{prop:(1,1)-in-p2xp2}} \\
\hline \textnumero\,$2.33$ & $54$ & The blow-up of $\mathbb{P}^3$ along a line  &
Yes & \makecell{Any element\\ by Proposition~\ref{prop:toric}} \\
\hline \textnumero\,$2.34$ & $54$ & $\mathbb{P}^1\times\mathbb{P}^2$  & Yes &
Proposition~\ref{prop:toric} \\
\hline \textnumero\,$2.35$ & $56$ &
$V_7\cong\mathbb{P}(\mathcal{O}_{\mathbb{P}^2}\oplus\mathcal{O}_{\mathbb{P}^2}(1))$
 &Yes & Proposition~\ref{prop:toric} \\
\hline \textnumero\,$2.36$ & $62$ &
$\mathbb{P}(\mathcal{O}_{\mathbb{P}^2}\oplus\mathcal{O}_{\mathbb{P}^2}(2))$
 & Yes & Proposition~\ref{prop:toric} \\
\hline \textnumero\,$3.1$ & $12$ & A double cover of
$\mathbb{P}^1\times\mathbb{P}^1\times\mathbb{P}^1$ branched in a
divisor of type $(2, 2, 2)$ & No & No \\
\hline \textnumero\,$3.2$ & $14$ & A divisor on a $\mathbb{P}^{2}$-bundle
$\mathbb{P}(\mathcal{O}_{\mathbb{P}^1\times\mathbb{P}^1}\oplus\mathcal{O}_{\mathbb{P}^1\times\mathbb{P}^1}(-1,-1)\oplus\mathcal{O}_{\mathbb{P}^1\times\mathbb{P}^1}(-1,-1))$
such that $X\in|L^{{}\otimes
2}\otimes\mathcal{O}_{\mathbb{P}^{1}\times\mathbb{P}^{1}}(2,3)|$,
where $L$ is the tautological line bundle & Yes & \makecell{Any element\\ by Proposition~\ref{prop:3-2_cluster_type}}\\
\hline \textnumero\,$3.3$ & $18$ & A divisor on
$\mathbb{P}^1\times\mathbb{P}^1\times\mathbb{P}^2$ of type $(1,
1, 2)$ & Yes & \makecell{Any element\\ by Proposition~\ref{prop:(1,1,2)-multi}} \\
\hline \textnumero\,$3.4$ & $18$ & The blow-up of the Fano threefold $Y$ from the
family \textnumero\,2.18 along a smooth fiber of the
composition $Y\to\mathbb{P}^1\times\mathbb{P}^2\to\mathbb{P}^2$ of the
double cover with the projection & Yes & \makecell{Any element\\ by Corollary~\ref{coro:3.4}}
\\
\hline \textnumero\,$3.5$ & $20$ & The blow-up of $\mathbb{P}^1\times\mathbb{P}^2$
along a curve $C$ of bidegree $(5, 2)$\hfill\break such that the
composition  $C\hookrightarrow\mathbb{P}^1\times\mathbb{P}^2\to\mathbb{P}^2$
is an embedding & Yes & Proposition~\ref{prop:blow-up-p1xp2} \\
\hline \textnumero\,$3.6$ & $22$ & The blow-up of $\mathbb{P}^3$ along a disjoint
union of a line and an elliptic curve of degree~$4$ & Yes & \makecell{Any element\\
by Theorem~\ref{thm:blow-up-P3}} \\
\hline \textnumero\,$3.7$ & $24$ & The blow-up of the threefold $W$ along an
elliptic curve\hfill\break that is an intersection of two  divisors
from $|-\frac{1}{2}K_W|$ & Yes & \makecell{Any element\\ by Proposition~\ref{prop:blow-up-of-(1,1)-in-P2xP2}}
\\
\hline \textnumero\,$3.8$ & $24$ & A divisor in
$|(\alpha\circ\pi_1)^*(\mathcal{O}_{\mathbb{P}^2}(1))\otimes\pi_2^*(\mathcal{O}_{\mathbb{P}^2}(2))|$,
where $\pi_{1}\colon\mathbb{F}_1\times\mathbb{P}^2\to\mathbb{F}_1$
and $\pi_{2}\colon\mathbb{F}_1\times\mathbb{P}^2\to\mathbb{P}^2$ are
projections, and $\alpha\colon\mathbb{F}_1\to\mathbb{P}^2$ is a blow
up of a point & Yes & \makecell{Any element\\ by Proposition~\ref{prop:3-8}} \\

\hline \textnumero\,$3.9$ & $26$ & The blow-up of a cone $W_4\subset\mathbb{P}^6$
over the Veronese surface  $R_4\subset\mathbb{P}^5$ with
center in a disjoint union of the vertex and  a quartic \hfill\break on
$R_4\cong\mathbb{P}^2$ & Yes & \makecell{Any element\\ by Proposition~\ref{prop:3-9}} \\

\hline \textnumero\,$3.10$ & $26$ & The blow-up of $\mathbb{Q}^3 \subset\mathbb{P}^4$ along a
disjoint union of two conics & Yes & \makecell{Any element\\ by Theorem~\ref{thm:blow-up-Q3}} \\

\hline \textnumero\,$3.11$ & $28$ & The blow-up of the threefold $V_7$ along an
elliptic curve\hfill\break that is an intersection of  two divisors
from $|-\frac{1}{2}K_{V_7}|$ & Yes & \makecell{Any element\\ by Theorem~\ref{thm:blow-up-P3}}  \\

\hline \textnumero\,$3.12$ & $28$ & The blow-up of $\mathbb{P}^3$ along a disjoint
union of a line and a twisted cubic & Yes & \makecell{Any element\\ by Theorem~\ref{thm:blow-up-P3}} \\

\hline \textnumero\,$3.13$ & $30$ & The blow-up of
$W\subset\mathbb{P}^2\times\mathbb{P}^2$ along a curve $C$ of
bidegree $(2, 2)$\hfill\break such that
$\pi_{1}(C)\subset\mathbb{P}^2$ and
$\pi_{2}(C)\subset\mathbb{P}^{2}$ are irreducible
conics,\hfill\break where $\pi_{1}\colon W\to\mathbb{P}^2$ and
$\pi_{2}\colon W\to\mathbb{P}^2$ are
natural projections & Yes & \makecell{Any element\\ by Proposition~\ref{prop:3-13}} \\

\hline \textnumero\,$3.14$ & $32$ & The blow-up of $\mathbb{P}^3$ along a
disjoint union of a plane cubic curve that is
contained in a plane
$\Pi\subset\mathbb{P}^{3}$ and a point that is not contained in $\Pi$
& Yes & \makecell{Any element\\ by Theorem~\ref{thm:blow-up-P3}} \\

\hline \textnumero\,$3.15$ & $32$ & The blow-up of $\mathbb{Q}^3 \subset\mathbb{P}^4$ along a
disjoint union of a line and a conic & Yes & \makecell{Any element\\ by Theorem~\ref{thm:blow-up-Q3}} \\

\hline \textnumero\,$3.16$ & $34$ & The blow-up of $V_7$ along a proper
transform via the blow-up $\alpha\colon
V_7\to\mathbb{P}^3$ of a twisted cubic
passing through the center of the blow-up $\alpha$ & Yes & \makecell{Any element\\ by Theorem~\ref{thm:blow-up-P3}} \\

\hline \textnumero\,$3.17$ & $36$ & A divisor on
$\mathbb{P}^1\times\mathbb{P}^1\times\mathbb{P}^2$ of type $(1,
1, 1)$ & Yes & \makecell{Any element\\ by Proposition~\ref{prop:3-17}} \\

\hline \textnumero\,$3.18$ & $36$ & The blow-up of $\mathbb{P}^3$ along a disjoint
union of a line and a conic & Yes & \makecell{Any element\\ by Theorem~\ref{thm:blow-up-P3}}\\

\hline \textnumero\,$3.19$ & $38$ & The blow-up of $\mathbb{Q}^3 \subset\mathbb{P}^4$ at two
points whose joining line is not in $\mathbb{Q}^3$ & Yes & \makecell{Any element\\ by Theorem~\ref{thm:blow-up-Q3}} \\

\hline \textnumero\,$3.20$ & $38$ & The blow-up of $\mathbb{Q} \subset\mathbb{P}^4$ along a
disjoint union of two lines & Yes& \makecell{Any element\\ by Theorem~\ref{thm:blow-up-Q3}} \\

\hline \textnumero\,$3.21$ & $38$ & The blow-up of $\mathbb{P}^1\times\mathbb{P}^2$
along a curve of bidegree  $(2, 1)$ & Yes & \makecell{Any element\\ by Proposition~\ref{prop:3-21}}  \\

\hline \textnumero\,$3.22$ & $40$ & The blow-up of $\mathbb{P}^1\times\mathbb{P}^2$
along a conic in a fiber of the projection
$\mathbb{P}^{1}\times\mathbb{P}^2\to\mathbb{P}^1$ & Yes & \makecell{Any element\\ by Proposition~\ref{prop:3-22}} \\

\hline \textnumero\,$3.23$ & $42$ & The blow-up of $V_7$ along a proper transform
via the blow-up $\alpha\colon V_7\to\mathbb{P}^3$ of an
irreducible conic passing through the center of the blow-up $\alpha$ &
Yes & \makecell{Any element\\ by Theorem~\ref{thm:blow-up-P3}}  \\

\hline \textnumero\,$3.24$ & $42$ & $W\times_{\mathbb{P}^2}\mathbb{F}_1$, where
$W\to\mathbb{P}^2$ is a $\mathbb{P}^1$-bundle and
$\mathbb{F}_1\to\mathbb{P}^2$ is the
blow-up & Yes & \makecell{Any element\\ by Proposition~\ref{prop:3-24}}  \\

\hline \textnumero\,$3.25$ & $44$ & The blow-up of $\mathbb{P}^3$ along a disjoint
union of two lines & Yes & \makecell{Any element\\ by Proposition~\ref{prop:toric}}  \\

\hline \textnumero\,$3.26$ & $46$ & The blow-up of $\mathbb{P}^3$ with center in a
disjoint union of a point and a line  & Yes & \makecell{Any element\\ by Proposition~\ref{prop:toric}}  \\

\hline \textnumero\,$3.27$ & $48$ &
$\mathbb{P}^1\times\mathbb{P}^1\times\mathbb{P}^1$  & Yes &
Proposition~\ref{prop:toric} \\

\hline \textnumero\,$3.28$ & $48$ & $\mathbb{P}^1\times\mathbb{F}_1$  & Yes &
Proposition~\ref{prop:toric} \\

\hline \textnumero\,$3.29$ & $50$ & The blow-up of the Fano threefold $V_7$
along a line in $E\cong\mathbb{P}^2$,\hfill\break where $E$ is the
exceptional divisor of the
blow-up $V_7\to\mathbb{P}^3$  & Yes & Proposition~\ref{prop:toric} \\

\hline \textnumero\,$3.30$ & $50$ & The blow-up of $V_7$ along a proper transform
via the blow-up $\alpha\colon V_7\to\mathbb{P}^3$ of a
line that passes through the center of the blow-up $\alpha$  & Yes &
Proposition~\ref{prop:toric} \\

\hline \textnumero\,$3.31$ & $52$ & The blow-up of a cone over a smooth quadric in
$\mathbb{P}^3$ at the vertex & Yes & Proposition~\ref{prop:toric} \\

\hline \textnumero\,$4.1$ & $24$ & Divisor on
$\mathbb{P}^1\times\mathbb{P}^1\times\mathbb{P}^1\times\mathbb{P}^1$
of multidegree $(1, 1, 1, 1)$ & Yes & \makecell{Any element\\ by Proposition~\ref{prop:4.1}} \\

\hline \textnumero\,$4.2$ & $28$ & The blow-up of the cone over a smooth quadric
$S\subset\nolinebreak\mathbb{P}^3$\hfill\break along a disjoint union
of the vertex and an elliptic curve on $S$ & Yes &\makecell{Any element\\ by Proposition~\ref{prop:4.2}}\\

\hline \textnumero\,$4.3$ & $30$ & The blow-up of
$\mathbb{P}^1\times\mathbb{P}^1\times\mathbb{P}^1$ along a curve of
type $(1, 1, 2)$  & Yes & \makecell{Any element\\ by Proposition~\ref{prop:4.3-13}} \\

\hline \textnumero\,$4.4$ & $32$ & The blow-up of the smooth Fano threefold $Y$
from the family \textnumero\,3.19 along the proper
transform of a conic on the quadric $\mathbb{Q}^3 \subset\mathbb{P}^4$
that passes through both centers \hfill\break of the blow-up $Y\to Q$ & Yes &
\makecell{Any element\\ by Theorem~\ref{thm:blow-up-Q3}} \\

\hline \textnumero\,$4.5$ & $32$ & The blow-up of $\mathbb{P}^1\times\mathbb{P}^2$
along a disjoint union of\hfill\break two irreducible curves of
bidegree $(2, 1)$ and $(1, 0)$ & Yes & \makecell{Any element\\ by Proposition~\ref{prop:T-comp-one}} \\

\hline \textnumero\,$4.6$ & $34$ & The blow-up of $\mathbb{P}^3$ along a disjoint
union of three lines & Yes & \makecell{Any element\\ by Theorem~\ref{thm:blow-up-P3}} \\

\hline \textnumero\,$4.7$ & $36$ & The blow-up of
$W\subset\mathbb{P}^2\times\mathbb{P}^2$ along a disjoint union
of\hfill\break two curves of bidegree $(0, 1)$ and $(1, 0)$ & Yes &
\makecell{Any element\\ by Proposition~\ref{prop:T-comp-one}} \\

\hline \textnumero\,$4.8$ & $38$ & The blow-up of
$\mathbb{P}^1\times\mathbb{P}^1\times\mathbb{P}^1$ along a curve of
type $(0, 1, 1)$  & Yes & \makecell{Any element\\ by Proposition~\ref{prop:T-comp-one}} \\

\hline \textnumero\,$4.9$ & $40$ & The blow-up of the smooth Fano threefold $Y$
from the family
\textnumero\,3.25 along a curve that is contracted
by the blow
up $Y \to\mathbb{P}^3$ & Yes & Proposition~\ref{prop:toric} \\

\hline \textnumero\,$4.10$ & $42$ & $\mathbb{P}^1\times S_7$ & Yes &
Proposition~\ref{prop:toric} \\

\hline \textnumero\,$4.11$ & $44$ & The blow-up of
$\mathbb{P}^1\times\mathbb{F}_1$ along a curve $C\cong\mathbb{P}^1$ such
that $C$ is contained in a fiber $F\cong\mathbb{F}_{1}$ of
the projection
$\mathbb{P}^1\times\mathbb{F}_{1}\to\mathbb{P}^1$ and $C\cdot C=-1$ on
$F$ & Yes & Proposition~\ref{prop:toric} \\

\hline \textnumero\,$4.12$ & $46$ & The blow-up of the smooth Fano threefold
$Y$ from the family \textnumero\,2.33 along two curves that are
contracted by the blow-up
$Y\to\mathbb{P}^3$ & Yes & Proposition~\ref{prop:toric} \\

\hline \textnumero\,$4.13$ & $26$ & The blow-up of
$\mathbb{P}^1\times\mathbb{P}^1\times\mathbb{P}^1$ along a curve of
type $(1, 1, 3)$ & Yes & \makecell{Any element\\ by Proposition~\ref{prop:4.3-13}} \\

\hline \textnumero\,$5.1$ & $28$ & The blow-up of the smooth Fano threefold $Y$
from the family \textnumero\,2.29 along three curves that are
contracted by the
blow-up $Y\to Q$ & Yes & \makecell{Any element\\ by Theorem~\ref{thm:blow-up-Q3}} \\

\hline \textnumero\,$5.2$ & $36$ & The blow-up of the smooth Fano threefold $Y$
in the family $3.25$ along two curves $C_{1}\ne
C_{2}$ that are contracted by the blow-up $\phi\colon
Y\to\nolinebreak\mathbb{P}^3$ and that are contained in the
same exceptional divisor of the blow-up $\phi$  &
Yes & Proposition~\ref{prop:toric} \\

\hline \textnumero\,$5.3$ & $36$ & $\mathbb{P}^1\times S_6$  & Yes &
Proposition~\ref{prop:toric} \\
\hline \textnumero\,$6.1$ & $30$ & $\mathbb{P}^1\times S_5$  & Yes &
Lemma~\ref{lem:product-cluster-type} \\
\hline \textnumero\,$7.1$ & $24$ & $\mathbb{P}^1\times S_4$  & Yes &
Lemma~\ref{lem:product-cluster-type}  \\
\hline \textnumero\,$8.1$ & $18$ & $\mathbb{P}^1\times S_3$  & Yes &
Lemma~\ref{lem:product-cluster-type}  \\
\hline \textnumero\,$9.1$ & $12$ & $\mathbb{P}^1\times S_2$  & Yes &
Lemma~\ref{lem:product-cluster-type}  \\
\hline \textnumero\,$10.1$ & $6$ & $\mathbb{P}^1\times S_1$  & Yes &
\makecell{General element \\ by Lemma~\ref{lem:product-cluster-type}}  \\
\hline
\end{longtable}
}

\bibliographystyle{habbvr}
\bibliography{bib}

@incollection {Rei83,
    AUTHOR = {Reid, Miles},
     TITLE = {Minimal models of canonical {$3$}-folds},
 BOOKTITLE = {Algebraic varieties and analytic varieties ({T}okyo, 1981)},
    SERIES = {Adv. Stud. Pure Math.},
    VOLUME = {1},
     PAGES = {131--180},
 PUBLISHER = {North-Holland, Amsterdam},
      YEAR = {1983},
      ISBN = {0-444-86612-4},
   MRCLASS = {14E30 (14J30)},
  MRNUMBER = {715649},
       DOI = {10.2969/aspm/00110131},
       URL = {https://doi.org/10.2969/aspm/00110131},
}

@article {SaintDonat72,
    AUTHOR = {Saint-Donat, Bernard},
     TITLE = {Sur les \'equations d\'efinissant une courbe alg\'ebrique},
   JOURNAL = {C. R. Acad. Sci. Paris S\'er. A-B},
  FJOURNAL = {Comptes Rendus Hebdomadaires des S\'eances de l'Acad\'emie des
              Sciences. S\'eries A et B},
    VOLUME = {274},
      YEAR = {1972},
     PAGES = {A487--A489},
      ISSN = {0151-0509},
   MRCLASS = {14.20},
  MRNUMBER = {289517},
MRREVIEWER = {Y.\ Nakai},
}

@article{GP82,
author = {Gruson, Laurent and Peskine, Christian},
journal = {Annales scientifiques de l'École Normale Supérieure},
language = {fre},
number = {3},
pages = {401-418},
publisher = {Elsevier},
title = {Genre des courbes de l'espace projectif. II},
url = {http://eudml.org/doc/82100},
volume = {15},
year = {1982},
}

@incollection {AIPSV12,
    AUTHOR = {Altmann, Klaus and Ilten, Nathan Owen and Petersen, Lars and
              S\"{u}\ss , Hendrik and Vollmert, Robert},
     TITLE = {The geometry of {$T$}-varieties},
 BOOKTITLE = {Contributions to algebraic geometry},
    SERIES = {EMS Ser. Congr. Rep.},
     PAGES = {17--69},
 PUBLISHER = {Eur. Math. Soc., Z\"{u}rich},
      YEAR = {2012},
   MRCLASS = {14L30 (14M25)},
  MRNUMBER = {2975658},
MRREVIEWER = {Alessandro Ruzzi},
       DOI = {10.4171/114-1/2},
       URL = {https://doi.org/10.4171/114-1/2},
}

@article {AH06,
    AUTHOR = {Altmann, Klaus and Hausen, J\"{u}rgen},
     TITLE = {Polyhedral divisors and algebraic torus actions},
   JOURNAL = {Math. Ann.},
  FJOURNAL = {Mathematische Annalen},
    VOLUME = {334},
      YEAR = {2006},
    NUMBER = {3},
     PAGES = {557--607},
      ISSN = {0025-5831},
   MRCLASS = {14L30 (13A02 14L24 14R05 14R20)},
  MRNUMBER = {2207875},
MRREVIEWER = {Dmitry A. Timash\"{e}v},
       DOI = {10.1007/s00208-005-0705-8},
       URL = {https://doi.org/10.1007/s00208-005-0705-8},
}

@article {BCHM10,
    AUTHOR = {Birkar, Caucher and Cascini, Paolo and Hacon, Christopher D.
              and McKernan, James},
     TITLE = {Existence of minimal models for varieties of log general type},
   JOURNAL = {J. Amer. Math. Soc.},
  FJOURNAL = {Journal of the American Mathematical Society},
    VOLUME = {23},
      YEAR = {2010},
    NUMBER = {2},
     PAGES = {405--468},
      ISSN = {0894-0347},
   MRCLASS = {14E30 (14E05)},
  MRNUMBER = {2601039},
MRREVIEWER = {Mark Gross},
       DOI = {10.1090/S0894-0347-09-00649-3},
       URL = {https://doi.org/10.1090/S0894-0347-09-00649-3}  ,
}

@article {IM07,
    AUTHOR = {Iliev, Atanas and Markushevich, Dimitri},
     TITLE = {Parametrization of sing {$\Theta$} for a {F}ano 3-fold of
              genus 7 by moduli of vector bundles},
   JOURNAL = {Asian J. Math.},
  FJOURNAL = {Asian Journal of Mathematics},
    VOLUME = {11},
      YEAR = {2007},
    NUMBER = {3},
     PAGES = {427--458},
      ISSN = {1093-6106,1945-0036},
   MRCLASS = {14J60 (14J30 14J45)},
  MRNUMBER = {2372725},
MRREVIEWER = {Bernd\ Kreussler},
       DOI = {10.4310/AJM.2007.v11.n3.a4},
       URL = {https://doi.org/10.4310/AJM.2007.v11.n3.a4},
}

@misc{HMY25,
 author={Hacon, Christopher and Moraga, Joaqu\'in and Y\'a\~nez, Jos\'e Ignacio},
 title={{Bott--Chern} complexity of {K\"ahler} pairs},
 year={2025}, eprint={2505.04303}, archivePrefix={arXiv}, primaryClass={math.AG}
}

@misc{Vir26,
      title={Cylinders in {F}ano threefolds of genus 9 and 10},
      author={Nikita Virin},
      year={2026},
      eprint={2605.30875},
      archivePrefix={arXiv},
      primaryClass={math.AG}
}

@incollection {Isk04,
    AUTHOR = {Iskovskikh, Vasilij A.},
     TITLE = {On the {N}oether-{F}ano inequalities},
 BOOKTITLE = {The {F}ano {C}onference},
     PAGES = {25--35},
 PUBLISHER = {Univ. Torino, Turin},
      YEAR = {2004},
      ISBN = {88-900876-1-7},
   MRCLASS = {14E05 (14E07 14E30)},
  MRNUMBER = {2112565},
MRREVIEWER = {St\'ephane\ Lamy},
}

@article {KP18,
    AUTHOR = {Kuznetsov, Alexander and Prokhorov, Yuri},
     TITLE = {Prime {F}ano threefolds of genus 12 with a {$\Bbb G_{\rm
              m}$}-action and their automorphisms},
   JOURNAL = {\'Epijournal G\'eom. Alg\'ebrique},
  FJOURNAL = {\'Epijournal de G\'eom\'etrie Alg\'ebrique. EPIGA},
    VOLUME = {2},
      YEAR = {2018},
     PAGES = {Art. 3, 14},
      ISSN = {2491-6765},
   MRCLASS = {14J45 (14J40 14J50)},
  MRNUMBER = {3816899},
MRREVIEWER = {Arnaud\ Beauville},
       DOI = {10.46298/epiga.2018.volume2.4179},
       URL = {https://doi.org/10.46298/epiga.2018.volume2.4179},
}

@article {DFK26,
    AUTHOR = {Dubouloz, Adrien and Fujita, Kento and Kishimoto, Takashi},
     TITLE = {Another view on smooth prime {F}ano threefolds of degree
              {$22$} with infinite automorphism groups},
   JOURNAL = {Mat. Sb.},
  FJOURNAL = {Matematicheski\u i\ Sbornik},
    VOLUME = {217},
      YEAR = {2026},
    NUMBER = {6},
     PAGES = {3--33},
      ISSN = {0368-8666,2305-2783},
   MRCLASS = {14J45 (14J50)},
  MRNUMBER = {5119286},
       DOI = {10.4213/sm10368},
       URL = {https://doi.org/10.4213/sm10368},
}

@article {CF88,
    AUTHOR = {Coray, D. F. and Tsfasman, M. A.},
     TITLE = {Arithmetic on singular {D}el {P}ezzo surfaces},
   JOURNAL = {Proc. London Math. Soc. (3)},
  FJOURNAL = {Proceedings of the London Mathematical Society. Third Series},
    VOLUME = {57},
      YEAR = {1988},
    NUMBER = {1},
     PAGES = {25--87},
      ISSN = {0024-6115,1460-244X},
   MRCLASS = {11G35 (11E04 11E95 11G25 14J17 14J20 14J26)},
  MRNUMBER = {940430},
MRREVIEWER = {Noriko\ Yui},
       DOI = {10.1112/plms/s3-57.1.25},
       URL = {https://doi.org/10.1112/plms/s3-57.1.25},
}

@article {Der14,
    AUTHOR = {Derenthal, Ulrich},
     TITLE = {Singular del {P}ezzo surfaces whose universal torsors are
              hypersurfaces},
   JOURNAL = {Proc. Lond. Math. Soc. (3)},
  FJOURNAL = {Proceedings of the London Mathematical Society. Third Series},
    VOLUME = {108},
      YEAR = {2014},
    NUMBER = {3},
     PAGES = {638--681},
      ISSN = {0024-6115,1460-244X},
   MRCLASS = {14J26 (14C20 14G05 14J17)},
  MRNUMBER = {3180592},
MRREVIEWER = {Antonio\ Laface},
       DOI = {10.1112/plms/pdt041},
       URL = {https://doi.org/10.1112/plms/pdt041},
}

@article {KPS18,
    AUTHOR = {Kuznetsov, Alexander G. and Prokhorov, Yuri G. and Shramov,
              Constantin A.},
     TITLE = {Hilbert schemes of lines and conics and automorphism groups of
              {F}ano threefolds},
   JOURNAL = {Jpn. J. Math.},
  FJOURNAL = {Japanese Journal of Mathematics},
    VOLUME = {13},
      YEAR = {2018},
    NUMBER = {1},
     PAGES = {109--185},
      ISSN = {0289-2316,1861-3624},
   MRCLASS = {14J45 (14C05 14J30 14J50)},
  MRNUMBER = {3776469},
MRREVIEWER = {Alexandr\ V.\ Pukhlikov},
       DOI = {10.1007/s11537-017-1714-6},
       URL = {https://doi.org/10.1007/s11537-017-1714-6},
}

@article {CS18,
    AUTHOR = {Cheltsov, Ivan and Shramov, Constantin},
     TITLE = {K\"ahler-{E}instein {F}ano threefolds of degree 22},
   JOURNAL = {J. Algebraic Geom.},
  FJOURNAL = {Journal of Algebraic Geometry},
    VOLUME = {32},
      YEAR = {2023},
    NUMBER = {3},
     PAGES = {385--428},
      ISSN = {1056-3911,1534-7486},
   MRCLASS = {14J45 (32Q20 53C55)},
  MRNUMBER = {4622256},
MRREVIEWER = {Feng\ Shao},
       DOI = {10.1090/jag/812},
       URL = {https://doi.org/10.1090/jag/812},
}

@misc{IKTT24,
      title={Arithmetic finiteness of {M}ukai varieties of genus 7},
      author={Tetsushi Ito and Akihiro Kanemitsu and Teppei Takamatsu and Yuuji Tanaka},
      year={2024},
      eprint={2409.20046},
      archivePrefix={arXiv},
      primaryClass={math.AG}
}

@article {PW10,
    AUTHOR = {Park, Jihun and Won, Joonyeong},
     TITLE = {Log-canonical thresholds on del {P}ezzo surfaces of degrees
              {$\geq 2$}},
   JOURNAL = {Nagoya Math. J.},
  FJOURNAL = {Nagoya Mathematical Journal},
    VOLUME = {200},
      YEAR = {2010},
     PAGES = {1--26},
      ISSN = {0027-7630,2152-6842},
   MRCLASS = {14J26 (14J17 14J45)},
  MRNUMBER = {2747875},
MRREVIEWER = {Ivan\ Chel\cprime tsov},
       DOI = {10.1215/00277630-2010-009},
       URL = {https://doi.org/10.1215/00277630-2010-009},
}

@article {Pro25,
    AUTHOR = {Prokhorov, Yuri G.},
     TITLE = {Fano threefolds},
   JOURNAL = {Proc. Steklov Inst. Math.},
  FJOURNAL = {Proceedings of the Steklov Institute of Mathematics},
    VOLUME = {328},
      YEAR = {2025},
     PAGES = {S1--S130},
      ISSN = {0081-5438,1531-8605},
   MRCLASS = {14J45 (14E30 14J30)},
  MRNUMBER = {4922010},
       DOI = {10.1134/s0081543825020014},
       URL = {https://doi.org/10.1134/s0081543825020014},
}

@article {Kuz18b,
    AUTHOR = {Kuznetsov, A. G.},
     TITLE = {On linear sections of the spinor tenfold. {I}},
   JOURNAL = {Izv. Ross. Akad. Nauk Ser. Mat.},
  FJOURNAL = {Izvestiya Rossiiskoi Akademii Nauk. Seriya Matematicheskaya},
    VOLUME = {82},
      YEAR = {2018},
    NUMBER = {4},
     PAGES = {53--114},
      ISSN = {1607-0046,2587-5906},
   MRCLASS = {14M17 (14C05 14F05 14M15 18E30)},
  MRNUMBER = {3833474},
MRREVIEWER = {Nicolae\ Manolache},
       DOI = {10.4213/im8756},
       URL = {https://doi.org/10.4213/im8756},
}

@misc{MZ25,
 author={Moraga, Joaqu\'in and Z\'u\~niga, Juan Pablo},
 title={Degenerations of cluster type varieties},
 year={2025}, eprint={2511.14959}, archivePrefix={arXiv}, primaryClass={math.AG}
}

@article {Sho79,
    AUTHOR = {Shokurov, V. V.},
     TITLE = {Smoothness of a general anticanonical divisor on a {F}ano
              variety},
   JOURNAL = {Izv. Akad. Nauk SSSR Ser. Mat.},
  FJOURNAL = {Izvestiya Akademii Nauk SSSR. Seriya Matematicheskaya},
    VOLUME = {43},
      YEAR = {1979},
    NUMBER = {2},
     PAGES = {430--441},
      ISSN = {0373-2436},
   MRCLASS = {14J30},
  MRNUMBER = {534602},
MRREVIEWER = {Werner\ Kleinert},
}

@misc{Sus13,
      title={Fano threefolds with 2-torus action - a picture book}, 
      author={S\"u\ss, Hendrik},
      year={2013},
      eprint={1308.2379}        ,
      archivePrefix={arXiv},
      primaryClass={math.AG}
}

@misc{Mor26a,
      title={Cluster type varieties}, 
      author={Joaqu\'in Moraga},
      year={2026},
      eprint={2602.23584}        ,
      archivePrefix={arXiv},
      primaryClass={math.AG}
}

@incollection {KPZ11,
    AUTHOR = {Kishimoto, Takashi and Prokhorov, Yuri and Zaidenberg,
              Mikhail},
     TITLE = {Group actions on affine cones},
 BOOKTITLE = {Affine algebraic geometry},
    SERIES = {CRM Proc. Lecture Notes},
    VOLUME = {54},
     PAGES = {123--163},
 PUBLISHER = {Amer. Math. Soc., Providence, RI},
      YEAR = {2011},
      ISBN = {978-0-8218-7283-3},
   MRCLASS = {14R20 (14J26 14J50)},
  MRNUMBER = {2768637},
MRREVIEWER = {J\'er\'emy\ Blanc},
       DOI = {10.1090/crmp/054/08},
       URL = {https://doi.org/10.1090/crmp/054/08},
}

@article {KPZ13,
    AUTHOR = {Kishimoto, Takashi and Prokhorov, Yuri and Zaidenberg,
              Mikhail},
     TITLE = {{$\Bbb G_{\rm a}$}-actions on affine cones},
   JOURNAL = {Transform. Groups},
  FJOURNAL = {Transformation Groups},
    VOLUME = {18},
      YEAR = {2013},
    NUMBER = {4},
     PAGES = {1137--1153},
      ISSN = {1083-4362,1531-586X},
   MRCLASS = {14C20 (14J45 14R20)},
  MRNUMBER = {3127989},
MRREVIEWER = {Karol\ Palka},
       DOI = {10.1007/s00031-013-9246-5},
       URL = {https://doi.org/10.1007/s00031-013-9246-5},
}

@article {KPZ14,
    AUTHOR = {Kishimoto, Takashi and Prokhorov, Yuri and Zaidenberg,
              Mikhail},
     TITLE = {Affine cones over {F}ano threefolds and additive group
              actions},
   JOURNAL = {Osaka J. Math.},
  FJOURNAL = {Osaka Journal of Mathematics},
    VOLUME = {51},
      YEAR = {2014},
    NUMBER = {4},
     PAGES = {1093--1112},
      ISSN = {0030-6126},
   MRCLASS = {14J45 (14J50 14R05 14R20)},
  MRNUMBER = {3273879},
MRREVIEWER = {Andreas\ H\"oring},
       URL = {http://projecteuclid.org/euclid.ojm/1414761913},
}

@article {Saw24,
    AUTHOR = {Sawahara, Masatomo},
     TITLE = {Cylinders in canonical del {P}ezzo fibrations},
   JOURNAL = {Ann. Inst. Fourier (Grenoble)},
  FJOURNAL = {Universit\'e{} de Grenoble. Annales de l'Institut Fourier},
    VOLUME = {74},
      YEAR = {2024},
    NUMBER = {1},
     PAGES = {1--69},
      ISSN = {0373-0956,1777-5310},
   MRCLASS = {14R25 (14D06 14E30 14J17 14J26)},
  MRNUMBER = {4748167},
MRREVIEWER = {Jing\ Zhang},
       DOI = {10.5802/aif.3573},
       URL = {https://doi.org/10.5802/aif.3573},
}

@article {DK15,
    AUTHOR = {Dubouloz, Adrien and Kishimoto, Takashi},
     TITLE = {Log-uniruled affine varieties without cylinder-like open
              subsets},
   JOURNAL = {Bull. Soc. Math. France},
  FJOURNAL = {Bulletin de la Soci\'et\'e{} Math\'ematique de France},
    VOLUME = {143},
      YEAR = {2015},
    NUMBER = {2},
     PAGES = {383--401},
      ISSN = {0037-9484,2102-622X},
   MRCLASS = {14R25 (14D06 14E08 14J50)},
  MRNUMBER = {3351185},
MRREVIEWER = {J\'er\'emy\ Blanc},
       DOI = {10.24033/bsmf.2692},
       URL = {https://doi.org/10.24033/bsmf.2692},
}

@article {SY02,
    AUTHOR = {Shpilrain, Vladimir and Yu, Jie-Tai},
     TITLE = {Affine varieties with equivalent cylinders},
   JOURNAL = {J. Algebra},
  FJOURNAL = {Journal of Algebra},
    VOLUME = {251},
      YEAR = {2002},
    NUMBER = {1},
     PAGES = {295--307},
      ISSN = {0021-8693,1090-266X},
   MRCLASS = {14R10},
  MRNUMBER = {1900285},
MRREVIEWER = {Marcin\ Skrzy\'nski},
       DOI = {10.1006/jabr.2001.9124},
       URL = {https://doi.org/10.1006/jabr.2001.9124},
}

@article {KZ01,
    AUTHOR = {Kaliman, Shulim and Zaidenberg, Mikhail},
     TITLE = {Families of affine planes: the existence of a cylinder},
   JOURNAL = {Michigan Math. J.},
  FJOURNAL = {Michigan Mathematical Journal},
    VOLUME = {49},
      YEAR = {2001},
    NUMBER = {2},
     PAGES = {353--367},
      ISSN = {0026-2285,1945-2365},
   MRCLASS = {14R25 (14R10)},
  MRNUMBER = {1852308},
MRREVIEWER = {Tatiana\ M.\ Bandman},
       DOI = {10.1307/mmj/1008719778},
       URL = {https://doi.org/10.1307/mmj/1008719778},
}

@article {KKW24,
    AUTHOR = {Kim, In-Kyun and Kim, Jaehyun and Won, Joonyeong},
     TITLE = {K-unstable singular del {P}ezzo surfaces without anticanonical
              polar cylinder},
   JOURNAL = {Int. Math. Res. Not. IMRN},
  FJOURNAL = {International Mathematics Research Notices. IMRN},
      YEAR = {2024},

    VOLUME = {18},
     PAGES = {12599--12619},
      ISSN = {1073-7928,1687-0247},
   MRCLASS = {14J26 (32Q26)},
  MRNUMBER = {4798642},
MRREVIEWER = {James\ McKernan},
       DOI = {10.1093/imrn/rnae175},
       URL = {https://doi.org/10.1093/imrn/rnae175},
}

@article {KW25,
    AUTHOR = {Kim, Jaehyun and Won, Joonyeong},
     TITLE = {Cylinders in smooth del {P}ezzo surfaces of degree 2},
   JOURNAL = {Adv. Geom.},
  FJOURNAL = {Advances in Geometry},
    VOLUME = {25},
      YEAR = {2025},
    NUMBER = {1},
     PAGES = {71--91},
      ISSN = {1615-715X,1615-7168},
   MRCLASS = {14R25 (14J26)},
  MRNUMBER = {4854449},
MRREVIEWER = {Tatiana\ M.\ Bandman},
       DOI = {10.1515/advgeom-2024-0034},
       URL = {https://doi.org/10.1515/advgeom-2024-0034},
}

@article {KKW25,
    AUTHOR = {Kim, In-Kyun and Kishimoto, Takashi and Won, Joonyeong},
     TITLE = {Cylinders in weighted {F}ano hypersurfaces via projections and
              unprojections},
   JOURNAL = {Taiwanese J. Math.},
  FJOURNAL = {Taiwanese Journal of Mathematics},
    VOLUME = {29},
      YEAR = {2025},
    NUMBER = {6},
     PAGES = {1495--1505},
      ISSN = {1027-5487,2224-6851},
   MRCLASS = {14R20 (14R25)},
  MRNUMBER = {5003261},
       DOI = {10.11650/tjm/250502},
       URL = {https://doi.org/10.11650/tjm/250502},
}

@article {CPPZ21,
    AUTHOR = {Cheltsov, Ivan and Park, Jihun and Prokhorov, Yuri and
              Zaidenberg, Mikhail},
     TITLE = {Cylinders in {F}ano varieties},
   JOURNAL = {EMS Surv. Math. Sci.},
  FJOURNAL = {EMS Surveys in Mathematical Sciences},
    VOLUME = {8},
      YEAR = {2021},
    NUMBER = {1-2},
     PAGES = {39--105},
      ISSN = {2308-2151,2308-216X},
   MRCLASS = {14R25 (14E05 14E08 14E30 14J45 14J50 14R20)},
  MRNUMBER = {4307203},
       DOI = {10.4171/emss/44},
       URL = {https://doi.org/10.4171/emss/44},
}

@article {GKKP11,
    AUTHOR = {Greb, Daniel and Kebekus, Stefan and Kov\'acs, S\'andor J. and
              Peternell, Thomas},
     TITLE = {Differential forms on log canonical spaces},
   JOURNAL = {Publ. Math. Inst. Hautes \'Etudes Sci.},
  FJOURNAL = {Publications Math\'ematiques. Institut de Hautes \'Etudes
              Scientifiques},

    VOLUME = {114},
      YEAR = {2011},
     PAGES = {87--169},
      ISSN = {0073-8301,1618-1913},
   MRCLASS = {14F10 (14C20 14E15 14F05)},
  MRNUMBER = {2854859},
MRREVIEWER = {Anne-Sophie\ Kaloghiros},
       DOI = {10.1007/s10240-011-0036-0},
       URL = {https://doi.org/10.1007/s10240-011-0036-0},
}

@article {Ati57,
    AUTHOR = {Atiyah, M. F.},
     TITLE = {Complex analytic connections in fibre bundles},
   JOURNAL = {Trans. Amer. Math. Soc.},
  FJOURNAL = {Transactions of the American Mathematical Society},
    VOLUME = {85},
      YEAR = {1957},
     PAGES = {181--207},
      ISSN = {0002-9947,1088-6850},
   MRCLASS = {53.3X},
  MRNUMBER = {86359},
MRREVIEWER = {F.\ Hirzebruch},
       DOI = {10.2307/1992969},
       URL = {https://doi.org/10.2307/1992969},
}

@article {Bot57,
    AUTHOR = {Bott, Raoul},
     TITLE = {Homogeneous vector bundles},
   JOURNAL = {Ann. of Math. (2)},
  FJOURNAL = {Annals of Mathematics. Second Series},
    VOLUME = {66},
      YEAR = {1957},
     PAGES = {203--248},
      ISSN = {0003-486X},
   MRCLASS = {53.3X},
  MRNUMBER = {89473},
MRREVIEWER = {M.\ F.\ Atiyah},
       DOI = {10.2307/1969996},
       URL = {https://doi.org/10.2307/1969996},
}

@article {Mus02,
    AUTHOR = {Musta\c t\u a, Mircea},
     TITLE = {Singularities of pairs via jet schemes},
   JOURNAL = {J. Amer. Math. Soc.},
  FJOURNAL = {Journal of the American Mathematical Society},
    VOLUME = {15},
      YEAR = {2002},
    NUMBER = {3},
     PAGES = {599--615},
      ISSN = {0894-0347,1088-6834},
   MRCLASS = {14B05 (14B10 14E30)},
  MRNUMBER = {1896234},
MRREVIEWER = {Shihoko\ Ishii},
       DOI = {10.1090/S0894-0347-02-00391-0},
       URL = {https://doi.org/10.1090/S0894-0347-02-00391-0},
}

@misc{MY24,
 author={Moraga, Joaqu\'in and Y\'a\~nez, Jos\'e Ignacio},
 title={{Calabi--Yau} pairs of complexity two},
 year={2024}, eprint={2412.18830}, archivePrefix={arXiv}, primaryClass={math.AG}
}

@misc{ELY25,
 author={Enwright, Joshua and Li, Jennifer and Y\'a\~nez, Jos\'e Ignacio},
 title={Complexity one varieties are cluster type},
 year={2026}, eprint={2504.17369v2}, archivePrefix={arXiv}, primaryClass={math.AG},
 note={Version 2, 3 August 2026}
}

@article {AdSFM26,
    AUTHOR = {Alves da Silva, Eduardo and Figueroa, Fernando and Moraga,
              Joaqu\'in},
     TITLE = {Algebraic tori in the complement of quartic surfaces},
   JOURNAL = {J. Lond. Math. Soc. (2)},
  FJOURNAL = {Journal of the London Mathematical Society. Second Series},
    VOLUME = {113},
      YEAR = {2026},
    NUMBER = {1},
     PAGES = {Paper No. e70427, 26},
      ISSN = {0024-6107,1469-7750},
   MRCLASS = {14M25 (13F60 14E25)},
  MRNUMBER = {5021150},
       DOI = {10.1112/jlms.70427},
       URL = {https://doi.org/10.1112/jlms.70427},
}

@misc{EFMS25,
      title={Fano compactifications of mutation algebras}, 
      author={Joshua Enwright and Luca Francone and  Joaqu\'in Moraga and Hunter Spink},
      year={2025},
      eprint={2512.21839}        ,
      archivePrefix={arXiv},
      primaryClass={math.AG}
}

@misc{EFM24,
      title={Log {C}alabi-{Y}au pairs of birational complexity zero}, 
      author={Joshua Enwright and Fernando Figueroa and Joaqu\'in Moraga},
      year={2024},
      eprint={2404.05878}        ,
      archivePrefix={arXiv},
      primaryClass={math.AG}
}

@article{JM24,
 author={Ji, Lena and Moraga, Joaqu\'in},
 title={Toricity in families of {Fano} varieties},
 journal={Forum Math. Sigma}, volume={13}, year={2025}, pages={e39},
 note={Preprint arXiv:2409.03564}
}

@misc{Mor24a,
      title={Birational complexity and conic fibrations}, 
      author={Joaqu\'in Moraga},
      year={2024},
      eprint={2403.17251}        ,
      archivePrefix={arXiv},
      primaryClass={math.AG}
}

@article {Fil18,
AUTHOR = {Filipazzi, Stefano},
title={On a generalized canonical bundle formula and generalized adjunction},
journal = {Ann. Sc. Norm. Super. Pisa Cl. Sci.},
YEAR = {2020},
VOLUME = {21}, 
NUMBER ={5}, 
PAGES={1187--1221},
}

@misc{LMV24,
      title={Toric models of smooth {F}ano threefolds}, 
      author={Konstantin Loginov and Joaqu\'in Moraga and Artem Vasilkov},
      year={2024},
      eprint={2407.09420},
      archivePrefix={arXiv},
      primaryClass={math.AG},
}

@book {KM98,
    AUTHOR = {Koll\'{a}r, J\'{a}nos and Mori, Shigefumi},
     TITLE = {Birational geometry of algebraic varieties},
    SERIES = {Cambridge Tracts in Mathematics},
    VOLUME = {134},
      NOTE = {With the collaboration of C. H. Clemens and A. Corti,
              Translated from the 1998 Japanese original},
 PUBLISHER = {Cambridge University Press, Cambridge},
      YEAR = {1998},
     PAGES = {viii+254},
      ISBN = {0-521-63277-3},
   MRCLASS = {14E30},
  MRNUMBER = {1658959},
MRREVIEWER = {Mark Gross},
       DOI = {10.1017/CBO9780511662560},
       URL = {https://doi.org/10.1017/CBO9780511662560} ,
}

@book {K13,
    AUTHOR = {Koll\'{a}r, J\'{a}nos},
     TITLE = {Singularities of the minimal model program},
    SERIES = {Cambridge Tracts in Mathematics},
    VOLUME = {200},
      NOTE = {With a collaboration of S\'{a}ndor Kov\'{a}cs},
 PUBLISHER = {Cambridge University Press, Cambridge},
      YEAR = {2013},
     PAGES = {x+370},
      ISBN = {978-1-107-03534-8},
   MRCLASS = {14E30 (14B05)},
  MRNUMBER = {3057950},
MRREVIEWER = {Tommaso De Fernex},
       DOI = {10.1017/CBO9781139547895},
       URL = {https://doi.org/10.1017/CBO9781139547895}   ,
}

@article {KMM92,
    AUTHOR = {Koll\'{a}r, J\'{a}nos and Miyaoka, Yoichi and Mori, Shigefumi},
     TITLE = {Rationally connected varieties},
   JOURNAL = {J. Algebraic Geom.},
  FJOURNAL = {Journal of Algebraic Geometry},
    VOLUME = {1},
      YEAR = {1992},
    NUMBER = {3},
     PAGES = {429--448},
      ISSN = {1056-3911},
   MRCLASS = {14E30 (14E05 14E35)},
  MRNUMBER = {1158625},
MRREVIEWER = {Yuri G. Prokhorov},
}

@article {AHS08,
    AUTHOR = {Altmann, Klaus and Hausen, J\"urgen and S\"uss, Hendrik},
     TITLE = {Gluing affine torus actions via divisorial fans},
   JOURNAL = {Transform. Groups},
  FJOURNAL = {Transformation Groups},
    VOLUME = {13},
      YEAR = {2008},
    NUMBER = {2},
     PAGES = {215--242},
      ISSN = {1083-4362,1531-586X},
   MRCLASS = {14M25},
  MRNUMBER = {2426131},
MRREVIEWER = {Mina\ Teicher},
       DOI = {10.1007/s00031-008-9011-3},
       URL = {https://doi.org/10.1007/s00031-008-9011-3},
}

@article {CS08,
    AUTHOR = {Cheltsov, I. A. and Shramov, K. A.},
     TITLE = {Log-canonical thresholds for nonsingular {F}ano threefolds},
      NOTE = {With an appendix by J.-P. Demailly},
   JOURNAL = {Uspekhi Mat. Nauk},
  FJOURNAL = {Uspekhi Matematicheskikh Nauk},
    VOLUME = {63},
      YEAR = {2008},
    NUMBER = {5(383)},
     PAGES = {73--180},
      ISSN = {0042-1316,2305-2872},
   MRCLASS = {14J45 (14J30 32Q20)},
  MRNUMBER = {2484031},
MRREVIEWER = {Alexandr\ V.\ Pukhlikov},
       DOI = {10.1070/RM2008v063n05ABEH004561},
       URL = {https://doi.org/10.1070/RM2008v063n05ABEH004561},
}

@article {Zha06,
    AUTHOR = {Zhang, Qi},
     TITLE = {Rational connectedness of log {${\bf Q}$}-{F}ano varieties},
   JOURNAL = {J. Reine Angew. Math.},
  FJOURNAL = {Journal f\"{u}r die Reine und Angewandte Mathematik. [Crelle's
              Journal]},
    VOLUME = {590},
      YEAR = {2006},
     PAGES = {131--142},
      ISSN = {0075-4102},
   MRCLASS = {14E30 (14J45)},
  MRNUMBER = {2208131},
MRREVIEWER = {Stefan Kebekus},
       DOI = {10.1515/CRELLE.2006.006},
       URL = {https://doi.org/10.1515/CRELLE.2006.006},
}

@article {LS13,
    AUTHOR = {Liendo, Alvaro and S\"{u}ss, Hendrik},
     TITLE = {Normal singularities with torus actions},
   JOURNAL = {Tohoku Math. J. (2)},
  FJOURNAL = {The Tohoku Mathematical Journal. Second Series},
    VOLUME = {65},
      YEAR = {2013},
    NUMBER = {1},
     PAGES = {105--130},
      ISSN = {0040-8735},
   MRCLASS = {14J17 (14B05 14E15 14L30)},
  MRNUMBER = {3049643},
MRREVIEWER = {Sonia P\'{e}rez-D\'{\i}az},
       DOI = {10.2748/tmj/1365452628},
       URL = {https://doi.org/10.2748/tmj/1365452628} ,
}

@article {Mor19,
    AUTHOR = {Moraga, Joaqu\'{\i}n},
     TITLE = {Extracting non-canonical places},
   JOURNAL = {Adv. Math.},
  FJOURNAL = {Advances in Mathematics},
    VOLUME = {375},
      YEAR = {2020},
     PAGES = {107415, 12},
      ISSN = {0001-8708},
   MRCLASS = {14E30 (14B05)},
  MRNUMBER = {4170231},
       DOI = {10.1016/j.aim.2020.107415},
       URL = {https://doi.org/10.1016/j.aim.2020.107415} ,
}

@article {BMSZ18,
    AUTHOR = {Brown, Morgan V. and McKernan, James and Svaldi, Roberto and
              Zong, Hong R.},
     TITLE = {A geometric characterization of toric varieties},
   JOURNAL = {Duke Math. J.},
  FJOURNAL = {Duke Mathematical Journal},
    VOLUME = {167},
      YEAR = {2018},
    NUMBER = {5},
     PAGES = {923--968},
      ISSN = {0012-7094},
   MRCLASS = {14E30 (14M25)},
  MRNUMBER = {3782064},
MRREVIEWER = {Jaros\l aw A. Wi\'{s}niewski},
       DOI = {10.1215/00127094-2017-0047},
       URL = {https://doi.org/10.1215/00127094-2017-0047},
}

@book {CLS11,
    AUTHOR = {Cox, David A. and Little, John B. and Schenck, Henry K.},
     TITLE = {Toric varieties},
    SERIES = {Graduate Studies in Mathematics},
    VOLUME = {124},
 PUBLISHER = {American Mathematical Society, Providence, RI},
      YEAR = {2011},
     PAGES = {xxiv+841},
      ISBN = {978-0-8218-4819-7},
   MRCLASS = {14M25 (05A15 05E45 52B12)},
  MRNUMBER = {2810322},
MRREVIEWER = {Ivan Arzhantsev},
       DOI = {10.1090/gsm/124},
       URL = {https://doi.org/10.1090/gsm/124} ,
}

@misc{Mor22,
      title={Coregularity of {F}ano varieties}, 
      author={Joaquín Moraga},
      year={2022},
      eprint={2206.10834},
      archivePrefix={arXiv},
      primaryClass={math.AG}
}

@book {Con82,
    AUTHOR = {Conte, A.},
     TITLE = {Introduzione alle variet\`a algebriche a tre dimensioni},
    SERIES = {Quaderni dell'Unione Matematica Italiana [Publications of the
              Italian Mathematical Union]},
    VOLUME = {22},
 PUBLISHER = {Pitagora Editrice, Bologna},
      YEAR = {1982},
     PAGES = {133},
   MRCLASS = {14J30 (14-02)},
  MRNUMBER = {738782},
MRREVIEWER = {Luciana\ Picco Botta},
}

@article{MM24,
 author={Mauri, Mirko and Moraga, Joaqu\'in},
 title={Birational complexity and dual complexes},
 journal={Proc. Lond. Math. Soc. (3)}, volume={131}, number={2},
 year={2025}, pages={e70073}, doi={10.1112/plms.70073},
 note={Preprint arXiv:2402.10136}
}

@article {MM81,
    AUTHOR = {Mori, Shigefumi and Mukai, Shigeru},
     TITLE = {Classification of {F}ano {$3$}-folds with {$B\sb{2}\geq 2$}},
   JOURNAL = {Manuscripta Math.},
  FJOURNAL = {Manuscripta Mathematica},
    VOLUME = {36},
      YEAR = {1981/82},
    NUMBER = {2},
     PAGES = {147--162},
      ISSN = {0025-2611,1432-1785},
   MRCLASS = {14J30 (14J10)},
  MRNUMBER = {641971},
MRREVIEWER = {Mary\ Schaps},
       DOI = {10.1007/BF01170131},
       URL = {https://doi.org/10.1007/BF01170131},
}

@article {MS21,
    AUTHOR = {Moraga, Joaquin and Svaldi, Roberto},
     TITLE = {A geometric characterization of toric singularities},
   JOURNAL = {J. Math. Pures Appl. (9)},
  FJOURNAL = {Journal de Math\'ematiques Pures et Appliqu\'ees. Neuvi\`eme
              S\'erie},
    VOLUME = {195},
      YEAR = {2025},
     PAGES = {Paper No. 103620, 61},
      ISSN = {0021-7824,1776-3371},
   MRCLASS = {14E30 (14B05 14M25)},
  MRNUMBER = {4867812},
MRREVIEWER = {Daniel\ Duarte},
       DOI = {10.1016/j.matpur.2024.103620},
       URL = {https://doi.org/10.1016/j.matpur.2024.103620},
}

@article {AB92,
    AUTHOR = {Alzati, Alberto and Bertolini, Marina},
     TITLE = {On the rationality of {F}ano {$3$}-folds with {$B_2\geq 2$}},
   JOURNAL = {Matematiche (Catania)},
  FJOURNAL = {Le Matematiche},
    VOLUME = {47},
      YEAR = {1992},
    NUMBER = {1},
     PAGES = {63--74},
      ISSN = {0373-3505,2037-5298},
   MRCLASS = {14J45 (14E05)},
  MRNUMBER = {1229454},
MRREVIEWER = {Alessio\ Corti},
}

@article{ALP23,
 author={Avilov, Artem and Loginov, Konstantin and Przyjalkowski, Victor},
 title={Coregularity of smooth {Fano} threefolds},
 journal={Commun. Number Theory Phys.}, volume={18}, number={3},
 year={2024}, pages={509--577}, note={Preprint arXiv:2309.16784}
}

@article {Duc24,
    AUTHOR = {Ducat, Tom},
     TITLE = {Quartic surfaces up to volume preserving equivalence},
   JOURNAL = {Selecta Math. (N.S.)},
  FJOURNAL = {Selecta Mathematica. New Series},
    VOLUME = {30},
      YEAR = {2024},
    NUMBER = {1},
     PAGES = {Paper No. 2, 27},
      ISSN = {1022-1824,1420-9020},
   MRCLASS = {14E05 (14C20 14M25)},
  MRNUMBER = {4667347},
MRREVIEWER = {Alexandr\ V.\ Pukhlikov},
       DOI = {10.1007/s00029-023-00890-7},
       URL = {https://doi.org/10.1007/s00029-023-00890-7},
}

@misc{Mor24b,
      title={Birational complexity of log {C}alabi-{Y}au 3-folds}, 
      author={Joaqu\'in Moraga},
      year={2024},
      eprint={2405.18516} ,
      archivePrefix={arXiv},
      primaryClass={math.AG}
}

@misc{IP99,
      title={Algebraic Geometry V: Fano Varieties}, 
      author={Iskovskikh, V. A. and Prokhorov, Yu. G.},
      year={1999},
      eprint={Encyclopaedia of Mathematical Sciences, Book 47. Springer, 1999} ,
      archivePrefix={arXiv},
      primaryClass={math.AG}
}

@misc{Isk77,
      title={Fano 3-folds, I}, 
      author={V. A. Iskovskikh.},
      year={1977},
      eprint={Izv. Akad. Nauk SSSR Ser. Mat. 41 (1977) 516--
562 (Russian), English transl.: Math. USSR, Izv. 11, 485--527.} ,
      primaryClass={math.AG}
}

@misc{Isk78,
      title={Fano 3-folds, II}, 
      author={V. A. Iskovskikh.},
      year={1978},
      eprint={Izv. Akad. Nauk SSSR Ser. Mat. 42 (1978) 504--549 (Russian), English transl.: Math. USSR, Izv. 
12, 469--506.} ,
      primaryClass={math.AG}
}

@misc{Isk89,
      title={A double projection from a line on Fano threefolds of the first kind}, 
      author={V. A. Iskovskikh.},
      year={1989},
      eprint={Math. USSR Sb. 66 265--284 (1990); translation from Mat. Sb. 180(2) 260--278 (1989).} ,
      primaryClass={math.AG}
}

@article{GHK15,
 author={Gross, Mark and Hacking, Paul and Keel, Sean},
 title={Mirror symmetry for log {Calabi--Yau} surfaces {I}},
 journal={Publ. Math. Inst. Hautes \'Etudes Sci.}, volume={122},
 year={2015}, pages={65--168}, doi={10.1007/s10240-015-0073-1}
}

@misc{FFMP22,
      title={Complements and coregularity of {F}ano varieties}, 
      author={Fernando Figueroa and  Stefano Filipazzi and Joaquin Moraga and Junyao Peng},
        year={2022},
      eprint={2211.09187},
      archivePrefix={arXiv},
      primaryClass={math.AG}
}

@incollection {Cor25,
    AUTHOR = {Corti, Alessio},
     TITLE = {Cluster Varieties and Toric Specializations of {F}ano Varieties},
 BOOKTITLE = {Higher Dimensional Algebraic Geometry -- A Volume in Honor of V. V. Shokurov},
    SERIES = {London Mathematical Society Lecture Note Series},
    VOLUME = {489},
     PAGES = {264--285},
 PUBLISHER = {Cambridge University Press},
      YEAR = {2025},
       DOI = {10.1017/9781009396233.017},
       URL = {https://doi.org/10.1017/9781009396233.017},
}

@article{CPW16,
 author = {Cheltsov, Ivan and Park, Jihun and Won, Joonyeong},
 title = {Affine cones over smooth cubic surfaces},
 fjournal = {Journal of the European Mathematical Society (JEMS)},
 journal = {J. Eur. Math. Soc. (JEMS)},
 issn = {1435-9855},
 volume = {18},
 number = {7},
 pages = {1537--1564},
 year = {2016},
 language = {English},
 doi = {10.4171/JEMS/622},
 zbMATH = {6603460},
 Zbl = {1386.14068}
}

@article{Kaw07,
 author = {Kawakita, Masayuki},
 title = {Inversion of adjunction on log canonicity},
 fjournal = {Inventiones Mathematicae},
 journal = {Invent. Math.},
 issn = {0020-9910},
 volume = {167},
 number = {1},
 pages = {129--133},
 year = {2007},
 language = {English},
 doi = {10.1007/s00222-006-0008-z},
 zbMATH = {5118884},
 Zbl = {1114.14009}
}

@book{huy23,
 author = {Huybrechts, Daniel},
 title = {The geometry of cubic hypersurfaces},
 fseries = {Cambridge Studies in Advanced Mathematics},
 series = {Camb. Stud. Adv. Math.},
 volume = {206},
 isbn = {978-1-00-928000-6; 978-1-00-928002-0},
 year = {2023},
 publisher = {Cambridge: Cambridge University Press},
 language = {English},
 doi = {10.1017/9781009280020},
 zbMATH = {7671697},
 Zbl = {1561.14074}
}

@article{BW79,
  author  = {Bruce, J. W. and Wall, C. T. C.},
  title   = {On the classification of cubic surfaces},
  journal = {Journal of the London Mathematical Society},
  series  = {2},
  volume  = {19},
  number  = {2},
  year    = {1979},
  pages   = {245--256},
  doi     = {10.1112/jlms/s2-19.2.245}
}

@article{MM03,
 author={Mori, Shigefumi and Mukai, Shigeru},
 title={Erratum: ``Classification of {Fano} {$3$}-folds with {$B_2\geq2$}''},
 journal={Manuscripta Math.}, volume={110}, number={3}, year={2003}, pages={407},
 doi={10.1007/s00229-002-0336-2}
}

@article{Bat82,
 author={Batyrev, Victor V.}, title={Toroidal {Fano} {$3$}-folds},
 journal={Math. USSR-Izv.}, volume={19}, number={1}, year={1982}, pages={13--25}
}

@article{WW82,
 author={Watanabe, Keiichi and Watanabe, Masayuki},
 title={The classification of {Fano} {$3$}-folds with torus embeddings},
 journal={Tokyo J. Math.}, volume={5}, number={1}, year={1982}, pages={37--48}
}

@article{KO73,
 author={Kobayashi, Shoshichi and Ochiai, Takushiro},
 title={Characterizations of complex projective spaces and hyperquadrics},
 journal={J. Math. Kyoto Univ.}, volume={13}, number={1}, year={1973}, pages={31--47}
}

@article{CPWsing16,
 author={Cheltsov, Ivan and Park, Jihun and Won, Joonyeong},
 title={Cylinders in singular del {Pezzo} surfaces},
 journal={Compos. Math.}, volume={152}, number={6}, year={2016}, pages={1198--1224},
 doi={10.1112/S0010437X16007284}
}

@incollection{Mukai88,
 author={Mukai, Shigeru}, title={Curves, {$K3$} surfaces and {Fano} {$3$}-folds of genus {$\leq10$}},
 booktitle={Algebraic geometry and commutative algebra, Vol. I},
 publisher={Kinokuniya}, address={Tokyo}, year={1988}, pages={357--377}
}

@article{Mukai89,
 author={Mukai, Shigeru},
 title={Biregular classification of {Fano} {$3$}-folds and {Fano} manifolds of coindex 3},
 journal={Proc. Natl. Acad. Sci. USA}, volume={86}, number={9}, year={1989}, pages={3000--3002},
 doi={10.1073/pnas.86.9.3000}
}

@article{FZ02,
 author={Fomin, Sergey and Zelevinsky, Andrei},
 title={Cluster algebras {I}: Foundations}, journal={J. Amer. Math. Soc.},
 volume={15}, number={2}, year={2002}, pages={497--529}, doi={10.1090/S0894-0347-01-00385-X}
}

@article{Scott06,
 author={Scott, Joshua S.}, title={Grassmannians and cluster algebras},
 journal={Proc. Lond. Math. Soc. (3)}, volume={92}, number={2}, year={2006}, pages={345--380},
 doi={10.1112/S0024611505015571}
}

@article{DK19,
 author={Dubouloz, Adrien and Kishimoto, Takashi},
 title={Cylindres dans les fibrations de {Mori}: formes du volume quintique de del {Pezzo}},
 journal={Ann. Inst. Fourier (Grenoble)}, volume={69}, number={6},
 year={2019}, pages={2377--2393}, doi={10.5802/aif.3297}
}

\vspace{0.5cm}
\end{document}